\renewcommand{\mathbb}{\mathbf}
\newcommand{\Fpbartimes}{\overline{\F}^\times_p}
\newcommand{\hyp}{\operatorname{hyp}}
\newcommand{\Iw}{\operatorname{Iw}}
\newcommand{\gL}{\mathfrak{L}}
\newcommand{\teta}{\tilde{\eta}}
\newcommand{\tx}{\tilde{x}}
\newcommand{\irred}{\operatorname{irred}}
\newcommand{\red}{\operatorname{red}}
\newcommand{\fbar}{\overline{f}}
\newcommand{\Ann}{\operatorname{Ann}}
\newcommand{\Adist}{{A^{\operatorname{dist}}}}
\newcommand{\Akfree}{{A^{\operatorname{k-free}}}}
\newcommand{\Bdist}{{B^{\operatorname{dist}}}}
\newcommand{\Btwist}{{B^{\operatorname{twist}}}}
\newcommand{\Bkfree}{{B^{\operatorname{k-free}}}}
\newcommand{\Ckfree}{{C^{\operatorname{k-free}}}}
\newcommand{\gEdist}{{\widetilde{\gE}^{\operatorname{dist}}}}
\newcommand{\gEkfree}{{\widetilde{\gE}^{\operatorname{k-free}}}}
\newcommand{\dist}{{\operatorname{dist}}}
\newcommand{\kfree}{{{\operatorname{k-free}}}}
\newcommand{\To}{\longrightarrow}
\newcommand{\isoto}{\stackrel{\sim}{\To}}
\newcommand{\cotimes}{\, \widehat{\otimes}}
\newcommand{\pr}{\operatorname{pr}}
\newcommand{\Rep}{\operatorname{Rep}}
\newcommand{\WBT}{W^{\operatorname{BT}}}
\newcommand{\BT}{\operatorname{BT}}
\newcommand{\ocZ}{\overline{\cZ}}
\newcommand{\cZbar}{\overline{\cZ}}
\newcommand{\cTbar}{\overline{\cT}}
\newcommand{\Rbarinfty}{{\bar{R}_\infty}}
\newcommand{\Kbar}{\bar{K}}
\newcommand{\gP}{\mathfrak{P}}
\newcommand{\ghat}{\hat{g}}
\newcommand{\JH}{\operatorname{JH}}
 \newcommand{\sigmabar   }{\overline{\sigma}}   
\def\iso{\buildrel \sim \over \longrightarrow}
\newcommand{\xbar}{\bar{x}}
\newcommand{\ybar}{\bar{y}}
\newcommand{\id}{\operatorname{id}}
\newtheorem{thm}[subsubsection]{Theorem}
\newtheorem{lemma}[subsubsection]{Lemma}
\newtheorem{lem}[subsubsection]{Lemma}
\newtheorem{cor}[subsubsection]{Corollary}
\newtheorem{prop}[subsubsection]{Proposition}
\newtheorem{athm}[subsection]{Theorem}
\newtheorem{alemma}[subsection]{Lemma}
\newtheorem{alem}[subsection]{Lemma}
\newtheorem{adefn}[subsection]{Definition}
\newtheorem{aprop}[subsection]{Proposition}
\theoremstyle{definition}
\newtheorem{df}[subsubsection]{Definition}
\newtheorem{defn}[subsubsection]{Definition}
\newtheorem{adf}[subsection]{Definition}
\theoremstyle{remark}
\newtheorem{remark}[subsubsection]{Remark}
\newtheorem{rem}[subsubsection]{Remark}
\newtheorem{aremark}[subsection]{Remark}
\newtheorem{example}[subsubsection]{Example}
\newtheorem{assumption}[subsubsection]{Assumption}
\def\numequation{\addtocounter{subsubsection}{1}\begin{equation}}
\def\nummultline{\addtocounter{subsubsection}{1}\begin{multline}}
\def\anumequation{\addtocounter{subsection}{1}\begin{equation}}
\def\anummultline{\addtocounter{subsection}{1}\begin{multline}}
\renewcommand{\theequation}{\arabic{section}.\arabic{subsection}.\arabic{subsubsection}}
\newif\iffinalrun
  \newcommand{\need}[1]{}
  \newcommand{\mar}[1]{}
  \newcommand{\need}[1]{{\tiny *** #1}}
  \newcommand{\mar}[1]{\marginpar{\raggedright\tiny fixme #1}}
\newcommand{\F}{\FF}
\newcommand{\Q}{\QQ}
\newcommand{\Z}{\ZZ}
\newcommand{\m}{\frakm}
\newcommand{\p}{\frakp}
\newcommand{\FF}{{\mathbb F}}
\newcommand{\GG}{{\mathbb G}}
\newcommand{\QQ}{{\mathbb Q}}
\newcommand{\ZZ}{{\mathbb Z}}
\renewcommand{\bf}{\ensuremath{\mathbf{f}}}
\newcommand{\cC}{{\mathcal C}}
\newcommand{\cD}{{\mathcal D}}
\newcommand{\cE}{{\mathcal E}}
\newcommand{\cF}{{\mathcal F}}
\newcommand{\cG}{{\mathcal G}}
\newcommand{\cI}{{\mathcal I}}
\newcommand{\cL}{{\mathcal L}}
\newcommand{\cM}{{\mathcal M}}
\newcommand{\cO}{{\mathcal O}}
\newcommand{\cP}{{\mathcal P}}
\newcommand{\cR}{{\mathcal R}}
\newcommand{\cS}{{\mathcal S}}
\newcommand{\cT}{{\mathcal T}}
\newcommand{\cU}{{\mathcal U}}
\newcommand{\cV}{{\mathcal V}}
\newcommand{\cW}{{\mathcal W}}
\newcommand{\cX}{{\mathcal X}}
\newcommand{\cY}{{\mathcal Y}}
\newcommand{\cZ}{{\mathcal Z}}
\newcommand{\frakm}{\mathfrak{m}}
\newcommand{\frakp}{\mathfrak{p}}
\newcommand{\gE}{\mathfrak{E}}
\newcommand{\gM}{\mathfrak{M}}
\newcommand{\gN}{\mathfrak{N}}
\newcommand{\gS}{\mathfrak{S}}
\newcommand{\tB}{\mathrm{B}}
\newcommand{\Fbar}{\overline{\F}}
\newcommand{\Qbar}{\overline{\Q}}
\newcommand{\Zbar}{\overline{\Z}}
\newcommand{\Fp}{\F_p}
\newcommand{\Fpbar}{\Fbar_p}
\newcommand{\Fpbarx}{\Fpbar^{\times}}
\newcommand{\Zp}{\Z_p}
\newcommand{\Zpbar}{\Zbar_p}
\newcommand{\Qp}{\Q_p}
\newcommand{\Qpbar}{\Qbar_p}
\DeclareMathOperator{\Aut}{Aut}
\DeclareMathOperator{\coker}{coker}
\DeclareMathOperator{\End}{End}
\DeclareMathOperator{\Ext}{Ext}
\DeclareMathOperator{\kExt}{ker-Ext}
\DeclareMathOperator{\Fil}{Fil}
\DeclareMathOperator{\Gal}{Gal}
\DeclareMathOperator{\GL}{GL}
\DeclareMathOperator{\Hom}{Hom}
\DeclareMathOperator{\Tor}{Tor}
\DeclareMathOperator{\im}{im}
\DeclareMathOperator{\Ind}{Ind}
\DeclareMathOperator{\Spec}{Spec}
\DeclareMathOperator{\Spf}{Spf}
\DeclareMathOperator{\Sym}{Sym}
\DeclareMathOperator{\val}{val}
\newcommand{\ab}{\mathrm{ab}}
\newcommand{\cris}{\mathrm{cris}}
\newcommand{\un}{\mathrm{un}}
\newcommand{\ur}{\mathrm{ur}}
\newcommand{\Bcris}{\tB_{\cris}}
\newcommand{\Dpcris}{\operatorname{D_{pcris}}}
\newcommand{\rhobar}{\overline{\rho}}
\newcommand*{\invlim}{\varprojlim} 
\newcommand{\into}{\hookrightarrow}
\newcommand{\onto}{\twoheadrightarrow}
\newcommand{\toisom}{\buildrel\sim\over\to}
\newcommand{\longnto}[1]{\buildrel#1\over\longrightarrow}
\newcommand{\Ga}{\GG_a}
\newcommand{\Gm}{\GG_m}
\newcommand{\hchar}{h} 
\newcommand{\dd}{\mathrm{dd}}
\newcommand{\loc}{\mathrm{loc}}
\newcommand{\Art}{{\operatorname{Art}}}
\newcommand{\col}{\colon}
\newcommand{\varepsilonbar}{\overline{\varepsilon}}
\newcommand{\rbar}{\overline{r}}
\newcommand{\Res}{\operatorname{Res}}
\newcommand{\K}[1]{\mathcal{K}(#1)}
\newcommand{\scrC}{\mathscr{C}}
\newcommand{\Czero}{\scrC^0}
\newcommand{\Czerofrac}{\Czero_{u}}
\newcommand{\Cone}{\scrC^1}
\newcommand{\Conefrac}{\Cone_{u}}
\newcommand{\Hzero}{\mathscr{H}}
\newcommand{\mumap}{\partial}
\newcommand{\tgP}{\widetilde{\gP}}
\newcommand{\czero}{c}
\newcommand{\overphi}{/_{\hskip -2pt \phi \hskip 2pt}}
\begin{document}
\title[Moduli stacks of two-dimensional Galois representations]{Moduli stacks of two-dimensional Galois
representations}

\author[A. Caraiani]{Ana Caraiani}\email{a.caraiani@imperial.ac.uk}
\address{Department of Mathematics, Imperial College London, London SW7 2AZ, UK}

\author[M. Emerton]{Matthew Emerton}\email{emerton@math.uchicago.edu}
\address{Department of Mathematics, University of Chicago,
5734 S.\ University Ave., Chicago, IL 60637, USA}

\author[T. Gee]{Toby Gee} \email{toby.gee@imperial.ac.uk} \address{Department of
  Mathematics, Imperial College London,
  London SW7 2AZ, UK}

\author[D. Savitt]{David Savitt} \email{savitt@math.jhu.edu}
\address{Department of Mathematics, Johns Hopkins University}
\thanks{The first author was supported in part by NSF grant DMS-1501064, 
  by a Royal Society University Research Fellowship,
  and by ERC Starting Grant 804176. The second author was supported in part by the
  NSF grants DMS-1303450 and DMS-1601871. The third author was 
  supported in part by a Leverhulme Prize, EPSRC grant EP/L025485/1, Marie Curie Career
  Integration Grant 303605, 
  ERC Starting Grant 306326, and a Royal Society Wolfson Research
  Merit Award. The fourth author was supported in part by NSF CAREER
  grant DMS-1564367 and  NSF grant 
  DMS-1702161.}

\maketitle
\begin{abstract}
 We construct moduli stacks of two-dimensional mod~$p$ representations of the
 absolute Galois group of a $p$-adic local field, 
 and relate their geometry to
 the weight part of Serre's conjecture for~$\GL_2$.
\end{abstract}
\setcounter{tocdepth}{1}
\tableofcontents

\section{Introduction}\subsection{Moduli of Galois
  representations}Let~$K/\Qp$ be a finite extension,
let~$\overline{K}$ be an algebraic closure of~$K$, and let
$\rbar:\Gal(\overline{K}/K)\to\GL_d(\Fpbar)$ be a continuous representation. 
The
theory of deformations of $\rbar$ --- that is, liftings of $\rbar$ to
continuous representations $r:\Gal(\overline{K}/K)\to\GL_d(A)$, where $A$ is
a complete local ring with residue field $\Fpbar$ --- is extremely
important in the Langlands program, and in particular is crucial for
proving automorphy lifting theorems via the Taylor--Wiles method.
Proving such theorems often comes down to studying the moduli spaces
of those deformations which satisfy various $p$-adic Hodge-theoretic
conditions. 

From the point of view of algebraic geometry, it seems unnatural to
study only formal deformations of this kind, and Kisin observed about
fifteen years ago that results on the reduction modulo $p$ of
two-dimensional crystalline representations suggested that there
should be moduli spaces of $p$-adic representations in which the
residual representations $\rbar$ should be allowed to vary. In
particular, the special fibres of these moduli spaces would be 
would be moduli
spaces of mod~$p$ representations of 
$\Gal(\Kbar/K)$. 

In this paper we construct such a space (or rather stack) $\cZ$ of mod $p$ representations in the
case~$d=2$, and describe its geometry. In particular, we show that their
irreducible components are naturally labelled by Serre weights, and
that our spaces give a geometrisation of the weight part of
Serre's conjecture.  More precisely, we prove the following
theorem (see Proposition~\ref{prop: dimensions of the Z stacks} and
Theorem~\ref{thm:stack version of geometric Breuil--Mezard}; we
explain the definition of a Serre weight, and see Section~\ref{subsec:
  intro serre weights} below for the notion of a Serre weight
associated to a Galois representation).
\begin{thm}
  \label{thm:intro thm on Z}The stack $\cZ$ is an algebraic stack of
  finite type over~$\Fpbar$, and
  is equidimensional of
  dimension~$[K:\Qp]$.  The irreducible components of~$\cZ$ are
  labelled by the Serre weights~$\sigmabar$, in such a way that the
  $\Fpbar$-points 
of the component $\cZ(\sigmabar)$ labelled by~$\sigmabar$ are
  precisely the representations $\rbar:G_K\to\GL_2(\Fpbar)$ having
  $\sigmabar$ as a Serre weight. 
\end{thm}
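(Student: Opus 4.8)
The plan is to realise $\cZ$ as a moduli stack of rank-$2$ \'etale $(\varphi,\Gamma)$-modules with $\Fpbar$-coefficients --- equivalently, of continuous mod $p$ representations of $G_K$ --- and to extract its geometry from a stratification by crystalline (really Barsotti--Tate) substacks. Algebraicity and finite-typeness over $\Fpbar$ come from Breuil--Kisin module theory: every rank-$2$ \'etale $\varphi$-module over an $\Fpbar$-algebra arises, after inverting the uniformiser, from a Breuil--Kisin module of bounded height; the moduli of Breuil--Kisin modules of height $\le h$ form a quasi-compact algebraic stack (a quotient of an explicit scheme of matrices by a linear group); these map to $\cZ$ and in the colimit over $h$ present it, so that $\cZ$ --- after imposing a commuting $\Gamma$-action to pass from $G_{K_\infty}$- to $G_K$-representations --- is algebraic, and a quasi-compactness argument gives finite type.

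For the dimension I would invoke Proposition~\ref{prop: dimensions of the Z stacks}. The mechanism is that $\cZ$ is covered, up to taking closures, by the crystalline substacks $\cZ^{\lambda,\tau}$ of a fixed regular Hodge type $\lambda$ and inertial type $\tau$, and the dimension of such a substack is read off from that of the special-fibre framed crystalline deformation rings, which is $d^2+[K:\Qp]\binom{d}{2}=4+[K:\Qp]$ for $d=2$; removing the $d^2=4$-dimensional $\GL_2$-action leaves $[K:\Qp]$, and equidimensionality of $\cZ$ follows since the covering pieces all have this dimension.

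To produce the components: to each Serre weight $\sigmabar$ one attaches a Barsotti--Tate Hodge type and a tame inertial type $\tau(\sigmabar)$, and I would define $\cZ(\sigmabar)$ to be the scheme-theoretic image in $\cZ$ of the corresponding crystalline substack --- equivalently, the Zariski closure of the set of reductions of crystalline representations of that type. Using the explicit structure of height-$1$ Breuil--Kisin (or Breuil) modules for $\GL_2$, namely their stratification by ``shapes'' and the behaviour of reduction along it, one shows each $\cZ(\sigmabar)$ is irreducible of dimension $[K:\Qp]$, hence an irreducible component of $\cZ$. That these are \emph{all} the components, that they are pairwise distinct, and that they have the asserted $\Fpbar$-points is where Theorem~\ref{thm:stack version of geometric Breuil--Mezard} does the work: the geometric Breuil--M\'ezard identity expresses each cycle as $[\cZ^{\lambda,\tau}]=\sum_{\sigmabar}m_{\sigmabar}(\lambda,\tau)\,[\cZ(\sigmabar)]$, with $m_{\sigmabar}(\lambda,\tau)$ the classical Breuil--M\'ezard multiplicities (the Jordan--H\"older multiplicity of $\sigmabar$ in the mod $p$ reduction of a lattice of type $(\lambda,\tau)$). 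Since every component of $\cZ$ is generically contained in some $\cZ^{\lambda,\tau}$, the $\cZ(\sigmabar)$ exhaust the components; distinctness follows by choosing $(\lambda,\tau)$ for which a single weight occurs; and $\rbar\in\cZ(\sigmabar)(\Fpbar)$ holds if and only if the multiplicity of $\sigmabar$ in the versal deformation ring at $\rbar$ is non-zero, which is exactly the condition $\sigmabar\in W(\rbar)$ that the stacky form of the weight part of Serre's conjecture identifies with $\sigmabar$ being a Serre weight of $\rbar$.

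The main obstacle is everything downstream of the numerical theory, namely showing that $\cZ(\sigmabar)$ is \emph{genuinely} irreducible and that the geometric Breuil--M\'ezard identity holds \emph{as an equality of cycles on $\cZ$}, not merely numerically or at a generic point. This requires a fine analysis of the moduli of height-$1$ Breuil--Kisin modules --- their irreducible components, the ``shape'' stratification, and the fibres of the map remembering only the associated \'etale $\varphi$-module --- to pin down irreducibility and dimensions, and then a matching of the resulting cycles against the representation-theoretic multiplicities, which rests on compatibility with the classical Breuil--M\'ezard conjecture for $\GL_2$. The subtlest single point is the ``only if'' direction of the point-wise characterisation --- that passing to the Zariski closure introduces no spurious $\rbar$ on $\cZ(\sigmabar)$ --- which has to be extracted from the local, versal-ring form of the geometric Breuil--M\'ezard statement rather than from any soft global argument.
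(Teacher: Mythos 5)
Your high-level strategy does align with the paper's: algebraicity and finite type via moduli of Breuil--Kisin modules, dimension via versal deformation rings, and component structure via a geometric Breuil--M\'ezard statement combined with an explicit analysis of families of extensions. However, there are two substantive points where the proposal goes wrong, one of which is a genuine gap.

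First, a conceptual error that would mislead the rest of the argument: the stack $\cZ$ in the theorem is \emph{not} the moduli stack of all mod $p$ representations of $G_K$. It is defined (Definition~\ref{def:defining the Z's}) as the scheme-theoretic image, inside the non-algebraic, infinite-dimensional moduli stack $\cR^{\dd}$ of rank-two \'etale $\varphi$-modules with tame descent data, of the finite-type algebraic stack $\cC^{\dd,\BT,1}$ of height-$\le 1$ Breuil--Kisin modules with descent data and the strong determinant condition. By Theorem~\ref{thm: existence of picture with descent data and its basic properties}(3), its $\Fpbar$-points are exactly the $\rbar$ which are \emph{not} twists of tr\`es ramifi\'ee extensions; those representations are deliberately excluded, and correspondingly the labelling is by non-Steinberg Serre weights. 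Consequently the algebraicity and finite type claims do not come from ``presenting'' $\cZ$ by Breuil--Kisin moduli --- the morphism $\cC^{\dd,\BT,1}\to\cZ$ is proper and scheme-theoretically dominant, not a smooth presentation --- but from the scheme-theoretic image formalism of \cite{EGstacktheoreticimages}, applied to a proper morphism from a finite-type algebraic stack into a stack whose diagonal is affine and of finite presentation (Corollary~\ref{cor:diagonal of R}, Theorem~\ref{thm: R^dd_h is an algebraic stack}).

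Second, and more seriously, your proposed definition of $\cZ(\sigmabar)$ as the scheme-theoretic image of the Barsotti--Tate crystalline substack of a single tame type $\tau(\sigmabar)$ does not produce an irreducible stack. For a non-scalar tame type $\tau$, the stack $\cZ^{\tau,1}$ has $\#\cP_\tau$ irreducible components (Corollary~\ref{cor: components of Z are exactly the Z(J)}), typically $2^f$; and since the paper fixes the Barsotti--Tate Hodge type, there is in general no tame $\tau$ with $\#\JH(\sigmabar(\tau))=1$ except for scalar types. So there is no way to isolate a single component by a good choice of $\tau$, and your ``distinctness follows by choosing $(\lambda,\tau)$ for which a single weight occurs'' cannot be run. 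The paper's essential move --- which the proposal never makes --- is to define $\cZ(\sigmabar)$ as the \emph{virtual cycle} $\sum_\tau n_\tau(\sigmabar)\,\cZ(\tau)$ in the free abelian group on the irreducible components of $\cZ^{\dd,1}$, where the integers $n_\tau(\sigmabar)$ express $\sigmabar$ as a $\Z$-linear combination of the $\sigmabar(\tau)$ in the Grothendieck group (Lemma~\ref{lem:write Serre weight as linear combination of types}). It is then a theorem (Theorem~\ref{thm:stack version of geometric Breuil--Mezard}), not a definition, that this virtual cycle is effective, reduced, and in fact a single irreducible component, and that it equals $\overline{\cZ}(J)$ for the appropriate $J$. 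Establishing this requires three independent inputs: the numerical Breuil--M\'ezard identity at every versal ring, obtained from Taylor--Wiles patching in Appendix~\ref{sec:appendix on geom BM}; the explicit classification of the $\overline{\cZ}(J)$ as the components of $\cZ^{\tau,1}$ from the extension-family analysis of Section~\ref{sec: extensions of rank one Kisin modules}; and --- precisely to pass from Hilbert--Samuel multiplicities to an equality of cycles, the obstacle you flag --- the generic reducedness of $\Spec R^{\tau,\BT}_{\rbar}/\varpi$ proved in Proposition~\ref{prop: generically reduced special fibre deformation ring}, which itself depends on the local-model results of Section~\ref{subsec: local models rank two}.
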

We also show that generic points of the irreducible
components admit a simple description (they are extensions of
characters whose restrictions to inertia are determined by the
corresponding Serre weight).

In the course of proving Theorem~\ref{thm:intro thm on Z}, we study a partial resolution of the
moduli spaces inspired by a construction of Kisin~\cite{kis04} in the
setting of formal deformations, and show that its irreducible
components are also naturally labelled by Serre weights. 
We use this resolution to show that our moduli spaces are generically
reduced, and as an illustration of the utility of our constructions,
we use this to prove the corresponding result for the special fibres
of tamely potentially Barsotti--Tate deformation rings (see
Proposition~\ref{prop: generically reduced special fibre deformation
  ring}). It seems hard to prove this result purely in the setting of
formal deformations, and we anticipate that it will
have applications to the theory of mod~$p$ Hilbert modular forms.

\subsection{The construction}The reason that we restrict to the case
of two-dimensional representations is that in this case one knows that
most mod~$p$ representations are ``tamely potentially finite flat'';
that is, after restriction to a finite tamely ramified extension, they
come from the generic fibres of finite flat group schemes.  Indeed,
the only representations not of this form are the so-called tr\`es
ramifi\'ee representations, which are twists of extensions of the
trivial character by the mod~$p$ cyclotomic character, and can be
described explicitly in terms of Kummer theory. (This is a local
Galois-theoretic analogue of the well-known fact that, up to twist,
modular forms of any weight and level $\Gamma_1(N)$, with $N$ prime to
$p$, are congruent modulo~$p$ to modular forms of weight two and level
$\Gamma_1(Np)$; the corresponding modular curves acquire semistable
reduction over a tamely ramified extension of~$\Qp$.)

These Galois representations, and the corresponding finite flat group
schemes, can be described in terms of semilinear algebra data. Such
descriptions also exist for more general $p$-adic Hodge theoretic
conditions (such as being crystalline of prescribed Hodge--Tate
weights), although they are more complicated,
and can be used to construct 
analogues,
for higher dimensional representations,
of the moduli stacks we construct here;
this construction is the subject of 
the forthcoming
paper~\cite{EGmoduli}.

The semilinear algebra data that we use in the present paper are Breuil--Kisin modules
and \'etale $\varphi$-modules. A Breuil--Kisin module is a module with
Frobenius over a power series ring, satisfying a condition on the
cokernel of the Frobenius which depends on a fixed integer, called the
\emph{height} of the Breuil--Kisin module. Inverting the formal variable in the power
series ring gives a functor from the category of Breuil--Kisin modules to the
category of 
\'etale $\varphi$-modules. By Fontaine's
theory~\cite{MR1106901}, these \'etale $\varphi$-modules correspond to
representations of~$\Gal(\overline{K}/K_\infty)$, where~$K_\infty$ is
an infinite non-Galois extension of~$K$ obtained by
extracting $p$-power roots of a uniformiser. By work of Breuil and
Kisin (in particular~\cite{kis04}), for \'etale
$\varphi$-modules that arise from a Breuil--Kisin module of height at most~$1$ 
the corresponding
representations admit a natural extension to~$\Gal(\overline{K}/K)$,
and in this way one obtains precisely the finite flat
representations. 
This is the case that we will consider throughout this paper, extended
slightly to
incorporate descent data from a finite tamely ramified
extension~$K'/K$ and thereby allowing us to study tamely potentially finite
flat representations.

Following Pappas and Rapoport~\cite{MR2562795}, we then consider the moduli stack~$\cC$ of rank two
projective Breuil--Kisin modules, and the moduli stack~$\cR$ of \'etale
$\varphi$-modules, together with the natural map~$\cC\to\cR$. We
deduce from the results of~\cite{MR2562795} that the stack~$\cC$ is algebraic (that is, it
is an Artin stack); however~$\cR$ is not algebraic, and indeed is infinite-dimensional. (In fact, we consider
versions of these stacks with $p$-adic coefficients, in which
case~$\cC$ is a $p$-adic formal algebraic stack, but we suppress
this for the purpose of this introduction.) The analogous construction
without tame descent data was considered
in~\cite{EGstacktheoreticimages}, where it was shown that one can
define a notion of the ``scheme-theoretic image'' of the morphism $\cC\to\cR$,
and that the scheme-theoretic image is algebraic. Using similar
arguments, we define our moduli stack~$\cZ$ of two-dimensional Galois
representations to be the scheme-theoretic image of the morphism~$\cC\to\cR$.

By construction, we know that the closed points of~$\cZ$ are in
bijection with the (non-tr\`es ramifi\'ee) representations
$\Gal(\overline{K}/K)\to\GL_2(\Fpbar)$, and by using standard results on
the corresponding formal deformation problems, we know that~$\cZ$ is
equidimensional of dimension~$[K:\Qp]$. The closed points of~$\cC$
correspond to potentially finite flat models of these Galois
representations, and we are able to deduce that~$\cC$ is also
equidimensional of dimension~$[K:\Qp]$ (at least morally, this is by
Tate's theorem on the uniqueness of prolongations of $p$-divisible
groups). 

These constructions
are relatively formal. To go further, we combine results from the
theory of local models of Shimura varieties and Taylor--Wiles patching
with an explicit construction of families of extensions of
characters. We begin by describing the last of these.

Intuitively, a natural source of ``families'' of representations
$\rhobar:\Gal(\overline{K}/K)\to\GL_2(\Fpbar)$ is given by the
extensions of two fixed characters. Indeed, given two
characters~$\chi_1,\chi_2:\Gal(\overline{K}/K)\to\Fpbartimes$, the
$\Fpbar$-vector space $\Ext^1_{\Gal(\overline{K}/K)}(\chi_2,\chi_1)$
is usually $[K:\Qp]$-dimensional, and a back of the envelope
calculation 
suggests that this should give a~$([K:\Qp]-2)$-dimensional
substack of~$\cZ$ (the difference between an extension and a
representation counts for a $-1$, as does the~$\Gm$ of endomorphisms). Twisting~$\chi_1,\chi_2$ independently
by unramified characters gives a candidate for a $[K:\Qp]$-dimensional
family; since~$\cZ$ is equidimensional of dimension~$[K:\Qp]$, the
closure of such a family should be an irreducible component of~$\cZ$.

Since there are only finitely many possibilities for the restrictions
of the~$\chi_i$ to the inertia subgroup~$I(\overline{K}/K)$, this gives a finite list of
maximal-dimensional families. On the other hand, there are up to
unramified twist only finitely many irreducible two-dimensional
representations of~$\Gal(\overline{K}/K)$, which suggests that the
irreducible representations should correspond to $0$-dimensional
substacks. Together these considerations suggest that the irreducible
components of our moduli stack should be given by the closures of the
families of extensions considered in the previous paragraph, and in
particular that the irreducible representations should arise as limits
of reducible representations. This could not literally be the case
for families of Galois representations, rather than families of
\'etale $\varphi$-modules, and may seem surprising at first glance,
but it is indeed what happens.

\subsection{Serre weights}\label{subsec: intro serre weights}
In the body of the paper we make this analysis 
rigorous, and we show
that the different families that we have constructed exhaust the
irreducible components. 
We can therefore label the irreducible components of~$\cZ$
as follows. Let~$k$ be the residue field of~$K$;\ a \emph{Serre
  weight} is then an irreducible $\Fpbar$-representation of $\GL_2(k)$ (or
rather an isomorphism class thereof). Such a representation is
specified by its highest weight, which can be thought of as a pair of
characters~$k^\times\to\Fpbartimes$, which via local class field
theory corresponds to a pair of
characters~$I(\overline{K}/K)\to\Fpbar^\times$, and thus to an
irreducible component of~$\cZ$ (in fact, we need to make a
shift in this dictionary, corresponding to half the sum of the
positive roots of~$\GL_2(k)$, but we ignore this for the purposes of
this introduction).

This might seem artificial, but in fact it is completely natural, for
the following reason. Following the pioneering work of
Serre~\cite{MR885783} and Buzzard--Diamond--Jarvis~\cite{bdj} (as
extended in~\cite{MR2430440} and~\cite{gee061}), we now know how to
associate a set $W(\rbar)$ of Serre weights to each continuous
representation $\rbar:G_K\to\GL_2(\Fpbar)$, with the property
that if $F$ is a totally real field and $\rhobar:G_F\to\GL_2(\Fpbar)$
is an irreducible representation coming from a Hilbert modular form,
then the possible weights of Hilbert modular forms giving rise
to~$\rhobar$ are precisely determined by the sets
$W(\rhobar|_{G_{F_v}})$ for places $v|p$ of~$F$ (see for example
\cite{blggu2,geekisin,gls13}).

Going back to our labelling of irreducible components
above, we have associated a Serre weight~$\sigmabar$ to each
irreducible component of~$\cZ$. 
One of our main theorems is that the representations~$\rbar$ on the
irreducible component labelled by~$\sigmabar$ are precisely the
representations with $\sigmabar\in W(\rbar)$,

We emphasise that the existence of such a geometric interpretation of
the sets~$W(\rbar)$ is far from
obvious, and indeed we know of no direct proof using any of the
explicit descriptions of~$W(\rbar)$ in the literature; it seems hard to understand in any explicit way which Galois
representations arise as the limits of a family of extensions of given
characters, and the description of the sets~$W(\rbar)$ is
very complicated (for example, the description
in~\cite{bdj} relies on certain $\Ext$ groups of crystalline
characters). Our proof is indirect, and ultimately makes use of a
description of~$W(\rbar)$ given in~\cite{geekisin}, which is in terms
of potentially Barsotti--Tate deformation rings of~$\rbar$ and is
motivated by the Taylor--Wiles method. We interpret this description
in the geometric language of~\cite{emertongeerefinedBM}, which we 
in turn interpret as the formal completion of a ``geometric
Breuil--M\'ezard conjecture'' for our stacks. 

We also study the irreducible components of the stack~$\cC$. This
stack admits a decomposition as a disjoint union of
substacks~$\cC^\tau$, indexed by the tame inertial types~$\tau$ (the
substack~$\cC^\tau$ is the moduli of those Breuil--Kisin modules which have
descent data given by~$\tau$). The inertial local Langlands
correspondence assigns a finite set of Serre weights
$\JH(\sigmabar(\tau))$ to~$\tau$ (the Jordan--H\"older factors of the
reduction mod~$p$ of the representation~$\sigma(\tau)$ of~$\GL_2(\cO_K)$
corresponding to~$\tau$), and we show that the scheme-theoretic image
of the morphism $\cC^\tau\to\cZ$ is
$\cZ^\tau=\cup_{\sigmabar\in\JH(\sigmabar(\tau))}\cZ(\sigmabar)$. 

The set~$\JH(\sigmabar(\tau))$ can be identified with a subset of the
power set~$\cS$ of the set of embeddings~$k\into\Fpbar$. For generic
choices of~$\tau$, it is equal to~$\cS$, and in this case we show that
the morphism~$\cC^\tau\to\cZ^\tau$ is a generic isomorphism on the
source. 
We are able to show (using the theory of Dieudonn\'e modules) that for
any non-scalar type~$\tau$, the irreducible components of~$\cC^\tau$
can be identified with~$\cS$, and those irreducible components not
corresponding to elements of~$\JH(\sigmabar(\tau))$ have image
in~$\cZ^\tau$ of positive codimension. (In the case of scalar types,
both~$\cC^\tau$ and~$\cZ^\tau$ are irreducible.) It follows from the
results described that~$\cZ^\tau$ is generically reduced, which
is not at all obvious from its definition.

An important tool in our proofs is that~$\cC$ has rather mild
singularities, and in particular is Cohen--Macaulay and
reduced. 
We show this by relating the singularities of the various~$\cC^\tau$
to the local
models at Iwahori level of Shimura varieties of $\GL_2$-type; such a
relationship was first found in~\cite{kis04} (in the context of formal
deformation spaces, with no descent data) and~\cite{MR2562795} (in the
context of the stacks~$\cC$, although again
without descent data) and developed further
by the first author and Levin in~\cite{2015arXiv151007503C}. 

\subsection{An outline of the paper}
In
Section~\ref{sec:kisin modules with descent data} we recall the theory
of Breuil--Kisin modules and \'etale $\varphi$-modules, and explain how it
extends to the setting of tame descent data. In
Section~\ref{sec:moduli stacks} we define the stacks~$\cC$, $\cR$
and~$\cZ$, and prove some of their basic properties
following~\cite{EGstacktheoreticimages}. We relate the singularities
of~$\cC$ to those of local
models, define the Dieudonn\'e stack, and explain how the morphism
from~$\cC$ to the Dieudonn\'e stack can be thought of in terms of
effective Cartier divisors.

In Section~\ref{sec: extensions of rank one Kisin modules} we build
our families of reducible Galois representations, and show that they
are dense in~$\cZ$. We begin with a thorough study of spaces of
extensions of Breuil--Kisin modules, before considering their scheme-theoretic
images in~$\cR$. After some general considerations we specialise to
the case of extensions of rank one Breuil--Kisin modules, where we explicitly
calculate the dimensions of the extension groups. We also show that
the Kisin variety corresponding to an irreducible Galois
representation has ``small'' dimension, by using a base change
argument, and proving an upper bound on the Kisin variety for
reducible representations via an explicit calculation.

In Section~\ref{sec: picture} we prove our main results, by
combining the hands-on study of~$\cC$ and~$\cZ$ of Section~\ref{sec:
  extensions of rank one Kisin modules} with the results on the weight
part of Serre's conjecture and the Breuil--M\'ezard conjecture
from~\cite{geekisin}. 

We finish with several appendices, summarising results that we use
earlier in the paper. Appendix~\ref{sec:formal stacks} recalls some
properties of formal algebraic stacks from~\cite{Emertonformalstacks},
and proves a technical result that we use in Section~\ref{sec:
  picture}. 
Appendix~\ref{sec: appendix on tame
  types} recalls some standard facts about Serre weights and the
inertial local Langlands correspondence, and finally
Appendix~\ref{sec:appendix on geom BM} combines the results
of~\cite{geekisin} and~\cite{emertongeerefinedBM} to prove a geometric
Breuil--M\'ezard result for tamely potentially Barsotti--Tate
deformation rings, which we use in Section~\ref{sec:
  picture}.

\subsection{Final comments}
As explained above, our construction excludes the tr\`es ramifi\'ee
representations, which are twists of certain extensions of the trivial
character by the mod~$p$ cyclotomic character. From the point of view
of the weight part of Serre's conjecture, they are precisely the
representations which admit a twist of the Steinberg representation as
their only Serre weight. In accordance with the picture described
above, this means that the full moduli stack of 2-dimensional
representations of~$\Gal(\overline{K}/K)$ can be obtained from our
stack by adding in the irreducible components consisting of the tr\`es
ramifi\'ee representations. This is carried out in~\cite{EGmoduli},
and the geometrisation of the weight part of Serre's conjecture
described above is extended to this moduli stack, using the results of
this paper as an input.

We assume that~$p>2$ in much of the paper; while we expect that our
results should also hold if~$p=2$, there are several reasons to
exclude this case. We are frequently able to considerably simplify our
arguments by assuming that the extension~$K'/K$ is not just tamely
ramified, but in fact of degree prime to~$p$; this is problematic
when~$p=2$, as the consideration of cuspidal types involves a
quadratic unramified extension. We also use results on the
Breuil--M\'ezard conjecture which ultimately depend on automorphy
lifting theorems that are not available in the case $p=2$ at present
(although it is plausible that the methods of~\cite{Thornep=2} could
be used to prove them). 

\subsection{Acknowledgements}We would like to thank Ulrich G\"ortz,
Wansu Kim and Brandon Levin
for helpful conversations and correspondence.
\subsection{Notation and conventions}\label{subsec: notation}

\subsubsection*{Topological groups} If~$M$ is an abelian
topological group with a linear topology, then as
in~\cite[\href{https://stacks.math.columbia.edu/tag/07E7}{Tag
  07E7}]{stacks-project} we say that~$M$ is {\em complete} if the
natural morphism $M\to \varinjlim_i M/U_i$ is an isomorphism,
where~$\{U_i\}_{i \in I}$ is some (equivalently any) fundamental
system of neighbourhoods of~$0$ consisting of subgroups. Note that in
some other references this would be referred to as being~{\em complete
  and separated}. In particular, any $p$-adically complete ring~$A$ is
by definition $p$-adically separated.

\subsubsection*{Galois theory and local class field theory} If $M$ is a field, we let $G_M$ denote its
absolute Galois group.
If~$M$ is a global field and $v$ is a place of $M$, let $M_v$ denote
the completion of $M$ at $v$. If~$M$ is a local field, we write~$I_M$
for the inertia subgroup of~$G_M$. 

 Let $p$ be a prime number. 
 Fix a finite extension $K/\Qp$, with
 ring of integers $\cO_K$ and residue field $k$.  Let $e$ and $f$
 be the  ramification and inertial degrees of $K$, respectively, and
 write $\# k=p^f$ for the cardinality of~$k$.   
Let $K'/K$ be a finite
tamely ramified Galois extension. Let $k'$ be the residue field of $K'$, and let $e',f'$ be the
ramification and inertial degrees of $K'$ respectively.

Our representations of $G_K$ will have coefficients in $\Qpbar$,
a fixed algebraic closure of $\Qp$ whose residue field we denote by~$\Fpbar$. Let $E$ be a finite
extension of $\Qp$ contained in $\Qpbar$ and containing the image of every
embedding of $K'$ into $\Qpbar$. Let $\cO$ be the ring of integers in
$E$, with uniformiser $\varpi$ and residue field $\F \subset
\Fpbar$.  

Fix an embedding $\sigma_0:k'\into\F$, and recursively define
$\sigma_i:k'\into\F$ for all $i\in\Z$ so that
$\sigma_{i+1}^p=\sigma_i$; of course, we have $\sigma_{i+f'}=\sigma_i$
for all~$i$. We let $e_i\in k'\otimes_{\Fp} \F$ denote the idempotent
satisfying $(x\otimes 1)e_i=(1\otimes\sigma_i(x))e_i$ for all $x\in
k'$; note that $\varphi(e_i)=e_{i+1}$. We also denote by $e_i$ the
natural lift of $e_i$ to an idempotent in
$W(k')\otimes_{\Zp}\cO$. If $M$ is an
$W(k')\otimes_{\Zp}\cO$-module, then we write $M_i$ for
$e_iM$.

We write $\Art_K \col K^\times\to W_K^{\ab}$ for
the isomorphism of local class field theory, normalised so that
uniformisers correspond to geometric Frobenius elements. 

\begin{lemma}\label{lem:cft} Let  $\pi$ be any uniformiser
of $\cO_K$. 
The composite $I_K \to \cO_K^{\times} \to k^{\times}$, where  the map
$I_K \to \cO_K^\times$ is induced by the restriction of $\Art_K^{-1}$,
sends an element $g \in I_K$ to the image in $k^{\times}$ of
$g(\pi^{1/(p^f-1)})/\pi^{1/(p^f-1)}$. 
\end{lemma}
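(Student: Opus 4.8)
The plan is to identify \emph{both} maps in the statement with the composite $I_K \twoheadrightarrow \Gal(L/K) \xrightarrow{\sim} k^\times$ for a suitable finite extension $L/K$, and then to evaluate the right-hand side using the explicit Lubin--Tate description of $\Art_K$. Write $q := p^f = \#k$, fix a root $\alpha := \pi^{1/(q-1)}$, and set $L := K(\alpha)$. Since $\cO_K$ contains the Teichm\"uller lifts $\mu_{q-1}$ of $k^\times$ and $X^{q-1}-\pi$ is Eisenstein, $L/K$ is cyclic and totally tamely ramified of degree $q-1$, with residue field $k$; the rule $g \mapsto g(\alpha)/\alpha$ defines an isomorphism $\Gal(L/K)\xrightarrow{\sim}\mu_{q-1}$, and composing with reduction gives an isomorphism $\Gal(L/K)\xrightarrow{\sim}k^\times$. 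A short computation --- for $g\in I_K$ the quantity $g(\alpha)/\alpha$ lies in $\mu_{q-1}$ and is independent of the chosen $(q-1)$-th root of $\pi$ --- identifies the right-hand side of the Lemma with the composite $I_K\twoheadrightarrow\Gal(L/K)\xrightarrow{\sim}k^\times$. On the other side, $\cO_K^\times\to k^\times$ has kernel $1+\pi\cO_K$, which lies in $N_{L/K}(L^\times)$ since $L/K$ is tamely ramified; hence $\Art_K$ sends $1+\pi\cO_K$ to $1\in\Gal(L/K)$, and the composite $I_K\to\cO_K^\times\to k^\times$ factors through $\Gal(L/K)$ as well. It therefore suffices to prove that the two resulting isomorphisms $\Gal(L/K)\xrightarrow{\sim}k^\times$ coincide; equivalently, that $\Art_K(u)(\alpha)/\alpha \equiv u \pmod{\mathfrak{m}_L}$ for every $u\in\cO_K^\times$.

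For this I would apply Lubin--Tate theory with the uniformiser $\pi' := -\pi$. Let $\mathcal{F}$ be the Lubin--Tate formal $\cO_K$-module attached to $\pi'$ with $[\pi']_{\mathcal{F}}(X) = \pi' X + X^q$; its nonzero $\pi'$-torsion points are exactly the roots of $X^{q-1}=\pi$, so $K(\mathcal{F}[\pi']) = L$ and $\alpha$ is a uniformiser of $\cO_L$. By the standard Lubin--Tate formula for the local Artin map, read off with the \emph{geometric} Frobenius normalisation fixed in \S\ref{subsec: notation}, the element $\Art_K(u)$ acts on $\mathcal{F}[\pi']$ as $[u]_{\mathcal{F}}$. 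Since $[u]_{\mathcal{F}}(X) = uX + (\text{terms of degree}\ \ge 2,\ \text{with coefficients in }\cO_K)$ and $\alpha\in\mathfrak{m}_L$, this gives $\Art_K(u)(\alpha)/\alpha = [u]_{\mathcal{F}}(\alpha)/\alpha \equiv u \pmod{\mathfrak{m}_L}$; and as $\Art_K(u)(\alpha)/\alpha$ automatically lies in $\mu_{q-1}$, its image in $k^\times$ is the reduction $\bar u$, which is what we want.

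The only genuinely delicate point --- and the step most likely to trip one up --- is the bookkeeping of normalisations. One must confirm that the geometric Frobenius normalisation makes $\Art_K(u)$ act on Lubin--Tate torsion by $[u]_{\mathcal{F}}$ and not by $[u^{-1}]_{\mathcal{F}}$ (the latter appears in the more commonly quoted, arithmetically normalised, form of the Lubin--Tate theorem), and, compatibly with this, that passing from $\pi$ to $-\pi$ is exactly what makes the Lubin--Tate polynomial $\pi' X + X^q$ have $K(\pi^{1/(q-1)})$ rather than $K((-\pi)^{1/(q-1)})$ as its field of $\pi'$-torsion. If one prefers to avoid the sign, one may instead run the Lubin--Tate computation for $\pi$ itself --- obtaining the analogous statement for a $(q-1)$-th root of $-\pi$ --- and then invoke the independence of the tame inertia character from the choice of uniformiser. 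Either way, once the normalisations are pinned down the remaining verifications are routine.
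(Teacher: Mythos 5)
Your proof is correct, and it takes essentially the approach the paper intends: the reference MR2487860 cited in the paper's proof is a Lubin--Tate construction of $\Art_K$, so the paper is delegating precisely the computation you have written out in full. You have handled the two genuinely delicate points correctly --- with the paper's geometric normalisation, $\Art_K(u)$ for $u\in\cO_K^{\times}$ acts on Lubin--Tate torsion as $[u]_{\cF}$ rather than $[u^{-1}]_{\cF}$, and the sign change $\pi'=-\pi$ is exactly what makes $\alpha=\pi^{1/(q-1)}$ (rather than $(-\pi)^{1/(q-1)}$) a nonzero root of $[\pi']_{\cF}(X)=\pi'X+X^{q}$, hence a $\pi'$-torsion point whose Galois orbit your identity $g(\alpha)=[u]_{\cF}(\alpha)$ then controls.
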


\begin{proof}
This follows (for example) from the construction in \cite[Prop.~4.4(iii), Prop.~4.7(ii), Cor.~4.9, Def.~4.10]{MR2487860}.
\end{proof}

 For each $\sigma\in \Hom(k,\Fpbar)$ we
define the fundamental character $\omega_{\sigma}$   to~$\sigma$ to be the composite \[\xymatrix{I_K \ar[r] & \cO_{K}^{\times}\ar[r] & k^{\times}\ar[r]^{\sigma} & \Fpbarx,}\]
where the map $I_K \to \cO_K^\times$ is induced by the restriction of $\Art_K^{-1}$.
Let $\varepsilon$ denote the $p$-adic cyclotomic
character and $\varepsilonbar$ the mod~$p$ cyclotomic
character, so that $\prod_{\sigma \in \Hom(k,\Fpbar)}
\omega_{\sigma}^{e} = \varepsilonbar$.   We will often identify
characters $I_K \to \Fpbar^{\times}$ with characters $k^{\times}
\to \Fpbar^{\times}$ via the Artin map, and similarly for
their Teichm\"uller lifts.

\subsubsection*{Inertial local Langlands} A two-dimensional \emph{tame inertial type} is (the isomorphism
class of) a tamely ramified representation
$\tau : I_K \to \GL_2(\Zpbar)$ that extends to a representation of $G_K$ and
whose kernel is open. Such a representation is of the form $\tau
\simeq \eta \oplus \eta'$, and we say that $\tau$ is a \emph{tame principal series
  type} if 
$\eta,\eta'$ both extend to characters of $G_K$. Otherwise,
$\eta'=\eta^q$, and ~$\eta$ extends to a character
of~$G_L$, where~$L/K$ is a quadratic unramified extension. 
In this case we say
that~$\tau$ is a \emph{tame cuspidal type}.

Henniart's appendix to \cite{breuil-mezard}
associates a finite dimensional irreducible $E$-representation $\sigma(\tau)$ of
$\GL_2(\cO_K)$ to each inertial type $\tau$; we refer to this association as the {\em
  inertial local Langlands correspondence}. Since we are only working
with tame inertial types, this correspondence can be made very
explicit as follows. 

If $\tau
\simeq \eta \oplus \eta'$ is a tame principal series type, then we
also write $\eta,\eta':k^\times\to\cO^\times$ for the 
multiplicative characters determined by
$\eta\circ\Art_K|_{\cO_{K}^\times},\eta'\circ\Art_K|_{\cO_{K}^\times}$
respectively. If $\eta=\eta'$, then we set
$\sigma(\tau)=\eta\circ\det$. Otherwise, we write $I$ for the Iwahori subgroup of $\GL_2(\cO_K)$ consisting of
matrices which are upper triangular modulo a uniformiser~$\varpi_K$
of~$K$, and write $\chi = \eta'\otimes \eta:
I\to\cO^\times$ for the character \[
\begin{pmatrix}
  a&b\\\varpi_K c&d
\end{pmatrix}\mapsto \eta'(\overline{a})\eta(\overline{d}).\] Then $\sigma(\tau) := \Ind_I^{\GL_2(\cO_K)}
\chi$. 

If $\tau=\eta\oplus\eta^q$ is a tame cuspidal type, then as above we
write~$L/K$ for a quadratic unramified extension, and~$l$ for the
residue field of~$\cO_L$. We write
$\eta :l^\times\to\cO^\times$ for the 
multiplicative character determined by
$\eta\circ\Art_L|_{\cO_{L}^\times}$; then $\sigma(\tau)$ is the
inflation to $\GL_2(\cO_K)$ of the cuspidal representation of $\GL_2(k)$
denoted by~$\Theta(\eta)$ in~\cite{MR2392355}.

\subsubsection*{$p$-adic Hodge theory} We normalise Hodge--Tate weights so that all Hodge--Tate weights of
the cyclotomic character are equal to $-1$. We say that a potentially
crystalline representation $\rho:G_K\to\GL_2(\Qpbar)$ has \emph{Hodge
  type} $0$, or is \emph{potentially Barsotti--Tate}, if for each
$\varsigma :K\into \Qpbar$, the Hodge--Tate weights of $\rho$ with
respect to $\varsigma$ are $0$ and $1$.  (Note that this is a more
restrictive definition of potentially Barsotti--Tate than is sometimes
used; however, we will have no reason to deal with representations
with non-regular Hodge-Tate weights, and so we exclude them from
consideration. Note also that it is more usual in the literature to
say that $\rho$ is potentially Barsotti--Tate if it is potentially
crystalline, and $\rho^\vee$ has Hodge type $0$.) 

We say
that a potentially crystalline representation
$\rho:G_K\to\GL_2(\Qpbar)$ has  \emph{inertial type} $\tau$ if the traces of
elements of $I_K$ acting on~$\tau$ and on
\[\Dpcris(\rho)=\varinjlim_{K'/K}(\Bcris\otimes_{\Qp}V_\rho)^{G_{K'}}\] are
equal (here~$V_\rho$ is the underlying vector space
of~$V_\rho$). 
A representation $\rbar:G_K\to\GL_2(\Fpbar)$ \emph{has a potentially
    Barsotti--Tate lift of
    type~$\tau$} if and
  only if $\rbar$ admits a lift to a representation
  $r:G_K\to\GL_2(\Zpbar)$ of Hodge type~$0$ and inertial type~$\tau$.
\subsubsection*{Serre weights}
By definition, a \emph{Serre weight} is an irreducible
$\F$-representation of $\GL_2(k)$. Concretely, such a
representation is of the form
\[\sigmabar_{\vec{t},\vec{s}}:=\otimes^{f-1}_{j=0}
(\det{\!}^{t_j}\Sym^{s_j}k^2) \otimes_{k,\sigma_{j}} \F,\]
where $0\le s_j,t_j\le p-1$ and not all $t_j$ are equal to
$p-1$. We say that a Serre weight is \emph{Steinberg} if $s_j=p-1$ for all $j$,
and \emph{non-Steinberg} otherwise.

\subsubsection*{A remark on normalisations}  Given a continuous representation $\rbar:G_K\to\GL_2(\Fpbar)$, there
is an associated (nonempty) set of Serre weights~$W(\rbar)$ whose
precise definition we will recall in Appendix~\ref{sec: appendix on tame types}. There are in fact
several different definitions of~$W(\rbar)$ in the literature; as a
result of the papers~\cite{blggu2,geekisin,gls13}, these definitions
are known to be equivalent up to normalisation. 

However, the normalisations of
Hodge--Tate weights and of inertial local Langlands used in
\cite{geekisin,gls13,emertongeesavitt} are not all the same, and so
for clarity we lay out how they differ, and how they compare to the normalisations of
this paper. 

Our conventions for Hodge--Tate weights and
inertial types agree with those of~\cite{geekisin, emertongeesavitt}, but our
representation~$\sigma(\tau)$ is the
representation~$\sigma(\tau^\vee)$ of~\cite{geekisin, emertongeesavitt}
(where~$\tau^\vee=\eta^{-1}\oplus(\eta')^{-1}$);\ to see this, note the
dual in the definition of~$\sigma(\tau)$ in~\cite[Thm.\
2.1.3]{geekisin} and the discussion in \S 1.9 of
\cite{emertongeesavitt}.\footnote{However, this dual is erroneously
  omitted when the inertial local Langlands correspondence is made
  explicit  at the end of  \cite[\S3.1]{emertongeesavitt}. See
  Remark~\ref{arem: wtf were we thinking in EGS}.}

In all cases one chooses to normalise the set of Serre weights so
that the condition of Lemma~\ref{lem: list of things we need to know about Serre
  weights}(1) holds.  Consequently, our set of weights~$W(\rbar)$ is the
set of duals of the weights~$W(\rbar)$ considered
in~\cite{geekisin}. In turn, the paper~\cite{gls13} has the opposite
convention for the signs of Hodge--Tate weights to our convention (and
to the convention of~\cite{geekisin}), so we find that our set of
weights~$W(\rbar)$ is the set of duals of the weights~$W(\rbar^\vee)$
considered in~\cite{gls13}.

\subsubsection*{Stacks}We follow the terminology of~\cite{stacks-project}; in
particular, we write ``algebraic stack'' rather than ``Artin stack''. More
precisely, an algebraic stack is a stack in groupoids in the \emph{fppf} topology,
whose diagonal is representable by algebraic spaces, which admits a smooth
surjection from a
scheme. See~\cite[\href{http://stacks.math.columbia.edu/tag/026N}{Tag
  026N}]{stacks-project} for a discussion of how this definition relates to
others in the literature, and~\cite[\href{http://stacks.math.columbia.edu/tag/04XB}{Tag
  04XB}]{stacks-project} for key properties of morphisms
representable by algebraic spaces.

For a commutative ring $A$, an \emph{fppf stack over $A$} (or
\emph{fppf} $A$-stack) is a stack fibred in groupoids over the big \emph{fppf}
site of $\Spec A$. 

\subsubsection*{Scheme-theoretic images}We briefly remind the reader of some 
definitions from~\cite[\S3.2]{EGstacktheoreticimages}. 
Let
 $\cX \to \cF$ be a proper morphism of stacks over a locally
Noetherian base-scheme~$S$, 
where $\cX$ is an algebraic stack which is locally of finite presentation over~$S$,
and the diagonal of $\cF$ is representable by algebraic spaces and locally of
finite presentation.

We refer to~\cite[Defn.\ 3.2.8]{EGstacktheoreticimages} for the
definition of the \emph{scheme-theoretic image}~$\cZ$ of the proper morphism $\cX \to
\cF$. By definition, it is a full subcategory in groupoids of~$\cF$, and in fact
by~\cite[Lem.\ 3.2.9]{EGstacktheoreticimages} it is a Zariski substack
of~$\cF$. By~\cite[Lem.\ 3.2.14]{EGstacktheoreticimages}, the finite type points
of~$\cZ$ are precisely the finite type points of~$\cF$ for which the
corresponding fibre of~$\cX$ is nonzero. 

 The results of~\cite[\S3.2]{EGstacktheoreticimages} give criteria
for~$\cZ$ to be an algebraic stack, and prove a number of associated results
(such as universal properties of the morphism $\cZ\to\cF$, and a description of
versal deformation rings for~$\cZ$); rather than recalling these results in detail
here, we will refer to them as needed in the body of the paper.

\section{Integral $p$-adic Hodge theory with tame descent data}\label{sec:kisin modules with descent
  data}In this section we introduce various objects in semilinear algebra which
arise in the study of potentially Barsotti--Tate Galois representations with tame
descent data. Much of this material is standard, and none of it will surprise an
expert, but we do not know of a treatment in the literature in the level of
generality that we require; in particular, we are not aware of a treatment of the theory of tame descent data for Breuil--Kisin modules. However, the arguments are almost identical
to those for strongly divisible
modules and Breuil modules, so we will be brief.

The various equivalences of categories between the objects we consider and
finite flat group schemes or $p$-divisible groups will not be
relevant to our main arguments, except at a motivational level, so we
largely ignore them.

\subsection{Breuil--Kisin modules and \texorpdfstring{$\varphi$}{phi}-modules
  with descent data}\label{subsec: kisin modules with dd} 
Recall that we have a finite
tamely ramified Galois extension $K'/K$. Suppose further that there exists a uniformiser $\pi'$ of
$\cO_{K'}$ such that $\pi:=(\pi')^{e(K'/K)}$ is an element of~$K$, 
where $e(K'/K)$ is the ramification index of
$K'/K$. 
Recall that $k'$ is the residue field of $K'$, while $e',f'$ are the
ramification and inertial degrees of $K'$ respectively.
 Let $E(u)$ be the minimal polynomial of $\pi'$ over $W(k')[1/p]$. 

Let $\varphi$ denote the arithmetic Frobenius automorphism of $k'$, which lifts uniquely
to an automorphism of $W(k')$ that we also denote by $\varphi$. Define
$\gS:=W(k')[[u]]$, and extend $\varphi$ to $\gS$ by \[\varphi\left(\sum a_iu^i\right)=\sum
\varphi(a_i)u^{pi}.\] By our assumptions that $(\pi')^{e(K'/K)} \in K$ 
and that $K'/K$ is Galois, for each
$g\in\Gal(K'/K)$ we can write $g(\pi')/\pi'=h(g)$ with $h(g)\in
\mu_{e(K'/K)}(K') \subset W(k')$,
and we
let $\Gal(K'/K)$ act on $\gS$ via \[g\left(\sum a_iu^i\right)=\sum g(a_i)h(g)^iu^i.\]

Let $A$ be a $p$-adically complete $\Zp$-algebra, set $\gS_A:=(W(k')\otimes_{\Zp} A)[[u]]$, and extend
the actions of $\varphi$ and $\Gal(K'/K)$ on $\gS$ to actions on $\gS_A$ in the
obvious ($A$-linear) fashion. 

\begin{lemma}\label{lem:projectivity descends}
An $\gS_A$-module is 
projective if and only if it is projective as an 
$A[[u]]$-module. 
\end{lemma}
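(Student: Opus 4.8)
The plan is to exploit the fact that $\gS_A = (W(k')\otimes_{\Zp}A)[[u]]$ is not merely finite free over $A[[u]]$ but is \emph{finite \'etale} over it, obtained by the base change $\Zp\to A[[u]]$ from the finite \'etale extension $\Zp\to W(k')$ (which is \'etale because $k'/\Fp$ is separable). This matters: projectivity does \emph{not} descend along an arbitrary finite free ring map — for instance along $\kbar\to\kbar[\varepsilon]/\varepsilon^2$ — so the \'etale (equivalently, separable) structure is exactly what must be used, and this is where essentially all the content sits. First I would record the elementary observation that if $w_1,\dots,w_{f'}$ is a $\Zp$-basis of $W(k')$ then it is also an $A[[u]]$-basis of $\gS_A$ (rearrange $\sum_j(\sum_i a_{ij}w_i)u^j$ as $\sum_i w_i(\sum_j a_{ij}u^j)$, with no completion issue since $W(k')$ is finite free over $\Zp$), so $\gS_A$ is free of rank $f'$ over $A[[u]]$. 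The ``only if'' direction is then immediate: a direct summand of a free $\gS_A$-module is, after restriction of scalars, a direct summand of a free $A[[u]]$-module, hence projective over $A[[u]]$.

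For the ``if'' direction I would use a separability idempotent. Since $k'/\Fp$ is separable, $W(k')\otimes_{\Zp}W(k')$ decomposes as a finite product of complete local rings, one factor being $W(k')$ via the multiplication map; equivalently there is an element $\epsilon=\sum_i x_i\otimes y_i\in W(k')\otimes_{\Zp}W(k')$ with $(w\otimes 1)\epsilon=(1\otimes w)\epsilon$ for all $w\in W(k')$ and $\sum_i x_iy_i=1$. Because $\gS_A=W(k')\otimes_{\Zp}A[[u]]$ is a base change of $W(k')$, one has $\gS_A\otimes_{A[[u]]}\gS_A=(W(k')\otimes_{\Zp}W(k'))\otimes_{\Zp}A[[u]]$, and the image of $\epsilon$ is a separability idempotent for $A[[u]]\to\gS_A$. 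Now let $M$ be an $\gS_A$-module that is projective over $A[[u]]$. The multiplication $\psi\colon\gS_A\otimes_{A[[u]]}M\to M$ is a surjection of $\gS_A$-modules (where $\gS_A$ acts on the left factor), and $\phi\colon M\to\gS_A\otimes_{A[[u]]}M$, $m\mapsto\sum_i x_i\otimes y_im$, is $\gS_A$-linear (using the bimodule identity for $\epsilon$ together with commutativity of $\gS_A$) and satisfies $\psi\circ\phi=\id_M$ (using $\sum_i x_iy_i=1$). Hence $M$ is an $\gS_A$-module direct summand of $\gS_A\otimes_{A[[u]]}M$. Since $M$ is a direct summand of a free $A[[u]]$-module, $\gS_A\otimes_{A[[u]]}M$ is a direct summand of a free $\gS_A$-module, so it is $\gS_A$-projective, and therefore so is $M$.

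The only genuine obstacle is the conceptual one already flagged — recognising that one cannot simply descend projectivity along the finite free map $A[[u]]\to\gS_A$ and that separability (i.e.\ finite \'etaleness of $\gS_A/A[[u]]$) is the feature that saves the day. Once that is in hand, the rest is routine: verifying that the separability idempotent base-changes correctly (immediate after writing $\gS_A=W(k')\otimes_{\Zp}A[[u]]$, with no topological subtlety since $W(k')$ is finite free over $\Zp$), and checking the two displayed properties of $\phi$. Note also that projectivity is being used throughout in the form ``direct summand of a free module'', so no finite generation hypothesis on $M$ is needed anywhere.
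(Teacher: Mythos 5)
Your proof is correct and is essentially the same approach the paper takes. The paper's argument also rests on the fact that $W(k')$ is a direct summand of $W(k')\otimes_{\Zp}W(k')$ compatibly with both $W(k')$-actions — that is, precisely the existence of the separability idempotent that you write out explicitly — and then tensors this decomposition with $\gM$ to realize $\gM$ as an $\gS_A$-module direct summand of the $\gS_A$-projective module $W(k')\otimes_{\Zp}\gM$. You have merely unwound this, made the idempotent and the two checks ($\gS_A$-linearity of $\phi$ via the bimodule identity, and $\psi\circ\phi=\id$ via $\sum x_iy_i=1$) explicit, and in addition recorded the easy ``only if'' direction, which the paper leaves unstated.
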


\begin{proof}
 Suppose that $\gM$ is an $\gS_A$-module that is projective as an
 $A[[u]]$-module. Certainly $W(k') \otimes_{\Zp} \gM$ is projective
 over $\gS_A$, and we claim that it has $\gM$ as an $\gS_A$-module direct summand. 
 Indeed, this  follows by rewriting $\gM$ as $W(k')\otimes_{W(k')}
 \gM$ and noting that $W(k')$ is a $W(k')$-module direct summand of $W(k')
 \otimes_{\Zp} W(k')$.
\end{proof}

The actions of $\varphi$ and $\Gal(K'/K)$ on $\gS_A$
extend to actions on $\gS_A[1/u]=(W(k')\otimes_{\Zp} A)((u))$ in the obvious
way.  It will sometimes be necessary to consider the subring $\gS_A^0
:=(W(k)\otimes_{\Zp} A)[[v]]$ of $\gS_A$
  consisting of power series in 
$v:=u^{e(K'/K)}$, on which  $\Gal(K'/K)$ acts
  trivially. 

\begin{defn}\label{defn: Kisin module with descent data}
Fix a $p$-adically complete $\Zp$-algebra~$A$. A \emph{weak Breuil--Kisin module with
  $A$-coefficients and descent data from $K'$ to $K$}
  is a triple $(\gM,\varphi_{\gM},\{\hat{g}\}_{g\in\Gal(K'/K)})$ consisting of
  a 
  $\gS_A$-module~$\gM$ and a $\varphi$-semilinear map
  $\varphi_{\gM}:\gM\to\gM$ 
   such that:
  \begin{itemize}
  \item the $\gS_A$-module $\gM$ is finitely generated and $u$-torsion
    free, and 
  \item the  induced
  map $\Phi_{\gM} = 1 \otimes \varphi_{\gM} :\varphi^*\gM\to\gM$ is an isomorphism after
  inverting $E(u)$  (here as usual we write $\varphi^*\gM:=\gS_A \otimes_{\varphi,\gS_A}\gM$), 
  \end{itemize}
together with
  additive bijections  $\hat{g}:\gM\to\gM$, satisfying the further
  properties that 
    the maps $\hat{g}$ commute with $\varphi_\gM$, satisfy 
    $\hat{g_1}\circ\hat{g_2}=\widehat{g_1\circ g_2}$, and have
    $\hat{g}(sm)=g(s)\hat{g}(m)$ for all $s\in\gS_A$, $m\in\gM$. 
   We say that $\gM$ is has \emph{height at most $h$} if the cokernel of 
   $\Phi_{\gM}$ is killed by $E(u)^h$. 

 If $\gM$ as above is projective as an $\gS_A$-module (equivalently, if
   the condition that  the $\gM$ is $u$-torsion free is replaced with the condition
   that $\gM$ is projective) then we say 
 that $\gM$ is a \emph{Breuil--Kisin
   module  with $A$-coefficients and descent data from $K'$ to $K$},
 or even simply that $\gM$ is a \emph{Breuil--Kisin module}. 

The Breuil--Kisin module $\gM$ is said to be  of rank~$d$ if the underlying
finitely generated projective $\gS_A$-module has constant rank~$d$. It
is said to be free if the underlying $\gS_A$-module is free.
\end{defn}

A morphism of (weak) Breuil--Kisin modules with descent data is
a  morphism 
of~$\gS_A$-modules 
that commutes with $\varphi$ and with the~$\hat{g}$. 
In the case that $K'=K$ the data of the $\hat{g}$ is trivial, so
it can be forgotten, giving the category of (\emph{weak}) \emph{Breuil--Kisin modules with
  $A$-coefficients.} In this case it will sometimes be convenient to elide the difference between a
Breuil--Kisin module with trivial descent data, and a Breuil--Kisin module without
descent data, in order to avoid making separate definitions in the
case of Breuil--Kisin modules without descent data; the same convention will
apply to the \'etale $\varphi$-modules considered below.

\begin{lem}\label{lem:kisin injective} Suppose either that $A$ is a $\Z/p^a\Z$-algebra for some
  $a \ge 1$, or that $A$ is $p$-adically separated and $\gM$ is projective. Then in
  Definition~\ref{defn: Kisin module with descent data} the condition
  that $\Phi_{\gM}$ is an isomorphism after inverting $E(u)$ may
  equivalently be replaced with the condition that $\Phi_\gM$ is
  injective and its cokernel is killed by a power
  of $E(u)$.
\end{lem}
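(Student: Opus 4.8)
The plan is to establish the two implications separately, with essentially all of the content in the direction that an isomorphism after inverting $E(u)$ already forces $\Phi_\gM$ to be injective. The reverse direction is formal: if $\Phi_\gM$ is injective with $\coker\Phi_\gM$ killed by some power $E(u)^h$, then applying the exact localization functor $M \mapsto M[1/E(u)]$ to the short exact sequence $0 \to \varphi^*\gM \xrightarrow{\Phi_\gM} \gM \to \coker\Phi_\gM \to 0$ shows that $\Phi_\gM[1/E(u)]$ is injective with cokernel $(\coker\Phi_\gM)[1/E(u)] = 0$, hence an isomorphism; this uses neither hypothesis.

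For the forward direction, assume $\Phi_\gM[1/E(u)]$ is an isomorphism. Since inverting $E(u)$ is exact, it commutes with kernels and cokernels, so $(\coker\Phi_\gM)[1/E(u)] = 0$ and $(\ker\Phi_\gM)[1/E(u)] = 0$. The cokernel assertion is then immediate: $\coker\Phi_\gM$ is a quotient of the finitely generated $\gS_A$-module $\gM$, hence finitely generated, and since its localization at $E(u)$ vanishes, each of finitely many generators is killed by some power of $E(u)$, so $\coker\Phi_\gM$ is killed by $E(u)^h$ for $h$ the largest of these. The remaining task is to deduce $N := \ker\Phi_\gM = 0$ from the fact that every element of $N$ is annihilated by some power of $E(u)$.

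The common first step is to observe that $\varphi^*\gM$ is $u$-torsion free. This holds because the Frobenius $\varphi\colon \gS_A \to \gS_A$ is (faithfully) flat: it is injective, since $\varphi$ is an automorphism of $W(k')$, with image $(W(k') \otimes_{\Zp} A)[[u^p]]$, over which $\gS_A$ is free with basis $1, u, \dots, u^{p-1}$. Hence $\varphi^*(-)$ is exact, and applying it to $0 \to \gM \xrightarrow{u} \gM$ — exact because $\gM$ is $u$-torsion free, which is automatic when $\gM$ is projective since $\gS_A$ is $u$-torsion free — shows that multiplication by $u^p$ (the image of $u$ under $\varphi$), hence by $u$, is injective on $\varphi^*\gM$. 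Now I treat the two cases. When $A$ is a $\Z/p^a\Z$-algebra, $p^a = 0$ in $\gS_A$; since $E(u)$ is Eisenstein we may write $E(u)^m = u^{me'} + p\,d_m$ with $d_m \in \gS_A$ (here $e' = \deg E(u)$ and $E(u) \equiv u^{e'} \bmod p$), so any $x \in N$ with $E(u)^m x = 0$ satisfies $u^{me'} x = -p\,d_m x$, whence $u^{kme'} x = (-p\,d_m)^k x$ for all $k$ by induction, and taking $k = a$ gives $u^{ame'} x = 0$, forcing $x = 0$ by the injectivity just noted; thus $N = 0$. When instead $A$ is $p$-adically separated and $\gM$ is projective, $\varphi^*\gM$ is again projective — it is a direct summand of $\varphi^*$ of a finite free $\gS_A$-module, which is finite free — hence embeds in a finite free $\gS_A$-module, so it suffices to show $E(u)$ is a non-zero-divisor in $\gS_A$ (then so is each $E(u)^m$, whence $N = 0$). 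For this I would note that $\gS_A$ is $p$-adically separated because $A$ is, and then show by comparing coefficients that $E(u)g = 0$ forces $g \in p\gS_A$, and inductively $g \in p^k\gS_A$ for every $k$, so $g = 0$.

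The main obstacle is precisely this control of $E(u)$-power-torsion in $\varphi^*\gM$ in the forward direction. It is false in general that $u$-torsion freeness implies $E(u)$-torsion freeness — over $W(k')[[u]]$ (the case $A = \Zp$) the module $\gS_A/(E(u)) \cong \cO_{K'}$ is $u$-torsion free but killed by $E(u)$ — so both hypotheses are genuinely needed, and they do different work: $p$ being nilpotent lets one absorb $E(u)$-power-torsion into $u$-power-torsion, whereas $p$-adic separatedness makes $E(u)$ an honest non-zero-divisor in $\gS_A$. The only slightly delicate verifications are the flatness of the Frobenius on $\gS_A$ and the non-zero-divisor property, both of which are short and standard in the Breuil--Kisin setting.
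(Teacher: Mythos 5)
Your proof is correct and follows essentially the same two-case route as the paper: when $p$ is nilpotent one shows that $E(u)$-power-torsion in $\varphi^*\gM$ is $u$-power-torsion (the paper cites the divisibility $E(u)^h \mid u^{e(a+h-1)}$ in $\gS_A$, while your manipulation $u^{ame'}x=(-p\,d_m)^a x = 0$ is an equivalent computation), and when $A$ is $p$-adically separated and $\gM$ is projective one shows that $E(u)$ is a non-zero-divisor in $\gS_A$ by reducing modulo powers of $p$. The only extra content in your write-up is the explicit verification that $\varphi\colon\gS_A\to\gS_A$ is finite free, hence that $\varphi^*\gM$ inherits $u$-torsion-freeness from $\gM$ --- a point the paper's proof asserts without comment.
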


  \label{rem: Kisin modules are etale} 

  \begin{proof}
   
  If $A$ is a $\Z/p^a\Z$-algebra for some $a\ge 0$, then $E(u)^h$
  divides $u^{e(a+h-1)}$ in $\gS_A$ (see ~\cite[Lem.\
  5.2.6]{EGstacktheoreticimages} and its proof), so that $\gM[1/u]$ is
  \'etale in the sense that the induced
  map \[\Phi_{\gM}[1/u]:\varphi^*\gM[1/u]\to\gM[1/u]\]
  is an isomorphism. The injectivity of $\Phi_{\gM}$ now follows
  because $\gM$, and therefore $\varphi^*\gM$, is $u$-torsion free.

If instead  $A$ is $p$-adically complete, then no Eisenstein
polynomial over $W(k')$ is a zero divisor in $\gS_A$:\ this is plainly
true if $p$ is nilpotent in $A$, from which one deduces the same for
$p$-adically complete $A$. Assuming that $\gM$ is projective, it
follows that the maps $\gM \to \gM[1/E(u)]$ and $\varphi^*\gM \to
(\varphi^*\gM)[1/E(u)]$ are injective, and we are done.
  \end{proof}

\begin{rem} 
\label{rem:projectivity for Kisin modules} We refer the reader
to~\cite[\S5.1]{EGstacktheoreticimages} for a
discussion of foundational results concerning finitely generated modules
over the power series ring $A[[u]]$. In particular (using
Lemma~\ref{lem:projectivity descends}) we note the
following.
\begin{enumerate}
\item An $\gS_A$-module $\gM$ is finitely generated and projective if
  and only if it is $u$-torsion free and $u$-adically complete, and $\gM/u\gM$ is a finitely generated projective
  $A$-module (\cite[Prop.~5.1.8]{EGstacktheoreticimages}).

\item  If the $\gS_A$-module $\gM$ is projective of
rank~$d$, then it is Zariski locally free of rank~$d$ in the sense that there is a cover of $\Spec A$
by affine opens $\Spec B_i$ such that each of the base-changed modules
 $\gM\otimes_{\gS_A}\gS_{B_i}$ is free of rank $d$  (\cite[Prop.~5.1.9]{EGstacktheoreticimages}).

\item If $A$ is coherent (so in
    particular, if $A$ is Noetherian), then $A[[u]]$ is faithfully
    flat over~$A$, and so $\gS_A$ is faithfully flat over~$A$, but
    this need not hold if $A$ is not coherent.
\end{enumerate}

\end{rem}

\begin{df}
\label{def:completed tensor}
If $Q$ is any (not necessarily finitely generated) $A$-module,
and $\gM$ is an $A[[u]]$-module,
then  we let $\gM\cotimes_A Q$ denote the $u$-adic completion
of $\gM\otimes_A Q$.
\end{df} 

\begin{lem}
  \label{rem: base change of locally free Kisin module is a
    locally free Kisin module}
If $\gM$ is a Breuil--Kisin module and $B$ is an $A$-algebra, then the base
change $\gM \cotimes_A B$ is a Breuil--Kisin module.
\end{lem}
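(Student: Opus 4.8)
The plan is to deduce the statement from two robust facts: that a finitely generated projective module is a direct summand of a finite free module --- a description preserved by both $(-)\otimes_A B$ and $u$-adic completion --- and that the condition on $\Phi_\gM$ can be repackaged as a pair of identities involving only honest $\gS_A$-linear maps of $u$-adically complete modules. Throughout I take $B$ to be $p$-adically complete, so that ``Breuil--Kisin module with $B$-coefficients'' is meaningful in the sense of Definition~\ref{defn: Kisin module with descent data}.

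First I would record the formal bookkeeping. For any $\gS_A$-module $\gN$ one has $(\gN\otimes_A B)/u^n(\gN\otimes_A B)\cong(\gN/u^n\gN)\otimes_A B$ by right exactness of $(-)\otimes_A B$, so $\gN\cotimes_A B$ is a module over the $u$-adic completion of $\gS_A\otimes_A B$, which is $\gS_B=(W(k')\otimes_{\Zp}B)[[u]]$; in particular $\gM\cotimes_A B$ is a $\gS_B$-module. Choosing $\gM'$ with $\gM\oplus\gM'\cong\gS_A^n$ and using that $(-)\otimes_A B$ and $u$-adic completion commute with finite direct sums gives $(\gM\cotimes_A B)\oplus(\gM'\cotimes_A B)\cong\gS_B^n$, so $\gM\cotimes_A B$ is a finitely generated projective (hence $u$-torsion free) $\gS_B$-module; reducing mod $u$ and invoking Remark~\ref{rem:projectivity for Kisin modules} shows the rank is preserved. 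The semilinear data transport by continuity: $\varphi_\gM$ and each $\hat g$ are $u$-adically continuous, since $\varphi_\gM(u^nm)=u^{pn}\varphi_\gM(m)$ and $\hat g(u^nm)=h(g)^nu^n\hat g(m)$, so $\varphi_\gM\otimes\id_B$ and $\hat g\otimes\id_B$ extend to endomorphisms of $\gM\cotimes_A B$, and the required relations (commutation of the $\hat g$ with $\varphi$, the identity $\widehat{g_1}\circ\widehat{g_2}=\widehat{g_1\circ g_2}$, and semilinearity over $\gS_B$ for the $B$-linear extension of the $\Gal(K'/K)$-action) hold on the dense submodule $\gM\otimes_A B$ and hence on all of $\gM\cotimes_A B$.

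The substantive step is checking that $\Phi_{\gM_B}=1\otimes\varphi_{\gM_B}$ is an isomorphism after inverting $E(u)$. One cannot simply base-change the isomorphism $\Phi_\gM[1/E(u)]$, since inverting $E(u)$ commutes with neither $u$-adic completion nor $(-)\otimes_A B$, and $\gS_A$ need not be flat over $A$. Instead: by Lemma~\ref{lem:kisin injective} (applicable since $\gM$ is projective and $A$ is $p$-adically separated) the map $\Phi_\gM$ is injective with $E(u)^h\cdot\coker(\Phi_\gM)=0$ for some $h\ge 0$, and dividing by $E(u)^h$ and using injectivity produces an $\gS_A$-linear map $\Psi\colon\gM\to\varphi^*\gM$ with $\Phi_\gM\circ\Psi=E(u)^h\cdot\id$ and $\Psi\circ\Phi_\gM=E(u)^h\cdot\id$. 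Applying $(-)\otimes_A B$ and then $u$-adic completion carries these to maps $\Phi_{\gM_B},\Psi_{\gM_B}$ satisfying the same identities; here I use that $\varphi^*$ commutes with $\cotimes_A B$ on projective modules, which one checks on idempotent-matrix presentations using the compatibility of the Frobenii on $\gS_A$ and $\gS_B$, so that $\varphi^*(\gM\cotimes_A B)\cong(\varphi^*\gM)\cotimes_A B$ and $\Phi_{\gM_B}$ is indeed the completion of $\Phi_\gM\otimes_A B$. Finally $E(u)$ is a non-zero-divisor on $\gS_B$ (as $B$ is $p$-adically complete, cf.\ the proof of Lemma~\ref{lem:kisin injective}), hence on the projective modules $\gM\cotimes_A B$ and $\varphi^*(\gM\cotimes_A B)$, so after inverting $E(u)$ the identities above exhibit $\Phi_{\gM_B}$ as an isomorphism with inverse $E(u)^{-h}\Psi_{\gM_B}$, completing the verification.

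The main obstacle is exactly this last step: localisation at $E(u)$, $u$-adic completion, and base change along $A\to B$ all fail to commute with one another, so the point of introducing $\Psi$ is to replace the defining isomorphism condition by identities of maps between finitely generated projective $u$-adically complete modules, which survive both operations. The non-flatness of $\gS_A$ over $A$ is a secondary nuisance, handled uniformly by working with direct summands of finite free modules in place of exact sequences.
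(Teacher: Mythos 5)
Your proof is correct, but it takes a longer route than the paper's. The paper's proof turns on a single observation: for a finitely generated projective $A[[u]]$-module $\gM$, one has $\gM\cotimes_A B \cong \gM\otimes_{A[[u]]} B[[u]]$, since the latter --- a direct summand of a finite free $B[[u]]$-module --- is already $u$-adically complete. After this identification, base change is an ordinary $\otimes_{\gS_A}\gS_B$, which commutes with $\varphi^*$ and with localization at $E(u)$, so the axioms of Definition~\ref{defn: Kisin module with descent data} are visibly preserved and the lemma drops out. Your worry that inverting $E(u)$ does not commute with $u$-adic completion or with $(-)\otimes_A B$ is legitimate a priori, but it is exactly what this identification dispels: once you are doing an ordinary tensor product there is no completion left to fight, and localization does commute with base change. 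In place of this, you verify the $\Phi$ condition directly via the Verschiebung-type map $\Psi$ satisfying $\Phi_\gM\circ\Psi = \Psi\circ\Phi_\gM = E(u)^h\cdot\id$ --- an idea in the same spirit as the map $\Upsilon$ in the proof of Lemma~\ref{lem:truncation argument used to prove f.g. of Ext Q version} and the map $\mathfrak{V}$ in Definition~\ref{def: Dieudonne module formulas}. This is valid and in fact makes more explicit why the $\Phi$ condition survives, but it is machinery that becomes unnecessary once one has the identification $\gM\cotimes_A B\cong\gM\otimes_{\gS_A}\gS_B$. (One small remark: the non-zero-divisor observation at the end of your argument is superfluous --- the two identities $\Phi\circ\Psi=\Psi\circ\Phi=E(u)^h\cdot\id$ already force $\Phi$ to become invertible after inverting $E(u)$, regardless of whether $E(u)$ is regular.)
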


\begin{proof}

We claim that $\gM \cotimes_A B \cong \gM \otimes_{A[[u]]} B[[u]]$ for
any finitely generated projective $A[[u]]$-module; the lemma then follows
immediately from Definition~\ref{defn: Kisin module with descent
  data}.

To check the claim,
we must see that
the finitely generated $B[[u]]$-module
$\gM\otimes_{A[[u]]} B[[u]]$ is $u$-adically complete.
But $\gM$ is a direct summand of a free $A[[u]]$-module of finite rank,
in which case
$\gM\otimes_{A[[u]]} B[[u]]$ is a direct summand of a free $B[[u]]$-module
of finite rank and hence is $u$-adically complete.
\end{proof}

\begin{remark}\label{rem:Kisin-mod-ideal}
If $I \subset A$ is a finitely generated ideal then  $A[[u]] \otimes_A
A/I \cong (A/I)[[u]]$, and $\gM \otimes_A A/I
\cong \gM \otimes_{A[[u]]} (A/I)[[u]] \cong \gM \cotimes_A A/I$;
so in this case $\gM \otimes_A A/I$ itself is a Breuil--Kisin module.
\end{remark}

Note that the base change (in the sense of Definition~\ref{def:completed
  tensor})  of a weak Breuil--Kisin
  module may not be a weak
  Breuil--Kisin module, because  the property of being $u$-torsion free is not
  always preserved by base change. 

We make the following two further remarks concerning base change.

\begin{remark}
\label{rem:completed tensor} 
(1) If $A$ is Noetherian, if $Q$ is finitely generated over $A$,
and if $\gN$ is 
finitely generated over $A[[u]]$, then $\gN\otimes_A Q$
is finitely generated over $A[[u]]$, and hence (by the Artin--Rees
lemma) is automatically $u$-adically complete.  Thus 
in this case the natural morphism $\gN\otimes_A Q \to \gN\cotimes_A Q$
is an isomorphism.

\smallskip

(2)
Note that $A[[u]]\cotimes_A Q = Q[[u]]$ (the $A[[u]]$-module
consisting of power series with coefficients in the $A$-module $Q$),
and so if $\gN$ is Zariski locally free on $\Spec A$,
then $\gN\cotimes_A Q$ is Zariski locally isomorphic to a direct sum
of copies of $Q[[u]]$, and hence is $u$-torsion free (as well as
being $u$-adically complete). In particular, by
Remark~\ref{rem:projectivity for Kisin modules}(2), this holds if~$\gN$
is projective.
\end{remark}

\begin{defn}\label{defn: etale phi module} 
Let $A$ be a $\Z/p^a\Z$-algebra for some $a\ge 1$.  A \emph{weak \'etale
  $\varphi$-module} with $A$-coefficients
  and descent data from $K'$ to $K$ is a triple
  $(M,\varphi_M,\{\hat{g}\})$ consisting of: 
  \begin{itemize}
\item 
a finitely generated
  $\gS_A[1/u]$-module $M$; 
\item a  $\varphi$-semilinear map $\varphi_M:M\to M$ with the
  property that the induced
  map   \[\Phi_M = 1 \otimes \varphi_M:\varphi^*M:=\gS_A[1/u]\otimes_{\varphi,\gS_A[1/u]}M\to M\]is an
  isomorphism,
  \end{itemize}
together with   additive bijections  $\hat{g}:M\to M$ for $g\in\Gal(K'/K)$, satisfying the further
  properties that the maps $\hat{g}$ commute with $\varphi_M$, satisfy
    $\hat{g_1}\circ\hat{g_2}=\widehat{g_1\circ g_2}$, and have
    $\hat{g}(sm)=g(s)\hat{g}(m)$ for all $s\in\gS_A[1/u]$, $m\in M$.

If $M$ as above is projective as an $\gS_A[1/u]$-module then we say
simply that $M$ is an \'etale $\varphi$-module. The \'etale $\varphi$-module $M$ is said to be  of rank~$d$ if the underlying
finitely generated projective $\gS_A[1/u]$-module has constant rank~$d$.

\end{defn}

\begin{rem} 
  \label{rem: completed version if $p$ not nilpotent}We could also
  consider \'etale $\varphi$-modules for general $p$-adically complete $\Zp$-algebras~$A$, but
  we would need to replace $\gS_A[1/u]$ by its $p$-adic completion. As
  we will not need to consider these modules in this paper, we do not
  do so here, but we refer the interested reader to~\cite{EGmoduli}. 
\end{rem}
A morphism
of weak \'etale
$\varphi$-modules with $A$-coefficients and descent data from $K'$ to
$K$ 
is a morphism of~$\gS_A[1/u]$-modules 
that commutes with $\varphi$ and with the
$\hat{g}$. Again, in the case $K'=K$ the descent data is trivial, and we
obtain the  usual category of \'etale $\varphi$-modules with
$A$-coefficients. 

Note that
if $A$ is a $\Z/p^a\Z$-algebra, and $\gM$ is a  Breuil--Kisin module (resp.,
weak Breuil--Kisin module)  with descent data, then $\gM[1/u]$ naturally has the
structure of an \'etale $\varphi$-module (resp., weak \'etale $\varphi$-module) with descent data.

Suppose that $A$ is an $\cO$-algebra (where $\cO$ is as in
Section~\ref{subsec: notation}). In making calculations, it is often
convenient to use the idempotents~$e_i$ (again as in
Section~\ref{subsec: notation}). In particular if $\gM$ is a Breuil--Kisin
module, then writing as usual
$\gM_i:=e_i\gM$, we write $\Phi_{\gM,i}:\varphi^*(\gM_{i-1})\to\gM_{i}$ for
the morphism induced by~$\Phi_{\gM}$.  Similarly if $M$ is an
\'etale $\varphi$-module  then we write
$M_i:=e_iM$, and we write $\Phi_{M,i}:\varphi^*(M_{i-1}) \to M_{i}$ for
the morphism induced by~$\Phi_{M}$.

\subsection{Dieudonn\'e modules}\label{subsec:Dieudonne modules} 
Let $A$ be a $\Zp$-algebra. We define a \emph{Dieudonn\'e module of rank $d$ with $A$-coefficients and
  descent data from $K'$ to $K$} to be a finitely generated projective
$W(k')\otimes_{\Zp}A$-module $D$ of constant rank 
$d$ on $\Spec A$, together with:

\begin{itemize}
\item $A$-linear endomorphisms $F,V$ satisfying $FV = VF = p$ such that $F$ is $\varphi$-semilinear and $V$ is
  $\varphi^{-1}$-semilinear for the action of $W(k')$, and
\item a $W(k')\otimes_{\Zp}A$-semilinear action of $\Gal(K'/K)$
which commutes with $F$ and $V$. 
\end{itemize}

\begin{defn}\label{def: Dieudonne module formulas}
If $\gM$ is a Breuil--Kisin module of height at most~$1$ and rank~$d$ with descent data, 
then there is a
corresponding Dieudonn\'e module $D=D(\gM)$ of rank~$d$ defined as follows. We set
$D:=\gM/u\gM$
with the induced action of $\Gal(K'/K)$, and $F$ given by the induced
action of $\varphi$. 
The endomorphism $V$ is determined as follows.  Write $E(0) = \czero p$,
so that we have $p \equiv \czero^{-1}E(u) \pmod{u}$. The
condition that the cokernel of $\varphi^*\gM\to\gM$ is killed by $E(u)$
allows us to factor the multiplication-by-$E(u)$ map on $\gM$ uniquely
as $\mathfrak{V} \circ \varphi$,  and $V$ is
defined to be
$\czero^{-1} \mathfrak{V}$ modulo~$u$. 
\end{defn}

\subsection{Galois representations}\label{subsec: etale phi modules
  and Galois representations}
The theory of fields of norms~\cite{MR526137} 
was used in~\cite{MR1106901} to relate \'etale $\varphi$-modules with descent data to representations
of a certain absolute Galois group; not the group $G_K$,
but rather the group $G_{K_{\infty}}$,
where $K_{\infty}$ is a certain infinite extension of~$K$ (whose
definition is recalled below).  Breuil--Kisin modules of height $h \leq 1$ are closely related to finite
flat group schemes (defined over $\cO_{K'}$, but with descent data to $K$
on their generic fibre). Passage from a Breuil--Kisin module to its associated
\'etale $\varphi$-module can morally be interpreted as the
passage from a finite flat group scheme (with descent data) to
its corresponding Galois representation (restricted to $G_{K_{\infty}}$).
Since the generic fibre of a finite flat group scheme over $\cO_{K'}$,
when equipped with descent data to $K$,
in fact gives rise to a representation of $G_K$,
in the case $h = 1$ we may relate Breuil--Kisin modules with descent data
(or, more precisely, their associated \'etale $\varphi$-modules),
not only to representations of $G_{K_{\infty}}$, but to representations
of~$G_K$.

 In 
this subsection, we recall
some results coming from this connection,
and draw some conclusions for Galois deformation rings.

\subsubsection{From \'etale $\varphi$-modules to $G_{K_{\infty}}$-representations}
We begin by recalling from \cite{kis04} some constructions arising in $p$-adic Hodge theory
and the theory of fields of norms, which go back to~\cite{MR1106901}. 
Following Fontaine,
we write $R:=\varprojlim_{x\mapsto
  x^p}\cO_{\Kbar}/p$. 
Fix a compatible system $(\! \sqrt[p^n]{\pi}\,)_{n\ge 0}$ of
$p^n$th roots of $\pi$ in $\Kbar$ (compatible in the obvious sense that 
$\bigl(\! \sqrt[p^{n+1}]{\pi}\,\bigr)^p = \sqrt[p^n]{\pi}\,$),
and let
$K_{\infty}:=\cup_{n}K(\sqrt[p^n]{\pi})$, and
also $K'_\infty:=\cup_{n}K'(\sqrt[p^n]{\pi})$. Since $(e(K'/K),p)=1$, the compatible system
$(\! \sqrt[p^n]{\pi}\,)_{n\ge 0}$ determines a unique compatible system $(\!
\sqrt[p^n]{\pi'}\,)_{n\ge 0}$ of $p^n$th roots of~$\pi'$ such that $(\!
\sqrt[p^n]{\pi'}\,)^{e(K'/K)} =\sqrt[p^n]{\pi}$.
Write
$\underline{\pi}'=(\sqrt[p^n]{\pi'})_{n\ge 0}\in R$, and $[\underline{\pi}']\in
W(R)$ for its Teichm\"uller representative. We have a Frobenius-equivariant
inclusion $\gS\into W(R)$ by sending $u\mapsto[\underline{\pi}']$.  We can naturally identify
$\Gal(K'_\infty/K_\infty)$ with $\Gal(K'/K)$, and doing this we see that the
action of $g\in G_{K_\infty}$ on $u$ is via $g(u)=h(g)u$.

We let $\cO_{\cE}$ denote the $p$-adic completion of $\gS[1/u]$, and let $\cE$ be the
field of fractions of~$\cO_\cE$. The inclusion $\gS\into W(R)$ extends to an
inclusion $\cE\into W(\operatorname{Frac}(R))[1/p]$. Let $\cE^{\text{nr}}$ be
the maximal unramified extension of $\cE$ in $ W(\operatorname{Frac}(R))[1/p]$,
and let $\cO_{\cE^{\text{nr}}}\subset W(\operatorname{Frac}(R))$ denote
its ring of
integers. Let $\cO_{\widehat{\cE^{\text{nr}}}}$ be the $p$-adic completion of
$\cO_{\cE^{\text{nr}}}$. Note that $\cO_{\widehat{\cE^{\text{nr}}}}$ is stable
under the action of $G_{K_\infty}$. 

 \begin{defn} 
Suppose that $A$ is a $\Z/p^a\Z$-algebra for some $a \ge 1$.  If $M$
is a weak \'etale $\varphi$-module with $A$-coefficients and descent data, set
  $T_A(M):=\left(\cO_{\widehat{\cE^{\text{nr}}}}\otimes_{\gS[1/u]}M
  \right)^{\varphi=1}$, an $A$-module with a
  $G_{K_\infty}$-action (via the diagonal action on
  $\cO_{\widehat{\cE^{\text{nr}}}}$ and $M$, the latter given by
  the~$\hat{g}$). If $\gM$ is a weak Breuil--Kisin module with
  $A$-coefficients and descent data,
  set 
  $T_A(\gM):=T_A(\gM[1/u])$. 
\end{defn}

\begin{lem}
  \label{lem: Galois rep is a functor if A is actually finite local} Suppose
  that $A$ is a local $\Zp$-algebra and that $|A|<\infty$. Then~$T_A$ induces an 
  equivalence of categories from the category of weak \'etale
  $\varphi$-modules with $A$-coefficients and descent data to the category of continuous
  representations of $G_{K_\infty}$ on finite $A$-modules. If $A\to A'$
  is finite, then there is a natural isomorphism $T_A(M)\otimes_A A'\iso
  T_{A'}(M\otimes_A A')$. A weak \'etale $\varphi$-module with
  $A$-coefficients and descent
  data~$M$ is free of rank~$d$ if and only if $T_A(M)$ is a free
  $A$-module of rank~$d$. 
\end{lem}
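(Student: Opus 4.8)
The plan is to deduce the statement from the case without descent data --- i.e.\ from Fontaine's equivalence~\cite{MR1106901} between \'etale $\varphi$-modules and Galois representations --- and then perform Galois descent along the normal subgroup $G_{K'_\infty}\trianglelefteq G_{K_\infty}$, whose quotient is canonically $\Gal(K'_\infty/K_\infty)=\Gal(K'/K)$ and in particular is finite. First I would record two preliminary reductions: since $A$ is finite it is killed by some power of $p$, so $\gS_A[1/u]$ is already $p$-adically complete and coincides with the coefficient ring $\cO_{\cE}\otimes_{\Zp}A$ of Fontaine's theory; and since $(e(K'/K),p)=1$, a B\'ezout argument identifies $K'_\infty=\bigcup_n K'(\sqrt[p^n]{\pi})$ with the Kummer tower $\bigcup_n K'(\sqrt[p^n]{\pi'})$ attached to the uniformiser $\pi'$ of $\cO_{K'}$. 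Thus \cite{MR1106901}, applied to $K'$ (see also \cite[\S1]{kis04}), gives an exact equivalence $M\mapsto T'_A(M):=(\cO_{\widehat{\cE^{\nr}}}\otimes_{\gS[1/u]}M)^{\varphi=1}$ from weak \'etale $\varphi$-modules with $A$-coefficients, \emph{forgetting} descent data, to continuous finite $A[G_{K'_\infty}]$-modules; moreover $T'_A$ is compatible with finite base change in $A$, carries free modules of rank $d$ to free $A$-modules of rank $d$, and fits into the comparison isomorphism $\cO_{\widehat{\cE^{\nr}}}\otimes_{\Zp}T'_A(M)\iso\cO_{\widehat{\cE^{\nr}}}\otimes_{\gS[1/u]}M$.

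Next I would construct the $G_{K_\infty}$-action. As recalled in \S\ref{subsec: etale phi modules and Galois representations}, the $G_{K_\infty}$-action on $\cO_{\widehat{\cE^{\nr}}}$ restricts, on the subring $\gS[1/u]$, to an action that factors through $G_{K_\infty}\twoheadrightarrow\Gal(K'/K)$ and is given there by exactly the formula $g(\sum a_iu^i)=\sum g(a_i)h(g)^iu^i$ defining the descent data on $\gS_A$. Hence, for a weak \'etale $\varphi$-module with descent data $(M,\varphi_M,\{\hat g\})$, the module $\cO_{\widehat{\cE^{\nr}}}\otimes_{\gS[1/u]}M$ acquires a well-defined diagonal semilinear $G_{K_\infty}$-action --- on the first factor via its own action, and on the second via $g\mapsto\widehat{\bar g}$, where $\bar g$ is the image of $g$ in $\Gal(K'/K)$ --- which commutes with $\varphi$. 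Passing to $\varphi$-invariants exhibits $T_A(M)$ as a $G_{K_\infty}$-representation on a finite $A$-module, continuous because $G_{K'_\infty}$ has finite index and acts continuously; and the underlying $A$-module of $T_A(M)$ is literally $T'_A$ applied to the \'etale $\varphi$-module obtained by forgetting the $\hat g$. Functoriality is clear, giving the functor $T_A$ of the statement.

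To prove that $T_A$ is an equivalence I would argue ``fibrewise over $T'_A$''. The key point is that $T'_A$ intertwines the operation of \emph{adding a descent datum to $M$} with that of \emph{extending the $G_{K'_\infty}$-action on $T'_A(M)$ to a continuous $G_{K_\infty}$-action}. Concretely, a map $\hat g$ in a descent datum is the same as a $\varphi$-compatible $\gS_A[1/u]$-linear isomorphism $M\to\bar{g}^{*}M$ onto the $\bar g$-twist of $M$; and the comparison isomorphism above, combined with the action of a lift $\tilde g\in G_{K_\infty}$ of $\bar g$ on $\cO_{\widehat{\cE^{\nr}}}$, shows that $T'_A$ sends $\bar{g}^{*}M$ to the $\tilde g$-conjugate of $T'_A(M)$. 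Tracing this dictionary through, a full system $\{\hat g\}$ with $\widehat{g_1g_2}=\hat g_1\hat g_2$ corresponds precisely to a continuous $G_{K_\infty}$-action on $T'_A(M)$ extending the $G_{K'_\infty}$-action --- the cocycle conditions on the two sides matching because the $\Gal(K'/K)$-action on $\gS_A$ and the $G_{K_\infty}$-action on $\cO_{\widehat{\cE^{\nr}}}$ are linked as above --- while a morphism of $\gS_A[1/u]$-modules commutes with the $\hat g$ on both sides if and only if the associated $A$-linear map is $G_{K_\infty}$-equivariant. Full faithfulness and essential surjectivity of $T_A$ then follow from those of $T'_A$. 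I expect this compatibility check --- that ``adding descent data'' and ``extending the Galois action'' are literally the \emph{same} operation under $T'_A$, rather than merely parallel ones --- to be the one part of the argument requiring genuine care; everything else is bookkeeping.

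Finally, the base-change and freeness assertions reduce to their (standard) no-descent analogues, the descent datum being irrelevant to the underlying module. For base change one has $\cO_{\widehat{\cE^{\nr}}}\otimes_{\gS[1/u]}(M\otimes_AA')=(\cO_{\widehat{\cE^{\nr}}}\otimes_{\gS[1/u]}M)\otimes_AA'$, compatibly with $\varphi$ and with the diagonal $G_{K_\infty}$-action, and the surjectivity of $\varphi-1$ on $\cO_{\widehat{\cE^{\nr}}}\otimes_{\gS[1/u]}M$ (coming from Artin--Schreier surjectivity on the separably closed residue field of $\cO_{\widehat{\cE^{\nr}}}$ together with the \'etale condition) yields a $G_{K_\infty}$-equivariant isomorphism $T_A(M)\otimes_AA'\iso T_{A'}(M\otimes_AA')$. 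For freeness, since $\mathfrak{m}_A$ is nilpotent (so $\mathfrak{m}_A\gS_A[1/u]$ lies in the Jacobson radical), Nakayama's lemma shows $M$ is free of rank $d$ over $\gS_A[1/u]$ if and only if $M\otimes_A\F$ is free of rank $d$ over $\gS_{\F}[1/u]$; by the equivalence over the field $\F$ this holds if and only if $T_{\F}(M\otimes_A\F)$ has dimension $d$ over $\F$; and by the base-change statement together with Nakayama once more, this holds if and only if $T_A(M)$ is free of rank $d$ over $A$.
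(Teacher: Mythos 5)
The paper's ``proof'' of this lemma is a one-line citation: ``This is due to Fontaine~\cite{MR1106901}, and can be proved in exactly the same way as~\cite[Lem.\ 1.2.7]{kis04}.'' Since Kisin's Lemma~1.2.7 establishes the no-descent-data statement (over $K'$), the paper is implicitly asking the reader to supply exactly the extra ingredient you provide: passage from $K'$-objects to $K$-objects via descent along $\Gal(K'_\infty/K_\infty)\cong\Gal(K'/K)$. So your approach is the natural fleshing-out of the paper's claim, and the structure of your argument --- apply Kisin/Fontaine over $K'$ (using $(e(K'/K),p)=1$ to identify $K'_\infty$ with the Kummer tower of $\pi'$), then translate descent data on the $\varphi$-module side into Galois actions on the representation side via the comparison isomorphism --- is sound. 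You are right to single out the intertwining compatibility (that the formula $g(\sum a_iu^i)=\sum g(a_i)h(g)^iu^i$ defining $\Gal(K'/K)$ on $\gS_A$ is literally the restriction of the $G_{K_\infty}$-action on $\cO_{\widehat{\cE^{\nr}}}$, so that $T'_A$ carries the $\bar{g}$-twist $\bar{g}^*M$ to the $\tilde{g}$-conjugate of $T'_A(M)$) as the crux of the descent step; this is precisely the content that Definition~\ref{defn: Kisin module with descent data} and the discussion in \S\ref{subsec: etale phi modules and Galois representations} are set up to make work.

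One small place to tighten: in the base-change paragraph, the surjectivity of $\varphi-1$ on $\cO_{\widehat{\cE^{\nr}}}\otimes_{\gS[1/u]}M$ alone does not instantly give $T_A(M)\otimes_A A'\iso T_{A'}(M\otimes_A A')$, because $A'$ need not be $A$-flat and the modules in question are only finitely generated (weak $\varphi$-modules are not assumed projective), so one cannot simply tensor the Artin--Schreier sequence with $A'$ and pass to kernels. The cleaner route --- which is what Fontaine/Kisin do --- is to deduce base change from the comparison isomorphism $\cO_{\widehat{\cE^{\nr}}}\otimes_{\Zp}T_A(M)\iso\cO_{\widehat{\cE^{\nr}}}\otimes_{\gS[1/u]}M$ together with faithful flatness of $\cO_{\widehat{\cE^{\nr}}}$ over $\Zp$, or to reduce to the case $A'=A/I$ and invoke exactness of $T_A$ plus vanishing of the obstruction term (which is where Artin--Schreier actually enters). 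Your freeness argument via Nakayama and the Fontaine equivalence over $\F$ is correct as written.
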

\begin{proof}
  This is due to Fontaine~\cite{MR1106901}, and can be proved in exactly the same way as~\cite[Lem.\ 1.2.7]{kis04}.
\end{proof}

We will frequently simply write $T$ for $T_A$. Note that if we let
$M'$ be the \'etale $\varphi$-module obtained from $M$ by forgetting the
descent data, then by definition we have
$T(M')=T(M)|_{G_{K'_\infty}}$.

\subsubsection{Relationships between $G_K$-representations
and $G_{K_{\infty}}$-representations}\label{subsubsec: deformation
rings and Kisin modules}
We will later need to study deformation rings for representations
of~$G_K$ in terms of the deformation rings for the restrictions of
these representations to~$G_{K_\infty}$. Note that the representations
of~$G_{K_\infty}$ coming from Breuil--Kisin modules of height at most~$1$
admit canonical extensions to~$G_K$ by~\cite[Prop.\ 1.1.13]{kis04}.
\begin{lem}
  \label{lem: restricting to K_infty doesn't lose information about
    rbar}If $\rbar,\rbar':G_K\to\GL_2(\Fpbar)$ are continuous
  representations, both of which arise as the reduction mod~$p$ of
  potentially Barsotti--Tate representations of tame inertial type,
 and there is an isomorphism $\rbar|_{G_{K_\infty}}\cong \rbar'|_{G_{K_\infty}}
  $, then $\rbar\cong\rbar'$.
\end{lem}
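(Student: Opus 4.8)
The plan is to realise $\rbar$ and $\rbar'$ via Breuil--Kisin modules of height at most $1$ with tame descent data, transport the given isomorphism over $G_{K_\infty}$ to an isomorphism of the associated \'etale $\varphi$-modules with descent data using Lemma~\ref{lem: Galois rep is a functor if A is actually finite local}, and then upgrade it to an isomorphism of $G_K$-representations by invoking Kisin's theorem that restriction to $G_{K_\infty}$ loses no information for finite-flat-type representations.

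First I would choose a finite tamely ramified Galois extension $K'/K$, large enough that the hypothesised potentially Barsotti--Tate lifts $r$ of $\rbar$ and $r'$ of $\rbar'$ become crystalline after restriction to $G_{K'}$, and (enlarging further if necessary) so that $K'$ satisfies the hypothesis of \S\ref{subsec: kisin modules with dd} that some uniformiser $\pi'$ of $\cO_{K'}$ has $(\pi')^{e(K'/K)}\in K$. Then $r|_{G_{K'}}$ and $r'|_{G_{K'}}$ are Barsotti--Tate, so $\rbar|_{G_{K'}}$ and $\rbar'|_{G_{K'}}$ arise from finite flat group schemes over $\cO_{K'}$, equivalently from Breuil--Kisin modules $\overline{\gM}$, $\overline{\gM}'$ with $\F$-coefficients of height at most $1$; and the $G_K$-actions extending the $G_{K'}$-actions propagate to descent data from $K'$ to $K$ on $\overline{\gM}$ and $\overline{\gM}'$, in such a way that $T_\F(\overline{\gM})\cong\rbar|_{G_{K_\infty}}$ and $T_\F(\overline{\gM}')\cong\rbar'|_{G_{K_\infty}}$. (Checking that the descent data propagate is routine, using the functoriality of Kisin's construction and the faithfulness of restriction to $G_{K'_\infty}$; this is the same bookkeeping that underlies \S\ref{subsec: kisin modules with dd}.)

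Next, by Lemma~\ref{lem: Galois rep is a functor if A is actually finite local} with $A=\F$, the hypothesised isomorphism $\rbar|_{G_{K_\infty}}\cong\rbar'|_{G_{K_\infty}}$ of $G_{K_\infty}$-representations corresponds to an isomorphism $\overline{\gM}[1/u]\cong\overline{\gM}'[1/u]$ of \'etale $\varphi$-modules with descent data. Forgetting the descent data, this gives in particular an isomorphism $\rbar|_{G_{K'_\infty}}\cong\rbar'|_{G_{K'_\infty}}$; by \cite{kis04} (restriction to $G_{K'_\infty}$ is fully faithful on $G_{K'}$-representations arising from finite flat group schemes over $\cO_{K'}$, using $p>2$), this extends uniquely to an isomorphism $\rbar|_{G_{K'}}\cong\rbar'|_{G_{K'}}$ of $G_{K'}$-representations. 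Since the isomorphism of \'etale $\varphi$-modules respected the descent data, and since the extension furnished by Kisin's theorem is unique, this isomorphism is automatically $\Gal(K'/K)$-equivariant, i.e.\ $G_K$-equivariant; hence $\rbar\cong\rbar'$.

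The crux is this last step: Kisin's full faithfulness of restriction to $G_{K'_\infty}$ for finite-flat-type representations (valid for $p>2$) is exactly what promotes the abstract $G_{K_\infty}$-isomorphism to a genuine $G_K$-isomorphism; the remainder is formal manipulation with the equivalences of categories recalled in this section. One could also run the argument entirely in the language of Breuil--Kisin modules with descent data, using the canonical $G_K$-extension of \cite[Prop.\ 1.1.13]{kis04} recalled above --- the point being that that extension depends only on the associated \'etale $\varphi$-module with its descent data, not on the choice of integral model.
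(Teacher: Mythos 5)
Your proof is correct, but it takes a genuinely different route from the paper's. The paper proceeds by a case analysis: because irreducible mod~$p$ representations of $G_K$ are induced from tamely ramified characters, both irreducibility and the isomorphism class are detected upon restriction to the totally wildly ramified extension $K_\infty$; in the reducible case, $\rbar$ and $\rbar'$ are seen to be extensions of the same pair of characters, and their extension classes are then compared using \cite[Lem.~5.4.2]{gls13} together with the fact (Lemma~\ref{lem: list of things we need to know about Serre weights}(2)) that neither can be tr\`es ramifi\'ee. You instead invoke Kisin's full faithfulness --- that restriction from $G_{K'}$ to $G_{K'_\infty}$ is fully faithful on Galois representations arising from finite flat group schemes over $\cO_{K'}$, valid for $p>2$ --- so that the given $G_{K_\infty}$-equivariant isomorphism, being a fortiori $G_{K'_\infty}$-equivariant, is automatically $G_{K'}$-equivariant; since $G_K = G_{K_\infty}\cdot G_{K'}$ (because $K_\infty\cap K' = K$, as $K_\infty/K$ is totally wildly ramified and $K'/K$ is tame), $G_{K_\infty}$- and $G_{K'}$-equivariance together give $G_K$-equivariance. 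Your approach is more uniform (no irreducible/reducible split, no separate tr\`es ramifi\'ee analysis) and is closer in spirit to the integral $p$-adic Hodge theory that the paper otherwise employs, but two points want tightening: the identity $G_K = G_{K_\infty}G_{K'}$, which you use implicitly in the final step, should be made explicit; and the Breuil--Kisin module preamble is in fact dispensable, since $\Hom_{G_{K'}} = \Hom_{G_{K'_\infty}}$ can be applied directly at the level of Galois representations --- indeed, your ``routine'' propagation of descent data to a $\Gal(K'/K)$-equivariant Breuil--Kisin model tacitly relies on Tate's uniqueness of $p$-divisible group prolongations, a real (if standard) input that deserves to be named rather than elided.
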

\begin{proof}
  The extension $K_\infty/K$ is totally wildly ramified. Since the
  irreducible $\Fpbar$-representations of~$G_K$ are induced from
  tamely ramified characters, we see that~$\rbar|_{G_{K_\infty}}$ is
  irreducible if and only if~$\rbar$ is irreducible, and if
  $\rbar$ or $\rbar'$ is irreducible then we are done. In the
  reducible case, we see that $\rbar$ and $\rbar'$ are extensions of
  the same characters, and the result then follows from~\cite[Lem.\
  5.4.2]{gls13} and Lemma~\ref{lem: list of things we need to know about Serre weights}~(2). 
\end{proof}
Let $\rbar:G_K\to\GL_2(\F)$
be a continuous representation, let $R_{\rbar}$ denote the universal
framed deformation $\cO$-algebra for~$\rbar$, and let
$R_{\rbar}^{[0,1]}$ 
be the quotient with the property that if~$A$ is
an Artinian local~$\cO$-algebra with residue field~$\F$, 
then
a local $\cO$-morphism $R_{\rbar}\to A$ factors
through $R_{\rbar}^{[0,1]}$ if and only if the corresponding
$G_K$-module (ignoring the $A$-action) admits a $G_K$-equivariant surjection from a potentially crystalline
$\cO$-representation  all of whose Hodge--Tate weights are equal to $0$ or
$1$, and whose restriction to~$G_{K'}$ is crystalline. (The existence of this quotient follows as in~\cite[\S2.1]{MR2782840}.)

Let $R_{\rbar|_{G_{K_\infty}}}$ be the universal framed deformation
$\cO$-algebra for~$\rbar|_{G_{K_\infty}}$, and let
$R_{\rbar|_{G_{K_\infty}}}^{\le 1}$ denote the quotient with the property that if~$A$ is
an Artinian local~$\cO$-algebra with residue field~$\F$, then a morphism $R_{\rbar|_{G_{K_\infty}}}\to A$ factors
through $R_{\rbar|_{G_{K_\infty}}}^{\le 1}$ if and only if the corresponding
$G_{K_\infty}$-module is isomorphic to  $T(\gM)$ for some weak Breuil--Kisin
module~$\gM$ of height at most one with $A$-coefficients and descent
data from~$K'$ to~$K$. 
(The existence of this quotient
follows exactly as for~\cite[Thm.~1.3]{MR2782840}.)

\begin{prop}
  \label{prop: restriction gives an equivalence of height at most 1
    deformation}The natural map induced by restriction from $G_K$ to
  $G_{K_\infty}$ induces an isomorphism $\Spec R_{\rbar}^{[0,1]}\to
  \Spec R_{\rbar|_{G_{K_\infty}}}^{\le 1}$. 
\end{prop}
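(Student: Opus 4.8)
The plan is to produce a two-sided inverse to the restriction map by producing, from a deformation of $\rbar|_{G_{K_\infty}}$ of height at most one, a deformation of $\rbar$ to $G_K$, functorially in the Artinian coefficient ring $A$. The key input is Kisin's result \cite[Prop.\ 1.1.13]{kis04} (together with the descent data variant), which says that the $G_{K_\infty}$-representation attached to a weak Breuil--Kisin module of height at most one extends canonically to $G_K$; the descent data on the Breuil--Kisin module is exactly what pins down the extension. First I would check that this canonical extension is compatible with the finite base change $A \to A'$, using the base change compatibility of $T_A$ in Lemma~\ref{lem: Galois rep is a functor if A is actually finite local} and the functoriality of Kisin's construction; this shows that a map $R_{\rbar|_{G_{K_\infty}}}^{\le 1} \to A$ gives rise to a genuine deformation of $\rbar$ over $A$, i.e.\ (after passing to the framed version by choosing a compatible basis) a map $R_{\rbar} \to A$.

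Next I would verify that this map $R_{\rbar} \to A$ in fact factors through the quotient $R_{\rbar}^{[0,1]}$. This is where one uses that a Breuil--Kisin module of height at most one over an Artinian $\cO$-algebra comes, by the usual limit argument over $\cO/\varpi^n$ and Kisin's classification, from a $p$-divisible group (or finite flat group scheme) over $\cO_{K'}$ with descent data, whose generic fibre is then potentially crystalline of the required Hodge type with crystalline restriction to $G_{K'}$; this is precisely the condition cut out by $R_{\rbar}^{[0,1]}$. Conversely, starting from a map $R_{\rbar}^{[0,1]} \to A$, restriction to $G_{K_\infty}$ and the theory of \cite{kis04} (Lemma~\ref{lem: restricting to K_infty doesn't lose information about rbar} handles the residual ambiguity) produce a weak Breuil--Kisin module of height at most one with $A$-coefficients and descent data computing the restricted representation, hence a map $R_{\rbar|_{G_{K_\infty}}}^{\le 1} \to A$. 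These two constructions are visibly mutually inverse on $A$-points, and since both rings are complete Noetherian local $\cO$-algebras, checking on Artinian points suffices to conclude that the induced map on $\Spec$ is an isomorphism.

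The main obstacle, I expect, is the \emph{injectivity/faithfulness} half, i.e.\ checking that the canonical extension of the $G_{K_\infty}$-representation to $G_K$ is genuinely unique and well-defined over an arbitrary Artinian $A$ (not just a finite field), so that the assignment $(R_{\rbar|_{G_{K_\infty}}}^{\le 1} \to A) \mapsto (R_{\rbar}^{[0,1]} \to A)$ is well-defined and the composite with restriction is the identity. On residue field this is Lemma~\ref{lem: restricting to K_infty doesn't lose information about rbar}, but over $A$ one needs to know that two lifts of $\rbar$ to $G_K$ that agree on $G_{K_\infty}$ must coincide; this again follows from Kisin's full faithfulness statement for the functor from Breuil--Kisin modules of height at most one to $G_K$-representations, applied with descent data, but it must be spelled out in the $A$-coefficient setting. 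A secondary technical point is that all of this is a priori about unframed deformations, so one must carry a choice of basis through the equivalence to get the framed statement; this is routine since the framing torsor is the same on both sides.
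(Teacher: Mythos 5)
Your proposal is essentially the same argument that the paper invokes: the paper's proof is a one-line citation to Kim's result \cite[Cor.\ 2.2.1]{MR2782840} (the case $E = \Qp$, $K' = K$), and your sketch reconstructs the substance of that argument — produce a two-sided inverse to restriction via Kisin's canonical extension of $G_{K_\infty}$-representations coming from height-$\le 1$ Breuil--Kisin modules, then verify the matching of the ``$[0,1]$'' and ``height $\le 1$'' conditions on $A$-points via the dictionary with finite flat group schemes. You also correctly identify the main technical burden as the faithfulness of restriction (equivalently, the well-definedness of the canonical extension independently of the choice of Breuil--Kisin model over $A$), which is precisely what Kim's argument, following Kisin, establishes.

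Two small imprecisions worth noting. First, the appeal to Lemma~\ref{lem: restricting to K_infty doesn't lose information about rbar} is misplaced: that lemma compares two residual representations with $\Fpbar$-coefficients and is used elsewhere in the paper for a different purpose; it plays no role here, since the residual representation on both sides of the restriction map is the fixed $\rbar$ (resp.\ $\rbar|_{G_{K_\infty}}$) by construction, and there is no ``residual ambiguity'' to resolve. Second, a weak Breuil--Kisin module of height $\le 1$ over an Artinian $A$ corresponds to a finite flat group scheme (not a $p$-divisible group) over $\cO_{K'}$; to produce the required $G_K$-equivariant surjection from a potentially crystalline $\cO$-representation one should embed this finite flat group scheme into a $p$-divisible group (à la Kisin/Raynaud) and then take Tate modules, rather than asserting the group scheme ``comes from'' a $p$-divisible group directly. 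Neither of these affects the overall correctness of the plan.
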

\begin{proof}
  This can be proved in exactly the same way as~\cite[Cor.\
  2.2.1]{MR2782840} (which is the case that $E=\Qp$ and $K'=K$).
\end{proof}

\section{Moduli stacks of Breuil--Kisin modules and  \texorpdfstring{$\varphi$}{phi}-modules with descent data}\label{sec:moduli stacks}

In this section we define moduli stacks of Breuil--Kisin modules with tame
descent data, following~\cite{MR2562795,EGstacktheoreticimages} (which
consider the case without descent data). In particular, we define various
stacks~$\cZ$ in Section~\ref{subsec:results from EG
  and PR on finite flat moduli}, as scheme-theoretic images of
morphisms from moduli stacks of Breuil--Kisin modules to moduli stacks
of \'etale $\varphi$-modules; these stacks are the main objects of
interest in the rest of the paper. In the rest of the section, we use the theories of
local models of Shimura varieties and Dieudonn\'e modules to begin our
study of the geometry of these stacks.

\subsection{Moduli stacks of Breuil--Kisin modules}
We begin by defining the 
moduli stacks of Breuil--Kisin modules, with and without descent data. We will
make use of the notion of a $\varpi$-adic formal algebraic stack, which
is recalled in Appendix~\ref{sec:formal stacks}.

\begin{defn}
  \label{defn: C^dd,a }For each integer $a\ge 1$, we let $\cC_{d,h,K'}^{\dd,a}$ be
  the {\em fppf} stack over~$\cO/\varpi^a$ which associates to any $\cO/\varpi^a$-algebra $A$
the groupoid $\cC_{d,h,K'}^{\dd,a}(A)$ 
of rank~$d$ Breuil--Kisin modules of height at most~$h$ with $A$-coefficients and descent data from
$K'$ to $K$. 

By~\cite[\href{http://stacks.math.columbia.edu/tag/04WV}{Tag 04WV}]{stacks-project}, 
we may also regard each of the stacks $\cC_{d,h,K'}^{\dd,a}$ as an {\em fppf}
stack over $\cO$,
and we then write $\cC_{d,h,K'}^{\dd}:=\varinjlim_{a}\cC_{d,h,K'}^{\dd,a}$; this
is again an {\em fppf} stack over $\cO$.

We will frequently omit any (or all) of the subscripts $d,h,K'$ from this notation
when doing so will not cause confusion.
In the case that $K=K'$, we write $\cC^a$ for $\cC^{\dd,a}$ and $\cC$
for $\cC^{\dd}$.
\end{defn} 

The natural morphism $\cC^{\dd} \to \Spec \cO$ factors through $\Spf \cO$,
and by construction, there is an isomorphism $\cC^{\dd,a} \iso \cC^{\dd}\times_{\Spf \cO}
\Spec \cO/\varpi^a,$ for each $a \geq 1$; in particular, each of the morphisms
$\cC^{\dd,a} \to \cC^{\dd,a+1}$
is a thickening
(in the sense that its pullback under any test morphism $\Spec A \to \cC^{\dd,a+1}$
becomes a thickening of schemes, as defined
in~\cite[\href{http://stacks.math.columbia.edu/tag/04EX}{Tag
  04EX}]{stacks-project}\footnote{Note that for morphisms of algebraic
stacks --- and we will see below that $\cC^{\dd,a}$ and $\cC^{\dd,a+1}$ {\em are}
algebraic stacks --- this notion of thickening coincides with the notion defined
in~\cite[\href{http://stacks.math.columbia.edu/tag/0BPP}{Tag
  0BPP}]{stacks-project},
by~\cite[\href{http://stacks.math.columbia.edu/tag/0CJ7}{Tag
  0CJ7}]{stacks-project}.}).
  In Corollary~\ref{cor: C^dd,a is an algebraic stack} below we show that for each integer $a\ge 1$,
  $\cC^{\dd,a}$ is in fact an algebraic stack of finite type over
  $\Spec\cO/\varpi^a$, and that $\cC^{\dd}$ (which is then
  {\em a priori} an Ind-algebraic
 stack, endowed with a morphism to $\Spf \cO$ which is representable
by algebraic stacks) 
is in fact a 
  $\varpi$-adic formal algebraic stack, in the sense of Definition~\ref{defn: pi adic formal alg stack}. 

Our approach will be to deduce the statements in the case with descent 
data from the corresponding statements in the case with no descent
data, which follow from the methods of Pappas and Rapoport~\cite{MR2562795}.  
More precisely, in that reference it is proved that each
$\cC^a$ is an algebraic stack over~$\cO/\varpi^a$~\cite[Thm.~0.1 (i)]{MR2562795},
and thus that $\cC := \varinjlim_a \cC^a$ is a $\varpi$-adic
Ind-algebraic stack (in the sense that it is an Ind-algebraic stack
with a morphism to~$\Spf\cO$ that is representable by algebraic
stacks). 
(In \cite{MR2562795} the stack
	$\cC$ is described as being a $p$-adic formal algebraic stack. 
	However, in that reference, this
	term is used synonomously with our notion of a $p$-adic 
	Ind-algebraic stack; the question of the existence of a smooth cover
	of $\cC$ by a $p$-adic formal algebraic space is not discussed.
	As we will see, though, the existence of such a cover is easily
	deduced from the set-up of \cite{MR2562795}.)

We thank Brandon Levin for pointing out the following
result to us.  The proof is essentially (but somewhat 
implicitly) contained in the proof of 
\cite[Thm.~3.5]{2015arXiv151007503C},
but we take the opportunity to make it explicit. Note that it could
also be directly deduced from the results of~\cite{MR2562795} using
Lemma~\ref{lem:ind to formal}, but the proof that we give has the
advantage of giving an explicit cover by a formal algebraic space. 

\begin{prop}
  \label{prop: C without dd is a $p$-adic formal  algebraic
    stack}For any choice of $d,h$, $\cC$ is a 
$\varpi$-adic formal algebraic stack of finite type over~$\Spf\cO$ with affine diagonal.
\end{prop}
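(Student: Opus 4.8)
The plan is to reduce everything to the no–descent–data case, for which the statement is (essentially) known from Pappas--Rapoport~\cite{MR2562795}, and then to package the descent data as a finite extra structure that does not disturb the formal algebraicity. So the first step is to record the precise shape of the claim: we must show that $\cC = \cC_{d,h,K'}^{\dd} = \varinjlim_a \cC^{\dd,a}$ is a $\varpi$-adic formal algebraic stack of finite type over $\Spf\cO$ with affine diagonal, which by Definition~\ref{defn: pi adic formal alg stack} amounts to exhibiting, for each $a\ge 1$, that $\cC^{\dd,a}$ is an algebraic stack of finite type over $\cO/\varpi^a$, that the transition maps are thickenings (already noted in the text), and that there is a smooth surjection onto $\cC^{\dd}$ from a $\varpi$-adic formal algebraic space; affineness of the diagonal should come along for free from the explicit presentation.

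Next I would treat the case $K'=K$, i.e.\ the stack $\cC=\cC_{d,h}$ of Breuil--Kisin modules with no descent data. Here the hands-on construction of~\cite{MR2562795} (as reworked in the proof of \cite[Thm.~3.5]{2015arXiv151007503C}) produces, for each $a$, an explicit atlas: choosing a presentation, one writes a Breuil--Kisin module locally as a free $\gS_A$-module of rank $d$ with a Frobenius matrix, and the moduli of such data with the height-$\le h$ condition is cut out inside an affine space (of matrices, truncated appropriately using that $E(u)^h \mid u^{e(a+h-1)}$ over a $\Z/p^a\Z$-algebra, cf.\ Lemma~\ref{lem:kisin injective}) by closed conditions. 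This realises $\cC^a$ as a quotient of a finite-type affine $\cO/\varpi^a$-scheme by $\GL_d$ acting on the (truncated) lattice, hence as an algebraic stack of finite type with affine diagonal; taking the colimit over $a$ and assembling the atlases into a $\varpi$-adic formal affine scheme gives the smooth cover, so $\cC$ without descent data is a $\varpi$-adic formal algebraic stack of finite type with affine diagonal.

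Then I would add the descent data. The key observation is that $\Gal(K'/K)$ is a \emph{finite} group (of order prime to $p$ under our standing assumptions, though that is not needed here), and that the datum of the operators $\{\hat g\}$ is the datum of a finite group acting semilinearly and commuting with $\varphi_\gM$. Concretely, writing $\gS_A = (W(k')\otimes_{\Zp}A)[[u]]$ with the twisted $\Gal(K'/K)$-action, a Breuil--Kisin module with descent data is a Breuil--Kisin module $\gM$ over $\gS_A$ together with, for each $g$, an isomorphism $\hat g\colon \gM \xrightarrow{\sim} g^*\gM$ of Breuil--Kisin modules (the base change along the ring automorphism $g$ of $\gS_A$, which is well-defined since $g$ is an $A$-algebra automorphism), subject to the cocycle condition $\widehat{g_1 g_2}=g_2^*(\hat g_1)\circ \hat g_2$ and compatibility with $\varphi$. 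Since the cocycle condition and $\varphi$-compatibility are closed conditions, and each $g^*\gM$ varies in the same algebraic stack $\cC_{d,h,K'}^{a}$ (no descent data, but over $K'$), the stack $\cC_{d,h,K'}^{\dd,a}$ is obtained from $\cC_{d,h,K'}^{a}$ by a finite sequence of operations — taking fibre products over $\cO/\varpi^a$ indexed by the finite set $\Gal(K'/K)$, and then imposing closed conditions — each of which preserves the property of being an algebraic stack of finite type over $\cO/\varpi^a$ with affine diagonal. Passing to the colimit over $a$, and pulling back the atlas of $\cC_{d,h,K'}$ along the (affine, finitely presented) forgetful map $\cC_{d,h,K'}^{\dd}\to\cC_{d,h,K'}$, yields the required smooth cover by a $\varpi$-adic formal algebraic space, with affine diagonal inherited throughout.

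The main obstacle I anticipate is not conceptual but bookkeeping: making the ``truncation'' step in the $K'=K$ case genuinely work, i.e.\ choosing the bound (using $E(u)^h\mid u^{e(a+h-1)}$ modulo $\varpi^a$) so that the moduli of height-$\le h$ Frobenius structures on a fixed lattice is represented by a \emph{finite-type} affine scheme rather than merely a locally-finite-type one, and then checking that the $\GL_d$-quotient presentation is compatible with the colimit over $a$ so that the covers glue to a single $\varpi$-adic formal algebraic space. A secondary point requiring care is verifying that $g^*\gM$ is again a Breuil--Kisin module of the same height — this uses that $g$ fixes $E(u)$ up to a unit (since $g$ permutes the roots $\pi'$ only up to the roots of unity $h(g)$, and $E(u)$ is the minimal polynomial of $\pi'$ over $W(k')[1/p]$, on which the twisted action is by $g\in\Gal(K'/K)$ acting on $W(k')$), so that the height condition is preserved; this should be routine given the set-up in Section~\ref{subsec: kisin modules with dd}, but it is the place where the tame-descent normalisations actually enter.
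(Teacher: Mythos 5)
Two issues, the first of which is a misreading and the second a genuine gap in the part of the argument that remains relevant.

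First, the proposition is about the stack $\cC$, which by the convention fixed in Definition~\ref{defn: C^dd,a } denotes the case $K'=K$ (i.e.\ no descent data). You have instead taken it to be about $\cC^{\dd}_{d,h,K'}$ with general tame $K'/K$, and roughly half your proposal is devoted to ``adding the descent data back in''. That half of your proof is aimed at a different statement — essentially Corollary~\ref{cor: C^dd,a is an algebraic stack}, which the paper deduces \emph{from} the present proposition together with the relative representability of $\cR^{\dd,a}\to\cR^a$. So I'll assess only the portion of your proposal treating $K'=K$.

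Second, and more seriously, you assert that ``taking the colimit over $a$ and assembling the atlases into a $\varpi$-adic formal affine scheme gives the smooth cover.'' This is exactly the point where the proposition has genuine content, and the assembly does \emph{not} work in the naive way you suggest. The paper makes the issue explicit: Pappas--Rapoport only produce, for each $a$, an algebraic stack $\cC^a$ of finite type over $\cO/\varpi^a$, presented as $[X^{h,K'}_{n(a)} \overphi \cG_{n(a)}]$ where the level $n(a)$ must grow with $a$ (one needs $n(a)>eah/(p-1)$ for the truncation to kill the $\varphi$-conjugation issues); consequently the atlases $X^{h,K'}_{n(a)}$ are \emph{not} a compatible system under pullback — for $a\ge b$ the base change of $X^{h,K'}_{n(a)}$ to $\cO/\varpi^b$ is a $U_{n(b)}/U_{n(a)}$-bundle over $X^{h,K'}_{n(b)}$, not $X^{h,K'}_{n(b)}$ itself. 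Hence the colimit of the naive atlases is not a $\varpi$-adic formal algebraic space. The key device in the paper's proof is to pass to the intermediate quotient $Y_a := [X^{h,K'}_{n(a)} \overphi U_{n(1)}]$ by a \emph{fixed} congruence subgroup, independent of $a$; one then checks $(Y_a)_{\cO/\varpi^b}\cong Y_b$, so the $Y_a$ genuinely form a thickening sequence whose colimit $Y$ is a $\varpi$-adic formal scheme, and $Y\to\cC$ is the desired smooth surjection. Without this (or an equivalent use of Lemma~\ref{lem:ind to formal}, which the paper notes as an alternative but not the route it takes), your construction of the cover does not go through. Two small further inaccuracies worth noting: the group acting is the truncated positive loop group $\cG_{n(a)}=L^+G/U_{n(a)}$, a Weil restriction of $\GL_d$, not $\GL_d$ itself — this is precisely what makes the affine-diagonal claim work; and the cover $Y$ is a formal scheme (in fact formal algebraic space), not asserted to be affine.
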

\begin{proof}
  We begin by recalling some results from~\cite[\S 3.b]{MR2562795} (which
  is where the
  proof that each~$\cC^a$ is an algebraic stack of finite type
  over~$\cO/\varpi^a$ is given). If~$A$ is an $\cO/\varpi^a$-algebra for some $a\ge 1$,
  then we set\[L^+G(A):=GL_d(\gS_A),\] \[LG^{h,K'}(A):=\{X\in
    M_d(\gS_A) \mid  X^{-1}\in E(u)^{-h}M_d(\gS_A)\},\]and let $g\in L^+G(A)$ act
  on the right on $LG^{h,K'}(A)$ by $\varphi$-conjugation as $g^{-1}\cdot
  X\cdot \varphi(g)$. Then we may write 
  \[\cC=[LG^{h,K'} \overphi L^+G].\]

  For each $n\ge 1$ we have the principal congruence subgroup $U_n$ of
  $L^+G$ given by $U_n(A)=I+u^n\cdot M_d(\gS_A)$. As in~\cite[\S
  3.b.2]{MR2562795}, for any integer $n(a)>eah/(p-1)$ we have a
  natural identification
  \numequation\label{eqn:U_n phi conjugacy}
[LG^{h,K'}\overphi U_{n(a)}]_{\cO/\varpi^a}\cong [LG^{h,K'}/
    U_{n(a)}]_{\cO/\varpi^a}\end{equation}where
  the $U_{n(a)}$-action on the right hand side is by left
  translation by the inverse; 
  moreover this quotient stack is represented by a finite type
  scheme $(X^{h,K'}_{n(a)})_{\cO/\varpi^a}$, and we find that
  \[\cC^a\cong
    [(X^{h,K'}_{n(a)})_{\cO/\varpi^a}\overphi(\cG_{{n(a)}})_{\cO/\varpi^a}],\]
  where $(\cG_{{n(a)}})_{\cO/\varpi^a}=(L^+G/U_{n(a)})_{\cO/\varpi^a}$ is a
  smooth finite type group scheme over $\cO/\varpi^a$.

Now define $Y_a := [(X_{n(a)}^{h,K'})_{\cO/\varpi^a}\overphi (U_{n(1)})_{\cO/\varpi^a}].$
If $a \geq b,$ then there is a natural isomorphism
$(Y_a)_{\cO/\varpi^b} \cong Y_b.$ 
Thus we may form the $\varpi$-adic Ind-algebraic stack
$Y := \varinjlim_a Y_a.$
Since $Y_1 := (X_{n(1)}^{h,K'})_{\F}$ is a scheme,
each $Y_a$ is in fact a
scheme~\cite[\href{http://stacks.math.columbia.edu/tag/0BPW}{Tag 0BPW}]{stacks-project}, 
and thus $Y$ is a $\varpi$-adic formal scheme.
(In fact, it is easy to check directly that $U_{n(1)}$ acts freely
on $X_{n(a)}^{h,K'},$ and thus to see that $Y_a$ is an algebraic space.)
The natural morphism $Y \to \cC$ is then representable by algebraic
spaces; indeed, any morphism from an affine scheme to~$\cC$ factors
through some~$\cC^a$, and representability by algebraic spaces then follows from the
representability by algebraic spaces of~$Y_a\to\cC^a$, and the
Cartesianness of the diagram \[\xymatrix{Y_a\ar[r]\ar[d]&Y\ar[d]\\ \cC^a\ar[r]&\cC}\]
Similarly, the morphism $Y\to\cC$ is smooth and surjective, and so witnesses the claim that
$\cC$ is a $\varpi$-adic formal algebraic stack.

To check that $\cC$ has affine diagonal, it suffices to check
that each $\cC^a$ has affine diagonal, which follows from the fact that
$(\cG_{n(a)})_{\cO/\varpi^a}$ is in fact an affine group scheme
over $\cO/\varpi^a$ (indeed, as in~\cite[\S 2.b.1]{MR2562795}, it is a
Weil restriction of~$\GL_d$). 
\end{proof}

We next introduce the moduli stack of \'etale $\varphi$-modules, again both with
and without descent data.

\begin{defn}\label{defn: R^dd}
For each integer~$a\ge 1$, we let 
 $\cR^{\dd,a}_{d,K'}$ be the \emph{fppf} $\cO/\varpi^a$-stack which
  associates
  to any $\cO/\varpi^a$-algebra $A$ the groupoid $\cR^{\dd,a}_{d,K'}(A)$ of rank $d$ \'etale
  $\varphi$-modules  with $A$-coefficients and  descent data from $K'$ to
  $K$.

By~\cite[\href{http://stacks.math.columbia.edu/tag/04WV}{Tag 04WV}]{stacks-project}, 
we may also regard each of the stacks $\cR_{d,K'}^{\dd,a}$ as an {\em fppf}
$\cO$-stack, 
and we then write 
  $\cR^{\dd}:=\varinjlim_{a}\cR^{\dd,a}$,
  which is again an {\em fppf} $\cO$-stack.

  We will omit $d, K'$ from the
  notation wherever doing so will not cause confusion, and when $K'=K$, we write~$\cR$ for~$\cR^{\dd}$.
\end{defn}

Just as in the case of $\cC^{\dd}$, the morphism $\cR^{\dd} \to \Spec \cO$ factors through
$\Spf \cO$, and for each $a \geq 1$, there is a natural isomorphism
$\cR^{\dd,a} \iso \cR^{\dd}\times_{\Spf \cO} \Spec \cO/\varpi^a.$
Thus each of the morphisms $\cR^{\dd,a} \to \cR^{\dd,a+1}$ is a thickening.

  There is a natural morphism $\cC_{d,h,K'}^{\dd}\to\cR_d^{\dd}$,
defined via
 $$ (\gM,\varphi,\{\hat{g}\}_{g\in\Gal(K'/K)}) \mapsto
(\gM[1/u],\varphi,\{\hat{g}\}_{g\in\Gal(K'/K)}),$$ and natural morphisms
$\cC^{\dd}\to\cC$ and $\cR^{\dd}\to\cR$ given by forgetting the descent
data.  
In the optic of Section~\ref{subsec: etale phi modules
  and Galois representations}, the stack~$\cR^{\dd}_d$ may
morally be thought of as a moduli of $G_{K_{\infty}}$-representations,
and the morphisms
$\cC^{\dd}_{d,h,K'}\to\cR^{\dd}_d$ correspond to passage from a Breuil--Kisin module to
its underlying Galois representation.

\begin{prop}
  \label{prop: relative representability of descent data for R} For
  each $a\ge 1$, the natural morphism
  $\cR^{\dd,a}\to\cR^a$ is representable by algebraic spaces,
  affine, and of finite presentation.
\end{prop}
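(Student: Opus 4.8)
The plan is to reduce the statement to a concrete description of the fibre of $\cR^{\dd,a} \to \cR^a$ over a point, and then recognise that fibre as an affine scheme of finite presentation. So first I would fix an $\cO/\varpi^a$-algebra $A$ and an object $M$ of $\cR^a(A)$, i.e.\ a rank $d$ \'etale $\varphi$-module over $\gS_A[1/u]$ (with $\gS_A = (W(k')\otimes_{\Zp}A)((u))$ in the case with no descent data, where $k'$ enters only through $W(k')$, or rather here with $K'=K$ so $k' = k$; one has to be slightly careful, but the point is that forgetting descent data does not change the underlying module, only the extra $\hat g$-structure). An object of $\cR^{\dd,a}(A)$ lifting $M$ is then the same as a collection of additive bijections $\hat g : M \to M$ for $g \in \Gal(K'/K)$ which are $g$-semilinear over $\gS_A[1/u]$, commute with $\varphi_M$, and satisfy the cocycle relation $\hat{g_1}\circ \hat{g_2} = \widehat{g_1 g_2}$. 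I would show that the functor on $A$-algebras sending $B$ to the set of such descent data on $M\otimes_A B$ (or rather the base change in the appropriate completed sense, though after inverting $u$ and with $A$ a $\Z/p^a$-algebra there are no completion subtleties — cf.\ Remark~\ref{rem:completed tensor}) is representable by an affine $A$-scheme of finite presentation.

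The key step is the representability of that fibre functor. Here is how I would argue: since $\Gal(K'/K)$ is a finite group, a choice of $\{\hat g\}$ is determined by its values, and each $\hat g$ is a $g$-semilinear automorphism of the finitely generated projective $\gS_A[1/u]$-module $M$. Writing $M$ Zariski-locally on $\Spec A$ as a free $\gS_A[1/u]$-module of rank $d$, a $g$-semilinear endomorphism is, after composing with the ``standard'' $g$-semilinear structure coming from the $\Gal(K'/K)$-action on $\gS_A[1/u]$, a genuine $\gS_A[1/u]$-linear endomorphism, i.e.\ a matrix in $M_d(\gS_A[1/u])$. But $\gS_A[1/u] = (W(k')\otimes_{\Zp}A)((u))$ is not finitely generated over $A$, so I cannot naively say ``finitely many matrix entries.'' The standard fix (and this is the technical heart, and the step I expect to be the main obstacle) is to use the height-at-most-$h$ structure implicitly: although $\cR$ is not algebraic and $\cR^a$ is enormous, the morphism $\cR^{\dd,a}\to\cR^a$ adds only \emph{bounded} data. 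Concretely, the compatibility with $\varphi_M$ forces $\hat g$ to be determined by $\varphi$-conjugation relations, and together with the cocycle condition this pins down $\hat g$ up to the choice of its action on a bounded ``lattice'' chunk; more carefully, one observes that $\hat g$ must preserve, up to a bounded power of $u$, any given $\gS_A$-lattice in $M$, so that the space of possible $\hat g$'s is cut out inside a finite-rank affine space of matrix entries (in a suitable finite-rank $A$-submodule of $\gS_A[1/u]$ determined by $E(u)$ and $|\Gal(K'/K)|$). This is exactly the kind of bookkeeping done in \cite{EGstacktheoreticimages} for analogous moduli; I would cite or adapt the relevant lemmas there.

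Granting the fibre functor is representable by an affine finite-presentation $A$-scheme $Z_M$ for each test object $M \in \cR^a(A)$, I would then check that these $Z_M$ are compatible with base change in $A$ (which is immediate from the moduli description, since descent data base-changes), so that they glue to give the desired relative affine morphism $\cR^{\dd,a}\to\cR^a$. Finally, representability by algebraic spaces follows a fortiori from representability by affine schemes, so the three assertions (representable by algebraic spaces, affine, of finite presentation) all come out of the single computation. Throughout I would phrase things using the groupoid-fibre-product characterisation: for an affine scheme $\Spec B$ mapping to $\cR^a$ via an object $M$, the fibre product $\cR^{\dd,a}\times_{\cR^a}\Spec B$ is the functor of descent data on $M$, and I show \emph{that} functor is an affine $B$-scheme of finite presentation. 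The main obstacle, as noted, is making precise and proving the ``boundedness'' that lets one cut $\hat g$ out by finitely many equations in finitely many variables over $B$ despite $\gS_B[1/u]$ being infinite-dimensional over $B$; everything else is formal.
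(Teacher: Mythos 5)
Your outline---pull back along $\Spec A\to\cR^a$, identify the fibre with the functor of descent data on a fixed $M$, show that this functor is an affine $A$-scheme of finite presentation, and note that affine representability subsumes representability by algebraic spaces---is the same as the paper's, and you correctly identify \cite{EGstacktheoreticimages} as the source of the key input.

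Where you go wrong is in your account of \emph{why} the boundedness needed for finite presentation holds. You appeal to ``the height-at-most-$h$ structure implicitly'' and to bounds ``determined by $E(u)$ and $|\Gal(K'/K)|$.'' Neither is available: $\cR^a$ and $\cR^{\dd,a}$ parametrise \'etale $\varphi$-modules, which carry no height structure, and when $A$ is a $\Z/p^a\Z$-algebra the element $E(u)$ is a unit of $\gS_A[1/u]$ (it divides a power of $u$), so it imposes no constraint whatsoever. You also miss the observation that makes the reduction clean and replaces your non-canonical ``compose with a standard $g$-semilinear structure'' step: since each $g\in\Gal(K'/K)$ acts trivially on the subring $\gS_A^0=(W(k)\otimes_{\Zp}A)[[v]]$ (where $v=u^{e(K'/K)}$), a $g$-semilinear additive bijection of $M$ is automatically $\gS_A^0[1/v]$-\emph{linear}. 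Hence the functor of descent data sits inside the functor of $|\Gal(K'/K)|$-tuples of $\gS_A^0[1/v]$-linear $\varphi$-equivariant automorphisms of $M$, and \cite[Prop.~5.4.8]{EGstacktheoreticimages} says the latter is represented by an affine scheme of finite presentation over $A$; the finiteness there comes from the $\varphi$-conjugation relation $X=\Phi_M\varphi(X)\Phi_M^{-1}$ for \'etale $\varphi$-modules, not from any height or $E(u)$ bound. The remaining conditions ($\gS_A[1/u]$-semilinearity, the cocycle relation, and commutation with $\varphi$) then cut out a closed subscheme, which completes the proof.
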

\begin{proof} To see this, consider the pullback along some morphism $\Spec
  A\to\cR^a$ (where $A$ is a $\cO/\varpi^a$-algebra); we must show
  that given an \'etale $\varphi$-module $M$ of rank $d$ without descent data,
  the
data of giving 
additive bijections $\hat{g}:M\to M$, satisfying the further
  property that:
  \begin{itemize}
  \item the maps $\hat{g}$ commute with $\varphi$, satisfy
    $\hat{g_1}\circ\hat{g_2}=\widehat{g_1\circ g_2}$, and we have
    $\hat{g}(sm)=g(s)\hat{g}(m)$ for all $s\in\gS_A[1/u]$, $m\in M$
  \end{itemize}is represented by an affine algebraic space (i.e.\ an affine scheme!)
of finite presentation over~$A$.

To see this, note first that such maps $\hat{g}$ are by definition
$\gS_A^0[1/v]$-linear. 
The data of giving
an $\gS^0_A[1/v]$-linear automorphism of $M$ which commutes with~$\varphi$ is representable
by an affine scheme of finite presentation over~$A$ by~\cite[Prop.\ 5.4.8]{EGstacktheoreticimages} 
and so the
data of a finite collection of automorphisms is also representable by
a finitely presented affine scheme over $A$. The further commutation and composition
conditions on the $\hat{g}$ cut out a closed subscheme, as does the condition of
$\gS_A[1/u]$-semi-linearity, so the result follows.
\end{proof}

\begin{cor}
\label{cor:diagonal of R}
The diagonal of $\cR^{\dd}$ is representable by algebraic spaces, affine,
and of finite presentation.
\end{cor}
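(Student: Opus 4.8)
The plan is to deduce the statement about the diagonal of $\cR^{\dd}$ from the corresponding statement for $\cR$ together with the relative representability result just proved in Proposition~\ref{prop: relative representability of descent data for R}. The key structural input is that $\cR^{\dd} = \varinjlim_a \cR^{\dd,a}$ and $\cR = \varinjlim_a \cR^a$, with $\cR^{\dd,a} \cong \cR^{\dd} \times_{\Spf\cO} \Spec\cO/\varpi^a$ and similarly for $\cR^a$, so that it suffices to check the assertion on each $\cR^{\dd,a}$ over $\cO/\varpi^a$; indeed, a morphism from an affine scheme to $\cR^{\dd}$ factors through some $\cR^{\dd,a}$, and the diagonal of $\cR^{\dd}$ pulled back along such a test morphism agrees with the diagonal of $\cR^{\dd,a}$. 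So I would reduce to showing that each $\cR^{\dd,a}$ has diagonal representable by algebraic spaces, affine, and of finite presentation.

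First I would recall (or cite, presumably from \cite{EGstacktheoreticimages} or \cite{MR2562795}, and this should already be implicitly available in the excerpt's setup) that $\cR^a$ has affine diagonal of finite presentation; this is the no-descent-data case. Then I would invoke the general principle that if $f\colon \cX \to \cY$ is a morphism of stacks which is representable by algebraic spaces, affine, and of finite presentation, and $\cY$ has diagonal representable by algebraic spaces, affine, and of finite presentation, then so does $\cX$. This follows from the factorization of the diagonal $\Delta_{\cX}\colon \cX \to \cX\times_{\cY}\cX \to \cX\times\cX$: the first map is the base change of $\Delta_{\cY}$ along $\cX\times\cX \to \cY\times\cY$ (using $f$ twice), hence affine and finitely presented; the second map $\cX\times_{\cY}\cX \to \cX\times\cX$ is a base change of $f$ (pulled back along one projection), hence affine and finitely presented; and the composite of two affine finitely presented morphisms is affine and finitely presented. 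Applying this with $f$ the morphism $\cR^{\dd,a}\to\cR^a$ of Proposition~\ref{prop: relative representability of descent data for R} gives the claim for $\cR^{\dd,a}$, and passing to the limit gives it for $\cR^{\dd}$.

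I do not expect any serious obstacle here; the corollary is genuinely formal given Proposition~\ref{prop: relative representability of descent data for R}. The only mild point of care is the limit argument — making sure that ``representable by algebraic spaces, affine, of finite presentation'' for the diagonal of the Ind-stack $\cR^{\dd}$ is correctly interpreted as a statement testable on affine schemes mapping in, each of which factors through a finite level $\cR^{\dd,a}$, so that the finite-level statements suffice. One should also make sure the composition-of-affine-morphisms and base-change stability facts are cited in the form applicable to morphisms of stacks representable by algebraic spaces (e.g.\ via \cite[\href{http://stacks.math.columbia.edu/tag/04XB}{Tag 04XB}]{stacks-project}), but these are entirely standard and already referenced in the conventions section of the paper.
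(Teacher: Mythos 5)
Your overall strategy is the same as the paper's: reduce to the finite-level stacks $\cR^{\dd,a}$, and factor the diagonal through the relative fibre product $\cR^{\dd,a}\times_{\cR^a}\cR^{\dd,a}$, using Proposition~\ref{prop: relative representability of descent data for R} and the known affine, finitely presented diagonal of $\cR^a$ (which the paper cites from \cite[Thm.~5.4.11(2)]{EGstacktheoreticimages}). The conclusion is right, but the way you identify the two maps in the factorization $\cX \to \cX\times_{\cY}\cX \to \cX\times\cX$ is wrong on both counts, and this is where your justification breaks down.

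Concretely, the base change of $\Delta_{\cY}\colon \cY \to \cY\times\cY$ along $f\times f\colon \cX\times\cX \to \cY\times\cY$ has source $\cY\times_{\cY\times\cY}(\cX\times\cX) \cong \cX\times_{\cY}\cX$ and target $\cX\times\cX$; that is, it is the \emph{second} map $\cX\times_{\cY}\cX \to \cX\times\cX$ in your factorization, not the first. And the second map is not a base change of $f$: pulling $f$ back along either projection $\cX\times\cX \to \cX \to \cY$ produces $\cX\times_{\cY}(\cX\times\cX) \cong \cX\times_{\cY}\cX\times\cX$, not $\cX\times_{\cY}\cX$. The first map $\cX \to \cX\times_{\cY}\cX$ is the relative diagonal $\Delta_{\cX/\cY}$ of $f$, and for this one needs a separate argument: since $f$ is representable by algebraic spaces and affine, it is in particular separated, so $\Delta_{\cX/\cY}$ is a closed immersion; and since $f$ is moreover of finite presentation, this closed immersion is of finite presentation (the ideal of the diagonal in $A\otimes_B A$ is generated by the finitely many differences $a_i\otimes 1 - 1\otimes a_i$ for $a_i$ generators of $A$ over $B$). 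With those corrected identifications --- relative diagonal of $f$ first, base change of $\Delta_{\cY}$ second --- your argument goes through and coincides with the paper's.
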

\begin{proof}
Since $\cR^{\dd} = \varinjlim_a\cR^{\dd,a} \iso \varinjlim_a \cR^{\dd}\times_{\Spf \cO}
\Spec \cO/\varpi^a$,
and since the transition morphisms are closed immersions (and hence monomorphisms),
we have a Cartesian diagram
$$\xymatrix{ \cR^{\dd,a} \ar[r]\ar[d] & \cR^{\dd,a}\times_{\cO/\varpi^a} \cR^{\dd,a} 
\ar[d] \\
\cR^\dd \ar[r] & \cR^{\dd} \times_{\cO} \cR^{\dd}}$$
for each~$a~\geq~1$,
and the diagonal morphism of $\cR^{\dd}$ is the inductive limit of the diagonal
morphisms of the various $\cR^{\dd,a}$.
Any morphism from an affine scheme $T$ to $\cR^{\dd} \times_{\cO} \cR^{\dd}$
thus factors through
one of the $\cR^{\dd,a}\times_{\cO/\varpi^a} \cR^{\dd,a}$, and the fibre product
$\cR^{\dd} \times_{\cR^{\dd} \times \cR^{\dd}} T$
may be identified with
$\cR^{\dd,a} \times_{\cR^{\dd,a} \times \cR^{\dd,a}} T$.
It is thus equivalent to prove that each of the diagonal morphisms
$\cR^{\dd,a} \to \cR^{\dd,a} \times_{\cO/\varpi^a} \cR^{\dd,a}$
is representable by algebraic spaces, affine, and of finite presentation.

The diagonal of $\cR^{\dd,a}$ may be obtained
by composing the pullback over $\cR^{\dd,a}\times_{\cO}
\cR^{\dd,a}$ of the diagonal $\cR^a \to \cR^a \times_{\cO} \cR^a$ with
the relative diagonal of the morphism $\cR^{\dd,a} \to \cR^a$.
The first of these morphisms is representable by algebraic spaces,
affine, and of finite presentation, by \cite[Thm.~5.4.11~(2)]{EGstacktheoreticimages},
and the second is also representable by algebraic
spaces, affine, and of finite presentation,
since it is the relative diagonal of a morphism which has these properties,
by~Proposition~\ref{prop: relative representability of descent data for R}.
\end{proof}

\begin{cor}
  \label{cor: C^dd,a is an algebraic stack}
\begin{enumerate}
\item
  For each $a\ge 1$,
  $\cC^{\dd,a}$ is an algebraic stack of finite presentation over
  $\Spec\cO/\varpi^a$, with affine diagonal.
\item
  The Ind-algebraic stack $\cC^{\dd} := \varinjlim_a \cC^{\dd,a}$ 
  is furthermore a $\varpi$-adic formal algebraic stack.
\item
The morphism $\cC^{\dd}_h \to \cR^{\dd}$ is representable by algebraic spaces and proper.
\end{enumerate}
\end{cor}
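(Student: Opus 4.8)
The plan is to deduce all three parts from the corresponding statements without descent data together with the properties of the forgetful morphisms $\cC^{\dd,a}\to\cC^a$ and $\cR^{\dd,a}\to\cR^a$ established above, in essentially the same way that Corollary~\ref{cor:diagonal of R} was deduced. For part~(1), the key point is that the diagram
\[\xymatrix{\cC^{\dd,a}\ar[r]\ar[d]&\cR^{\dd,a}\ar[d]\\ \cC^a\ar[r]&\cR^a}\]
should be Cartesian: a Breuil--Kisin module with descent data is the same as a Breuil--Kisin module \emph{without} descent data together with descent data on its associated \'etale $\varphi$-module (the height condition only involves the underlying Breuil--Kisin structure, and the maps $\hat g$ automatically preserve $\gM\subset\gM[1/u]$ since they are $\gS_A$-semilinear bijections commuting with $\varphi$ — one should check that $\hat g(\gM)=\gM$, which follows because $\hat g$ and $\hat{g}^{-1}=\widehat{g^{-1}}$ both send the finitely generated $\gS_A$-submodule $\gM$ into $\gM[1/u]$ and one can compare, e.g. via the fact that $\gM$ is the set of elements of $\gM[1/u]$ on which some fixed power of $E(u)$ times $u^{-N}$ behaves integrally, or more simply by a reduction to the finite-level/Noetherian case). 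Granting this, $\cC^{\dd,a}$ is the base change of the algebraic stack $\cC^a$ (algebraic by~\cite[Thm.~0.1(i)]{MR2562795}, see Proposition~\ref{prop: C without dd is a $p$-adic formal algebraic stack}) along $\cR^{\dd,a}\to\cR^a$, which is representable by algebraic spaces, affine, and of finite presentation by Proposition~\ref{prop: relative representability of descent data for R}; hence $\cC^{\dd,a}\to\cC^a$ has the same properties, and algebraicity plus finite presentation over $\cO/\varpi^a$ follows. Affineness of the diagonal then follows by the same two-step argument as in Corollary~\ref{cor:diagonal of R}: factor the diagonal of $\cC^{\dd,a}$ as the pullback of the (affine) diagonal of $\cC^a$ composed with the relative diagonal of $\cC^{\dd,a}\to\cC^a$, the latter being affine since the morphism is.

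For part~(2), the argument mirrors the proof that $\cR^{\dd}$ and $\cC$ are $\varpi$-adic formal algebraic stacks. We have $\cC^{\dd}=\varinjlim_a\cC^{\dd,a}$ with each $\cC^{\dd,a}$ algebraic (by part~(1)) and each transition map $\cC^{\dd,a}\to\cC^{\dd,a+1}$ a thickening, and there is a compatible isomorphism $\cC^{\dd,a}\cong\cC^{\dd}\times_{\Spf\cO}\Spec\cO/\varpi^a$. One way to finish: pull back the smooth surjective cover $Y\to\cC$ of Proposition~\ref{prop: C without dd is a $p$-adic formal algebraic stack} along $\cC^{\dd}\to\cC$ to obtain $Y^{\dd}:=Y\times_\cC\cC^{\dd}\to\cC^{\dd}$, still smooth and surjective; since $\cC^{\dd}\to\cC$ is representable by algebraic spaces and affine (being a limit of the affine morphisms $\cC^{\dd,a}\to\cC^a$), $Y^{\dd}$ is representable by algebraic spaces over $Y$ hence is itself a $\varpi$-adic formal algebraic space, and it provides the required smooth cover. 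Alternatively, invoke Lemma~\ref{lem:ind to formal} directly. Either way the $\varpi$-adic formal algebraic stack structure follows, and one notes that it has affine diagonal by part~(1) (or as in Corollary~\ref{cor:diagonal of R}).

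For part~(3), we again reduce to the case without descent data. The morphism $\cC^{\dd}_h\to\cR^{\dd}$ fits into the Cartesian square above (at each finite level, then take the limit), so it is the base change of $\cC_h\to\cR$ along $\cR^{\dd}\to\cR$. By~\cite[Thm.~5.4.11]{EGstacktheoreticimages} (or the relevant properness statement therein — this is exactly the analogue without descent data, which is part of the Pappas--Rapoport package as used in~\cite{EGstacktheoreticimages}), $\cC_h\to\cR$ is representable by algebraic spaces and proper; both properties are stable under base change, giving the claim. The main obstacle, and the step deserving the most care, is verifying the Cartesianness of the descent-data square — concretely, that a Breuil--Kisin module with descent data is precisely the fibre product datum (Breuil--Kisin module without descent data)$\times_{(\text{\'etale }\varphi\text{-module without dd})}$(\'etale $\varphi$-module with dd). This amounts to showing the descent data $\{\hat g\}$ on $\gM[1/u]$ automatically preserves the lattice $\gM$; this is not entirely formal because $\gS_A\hookrightarrow\gS_A[1/u]$ is not a local homomorphism in any useful sense, but it can be handled by the standard device of reducing to $A$ Noetherian (or finite) via a limit argument and Remark~\ref{rem:Kisin-mod-ideal}, where $\gM$ is recovered from $\gM[1/u]$ as $\{m\in\gM[1/u] : E(u)^n m\in \Phi_\gM(\varphi^*\gM)\ \forall n\gg0\}$ or simply as the $u$-adic-completeness locus, both of which are manifestly $\hat g$-stable.
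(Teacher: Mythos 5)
Your proposal for part (1) rests on the claim that the square
\[\xymatrix{\cC^{\dd,a}\ar[r]\ar[d]&\cR^{\dd,a}\ar[d]\\ \cC^a\ar[r]&\cR^a}\]
is Cartesian, i.e.\ that descent data on the \'etale $\varphi$-module $\gM[1/u]$ automatically stabilises the lattice $\gM$. This is false, and it is the genuine gap in the argument. The paper instead proves only that the natural monomorphism $\cC^{\dd,a}\to \cR^{\dd,a}\times_{\cR^a}\cC^a$ is a \emph{closed immersion}, with the closed condition being precisely that the $\hat g$ preserve $\gM$, expressed (in the spirit of the proof of~\cite[Prop.\ 5.4.8]{EGstacktheoreticimages}) as the vanishing of the coefficients of negative powers of $u$ in a matrix for $\hat g$ in a local basis. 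If preservation were automatic there would be nothing to prove here, and the authors would simply have said the map is an isomorphism.

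The heuristics you offer in support of automaticity do not survive scrutiny. Comparing $\hat g$ and $\hat g^{-1}=\widehat{g^{-1}}$ shows only that both send $\gM$ into $\gM[1/u]$, which gives nothing. More fundamentally, $\gM$ simply cannot be recovered intrinsically from the \'etale $\varphi$-module $\gM[1/u]$ — this is the entire content of the theory of Kisin varieties/moduli of finite flat models, on which much of this paper is built (cf.\ the multiple lattices $\gM(r,a,c)$ with the same $T(\gM)$ in Lemma~\ref{lem: generic fibres of rank 1 Kisin modules} and Corollary~\ref{cor: Kisin modules with the same generic fibre}). Your proposed description $\{m\in\gM[1/u] : E(u)^n m\in \Phi_\gM(\varphi^*\gM)\ \forall n\gg0\}$ is circular, since $\Phi_\gM(\varphi^*\gM)$ itself depends on the lattice $\gM$; and the ``$u$-adic-completeness locus'' is not a well-defined submodule. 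A concrete source of failure: take $\gM=\gM_1\oplus\gM_2$ with $\gM_1[1/u]\cong\gM_2[1/u]$ but no nonzero map $\gM_1\to\gM_2$ of Breuil--Kisin modules; then $\gM[1/u]$ has $\varphi$-equivariant endomorphisms that do not stabilise $\gM$, and twisting the descent data by such endomorphisms over a nonreduced base (or moving to another Kisin lattice in the same $\varphi$-module) produces descent data on $\gM[1/u]$ not preserving $\gM$.

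The error propagates: in part (3) you deduce properness by base-changing $\cC_h\to\cR$ along $\cR^{\dd}\to\cR$, which again implicitly identifies $\cC^{\dd}_h$ with the fibre product. The fix is exactly the paper's: prove $\cC^{\dd,a}\to\cR^{\dd,a}\times_{\cR^a}\cC^a$ is a closed immersion (not an isomorphism). This suffices for everything you want — algebraicity and finite presentation of a closed substack of a finitely presented algebraic stack, the thickening structure needed for the $\varpi$-adic formal algebraic stack claim in (2), and for (3) the composite of the closed immersion (proper, representable) with the projection $\cR^{\dd,a}\times_{\cR^a}\cC^a\to\cR^{\dd,a}$, which is proper as the base change of $\cC^a\to\cR^a$. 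The rest of your framework (using Proposition~\ref{prop: relative representability of descent data for R}, the two-step diagonal argument for affineness, and the use of Lemma~\ref{lem:ind to formal} or the pullback of the smooth cover $Y$) is in line with what the paper does.
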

\begin{proof}By Proposition~\ref{prop: C without dd is a $p$-adic formal  algebraic
    stack}, $\cC^a$ is an algebraic stack of
finite type over $\Spec\cO/\varpi^a$ with affine diagonal.
In particular it has quasi-compact diagonal, and so is quasi-separated.
Since $\cO/\varpi^a$ is Noetherian, it follows from~\cite[\href{http://stacks.math.columbia.edu/tag/0DQJ}{Tag 0DQJ}]{stacks-project} that $\cC^a$ is 
in fact of finite presentation over $\Spec \cO/\varpi^a$.

By Proposition~\ref{prop: relative representability of descent data for
  R}, the  morphism $\cR^{\dd,a}\times_{\cR^a}\cC^a\to\cC^a$ is
representable by algebraic spaces and of finite presentation, so it follows from~\cite[\href{http://stacks.math.columbia.edu/tag/05UM}{Tag
    05UM}]{stacks-project} that $\cR^{\dd,a}\times_{\cR^a}\cC^a$ is
  an algebraic stack of finite presentation over $\Spec\cO/\varpi^a$. In order to show
that $\cC^{\dd,a}$ is an algebraic stack of finite presentation over $\Spec\cO/\varpi^a$,
it therefore suffices 
  to show 
  that the natural monomorphism 
\numequation
\label{eqn:closed}
\cC^{\dd,a}\to \cR^{\dd,a}\times_{\cR^a}\cC^a
\end{equation}
 is representable by
algebraic spaces and of finite presentation.  We will in fact show that it is
a closed immersion (in the sense that its pull-back under 
any morphism from a scheme to its target becomes a closed immersion
of schemes); since the target is locally Noetherian, and closed
immersions are automatically of finite type and quasi-separated, 
it follows
from~\cite[\href{http://stacks.math.columbia.edu/tag/0DQJ}{Tag
  0DQJ}]{stacks-project} that this closed immersion is of finite
presentation, as required.

By~\cite[\href{http://stacks.math.columbia.edu/tag/0420}{Tag
  0420}]{stacks-project}, the property of being a closed immersion can  be checked
after pulling back to an affine scheme, and then working
\emph{fpqc}-locally. 
The claim then follows easily from the proof of~\cite[Prop.\ 5.4.8]{EGstacktheoreticimages}, as
  \emph{fpqc}-locally the condition that a lattice in an \'etale
  $\varphi$-module of rank $d$ with descent data is preserved by the action
  of the $\hat{g}$ is determined by the vanishing of the coefficients of
  negative powers of~$u$ in a matrix.

To complete the proof of~(1), it suffices to show that the diagonal of
$\cC^{\dd,a}$ is affine.  Since (as we have shown)
the morphism~\eqref{eqn:closed} is a closed immersion, and thus a monomorphism,
it is equivalent to show that the diagonal 
of $\cR^{\dd,a}\times_{\cR^a} \cC^a$ is affine.
To ease notation, we denote this fibre product by $\cY$.
We may then factor the diagonal of $\cY$ as the composite 
of the pull-back over $\cY \times_{\cO/\varpi^a} \cY$
of the diagonal morphism $\cC^a \to \cC^a \times_{\cO/\varpi^a} \cC^a$
and the relative diagonal $\cY \to \cY\times_{\cC^a} \cY$.
The former morphism is affine, by~\cite[Thm.~5.4.9~(1)]{EGstacktheoreticimages},
and the latter morphism is also affine, since it is the pullback via $\cC^a \to
\cR^a$ 
of the relative diagonal morphism $\cR^{\dd,a} \to \cR^{\dd,a}\times_{\cR^a} \cR^{\dd,a}$,
which is affine (as already observed in the proof of Corollary~\ref{cor:diagonal of R}).

To prove~(2),
consider the morphism $\cC^{\dd} \to \cC.$  
This is a morphism of $\varpi$-adic Ind-algebraic stacks,
and by what we have already proved, it is representable 
by algebraic spaces.    Since the target is a $\varpi$-adic formal
algebraic stack, it follows from ~\cite[Lem.\
7.9]{Emertonformalstacks} that the source is also a $\varpi$-adic formal
algebraic stack, as required. 

To prove~(3), since each of $\cC^{\dd,a}$ and $\cR^{\dd,a}$ is obtained from
$\cC^{\dd}$ and $\cR^{\dd}$ via pull-back over $\cO/\varpi^a$,
it suffices to prove that each of the morphisms $\cC^{\dd,a} \to \cR^{\dd,a}$
is representable by algebraic spaces and proper.   Each of these morphisms
factors as
$$\cC^{\dd,a} \buildrel \text{\eqref{eqn:closed}} \over
\longrightarrow \cR^{\dd,a} \times_{\cR^a} \cC^a \buildrel \text{proj.} \over
\longrightarrow  \cR^{\dd,a}.$$
We have already shown that the first of these morphisms is a closed immersion,
and hence representable by algebraic spaces and proper.  The second morphism is also 
representable by algebraic spaces and proper, since it is a base-change 
of the morphism $\cC^a \to \cR^a$, which has these properties
by~\cite[Thm.~5.4.11~(1)]{EGstacktheoreticimages}.
\end{proof}

The next lemma gives a concrete interpretation of the points of $\cC^{\dd}$ over 
$\varpi$-adically complete $\cO$-algebras, extending the tautological interpretation
of the points of each $\cC^{\dd,a}$ prescribed by Definition~\ref{defn: C^dd,a }.

\begin{lemma}\label{lem:points-of-Cdd}
 If $A$ is a $\varpi$-adically complete $\cO$-algebra then the
 $\Spf(A)$-points of $\cC^{\dd}$ are the Breuil--Kisin modules
 of rank $d$ and height $h$ with $A$-coefficients and descent
 data. 
\end{lemma}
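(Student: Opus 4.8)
The plan is to unwind the meaning of an $\Spf(A)$-point of the $\varpi$-adic formal algebraic stack $\cC^{\dd}$ and then to show that passing to inverse limits over $a$ converts a compatible system of Breuil--Kisin modules modulo $\varpi^a$ into a single one with $A$-coefficients. By Corollary~\ref{cor: C^dd,a is an algebraic stack}(2) and the identification $\cC^{\dd,a}\cong\cC^{\dd}\times_{\Spf\cO}\Spec\cO/\varpi^a$, for $A$ a $\varpi$-adically complete $\cO$-algebra an $\Spf(A)$-point of $\cC^{\dd}$ is the same datum as a compatible system of $\Spec(A/\varpi^a)$-points of the $\cC^{\dd,a}$ (cf.~\cite{Emertonformalstacks}); by Definition~\ref{defn: C^dd,a } the latter is a system $(\gM_a)_{a\ge 1}$, where $\gM_a$ is a rank $d$ Breuil--Kisin module of height at most $h$ with $A/\varpi^a$-coefficients and descent data from $K'$ to $K$, together with isomorphisms $\gM_{a+1}\otimes_{A/\varpi^{a+1}}A/\varpi^a\iso\gM_a$ of such objects (the base change here being that of Definition~\ref{def:completed tensor}, which agrees with the naive one by Remark~\ref{rem:Kisin-mod-ideal} since $\varpi^aA$ is finitely generated). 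Conversely, a rank $d$, height at most $h$ Breuil--Kisin module $\gM$ with $A$-coefficients and descent data produces such a system via $\gM_a:=\gM\otimes_A A/\varpi^a\cong\gM\otimes_{\gS_A}\gS_{A/\varpi^a}$, which is of the required type by Lemma~\ref{rem: base change of locally free Kisin module is a locally free Kisin module} (together with the evident base changes of $\varphi_\gM$ and of the $\hat g$). I would then prove the lemma by exhibiting a quasi-inverse to this functor.

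Given a system $(\gM_a)_a$ as above, set $\gM:=\varprojlim_a\gM_a$. Since $W(k')\otimes_{\Zp}A$ is finite free over $A$ and reduction modulo the finitely generated ideal $\varpi^aA$ commutes with passage to formal power series, one has $\gS_A/\varpi^a\gS_A=\gS_{A/\varpi^a}$ and $\gS_A=\varprojlim_a\gS_{A/\varpi^a}$; a standard limit argument (the unique lifting of finite projective modules along the $\varpi$-adically complete ring $\gS_A$) then shows that $\gM$ is a finite projective $\gS_A$-module with $\gM\otimes_{\gS_A}\gS_{A/\varpi^a}\iso\gM_a$ for all $a$, and that its rank is the constant value $d$, using that the rank is locally constant and that $\varpi$ lies in the Jacobson radical of $A$. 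The compatible maps $\varphi_{\gM_a}$ and $\hat g$ (for $g\in\Gal(K'/K)$) pass to $\gM=\varprojlim_a\gM_a$, yielding a $\varphi$-semilinear $\varphi_\gM$ and $g$-semilinear bijections $\hat g$; the identities $\hat g\circ\hat{g'}=\widehat{g\circ g'}$ and $\hat g\circ\varphi_\gM=\varphi_\gM\circ\hat g$, and $\varphi$- resp.\ $g$-semilinearity, hold because they hold after every reduction modulo $\varpi^a$ and $\gM$ is $\varpi$-adically separated.

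It remains to check the height condition: that $\Phi_\gM=1\otimes\varphi_\gM$ is an isomorphism after inverting $E(u)$, with cokernel killed by $E(u)^h$. By Remark~\ref{rem:projectivity for Kisin modules}(2) $\gM$ is Zariski locally free over $\gS_A$, and as the assertion is Zariski local on $\Spec A$ I may assume $\gM\cong\gS_A^d$, so that $\Phi_\gM$ is given by a matrix $X\in M_d(\gS_A)$ whose reduction $X_a\in M_d(\gS_{A/\varpi^a})$ represents $\Phi_{\gM_a}$. Since $\gM_a$ has height at most $h$ there is $Y_a\in M_d(\gS_{A/\varpi^a})$ with $X_aY_a=Y_aX_a=E(u)^hI$; because $E(u)$ is not a zero divisor in $\gS_{A/\varpi^a}$ (there $p$ is nilpotent, so $E(u)^h$ divides a power of $u$, exactly as in the proof of Lemma~\ref{lem:kisin injective}), $Y_a$ is unique, hence the $Y_a$ are compatible and $Y:=\varprojlim_a Y_a\in M_d(\gS_A)$ satisfies $XY=YX=E(u)^hI$. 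Thus $\Phi_\gM$ is invertible over $\gS_A[1/E(u)]$ with inverse $E(u)^{-h}Y$ and its cokernel is killed by $E(u)^h$, and it is injective because $E(u)$ is not a zero divisor in $\gS_A$ --- recall that a $\varpi$-adically complete $\cO$-algebra is $p$-adically complete, so the relevant part of Lemma~\ref{lem:kisin injective} applies. Hence $\gM$ is a rank $d$, height at most $h$ Breuil--Kisin module with $A$-coefficients and descent data. Finally $\varprojlim_a(\gM\otimes_A A/\varpi^a)=\gM$ for $\gM$ with $A$-coefficients (as $\gM$ is $\varpi$-adically complete), and the reductions of $\varprojlim_a\gM_a$ recover the $\gM_a$, compatibly with $\varphi$ and with the descent data, by the argument of the previous paragraph; together with functoriality --- again via $\varpi$-adic completeness and separatedness --- this shows the two constructions are mutually quasi-inverse and identifies the morphism sets, which gives the claim.

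The step I expect to be the main obstacle is the height verification in the third paragraph: one must propagate to the limit not merely the statement ``$\Phi_\gM$ becomes an isomorphism after inverting $E(u)$'' but the sharp bound ``cokernel killed by $E(u)^h$'', in a setting where $\gS_A$ need not be Noetherian. The matrix-and-uniqueness argument above is what makes this work, and it relies crucially on $E(u)$ being a non-zero-divisor both in $\gS_A$ and in every $\gS_{A/\varpi^a}$; the remainder of the proof is essentially bookkeeping with inverse limits.
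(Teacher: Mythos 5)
Your argument is correct and follows the same overall route as the paper's proof: interpret an $\Spf(A)$-point as a compatible system $(\gM_a)_a$, take $\gM := \varprojlim_a \gM_a$, and check that $\gM$ inherits each of the defining properties. The one place where you diverge is in how you propagate the height bound to the limit. The paper simply observes that $\varphi^*\gM = \varprojlim_a\varphi^*\gM_a$ and asserts that injectivity together with the bound on the cokernel pass to the inverse limit; the underlying mechanism is that injectivity of each $\Phi_{\gM_a}$ forces the preimages of $E(u)^h m_a$ to be unique and hence compatible. You instead localise to the free case and run a matrix computation, manufacturing a compatible system of ``$E(u)^h$-adjugates'' $Y_a$ and taking their limit. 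Both arguments hinge on the same fact (that $E(u)$ is a non-zero-divisor in each $\gS_{A/\varpi^a}$ and in $\gS_A$), so they are morally equivalent; your version is more explicit but also a bit longer, and requires the Zariski localisation that the paper avoids. For the projectivity of $\gM$ the paper cites Fujiwara--Kato \cite[Prop.\ 0.7.2.10(ii)]{MR3075000}, which is precisely the ``unique lifting of finite projectives over a complete ring'' fact you invoke in words; it would be worth adding that citation, since the statement in the non-Noetherian setting is not entirely standard.
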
 

\begin{proof} 

Let $\gM$ be a Breuil--Kisin module of rank $d$ and height $h$ with
$A$-coefficients and descent data. Then the sequence
$\{\gM/\varpi^a\gM\}_{a\ge 1}$ defines a $\Spf(A)$-point of $\cC^{\dd}$ 
 (\emph{cf.}\ Remark~\ref{rem:Kisin-mod-ideal}), and since $\gM$ is
 $\varpi$-adically complete  it is recoverable from the sequence
 $\{\gM/\varpi^a\gM\}_{a \ge 1}$. 

In the other direction, suppose that $\{\gM_a\}$ is a $\Spf(A)$-point
of $\cC^{\dd}$, so that $\gM_a \in \cC^{\dd,a}(A/\varpi^a)$. Define
$\gM = \invlim_{a} \gM_a$, and similarly define $\varphi_\gM$ and
$\{\ghat\}$ as inverse limits. Observe that $\varphi^* \gM = \invlim_a
\varphi^*\gM_a$ (since $\varphi : \gS_A \to \gS_A$ makes $\gS_A$ into
a free $\gS_A$-module). Since each $\Phi_{\gM_a}$ is injective with
cokernel killed by $E(u)^h$ the same holds for $\Phi_\gM$. 

Since the required properties of the descent data are immediate, to complete the proof it remains to check that $\gM$ is a projective
$\gS_A$-module (necessarily of rank~$d$, since its rank will equal
that of $\gM_1$), which is a consequence of \cite[Prop.\ 0.7.2.10(ii)]{MR3075000}.
\end{proof}

We now temporarily reintroduce~$h$ to the notation. 

\begin{df} For each $h\ge 0$,  write~$\cR^a_h$ for the
scheme-theoretic image of $\cC^a_h\to\cR^a$ in the sense of~\cite[Defn.\
3.2.8]{EGstacktheoreticimages}; then by~\cite[Thms.\ 5.4.19,
5.4.20]{EGstacktheoreticimages}, $\cR^a_h$ is an algebraic stack of
finite presentation over~$\Spec\cO/\varpi^a$, the morphism 
$\cC^a_h\to\cR^a$ factors through $\cR^a_h$, 
and we may write
$\cR^a \cong \varinjlim_h\cR^a_h$ as an inductive limit of closed substacks,
the natural transition morphisms being closed immersions. 

We similarly write $\cR^{\dd,a}_h$ for the scheme-theoretic image of the morphism
$\cC_h^{\dd,a}\to\cR^{\dd,a}$ in the sense of~\cite[Defn.\
3.2.8]{EGstacktheoreticimages}.
\end{df}

  \begin{thm}
    \label{thm: R^dd is an ind algebraic stack}For each $a\ge 1$,
    $\cR^{\dd,a}$ is an Ind-algebraic stack. Indeed, we can write
    $\cR^{\dd,a}=\varinjlim_h\cX_h$ as an inductive limit of
    algebraic stacks of finite presentation over $\Spec\cO/\varpi^a$, the
    transition morphisms being closed immersions.
  \end{thm}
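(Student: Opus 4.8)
The plan is to transport the Ind-algebraic description of $\cR^a$ across the forgetful morphism $\cR^{\dd,a}\to\cR^a$, using crucially that this morphism is affine. Recall from the definition preceding the statement that $\cR^a\cong\varinjlim_h\cR^a_h$, where each $\cR^a_h$ is an algebraic stack of finite presentation over $\Spec\cO/\varpi^a$ and each transition morphism $\cR^a_h\to\cR^a_{h+1}$ is a closed immersion; and recall from Proposition~\ref{prop: relative representability of descent data for R} that $\cR^{\dd,a}\to\cR^a$ is representable by algebraic spaces, affine, and of finite presentation. I would then set
\[
\cX_h:=\cR^{\dd,a}\times_{\cR^a}\cR^a_h ,
\]
the idea being that $\cR^{\dd,a}=\cR^{\dd,a}\times_{\cR^a}\cR^a=\varinjlim_h\bigl(\cR^{\dd,a}\times_{\cR^a}\cR^a_h\bigr)=\varinjlim_h\cX_h$, with the real content lying in justifying these identifications.

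First I would check that each $\cX_h$ is an algebraic stack of finite presentation over $\Spec\cO/\varpi^a$. The defining Cartesian square exhibits $\cX_h\to\cR^a_h$ as a base change of $\cR^{\dd,a}\to\cR^a$, hence $\cX_h\to\cR^a_h$ is representable by algebraic spaces, affine, and of finite presentation; since $\cR^a_h$ is an algebraic stack, it follows exactly as in the proof of Corollary~\ref{cor: C^dd,a is an algebraic stack} that $\cX_h$ is an algebraic stack, and it is of finite presentation over $\Spec\cO/\varpi^a$ because it is of finite presentation over $\cR^a_h$, which is in turn of finite presentation over $\Spec\cO/\varpi^a$. (If wanted, affineness of the diagonal of $\cX_h$ follows from that of $\cR^a_h$ together with that of the morphism $\cR^{\dd,a}\to\cR^a$, as in the earlier corollaries.) Moreover the transition morphism $\cX_h\to\cX_{h+1}$ is the base change along $\cX_{h+1}\to\cR^a_{h+1}$ of the closed immersion $\cR^a_h\to\cR^a_{h+1}$, hence is itself a closed immersion; and $\cX_h\to\cR^{\dd,a}$, being the base change of the closed immersion $\cR^a_h\to\cR^a$, is in particular a monomorphism.

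It then remains to identify $\varinjlim_h\cX_h$ with $\cR^{\dd,a}$ via the tautological morphism. For this I would check that for every affine test scheme $T$ the induced functor $\varinjlim_h\cX_h(T)\to\cR^{\dd,a}(T)$ is an equivalence of groupoids. Essential surjectivity is the crux: given $T\to\cR^{\dd,a}$, composing with $\cR^{\dd,a}\to\cR^a$ gives $T\to\cR^a$, which---since $T$ is quasi-compact and the transition morphisms of $\cR^a=\varinjlim_h\cR^a_h$ are monomorphisms---factors through some $\cR^a_h$; the morphisms $T\to\cR^{\dd,a}$ and $T\to\cR^a_h$ then determine a morphism $T\to\cX_h$ lying over $T\to\cR^{\dd,a}$. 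Full faithfulness follows from the fact that each $\cX_h\to\cR^{\dd,a}$ is a monomorphism. This yields the required equivalence $\cR^{\dd,a}\cong\varinjlim_h\cX_h$, which exhibits $\cR^{\dd,a}$ as an Ind-algebraic stack and completes the proof.

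The argument is essentially formal; its two genuine inputs are the affineness of $\cR^{\dd,a}\to\cR^a$ (Proposition~\ref{prop: relative representability of descent data for R}) and the already-established Ind-algebraic presentation of $\cR^a$. The step deserving real care is the exhaustion above---that every point of $\cR^{\dd,a}$ factors through some $\cX_h$---which rests on the quasi-compactness of affine test schemes together with the compatibility of the fibre product $\cR^{\dd,a}\times_{\cR^a}(-)$ with the filtered colimit $\varinjlim_h\cR^a_h$ taken along closed immersions. One should also be a little careful \emph{not} to define $\cX_h$ as the scheme-theoretic image of $\cC^{\dd,a}_h\to\cR^{\dd,a}$, since it is not obvious that those scheme-theoretic images exhaust $\cR^{\dd,a}$ (it is not clear that a Galois-stable Breuil--Kisin lattice of bounded height survives the descent data); the fibre-product definition sidesteps this entirely.
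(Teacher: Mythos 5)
Your proposal is correct and takes the same route as the paper: define $\cX_h := \cR^{\dd,a}\times_{\cR^a}\cR^a_h$, invoke Proposition~\ref{prop: relative representability of descent data for R} to transport algebraicity and finite presentation across this affine morphism, and observe that the transition maps pull back from the closed immersions $\cR^a_h\to\cR^a_{h+1}$. The paper states the identification $\cR^{\dd,a}=\varinjlim_h\cX_h$ without comment, whereas you spell out why fibre product along an affine morphism commutes with this filtered colimit of closed immersions; that is a useful (if standard) check, and your closing remark about why one should not instead use scheme-theoretic images of the $\cC^{\dd,a}_h$ is a sensible caution.
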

  \begin{proof}
    As we have just recalled, by~\cite[Thm.\
    5.4.20]{EGstacktheoreticimages} 
    we can write
    $\cR^{a}=\varinjlim_h\cR^{a}_h$, so that if we set
    $\cX^{\dd,a}_h:=\cR^{\dd,a}\times_{\cR^a}\cR^a_h$, then
    $\cR^{\dd,a}=\varinjlim_h\cX^{\dd,a}_h$, and the transition morphisms are
    closed immersions.  Since~$\cR^a_h$ is of finite presentation
    over~$\Spec\cO/\varpi^a$, and a composite of morphisms of finite
    presentation is of finite presentation, it follows from
    Proposition~\ref{prop: relative representability of descent data
      for R} and
    \cite[\href{http://stacks.math.columbia.edu/tag/05UM}{Tag
      05UM}]{stacks-project} that $\cX^{\dd,a}_h$ is an algebraic stack of
    finite presentation over $\Spec\cO/\varpi^a$, as required.
  \end{proof}

\begin{thm}
  \label{thm: R^dd_h is an algebraic stack}$\cR^{\dd,a}_h$ is an algebraic stack of finite presentation over
    $\Spec\cO/\varpi^a$. It is a closed substack of $\cR^{\dd,a}$, and the
    morphism $\cC_h^{\dd,a}\to\cR^{\dd,a}$ factors through
a morphism 
    $\cC_h^{\dd,a} \to \cR^{\dd,a}_h$ which is representable by algebraic
spaces, scheme-theoretically dominant, and proper. 
\end{thm}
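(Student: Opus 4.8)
The plan is to deduce the statement from the corresponding no-descent-data result --- namely that $\cR^a_h$ is an algebraic stack of finite presentation over $\Spec\cO/\varpi^a$ and a closed substack of $\cR^a$ (\cite[Thms.~5.4.19, 5.4.20]{EGstacktheoreticimages}) --- by interposing the algebraic stack $\cX^{\dd,a}_h:=\cR^{\dd,a}\times_{\cR^a}\cR^a_h$ that already appeared in the proof of Theorem~\ref{thm: R^dd is an ind algebraic stack}. First I would observe that the evident square relating $\cC^{\dd,a}_h\to\cR^{\dd,a}$ and $\cC^a_h\to\cR^a$ via the maps forgetting descent data commutes, and that $\cC^a_h\to\cR^a$ factors through $\cR^a_h$ by construction; hence $\cC^{\dd,a}_h\to\cR^a$ factors through $\cR^a_h$, and the universal property of the fibre product shows that $\cC^{\dd,a}_h\to\cR^{\dd,a}$ factors through $\cX^{\dd,a}_h$. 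Since $\cR^a_h\hookrightarrow\cR^a$ is a closed immersion, so is its base change $\cX^{\dd,a}_h\hookrightarrow\cR^{\dd,a}$; thus $\cX^{\dd,a}_h$ is a closed substack of $\cR^{\dd,a}$, and by (the proof of) Theorem~\ref{thm: R^dd is an ind algebraic stack} it is an algebraic stack of finite presentation over $\Spec\cO/\varpi^a$.

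Next I would check that the factored morphism $f\colon\cC^{\dd,a}_h\to\cX^{\dd,a}_h$ is proper and representable by algebraic spaces. The composite $\cC^{\dd,a}_h\to\cX^{\dd,a}_h\hookrightarrow\cR^{\dd,a}$ has these properties by Corollary~\ref{cor: C^dd,a is an algebraic stack}(3) (base changed over $\cO/\varpi^a$), and since the second arrow is a closed immersion --- in particular separated and a monomorphism --- the usual cancellation properties for proper morphisms and for morphisms representable by algebraic spaces yield the claim for $f$. In particular $f$ is quasi-compact and quasi-separated and its target is a Noetherian algebraic stack, so the scheme-theoretic image of $f$ in the sense of \cite[\S3.2]{EGstacktheoreticimages} is an honest closed substack of $\cX^{\dd,a}_h$ through which $f$ factors by a scheme-theoretically dominant morphism. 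I would then invoke the compatibilities of \cite[\S3.2]{EGstacktheoreticimages}: since $\cC^{\dd,a}_h\to\cR^{\dd,a}$ factors through the closed substack $\cX^{\dd,a}_h$, its scheme-theoretic image in $\cR^{\dd,a}$ --- which is $\cR^{\dd,a}_h$ by definition --- is contained in $\cX^{\dd,a}_h$ and agrees there with the scheme-theoretic image of $f$. Consequently $\cR^{\dd,a}_h$ is a closed substack of $\cX^{\dd,a}_h$, and hence of $\cR^{\dd,a}$.

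It then remains to record the finiteness and morphism properties. As a closed substack of an algebraic stack, $\cR^{\dd,a}_h$ is algebraic; the closed immersion $\cR^{\dd,a}_h\hookrightarrow\cX^{\dd,a}_h$ is of finite type and quasi-separated, and $\cX^{\dd,a}_h$ is of finite presentation over the Noetherian ring $\cO/\varpi^a$, so by~\cite[\href{http://stacks.math.columbia.edu/tag/0DQJ}{Tag 0DQJ}]{stacks-project} that closed immersion, and therefore $\cR^{\dd,a}_h$ itself, is of finite presentation over $\Spec\cO/\varpi^a$. The morphism $\cC^{\dd,a}_h\to\cR^{\dd,a}_h$ --- which is $f$ followed by the closed immersion $\cR^{\dd,a}_h\hookrightarrow\cX^{\dd,a}_h$, equivalently the morphism $\cC^{\dd,a}_h\to\cR^{\dd,a}$ factored through its scheme-theoretic image --- is scheme-theoretically dominant by the previous paragraph, and is proper and representable by algebraic spaces by one more application of the cancellation properties, using that $\cR^{\dd,a}_h\hookrightarrow\cR^{\dd,a}$ is a closed immersion and that $\cC^{\dd,a}_h\to\cR^{\dd,a}$ has these properties.

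The step I expect to demand the most care is the middle one: confirming, via \cite[\S3.2]{EGstacktheoreticimages}, that for a proper morphism into a Noetherian algebraic stack the abstractly-defined scheme-theoretic image is represented by a genuine closed substack through which the morphism is scheme-theoretically dominant, and that this formation is compatible with passing between the ambient stacks $\cX^{\dd,a}_h$ and $\cR^{\dd,a}$ (so that the object denoted $\cR^{\dd,a}_h$ really is the closed substack produced by working inside $\cX^{\dd,a}_h$). Everything else --- the factorisation through $\cX^{\dd,a}_h$, the cancellation arguments, and the finite-presentation bookkeeping --- is routine.
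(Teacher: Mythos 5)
Your proposal is correct and follows essentially the same route as the paper: define the intermediate stack $\cX^{\dd,a}_h:=\cR^{\dd,a}\times_{\cR^a}\cR^a_h$, observe that $\cC^{\dd,a}_h\to\cR^{\dd,a}$ factors through it, compute the scheme-theoretic image inside $\cX^{\dd,a}_h$, and use Tag~0DQJ for finite presentation. The only cosmetic difference is that where you invoke ``the compatibilities of \cite[\S3.2]{EGstacktheoreticimages}'' and cancellation arguments, the paper cites the specific results \cite[Prop.~3.2.31]{EGstacktheoreticimages} (compatibility of scheme-theoretic images with closed immersions of the target) and \cite[Lem.~3.2.29]{EGstacktheoreticimages} (that the scheme-theoretic image of a proper morphism of algebraic stacks is a closed substack through which the morphism factors scheme-theoretically dominantly and properly), which package exactly the steps you carry out by hand.
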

\begin{proof}
As in the proof of Theorem~\ref{thm: R^dd is an ind algebraic stack},
if we set $\cX^{\dd,a}_h:=\cR^{\dd,a}\times_{\cR^a}\cR^a_h$, then~$\cX^{\dd,a}_h$ is an algebraic stack of
    finite presentation over $\Spec\cO/\varpi^a$, and the natural
    morphism $\cX^{\dd,a}_h\to\cR^{\dd,a}$ is a closed immersion. The
    morphism~$\cC_h^{\dd,a}\to\cR^{\dd,a}$ factors through~$\cX^{\dd,a}_h$
    (because the morphism $\cC_h^a\to\cR^{a}$ factors through its
    scheme-theoretic image~$\cR^a_h$), so by~\cite[Prop.\
    3.2.31]{EGstacktheoreticimages}, 
    $\cR^{\dd,a}_h$ is the
    scheme-theoretic image of the morphism of algebraic stacks
    $\cC_h^{\dd,a}\to\cX^{\dd,a}_h$. The required properties now follow
    from \cite[Lem.\ 3.2.29]{EGstacktheoreticimages}
    (using representability by algebraic spaces and properness of the morphism
$\cC_h^{\dd,a} \to \cR^{\dd,a}$,
as proved in Corollary~\ref{cor: C^dd,a is an algebraic stack}~(3),
to see that the induced morphism $\cC_h^{\dd,a} \to \cR_h^{\dd,a}$ is representable
by algebraic spaces and proper,
along with~\cite[\href{http://stacks.math.columbia.edu/tag/0DQJ}{Tag
      0DQJ}]{stacks-project}, and the fact that~$\cX^{\dd,a}_h$ is of finite presentation
    over~$\Spec\cO/\varpi^a$, to see that~$\cR^{\dd,a}_h$ is of finite presentation). 
\end{proof}

\subsection{Representations of tame groups}\label{subsec: tame groups}
Let $G$ be a finite group.

\begin{df}
We let $\Rep_d(G)$ denote the algebraic stack
classifying $d$-dimensional
representations of $G$ over $\cO$: if $X$ is any $\cO$-scheme,
then $\Rep_d(G)(X)$ is the groupoid consisting 
of locally free sheaves of rank $d$ over $X$ endowed with an $\cO_X$-linear
action of $G$ (rank $d$ locally free $G$-sheaves, for short);
morphisms are $G$-equivariant isomorphisms of vector bundles.
\end{df}

We now suppose that $G$ is tame, i.e.\ that it has prime-to-$p$ order.
In this case (taking into account the fact that $\F$ has characteristic~$p$,
and that $\cO$ is Henselian),
the isomorphism classes of $d$-dimensional $G$-representations
of $G$ over $E$ and over $\F$ are in natural bijection.  Indeed, any 
finite-dimensional representation $\tau$
of $G$ over $E$ contains a $G$-invariant
$\cO$-lattice $\tau^{\circ}$,
and the associated representation of $G$ over $\F$ is given
by forming $\overline{\tau}:= \F\otimes_{\cO} \tau^{\circ}$.

\begin{lemma}
\label{lem:tame reps}
Suppose that~ $G$ is tame, and that~ $E$ is chosen large enough so
 that each irreducible
representation of $G$ over $E$ is absolutely irreducible {\em (}or,
equivalently, so that each irreducible representation of $G$ over $\F$
is absolutely irreducible{\em )}, and so that each irreducible
representation of~$G$ over~$\Qpbar$ is defined over~$E$
\emph{(}equivalently, so that each irreducible representation of~$G$
over~$\Fpbar$ is defined over~$\F$\emph{)}. 
\begin{enumerate}
\item $\Rep_d(G)$ is the disjoint union of irreducible
  components 
$\Rep_d(G)_{\tau}$, where $\tau$ ranges over the finite set of isomorphism
classes of $d$-dimensional representations of $G$ over $E$.
\item A morphism $X \to \Rep_d(G)$
factors through $\Rep_d(G)_{\tau}$
if and only if the associated locally free $G$-sheaf on $X$
is Zariski locally isomorphic to $\tau^{\circ}\otimes_{\cO} \cO_X$.
\item If we write $G_{\tau} := \Aut_{\cO[G]}(\tau^{\circ})$, 
then $G_{\tau}$ is a smooth
\emph{(}indeed reductive\emph{)} group scheme over $\cO$,
and
$\Rep_d(G)_{\tau}$ is isomorphic to the classifying space
$[\Spec \cO/G_{\tau}]$. 
\end{enumerate}
\end{lemma}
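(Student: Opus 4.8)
The plan is to reduce everything to the structure theory of the group algebra $\cO[G]$. The starting point is that, since $E/\Qp$ is finite and $p\nmid|G|$, the order $|G|$ is a unit in $\cO$; combined with the hypothesis that $E$ (equivalently $\F$) is a splitting field for $G$, this yields a decomposition $\cO[G]\cong\prod_{i=1}^{r}M_{n_i}(\cO)$, where $\rho_1,\dots,\rho_r$ are the irreducible $E$-representations of $G$ and $n_i=\dim_E\rho_i$. Concretely, the central primitive idempotents of $E[G]$ have coefficients in $\cO$ (being $|G|^{-1}$ times sums of character values, which are algebraic integers lying in $E$ by hypothesis), so they already lie in $\cO[G]$; writing $e_i$ for these idempotents, each factor $e_i\cO[G]$ is an $\cO$-order in the split matrix algebra $M_{n_i}(E)$, and since $\cO[G]$ is a maximal order (as $p\nmid|G|$) one gets $e_i\cO[G]\cong M_{n_i}(\cO)$ --- alternatively this may be seen directly by lifting the matrix units of $\F[G]\onto$ through $\cO[G]\onto\F[G]$ via Hensel's lemma. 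I expect this structural input, and in particular its integral (as opposed to generic or residual) form, to be the step requiring the most care; everything afterwards is Morita theory and descent.

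Granting this, I record the consequences I will use. Write $\rho_i^\circ$ for a $G$-stable $\cO$-lattice in $\rho_i$ (unique up to isomorphism, being the standard module for the $i$-th factor of $\cO[G]$). Then every $\cO[G]$-module finite free over $\cO$ is projective over $\cO[G]$ and decomposes, uniquely up to isomorphism, as $\bigoplus_{i}(\rho_i^\circ)^{\oplus m_i}$. More generally, for any $\cO$-algebra $B$, a $B[G]$-module $\cV$ that is finite projective over $B$ splits, via the $e_i$, as $\bigoplus_i\cV_i$ with $\cV_i$ a finite projective $M_{n_i}(B)$-module; by Morita theory $\cV_i\cong\rho_i^\circ\otimes_\cO\cW_i$ for a finite projective $B$-module $\cW_i$ of some rank $m_i$, so Zariski-locally on $\Spec B$ one has $\cV\cong\bigoplus_i(\rho_i^\circ)^{m_i}\otimes_\cO B$. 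Finally, since $\rho_i^\circ$ is projective over $\cO[G]$, the formation of $\Hom_{\cO[G]}(\rho_i^\circ,-)$, and hence of $\Hom_{\cO[G]}(\tau^\circ,-)$, commutes with arbitrary base change $\cO\to B$.

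Next I would prove~(3) together with irreducibility. Given a $d$-dimensional $E$-representation $\tau$, write $\tau^\circ\cong\bigoplus_i(\rho_i^\circ)^{\oplus m_i}$. By Schur's lemma in the split setting $\End_{\cO[G]}(\rho_i^\circ)=\cO$ and $\Hom_{\cO[G]}(\rho_i^\circ,\rho_j^\circ)=0$ for $i\neq j$ (a nonzero map would be an isomorphism after $\otimes_\cO E$), so $\End_{\cO[G]}(\tau^\circ)\cong\prod_i M_{m_i}(\cO)$ and hence $G_\tau=\Aut_{\cO[G]}(\tau^\circ)\cong\prod_i\GL_{m_i,\cO}$ is a smooth reductive group scheme over $\cO$. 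By the base-change compatibility above, for any $\cO$-scheme $X$ the group scheme $\underline{\Aut}_{\cO_X[G]}(\tau^\circ\otimes_\cO\cO_X)$ is $G_\tau\times_\cO X$. Now define $\Rep_d(G)_\tau\subseteq\Rep_d(G)$ to be the full subcategory of those $G$-sheaves $\cV$ that are, Zariski-locally on the base, isomorphic to $\tau^\circ\otimes_\cO\cO_X$; then $\cV\mapsto\underline{\Isom}_{\cO_X[G]}(\tau^\circ\otimes_\cO\cO_X,\cV)$ identifies $\Rep_d(G)_\tau$ with the stack of Zariski-locally-trivial $G_\tau$-torsors. Since $G_\tau$ is a product of general linear groups it is a special group in the sense of Serre, so every fppf $G_\tau$-torsor is Zariski-locally trivial; hence $\Rep_d(G)_\tau$ is the stack of fppf $G_\tau$-torsors, i.e. $[\Spec\cO/G_\tau]$. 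In particular $\Rep_d(G)_\tau$ is irreducible, as $\Spec\cO$ is irreducible and $G_\tau$ has connected fibres.

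It remains to assemble~(1) and~(2). Part~(2) is the defining property of $\Rep_d(G)_\tau$ used above, so what must be checked is that the $\Rep_d(G)_\tau$, as $\tau$ ranges over the finitely many $d$-dimensional $E$-representations, are open and closed substacks partitioning $\Rep_d(G)$. For this, given any $X$-point $\cV$ of $\Rep_d(G)$, apply the decomposition of the second paragraph: $\cV\cong\bigoplus_i\rho_i^\circ\otimes_\cO\cW_i$ with $\cW_i$ locally free on $X$; the ranks $m_i=\operatorname{rk}\cW_i$ are locally constant functions on $X$ with $\sum_i m_in_i=d$, and on the open and closed locus $X_\tau\subseteq X$ where they take the values corresponding to a fixed $\tau$, the restriction $\cV|_{X_\tau}$ is Zariski-locally isomorphic to $\tau^\circ\otimes_\cO\cO_{X_\tau}$, i.e.\ lies in $\Rep_d(G)_\tau(X_\tau)$. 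Thus $X=\bigsqcup_\tau X_\tau$ compatibly with the maps to $\Rep_d(G)$, which is precisely the assertion that $\Rep_d(G)=\bigsqcup_\tau\Rep_d(G)_\tau$ as open and closed substacks. Combined with the irreducibility from the previous paragraph, the $\Rep_d(G)_\tau$ are exactly the irreducible components, completing the proof.
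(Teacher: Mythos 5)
Your proof is correct, and its core is the same Morita-theoretic reduction the paper uses: both hinge on the fact that, since $p \nmid |G|$ and $E$ (equivalently $\F$) is a splitting field, the module category of $\cO[G]$ is Morita equivalent to that of $\prod_{\sigma}\cO$. The packaging of this key input differs slightly. The paper establishes it in progenerator form: it checks that $P = \oplus_\sigma \sigma^\circ$ is a projective generator with $\End_G(P) = \prod_\sigma \cO$, and then invokes Morita theory once to get the global equivalence $M \mapsto \bigl(\Hom_G(\sigma^\circ, M)\bigr)_\sigma$. You instead prove the ring-theoretic structure theorem $\cO[G] \cong \prod_i M_{n_i}(\cO)$ directly (integrality of the central idempotents, plus maximality of $\cO[G]$ as an order or Hensel-lifting of matrix units), and then apply Morita to each factor $M_{n_i}(B)$; these are Morita-dual expressions of the same fact, so this difference is purely expository, with your route making the integral ring structure explicit where the paper stays at the level of module categories. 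The more substantive (if minor) divergence is at the final identification of $\Rep_d(G)_\tau$ with $[\Spec\cO/G_\tau]$: the paper gets this immediately from the Morita data, since a tuple $(N_\sigma)$ of projective $A$-modules of ranks $n_\sigma$ is precisely an $A$-point of $\prod_\sigma [\Spec\cO/\GL_{n_\sigma}]$, whereas you first identify $\Rep_d(G)_\tau$ with Zariski-locally trivial $G_\tau$-torsors and then use that $G_\tau \cong \prod \GL_{m_i}$ is special to pass to all fppf torsors. Your version is a step longer but makes explicit a descent point the paper elides.
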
 
\begin{proof}
Since~$G$ has order prime to~$p$, the
representation~$P:= \oplus_\sigma\sigma^\circ$ is a projective generator of
the category of $\cO[G]$-modules, where~$\sigma$ runs over a set of
representatives for the isomorphism classes of irreducible
$E$-representations of~$G$. (Indeed, each~$\sigma^\circ$ is
projective, because the fact that~$G$ has order prime to~$p$ means
that all of the~$\Ext^1$s against~$\sigma^\circ$ vanish. To see that
$\oplus_\sigma\sigma^\circ$ is a generator, we need to show that every
$\cO[G]$-module admits a non-zero map from some~$\sigma^\circ$. We can
reduce to the case of a finitely generated module~$M$, and it is
then enough (by projectivity) to prove that~$M\otimes_\cO\F$ admits
such a map, which is clear.)
Our assumption that each $\sigma$ is absolutely irreducible furthermore
shows that $\End_G(\sigma^{\circ}) = \cO$ for each $\sigma$,
so that $\End_G(P) = \prod_{\sigma} \cO$.  

Standard Morita theory then shows that the functor
$M \mapsto \Hom_G(P,M)$ induces an equivalence between the category
of $\cO[G]$-modules and the category of $\prod_{\sigma} \cO$-modules.
Of course, a $\prod_{\sigma}\cO$-module is just given by a tuple
$(N_{\sigma})_{\sigma}$ of $\cO$-modules,
and in this optic, the functor $\Hom_G(P,\text{--})$ 
can be written as $M \mapsto \bigl(\Hom_G(\sigma^{\circ},M)\bigr)_{\sigma}$,
with a quasi-inverse functor being given by $(N_{\sigma}) \mapsto
\bigoplus_{\sigma} \sigma^{\circ} \otimes_{\cO} N_{\sigma}.$
It is easily seen (just using the fact that $\Hom_G(P,\text{--})$
induces an equivalence of categories)
that $M$ is a finitely generated projective $A$-module,
for some $\cO$-algebra $A$,
if and only if each $\Hom_G(\sigma^{\circ},M)$
is a finitely generated projective $A$-module.

The preceding discussion shows that
giving a rank $d$ representation of $G$ over an $\cO$-algebra $A$
amounts to giving a tuple $(N_{\sigma})_{\sigma}$ of projective
$A$-modules, of ranks~$n_{\sigma}$, such that $\sum_{\sigma} n_{\sigma}
\dim \sigma = d.$
For each such tuple of ranks $(n_\sigma)$, we obtain a corresponding
moduli stack
$\Rep_{(n_{\sigma})}(G)$ classifying rank $d$ representations of $G$
which decompose in this manner,
and $\Rep_d(G)$ is isomorphic to the disjoint union
of the various stacks~$\Rep_{(n_{\sigma})}(G).$

If we write $\tau = \oplus_{\sigma} \sigma^{n_{\sigma}}$, 
then we may relabel $\Rep_{(n_{\sigma})}(G)$ as~$\Rep_{\tau}(G)$; 
statements~(1) and~(2) are then proved.
By construction, there is an isomorphism  
$$\Rep_{\tau}(G) = \Rep_{(n_{\sigma})}(G) \iso \prod_{\sigma}
[\Spec \cO/\GL_{n_{\sigma}}].$$
Noting that $G_{\tau} := \Aut(\tau) = 
\prod_\sigma\GL_{n_\sigma/\cO}$, 
we find that statement~(3) follows as well.
\end{proof}

For each~$\tau$, it follows from the identification of $\Rep_d(G)_{\tau}$ with
$[\Spec \cO/G_{\tau}]$ that there is a natural map
$\Rep_d(G)_{\tau}\to\Spec\cO$. We let~$\pi_0(\Rep_{d}(G))$ denote the
disjoint union of copies of~$\Spec\cO$, one for each isomorphism
class~$\tau$; then there is a natural map
$\Rep_d(G)\to\pi_0(\Rep_d(G))$. While we do not want to develop a
general theory of the \'etale~$\pi_0$ groups of algebraic stacks, we
note that it is natural to regard~$\pi_0(\Rep_d(G))$ as the
\'etale~$\pi_0$ of~$\Rep_d(G)$.
\subsection{Tame inertial types}
\label{subsec:tame inertial types,
  general height}Write $I(K'/K)$ for the inertia subgroup of~$\Gal(K'/K)$.
Since we are assuming that~$E$ is large enough that it contains the
image of every embedding $K'\into\Qpbar$, it follows in particular that every
$\Qpbar$-character of~$I(K'/K)$ is defined
over~$E$.

Recall from Subsection~\ref{subsec: notation} that if $A$ is an~$\cO$-algebra, and~$\gM$ is a Breuil--Kisin module with
$A$-coefficients, then we write $\gM_i$ for  $e_i\gM$. Since $I(K'/K)$
acts trivially on~$W(k')$, the $\hat{g}$ for~$g\in I(K'/K)$ stabilise
each~$\gM_i$, inducing an action of $I(K'/K)$ on $\gM_i/u\gM_i$. 

Write \[\Rep_{d,I(K'/K)}:=\prod_{i = 0}^{f'-1}
\Rep_d\bigl(I(K'/K)\bigr),\]the fibre product being taken over~$\cO$.  If~$\{\tau_i\}$ is an $f'$-tuple of isomorphism classes 
of $d$-dimensional representations of $I(K'/K)$, we write \[\Rep_{d,I(K'/K),\{\tau_i\}}:=\prod_{i = 0}^{f'-1}
\Rep_d\bigl(I(K'/K)\bigr)_{\tau_i}.\] Lemma~\ref{lem:tame reps} shows that we may write
$$\Rep_{d,I(K'/K)} = \coprod_{\{\tau_i\}}
\Rep_{d,I(K'/K),\tau_i}.$$
 Note that since $K'/K$ is tamely ramified,
$I(K'/K)$ is abelian of prime-to-$p$ order, and each~$\tau_i$ is just
a sum of characters. If all of the~$\tau_i$ are equal to some
fixed~$\tau$, then we write $\Rep_{d,I(K'/K),\tau}$ for
$\Rep_{d,I(K'/K),\{\tau_i\}}$. We have corresponding
stacks~$\pi_0(\Rep_{d,I(K'/K)})$,
~$\pi_0(\Rep_{d,I(K'/K),\{\tau_i\}})$
and~$\pi_0(\Rep_{d,I(K'/K),\tau})$, defined in the obvious way.

 If $\gM$ is a Breuil--Kisin module of rank~$d$ with descent data and $A$-coefficients,
then $\gM_i/u\gM_i$ is projective $A$-module of rank $d$, endowed
with an $A$-linear action of $I(K'/K)$,
and so is an $A$-valued point of $\Rep_d\bigl(I(K'/K)\bigr).$ Thus we obtain a morphism 
\numequation
\label{eqn:mixed type morphism}
\cC_d^{\dd} \to \Rep_{d,I(K'/K)},
\end{equation}
defined via $\gM \mapsto ( \gM_0/u\gM_0, \ldots, \gM_{f'-1}/u\gM_{f'-1}).$

\begin{defn}
  \label{defn: Kisin module of type tau general rank}Let $A$ be
  an~$\cO$-algebra, and let $\gM$ be a 
  Breuil--Kisin module of rank~$d$ with $A$-coefficients. We say that $\gM$ has
  \emph{mixed type}~$(\tau_i)_i$
  if the composite $\Spec A \to \cC_d^{\dd} \to \Rep_{d,I(K'/K)}$ (the first arrow being the morphism
that classifies $\gM$, and the second arrow being~(\ref{eqn:mixed 
type morphism}))
factors through $\Rep_{d,I(K'/K),\{\tau_i\}}$.
Concretely, this is equivalent to requiring that,
Zariski locally on
  $\Spec A$, 
there is an $I(K'/K)$-equivariant isomorphism $\gM_i/u\gM_i \cong
A\otimes_{\cO}\tau_i^{\circ}$ for each $i$.

If each $\tau_i=\tau$ for some fixed~$\tau$, then we say that the type
of $\gM$ is unmixed, or simply that
$\gM$ has \emph{type}~$\tau$.
\end{defn}
\begin{rem}
  \label{rem: Kisin modules don't always have a type} If $A=\cO$ then
  a Breuil--Kisin module necessarily has some (unmixed) type~$\tau$, since
after 
  inverting $E(u)$ and reducing modulo $u$ the map   $\Phi_{\gM,i}$
  gives an
  $I(K'/K)$-equivariant $E$-vector space isomorphism
  $\varphi^*(\gM_{i-1}/u\gM_{i-1})[\frac 1p] \toisom (\gM_{i}/u\gM_{i})[\frac 1p]$.
  However if $A=\F$
  there are Breuil--Kisin modules which have a genuinely mixed type; indeed, it is easy to write down examples of free Breuil--Kisin
  modules of rank one of any mixed type (see
  also~\cite[Rem. 3.7]{2015arXiv151007503C}), which necessarily cannot
  lift to characteristic zero. This shows that~$\cC_d^\dd$ is not flat
  over~$\Zp$. In the following sections, when $d=2$ and $h=1$ we define a
  closed substack~$\cC_d^{\dd,\BT}$ of~$\cC^{\dd}$ which \emph{is} flat
  over~$\Zp$, and can be thought of as taking the Zariski closure of
   $\Qpbar$-valued Galois representations that become Barsotti--Tate
   over $K'$ and such that all pairs of labeled Hodge--Tate
   weights are $\{0,1\}$ (see Remark~\ref{rem: strong determinant condition motivated by
      flat closure} below). 
\end{rem}

\begin{defn} 
  \label{defn: Ctau in general}Let $\cC_d^{(\tau_i)}$ be the
  \'etale substack of $\cC_d^{\dd}$ which associates to
  each $\cO$-algebra $A$ the subgroupoid $\cC_d^{(\tau_i)}(A)$ of
  $\cC_d^{\dd}(A)$ consisting of those Breuil--Kisin modules which are of
  mixed type~$(\tau_i)$. If each $\tau_i = \tau$ for some fixed
  $\tau$, we write $\cC_d^\tau$ for $\cC_d^{(\tau_i)}$.
\end{defn}

\begin{prop}
  \label{prop: Ctau is an algebraic stack}Each $\cC_d^{(\tau_i)}$ is an open
and closed
  substack of $\cC_d^\dd$, and $\cC_d^\dd$ is the disjoint union of its substacks~$\cC_d^{(\tau_i)}$. 
\end{prop}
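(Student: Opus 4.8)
The plan is to deduce this from the morphism $\cC_d^\dd\to\Rep_{d,I(K'/K)}$ of~\eqref{eqn:mixed type morphism} together with Lemma~\ref{lem:tame reps}(1), which exhibits $\Rep_{d,I(K'/K)}$ as the disjoint union $\coprod_{\{\tau_i\}}\Rep_{d,I(K'/K),\{\tau_i\}}$ of its open and closed substacks. By definition (Definition~\ref{defn: Ctau in general} and Definition~\ref{defn: Kisin module of type tau general rank}), $\cC_d^{(\tau_i)}$ is exactly the preimage of $\Rep_{d,I(K'/K),\{\tau_i\}}$ under this morphism. So the first step is to check that the decomposition $\Rep_{d,I(K'/K)}=\coprod_{\{\tau_i\}}\Rep_{d,I(K'/K),\{\tau_i\}}$ is a decomposition into open and closed substacks: this is immediate from Lemma~\ref{lem:tame reps}(1) applied to each factor $\Rep_d(I(K'/K))$ (using that $I(K'/K)$ is tame of prime-to-$p$ order, and that $E$ is large enough, as noted in Subsection~\ref{subsec:tame inertial types, general height}), since a finite disjoint union of irreducible components is in particular a decomposition into open and closed substacks, and a finite product of such decompositions is again one.

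Next I would observe that open and closed substacks pull back to open and closed substacks along any morphism of stacks: if $\Rep_{d,I(K'/K),\{\tau_i\}}\hookrightarrow\Rep_{d,I(K'/K)}$ is an open (resp.\ closed) immersion, then so is its base change $\cC_d^{(\tau_i)}=\cC_d^\dd\times_{\Rep_{d,I(K'/K)}}\Rep_{d,I(K'/K),\{\tau_i\}}\hookrightarrow\cC_d^\dd$, because open immersions and closed immersions of algebraic stacks are stable under base change (and one should note $\cC_d^\dd$ is an Ind-algebraic stack, indeed a $\varpi$-adic formal algebraic stack by Corollary~\ref{cor: C^dd,a is an algebraic stack}(2), so "open and closed substack" makes sense here; alternatively one can argue $a$ by $a$ using that each $\cC^{\dd,a}$ is algebraic). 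Finally, since the $\Rep_{d,I(K'/K),\{\tau_i\}}$ cover $\Rep_{d,I(K'/K)}$ and are pairwise disjoint, their preimages $\cC_d^{(\tau_i)}$ cover $\cC_d^\dd$ and are pairwise disjoint, giving the asserted disjoint union decomposition $\cC_d^\dd=\coprod_{\{\tau_i\}}\cC_d^{(\tau_i)}$.

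The only mild subtlety — and the step I would be most careful about — is the compatibility of this $\varphi$-conjugation/limit picture with forming the morphism to $\Rep_{d,I(K'/K)}$: one needs that the formation of $\gM_i/u\gM_i$ with its $I(K'/K)$-action is compatible with base change (so that "$\gM$ has mixed type $(\tau_i)$" is an \'etale-local, hence functorial, condition cutting out a substack), which is already built into the construction of~\eqref{eqn:mixed type morphism} and the fact (Remark~\ref{rem:projectivity for Kisin modules}(2), Remark~\ref{rem:completed tensor}(2)) that $\gM_i/u\gM_i$ is a projective $A$-module of rank $d$ whose formation commutes with base change. Given that, nothing further is needed: the statement is a formal consequence of the decomposition of $\Rep_{d,I(K'/K)}$ and stability of open-and-closed immersions under pullback.
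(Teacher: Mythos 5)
Your proof is correct and takes essentially the same approach as the paper: both deduce the result by pulling back the open-and-closed decomposition $\Rep_{d,I(K'/K)}=\coprod_{\{\tau_i\}}\Rep_{d,I(K'/K),\{\tau_i\}}$ (from Lemma~\ref{lem:tame reps}) along the morphism~\eqref{eqn:mixed type morphism}, using that $\cC_d^{(\tau_i)}$ is by definition the preimage of $\Rep_{d,I(K'/K),\{\tau_i\}}$. The paper's proof is just a terser version of yours.
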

\begin{proof}  
By Lemma~\ref{lem:tame reps}, $\Rep_{d,I(K'/K)}$ is the disjoint
union of its open and closed substacks $\Rep_{d,I(K'/K),\{\tau_i\}}$.
By definition $\cC_d^{(\tau_i)}$ is the preimage of 
$\Rep_{d,I(K'/K),\{\tau_i\}}$
under the morphism~(\ref{eqn:mixed type morphism});
the lemma follows.
\end{proof}

\subsection{Local models:\ generalities}\label{subsec: local models
  generalities}Throughout this section we allow~$d$ to be arbitrary; in Section~\ref{subsec: local
  models rank two} we specialise to the case~$d=2$, where we relate
the local models considered in this section to the local models
considered in the theory of Shimura varieties. We will usually
omit~$d$ from our notation, writing for example~$\cC$ for~$\cC_d$,
without any further comment. We begin with the
following lemma, for which we allow~$h$ to be arbitrary.

\begin{lem}
  \label{lem:cokernel is projective} Let $\gM$ be a rank~$d$ Breuil--Kisin
  module of height~$h$ with
  descent data over an $\cO$-algebra $A$. Assume further 
  either that~$A$ is $p^n$-torsion for some~$n$, or that~$A$ is Noetherian
  and $p$-adically complete. 
Then
  $\im \Phi_{\gM}/E(u)^h\gM$ is a finite projective
  $A$-module, and is a direct summand of~$\gM/E(u)^h\gM$ as an
  $A$-module. 
\end{lem}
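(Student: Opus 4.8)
The plan is to reduce the whole statement to the single claim that $C:=\coker\Phi_\gM=\gM/\im\Phi_\gM$ is a finite projective $A$-module; the descent data $\{\hat g\}$ are irrelevant to the assertion. First I would observe that $\gS_A/E(u)^h\gS_A$ is free over $W(k')\otimes_{\Zp}A$ with basis $1,u,\dots,u^{e'h-1}$ (as $E(u)$ is monic of degree $e'$ over $W(k')$, by Weierstrass division), hence a finite free $A$-module, so that $R:=\gM/E(u)^h\gM=\gM\otimes_{\gS_A}\gS_A/E(u)^h\gS_A$ is a finite projective $A$-module. The height hypothesis says precisely that $E(u)^h$ kills $C$, i.e.\ $E(u)^h\gM\subseteq\im\Phi_\gM$, so $Q:=\im\Phi_\gM/E(u)^h\gM$ is an $A$-submodule of $R$ with $R/Q\cong C$; if $C$ is finite projective over $A$ then the sequence $0\to Q\to R\to C\to 0$ of $A$-modules splits, and $Q$, being a direct summand of the finite projective module $R$, is both an $A$-module direct summand of $R$ and itself finite projective. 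Both assertions of the lemma follow.

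Next I would record that $C$ is finitely presented over $A$: being killed by $E(u)^h$ it is a module over $S:=\gS_A/E(u)^h\gS_A$, and since $\gM$ and $\varphi^*\gM$ are finite projective over $\gS_A$, $C=\coker\bigl(\varphi^*\gM\otimes_{\gS_A}S\to\gM\otimes_{\gS_A}S\bigr)$ is the cokernel of a morphism of finite projective $S$-modules, hence finitely presented over $S$ and so over $A$ (as $S$ is finite free over $A$). It therefore suffices to prove that $C$ is $A$-flat.

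The heart of the matter is the Noetherian case. Here the crucial input is that $\Phi_\gM$ remains injective after arbitrary base change: for an ideal $I\subseteq A$ the base change $\gM\cotimes_A A/I=\gM\otimes_A A/I$ is a Breuil--Kisin module over $A/I$ (Lemma~\ref{rem: base change of locally free Kisin module is a locally free Kisin module}, Remark~\ref{rem:Kisin-mod-ideal}) whose Frobenius $\Phi_\gM\otimes_A A/I$ is injective by Lemma~\ref{lem:kisin injective}---which applies since in the torsion case $A/I$ is again a $\Z/p^a\Z$-algebra, and in the $p$-adically complete Noetherian case $p$ lies in the Jacobson radical of $A$, hence of $A/I$, so $A/I$ is $p$-adically separated by the Krull intersection theorem. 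Applying $-\otimes_{\gS_A}\gS_A/I\gS_A$ to $0\to\varphi^*\gM\xrightarrow{\Phi_\gM}\gM\to C\to 0$ and using that $\gM,\varphi^*\gM$ are $\gS_A$-projective gives
\[
\Tor_1^{\gS_A}\bigl(C,\gS_A/I\gS_A\bigr)=\ker\bigl(\Phi_\gM\otimes_{\gS_A}\gS_A/I\gS_A\bigr)=\ker\bigl(\Phi_{\gM\otimes_A A/I}\bigr)=0 .
\]
Since $A$ is Noetherian, $\gS_A=(W(k')\otimes_{\Zp}A)[[u]]$ is flat over $A$ (Remark~\ref{rem:projectivity for Kisin modules}(3)), so change of rings yields $\Tor_1^A(C,A/I)\cong\Tor_1^{\gS_A}(C,\gS_A/I\gS_A)=0$ for every ideal $I$; hence $C$ is $A$-flat, and being finitely presented it is finite projective over $A$.

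Finally I would deduce the general $p^n$-torsion case. Such an $A$ is an $\cO/\varpi^a$-algebra for some $a$, and since $\cC^{\dd,a}_{d,h}$ is of finite presentation over $\cO/\varpi^a$ (Corollary~\ref{cor: C^dd,a is an algebraic stack}(1)) it is limit-preserving, so $\gM$ descends to a Breuil--Kisin module $\gM_0$ over a finitely generated---hence Noetherian---$\cO/\varpi^a$-subalgebra $A_0\subseteq A$; as $E(u)^h$ divides a power of $u$ in $\gS_{A_0}$ (see the proof of Lemma~\ref{lem:kisin injective}), $\coker\Phi_{\gM_0}$ is $u$-power torsion, so its formation commutes with base change and $C\cong\coker\Phi_{\gM_0}\otimes_{A_0}A$, which is finite projective over $A$ by the Noetherian case. (Alternatively one could forget the descent data throughout and invoke the corresponding descent-data-free statement of \cite{EGstacktheoreticimages}.) The step I expect to be the main obstacle is exactly this Noetherian core: establishing that injectivity of $\Phi_\gM$ is stable under base change---this is what pins down the two alternative hypotheses on $A$, via Lemma~\ref{lem:kisin injective}---together with the flatness of $\gS_A$ over $A$, which genuinely needs Noetherianness and is the reason the general torsion case must be bootstrapped from it.
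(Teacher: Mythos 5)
Your proof is correct, and it takes a genuinely different route from the paper's in how it handles the two cases of the hypothesis. The paper treats both cases uniformly: it invokes the criterion that the finitely presented module $C=\coker\Phi_\gM$ is flat iff $I\otimes_A C\to C$ is injective for every finitely generated ideal~$I$, reduces this (via the snake lemma) to exactness of the presentation after $\otimes_A A/I$, and then applies Lemma~\ref{lem:kisin injective} to $\gM\cotimes_A A/I$ --- checking in the second case that $A/I$ is $p$-adically complete by Artin--Rees. You instead settle the Noetherian case by a $\Tor$ change-of-rings argument across $A\to\gS_A$ (you correctly observe that the flatness of $\gS_A$ over~$A$, via Remark~\ref{rem:projectivity for Kisin modules}(3), is what makes the flat-base-change isomorphism $\Tor_1^A(C,A/I)\cong\Tor_1^{\gS_A}(C,\gS_{A/I})$ valid, and this is exactly where Noetherianness bites; you also invoke Krull intersection for the weaker hypothesis of $p$-adic separatedness of $A/I$, which is all that Lemma~\ref{lem:kisin injective} actually requires), and you then bootstrap the general $p^n$-torsion case by descending $\gM$ to a finitely generated --- hence Noetherian --- subalgebra $A_0\subseteq A$ via the limit-preservingness of $\cC^{\dd,a}_{d,h}$, and checking that $C\cong C_0\otimes_{A_0}A$ because $C_0$ is killed by a power of~$u$. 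This descent step is logically sound (Corollary~\ref{cor: C^dd,a is an algebraic stack} is proved in the paper before this lemma, so there is no circularity), and it is a noteworthy feature of your approach that you isolate the exact role of Noetherianness/coherence and route around it for general $p^n$-torsion~$A$ rather than attempting a direct argument there; the paper's uniform snake-lemma reduction, read literally, needs $\gM$ to be $A$-flat, which is only clear under a coherence-type hypothesis. What the paper's approach buys is brevity; what yours buys is an explicit accounting of where flatness of $\gS_A$ over $A$ enters and a self-contained fallback for the non-Noetherian case.
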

\begin{proof}
  We follow the proof of~\cite[Lem.\ 1.2.2]{kis04}. We have a short
  exact sequence
  \[0\to \im \Phi_{\gM}/E(u)^h\gM\to
    \gM/E(u)^h\gM\to\gM/\im \Phi_{\gM}\to 0 \] in which the
  second term is a finite projective $A$-module (since it is a finite
  projective $\cO_{K'}\otimes_{\Zp}A$-module), so it is enough to show
  that the third term is a projective $A$-module. It is therefore enough to show
  that the finitely generated $A$-module $\gM/\im \Phi_{\gM}$
 is finitely presented and
  flat.

To see that it is finitely presented, note that we have the
equality \[\gM/\im \Phi_{\gM}=(\gM/E(u)^h)/(\im \Phi_{\gM}/E(u)^h), \]
and the right hand side admits a presentation
by finitely generated projective
$A$-modules  \[\varphi^*(\gM/E(u)^h)\to\gM/E(u)^h\to(\gM/E(u)^h) /(\im \Phi_{\gM}/E(u)^h)\to 0. \]

To see that it is flat, it is enough to show that for every finitely generated ideal~$I$
of~$A$, the map \[I\otimes_A\gM/\im \Phi_{\gM} \to
  \gM/\im \Phi_{\gM}\] is injective. It follows easily (for
example, from the snake lemma)
that it is enough to check that the complex \[0\to
  \varphi^*\gM\to \gM\to \gM/\im \Phi_{\gM}\to 0, \] 
which is exact by Lemma~\ref{lem:kisin injective},
remains
exact after tensoring with~$A/I$.  Since~$I$ is finitely generated we
have $\gM \otimes_A A/I \cong \gM \cotimes_A A/I$ by
Remark~\ref{rem:Kisin-mod-ideal}, and the desired exactness amounts
 to the injectivity of
$\Phi_{\gM \cotimes_A A/I}$ for the Breuil--Kisin module $\gM \cotimes_A A/I$.
This  follows immediately  from
  Lemma~\ref{lem:kisin injective} in the case that~$A$ is killed by
  $p^n$, and otherwise follows from the same lemma once we check that $A/I$ is
  $p$-adically complete, which 
follows from the
Artin--Rees lemma (as $A$ is assumed Noetherian and $p$-adically
complete). 
\end{proof}
We assume from now on that~$h=1$, but 
we continue to allow arbitrary~$d$. We allow~$K'/K$ to be any 
Galois extension such that~$[K':K]$ is prime to~$p$ (so in particular~$K'/K$
is tame). 

\begin{defn}\label{defn: }
  We let~$\cM_{\loc}^{K'/K,a}$ be the algebraic stack of finite presentation
over~$\Spec\cO/\varpi^a$ defined as follows:
  if~$A$ is an $\cO/\varpi^a$-algebra, then $\cM_{\loc}^{K'/K}(A)$ is
  the groupoid of tuples~$(\gL,\gL^+)$, where:
  \begin{itemize}
  \item $\gL$ is a rank $d$ projective $\cO_{K'}\otimes_{\Zp} A$-module,
    with a $\Gal(K'/K)$-semilinear, $A$-linear action of~$\Gal(K'/K)$;
  \item $\gL^+$ is an $\cO_{K'}\otimes_{\Zp} A$-submodule of~$\gL$, which is
    locally on~$\Spec A$ a direct summand of~$\gL$ as an $A$-module
    (or equivalently, for which $\gL/\gL^+$ is  projective as an $A$-module),
    and is preserved by~$\Gal(K'/K)$.
  \end{itemize}
  We set~$\cM_{\loc}^{K'/K}:=\varinjlim_a\cM_{\loc}^{K'/K,a}$, so
  that~$\cM_{\loc}^{K'/K}$ is a $\varpi$-adic formal algebraic stack,
of finite presentation over $\Spf \cO$
  (indeed, it is easily seen to be the $\varpi$-adic completion of an algebraic stack
of finite presentation over $\Spec \cO$).
\end{defn}

\begin{defn}By Lemma~\ref{lem:cokernel is projective}, we have a natural morphism
  $\Psi:\cC_{1,K'}^{\dd}\to\cM_{\loc}^{K'/K}$, which takes a Breuil--Kisin module
  with descent data~$\gM$ of height~$1$ to the pair\[(\gM/E(u)\gM, \im\Phi_{\gM}/E(u)\gM).\]
\end{defn}

\begin{remark}\label{rem:Mloc-is-too-big}
  The definition of the stack $\cM_{\loc}^{K'/K,a}$ does not include
  any condition that mirrors the commutativity between the Frobenius
  and the   descent data on a Breuil--Kisin module,   and so in general the
  morphism $\Psi_A:\cC_{1,K'}^{\dd}(A)\to\cM_{\loc}^{K'/K}(A)$ cannot be essentially surjective.
\end{remark}

It will be convenient to consider
the twisted group rings~$\gS_A[\Gal(K'/K)]$
and~$(\cO_{K'}\otimes_{\Zp}A)[\Gal(K'/K)]$, 
in which the elements $g\in\Gal(K'/K)$ obey the following
commutation relation with elements $s\in \gS_A$ (resp.\
$s\in\cO_{K'}$):
\[g\cdot s=g(s)\cdot g. \]
(In the literature these twisted group rings are more often written as
$\gS_A * \Gal(K'/K)$, $(\cO_{K'}\otimes_{\Zp}A) * \Gal(K'/K)$, in order
to distinguish them from the usual (untwisted) group rings, but as we
will only use the twisted versions in this paper, we prefer to use
this notation for them.)

By definition, endowing a finitely generated ~$\gS_A$-module~$P$ with a semilinear
$\Gal(K'/K)$-action is equivalent to giving it the structure of a
left~$\gS_A[\Gal(K'/K)]$-module. If~$P$ is projective as an
$\gS_A$-module, then it is also projective as
an $\gS_A[\Gal(K'/K)]$-module. Indeed, $\gS_A$ is a direct summand
of~$\gS_A[\Gal(K'/K)]$ as a $\gS_A[\Gal(K'/K)]$-module, given by the
central idempotent~$\frac{1}{|\Gal(K'/K)|}\sum_{g\in\Gal(K'/K)}g$,
so~$P$ is a direct summand of the projective module
$\gS_A[\Gal(K'/K)]\otimes_{\gS_A}P$. Similar remarks apply to the case
of  $(\cO_{K'}\otimes_{\Zp}A)[\Gal(K'/K)]$-modules.

\begin{thm}\label{thm: map from Kisin to general local model is
    formally smooth}The
  morphism~$\Psi:\cC_{1,K'}^{\dd}\to\cM_{\loc}^{K'/K}$ is representable by
algebraic spaces and smooth.  
\end{thm}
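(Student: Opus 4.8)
The plan is to reduce the smoothness of $\Psi$ to an infinitesimal lifting criterion, and then verify that criterion by a direct cohomological argument involving Breuil--Kisin modules. First I would check that $\Psi$ is representable by algebraic spaces: since the diagonal of $\cM_{\loc}^{K'/K}$ is representable (it is a $\varpi$-adic formal algebraic stack) and $\cC_{1,K'}^{\dd}$ has representable diagonal as well (Corollary~\ref{cor: C^dd,a is an algebraic stack}), one checks that fibre products of $\cC_{1,K'}^{\dd}$ over $\cM_{\loc}^{K'/K}$ against affine schemes are algebraic spaces; this is formal from the set-up. Then, to prove smoothness, by~\cite[\href{http://stacks.math.columbia.edu/tag/0DNV}{Tag 0DNV}]{stacks-project} (or the corresponding statement for $\varpi$-adic formal algebraic stacks, using that everything in sight is locally of finite presentation) it suffices to verify the formal smoothness criterion: for every square-zero extension $A' \twoheadrightarrow A$ of $\cO/\varpi^a$-algebras with kernel $I$ (so $I^2 = 0$), and every commutative square sending $\Spec A \to \cC_{1,K'}^{\dd}$ and $\Spec A' \to \cM_{\loc}^{K'/K}$ compatibly, there exists a lift $\Spec A' \to \cC_{1,K'}^{\dd}$.

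Unwinding the moduli descriptions, such data amount to: a Breuil--Kisin module $\gM$ over $A$ with descent data of height $1$, together with a lift of the pair $(\gM/E(u)\gM,\ \im\Phi_\gM/E(u)\gM)$ to a pair $(\gL',\gL'^+)$ over $A'$ carrying a semilinear $\Gal(K'/K)$-action, compatible with the data over $A$. The task is to produce a Breuil--Kisin module $\gM'$ over $A'$ lifting $\gM$, with descent data, whose associated local-model point is the prescribed $(\gL',\gL'^+)$. The key step is to observe that a Breuil--Kisin module of height $1$ is equivalent to the data of $\gM$ as a $\gS_{A}[\Gal(K'/K)]$-module together with the map $\Phi_\gM: \varphi^*\gM \to \gM$, and that lifting this is governed by deformation theory of modules and of the map $\Phi$; here the twisted group ring discussion just before the theorem is what lets one treat the descent data uniformly, since projective $\gS_A$-modules with semilinear action are projective $\gS_A[\Gal(K'/K)]$-modules, so the underlying module $\gM$ lifts (uniquely up to non-unique isomorphism) to a projective $\gS_{A'}[\Gal(K'/K)]$-module $\widetilde\gM$. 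The real content is then to lift the Frobenius $\Phi_\gM$ to a map $\Phi': \varphi^*\widetilde\gM \to \widetilde\gM$ of the correct height, inducing the prescribed submodule $\gL'^+ \subseteq \gL' = \widetilde\gM/E(u)\widetilde\gM$ after reduction; the point is that prescribing $\gL'^+$ is exactly prescribing $\im\Phi'$ modulo $E(u)$, and since $I^2 = 0$ and $\gL'^+$ is a direct summand as an $A'$-module, one can choose a lift of the map $\Phi_\gM$ through the projective presentation so that its image has the required reduction, with the height-$1$ condition (cokernel killed by $E(u)$) being preserved because it can be checked modulo $I$ using Nakayama/the projectivity in Lemma~\ref{lem:cokernel is projective}. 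One must also lift the identification making the $\hat g$ commute with $\Phi'$; this is where one uses that $|\Gal(K'/K)|$ is prime to $p$, so the relevant obstruction group (an $H^2$, or a coinvariants-of-a-coherent-sheaf computation tensored with $I$) vanishes after averaging, allowing the $\Gal(K'/K)$-equivariance of $\Phi'$ to be arranged.

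The main obstacle I expect is precisely the bookkeeping in that last step: one has genuine freedom in choosing the lift $\Phi'$, and one must show the subspace of choices that (a) have height $1$, (b) induce $\gL'^+$, and (c) are $\Gal(K'/K)$-equivariant, is nonempty. The cleanest route is probably to first ignore equivariance, lift $\Phi$ as a map of $\gS_{A'}$-modules realizing $\gL'^+$ (a torsor computation over $I \otimes_A (\text{something locally free})$, hence locally solvable, and one only needs existence fppf-locally since smoothness is fppf-local on the source — though in fact representability plus the lifting criterion can be checked after smooth base change), and then average over $\Gal(K'/K)$, using prime-to-$p$ order, to correct $\Phi'$ to an equivariant lift without disturbing properties (a) and (b), since those are stable under the averaging operation. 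Once existence of $\gM'$ is established, it automatically maps to the given $A'$-point of $\cM_{\loc}^{K'/K}$ by construction, completing the verification of formal smoothness; combined with local finite presentation (which is inherited from the finite presentation of $\cC^{\dd,a}_{1,K'}$ and $\cM_{\loc}^{K'/K,a}$ over $\cO/\varpi^a$), this gives that $\Psi$ is smooth.
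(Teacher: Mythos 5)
Your core lifting argument is essentially the paper's: both reduce to the infinitesimal lifting criterion, both exploit the observation that a projective $\gS_A$-module with semilinear $\Gal(K'/K)$-action is projective over the twisted group ring $\gS_A[\Gal(K'/K)]$ in order to lift the underlying module, and both then lift $\Phi$ while forcing its image modulo $E(u)$ to hit the prescribed $\gL'^+$. One genuine difference in route: where you propose to lift $\Phi$ non-equivariantly and then average over $\Gal(K'/K)$, the paper instead lifts $\Phi$ directly as a map of $\gS_A[\Gal(K'/K)]$-modules (using that $\varphi^*\gM_A$ is projective over the twisted group ring), so equivariance is automatic and no averaging step is needed. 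Your averaging argument does work — $\gL'^+$ is Galois-stable, the average of the reductions mod $I$ recovers $\Phi_\gM$, and surjectivity onto $\gM_A^+$ follows by Nakayama — but it is the less streamlined of the two. You are also a bit quick in identifying $\gL'$ with $\widetilde\gM/E(u)\widetilde\gM$ after lifting the module: the lifted module only produces \emph{some} rank-$d$ projective $(\cO_{K'}\otimes_{\Zp}A)[\Gal(K'/K)]$-module, and one must still construct an isomorphism with the prescribed $\gL'$ compatibly with the given one over $A/I$ (the paper does this explicitly by lifting $\theta$ using projectivity of $\gM_A/E(u)\gM_A$ and Nakayama).

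The real gap is in your claim that representability by algebraic spaces is ``formal from the set-up'' because both source and target have representable diagonals. That inference is false: $B\Gm \to \Spec k$ is a morphism between stacks with affine diagonals but is not representable by algebraic spaces. What you actually need is that the \emph{relative} diagonal $\cC_{1,K'}^{\dd} \to \cC_{1,K'}^{\dd}\times_{\cM_{\loc}^{K'/K}}\cC_{1,K'}^{\dd}$ is a monomorphism, equivalently that $\Psi$ is injective on automorphism groups; this requires an argument (e.g., a successive-approximation argument using the height-$\le 1$ condition and $u$-adic completeness to show that an automorphism of $\gM$ commuting with $\Phi$ and descent data and congruent to the identity mod $E(u)$ is the identity). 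The paper sidesteps this by directly citing~\cite[Lem.~7.10]{Emertonformalstacks}, which applies because both $\cC_{1,K'}^{\dd}$ and $\cM_{\loc}^{K'/K}$ are $\varpi$-adic formal algebraic stacks of finite presentation over $\Spf\cO$; if you want to avoid that citation you need to supply the injectivity-on-automorphisms argument rather than appeal to representability of the absolute diagonals.
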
 
\begin{proof}
We first show that the morphism~$\Psi$ is formally smooth,
in the sense that it satisfies the infinitesimal lifting criterion for nilpotent
thickenings of affine test objects \cite[Defn.\
2.4.2]{EGstacktheoreticimages}.
For this, 
we follow the proof of~\cite[Prop.\ 2.2.11]{kis04} (see also the proof
  of~\cite[Thm.\ 4.9]{2015arXiv151007503C}). Let $A$ be an $\cO/\varpi^a$-algebra and $I\subset A$ be a nilpotent ideal. 
  Suppose that we are given  $\gM_{A/I}\in \cC_{1,K'}^{\dd}(A/I)$
  and $(\gL_A,\gL^+_A)\in \cM_{\loc}^{K'/K}(A)$ 
together with an isomorphism
  \[
  \Psi(\gM_{A/I})\isoto (\gL_{A},\gL^+_{A})\otimes_{A}A/I=:(\gL_{A/I},\gL^+_{A/I}).
  \]
  We must show that there exists
  $\gM_{A}\in \cC_{1,K'}^{\dd}(A)$ together with an isomorphism 
  $\Psi(\gM_{A})\isoto
  (\gL_{A},\gL^+_{A})$ lifting the
  given isomorphism. 
  
As explained above, we can and do think of~$\gM_{A/I}$ as a finite
projective $\gS_{A/I}[\Gal(K'/K)]$-module, and~$\gL_A$ as a finite
projective $(\cO_{K'}\otimes_{\Zp}A)[\Gal(K'/K)]$-module. Since the
closed 2-sided ideal 
of~$\gS_{A}[\Gal(K'/K)]$ generated by~$I$ consists of
nilpotent elements, we may lift
$\gM_{A/I}$ to a finite
projective~$\gS_{A}[\Gal(K'/K)]$-module~$\gM_A$. (This is presumably
standard, but for lack of a reference we indicate a proof. In fact the
proof of ~\cite[\href{https://stacks.math.columbia.edu/tag/0D47}{Tag
  0D47}]{stacks-project} goes over unchanged to our setting. Writing
~$\gM_{A/I}$ as a direct summand of a finite free
$\gS_{A/I}[\Gal(K'/K)]$-module $F$, it is enough to lift the
corresponding idempotent in~$\End_{\gS_{A/I}[\Gal(K'/K)]}(F)$, which
is possible
by~\cite[\href{http://stacks.math.columbia.edu/tag/05BU}{Tag
  05BU}]{stacks-project}; see also~\cite[Thm.\ 21.28]{MR1125071} for another proof of the existence
of lifts of idempotents in this generality.) Note that since $\gM_{A/I}$ is of
rank~$d$ as a projective $\gS_{A/I}$-module, $\gM_A$ is of rank~$d$ as
a projective~$\gS_A$-module.

Since~$\gM_A/E(u)\gM_A$ is a projective
$(\cO_{K'}\otimes_{\Zp}A)[\Gal(K'/K)]$-module, we may lift the
composite \[\gM_A/E(u)\gM_A \onto \gM_{A/I}/E(u)\gM_{A/I} \isoto
\gL_{A/I}\] to a morphism $\theta : \gM_A/E(u)\gM_A \to \gL_A$.  
Since the composite of
$\theta$ with $\gL_A \onto \gL_{A/I}$ is
surjective, it follows by Nakayama's lemma that $\theta$ is
surjective. But a surjective map of projective modules of the same
rank is an isomorphism, and so $\theta$ is
an isomorphism lifting the given isomorphism $\gM_{A/I}/E(u)\gM_{A/I} \isoto
\gL_{A/I}$. 

We let~$\gM_A^+$ denote the
preimage in~$\gM_A$ of~$\theta^{-1}(\gL^+_A)$. The image of the induced
map $f : \gM_A^+ \subset \gM_A \onto \gM_A \cotimes_A A/I \cong  \gM_{A/I}$ is precisely $\im
\Phi_{\gM_{A/I}}$, since the same is true modulo $E(u)$ and because
$\gM^+_A$, $\im \Phi_{\gM_{A/I}}$
contain $E(u)\gM_A$, $E(u) \gM_{A/I}$ respectively. Observing that 
\[ \gM_A / \gM_A^+ \cong (\gM_A/E(u)\gM_A)/(\gM_A^+/E(u)\gM_A) \cong
\gL_A/\gL^+_A \] 
we deduce that $\gM_A/\gM_A^+$ is projective as an $A$-module, whence
$\gM_A^+$ is an $A$-module direct summand of $\gM_A$. By the same
argument $\im \Phi_{\gM_{A/I}}$ is an $A$-module direct summand of
$\gM_{A/I}$, and we conclude that the map $\gM_A^+ \cotimes_{A} A/I \to
\im \Phi_{\gM_{A/I}}$ induced by $f$ is an isomorphism.

Finally, we have the diagram
\[\xymatrix{\varphi^*\gM_A\ar[d]&\gM^+_A\ar[d]\\
    \varphi^*\gM_{A/I}\ar[r]& \im \Phi_{\gM_{A/I}}} \]
where the horizontal arrow is given by~$\Phi_{\gM_{A/I}}$, and the
right hand vertical arrow is~$f$. 
Since~$\varphi^*\gM_A$ is a
projective $\gS_{A}[\Gal(K'/K)]$-module, we may find a morphism of
$\gS_{A}[\Gal(K'/K)]$-modules $\varphi^*\gM_A\to \gM_A^+$ which fills in
the commutative square. Since the composite $\varphi^*\gM_A\to
\gM_A^+\to \im \Phi_{\gM_{A/I}} \cong \gM_A^+ \cotimes_{A} A/I $ is surjective, it follows by Nakayama's
lemma 
that
$\varphi^*\gM_A\to \gM_A^+$ is also surjective, and the composite
$\varphi^*\gM_A\to \gM_A^+\subset\gM_A$ gives a map
$\Phi_{\gM_A}$. Since $\Phi_{\gM_A}[1/E(u)]$ is a surjective
map of projective modules of the same rank, it is an isomorphism, and
we see that $\gM_A$ together with $\Phi_{\gM_A}$ is our required lifting to a
Breuil--Kisin module of rank~$d$ with descent data.

Since the source and target of $\Psi$ are of finite presentation
over $\Spf \cO$, and $\varpi$-adic, we see that $\Psi$ is representable
by algebraic spaces
(by \cite[Lem.~7.10]{Emertonformalstacks}) and locally of finite presentation
(by \cite[Cor.~2.1.8]{EGstacktheoreticimages}
and~\cite[\href{http://stacks.math.columbia.edu/tag/06CX}{Tag
  06CX}]{stacks-project}).  Thus $\Psi$ is in fact smooth (being formally
smooth and locally of finite presentation).
 \end{proof}

We now show that the inertial type of a Breuil--Kisin module is visible on the
local model.

\begin{lem}\label{lem: local model types map}There is a natural
  morphism  $\cM_{\loc}^{K'/K}\to \pi_0(\Rep_{I(K'/K)})$. 
\end{lem}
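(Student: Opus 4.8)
The plan is to construct the morphism directly; the whole point is that the inertia group $I(K'/K)$ acts trivially on the residue field $k'$ of $K'$. Fix $a\ge 1$, let $A$ be an $\cO/\varpi^a$-algebra, and let $(\gL,\gL^+)$ be an object of $\cM_{\loc}^{K'/K}(A)$; the submodule $\gL^+$ will play no role. Via the inclusion $W(k')\into\cO_{K'}$, the idempotents $e_i\in W(k')\otimes_{\Zp}\cO$ ($i\in\Z/f'\Z$) of Subsection~\ref{subsec: notation} act on $\cO_{K'}\otimes_{\Zp}A$, giving a decomposition $\gL=\bigoplus_i\gL_i$ with $\gL_i:=e_i\gL$ locally free of rank $d$ over $e_i(\cO_{K'}\otimes_{\Zp}A)$. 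Since the action of $\Gal(K'/K)$ on $W(k')$ factors through $\Gal(k'/k)$, the subgroup $I(K'/K)$ acts trivially on $W(k')$ and hence fixes each $e_i$; as the semilinear $\Gal(K'/K)$-action on $\gL$ is moreover $A$-linear, each $\gL_i$ is preserved by $I(K'/K)$, which acts $A$-linearly on it.

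Next I would reduce modulo the maximal ideal of $\cO_{K'}$. Let $\widetilde\sigma_i\colon W(k')\to\cO$ be the unique ring homomorphism lifting $\sigma_i\colon k'\into\F$, so that $e_i(W(k')\otimes_{\Zp}\cO)\iso\cO$ via $x\otimes1\mapsto\widetilde\sigma_i(x)$ and $e_i(\cO_{K'}\otimes_{\Zp}A)\cong\cO_{K'}\otimes_{W(k'),\widetilde\sigma_i}A$. Reducing modulo $\pi'$ identifies $k'\otimes_{W(k'),\widetilde\sigma_i}A$ with $A/pA$ (as $k'=W(k')/pW(k')$ and $\widetilde\sigma_i(p)=p$), so $\overline{\gL_i}:=\gL_i/\pi'\gL_i$ is locally free of rank $d$ over $A/pA$; and since $I(K'/K)$ acts trivially on $k'$, its induced action on $\overline{\gL_i}$ is $A/pA$-linear. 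Thus $(\overline{\gL_0},\dots,\overline{\gL_{f'-1}})$ defines a morphism $\Spec(A/pA)\to\Rep_{d,I(K'/K)}$, and composing with the canonical morphism $\Rep_{d,I(K'/K)}\to\pi_0(\Rep_{d,I(K'/K)})$ gives a morphism $\Spec(A/pA)\to\pi_0(\Rep_{d,I(K'/K)})$.

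Finally, since $p\in\varpi\cO$ and $\varpi$ is nilpotent in $A$, the ideal $pA$ is nilpotent, so $\Spec(A/pA)\to\Spec A$ is a thickening; in particular it induces a bijection on idempotents, equivalently on open and closed subschemes. As $\pi_0(\Rep_{d,I(K'/K)})$ is a disjoint union of copies of $\Spec\cO$, the morphism just constructed extends uniquely along this thickening to $\Spec A\to\pi_0(\Rep_{d,I(K'/K)})$. These assignments commute with base change in $A$ and are compatible as $a$ varies, so they glue to the desired morphism $\cM_{\loc}^{K'/K}\to\pi_0(\Rep_{d,I(K'/K)})$. Unwinding the definitions --- using that $\pi'$ corresponds to $u$ under $\cO_{K'}\cong\gS/E(u)$ and that $E(0)$ is $p$ times a unit --- one checks that the composition of $\Psi\colon\cC_{1,K'}^\dd\to\cM_{\loc}^{K'/K}$ with this morphism recovers the morphism $\cC_{1,K'}^\dd\to\pi_0(\Rep_{d,I(K'/K)})$ induced by~\eqref{eqn:mixed type morphism}; this is the content of the sentence preceding the lemma.

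I do not expect a real obstacle: the only mildly delicate points are the bookkeeping with the idempotents $e_i$ and the ring $k'\otimes_{W(k'),\widetilde\sigma_i}A$, and the observation that passing from $A$ to $A/pA$ loses nothing because $p$ is nilpotent. Everything else follows the pattern already used for Lemma~\ref{lem:tame reps} and the morphism~\eqref{eqn:mixed type morphism}.
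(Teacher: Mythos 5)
Your proof is correct and follows essentially the same route as the paper's: send $(\gL,\gL^+)$ to $\gL/\pi'\gL$, observe that $I(K'/K)$ acts $A/p$-linearly on it (because inertia acts trivially on the residue field), obtain an $A/p$-point of $\pi_0(\Rep_{I(K'/K)})$, and then lift uniquely along the nilpotent thickening $\Spec A/p\hookrightarrow\Spec A$ using that the target is a disjoint union of copies of $\Spec\cO$. The extra bookkeeping you include with the idempotents $e_i$ and the final remark about compatibility with $\Psi$ are correct but go beyond what the lemma requires.
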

\begin{proof} 
  The morphism $\cM_{\loc}^{K'/K}\to \pi_0(\Rep_{I(K'/K)})$ is defined by sending
$(\gL,\gL^+)\mapsto\gL/\pi'$. More precisely, $\gL/\pi'$ is a rank~$d$
projective $k'\otimes_{\Zp}A$-module with a linear action of
$I(K'/K)$, so determines an $A/p$-point of~$\pi_0(\Rep_{I(K'/K)})=\prod_{i = 0}^{f'-1}
\pi_0(\Rep_d\bigl(I(K'/K)\bigr))$. Since the target is a disjoint
union of copies of~$\Spec\cO$, the morphism $\Spec A/p\to
\pi_0(\Rep_{I(K'/K)})$ lifts uniquely to a morphism $\Spec A \to
\pi_0(\Rep_{I(K'/K)})$, as required. 
\end{proof}

\begin{defn}\label{defn: local model stack fixed type}We let \[\cM_{\loc}^{(\tau_i)}:=\cM_{\loc}^{K'/K}\otimes_{\pi_0(\Rep_{I(K'/K)})}\pi_0(\Rep_{I(K'/K),\{\tau_i\}}).\]  If each $\tau_i = \tau$ for some fixed
  $\tau$, we write $\cM_{\loc}^\tau$ for $\cM_{\loc}^{(\tau_i)}$. By
  Lemma~\ref{lem:tame reps}, $\cM_{\loc}^{K'/K}$ is the disjoint union
  of the open and closed substacks~$\cM_{\loc}^{(\tau_i)}$.
  \end{defn}

\begin{lem} 
 We have~$\cC^{(\tau_i)}=\cC^{\dd}\times_{\cM_{\loc}^{K'/K}}\cM_{\loc}^{(\tau_i)}$. 
\end{lem}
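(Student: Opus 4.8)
The plan is to identify both sides with the preimage of one and the same open and closed substack of $\pi_0(\Rep_{I(K'/K)})$ under one and the same morphism out of $\cC^{\dd}$; the real content is then to check that two a priori different such morphisms coincide.

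First I would unwind the two sides. By Definitions~\ref{defn: Kisin module of type tau general rank} and~\ref{defn: Ctau in general}, made explicit in the proof of Proposition~\ref{prop: Ctau is an algebraic stack}, $\cC^{(\tau_i)}$ is the preimage of $\Rep_{d,I(K'/K),\{\tau_i\}}$ under the morphism~\eqref{eqn:mixed type morphism}. By Lemma~\ref{lem:tame reps}, the substack $\Rep_{d,I(K'/K),\{\tau_i\}}$ of $\Rep_{d,I(K'/K)}$ is precisely the preimage of the open and closed piece $\pi_0(\Rep_{I(K'/K),\{\tau_i\}})$ under the tautological morphism $\Rep_{d,I(K'/K)}\to\pi_0(\Rep_{I(K'/K)})$. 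Hence $\cC^{(\tau_i)}$ is the preimage of $\pi_0(\Rep_{I(K'/K),\{\tau_i\}})$ under the composite $\alpha\colon\cC^{\dd}\to\Rep_{d,I(K'/K)}\to\pi_0(\Rep_{I(K'/K)})$, where the first arrow is~\eqref{eqn:mixed type morphism}. On the other hand $\cM_{\loc}^{(\tau_i)}=\cM_{\loc}^{K'/K}\times_{\pi_0(\Rep_{I(K'/K)})}\pi_0(\Rep_{I(K'/K),\{\tau_i\}})$ by Definition~\ref{defn: local model stack fixed type}, so by associativity of fibre products the right-hand side $\cC^{\dd}\times_{\cM_{\loc}^{K'/K}}\cM_{\loc}^{(\tau_i)}$ is the preimage of $\pi_0(\Rep_{I(K'/K),\{\tau_i\}})$ under the composite $\beta\colon\cC^{\dd}\xrightarrow{\Psi}\cM_{\loc}^{K'/K}\to\pi_0(\Rep_{I(K'/K)})$, the second arrow being the morphism of Lemma~\ref{lem: local model types map}. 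Thus it suffices to prove $\alpha=\beta$.

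To prove $\alpha=\beta$: since $\cC^{\dd}=\varinjlim_a\cC^{\dd,a}$ and $\pi_0(\Rep_{I(K'/K)})$ is a disjoint union of copies of $\Spec\cO$, it is enough to compare the two morphisms on $A$-valued points for $A$ an $\cO/\varpi^a$-algebra. Fix $\gM\in\cC^{\dd}(A)$. Then $\alpha(\gM)$ is the $\pi_0$-class of the tuple $(\gM_i/u\gM_i)_i\in\Rep_{d,I(K'/K)}(A)$, while, by the construction in Lemma~\ref{lem: local model types map} and using that $\Psi(\gM)=(\gM/E(u)\gM, \im\Phi_{\gM}/E(u)\gM)$, the point $\beta(\gM)$ is the unique lift to an $A$-point of the $(A/p)$-point of $\pi_0(\Rep_{I(K'/K)})$ determined by $(\gM/E(u)\gM)/\pi'$ with its $I(K'/K)$-action. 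Now $E(u)$ is Eisenstein over $W(k')$ with $E(0)=\czero p$ and $\czero\in W(k')^{\times}$, so $E(u)\equiv\czero p\pmod{u}$ in $\gS_A$ and therefore $(E(u),u)=(p,u)$ as ideals of $\gS_A$; identifying $\gS_A/E(u)\gS_A$ with $\cO_{K'}\otimes_{\Zp}A$ via $u\mapsto\pi'$ we get $(\gM/E(u)\gM)/\pi'=\gM/(E(u),u)\gM=\gM/(p,u)\gM$, and applying the idempotents $e_i$ this is the tuple $\bigl((\gM_i/u\gM_i)\otimes_A A/p\bigr)_i$ of $(A/p)[I(K'/K)]$-modules. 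In other words, the $(A/p)$-point of $\pi_0(\Rep_{I(K'/K)})$ appearing in the description of $\beta(\gM)$ is exactly the pullback of the $A$-point $\alpha(\gM)$ along the closed immersion $\Spec A/p\hookrightarrow\Spec A$. Since $p$ is nilpotent in $A$ this is a nilpotent thickening, and since $\pi_0(\Rep_{I(K'/K)})$ is a disjoint union of copies of $\Spec\cO$, hence formally \'etale over $\cO$, such a pullback has a unique lift to an $A$-point; as $\alpha(\gM)$ is itself such a lift we conclude $\beta(\gM)=\alpha(\gM)$. Therefore $\alpha=\beta$, and the lemma follows.

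The only non-formal step is the identification $(\gM/E(u)\gM)/\pi'=\gM/(p,u)\gM=(\gM/u\gM)/p$ together with the matching of the $e_i$-decompositions, which is what reconciles the notion of inertial type visible on the local model $\cM_{\loc}^{K'/K}$ (read off after reduction modulo $p$) with the mixed type of the Breuil--Kisin module $\gM$ (read off directly from the $\gM_i/u\gM_i$); everything else is bookkeeping with fibre products and the universal property of the formally \'etale stacks $\pi_0(\Rep_{I(K'/K)})$.
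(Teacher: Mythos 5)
Your proof is correct; the paper itself only says ``This is immediate from the definitions,'' and your argument supplies precisely the unwinding one would want: both sides are the preimage of $\pi_0(\Rep_{I(K'/K),\{\tau_i\}})$ along the same map out of $\cC^{\dd}$, once one checks (via $(E(u),u)=(p,u)$ and formal \'etaleness of $\pi_0(\Rep_{I(K'/K)})$) that the morphism through $\Rep_{d,I(K'/K)}$ and the one through $\cM_{\loc}^{K'/K}$ coincide. This is the same route the paper's terse remark gestures at, just carried out in full.
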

\begin{proof}
  This is immediate from the definitions.
\end{proof}

In particular, $\cC^{(\tau_i)}$ is a closed substack of $\cC^{\dd}$.

\subsection{Local models:\ determinant conditions}\label{subsec: local
  models rank two}
Write~$N=K\cdot W(k')[1/p]$, so that~$K'/N$ is
totally ramified. Since~$I(K'/K)$ is cyclic of order prime to~$p$  and
acts trivially on~$\cO_N$, we may
write  \numequation\label{eqn: breaking up local model over
  xi}(\gL,\gL^+)=\oplus_{\xi}(\gL_{\xi},\gL^+_\xi)\end{equation}where the sum
is over all characters~$\xi:I(K'/K)\to\cO^\times$, and  
$\gL_\xi$ (resp.\ $\gL^+_\xi$) is the $\cO_N\otimes A$-submodule of~$\gL$
(resp.\ of~$\gL^+$) on which~$I(K'/K)$ acts through~$\xi$.
\begin{defn}\label{defn: strong determinant condition general local
    model} 
We say that an object $(\gL,\gL^+)$  of~$\cM_{\loc}^{K'/K}(A)$
\emph{satisfies the strong determinant condition} if Zariski
  locally on $\Spec A$ the
  following condition holds: for all $a\in\cO_N$ and all $\xi$, we
  have \numequation\label{eqn: strong det condn}
  \det{\!}_A(a|\gL^+_\xi)
  =\prod_{\psi:N\into E}\psi(a) 
  \end{equation}
  as polynomial functions on $\cO_N$ 
  in the sense of~\cite[\S5]{MR1124982}. 
  \end{defn}

  \begin{remark}\label{rem: explicit version of determinant condition
      local models version} 
An explicit version of this determinant condition is stated, in this generality, in~\cite[\S 2.2]{kis04}, specifically in the proof of \cite[Prop.\ 2.2.5]{kis04}. 
  We recall this here, with our notation. 
  We have a direct sum decomposition
  \[
  \cO_N\otimes_{\Z_p}A\toisom \bigoplus_{\sigma_i:k'\hookrightarrow \F}\cO_{N}\otimes_{W(k'),\sigma_i}A
  \]
  Recall that $e_{i}\in \cO_{N}\otimes_{\Z_p}\cO$ denotes the idempotent that identifies $e_i\cdot  \cO_N\otimes_{\Z_p}A$
  with the summand $\cO_{N}\otimes_{W(k'),\sigma_i}A$. For $j=0,1,\dots, e-1$, let $X_{j,\sigma_i}$ be an indeterminate. 
  Then the strong determinant condition on~$(\gL,\gL^+)$ is that for
  all~$\xi$, we have
  
\numequation\label{eqn: explicit strong det condn}  
\det{\!}_{A}\left(\sum_{j,\sigma_i}e_i\pi^jX_{j,\sigma_i}
\mid \gL^+_{\xi}\right) = 
\prod_{\psi}\sum_{j,\sigma_i}\left(\psi(e_i\pi^j)X_{j,\sigma_i}\right),
 \end{equation}
  where $j$ runs over $0,1,\dots,e-1$, $\sigma_i$ over embeddings $k'\hookrightarrow \F$, and $\psi$ over
  embeddings $\cO_N\hookrightarrow \cO$. Note that $\psi(e_i)=1$ if $\psi|_{W(k')}$ lifts $\sigma_i$
  and is equal to $0$ otherwise. 
  \end{remark}

\begin{defn}\label{defn: C^tau BT}
We write~$\cM_{\loc}^{K'/K,\BT}$ for the substack of~$\cM_{\loc}^{K'/K}$
given by those~$(\gL,\gL^+)$ which satisfy the strong determinant
condition. For each (possibly mixed) type~$(\tau_i)$, we write
$\cM_{\loc}^{(\tau_i),\BT}:=\cM_{\loc}^{(\tau_i)}\times_{\cM_{\loc}^{K'/K}}\cM_{\loc}^{K'/K,\BT}$.

Suppose for the remainder of this section that $d=2$ and $h=1$, so
that $\cC^{\dd}$ consists of Breuil--Kisin modules of rank two and height at
most $1$.
We then
  set~$\cC^{\dd,\BT}:=\cC^{\dd}\times_{\cM_{\loc}^{K'/K}}\cM_{\loc}^{K'/K,\BT}$,
  and
  $\cC^{(\tau_i),\BT}:=\cC^{(\tau_i)}\times_{\cM_{\loc}^{(\tau_i)}}\cM_{\loc}^{(\tau_i),\BT}$. 

A Breuil--Kisin module $\gM \in
  \cC^{\dd}(A)$ is said to satisfy the strong determinant
  condition if and only if its image $\Psi(\gM) \in
  \cM_{\loc}^{K'/K}(A)$ does, i.e.\ if and only if it lies in $\cC^{\dd,\BT}$.

\end{defn}
\begin{prop}
  \label{prop:C tau BT is algebraic}$\cC^{(\tau_i),\BT}$ \emph{(}resp.\ $\cC^{\dd,\BT}$\emph{)} is a closed
  substack of $\cC^{(\tau_i)}$ \emph{(}resp.\ $\cC^{\dd}$\emph{)}; in particular, it is a $\varpi$-adic formal
   algebraic
  stack of finite presentation over~$\cO$. 
\end{prop}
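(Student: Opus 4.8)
The statement to prove is Proposition~\ref{prop:C tau BT is algebraic}: that $\cC^{(\tau_i),\BT}$ (resp.\ $\cC^{\dd,\BT}$) is a closed substack of $\cC^{(\tau_i)}$ (resp.\ $\cC^{\dd}$), hence a $\varpi$-adic formal algebraic stack of finite presentation over $\cO$. The plan is to reduce everything to the corresponding statement on the local model stack $\cM_{\loc}^{K'/K}$, and then to prove the key claim that the strong determinant condition cuts out a \emph{closed} substack of $\cM_{\loc}^{K'/K}$. Granting that claim, the proposition follows formally: by Definition~\ref{defn: C^tau BT} we have $\cC^{\dd,\BT}=\cC^{\dd}\times_{\cM_{\loc}^{K'/K}}\cM_{\loc}^{K'/K,\BT}$, so if $\cM_{\loc}^{K'/K,\BT}\hookrightarrow\cM_{\loc}^{K'/K}$ is a closed immersion, then its base change along $\Psi:\cC^{\dd}\to\cM_{\loc}^{K'/K}$ is a closed immersion as well, and similarly for each type after the further base change defining $\cC^{(\tau_i),\BT}$. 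Since $\cC^{\dd}$ and $\cC^{(\tau_i)}$ are already known to be $\varpi$-adic formal algebraic stacks of finite presentation over $\cO$ (Corollary~\ref{cor: C^dd,a is an algebraic stack} and Proposition~\ref{prop: Ctau is an algebraic stack}), and closed substacks inherit these properties, we are done.

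So the real content is showing that $\cM_{\loc}^{K'/K,\BT}$ is a closed substack of $\cM_{\loc}^{K'/K}$. First I would reduce to each truncation $\cM_{\loc}^{K'/K,a}$ over $\cO/\varpi^a$, since $\cM_{\loc}^{K'/K}$ is the colimit along thickenings, and a substack defined compatibly at each finite level is closed iff it is so at each level. By Definition~\ref{defn: local model stack fixed type} and the decomposition \eqref{eqn: breaking up local model over xi}, the strong determinant condition \eqref{eqn: strong det condn} is imposed $\xi$-by-$\xi$ on the summands $(\gL_\xi,\gL_\xi^+)$, and it is Zariski local on $\Spec A$, so it suffices to work Zariski-locally where $\gL$ is free and then observe that "being a closed immersion" may be checked fpqc-locally on the target (as used e.g.\ in the proof of Corollary~\ref{cor: C^dd,a is an algebraic stack}). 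Locally, $\gL^+_\xi$ is a direct summand of a free $\cO_N\otimes_{\Zp}A$-module, and the condition $\det_A(a\mid\gL^+_\xi)=\prod_\psi\psi(a)$ as a polynomial identity in $a\in\cO_N$ — made explicit in Remark~\ref{rem: explicit version of determinant condition local models version} as the identity \eqref{eqn: explicit strong det condn} of polynomials in the indeterminates $X_{j,\sigma_i}$ — is the vanishing of the finite list of coefficients of the difference of two explicit polynomials with coefficients in $A$. Equivalently, following Kisin's argument in the proof of \cite[Prop.~2.2.5]{kis04}, this is exactly the statement that the pair $(\gL_\xi,\gL_\xi^+)$ defines a point of a suitable local model scheme $M_{\loc}$ inside a product of Grassmannians; the "norm-one" or determinant locus is defined by the vanishing of an ideal, hence closed. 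Thus the locus where \eqref{eqn: strong det condn} holds is the vanishing locus of a (quasi-coherent) ideal sheaf on the appropriate Grassmannian/affine chart, which is a closed immersion, and this descends to give that $\cM_{\loc}^{K'/K,a,\BT}\hookrightarrow\cM_{\loc}^{K'/K,a}$ is a closed immersion.

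For the finite-presentation bookkeeping: a closed immersion into a locally Noetherian target (here $\cM_{\loc}^{K'/K,a}$, which is of finite presentation over the Noetherian ring $\cO/\varpi^a$) is automatically of finite type and quasi-separated, hence of finite presentation by \cite[\href{http://stacks.math.columbia.edu/tag/0DQJ}{Tag 0DQJ}]{stacks-project}; so $\cM_{\loc}^{K'/K,\BT}$ and then $\cC^{\dd,\BT}$, $\cC^{(\tau_i),\BT}$ are all of finite presentation over the respective bases, and being closed substacks of $\varpi$-adic formal algebraic stacks they are themselves $\varpi$-adic formal algebraic stacks. The morphism $\cC^{(\tau_i),\BT}\to\cC^{(\tau_i)}$ being a base change of $\cC^{\dd,\BT}\to\cC^{\dd}$ along $\cC^{(\tau_i)}\hookrightarrow\cC^{\dd}$ (compatibly with the local model factorisation, using the Lemma identifying $\cC^{(\tau_i)}=\cC^\dd\times_{\cM_{\loc}^{K'/K}}\cM_{\loc}^{(\tau_i)}$), the closed-immersion property transfers.

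The main obstacle I expect is purely notational rather than conceptual: one must set up the explicit chart on the Grassmannian of $\gL_\xi^+\subset\gL_\xi$ over $\cO_N\otimes_{\Zp}A$, decomposed via the idempotents $e_i$ into $\cO_N\otimes_{W(k'),\sigma_i}A$-pieces, and check that the determinant identity \eqref{eqn: explicit strong det condn} genuinely becomes the vanishing of finitely many polynomial equations in the Plücker/affine coordinates — i.e.\ that "equality of polynomial functions on $\cO_N$ in the sense of \cite[\S5]{MR1124982}" is a closed condition. This is exactly the content of the discussion around \cite[Prop.~2.2.5]{kis04}, so the cleanest write-up is to cite that directly and only indicate the translation of notation, avoiding a full re-derivation of the local-model-scheme structure.
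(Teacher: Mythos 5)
Your proposal is correct and, modulo expository expansion, coincides with the paper's argument: the paper's proof simply notes that the strong determinant condition is an equality of polynomial functions (hence cuts out a closed substack) and that the ambient stacks are already $\varpi$-adic formal algebraic stacks of finite presentation by Corollary~\ref{cor: C^dd,a is an algebraic stack}. You make explicit the reduction to the local model stack $\cM_{\loc}^{K'/K}$ and the verification that the coefficient-matching in~\eqref{eqn: explicit strong det condn} is a Zariski-closed condition, which the paper treats as immediate; the appeal to Kisin's Grassmannian description is harmless but not needed once one observes that Remark~\ref{rem: explicit version of determinant condition local models version} already writes the condition as the vanishing of finitely many polynomials in the matrix entries.
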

\begin{proof}
  This is immediate from Corollary~\ref{cor: C^dd,a is an algebraic
    stack} and the definition of the strong determinant
  condition as an equality of polynomial functions.
\end{proof}

  \begin{rem}\label{rem: strong determinant condition motivated by
      flat closure} 
The motivation for imposing the strong determinant condition is as
follows. One can take the flat part (in the sense of~\cite[Ex.\
9.11]{Emertonformalstacks}) of the $\varpi$-adic formal
stack~$\cC^{\dd}$, and on this flat part, one can impose the condition
that the corresponding Galois representations have all pairs of labelled
Hodge--Tate weights equal to~$\{0,1\}$; that is, we can consider the
substack of~$\cC^{\dd}$ corresponding to the Zariski closure of the
these Galois representations. 

We will soon see that $\cC^{\dd,\BT}$ is flat (Corollary~\ref{cor:
  Kisin moduli consequences of local models}).  By Lemma~\ref{lem: O points of moduli
  stacks} below, it follows that the substack of the previous paragraph is equal
to~$\cC^{\dd,\BT}$; so we may think of the strong determinant
condition as being precisely the condition which imposes this condition on the
labelled Hodge--Tate weights, and results in a formal stack which is
flat over~$\Spf\cO$. Since the inertial
types of $p$-adic Galois representations are unmixed, it is natural
from this perspective to expect
that $\cC^{\dd,\BT}$ should be the disjoint union of the stacks
$\cC^{\tau,\BT}$ for \emph{unmixed} types, and indeed this will be
proved shortly at Corollary~\ref{cor: BT C is union of tau C}.
  \end{rem}

To compare the strong determinant condition to the condition that the
type of a Breuil--Kisin module is unmixed, we make some observations about these conditions in the case of finite
field coefficients.

\begin{lem}
  \label{lem: determinant condition explicit finite field local models
  version}Let $\F'/\F$
  be a finite extension, and let~$(\gL,\gL^+)$ be an object
  of~$\cM_{\loc}^{K'/K}(\F')$. Then~$(\gL,\gL^+)$
satisfies the strong determinant condition if and only if the
  following property holds: for each~$i$ and for each $\xi: I(K'/K)
  \to \cO^\times$ we have $\dim_{\F'} (\gL^+_i)_{\xi} = e$. 
\end{lem}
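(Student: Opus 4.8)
The plan is to unwind the strong determinant condition over the field $\F'$ into an identity between honest polynomials and then read off the dimensions. First I would replace the condition by its explicit form from Remark~\ref{rem: explicit version of determinant condition local models version}: for each character $\xi\colon I(K'/K)\to\cO^\times$ it asserts
\[
\det{\!}_{\F'}\Bigl(\textstyle\sum_{j,\sigma_i}e_i\pi^j X_{j,\sigma_i}\;\Big|\;\gL^+_\xi\Bigr)=\prod_{\psi}\sum_{j,\sigma_i}\psi(e_i\pi^j)X_{j,\sigma_i}
\]
in $\F'[X_{j,\sigma_i}]$, where $j$ runs over $0,\dots,e-1$, the $\sigma_i$ over the embeddings $k'\hookrightarrow\F$, and $\psi$ over the embeddings $\cO_N\hookrightarrow\cO$. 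Using the decomposition $\gL^+_\xi=\bigoplus_i(\gL^+_i)_\xi$ (which is compatible with the $\cO_N\otimes_{\Zp}\F'$-action, since the $e_i$ are fixed by $I(K'/K)$), the element $\sum_{j,\sigma_i}e_i\pi^jX_{j,\sigma_i}$ acts block-diagonally, acting on $(\gL^+_i)_\xi$ simply as $\sum_{j=0}^{e-1}\pi^jX_{j,\sigma_i}$; hence the left-hand side factors as $\prod_i\det_{\F'}\bigl(\sum_j\pi^jX_{j,\sigma_i}\mid(\gL^+_i)_\xi\bigr)$.

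Next I would evaluate the two sides. On the right, since $\pi$ is a uniformiser of $\cO_K$ and $N/K$ is unramified, $\psi(\pi)$ lies in the maximal ideal of $\cO$, so $\psi(e_i\pi^j)$ reduces modulo $\varpi$ to $0$ for $j\ge 1$ and to $\psi(e_i)\in\{0,1\}$ for $j=0$; recalling (from the same Remark) that $\psi(e_i)=1$ precisely when $\psi|_{W(k')}$ lifts $\sigma_i$, each factor $\sum_{j,\sigma_i}\psi(e_i\pi^j)X_{j,\sigma_i}$ collapses to $X_{0,\sigma_{i(\psi)}}$, where $i(\psi)$ is the index with $\psi|_{W(k')}$ lifting $\sigma_{i(\psi)}$. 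As exactly $e=[N:W(k')[1/p]]$ of the embeddings $\psi$ restrict to any given embedding of $W(k')[1/p]$, the right-hand side equals $\prod_i X_{0,\sigma_i}^{e}$. On the left, $p=0$ in $\F'$ and $p=\pi^e\cdot(\text{unit})$ in $\cO_K$, so $\pi^e=0$ in $\cO_{K'}\otimes_{\Zp}\F'$; thus on $(\gL^+_i)_\xi$ the operator $\sum_j\pi^jX_{j,\sigma_i}=X_{0,\sigma_i}\cdot\Id+\pi\bigl(\sum_{j\ge 1}\pi^{j-1}X_{j,\sigma_i}\bigr)$ is the sum of the scalar $X_{0,\sigma_i}$ and a nilpotent endomorphism of a finite free $\F'[X_{j,\sigma_i}]$-module. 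Since $\F'[X_{j,\sigma_i}]$ is a domain, a nilpotent endomorphism has all eigenvalues zero over an algebraic closure of its fraction field, so its characteristic polynomial is a pure power of the variable and $\det_{\F'}\bigl(\sum_j\pi^jX_{j,\sigma_i}\mid(\gL^+_i)_\xi\bigr)=X_{0,\sigma_i}^{\dim_{\F'}(\gL^+_i)_\xi}$.

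Combining the two computations, the strong determinant condition for the character $\xi$ becomes exactly the identity $\prod_i X_{0,\sigma_i}^{\dim_{\F'}(\gL^+_i)_\xi}=\prod_i X_{0,\sigma_i}^{e}$ in $\F'[X_{j,\sigma_i}]$, which by algebraic independence of the $X_{0,\sigma_i}$ holds if and only if $\dim_{\F'}(\gL^+_i)_\xi=e$ for every $i$; letting $\xi$ range over all characters of $I(K'/K)$ gives the lemma. I do not anticipate a genuine obstacle here: the only points needing a little care are matching the notion of ``$\det$ as a polynomial function'' from \cite[\S5]{MR1124982} with the explicit formula of Remark~\ref{rem: explicit version of determinant condition local models version}, and the elementary fact that over a reduced (polynomial) ring the determinant of ``scalar plus nilpotent'' is the corresponding pure power of the scalar.
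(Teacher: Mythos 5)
Your proof is correct and takes essentially the same route as the paper's: both unwind the explicit polynomial identity of Remark~\ref{rem: explicit version of determinant condition local models version}, split it over the idempotents $e_i$, use the nilpotence of $\pi$ on $(\gL^+_i)_\xi$ to identify the left side with $X_{0,\sigma_i}^{\dim_{\F'}(\gL^+_i)_\xi}$, and compute the right side to be $X_{0,\sigma_i}^e$. The only cosmetic difference is that you evaluate the right-hand side by reducing the coefficients $\psi(\pi^j)$ modulo $\varpi$, whereas the paper views that product as a norm from $\cO_N\otimes_{\Zp}\F'$ down to $W(k')\otimes_{\Zp}\F'$; both routes give the same answer.
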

\begin{proof} This is proved in a similar way to~\cite[Lemma
  2.5.1]{kis04},  using the explicit formulation of the strong
    determinant condition from Remark~\ref{rem: explicit version
      of determinant condition local models version}. 
    In the
    notation of that remark, we see that the strong determinant
    condition holds at~$\xi$ if and only if for each embedding
    $\sigma_i:k'\into\F$ we have
\numequation\label{eqn: explicit strong det condn slightly rewritten}  
\det{\!}_{A}\left(\sum_{j}\pi^jX_{j,\sigma_i}
\mid (\gL^+_i)_{\xi}\right) = 
\prod_{\psi}\sum_{j}\left(\psi(\pi)^jX_{j,\sigma_i}\right),
 \end{equation} where the product runs over the embeddings~$\psi:\cO_N\into\cO$
    with the property that~$\psi|_{W(k')}$ lifts~$\sigma_i$.
Since $\pi$ induces a nilpotent 
endomorphism of $(\gL^+_i)_{\xi}$ the left-hand side of \eqref{eqn: explicit strong det condn slightly rewritten} evaluates to $X_{0,\sigma_i}^{\dim_{\F'} (\gL^+_i)_{\xi}}$
while the right-hand side, which can be viewed as a norm
from $\cO_N \otimes_{\Zp} \F'$ down to $W(k') \otimes_{\Zp} \F'$, is equal to
$X_{0,\sigma_i}^e$.
     \end{proof}

\begin{lem} 
  \label{lem: determinant condition explicit finite field}Let $\F'/\F$
  be a finite extension, and let $\gM$ be a Breuil--Kisin module of rank~2 and
  height at most one with $\F'$-coefficients and descent data.
  
\begin{enumerate}
\item  $\gM$ satisfies the strong determinant condition if and only if the
  following property holds: for each~$i$ and for each $\xi
  : I(K'/K) \to \cO^\times$ we have $\dim_{\F'} (\im
  \Phi_{\gM,i}/E(u)\gM_i)_{\xi} = e$.
\item If~$\gM$ satisfies the strong determinant condition, then the determinant of~$\Phi_{\gM,i}$
  with respect to some choice of basis has $u$-adic
  valuation~$e'$. 
  \end{enumerate}
\end{lem}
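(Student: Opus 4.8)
The statement relates the strong determinant condition for a rank-two, height-at-most-one Breuil--Kisin module $\gM$ with $\F'$-coefficients and descent data to the dimensions of the $\xi$-isotypic pieces of $\im\Phi_{\gM,i}/E(u)\gM_i$, and then extracts a consequence about the $u$-adic valuation of $\det\Phi_{\gM,i}$. Part (1) should be essentially immediate from the previous lemma once we unwind the definitions: by Definition~\ref{defn: C^tau BT}, $\gM$ satisfies the strong determinant condition if and only if $\Psi(\gM)=(\gM/E(u)\gM,\ \im\Phi_\gM/E(u)\gM)$ satisfies it as an object of $\cM_{\loc}^{K'/K}(\F')$. Now Lemma~\ref{lem: determinant condition explicit finite field local models version} characterises this by the requirement that $\dim_{\F'}(\gL^+_i)_\xi=e$ for each $i$ and each $\xi:I(K'/K)\to\cO^\times$, where here $\gL=\gM/E(u)\gM$ and $\gL^+=\im\Phi_\gM/E(u)\gM$. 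Since $e_i$ commutes with the $I(K'/K)$-action and with passage to $\im\Phi_\gM/E(u)\gM$, we have $(\gL^+_i)_\xi=(\im\Phi_{\gM,i}/E(u)\gM_i)_\xi$, and part (1) follows directly.

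For part (2), the plan is to pass from the pointwise/isotypic dimension count in part (1) to a statement about the valuation of the determinant of the single map $\Phi_{\gM,i}:\varphi^*(\gM_{i-1})\to\gM_i$ of free $\F'[[u]]$-modules of rank two (after choosing bases, noting $\F'[[u]]$ is a discrete valuation ring with uniformiser $u$). The determinant is a nonzero element of $\F'[[u]]$, and its $u$-adic valuation equals the length of the cokernel $\gM_i/\im\Phi_{\gM,i}$ as an $\F'[[u]]$-module. The cokernel is killed by $E(u)$ since $\gM$ has height at most one, and $E(u)$ reduces modulo $p$ (hence modulo $u$ considerations aside) to a power of $u$: indeed, over $\F'$ the Eisenstein polynomial $E(u)$ of $\pi'$ over $W(k')[1/p]$ has image $u^{e'}$ (up to a unit), where $e'=e(K'/\Qp)$ relative to the relevant inertial block — more precisely $e'$ is the degree of $E(u)$, which is the ramification index of $K'$ over its maximal unramified subextension. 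So I would compute $\ell_{\F'[[u]]}(\gM_i/\im\Phi_{\gM,i})$ by combining: $\gM_i/E(u)\gM_i$ has $\F'$-dimension $2e'$ (it is free of rank $2$ over $\F'[[u]]/u^{e'}$), and the submodule $\im\Phi_{\gM,i}/E(u)\gM_i$ has $\F'$-dimension $\sum_\xi e = e'$ by part (1), since the characters $\xi$ of the cyclic group $I(K'/K)$, which has order $e'/e$ (i.e. $e(K'/K)$), are exactly $e(K'/K)$ in number, each contributing $e$. Hence $\gM_i/\im\Phi_{\gM,i}$ has length $2e'-e'=e'$, which is the asserted valuation.

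**The main obstacle.** The genuine bookkeeping point — and the thing to get exactly right — is the count of the number of characters $\xi:I(K'/K)\to\cO^\times$ appearing in the decomposition and the relation $e'=e(K'/K)\cdot e$ of ramification indices (here $e'$ is the ramification index of $K'/\Qp$, $e$ that of $K/\Qp$, and $e(K'/K)$ the relative one), together with the fact that the decomposition $(\gL,\gL^+)=\oplus_\xi(\gL_\xi,\gL^+_\xi)$ is over $\cO_N\otimes A$ rather than over $\cO_{K'}\otimes A$, so that $\gL_\xi$ for each fixed $\xi$ has rank $d=2$ over $\cO_N\otimes A$ and hence $\F'$-dimension $2e$ in each embedding-component $(\gL_i)_\xi$. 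I would therefore be careful to state the degree of $E(u)$ as $e'=[K':N]$ and to note $[K':N]=e(K'/K)\cdot e$, so that summing the contribution $e$ over the $e(K'/K)$ characters $\xi$ recovers exactly half of $\dim_{\F'}(\gM_i/E(u)\gM_i)=2e'$. The rest is the standard identification of the $u$-adic valuation of a determinant over a DVR with the length of the cokernel.
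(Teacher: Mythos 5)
Your part (1) is, as you observe, a direct unwinding of Definition~\ref{defn: C^tau BT} and Lemma~\ref{lem: determinant condition explicit finite field local models version}, exactly as in the paper.

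For part (2), your argument is correct but genuinely different from the paper's. You compute the $u$-adic valuation of $\det\Phi_{\gM,i}$ directly as the $\F'[[u]]$-length of the cokernel $\gM_i/\im\Phi_{\gM,i}$ (using that $\Phi_{\gM,i}$ is injective by Lemma~\ref{lem:kisin injective}, so the determinant is nonzero and Smith normal form over the DVR $\F'[[u]]$ applies), and then compute that length as $\dim_{\F'}(\gM_i/E(u)\gM_i) - \dim_{\F'}(\im\Phi_{\gM,i}/E(u)\gM_i) = 2e' - \sum_\xi e = 2e' - e(K'/K)\cdot e = e'$, using part~(1) and the fact that $E(u)\equiv u^{e'}$ (up to unit) mod $p$. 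The paper instead passes to the subring $\F'[[v]]$ with $v=u^{e(K'/K)}$, decomposes $\Phi_{\gM,i}$ into its $\xi$-components $\Phi_{\gM,i,\xi}$ (each a map of rank-two free $\F'[[v]]$-modules), applies the elementary-divisor argument to each $\xi$-piece separately to get $v$-adic valuation $e$ for each, and then recovers the $u$-adic valuation of $\det_{\F'[[u]]}\Phi_{\gM,i}$ via the norm identity $\det_{\F'[[v]]}\Phi_{\gM,i}=(\det_{\F'[[u]]}\Phi_{\gM,i})^{e(K'/K)}$. Your route avoids the $\F'[[v]]$-intermediary and the determinant norm identity entirely, summing all $\xi$-dimensions at once; the paper's route works $\xi$-by-$\xi$ and is closer in spirit to the argument in~\cite[Lem.~2.5.1]{kis04} that it cites. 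Both arguments are correct, and yours is slightly shorter and arguably more transparent, at the cost of not isolating the per-$\xi$ structure (which the paper's form of the argument makes explicit, though it is not needed for the statement being proved).
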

\begin{proof} 
  The first part is immediate from Lemma~\ref{lem: determinant condition explicit finite field local models
  version}. 
  For the second part, let $\Phi_{\gM,i,\xi}$ be the restriction
  of $\Phi_{\gM, i}$ to $\varphi^*(\gM_{i-1})_{\xi}$. We think of
  $\gM_{i}$ and $\varphi^*(\gM_{i-1})$ as free $\F'[\![v]\!]$-modules of rank $2e(K'/K)$,
  where $v=u^{e(K'/K)}$. 
  We have
  \[
  \det{\!}_{\F'[\![v]\!]}(\Phi_{\gM,i}) =\left(\det{\!}_{\F'[\![u]\!]}(\Phi_{\gM,i})\right)^{e(K'/K)}.
  \] 
  Since $\Phi_{\gM,i}$ commutes with the descent datum, we also have 
  \[
  \det{\!}_{\F'[\![v]\!]}(\Phi_{\gM, i}) =\prod_{\xi} \det{\!}_{\F'[\![v]\!]}(\Phi_{\gM,i,\xi}),
  \]
  where $\xi$ runs over the $e(K'/K)$ characters $I(K'/K)\to \cO^\times$. 
  
  The proof of the 
  second part of~\cite[Lemma 2.5.1]{kis04} implies
  that, for each $\xi$, $\det_{\F'[\![v]\!]}(\Phi_{\gM,i,\xi})$ is $v^e=u^{e'}$ times a unit. 
  Indeed, each $\gM_{i,\xi}$ is a free $\F'[\![v]\!]$-module of rank $2$. It admits
  a basis $\{e_{1,\xi},e_{2,\xi}\}$ such that $\im\Phi_{\gM,i,\xi} = \langle v^{i}e_{1,\xi}, v^je_{2,\xi}\rangle$ 
  for some non-negative integers $i,j$. The strong determinant condition on 
  $(\im \Phi_{\gM,\i,\xi}/v^e\gM_{i,\xi})$
  implies that $i+j = 2e-e =e$, and this is precisely the $v$-adic valuation of $\det_{\F'[\![v]\!]}(\Phi_{\gM,i,\xi})$.
  We deduce that the $u$-adic valuation of  
  $\left(\det{\!}_{\F'[\![u]\!]}(\Phi_{\gM,i})\right)^{e(K'/K)}$ is $e(K'/K)\cdot e'$, which implies
  the second part of the lemma.  
   \end{proof}

By contrast, we have the following criterion for the type of a
Breuil--Kisin module to be unmixed.

 \begin{prop}\label{prop:types are unmixed}
  Let $\F'/\F$ be a finite extension, and let $\gM$ be a Breuil--Kisin module
  of rank~2 and height at most one with $\F'$-coefficients and descent
  data.  Then
the type of 
  $\gM$ is unmixed if and only if $\dim_{\F'} (\im
  \Phi_{\gM,i}/E(u)\gM_i)_{\xi}$ is independent of $\xi$ for each
  fixed $i$.  In particular, if
  $\gM$ satisfies the strong determinant condition, then the type of
  $\gM$ is unmixed. 
\end{prop}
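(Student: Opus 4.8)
The plan is to recast the statement in terms of the eigenspace decomposition over $\F'[[v]]$, $v:=u^{e(K'/K)}$, in the style of the proof of Lemma~\ref{lem: determinant condition explicit finite field}. Since $\gM$ is projective it is free over $\gS_{\F'}$, so each $\gM_i$ is free of rank $2$ over $\F'[[u]]$, hence free of rank $2e(K'/K)$ over $\F'[[v]]$; as $I(K'/K)$ fixes $v$ it acts $\F'[[v]]$-linearly, giving $\gM_i=\bigoplus_\xi(\gM_i)_\xi$. Multiplication by $u$ is injective and sends $(\gM_i)_\xi$ into $(\gM_i)_{\omega_i\xi}$, where $\omega_i\colon g\mapsto\sigma_i(\overline{h(g)})$ is a faithful, hence generating, character of the cyclic group $I(K'/K)$; since $u^{e(K'/K)}=v$ these maps form a cyclic chain with composite $v\cdot\mathrm{id}$, and comparing $\F'[[v]]$-ranks around the chain forces every $(\gM_i)_\xi$ to be free of rank exactly $2$. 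Grading $\gM_i/u\gM_i$ then shows that the type multiplicity $m_{i,\chi}:=\dim_{\F'}(\gM_i/u\gM_i)_\chi$ equals the $v$-adic valuation of $\det_{\F'[[v]]}\bigl(u\colon(\gM_i)_{\omega_i^{-1}\chi}\to(\gM_i)_\chi\bigr)$, so ``$\gM$ has unmixed type'' is exactly ``$m_{i,\chi}$ is independent of $i$''.

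Next I would put the same structure on $\varphi^*\gM_{i-1}=e_i(\varphi^*\gM)$. Because $\varphi$ on $\gS_A$ is $A$-linear and merely permutes the idempotents $e_j$, reducing the isomorphism $\varphi^*(\gM_{i-1})[1/u]\cong\gM_i[1/u]$ modulo $u$ (as in Remark~\ref{rem: Kisin modules don't always have a type}) yields an $I(K'/K)$-equivariant isomorphism $\varphi^*\gM_{i-1}/u\varphi^*\gM_{i-1}\cong\gM_{i-1}/u\gM_{i-1}$; and $\varphi^*\gM_{i-1}$, lying in the $e_i$-component, is again free of rank $2$ over $\F'[[u]]$ with multiplication by $u$ shifting weights by the \emph{same} character $\omega_i$. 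So $\varphi^*\gM_{i-1}$ decomposes over $\F'[[v]]$ into rank-$2$ eigenspaces, and the $v$-valuation of $\det_{\F'[[v]]}\bigl(u\colon(\varphi^*\gM_{i-1})_{\omega_i^{-1}\chi}\to(\varphi^*\gM_{i-1})_\chi\bigr)$ equals $m_{i-1,\chi}$.

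The crux is then a commuting-square computation. The map $\Phi_{\gM,i}\colon\varphi^*\gM_{i-1}\to\gM_i$ is $\gS_{\F'}$-linear, $I(K'/K)$-equivariant, injective, and (Lemma~\ref{lem:kisin injective} and its proof) an isomorphism after inverting $u$; it therefore restricts to injective maps $(\varphi^*\gM_{i-1})_\xi\to(\gM_i)_\xi$ of free rank-$2$ $\F'[[v]]$-modules, and writing $\ell_{i,\xi}$ for the $v$-valuation of the determinant of this restriction (equivalently, $\dim_{\F'}$ of its cokernel) one gets, using that $E(u)$ and $u^{e'}=v^e$ generate the same ideal of $\F'[[u]]$ (recall $E(u)$ is Eisenstein and $e'=e\cdot e(K'/K)$), the identity $c_{i,\xi}:=\dim_{\F'}(\im\Phi_{\gM,i}/E(u)\gM_i)_\xi=2e-\ell_{i,\xi}$. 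Since $\Phi_{\gM,i}$ commutes with multiplication by $u$, for each $\xi$ the square with corners $(\varphi^*\gM_{i-1})_\xi$, $(\varphi^*\gM_{i-1})_{\omega_i\xi}$, $(\gM_i)_\xi$, $(\gM_i)_{\omega_i\xi}$ commutes, and comparing the two factorisations of the composite $(\varphi^*\gM_{i-1})_\xi\to(\gM_i)_{\omega_i\xi}$ via multiplicativity of $\det_{\F'[[v]]}$ gives
\[
m_{i-1,\omega_i\xi}+\ell_{i,\omega_i\xi}=\ell_{i,\xi}+m_{i,\omega_i\xi},
\]
hence $c_{i,\omega_i\xi}-c_{i,\xi}=m_{i-1,\omega_i\xi}-m_{i,\omega_i\xi}$. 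As $\omega_i$ generates the character group, $c_{i,\cdot}$ is constant in $\xi$ iff $m_{i-1,\chi}=m_{i,\chi}$ for all $\chi$; letting $i$ run over $\Z/f'$, this holds for every $i$ iff $m_{i,\cdot}$ is independent of $i$, i.e.\ the type is unmixed. That is the stated equivalence, and the final assertion follows at once from Lemma~\ref{lem: determinant condition explicit finite field}(1), which gives $c_{i,\xi}=e$ for all $i,\xi$ under the strong determinant condition.

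I expect the main obstacle to be the second paragraph: making rigorous both the $I(K'/K)$-equivariant identification $\varphi^*\gM_{i-1}/u\varphi^*\gM_{i-1}\cong\gM_{i-1}/u\gM_{i-1}$ and the claim that multiplication by $u$ shifts weights by the same $\omega_i$ on $\varphi^*\gM_{i-1}$ as on $\gM_i$. Both reduce to $\varphi$ acting $A$-linearly and only permuting the $e_j$, together with careful tracking of the embeddings $\sigma_{i-1},\sigma_i$ and the relation $\sigma_i^{\,p}=\sigma_{i-1}$; once this is in place, everything else is routine linear algebra over the discrete valuation ring $\F'[[v]]$.
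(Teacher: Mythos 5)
Your proof is correct, but it takes a genuinely different route from the paper's. The paper isolates a self-contained combinatorial claim: for a rank-two free $\F'[[u]]$-module $\Lambda$ carrying a $u$-semilinear $I(K'/K)$-action through a faithful character $\chi$, and a rank-two sublattice $\Lambda'$, the character multiset of $\Lambda/\Lambda'$ has the ``staircase'' form $\{\eta\chi^i:0\le i<m\}\cup\{\eta'\chi^j:0\le j<m'\}$ with $\Lambda'/u\Lambda'$ carrying $\{\eta\chi^m,\eta'\chi^{m'}\}$; this is proved by induction on $\dim_{\F'}\Lambda/\Lambda'$, and then applied with $\Lambda=\im\Phi_{\gM,i}$, $\Lambda'=E(u)\gM_i$, so that equidistribution of the multiset becomes $\{\eta\chi^m,\eta'\chi^{m'}\}=\{\eta,\eta'\}$, i.e.\ $\tau_i=\tau_{i-1}$. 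You instead work directly with the $I(K'/K)$-eigenspace decomposition over $\F'[[v]]$ --- the same framework used in the proofs of Lemmas~\ref{lem: determinant condition explicit finite field local models version} and \ref{lem: determinant condition explicit finite field} --- expressing both the type multiplicities $m_{i,\chi}$ and the codimensions $c_{i,\xi}$ as $v$-adic valuations of determinants, and exploiting the commutativity of $\Phi_{\gM,i}$ with $u$-multiplication to extract the single identity $c_{i,\omega_i\xi}-c_{i,\xi}=m_{i-1,\omega_i\xi}-m_{i,\omega_i\xi}$, from which both directions of the equivalence fall out simultaneously. Both arguments are about the same length; the paper's is more elementary in that it only needs the $\F'[[u]]$-module structure, while yours packages the equivalence into one transparent formula and dovetails with the nearby local-model computations. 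The two technical points you flag at the end --- the $I(K'/K)$-equivariance of $\varphi^*\gM_{i-1}/u\varphi^*\gM_{i-1}\cong\gM_{i-1}/u\gM_{i-1}$, and the identification of the $u$-weight-shift characters on $\varphi^*\gM_{i-1}$ and $\gM_i$ --- do go through exactly as you say, via $\varphi$ acting $\F'$-linearly and permuting the idempotents by $e_{i-1}\mapsto e_i$; note that the paper uses this same equivariance implicitly when it asserts that $I(K'/K)$ acts on $\Lambda/u\Lambda=\im\Phi_{\gM,i}/u\im\Phi_{\gM,i}$ through the characters of $\tau_{i-1}$.
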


\begin{proof} We begin the proof of the first part with the following observation. Let $\Lambda$ be a rank two free $\F'[[u]]$-module with an action of
 $I(K'/K)$ that is $\F'$-linear and $u$-semilinear with respect to a
 character $\chi$  (i.e., such that $g(u\lambda) = \chi(g)u
 g(\lambda)$ for $\lambda \in \Lambda$). In particular $I(K'/K)$ acts on $\Lambda/u\Lambda$ through a
 pair of characters which we call $\eta$ and $\eta'$. Let $\Lambda' \subset \Lambda$ be a rank two $I(K'/K)$-sublattice. We claim that there are integers $m, m'
 \ge 0$ such that the multiset of characters of $I(K'/K)$ occurring
in $\Lambda/\Lambda'$ has the shape $$\{ \eta \chi^i : 0 \le i < m\} \cup \{ \eta'
\chi^i : 0 \le j < m'\}$$ and the multiset of
characters occurring in $\Lambda'/u\Lambda'$ is $\{\eta \chi^m, \eta' \chi^{m'}\}$.

To check the claim we proceed by induction on $\dim_{\F'}
\Lambda/\Lambda'$, the case $\Lambda=\Lambda'$ being
trivial. Suppose $\dim_{\F'} \Lambda/\Lambda'=1$, so that $\Lambda'$ lies between $\Lambda$ and
$u\Lambda$. Consider the chain of containments $\Lambda \supset \Lambda' \supset u\Lambda
\supset u\Lambda'$. If without loss of generality $I(K'/K)$ acts on $\Lambda/\Lambda'$
via $\eta$, then it acts on $\Lambda'/u\Lambda$ by $\eta'$ and $u\Lambda/u\Lambda'$ by $\chi
\eta$, proving the claim with $m=1$ and $m'=0$. The general case follows by iterated application of the case
$\dim_k \Lambda/\Lambda'=1$, noting that since $I(K'/K)$ is abelian
the quotient $\Lambda/\Lambda'$ has a filtration by $I(K/K')$-submodules
whose graded pieces have dimension $1$.

Now return to the statement of the proposition. Let $(\tau_i)$ be the mixed type of $\gM$ and write $\tau_{i-1} = \eta
\oplus \eta'$. We apply the preceding observation with $\Lambda = \im
  \Phi_{\gM,i}$ and $\Lambda' = E(u)\gM_i = u^{e'} \gM_i$. Note that
  $\chi$ is a generator of the cyclic group $I(K'/K)$ of order $e'/e$. 
  Since
  $\Phi_{\gM}$ commutes with descent data, the group $I(K'/K)$ acts
  on $\Lambda/u\Lambda$ via $\eta$ and $\eta'$.   Then the
  the multiset 
\[\{ \eta \chi^i : 0 \le i < m\} \cup \{ \eta'
\chi^i : 0 \le j < m'\}\]
contains each character of $I(K'/K)$ with equal multiplicity if and
only if  one of $\eta,\eta'$  is the successor to $\eta\chi^{m-1}$ in the list
$\eta,\eta\chi,\eta\chi^2,\ldots$, and the other is the successor to
$\eta'\chi^{m'-1}$ in the list $\eta',\eta'\chi,\eta'\chi^2,\ldots$,
i.e., if and only if  $\{\eta\chi^m,\eta'\chi^{m'}\} = \{ \eta,\eta'\}$. Since $\gM_i/u\gM_i
\cong u^{e'}\gM_i/u^{e'+1}\gM_i = \Lambda'/u\Lambda'$, this occurs if
and only if
that $\tau_i = \tau_{i-1}$.

Finally, the last part of the proposition follows immediately from the
  first part and Lemma~\ref{lem: determinant condition explicit finite
    field}.\end{proof}

\begin{cor} 
  \label{cor: BT C is union of tau C}$\cC^{\dd,\BT}$ is the disjoint
  union of its closed substacks $\cC^{\tau,\BT}$. 
\end{cor}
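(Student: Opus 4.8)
The plan is to decompose everything according to (possibly mixed) descent types and then use Proposition~\ref{prop:types are unmixed} to eliminate the genuinely mixed pieces.

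First I would record the decomposition $\cC^{\dd,\BT}=\coprod_{(\tau_i)}\cC^{(\tau_i),\BT}$, the disjoint union being over all $f'$-tuples of types $(\tau_i)$. This is bookkeeping: one way is to note that by Lemma~\ref{lem:tame reps} and Definition~\ref{defn: local model stack fixed type} we have $\cM_{\loc}^{K'/K}=\coprod_{(\tau_i)}\cM_{\loc}^{(\tau_i)}$ as a disjoint union of open and closed substacks, so the closed substack $\cM_{\loc}^{K'/K,\BT}$ is $\coprod_{(\tau_i)}\cM_{\loc}^{(\tau_i),\BT}$; pulling back along $\Psi\colon\cC^{\dd}\to\cM_{\loc}^{K'/K}$ and unwinding the fibre products (using the identification $\cC^{(\tau_i)}=\cC^{\dd}\times_{\cM_{\loc}^{K'/K}}\cM_{\loc}^{(\tau_i)}$) gives the claimed decomposition of $\cC^{\dd,\BT}$. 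Equivalently, by Proposition~\ref{prop: Ctau is an algebraic stack} the $\cC^{(\tau_i)}$ are open and closed in $\cC^{\dd}$ and cover it, hence $\cC^{(\tau_i),\BT}=\cC^{(\tau_i)}\times_{\cC^{\dd}}\cC^{\dd,\BT}$ is open and closed in $\cC^{\dd,\BT}$ and these cover $\cC^{\dd,\BT}$.

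The substance is then to show that $\cC^{(\tau_i),\BT}$ is empty whenever the type $(\tau_i)$ is genuinely mixed (i.e.\ not of the form $\tau_i=\tau$ for a single $\tau$). By Proposition~\ref{prop:C tau BT is algebraic}, $\cC^{(\tau_i),\BT}$ is a $\varpi$-adic formal algebraic stack of finite presentation over $\cO$; a nonempty such stack has nonempty special fibre $\cC^{(\tau_i),\BT}\times_{\Spf\cO}\Spec\F$ (a point is valued in some nonzero $\varpi$-adically complete $\cO$-algebra $R$, and then $R/\varpi\neq 0$ by separatedness), which is then a nonempty algebraic stack of finite type over $\F$ and therefore admits a point valued in some finite extension $\F'$ of $\F$. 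Such a point is precisely a rank two Breuil--Kisin module $\gM$ of height at most one with $\F'$-coefficients and descent data, of mixed type $(\tau_i)$ and satisfying the strong determinant condition. But Proposition~\ref{prop:types are unmixed} asserts that any Breuil--Kisin module over $\F'$ satisfying the strong determinant condition has unmixed type, a contradiction. Hence $\cC^{(\tau_i),\BT}=\emptyset$ for mixed $(\tau_i)$, and combining with the decomposition of the previous paragraph yields $\cC^{\dd,\BT}=\coprod_{\tau}\cC^{\tau,\BT}$, the union being over unmixed types $\tau$.

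There is no real obstacle here: the geometric input (Proposition~\ref{prop:types are unmixed}) is already in hand, and the one point requiring a little care is the reduction of ``$\cC^{(\tau_i),\BT}$ is empty'' to ``$\cC^{(\tau_i),\BT}$ has no points over finite extensions of $\F$'', which is the standard fact that a $\varpi$-adic formal algebraic stack of finite type over $\cO$ is detected on its special fibre (which shares its underlying reduced substack) together with finite type-ness over $\F$.
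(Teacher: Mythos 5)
Your proof is correct and takes essentially the same approach as the paper: decompose $\cC^{\dd,\BT}$ over mixed types via Proposition~\ref{prop: Ctau is an algebraic stack}, then show each genuinely mixed piece is empty by producing an $\F'$-point and invoking Proposition~\ref{prop:types are unmixed}. Your intermediate step (pass to the special fibre and use finite presentation to get a finite-type point) is a slightly more explicit spelling-out of the paper's appeal to the density of finite type points in a nonzero finite presentation stack, but the content is identical.
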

\begin{proof}
  This follows from Propositions~\ref{prop: Ctau is
    an algebraic stack} and~\ref{prop:types are unmixed}. Indeed, from Proposition~\ref{prop: Ctau is
    an algebraic stack}, it suffices to show that if~$(\tau_i)$ is a
  mixed type,
  and~$\cC^{(\tau_i),\BT}$ is
  nonzero, then $(\tau_i)$ is in fact an unmixed type. Indeed,
  note that if $\cC^{(\tau_i),\BT}$ is nonzero, then it contains a dense set of
  finite type points, so in particular contains an $\F'$-point for
  some finite extension $\F'/\F$. It follows from
  Proposition~\ref{prop:types are unmixed} that the type is unmixed,
  as required.
\end{proof}

  \begin{rem}
    \label{rem: we don't consider mixed type local models} Since our
    primary interest is in Breuil--Kisin modules, we will have no further need
    to consider the stacks $\cM^{(\tau_i),\BT}_{\loc}$ or
    $\cC^{(\tau_i),\BT}$ for types that are not unmixed.
  \end{rem}

Let~$\tau$ be a tame
type; since $I(K'/K)$ is cyclic, we can write~$\tau=\eta\oplus\eta'$
for (possibly equal) characters $\eta,\eta':I(K'/K)\to \cO^\times$. 
Let~$(\gL,\gL^+)$ be an object of $\cM_{\loc}^{\tau}(A)$. Suppose that~$\xi\ne\eta,\eta'$. Then elements of $\gL_{\xi}$ are
divisible by $\pi'$ in $\gL$, and so multiplication by~$\pi'$
induces an isomorphism 
 of
projective $e_i(\cO_N \otimes A)$-modules of equal rank  \[p_{i,\xi} :
  e_i \gL_{\xi\chi_i^{-1} }
\isoto e_i\gL_{\xi} \]
where~$\chi_i:I(K'/K)\to\cO^{\times}$ is the
character sending $g\mapsto \sigma_i(h(g))$.  The 
induced map  \[p_{i,\xi}^+ : e_i \gL^+_{\xi\chi_i^{-1} }
\longrightarrow  e_i\gL^+_{\xi} \] is in particular an injection.
 The following lemma will be useful for checking the strong
 determinant condition when comparing various different stacks of local model
 stacks. 

\begin{lem} \label{lem: local model of type only have to check the strong
      determinant condition on that type}Let~$(\gL,\gL^+)$ be an object of $\cM_{\loc}^{\tau}(A)$. Then
  $(\gL,\gL^+)$ is an object of $\cM_{\loc}^{\tau,\BT}(A)$ if and only
  if both
  \begin{enumerate}
\item   the condition~\eqref{eqn: strong det condn} holds 
  for~$\xi=\eta,\eta'$, and 
  \item the injections $p_{i,\xi}^+ : e_i \gL^+_{\xi\chi_i^{-1} }
\longnto{\pi'}  e_i\gL^+_{\xi} $ are isomorphisms for all $\xi \neq
\eta,\eta'$ and for all $i$.
  \end{enumerate}
The second condition is equivalent to
\begin{enumerate}
\item[($\textrm{2}\,'$)] we have $(\gL^+/\pi' \gL^+)_{\xi} = 0$ for all $\xi \neq \eta,\eta'$.
\end{enumerate}
   \end{lem}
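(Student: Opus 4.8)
The plan is to read everything off the weight-space decomposition $(\gL,\gL^+)=\bigoplus_{\xi}(\gL_\xi,\gL^+_\xi)$ of~\eqref{eqn: breaking up local model over xi}, together with the way multiplication by $\pi'$ cyclically permutes the pieces. Fix an embedding index $i$. Since $\chi_i=\sigma_i\circ h$ is a faithful character of the cyclic group $I(K'/K)$, translation by $\chi_i$ permutes all $n:=e(K'/K)$ characters cyclically, so I would list them as $\xi_0=\eta,\ \xi_1=\eta\chi_i,\ \dots,\ \xi_{n-1}=\eta\chi_i^{n-1}$, and (when $\eta\neq\eta'$) write $\eta'=\xi_k$ for a unique $0<k<n$. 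Recall that each $p_{i,\xi}^+$ is injective (being the restriction of the isomorphism $p_{i,\xi}$) and is induced by multiplication by $\pi'$, hence is $\cO_N\otimes_{\Zp}A$-linear. The equivalence of~(2) and~$(2')$ is then purely formal: since $g(\pi' m)=\chi_i(g)\,\pi'\,g(m)$ for $m\in e_i\gL$, multiplication by $\pi'$ identifies $e_i\gL^+_{\xi\chi_i^{-1}}$ with $e_i(\pi'\gL^+)_{\xi}$, so $(\gL^+/\pi'\gL^+)_\xi\cong\bigoplus_i\coker(p_{i,\xi}^+)$, and as the $p_{i,\xi}^+$ are injective this vanishes precisely when each $p_{i,\xi}^+$ is an isomorphism.

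Next I would prove that~(1) and~(2) imply the strong determinant condition. By~(2), in the cyclic chain of injections $e_i\gL^+_{\xi_0}\to e_i\gL^+_{\xi_1}\to\cdots\to e_i\gL^+_{\xi_{n-1}}\to e_i\gL^+_{\xi_0}$ induced by multiplication by $\pi'$, every arrow is an isomorphism except possibly the two whose target is $e_i\gL^+_\eta$ or $e_i\gL^+_{\eta'}$. Hence $e_i\gL^+_{\xi_j}\cong e_i\gL^+_{\eta}$ for $0\le j<k$ and $e_i\gL^+_{\xi_j}\cong e_i\gL^+_{\eta'}$ for $k\le j<n$ (a single such run when $\eta=\eta'$), each isomorphism being multiplication by a power of $\pi'$ and so $\cO_N\otimes_{\Zp}A$-linear. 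Since $\det_A(a\mid -)$ for $a\in\cO_N$ depends only on the underlying $\cO_N\otimes_{\Zp}A$-module, taking the product over $i$ shows that for every $\xi$ the polynomial function $\det_A(a\mid\gL^+_\xi)$ equals $\det_A(a\mid\gL^+_\eta)$ or $\det_A(a\mid\gL^+_{\eta'})$, which by~(1) equals $\prod_{\psi\colon N\into E}\psi(a)$; so~\eqref{eqn: strong det condn} holds for all $\xi$, i.e.\ $(\gL,\gL^+)\in\cM_{\loc}^{\tau,\BT}(A)$.

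For the converse, suppose $(\gL,\gL^+)\in\cM_{\loc}^{\tau,\BT}(A)$. Then~(1) is immediate, and by the first paragraph it remains to show that the finitely generated $A$-module $\coker(p_{i,\xi}^+)$ vanishes for all $i$ and all $\xi\neq\eta,\eta'$. By Nakayama's lemma it suffices to check this after base change to $\kappa(\frakp)$ for every prime $\frakp$ of $A$. The formation of $(\gL_\xi,\gL^+_\xi)$, of $p_{i,\xi}^+$, of the type, and of the strong determinant condition all commute with base change, so $(\gL,\gL^+)\otimes_A\kappa(\frakp)$ lies in $\cM_{\loc}^{\tau,\BT}(\kappa(\frakp))$ with associated map $p_{i,\xi}^+\otimes_A\kappa(\frakp)$. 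Over the field $\kappa(\frakp)$---an $\F$-algebra, since $\varpi$ is nilpotent in $A$---Lemma~\ref{lem: determinant condition explicit finite field local models version} (whose proof uses only that $\pi$ acts nilpotently, hence applies over any field extension of $\F$) shows that the strong determinant condition forces $\dim_{\kappa(\frakp)}(\gL^+_i)_\xi=e$ for all $i$ and $\xi$; thus $p_{i,\xi}^+\otimes_A\kappa(\frakp)$ is an injection of $e$-dimensional vector spaces, hence an isomorphism, and $\coker(p_{i,\xi}^+)\otimes_A\kappa(\frakp)=0$.

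The genuinely non-formal step is this converse direction: over a general base an injection of projective modules with the same determinant need not be an isomorphism, so one cannot argue by a bare rank count, and the reduction to residue fields---using Nakayama together with the base-change compatibility of all the relevant structures, and feeding into the explicit dimension criterion of Lemma~\ref{lem: determinant condition explicit finite field local models version}---is what makes the argument work. The only loose end is to confirm that that criterion holds over arbitrary, not merely finite, extensions of $\F$, which is clear from its proof.
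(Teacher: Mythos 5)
Your argument is correct in substance, but the converse direction takes a genuinely different route from the paper's. The paper's proof hinges on a single observation made once: since $p_{i,\xi}$ is an isomorphism and $e_i\gL^+_{\xi\chi_i^{-1}}$ is (locally on $\Spec A$) a direct summand of $e_i\gL_{\xi\chi_i^{-1}}$, the image of $p^+_{i,\xi}$ is locally a direct summand of $e_i\gL^+_\xi$. This gives directly, over an arbitrary base $A$ and without any reduction, that the per-$(i,\xi)$ determinant condition is \emph{equivalent} to surjectivity of $p^+_{i,\xi}$ once the condition at $\xi\chi_i^{-1}$ is known; both directions of the lemma then fall out by walking around the cycle generated by $\chi_i$. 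Your converse instead reduces to residue fields via Nakayama and quotes Lemma~\ref{lem: determinant condition explicit finite field local models version}, which works (you correctly note the lemma's proof is insensitive to whether the extension of $\F$ is finite, and the base-change compatibilities you invoke do hold because $\gL/\gL^+$ is $A$-projective and $|I(K'/K)|$ is prime to $p$), but it imports more machinery than the uniform direct-summand argument, and Nakayama should be at maximal ideals rather than arbitrary primes. What the paper's single technical lemma buys, beyond economy, is that both implications come from the same computation.

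One small imprecision in your forward direction: the assertion that ``$\det_A(a\mid\gL^+_\xi)$ equals $\det_A(a\mid\gL^+_\eta)$ or $\det_A(a\mid\gL^+_{\eta'})$'' is not quite right, because the index $k$ at which the chain crosses $\eta'$ depends on $i$ (since $\chi_i$ does). For a fixed $\xi$, some $i$'s give $e_i\gL^+_\xi \cong e_i\gL^+_\eta$ and others give $e_i\gL^+_\xi \cong e_i\gL^+_{\eta'}$, so the full determinant over $\gL^+_\xi$ is a mix. The conclusion is still correct, but you need to run the comparison factor-by-factor in $i$ using the explicit per-$i$ form~\eqref{eqn: explicit strong det condn} of the determinant condition: condition (1) then gives $\det_A(\cdot\mid e_i\gL^+_\eta) = \det_A(\cdot\mid e_i\gL^+_{\eta'}) = $ (the per-$i$ norm polynomial) for each $i$, after which the mixed product is still the right thing.
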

     \begin{proof}

The equivalence between (2) and ($2'$) is straightforward.
Suppose now that $\xi \neq \eta,\eta'$. Locally on $\Spec A$ the module $e_i \gL^{+}_{\xi \chi_i^{-1}}$ is by
definition a
  direct summand of $e_i \gL_{\xi \chi_i^{-1}}$.  Since $p_{i,\xi}$ is an isomorphism, the image of $p^+_{i,\xi}$ is locally on $\Spec A$ a direct summand of
$e_i\gL^+_{\xi}$. 
Under the assumption that~(\ref{eqn: explicit strong det condn slightly
  rewritten}) holds for $i$ and~ $\xi \chi_i^{-1}$, the condition~(\ref{eqn: explicit strong det condn slightly
  rewritten}) for $i$ and $\xi$ is therefore  equivalent to the surjectivity of
$p^+_{i,\xi}$.  The lemma follows upon  noting that $\chi_i$ is a generator
of the
group of characters $I(K'/K)\to \cO^{\times}$.  
\end{proof}

To conclude this section we describe the $\cO_{E'}$-points of $\cC^{\dd,\BT}$, for $E'/E$
a finite extension; recall that our convention is that a two-dimensional Galois
representation is Barsotti--Tate if all its labelled pairs of
Hodge--Tate weights are equal to~$\{0,1\}$ (and not just that all of
the labelled Hodge--Tate weights are equal to~$0$ or~$1$).
\begin{lem} 
\label{lem: O points of moduli stacks}Let $E'/E$ be a finite
extension. Then the $\Spf(\cO_{E'})$-points of $\cC^{\dd,\BT}$ correspond
precisely to the potentially Barsotti--Tate Galois representations
$G_K\to\GL_2(\cO_{E'})$ which become Barsotti--Tate over~$K'$; and the
$\Spf(\cO_{E'})$-points of $\cC^{\tau,\BT}$ correspond to those
representations which are potentially Barsotti--Tate of type~$\tau$.
\end{lem}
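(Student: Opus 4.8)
The plan is to combine the concrete description of the points of $\cC^{\dd}$ from Lemma~\ref{lem:points-of-Cdd} with Kisin's classification of finite flat group schemes and $p$-divisible groups via Breuil--Kisin modules. Since $\cO_{E'}$ is $\varpi$-adically complete, Lemma~\ref{lem:points-of-Cdd} identifies the $\Spf(\cO_{E'})$-points of $\cC^{\dd}$ with the rank two, height at most one Breuil--Kisin modules $\gM$ with $\cO_{E'}$-coefficients and descent data from $K'$ to $K$. As $\cC^{\dd,\BT}$ (resp.\ $\cC^{\tau,\BT}$) is the closed substack cut out of $\cC^{\dd}$ by the strong determinant condition (resp.\ and the requirement that the type be the unmixed type~$\tau$), and both are conditions that may be tested modulo each power of~$\varpi$ and survive passage to the inverse limit over these reductions, the $\Spf(\cO_{E'})$-points of $\cC^{\dd,\BT}$ (resp.\ $\cC^{\tau,\BT}$) are exactly those~$\gM$ satisfying the strong determinant condition (resp.\ and of type~$\tau$).

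Next I would attach a Galois representation to such a~$\gM$. Inverting~$u$ and applying~$T$, understood as $T(\gM):=\varprojlim_a T(\gM/\varpi^a\gM)$, produces a representation of~$G_{K_\infty}$ on a free $\cO_{E'}$-module of rank two. Because $\gM$ has height at most one, Kisin's theory (\cite[Prop.~1.1.13]{kis04} together with the constructions of~\cite[\S2]{kis04}; see also~\cite{MR2782840}) shows that $\gM$ arises from a $p$-divisible group over $\cO_{K'}$ carrying descent data to~$K$ on its generic fibre, so that $T(\gM)$ extends canonically and uniquely to a representation $\rho\colon G_K\to\GL_2(\cO_{E'})$ with $\rho|_{G_{K'}}$ crystalline. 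This assignment $\gM\mapsto\rho$ is functorial, and injective on isomorphism classes: if $\rho\cong\rho'$ then $T(\gM)\cong T(\gM')$ as $G_{K_\infty}$-representations, and Kisin's functor is fully faithful (the isomorphism automatically respecting the descent data, since the latter is determined by the $G_K$-action). Finally, the strong determinant condition translates, through the explicit description of Remark~\ref{rem: explicit version of determinant condition local models version} exactly as in~\cite[\S2.2]{kis04} (compare the finite-field computation of Lemma~\ref{lem: determinant condition explicit finite field}), into the assertion that for every embedding $\varsigma\colon K'\into\Qpbar$ the pair of Hodge--Tate weights of~$\rho$ for~$\varsigma$ is~$\{0,1\}$; that is, $\rho$ is potentially Barsotti--Tate and becomes Barsotti--Tate over~$K'$ in the sense fixed above.

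Conversely, given a potentially Barsotti--Tate $\rho\colon G_K\to\GL_2(\cO_{E'})$ that becomes Barsotti--Tate over~$K'$, the restriction $\rho|_{G_{K'}}$ is crystalline with all Hodge--Tate weights in~$\{0,1\}$, hence by~\cite{kis04} arises from a $p$-divisible group over~$\cO_{K'}$; the $\Gal(K'/K)$-action coming from~$\rho$ furnishes descent data, yielding a height at most one Breuil--Kisin module~$\gM$ over~$\cO_{E'}$ with descent data such that $T(\gM)\cong\rho|_{G_{K_\infty}}$, whose canonical extension to $G_K$ is~$\rho$. Since its Hodge--Tate weights are the pairs $\{0,1\}$, $\gM$ satisfies the strong determinant condition, so $\gM$ is a $\Spf(\cO_{E'})$-point of~$\cC^{\dd,\BT}$ mapping to~$\rho$; together with the previous paragraph this proves the first assertion. (Alternatively, one can package this comparison via the versal rings of~$\cC^{\dd,\BT}$, using the isomorphism of Proposition~\ref{prop: restriction gives an equivalence of height at most 1 deformation} cut down by the strong determinant condition.) For the second assertion it remains to match the type: the Dieudonn\'e module $D(\gM)=\gM/u\gM$ of Definition~\ref{def: Dieudonne module formulas}, with its $\Gal(K'/K)$-action, recovers after inverting~$p$ the module $\Dcris(\rho|_{G_{K'}})$ with its $\Gal(K'/K)$-action, and the trace of $I(K'/K)$ on the latter is precisely what defines the inertial type of~$\rho$; hence $\gM$ has type~$\tau$ if and only if $\rho$ has inertial type~$\tau$, and restricting the bijection to these substacks gives the claim.

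The main obstacle will be the second and third paragraphs: pushing Kisin's equivalence between height at most one Breuil--Kisin modules and $p$-divisible groups through the descent data, and establishing on the nose that the strong determinant condition corresponds to the Hodge--Tate weights being the pair~$\{0,1\}$ for every embedding (rather than merely lying in~$\{0,1\}$), together with the parallel bookkeeping identifying the type of~$\gM$ with the inertial type of~$\rho$.
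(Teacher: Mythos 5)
Your broad outline follows a genuinely different route from the paper's. The paper never invokes the $p$-divisible group picture; instead it works directly with the filtered module $\mathrm{Fil}^1_i\subset\varphi^*(\gM_i)/E(u)\varphi^*(\gM_i)$ and its $\xi$-isotypic pieces $\mathrm{Fil}^1_{i,\xi}$, quoting a specialisation of~\cite[Cor.~2.6.2]{kisindefrings} for the Barsotti--Tate criterion, and the key compatibility result~\cite[Lemma 5.10]{2015arXiv151007503C} (giving $\mathrm{Fil}^1_i\cong K'\otimes_N\mathrm{Fil}^1_{i,\xi}$) to bridge the per-character strong determinant condition to the global Hodge filtration. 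Your plan routes instead through Kisin's equivalence between height one Breuil--Kisin modules and $p$-divisible groups, and through the Dieudonn\'e module to read off the type; this is a plausible alternative packaging, and your last paragraph on types is essentially the content of the paper's citation to~\cite[Cor.~3.2.3]{MR2388556}.

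However, there is a genuine gap at the pivotal step. You assert that the strong determinant condition ``translates, through the explicit description of Remark~\ref{rem: explicit version of determinant condition local models version} exactly as in~\cite[\S2.2]{kis04}, into the assertion that for every embedding $\varsigma\colon K'\into\Qpbar$ the pair of Hodge--Tate weights of $\rho$ for $\varsigma$ is $\{0,1\}$.'' Neither of your references establishes this. The remark only unwinds the polynomial identity defining the condition; Lemma~\ref{lem: determinant condition explicit finite field} only treats finite field coefficients, where the question of Hodge--Tate weights doesn't arise. More importantly, the argument in~\cite[\S2.2]{kis04} concerns the Kottwitz determinant condition over $K'$ (a single condition on $\im\Phi_{\gM}/E(u)\gM$ as a module over all of $\cO_{K'}\otimes_{\Zp}A$), whereas the strong determinant condition is imposed separately on each $\xi$-isotypic piece $(\im\Phi_{\gM})_{\xi}/E(u)\gM_{\xi}$, for each character $\xi$ of $I(K'/K)$. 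These are not a priori the same after inverting $p$, and making them match requires showing that the $\xi$-eigenspaces generate the full Hodge filtration in the appropriate sense — which is exactly what $\mathrm{Fil}^1_i\cong K'\otimes_N\mathrm{Fil}^1_{i,\xi}$ from~\cite[Lemma 5.10]{2015arXiv151007503C} provides (and which crucially relies on the commutativity of $\Phi$ with the descent data). Without this step, you could conclude that the Hodge--Tate weights are in $\{0,1\}$ at each embedding but not that they form the \emph{pair} $\{0,1\}$ at each embedding, because a priori a single $\xi$-eigenspace could have the wrong rank while the aggregate over all $\xi$ still has total rank one.

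A second, minor issue: you invoke full faithfulness of Kisin's functor including descent data ``since the latter is determined by the $G_K$-action.'' For this you should note that the $\hat g$'s on the Breuil--Kisin module side are determined by the generic-fibre Galois data only because Kisin's equivalence between height-one Breuil--Kisin modules and $p$-divisible groups over $\cO_{K'}$ is an equivalence of categories (so isomorphisms of generic fibre Galois lattices lift uniquely to isomorphisms of $p$-divisible groups, and then to isomorphisms of Breuil--Kisin modules commuting with the descent data). This can be made precise, but as written it elides the point.
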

\begin{proof} 
In light of Lemma~\ref{lem:points-of-Cdd} and the first sentence of
Remark~\ref{rem: Kisin modules don't always have a type}, we are
reduced to checking that a Breuil--Kisin module of rank $2$ and height $1$ with $\cO_{E'}$-coefficients
and descent data corresponds to a potentially Barsotti--Tate
representation if and only if it satisfies the strong determinant
condition, as well as checking that the descent data on the Breuil--Kisin module
matches the type of the corresponding Galois representation. 

Let $\gM_{\cO_{E'}} \in\cC^{\dd,\BT}(\Spf(\cO_{E'}))$. Plainly
$\gM_{\cO_{E'}}$ satisfies the strong determinant condition if and
only if $\gM := \gM_{\cO_{E'}}[1/p]$ satisfies the strong determinant
condition (with the latter having the obvious meaning). 
Consider the filtration
\[
\mathrm{Fil}^1(\varphi^*(\gM_i)):=\{m\in \varphi^*(\gM_i)\mid \Phi_{\gM,i+1}(m)\in 
E(u)\gM_{i+1}\}\subset \varphi^*\gM_{i}
\]
inducing
\[
\mathrm{Fil}^1_i\subset \varphi^*(\gM_{i})/ E(u)\varphi^*(\gM_{i}).
\]
Note that $\varphi^*(\gM_{i})/E(u)\varphi^*(\gM_{i})$ 
is isomorphic to a free $K'\otimes_{W(k'),\sigma_i}E'$-module 
of rank $2$. Then $\gM$ corresponds to a Barsotti--Tate Galois representation
\[
G_{K'}\to \GL_{2}(E')
\]
if and only, if for every $i$, $\mathrm{Fil}^1_i$ is isomorphic to $K'\otimes_{W(k'),\sigma_i}E'$ as a 
$K'\otimes_{W(k'),\sigma_i}E'$-submodule of $\varphi^*\gM_{i}/E(u)\varphi^*\gM_{i}$. 
This follows, for example, by specialising the proof of~\cite[Cor. 2.6.2]{kisindefrings}
to the Barsotti--Tate case (taking care to note that the conventions
of \emph{loc.\ cit.} for Hodge--Tate weights and for Galois
representations associated to Breuil--Kisin modules are both dual to ours). 

Let $\xi: I(K'/K)\to\cO^\times$ be a character. Consider the filtration
\[
\mathrm{Fil}^1_{i,\xi}\subset \varphi^*(\gM_i)_{\xi}/E(u)\varphi^*(\gM_i)_{\xi}\simeq N^2\otimes_{W(k'),\sigma_i}E'
\]
induced by $\mathrm{Fil}^1_i$. The strong determinant condition on 
$(\im \Phi_{\gM,i+1}/E(u)\gM_{i+1})_{\xi}$
holds if and only if $\mathrm{Fil}^1_{i,\xi}$ 
is isomorphic to $N\otimes_{W(k'),\sigma_i}E'$. By~\cite[Lemma 5.10]{2015arXiv151007503C},
we have an isomorphism of $K'\otimes_{W(k'),\sigma_i}E'$-modules 
\[
\mathrm{Fil}^1_{i}\simeq K' \otimes_{N} \mathrm{Fil}^1_{i,\xi}. 
\] 
This, together with the previous paragraph, allows us to conclude. 
Note that, since $u$ acts invertibly when working with $E'$-coefficients
and after quotienting by $E(u)$, the argument is independent of the
choice of character $\xi$.  

For the statement about types, let $S_{K'_0}$ be Breuil's period ring
(see e.g.\ \cite[\S5.1]{MR1804530}) endowed with the evident action of
$\Gal(K'/K)$ compatible with the embedding $\gS \into S_{K'_0}$.
Here $K'_0$ is the maximal unramified extension in $K'$.  Recall that by \cite[Cor.\
3.2.3]{MR2388556} there is a canonical  $(\varphi,N)$-module isomorphism
\numequation\label{eq:compare-M-and-D}  S_{K'_0} \otimes_{\varphi,\gS} \gM \cong S_{K'_0} \otimes_{K'_0}
  D_{\mathrm{pcris}}(T(\gM)). \end{equation}
One sees from its construction 
that the 
isomorphism \eqref{eq:compare-M-and-D} is in fact equivariant for the
action of $I(K'/K)$, 
and
the claim follows by reducing modulo $u$ and all its divided
powers.
\end{proof}

\subsection{Change of extensions}\label{subsec: change of extension K
  prime over K}
We now discuss the behaviour of various of our constructions under
change of~$K'$. Let~$L'/K'$ be a tame extension such that~$L'/K$ is
Galois. We suppose that we have fixed a uniformiser~$\pi''$ of~$L'$
such that
$(\pi'')^{e(L'/K')}=\pi'$. Let~$\gS'_A:=(W(l')\otimes_{\Zp}A)[[u]]$,
where~$l'$ is the residue field of~$L'$, and let~$\Gal(L'/K)$ and~$\varphi$ act
on~$\gS'_A$ via the prescription of Section~\ref{subsec: kisin modules
  with dd} (with~$\pi''$ in place of~$\pi'$). 

There is a natural injective ring
homomorphism~$\cO_{K'}\otimes_{\Zp}A\to\cO_{L'}\otimes_{\Zp}A$, which
is equivariant for the action of~$\Gal(L'/K)$ (acting on the source via the
natural surjection~$\Gal(L'/K)\to\Gal(K'/K)$). There is also an
obvious injective ring homomorphism $\gS_A\to\gS'_A$
sending~$u\mapsto u^{e(L'/K')}$, which is equivariant for the actions
of~$\varphi$ and~$\Gal(L'/K)$; we have $(\gS'_{A})^{\Gal(L'/K')}=\gS_A$. 
If~$\tau$ is an inertial type for~$I(K'/K)$,
we write~$\tau'$ for the corresponding type for~$I(L'/K)$, obtained by
inflation.

For any $(\gL,\gL^+)\in\cM_{\loc}^{K'/K}$, we define 
$(\gL',(\gL')^+)\in\cM_{\loc}^{L'/K}$
by \[(\gL',(\gL')^+):=\cO_{L'}\otimes_{\cO_{K'}}(\gL,\gL^+),\]with the diagonal
action of~$\Gal(L'/K)$.  Similarly, for
any~$\gM\in\cC^{\dd}(A)$, we let~$\gM':=\gS'_A\otimes_{\gS_A}\gM$,
with~$\varphi$ and~$\Gal(L'/K)$ again acting diagonally. 

\begin{prop} 
  \label{prop: change of K'}\hfill
  \begin{enumerate}
  \item The assignments~$(\gL,\gL^+)\to(\gL',(\gL')^+)$ and 
  $\gM\mapsto\gM'$ induce compatible monomorphisms 
  $\cM_{\loc}^{K'/K}\to\cM_{\loc}^{L'/K}$
  and~$\cC^{\dd}_{K'}\to\cC^{\dd}_{L'}$, i.e., as functors they are 
  fully faithful. 

\item The monomorphism $\cC^{\dd}_{K'}\to\cC^{\dd}_{L'}$ induces an
  isomorphism~$\cC^{\tau}\to\cC^{\tau'}$, as well as a monomorphism
$\cC^{\dd,\BT}_{K'}\to\cC^{\dd,\BT}_{L'}$ 
and an isomorphism $\cC^{\tau,\BT}\to\cC^{\tau',\BT}$. 
  \end{enumerate}
\end{prop}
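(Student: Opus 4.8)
The plan is to realise both of the assignments in part~(1) as base change along the finite free ring extensions $\gS_A\hookrightarrow\gS'_A$ and $\cO_{K'}\otimes_{\Zp}A\hookrightarrow\cO_{L'}\otimes_{\Zp}A$, each equipped with the diagonal $\varphi$- and $\Gal(L'/K)$-actions, and then to exploit that $\Gal(L'/K')$ has order prime to $p$ --- hence is invertible in every $\Zp$-algebra --- to build a quasi-inverse on morphisms. First I would check well-definedness: $\gS'_A$ is finite free over $\gS_A$ (a basis being the products of a $W(k')$-basis of $W(l')$ with $1,u,\dots,u^{e(L'/K')-1}$), so base change preserves finite projectivity and is faithfully flat; and the Eisenstein polynomial $E_{K'}(u)$ is carried to $E_{L'}(u)$ under $u\mapsto u^{e(L'/K')}$, since both are monic of degree $e(L'/\Qp)=e(L'/K')e(K'/\Qp)$ and $\pi''$ is a root of $E_{K'}\bigl(u^{e(L'/K')}\bigr)$. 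Hence the conditions ``height $\le h$'' and ``$\Phi$ an isomorphism after inverting $E$'' transfer to $\gM'$, and the descent-data axioms for $\gM'$ are immediate. Compatibility of the two monomorphisms with $\Psi$ then follows from $\gM'/E_{L'}(u)\gM'\iso(\cO_{L'}\otimes_{\Zp}A)\otimes_{\cO_{K'}\otimes_{\Zp}A}(\gM/E_{K'}(u)\gM)$ and the analogous identity for $\im\Phi$. Full faithfulness --- which is exactly the assertion that these morphisms of stacks are monomorphisms --- is proved by descent along the $\Gal(L'/K')$-action: as $|\Gal(L'/K')|$ is invertible, taking $\Gal(L'/K')$-invariants is exact and commutes with the flat base change, and one computes $(\gS'_A)^{\Gal(L'/K')}=\gS_A$ (and likewise $(\cO_{L'}\otimes_{\Zp}A)^{\Gal(L'/K')}=\cO_{K'}\otimes_{\Zp}A$); thus $(\gM')^{\Gal(L'/K')}=\gM$ functorially, and any $\Gal(L'/K)$-equivariant, $\varphi$-compatible map $\gM'\to\gN'$ restricts --- using that $\Gal(L'/K')$ is normal and $\Gal(L'/K)\onto\Gal(K'/K)$ --- to a morphism $\gM\to\gN$ of Breuil--Kisin modules with descent data from $K'$, giving a two-sided inverse to $(\,\cdot\,)'$ on Hom-groups. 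The same argument handles the monomorphism $\cM_{\loc}^{K'/K}\to\cM_{\loc}^{L'/K}$.

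For part~(2), the functor $(\,\cdot\,)'$ carries $\cC^\tau$ into $\cC^{\tau'}$: indeed $\gM'/u\gM'\iso(W(l')\otimes_{\Zp}A)\otimes_{W(k')\otimes_{\Zp}A}(\gM/u\gM)$, on which $I(L'/K)$ acts trivially through the $W(l')$-factor and through $I(L'/K)\onto I(K'/K)$ on the other, so it acts by the inflation of the type of $\gM$; hence $(\,\cdot\,)'$ restricts to a fully faithful functor $\cC^\tau_{K'}\to\cC^{\tau'}_{L'}$, and the content of part~(2) is its essential surjectivity. Given $\gN\in\cC^{\tau'}_{L'}(A)$, the key observation is that, as $\tau'$ is inflated from $I(K'/K)$, the inertia subgroup $G_0:=I(L'/K')$ acts trivially on each $\gN_j/u\gN_j$. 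Introducing the twisted idempotents $\tilde e_c:=\tfrac1{|G_0|}\sum_{g\in G_0}h(g)^{-c}g$ (legitimate since $|G_0|=e(L'/K')$ is invertible and $h$ restricts to an isomorphism $G_0\iso\mu_{e(L'/K')}$), one obtains $\gN_j=\bigoplus_{c=0}^{e(L'/K')-1}\gN_j^{(c)}$ with $g\in G_0$ acting on $\gN_j^{(c)}$ by $h(g)^c$ and $u\gN_j^{(c)}\subseteq\gN_j^{(c+1)}$; triviality of the $G_0$-action modulo $u$ then forces $\gN_j^{(c)}=u\gN_j^{(c-1)}$ for $c\ne0$ (as $h(g)^c-1$ is a unit of $W(l')$ when $g$ generates $G_0$ and $e(L'/K')\nmid c$), so $\gN_j=\bigoplus_{c}u^c\gN_j^{(0)}$. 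Setting $\gM:=\gN^{\Gal(L'/K')}$ --- first passing to the $G_0$-invariants $\gN_j^{(0)}$, then performing ordinary \'etale descent along $W(k')\hookrightarrow W(l')$ --- one checks that $\gS'_A\otimes_{\gS_A}\gM\iso\gN$, that $\gM$ inherits a Breuil--Kisin structure of height $\le1$ with descent data from $K'$ (the height being preserved by faithful flatness), and that its type is $\tau$ (read off from the type $\tau'$ of $\gN$). Thus $\gM\in\cC^\tau_{K'}(A)$ with $\gM'\cong\gN$, proving $\cC^\tau_{K'}\iso\cC^{\tau'}_{L'}$.

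The $\BT$ statements reduce to a computation on local models. By the definitions of $\cC^{\dd,\BT}$ and $\cC^{\tau,\BT}$ (and their $L'$-counterparts) as fibre products over the relevant local-model stacks, together with the compatibility of $(\,\cdot\,)'$ with $\Psi$, it suffices to show that for $(\gL,\gL^+)\in\cM_{\loc}^\tau(A)$ the strong determinant condition for $(\gL,\gL^+)$ is equivalent to the strong determinant condition for $(\gL',(\gL')^+)=\cO_{L'}\otimes_{\cO_{K'}}(\gL,\gL^+)\in\cM_{\loc}^{\tau'}(A)$. Using the explicit form of the determinant condition recalled in Remark~\ref{rem: explicit version of determinant condition local models version}, together with the reduction in Lemma~\ref{lem: local model of type only have to check the strong determinant condition on that type} to the condition at $\xi=\eta,\eta'$ plus the vanishing of $(\gL^+/\pi'\gL^+)_\xi$ for the remaining $\xi$, this is a direct computation from the facts that $N'/N$ is unramified (with $N=K\cdot W(k')[1/p]$, $N'=K\cdot W(l')[1/p]$), that $I(L'/K)$ acts trivially on $l'$, and that $\cO_{L'}\otimes_{\Zp}A$ is faithfully flat over $\cO_{K'}\otimes_{\Zp}A$. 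Granting this, $\cC^{\tau,\BT}_{K'}\iso\cC^{\tau',\BT}_{L'}$ by base-changing the isomorphism $\cC^\tau_{K'}\iso\cC^{\tau'}_{L'}$; and since, by Corollary~\ref{cor: BT C is union of tau C} applied over both $K'$ and $L'$, the stacks $\cC^{\dd,\BT}_{K'}$ and $\cC^{\dd,\BT}_{L'}$ are the disjoint unions of their substacks $\cC^{\tau,\BT}$ indexed by the unmixed types, the monomorphism $\cC^{\dd,\BT}_{K'}\to\cC^{\dd,\BT}_{L'}$ identifies the source with the open and closed substack of the target cut out by those unmixed types of $I(L'/K)$ that are inflated from $I(K'/K)$.

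The step I expect to be the main obstacle is the essential surjectivity in part~(2): ordinary faithfully-flat or Galois descent does not apply directly along $\gS_A\to\gS'_A$, because this extension is (tamely) \emph{ramified}, and it is exactly the hypothesis that the inertial type is inflated from $K'$ that forces the $u$-semilinear $I(L'/K')$-action on $\gN$ to take the ``standard'' graded shape $\gN_j=\bigoplus_c u^c\gN_j^{(0)}$ that makes descent possible. This is also why, in the absence of a type constraint, one can only assert that $\cC^{\dd}_{K'}\to\cC^{\dd}_{L'}$ is a monomorphism rather than an equivalence.
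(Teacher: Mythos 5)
Your proposal is correct and captures the essential structure of the argument; the differences from the paper's proof are in packaging rather than in substance, so I will briefly compare.

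For part~(1) you take the same route as the paper: exactness of taking $\Gal(L'/K')$-invariants (the group being of order prime to $p$), the identification $(\gS'_A)^{\Gal(L'/K')}=\gS_A$, and the resulting equality $(\gM')^{\Gal(L'/K')}=\gM$ give full faithfulness. For essential surjectivity in part~(2), you and the paper both crucially exploit the same fact --- namely that, because $\tau'$ is inflated from~$\tau$, the group $I(L'/K')$ acts trivially on $\gN/u_{L'}\gN$, and multiplication by $u_{L'}^i$ twists this action by a nontrivial character for $0<i<e(L'/K')$. The paper uses this to show that $u_{L'}^i\gN/u_{L'}^{i+1}\gN$ has no $\Gal(L'/K')$-invariants, hence $(\gN/u_{K'}\gN)^{\Gal(L'/K')}=(\gN/u_{L'}\gN)^{\Gal(L'/K')}$, then combines the normal basis theorem for $l'/k'$ with Nakayama and projectivity to conclude. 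You instead decompose $\gN_j$ into $I(L'/K')$-isotypic pieces $\gN_j^{(c)}=\tilde e_c\gN_j$ and show directly that $\gN_j^{(c)}=u^c\gN_j^{(0)}$, after which $\gN$ visibly becomes the base change of $\gN^{(0)}$ along $\gS_A^0\to\gS'_A$; unramified descent along $W(k')\to W(l')$ finishes the construction. The two arguments are equivalent in content --- the normal basis theorem is precisely what makes your \'etale descent step work --- and yours is more explicit at the cost of more notation, while the paper's is quicker once Nakayama is invoked.

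One point to flag: in the determinant-condition step you reduce correctly, via Corollary~\ref{cor: BT C is union of tau C} and Lemma~\ref{lem: local model of type only have to check the strong determinant condition on that type}, to a calculation on the $\eta,\eta'$-parts of the local model, and then assert that this is a ``direct computation.'' In fact there is a small subtlety here: unwinding the explicit determinant condition of Remark~\ref{rem: explicit version of determinant condition local models version} shows that the condition over $L'$ is a product of $[l':k']$ copies of the condition over $K'$ in disjoint sets of variables, so the $L'$-condition implies the $K'$-condition only up to an $[l':k']$th root of unity, and one must then compare coefficients (e.g.\ of $\prod_i X_{0,\sigma_i}$) to see that this root of unity is $1$. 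This does not invalidate your outline, but it is the one place where ``direct'' undersells the amount of care required.
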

\begin{proof} 
(1) One checks easily that the  assignments~$(\gL,\gL^+)\to(\gL',(\gL')^+)$ and 
  $\gM\mapsto\gM'$ are compatible. For the rest of the claim, we consider the case of the functor $\gM\mapsto\gM'$; the
  arguments for the local models case are similar but slightly easier,
  and we leave them to the reader. Let $A$ be a $\varpi$-adically
  complete $\cO$-algebra. 
If~$\gN$ is a
rank~$d$ Breuil--Kisin module with descent data from~$L'$ to~$K$, consider
the Galois
invariants~$\gN^{\Gal(L'/K')}$. Since~$(\gS'_{A})^{\Gal(L'/K')}=\gS_A$,
these invariants are naturally a~$\gS_A$-module, and moreover they
naturally carry a Frobenius and descent data satisfying the conditions
required of a Breuil--Kisin module of height at most $h$. In general the
invariants need not be projective of rank $d$, and so need not be
rank~$d$ Breuil--Kisin module with descent data from~$K'$ to~$K$. However, in
the case~$\gN=\gM'$ we
have \[(\gM')^{\Gal(L'/K')}=(\gS'_A\otimes_{\gS_A}\gM)^{\Gal(L'/K')}=(\gS'_A)^{\Gal(L'/K')}\otimes_{\gS_A}\gM=\gM.\]
Here the second equality holds e.g.\ because $\Gal(L'/K')$ has order
prime to $p$, so that taking $\Gal(L'/K')$-invariants is exact, and
indeed is given by
multiplication by an idempotent $i \in \gS'_A$ (use the decomposition
$\gS'_A = i \gS'_A \oplus (1-i) \gS'_A$ and note that $i$ kills the
latter summand).
It
follows immediately that the functor~$\gM\to\gM'$ is fully faithful,
so $\cC^{\dd,a}_{K'}\to\cC^{\dd,a}_{L'}$ is a monomorphism.

(2) Suppose now that~$\gN$ has type~$\tau'$. In view of what we have
proven so far, in order to prove that
$\cC^{\tau}\to\cC^{\tau'}$ is an isomorphism, it is enough to
show that $\gN^{\Gal(L'/K')}$ is a rank~$d$ Breuil--Kisin module of type~$\tau$, and that the natural
map of $\gS'_A$-modules
\numequation\label{eqn: tensoring up invariants}\gS'_{A}\otimes_{\gS_A}\gN^{\Gal(L'/K')}\to\gN\end{equation}
is an
isomorphism. For the remainder of this proof, for clarity we write $u_{K'}$,
$u_{L'}$ instead of $u$ for the variables of $\gS_A$ and $\gS'_A$ respectively.
Since the type~$\tau'$ of $\gN$ is inflated from $\tau$, the
action of $\Gal(L'/K')$ on $\gN/u_{L'} \gN$ factors through
$\Gal(l'/k')$; noting that $W(l')$ has a normal basis for $\Gal(l'/k')$ over
$W(k')$,  we obtain an isomorphism
\numequation\label{eq:moduL'} 
 W(l')  \otimes_{W(k')}  (\gN/u_{L'}\gN)^{\Gal(L'/K')} \toisom
  \gN/u_{L'} \gN.\end{equation}
In particular the $W(k')\otimes_{\Zp} A$-module $(\gN/u_{L'} \gN)^{\Gal(L'/K')}$ is projective of rank $d$.

Observe however that $(\gN/u_{K'}\gN)^{\Gal(L'/K')} =
(\gN/u_{L'}\gN)^{\Gal(L'/K')}$. To see this, by the exactness of
taking $\Gal(L'/K')$ invariants it suffices to check that $u_{L'}^i
\gN/u_{L'}^{i+1}$ has trivial $\Gal(L'/K')$-invariants  for $0 < i <
e(L'/K')$. Multiplication by $u_{L'}^i$ gives an isomorphism 
$\gN/u_{L'} \gN \cong u_{L'}^i
\gN/u_{L'}^{i+1}$, so that for $i$ in the above range, the inertia group 
$I(L'/K')$ acts linearly on $ u_{L'}^i \gN/u_{L'}^{i+1}$ through a twist of
$\tau'$ by a nontrivial character; so there are no
$I(L'/K')$-invariants, and thus no $\Gal(L'/K')$-invariants either.

It
follows that the isomorphism \eqref{eq:moduL'} is the map \eqref{eqn: tensoring up
  invariants} modulo~$u_{L'}$. By 
Nakayama's lemma it follows that \eqref{eqn: tensoring up
  invariants} is surjective. Since~$\gN$ is projective,  the surjection~\eqref{eqn: tensoring up
  invariants} is split, and is therefore an isomorphism, since it is
an isomorphism modulo~$u_{L'}$. This isomorphism
exhibits~$\gN^{\Gal(L'/K')}$ as a direct summand (as
an~$\gS_A$-module) of the projective module~$\gN$, so it is also
projective; and it is projective of rank~$d$, since this holds modulo~$u_{K'}$. 

Finally, we need to check the compatibility of these maps with the
strong determinant condition.  By Corollary~\ref{cor: BT C is union of
  tau C}, it is enough to prove this for the case of the morphism
$\cC^{\tau}\to\cC^{\tau'}$ for some~$\tau$; by the compatibility in
part (1), this comes down to the same for the corresponding map of
local model stacks $\cM_{\loc}^{\tau}\to\cM_{\loc}^{\tau'}$. If
$(\gL,\gL^+) \in \cM_{\loc}^{\tau}$, it therefore suffices to show 
to show that conditions (1) and ($2'$) of  Lemma~\ref{lem: local model of type only have to check the strong
      determinant condition on that type} for $(\gL,\gL^+)$ and
    $(\gL', (\gL')^+) := \cO_{L'} \otimes_{\cO_{K'}} (\gL,\gL^+)$ are
    equivalent. This is immediate for condition ($2'$), since we have
    $(\gL')^+/\pi'' (\gL')^+ \cong l' \otimes_{k'} (\gL^+/\pi' \gL^+)$
 as $I(L'/K)$-representations.

Writing~$\tau=\eta\oplus\eta'$, it remains to relate the
    strong determinant conditions on the~$\eta,\eta'$-parts over
    both~$K'$ and~$L'$. Unwinding the definitions using Remark~\ref{rem: explicit version of determinant condition
      local models version}, one finds that the condition over $L'$ is
    a product of $[l':k']$ copies (with different sets of variables)
    of the condition over~$K'$. Thus the strong determinant condition
    over $K'$ implies the condition over $L'$, while the condition
    over $L'$ implies the condition over $K'$ up to an $[l':k']$th
    root of unity. Comparing the terms involving only copies of 
    $X_{0,\sigma_i}$'s shows that this root of unity must be $1$.
\end{proof}

\begin{remark}
 The morphism of local model stacks $\cM^{\tau}_{\loc} \to
 \cM^{\tau'}_{\loc}$ is not an isomorphism (provided that the extension
 $L'/K'$ is nontrivial). The issue is that, as we observed in the
 preceding proof, local models
 $(\gL',(\gL')^+)$ in the image
 of the morphism $\cM^{\tau}_{\loc} \to
 \cM^{\tau'}_{\loc}$ can have  $((\gL')^+/\pi'' (\gL')^+)_{\xi} \neq
 0$ only for characters $\xi : I(L'/K) \to \cO^{\times}$ that are
 inflated from $I(K'/K)$. However, one does obtain an isomorphism
 from the substack of $\cM^{\tau}_{\loc}$ of pairs $(\gL,\gL^+)$ satisfying condition~(2)
 of Lemma~\ref{lem: local model of type only have to check the strong
      determinant condition on that type} to the analogous substack of
    $\cM^{\tau'}_{\loc}$; therefore the induced map
    $\cM^{\tau,\BT}\rightarrow \cM^{\tau',\BT}$ \emph{will} also be an
    isomorphism.  Analogous remarks will apply to the maps 
    of local model stacks in \S\ref{subsec: explicit local models}.
\end{remark}

\subsection{Explicit local models}\label{subsec: explicit local models}
We now explain the connection between the moduli stacks $\cC^\tau$
and local models for Shimura varieties at hyperspecial and Iwahori level. 
This idea has been developed in~\cite{2015arXiv151007503C} for
Breuil--Kisin modules of arbitrary rank with tame principal series descent data, inspired by 
~\cite{kis04}, which analyses the case without descent data. 

The results
of~\cite{2015arXiv151007503C} relate the moduli stacks $\cC^{\tau}$(in
the case that~$\tau$ is a principal series type) via
a local model diagram to a mixed-characteristic deformation of the affine flag variety,
introduced in this generality by Pappas and Zhu~\cite{pappas-zhu}. The local models in~\cite[\S 6]{pappas-zhu}
are defined in terms of Schubert cells in the generic fibre of this mixed-characteristic deformation,
by taking the Zariski closure of these Schubert cells. The disadvantage of this
approach is that it does not give a direct moduli-theoretic interpretation of the special
fibre of the local model. Therefore, it is hard to check directly that our stack $\cC^{\tau, \BT}$, 
which has a moduli-theoretic definition,
corresponds to the desired local model under the diagram introduced in~\cite[Cor. 4.11]{2015arXiv151007503C} 
\footnote{One should be able to check this by adapting the ideas
in~\cite[\S 2.1]{haines-ngo} and~\cite[Prop. 6.2]{pappas-zhu} to $\Res_{K/\Q_p}\GL_n$
where $K/\Q_p$ can be ramified.}. 

In our rank~2 setting, the local models admit a much more explicit condition, using
lattice chains and Kottwitz's determinant condition, and in the cases
of nonscalar types, we will relate our local models to 
the naive local model at Iwahori level
for the Weil restriction of $\GL_2$, in the sense of~\cite[\S
2.4]{MR3135437}.

We begin with the simpler case of scalar inertial types. Suppose  that~$K'/K$ is totally ramified, and that~$\tau$ is a scalar
inertial type, say~$\tau=\eta\oplus\eta$. In this case we define the
local model stack~$\cM_{\loc,\hyp}$ (``hyp'' for ``hyperspecial'') to be the \emph{fppf} stack
over~$\Spf\cO$ (in fact, a $p$-adic formal algebraic stack),
which to each $p$-adically complete
$\cO$-algebra $A$ associates the groupoid~$\cM_{\loc,\hyp}(A)$
consisting of pairs~$(\gL,\gL^+)$,
where
\begin{itemize}
\item $\gL$ is a rank~$2$ projective~$\cO_K\otimes_{\Zp}A$-module, and
\item $\gL^+$ is an $\cO_K\otimes_{\Zp}A$-submodule of $\gL$,
	which is locally on~$\Spec A$ a direct summand of~$\gL$ as an $A$-module (or equivalently, for which the quotient $\gL/\gL^+$ is projective
	as an $A$-module).
\end{itemize}
We let~$\cM^{\BT}_{\loc,\hyp}$ be the substack of
pairs~$(\gL,\gL^+)$ with the property that for all $a\in\cO_K$,
we have \numequation\label{eqn: strong det condn for scalar local model} \det{\!}_A(a|\gL^+)
=\prod_{\psi:K\into E}\psi(a)
  \end{equation}
  as polynomial functions on $\cO_K$.

\begin{lem}
  \label{lem: local model scalar type is equivalent to what you
    expect}
The functor
  $(\gL',(\gL')^+) \mapsto ((\gL')_{\eta},(\gL')^+_{\eta})$
  defines a morphism
$\cM^\tau_{\loc}\to\cM_{\loc,\hyp}$ which induces an isomorphism
  $\cM^{\tau,\BT}_{\loc} \to \cM^{\BT}_{\loc,\hyp}$. 
{\em (}We remind the reader that $K'/K$ is assumed totally ramified,
and that $\tau$ is assumed to be a scalar inertial type associated
to the character $\eta$.{\em )}
\end{lem}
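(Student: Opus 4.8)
The plan is to unwind the definitions on both sides and check that the functor $(\gL',(\gL')^+)\mapsto((\gL')_\eta,(\gL')^+_\eta)$ is well-defined, fully faithful, and essentially surjective after imposing the $\BT$ condition. First I would observe that, since $K'/K$ is totally ramified, the residue field of $K'$ is $k$ itself, so $\gS_A$, $\cO_{K'}\otimes A$, $W(k')\otimes A$ all involve $W(k)$, and in particular $N=K\cdot W(k)[1/p]=K$; thus $\cM_{\loc,\hyp}$ is literally $\cM_{\loc}^{K/K}$ in the notation of Definition~\ref{defn: } (i.e., the trivial-descent-data case), and $\cM^{\BT}_{\loc,\hyp}$ is $\cM_{\loc}^{K/K,\BT}$ via the strong determinant condition~\eqref{eqn: strong det condn for scalar local model}, which is exactly \eqref{eqn: strong det condn} in this setting. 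This identification makes the whole statement a comparison between the $\eta$-isotypic piece of a scalar-type local model over $K'$ and a local model over $K$.

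Next I would check that the functor lands in $\cM_{\loc,\hyp}$. Given $(\gL',(\gL')^+)\in\cM_{\loc}^\tau(A)$, the decomposition \eqref{eqn: breaking up local model over xi} over characters $\xi:I(K'/K)\to\cO^\times$ gives $(\gL')_\xi$ as an $\cO_N\otimes A=\cO_K\otimes A$-module; for $\xi=\eta$ this is projective of rank $2$ (since $\tau=\eta\oplus\eta$ is scalar of rank $2$, Lemma~\ref{lem:tame reps}(2) shows $\gL'/\pi'\cong A\otimes_\cO(\eta^\circ)^{\oplus 2}$ Zariski-locally, so the $\eta$-part has full rank $2$ while all other $(\gL')_\xi$ vanish modulo $\pi'$ and hence are $\pi'$-divisible). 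The submodule $(\gL')^+_\eta$ is an $\cO_K\otimes A$-submodule that is locally an $A$-module direct summand (being the $\eta$-part of a direct summand), so $((\gL')_\eta,(\gL')^+_\eta)$ is a legitimate object of $\cM_{\loc,\hyp}(A)$. For full faithfulness: a morphism $(\gL')_\eta\to(\gL'')_\eta$ as $\cO_K\otimes A$-modules respecting the submodules extends canonically to an $I(K'/K)$-equivariant map $\gL'\to\gL''$ by the observation that for a scalar type $\gL'=\cO_{K'}\otimes_{\cO_K}(\gL')_\eta$ (since all other isotypic pieces are $\pi'$-torsion-free of rank $0$, hence $0$, wait — more precisely, $\gL'$ as a projective $\cO_{K'}\otimes A$-module with $I(K'/K)$ acting through the single character $\eta$ on $\gL'/\pi'$ must be induced: multiplication by $\pi'$ gives isomorphisms shuffling the $\chi_i$-twists, so $\gL'\cong\cO_{K'}\otimes_{\cO_K}(\gL')_\eta$ canonically, and likewise $(\gL')^+$ is recovered as $\cO_{K'}\otimes_{\cO_K}(\gL')^+_\eta$ when the $\BT$ condition forces $((\gL')^+/\pi'(\gL')^+)_\xi=0$ for $\xi\ne\eta$ by Lemma~\ref{lem: local model of type only have to check the strong determinant condition on that type}(2$'$)). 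This is precisely the quasi-inverse on the $\BT$ loci.

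Finally I would verify the match of determinant conditions. By Lemma~\ref{lem: local model of type only have to check the strong determinant condition on that type} applied with $\eta=\eta'$, an object $(\gL',(\gL')^+)\in\cM_{\loc}^\tau(A)$ lies in $\cM_{\loc}^{\tau,\BT}(A)$ if and only if condition~\eqref{eqn: strong det condn} holds for $\xi=\eta$ and $((\gL')^+/\pi'(\gL')^+)_\xi=0$ for all $\xi\ne\eta$; the latter is exactly the condition that makes $\cO_{K'}\otimes_{\cO_K}((\gL')_\eta,(\gL')^+_\eta)\toisom(\gL',(\gL')^+)$, while the former, under the identification $\cO_N=\cO_K$, is precisely \eqref{eqn: strong det condn for scalar local model} for the pair $((\gL')_\eta,(\gL')^+_\eta)$. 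Hence the functor restricts to an equivalence $\cM^{\tau,\BT}_{\loc}\toisom\cM^{\BT}_{\loc,\hyp}$, with quasi-inverse $(\gL,\gL^+)\mapsto\cO_{K'}\otimes_{\cO_K}(\gL,\gL^+)$ equipped with the $\eta$-semilinear $I(K'/K)$-action (i.e.\ $g$ acts on $\pi'$ via $h(g)$ and on $(\gL,\gL^+)$ via $\eta(g)$). I expect the only mildly delicate point to be the essential surjectivity/quasi-inverse construction: one must check that $\cO_{K'}\otimes_{\cO_K}(\gL,\gL^+)$ with this prescribed action genuinely satisfies the $\BT$ condition at every $\xi$ — but that is exactly the content of Lemma~\ref{lem: local model of type only have to check the strong determinant condition on that type}, since multiplication by $\pi'$ identifies the $\xi\chi_i^{-1}$ and $\xi$ parts and the positive part is supported entirely in the $\eta$-component, so conditions (1) and (2$'$) hold by construction.
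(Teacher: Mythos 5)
Your proposal is correct and takes essentially the same route as the paper: you build the quasi-inverse $(\gL,\gL^+)\mapsto\cO_{K'}\otimes_{\cO_K}(\gL,\gL^+)$ with the $\eta$-twisted semilinear action, observe that $\gL'\cong\cO_{K'}\otimes_{\cO_K}(\gL')_\eta$ always holds for objects of $\cM^\tau_{\loc}$ but that recovering $(\gL')^+$ from $(\gL')^+_\eta$ requires condition (2$'$) of Lemma~\ref{lem: local model of type only have to check the strong determinant condition on that type} (hence the restriction to the $\BT$ loci), and then reduce the comparison of determinant conditions to the $\xi=\eta$ component via that same lemma. The only cosmetic slip is the phrase ``the positive part is supported entirely in the $\eta$-component,'' which should read ``the positive part \emph{modulo} $\pi'$ is supported entirely in the $\eta$-component,'' but your invocation of the lemma makes the intended argument clear.
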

\begin{proof}
If $(\gL',(\gL')^+)$ is an object of $\cM^{\tau}_{\loc}$,
the proof that $((\gL')_\eta,(\gL')^+_\eta)$
  is indeed an object of~$\cM_{\loc,\hyp}(A)$ is
  very similar to the proof of Proposition~\ref{prop: change of K'},
  and is left to the reader.
  Similarly, the reader may verify that the functor
  \[(\gL,\gL^+) \mapsto
(\gL',(\gL')^+):=\cO_{K'}\otimes_{\cO_K}(\gL,\gL^+),\]
where the
action of~$\Gal(K'/K)$ is given by the tensor product of the natural
action on~$\cO_{K'}$ with the action on~$(\gL,\gL^+)$ given by the character~$\eta$,
defines a morphism
$\cM_{\loc,\hyp} \to \cM^\tau_{\loc}$.

 The composition
$\cM_{\loc,\hyp} \to \cM^\tau_{\loc}\to\cM_{\loc,\hyp}$ is evidently
the identity. The composition in the other order is not, in general, naturally equivalent to the
identity morphism, because for $(\gL,\gL^+) \in
\cM^{\tau}_{\loc}(A)$ one cannot necessarily recover $\gL^+$ from the
projection to its $\eta$-isotypic part. However, this 
\emph{will} hold if $\gL^+$ satisfies condition (2) of  Lemma~\ref{lem:
      local model of type only have to check the strong determinant
      condition on that type} (and so in particular will hold after
    imposing the strong determinant condition). 

Indeed, suppose $(\gL,\gL^+) \in
\cM^{\tau}_{\loc}(A)$. Then there is a natural
$\Gal(K'/K)$-equivariant map of projective $\cO_{K'}\otimes_{\Zp} A$-modules
\numequation\label{eq:recover-L} \cO_{K'} \otimes_{\cO_K} \gL_{\eta} \to \gL\end{equation}
of the same rank (in which $\Gal(K'/K)$ acts by $\eta$ on $\gL_{\eta}$). This map is
surjective because it is surjective on $\eta$-parts and the maps $p_{i,\xi}$ are surjective for all
$\xi\neq \eta$; therefore it is an isomorphism. One further has a 
a natural
$\Gal(K'/K)$-equivariant map of $\cO_{K'}\otimes_{\Zp} A$-modules
\numequation\label{eq:recover-L+} \cO_{K'} \otimes_{\cO_K}
\gL^+_{\eta} \to \gL^+ \end{equation}
that is injective because locally on $\Spec(A)$ it is a direct summand of the isomorphism \eqref{eq:recover-L}.
If one further assumes that $\gL^+$ satisfies condition (2) of  Lemma~\ref{lem:
      local model of type only have to check the strong determinant
      condition on that type} then \eqref{eq:recover-L+} is an
    isomorphism, as claimed.

  It remains to check the compatibility of these maps with the strong
  determinant condition. If $(\gL,\gL^+) \in \cM_{\loc,\hyp}$ then 
certainly condition (2) of  Lemma~\ref{lem:
      local model of type only have to check the strong determinant
      condition on that type} holds for
    $(\gL',(\gL')^+):=\cO_{K'}\otimes_{\cO_K}(\gL,\gL^+) \in
    \cM^\tau_{\loc}$.  By Lemma~\ref{lem:
      local model of type only have to check the strong determinant
      condition on that type} the strong determinant condition holds
    for~$(\gL',(\gL')^+)$ if and only if~(\ref{eqn: strong det condn}) holds
    for~$\gL'$ with~$\xi=\eta$; but this is exactly the
    condition~(\ref{eqn: strong det condn for scalar local model}) for
    $\gL$, as required.
\end{proof}

Next, we consider the case of principal series types. We suppose
that~$K'/K$ is totally ramified. We begin by
defining a $p$-adic formal algebraic stack $\cM_{\loc,\Iw}$
over~$\Spf\cO$ (``Iw'' for Iwahori). For each complete $\cO$-algebra $A$, we
let~$\cM_{\loc,\Iw}(A)$ be the groupoid of tuples~$(\gL_1,\gL^+_1,\gL_2,\gL^+_2,f_1,f_2)$,
where
\begin{itemize}
\item $\gL_1$, $\gL_2$ are rank~$2$
  projective~$\cO_K\otimes_{\Zp}A$-modules, 
\item $f_1:\gL_1\to\gL_2$, $f_2:\gL_2\to\gL_1$ are morphisms of
  $\cO_K\otimes_{\Zp}A$-modules, satisfying~$f_1\circ f_2=f_2\circ f_1=\pi$,
\item both $\coker f_1$ and~$\coker f_2$ are rank one projective $k\otimes_{\Zp}A$-modules,
\item $\gL^+_1$, $\gL^+_2$ are $\cO_K\otimes_{\Zp}A$-submodules of $\gL_1$, $\gL_2$, which are locally
  on~$\Spec A$ direct summands  as $A$-modules
  (or equivalently, for which the quotients
  $\gL_i/\gL^+_i$ ($i = 1,2$) are projective $A$-modules), and moreover
  for which
  the morphisms~$f_1,f_2$ restrict to morphisms~$f_1:\gL^+_1\to \gL^+_2$, $f_2:\gL^+_2\to \gL^+_1$.
\end{itemize}
We let~$\cM^{\BT}_{\loc,\Iw}$ be the substack of
tuples with the property that for all $a\in\cO_K$ and $i=1,2$,
we have \numequation\label{eqn: strong det condn for principal series local model} \det{\!}_A(a|\gL^+_i)
=\prod_{\psi:K\into E}\psi(a)
  \end{equation}
  as polynomial functions on $\cO_K$.

Write~$\tau=\eta\oplus\eta'$ with~$\eta\ne\eta'$. 
Recall that the character $h:\Gal(K'/K)=I(K'/K)\to W(k)^\times$ is
given by~$h(g)=g(\pi')/\pi'$. 
Since we are assuming
that~$\eta\ne\eta'$, for each embedding~$\sigma:k\into\F$ (which we
also think of as $\sigma:W(k)\into\cO$) there are
integers $0<a_\sigma,b_\sigma<e(K'/K)$ with the properties that
$\eta'/\eta=\sigma\circ h^{a_\sigma}$, $\eta/\eta'=\sigma\circ
h^{b_\sigma}$; in particular, $a_\sigma+b_\sigma=e(K'/K)$. Recalling
that~$e_\sigma\in W(k)\otimes_{\Zp}\cO\subset \cO_K$ is the idempotent
corresponding to~$\sigma$, we
set~$\pi_1=\sum_\sigma(\pi')^{a_\sigma}e_\sigma$,
$\pi_2=\sum_\sigma(\pi')^{b_\sigma}e_\sigma$; so we have
~$\pi_1\pi_2=\pi$, and~$\pi_1\in(\cO_{K'}\otimes_{\Zp}\cO)^{I(K'/K)=\eta'/\eta}$,
$\pi_2\in(\cO_{K'}\otimes_{\Zp}\cO)^{I(K'/K)=\eta/\eta'}$.

We define a
morphism $\cM^\tau_{\loc}\to\cM_{\loc,\Iw}$ as follows. Given a
pair~$(\gL,\gL^+)\in\cM^\tau_{\loc}(A)$, we
set~$(\gL_1,\gL^+_1)=(\gL_\eta,\gL^+_\eta)$,
$(\gL_2,\gL^+_2)=(\gL_{\eta'},\gL^+_{\eta'})$, and we let~$f_1$, $f_2$ be given by
multiplication by~$\pi_1$, $\pi_2$ respectively. The only point that
is perhaps not immediately obvious is to check that~$\coker f_1$ and~$\coker f_2$
are  rank one projective $k\otimes_{\Zp}A$-modules. To see this, note
that by~\cite[\href{http://stacks.math.columbia.edu/tag/05BU}{Tag
  05BU}]{stacks-project}, we can lift $(\gL/\pi'\gL)_{\eta'}$ to a
rank one summand~$U$ of the
projective~$\cO_{K'}\otimes_{\Zp}A$-module~$\gL$. Let~$U_\eta$
be the direct summand of~$\gL_\eta$ obtained by projection of~$U$ to
the $\eta$-eigenspace~$\gL_\eta$; then the projective $\cO_K
\otimes_{\Zp} A$-module $U_\eta$ has rank one, as can be checked
modulo $\pi$.
Similarly we
may lift $(\gL/\pi'\gL)_{\eta'}$ to a rank one
summand~$V$ of~$\gL$, and we let~$V_{\eta'}$ be the 
projection to the $\eta'$-part.

The natural map 
\numequation\label{eqn: decomposing principal series type over powers
  of pi
  prime}
\cO_{K'} \otimes_{\cO_K} (U_\eta \oplus V_{\eta'}) \to \gL 
\end{equation}
is an isomorphism,  since both sides are projective
$\cO_{K'}\otimes_{\Zp}A$-modules of rank two, and the given
map is an isomorphism modulo~$\pi'$.
It follows immediately from~(\ref{eqn: decomposing principal series type over powers
  of pi prime}) that~$\gL_\eta=U_\eta\oplus\pi_2V_{\eta'}$
and~$\gL_{\eta'}=\pi_1U_\eta\oplus V_{\eta'}$, so that~$\coker f_1$ and~$\coker f_2$ are projective of rank
one, as claimed.

\begin{prop}
  \label{prop: principal series case local model isomorphic to Iwahori
  local model}The morphism $\cM^\tau_{\loc}\to\cM_{\loc,\Iw}$ 
induces an isomorphism $\cM^{\tau,\BT}_{\loc}\to\cM^{\BT}_{\loc,\Iw}$.
{\em (}We remind the reader that $K'/K$ is assumed totally ramified,
and that $\tau$ is assumed to be a principal series inertial type.{\em )}
\end{prop}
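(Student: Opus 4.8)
The plan is to produce a quasi-inverse to the functor $\cM^\tau_{\loc}\to\cM_{\loc,\Iw}$ on the Barsotti--Tate loci. Since both $\cM^{\tau,\BT}_{\loc}$ and $\cM^{\BT}_{\loc,\Iw}$ are fppf stacks over $\Spf\cO$, it suffices to exhibit mutually quasi-inverse equivalences of groupoids $\cM^{\tau,\BT}_{\loc}(A)\rightleftarrows\cM^{\BT}_{\loc,\Iw}(A)$, functorially in the $p$-adically complete $\cO$-algebra $A$, and then observe that a morphism of fppf stacks inducing such equivalences is an isomorphism.

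First I would check that the morphism already constructed carries $\cM^{\tau,\BT}_{\loc}$ into $\cM^{\BT}_{\loc,\Iw}$. Under the assignment $(\gL,\gL^+)\mapsto(\gL_1,\gL_1^+,\gL_2,\gL_2^+,f_1,f_2)=(\gL_\eta,\gL_\eta^+,\gL_{\eta'},\gL_{\eta'}^+,\pi_1\cdot,\pi_2\cdot)$, the defining equation~\eqref{eqn: strong det condn for principal series local model} for $i=1$ (resp.\ $i=2$) is literally equation~\eqref{eqn: strong det condn} at $\xi=\eta$ (resp.\ at $\xi=\eta'$), and these hold for any object of $\cM^{\tau,\BT}_{\loc}(A)$ by Lemma~\ref{lem: local model of type only have to check the strong determinant condition on that type}(1). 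So the forward functor restricts as claimed.

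For the inverse, I would mirror the construction in the proof of Lemma~\ref{lem: local model scalar type is equivalent to what you expect} and the discussion preceding the Proposition. Given $(\gL_1,\gL_1^+,\gL_2,\gL_2^+,f_1,f_2)\in\cM^{\BT}_{\loc,\Iw}(A)$, the hypothesis that $\coker f_1$ and $\coker f_2$ are rank one projective $k\otimes_{\Zp}A$-modules lets me split, Zariski locally on $\Spec A$ and then globally by descent, $\gL_1=U_\eta\oplus\pi_2V_{\eta'}$ and $\gL_2=\pi_1 U_\eta\oplus V_{\eta'}$ for rank one projective $\cO_K\otimes_{\Zp}A$-summands $U_\eta$, $V_{\eta'}$, with $f_1$ (resp.\ $f_2$) identified with multiplication by $\pi_1$ (resp.\ $\pi_2$). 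I then set $\gL:=\cO_{K'}\otimes_{\cO_K}(U_\eta\oplus V_{\eta'})$, with $\Gal(K'/K)$ acting diagonally through $\eta$ on the first summand and $\eta'$ on the second, so that $\gL$ has type $\tau$, $\gL_\eta=\gL_1$, $\gL_{\eta'}=\gL_2$, and multiplication by $\pi_1$, $\pi_2$ recovers $f_1$, $f_2$ (compare~\eqref{eqn: decomposing principal series type over powers of pi prime}); and I let $\gL^+\subseteq\gL$ be the $\cO_{K'}\otimes_{\Zp}A$-submodule generated by $\gL_1^+\subseteq\gL_\eta$ and $\gL_2^+\subseteq\gL_{\eta'}$. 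The compatibilities $f_1(\gL_1^+)\subseteq\gL_2^+$ and $f_2(\gL_2^+)\subseteq\gL_1^+$ built into the definition of $\cM_{\loc,\Iw}$ are precisely what ensure that no new elements are added to the $\eta$- and $\eta'$-isotypic parts of this span, so that $(\gL^+)_\eta=\gL_1^+$ and $(\gL^+)_{\eta'}=\gL_2^+$; and since $\gL^+$ is generated over $\cO_{K'}\otimes_{\Zp}A$ by elements lying in the $\eta$- and $\eta'$-parts, one has $(\gL^+/\pi'\gL^+)_\xi=0$ for all $\xi\neq\eta,\eta'$.

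The main obstacle is then twofold. First, one must check that $(\gL,\gL^+)$ really is an object of $\cM^{\tau,\BT}_{\loc}(A)$, i.e.\ that $\gL^+$ is an $A$-module direct summand of $\gL$ with $\gL/\gL^+$ projective over $A$; this is where the determinant conditions~\eqref{eqn: strong det condn for principal series local model} on $(\gL_1^+,\gL_2^+)$ enter, and the verification is carried out Zariski locally on $\Spec A$ using the explicit splittings above. Once that is done the strong determinant condition for $(\gL,\gL^+)$ comes for free, since Lemma~\ref{lem: local model of type only have to check the strong determinant condition on that type} reduces it to condition~\eqref{eqn: strong det condn} at $\xi=\eta,\eta'$ — which are the given Iwahori determinant conditions — together with the vanishing of $(\gL^+/\pi'\gL^+)_\xi$ off $\eta,\eta'$, true by construction. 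Second, one must identify the two composites with the identity: the composite $\cM^{\BT}_{\loc,\Iw}\to\cM^{\tau,\BT}_{\loc}\to\cM^{\BT}_{\loc,\Iw}$ is visibly the identity, while for the other composite one uses, exactly as in the paragraph preceding the Proposition, that on the Barsotti--Tate locus an object of $\cM^\tau_{\loc}$ is recovered from $\gL_\eta$, $\gL_{\eta'}$ via~\eqref{eqn: decomposing principal series type over powers of pi prime} and from $\gL^+_\eta$, $\gL^+_{\eta'}$ by taking the $\cO_{K'}\otimes_{\Zp}A$-span, the key input being that the maps $p_{i,\xi}$ are isomorphisms for $\xi\neq\eta,\eta'$. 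Tracking the naturality of the splittings in $A$ then promotes this equivalence of groupoids to the desired isomorphism of stacks.
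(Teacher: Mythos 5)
Your overall strategy is the same as the paper's (construct an explicit quasi-inverse to $\cM^\tau_{\loc}\to\cM_{\loc,\Iw}$), and your forward direction is correct, but the construction of the inverse has a real gap, and a secondary confusion about which hypotheses are doing what.

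The paper builds $\gL$ \emph{canonically} from the chain data: it sets $\gL=\oplus_\sigma\bigl(\oplus_{i=0}^{a_\sigma-1}e_\sigma\gL_1^{(i)}\oplus_{j=0}^{b_\sigma-1}e_\sigma\gL_2^{(j)}\bigr)$ with the $\pi'$-action given by identity maps between adjacent copies and by $f_1$, $f_2$ at the two boundaries. This involves no choices whatsoever, so it is immediately a morphism of stacks, and the choice of splittings $U_\eta$, $V_{\eta'}$ enters only \emph{afterwards}, as a device to verify that the $\cO_{K'}\otimes_{\Zp}A$-module so obtained is projective of rank two. By contrast, you \emph{define} $\gL:=\cO_{K'}\otimes_{\cO_K}(U_\eta\oplus V_{\eta'})$, which depends on the choice of the local splittings $\gL_1=U_\eta\oplus\pi_2 V_{\eta'}$, $\gL_2=\pi_1 U_\eta\oplus V_{\eta'}$. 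Two different splittings give isomorphic but not equal $\gL$'s, and there is no canonical isomorphism between them built into your construction. Your closing remark --- ``Tracking the naturality of the splittings in $A$ then promotes this equivalence of groupoids to the desired isomorphism of stacks'' --- is precisely the step that is missing: to get a morphism of stacks you would need to show independence of the splitting up to canonical isomorphism and verify a cocycle condition over a cover, neither of which is sketched. The paper's lattice-chain construction sidesteps this entirely, which is exactly why it is phrased that way (and why it cites Rapoport--Zink). To fix your argument, you would need to either prove the independence rigorously, or --- more economically --- adopt the canonical definition of $\gL$ and relegate the splittings to the verification of projectivity.

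There is also a smaller misattribution. You write that the determinant conditions~\eqref{eqn: strong det condn for principal series local model} are what ensure that $\gL^+$ is locally an $A$-module direct summand of $\gL$ with projective quotient. This is not the case: in the paper's model $\gL^+$ is literally a direct sum of copies of $\gL_1^+$ and $\gL_2^+$, and the fact that it is locally an $A$-summand follows purely from the requirement (already in the definition of $\cM_{\loc,\Iw}$, not of $\cM^{\BT}_{\loc,\Iw}$) that each $\gL_i^+$ is locally an $A$-summand of $\gL_i$. The determinant conditions enter only later, to verify that $(\gL,\gL^+)$ satisfies the \emph{strong} determinant condition via Lemma~\ref{lem: local model of type only have to check the strong determinant condition on that type}, and to check that the round-trip composite $\cM^{\tau,\BT}_{\loc}\to\cM^{\BT}_{\loc,\Iw}\to\cM^{\tau,\BT}_{\loc}$ is the identity (which needs condition (2) of that lemma). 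In other words the paper's inverse morphism is defined on all of $\cM_{\loc,\Iw}$, not just on the Barsotti--Tate locus; the BT condition is only needed to make the two composites identities. Your phrasing obscures this, and the confusion is a symptom of building $\gL$ out of the splittings rather than intrinsically.
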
 
\begin{proof} 
We begin by constructing a morphism
  $\cM_{\loc,\Iw}\to\cM^\tau_{\loc}$, inspired by the arguments
  of~\cite[App.\ A]{rapoport-zink}. 
We define an~$\cO_K\otimes_{\Zp}A$-module~$\gL$ by
 \[\gL= \oplus_\sigma\left(\oplus_{i=0}^{a_\sigma-1}e_\sigma
    \gL_1^{(i)} \oplus_{j=0}^{b_\sigma-1}e_\sigma
    \gL_2^{(j)}\right), \] where the $\gL_1^{(i)}$'s and
  $\gL_2^{(j)}$'s are copies of~$\gL_1$, $\gL_2$ respectively. 

We can
  upgrade the ~$\cO_K\otimes_{\Zp}A$-module structure on~$\gL$ to that
  of an~$\cO_{K'}\otimes_{\Zp}A$-module by specifying how~$\pi'$
  acts.  If $i<a_\sigma-1$, then we let~$\pi':e_\sigma
    \gL_1^{(i)}\to e_\sigma
    \gL_1^{(i+1)}$ be the map induced by the identity
    on~$\gL_1$, and if~$j<b_\sigma-1$, then we let~$\pi':e_\sigma
    \gL_2^{(j)}\to e_\sigma
    \gL_2^{(j+1)}$ be the map induced by the identity
    on~$\gL_2$. We let $\pi':e_\sigma
    \gL_1^{(a_\sigma-1)}\to e_\sigma
    \gL_2^{(0)}$ be the map induced by~$f_1:\gL_1\to\gL_2$, and we let $\pi':e_\sigma
    \gL_2^{(b_\sigma-1)}\to e_\sigma
    \gL_1^{(0)}$ be the map induced by~$f_2:\gL_2\to\gL_1$. That this indeed gives~$\gL$ the structure of an
$\cO_{K'}\otimes_{\Zp}A$-module follows from our assumption that
$f_1\circ f_2=f_2\circ f_1=\pi$. We give~$\gL$ a semilinear action of~$\Gal(K'/K)=I(K'/K)$ 
by letting
it act via~$(\sigma \circ h)^{i} \cdot \eta$ on each~$e_\sigma
    \gL_1^{(i)} $ and via~$(\sigma \circ h)^j  \cdot \eta'$ on each~$e_\sigma
    \gL_2^{(j)} $.

We claim that~$\gL$ is a rank~$2$ projective
$\cO_{K'}\otimes_{\Zp}A$-module.  Since~$\coker f_2$ is projective by
  assumption, we can choose a section to the $k\otimes_{\Zp}A$-linear
  morphism $\gL_1/\pi\to\coker f_2$, with image~$\overline{U}_\eta$,
  say. Similarly we choose a section to $\gL_2/\pi\to\coker f_1$ with
  image~$\overline{V}_{\eta'}$. We choose lifts~$U_\eta$, $V_{\eta'}$
  of~$\overline{U}_\eta$, $\overline{V}_{\eta'}$ to direct summands of the
  $\cO_K\otimes_{\Zp}A$-modules $\gL_1$, $\gL_2$ respectively. There
  is a map of $\cO_{K'} \otimes_{\Zp} A$-modules
\numequation\label{eqn:
    reconstructing
    L} \lambda: \cO_{K'} \otimes_{\cO_K} (U_\eta \oplus V_{\eta'})  \to \gL
\end{equation}
induced by the map identifying $U_\eta, V_{\eta'}$ with their copies in $\gL_1^{(0)}$ and
$\gL_2^{(0)}$ respectively.  The map $\lambda$ is surjective modulo $\pi'$ by
construction, hence surjective by Nakayama's lemma. Regarding $\lambda$ as a map
of projective $\cO_K \otimes_{\Zp} A$-modules of equal rank $2e(K'/K)$, we
deduce that $\lambda$ is an isomorphism. Since the source of $\lambda$
is a projective $\cO_{K'}\otimes_{\Zp} A$-module, the claim follows.

We now set \[\gL^+=
  \oplus_\sigma\left(\oplus_{i=0}^{a_\sigma-1} e_\sigma
    (\gL_1^{(i)})^+ \oplus_{j=0}^{b_\sigma-1} e_\sigma
    (\gL_2^{(j)})^+\right) \subset \gL\]
It is
immediate from the construction that $\gL^+$ is preserved by
$\Gal(K'/K)$. The hypothesis that $f_1,f_2$ and preserve $\gL_1^+,\gL_2^+$ implies
that $\gL^+$ is an $\cO_{K'}\otimes_{\Zp} A$-submodule of $\gL$, while
the hypothesis that each $\gL_i/\gL_i^+$  is a projective $A$-module
implies the same for $\gL/\gL^+$.  This completes the construction of
our morphism 
  $\cM_{\loc,\Iw}\to\cM^\tau_{\loc}$.

Just as in the proof of Proposition~\ref{lem: local model scalar type is equivalent to what you
    expect}, the morphism $\cM_{\loc,\Iw}\to
\cM^{\tau}_{\loc}$ followed by our morphism
$\cM^{\tau}_{\loc}\to\cM_{\loc,\Iw}$ is the identity, while the composition in the other order is not, in general, naturally equivalent to the
identity morphism. 
However, it follows immediately from Lemma~\ref{lem: local model of type only have to check the strong
      determinant condition on that type} and the construction of $\gL^+$ that our morphisms $\cM_{\loc,\Iw}\to
\cM^{\tau}_{\loc}$ and
$\cM^{\tau}_{\loc}\to\cM_{\loc,\Iw}$ respect the strong determinant
condition, and so induce maps  $\cM_{\loc,\Iw}^{\BT} \to
\cM^{\tau,\BT}_{\loc}$ and
$\cM^{\tau,\BT}_{\loc} \to\cM_{\loc,\Iw}^{\BT}$.  To see that the
composite $\cM^{\tau,\BT}_{\loc} \to\cM_{\loc,\Iw}^{\BT} \to
\cM^{\tau,\BT}_{\loc}$ is naturally equivalent to the identity,
suppose that 
$(\gL,\gL^+) \in \cM^\tau_{\loc}(A)$ and observe that
there is a natural $\Gal(K'/K)$-equivariant isomorphism of $\cO_K\otimes_A\Zp$-modules
\numequation \label{eq:ps-natural-isom} \oplus_\sigma\left(\oplus_{i=0}^{a_\sigma-1}e_\sigma
    \gL_\eta^{(i)} \oplus_{j=0}^{b_\sigma-1}e_\sigma
    \gL_{\eta'}^{(j)}\right) \toisom \gL \end{equation}
induced by the maps $\gL_\eta^{(i)} \longnto{(\pi')^i} (\pi')^i
\gL_\eta$ and $\gL_{\eta'}^{(j)} \longnto{(\pi')^j} (\pi')^j
\gL_{\eta'}$. The commutativity of the diagram
\[ 
\xymatrix{ e_\sigma \gL_{\eta}^{(a_\sigma-1)} \ar[d]_{\pi_1}
  \ar[r]^{(\pi')^{a_\sigma-1}} & e_\sigma (\pi')^{a_\sigma-1} \gL_{\eta}
  \ar[d]^{\pi'} \\
e_\sigma \gL^{(0)}_{\eta'} \ar[r]_{\textrm{id}} & e_\sigma\gL_{\eta'} }\]
implies that the map in \eqref{eq:ps-natural-isom} is in fact an
$\cO_{K'}\otimes_{\Zp} A$-module isomorphism. The map
\eqref{eq:ps-natural-isom} induces an inclusion
\[ \oplus_\sigma\left(\oplus_{i=0}^{a_\sigma-1}e_\sigma
    (\gL_\eta^{(i)})^+ \oplus_{j=0}^{b_\sigma-1}e_\sigma
    (\gL_{\eta'}^{(j)})^+ \right) \rightarrow \gL^+.\]
If furthermore $(\gL,\gL^+) \in \cM^{\tau,\BT}_{\loc}$ then this is an isomorphism because $\gL^+$ satisfies condition (2) of Lemma~\ref{lem: local model of type only have to check the strong
      determinant condition on that type}.
  \end{proof}

Finally, we turn to the case of a cuspidal type. Let~$L$ as usual be a
quadratic unramified extension of~$K$, and
set~$K'=L(\pi^{1/(p^{2f}-1)})$. The field $N$ continues to denote the
maximal unramified extension of $K$ in $K'$, so that $N=L$.  Let~$\tau$ be a cuspidal type, so that
~$\tau=\eta\oplus\eta'$, where~$\eta\ne\eta'$ but~$\eta'=\eta^{p^f}$.

\begin{prop}
  \label{prop: cuspidal local model formally smooth over Iwahori}There
  is a morphism 
  $\cM_{\loc}^{\tau,\BT}\to\cM_{\loc,\Iw}^{\BT}$
which is representable by algebraic spaces and smooth.
{\em (}We remind the reader that $\tau$ is now assumed to be a cuspidal
inertial type.{\em )}
\end{prop}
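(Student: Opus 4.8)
The plan is to reduce the cuspidal case to the principal series case over the quadratic unramified extension~$L$. Write $\tau_L$ for the restriction of~$\tau$ to $I(K'/L)=I(K'/K)$. Since $\eta$ extends to a character of~$G_L$ by the definition of a cuspidal type, so does $\eta'=\eta^{p^f}$, and hence $\tau_L=\eta\oplus\eta'$ is a tame principal series type for~$L$; moreover $K'/L$ is totally (tamely) ramified of degree $p^{2f}-1$. Proposition~\ref{prop: principal series case local model isomorphic to Iwahori local model}, applied with~$L$ in place of~$K$, therefore provides an isomorphism $\cM_{\loc}^{\tau_L,\BT}\toisom\cM_{\loc,\Iw}^{\BT}$, where the left-hand side is the local model stack of principal series type~$\tau_L$ with descent data from~$K'$ to~$L$, and the right-hand side is the Iwahori-level local model attached to~$L$ (which, via~\cite[\S 2.4]{MR3135437}, is the naive local model at Iwahori level for $\Res_{L/K}\GL_2$, as in the statement).

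The second ingredient is a morphism $\cM_{\loc}^{\tau,\BT}\to\cM_{\loc}^{\tau_L,\BT}$ obtained by forgetting the part of the descent datum coming from $\Gal(L/K)$: an object $(\gL,\gL^+)$ of $\cM_{\loc}^{\tau}(A)$ carries a semilinear action of $\Gal(K'/K)$, and we retain only the action of the subgroup $\Gal(K'/L)$. One checks this is well defined: the type of the resulting object is $\tau_L$ because the inertial groups $I(K'/K)$ and $I(K'/L)$ coincide, and the strong determinant condition of Definition~\ref{defn: strong determinant condition general local model} refers only to the $\cO_N$-module structure (here $N=L$) and the $I(K'/K)$-isotypic decomposition, both of which are untouched by the forgetting operation. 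Composing with the isomorphism of the previous paragraph yields the desired morphism $\cM_{\loc}^{\tau,\BT}\to\cM_{\loc,\Iw}^{\BT}$.

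It remains to prove that this forgetful morphism is representable by algebraic spaces and smooth, and this is where the real work lies. I would follow the template of the proof of Theorem~\ref{thm: map from Kisin to general local model is formally smooth}: source and target are of finite presentation over $\Spf\cO$ by construction, so it suffices to verify the infinitesimal lifting criterion, after which representability by algebraic spaces follows from~\cite[Lem.~7.10]{Emertonformalstacks} (the diagonal of the morphism is a monomorphism, since an automorphism of an object of $\cM_{\loc}^{K'/K}$ inducing the identity on the underlying $\cM_{\loc}^{K'/L}$-object is the identity map of~$\gL$) and then smoothness follows from formal smoothness together with local finite presentation. For the lifting criterion I would describe an object of $\cM_{\loc}^{K'/K}(A)$ as a finitely generated projective $(\cO_{K'}\otimes_{\Zp}A)[\Gal(K'/K)]$-module $\gL$ together with a submodule $\gL^+$ with $\gL/\gL^+$ projective over $A$ (and likewise with $\Gal(K'/L)$), using the twisted group rings exactly as in the proof of Theorem~\ref{thm: map from Kisin to general local model is formally smooth}. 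Given a nilpotent ideal $I\subset A$, a $\Gal(K'/L)$-object over $A$ and a compatible $\Gal(K'/K)$-object over $A/I$, one lifts the $\Gal(K'/K)$-structure by writing it as a summand of a free module and lifting the corresponding idempotent of the twisted group ring via~\cite[\href{http://stacks.math.columbia.edu/tag/05BU}{Tag 05BU}]{stacks-project}, then reconciles the two resulting $\Gal(K'/L)$-module structures on the lift using uniqueness of idempotent lifts up to conjugation by a unit congruent to~$1$, and handles $\gL^+$ by applying the same reasoning to a chosen $A$-module splitting (the relevant quotients being projective forces the lifted submodule to remain a direct summand, as in the local-model proof). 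The morphism is \emph{not} an isomorphism precisely because such lifts are non-unique: its fibres parametrise the positive-dimensional family of $\Gal(L/K)$-equivariant refinements of a given $\Gal(K'/L)$-structure, which is exactly why only smoothness can be expected.

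The main obstacle I anticipate is making the idempotent-lifting argument of the third paragraph fully rigorous in the presence of the submodule $\gL^+$ and of the several isotypic decompositions, and, relatedly, pinning down the precise identification of the target $\cM_{\loc,\Iw}^{\BT}$ (the Iwahori local model over $\cO_L$ versus its incarnation as a local model for $\Res_{L/K}\GL_2$ in the sense of~\cite{MR3135437}), since this fixes the exact shape of the statement. The verification that the strong determinant condition is genuinely insensitive to forgetting the $\Gal(L/K)$-action, while straightforward once the bookkeeping is set up, is the other point that must be checked with care.
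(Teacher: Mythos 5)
Your reduction to the principal series case over~$L$ is the right starting point, and it matches the paper's first move: both you and the paper view an object of $\cM_{\loc}^{\tau,\BT}$ as a principal-series object with descent data from $K'$ to $N=L$, and both invoke Proposition~\ref{prop: principal series case local model isomorphic to Iwahori local model} (applied to the totally ramified extension $K'/L$). However, there is a genuine gap after that point, and it is not merely the bookkeeping issue you flag at the end. The composite you construct — forget $\Gal(L/K)$-descent, then apply the principal-series-to-Iwahori isomorphism — lands in the Iwahori-level local model stack \emph{over $\cO_L$}, whose underlying modules $\gL_1,\gL_2$ are rank two over $\cO_L\otimes_{\Zp}A$. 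That is \emph{not} the stack $\cM_{\loc,\Iw}^{\BT}$ appearing in the statement of the proposition, which by definition is built from rank two modules over $\cO_K\otimes_{\Zp}A$. The target you reach has, loosely speaking, twice the dimension of the intended one, and no amount of reformulating "$\cO_L$ versus $\Res_{L/K}\GL_2$" makes them the same stack.

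The missing step is precisely the content of the paper's proof. Rather than discarding the $\Gal(L/K)$-descent datum, one must \emph{use} it: encode it as an isomorphism $\theta\colon\cO_{K'}\otimes_{\cO_{K'},c}\gL'\to\gL'$ (with $\theta\circ\theta^c=\id$), transport it through the identification with the Iwahori data over $\cO_L$ to get isomorphisms $\theta_1,\theta_2$ between the twisted $\gL'_i$, then split off the idempotent $e_1=\sum_\sigma e_{\sigma^{(1)}}$ picking one extension to $k'$ of each embedding $k\into\F$. The constraint $\theta\circ\theta^c=\id$ becomes $\theta_{22}=\theta_{11}^{-1}$, $\theta_{21}=\theta_{12}^{-1}$, so the $e_2$-components of $\gL'_1,\gL'_2$ are recoverable from the $e_1$-components together with $\theta_{11},\theta_{21}$. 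The morphism to $\cM_{\loc,\Iw}^{\BT}$ (over $\cO_K$) sends everything to $(e_1\gL'_1,e_1(\gL'_1)^+,e_1\gL'_2,e_1(\gL'_2)^+,e_1f_1,e_1f_2)$, and smoothness is then immediate — it is a forgetful morphism that drops two projective modules and two isomorphisms — with no idempotent-lifting argument needed. Your proposed lifting argument both addresses a different (and wrongly-targeted) morphism and is substantially more complicated than what is required once the correct reduction to $\cO_K$-data via $e_1$ and $\theta$ is in place.
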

\begin{proof}Let~$\tau'$ be the type~$\tau$, considered as a (principal series) type for the
  totally ramified extension~$K'/N$. Let~$c\in\Gal(K'/K)$ be the
  unique element which fixes~$\pi^{1/(p^{2f}-1)}$ but acts
  nontrivially on~$N$. For any map $\alpha : X\to Y$ of
  $\cO_N$-modules we write $\alpha^c$ for the twist  $1\otimes \alpha :
  \cO_N \otimes_{\cO_N,c} X \to \cO_N \otimes_{\cO_N,c} Y$.

We may think of an
  object~$(\gL,\gL^+)$ of~$\cM_{\loc}^{\tau,\BT}$ as an object~$(\gL',(\gL')^+)$
  of~$\cM_{\loc}^{\tau',\BT}$ equipped with the additional data of an
  isomorphism of $\cO_{K'}\otimes_{\Zp}A$-modules
  $\theta:\cO_{K'}\otimes_{\cO_{K'},c}\gL'\to\gL'$ which is compatible
  with~$(\gL')^+$, which satisfies~$\theta\circ\theta^c=\id$, and which is compatible with the
  action of~$\Gal(K'/N)=I(K'/N)$ in the sense that $\theta \circ (1
  \otimes g) = g^{p^f} \circ \theta$.

  Employing the isomorphism of Proposition~\ref{prop: principal series case local model isomorphic to Iwahori
  local model}, we think of~$(\gL',(\gL')^+)$ as a
tuple~$(\gL'_1, (\gL'_1)^+,\gL'_2,(\gL'_2)^+,f_1,f_2)$, where the~$\gL'_i,(\gL'_i)^+$ are
$\cO_N\otimes_{\Zp}A$-modules; by construction, the map~$\theta$
induces  isomorphisms $\theta_1:\cO_N\otimes_{\cO_N,c}\gL'_1\isoto\gL_2'$,
$\theta_2:\cO_N\otimes_{\cO_N,c}\gL'_2\isoto\gL'_1$, which are
compatible with~$(\gL'_1)^+,(\gL'_2)^+$ and~$f_1,f_2$, and
satisfy~$\theta_1\circ\theta_2^c=\id$.

Choose for each embedding~$\sigma:k\into\F$ an extension to an
embedding~$\sigma^{(1)}:k'\into\F$, 
set~$e_1=\sum_\sigma e_{\sigma^{(1)}}$, and write~$e_2=1-e_1$. Then
the map~$\theta_1$  induces isomorphisms
$\theta_{11}:e_1\gL'_1\isoto e_2\gL_2'$ and
$\theta_{12}:e_2\gL'_1\isoto e_1\gL_2'$, while~$\theta_2$  induces isomorphisms
$\theta_{21}:e_1\gL'_2\isoto e_2\gL_1'$ and
$\theta_{22}:e_2\gL'_2\isoto e_1\gL_1'$. The condition
that~$\theta_1\circ\theta_2^c=\id$ translates to $\theta_{22} =
\theta_{11}^{-1}$ and $\theta_{21}=\theta_{12}^{-1}$, and
compatibility with~$(\gL'_1)^+,(\gL'_2)^+$ implies that
$\theta_{11},\theta_{21}$ induce isomorphisms $e_1 (\gL'_1)^+ \isoto
e_2(\gL_2')^+$ and $e_1 (\gL'_2)^+ \isoto
e_2(\gL_1')^+$ respectively.

Furthermore $f_1,f_2$ induce maps
$e_1f_1:e_1\gL'_1\to e_1\gL_2'$, $e_1g:e_1\gL_2'\to e_1\gL_1'$.
 It
follows that there is a map $\cM_{\loc}^{\tau,\BT}\to\cM_{\loc,\Iw}$,
sending~$(\gL,\gL^+)$ to the tuple
$(e_1\gL_1',e_1(\gL_1')^+,e_1\gL_2',e_1(\gL_2')^+,e_1f_1,e_1f_2)$.
To see that it respects the strong determinant condition, 
one has to check that the
    conditions on~$\gL^+_1$, $\gL^+_2$ imply those for~$e_1(\gL'_1)^+$,
    $e_2(\gL'_2)^+$
    coincide, and this follows from the definitions (via Remark~\ref{rem: explicit version of determinant condition
      local models version}). We therefore obtain a map $\cM_{\loc}^{\tau,\BT}\to\cM^{\BT}_{\loc,\Iw}$,

Since this morphism is
given by forgetting the data of $e_2 \gL'_1,e_2 \gL'_2$ and the pair of isomorphisms~$\theta_{11},\theta_{21}$, it is evidently formally
smooth.
It is also a morphism between $\varpi$-adic formal algebraic stacks that
are locally of finite presentation, and so is representable by algebraic spaces
(by \cite[Lem.~7.10]{Emertonformalstacks}) and locally of finite presentation
(by \cite[Cor.~2.1.8]{EGstacktheoreticimages}
and~\cite[\href{http://stacks.math.columbia.edu/tag/06CX}{Tag
  06CX}]{stacks-project}).  Thus this morphism is in fact smooth.
\end{proof}

\subsection{Local models:\ local geometry}\label{subsec: local models
  relation to explicit PR ones}We now deduce our main results on the
local structure of our moduli stacks from results in the literature on local models for Shimura
varieties.

  \begin{prop}\label{prop: explicit description of local models stack
      as naive local model} 
  We can identify $\cM_{\loc,\Iw}^{\BT}$ with the quotient 
  of \emph{(}the $p$-adic formal completion of\emph{)} the naive local model for $\Res_{K/\Q_p}\GL_2$
  \emph{(}as defined in~\cite[\S 2.4]{MR3135437}\emph{)} by a smooth
  group scheme over $\cO$.
  \end{prop}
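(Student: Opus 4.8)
The plan is to compare our moduli-theoretic description of $\cM_{\loc,\Iw}^{\BT}$ with the linear-algebra data used to define the naive local model for $\Res_{K/\Qp}\GL_2$ at Iwahori level. Recall that the tuple $(\gL_1,\gL_1^+,\gL_2,\gL_2^+,f_1,f_2)$ defining an $A$-point of $\cM_{\loc,\Iw}$ consists of two rank-two projective $\cO_K\otimes_{\Zp}A$-modules fitting into a periodic chain $\gL_1 \xrightarrow{f_1} \gL_2 \xrightarrow{f_2} \gL_1$ with $f_1 f_2 = f_2 f_1 = \pi$ and cokernels line bundles over $k\otimes_{\Zp}A$, together with $A$-module direct summands $\gL_i^+ \subset \gL_i$ compatible with the $f_i$; the $\BT$ condition imposes Kottwitz's determinant identity $\det{}_A(a\mid \gL_i^+) = \prod_{\psi\colon K\into E}\psi(a)$. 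First I would recall the definition of the standard lattice chain for $\Res_{K/\Qp}\GL_2$ from \cite[\S 2.4]{MR3135437} (or rather its base change to $\cO$): the naive local model $M^{\mathrm{naive}}$ parametrises, for each $\cO$-algebra $A$, a chain of $A$-submodules $t_\bullet$ of the standard lattice chain $\Lambda_\bullet \otimes_{\Zp} A$ which are direct summands as $A$-modules, stable under the $\cO_K$-action, compatible with the chain maps, and of the appropriate ``size'' (imposed by the determinant/rank condition on $\Lambda_i/t_i$). The two-step Iwahori chain for $\GL_2$ over $\cO_K$ is exactly $\Lambda_1 \hookrightarrow \Lambda_2 \hookrightarrow \pi^{-1}\Lambda_1$, so that giving the abstract chain $(\gL_1,\gL_2,f_1,f_2)$ with the stated cokernel conditions is, fpqc-locally on $\Spec A$, the same as giving an automorphism-bundle's worth of identifications of $(\gL_\bullet,f_\bullet)$ with the standard chain.

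The key steps are then: (i) show that the groupoid of chains $(\gL_1,\gL_2,f_1,f_2)$ (forgetting the $\gL_i^+$) is a neutral gerbe over $\Spec\cO$, equivalently is isomorphic to $[\Spec\cO/\underline{G}]$ where $\underline{G}$ is the automorphism group scheme of the standard Iwahori lattice chain for $\Res_{K/\Qp}\GL_2$ — this is a smooth (indeed, the relevant ``parahoric'' Bruhat–Tits) group scheme over $\cO$, by the standard theory (e.g.\ \cite{MR1124982}); the main input is that any such chain is fpqc-locally isomorphic to the standard one, which follows from the projectivity statements already established (every finitely generated projective $\cO_K\otimes_{\Zp}A$-module is Zariski-locally free, and the cokernel conditions pin down the chain type). (ii) Having trivialised the chain, the remaining data $(\gL_1^+,\gL_2^+)$ satisfying the compatibility with the $f_i$ and Kottwitz's determinant condition is \emph{exactly} the moduli problem defining $M^{\mathrm{naive}}$ for $\Res_{K/\Qp}\GL_2$ at this Iwahori level — here one matches Kottwitz's condition \eqref{eqn: strong det condn for principal series local model} with the determinant condition in \cite[\S 2.4]{MR3135437}, using that the ``minuscule coweight'' in our setting is $(1,0)$ at each embedding $K\into E$. (iii) Conclude that $\cM_{\loc,\Iw}^{\BT} \cong [M^{\mathrm{naive}} / \underline{G}]$ after $p$-adic completion, i.e.\ the quotient of the $p$-adic formal completion of $M^{\mathrm{naive}}$ by the smooth $\cO$-group scheme $\underline{G}$; since $\cM_{\loc,\Iw}$ was constructed as an fppf stack and $M^{\mathrm{naive}}$ is a scheme (projective over $\cO$), this descent is standard.

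I expect the main obstacle to be step (ii): carefully matching the normalisations, specifically checking that the Kottwitz determinant condition as we have stated it (as an identity of polynomial functions on $\cO_K$, in the sense of \cite[\S5]{MR1124982}) agrees on the nose with the condition imposed in \cite[\S 2.4]{MR3135437}, rather than with its dual or a twist, and that the two-term lattice chain for $\GL_2$ over $\cO_K$ corresponds to Iwahori (rather than some other parahoric) level under Weil restriction. One must also verify that the group scheme $\underline{G}$ really is smooth over all of $\cO$ (not just over $E$ or over $\F$) and affine, so that the quotient stack makes sense and is algebraic; this again follows from \cite{MR1124982}, since the group of automorphisms of a self-dual-up-to-scaling lattice chain is a smooth affine $\cO$-group scheme, but it requires being precise about which lattice chain. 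The comparison of special fibres, which is the selling point of this moduli-theoretic description, is then immediate from the construction, since the quotient presentation is defined over $\cO$ and commutes with base change to $\F$.
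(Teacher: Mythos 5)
Your proposal takes essentially the same approach as the paper's proof: one identifies the automorphism group scheme $\cP_{\cL}$ of the standard Iwahori lattice chain for $\Res_{\cO_K/\Zp}\GL_2$ as a smooth affine (parahoric) group scheme over $\cO$, uses the Rapoport--Zink result that lattice chains of the given type are Zariski-locally trivial, and then exhibits the ($p$-adic completion of the) naive local model as a $\cP_{\cL}$-torsor over $\cM_{\loc,\Iw}^{\BT}$ for the Zariski topology. The paper's argument is terser (invoking \cite[Prop.~A.4]{rapoport-zink} directly and stating that ``comparing the two moduli problems'' gives the torsor property), but the two proofs are structurally identical.
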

  
  \begin{proof} Let $\widetilde{\cM}_{\loc,\Iw}^{\BT}$ be the $p$-adic formal
    completion of the naive local model for $\Res_{K/\Q_p}\GL_2$
  corresponding to a standard lattice chain~ $\cL$, as defined in~\cite[\S 2.4]{MR3135437}. 
  By~\cite[Prop. A.4]{rapoport-zink}, the automorphisms of the standard lattice chain $\cL$ 
  are represented by a smooth group scheme $\cP_{\cL}$ over $\cO$. (This is in fact a parahoric subgroup scheme
  of $\mathrm{Res}_{\cO_K/\Z_p}\GL_2$, and in particular it is affine.) 
  Also by~\emph{loc.\ cit.}, every lattice chain of type $(\cL)$ is Zariski locally isomorphic to $\cL$. 
  By comparing the two moduli problems, we see
  that $\widetilde{\cM}_{\loc,\Iw}^{\BT}$ is a $\cP_{\cL}$-torsor over
  $\cM_{\loc,\Iw}^{\BT}$ 
  for the Zariski topology
  and the proposition follows. 
  \end{proof}
  
The following theorem describes the various regularity properties of local models.
Since we are working in the context of formal algebraic stacks,
we use the terminology developed in~\cite[\S 8]{Emertonformalstacks}
(see in particular~\cite[Rem.~8.21]{Emertonformalstacks}
and~\cite[Def.~8.35]{Emertonformalstacks}).

\begin{thm} 
  \label{thm: local consequences of local models}Suppose that~$d=2$
  and that $\tau$
  is a tame inertial type. Then
  \begin{enumerate}
  \item $\cM_{\loc}^{\tau,\BT}$ is residually Jacobson and analytically normal,
 and Cohen--Macaulay.
  \item The special fibre $\cM_{\loc}^{\tau,\BT,1}$ is reduced.
  \item  $\cM_{\loc}^{\tau,\BT}$ is flat over~$\cO$.
  \end{enumerate}

\end{thm}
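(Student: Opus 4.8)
The plan is to deduce Theorem~\ref{thm: local consequences of local models} from the explicit descriptions of $\cM_{\loc}^{\tau,\BT}$ obtained in the previous subsection, reducing in each case to known results on local models of Shimura varieties. The first step is to handle the scalar and principal series cases directly, using Lemma~\ref{lem: local model scalar type is equivalent to what you expect} and Proposition~\ref{prop: principal series case local model isomorphic to Iwahori local model}, which identify $\cM_{\loc}^{\tau,\BT}$ with $\cM_{\loc,\hyp}^{\BT}$ (resp.\ $\cM_{\loc,\Iw}^{\BT}$). By Proposition~\ref{prop: explicit description of local models stack as naive local model}, the latter is a torsor under a smooth affine group scheme $\cP_{\cL}$ over the $p$-adic completion of the naive local model for $\Res_{K/\Qp}\GL_2$; the hyperspecial case is analogous (and simpler, being the Weil restriction of the familiar $\GL_2$ local model $\Mat_{2\times 1}$, i.e.\ $\mathbf{P}^1$). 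Since all the properties in question (normality, analytic normality, Cohen--Macaulayness, flatness over $\cO$, reducedness of the special fibre) are smooth-local on the source and insensitive to passing to/from a torsor under a smooth group scheme, it suffices to verify them for the naive local model itself.

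The second step is to invoke the literature: for $\Res_{K/\Qp}\GL_2$ with the standard lattice chain, the naive local model coincides with the local model (there is no difference between ``naive'' and ``honest'' in the $\GL_2$, i.e.\ minuscule, case, cf.\ the topological flatness results of G\"ortz and of Pappas--Rapoport), and these are known to be normal, Cohen--Macaulay, flat over $\cO$, with reduced special fibre --- this is exactly the content of the Iwahori-level local model results for $\GL_n$-type, specialised to $n=2$ (see~\cite{MR3135437} and the references therein, building on~\cite{MR1124982}). Analytic normality follows since these local models are excellent, so completion preserves normality; ``residually Jacobson'' is automatic for a formal completion of a scheme of finite type over $\cO$. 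The only mild point to check is that Weil restriction along $\cO_K/\Zp$ of a nice scheme stays nice: flatness and Cohen--Macaulayness are preserved by $\Res_{\cO_K/\Zp}$ since $\cO_K$ is finite free over $\Zp$, normality and reducedness of the special fibre require a short argument using that $\cO_K\otimes_{\Zp}\F = \prod_{\sigma} k[\varpi_K]/\varpi_K^e$ and that these properties of the local model in the relevant cases are compatible with base change along the factors.

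The third step is the cuspidal case. Here Proposition~\ref{prop: cuspidal local model formally smooth over Iwahori} provides a morphism $\cM_{\loc}^{\tau,\BT}\to\cM_{\loc,\Iw}^{\BT}$ which is representable by algebraic spaces and smooth. Since smooth morphisms are flat with regular (in particular Cohen--Macaulay, normal, reduced) fibres, and all five properties descend along a smooth surjective-onto-its-image morphism --- more precisely, they can be checked after smooth base change, and smoothness transfers normality, the Cohen--Macaulay property, flatness over $\cO$, and reducedness of the special fibre from target to source --- the cuspidal case follows formally from the principal series case already established. One should note that the target here is the principal series local model stack over the \emph{unramified} quadratic extension $N=L$, so one first applies the already-proven principal series case with $L$ in place of $K$; this is legitimate since the hypotheses only require $K'/N$ totally ramified, which holds by construction.

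I expect the main obstacle to be purely bookkeeping: assembling the precise statement from the local models literature in the exact generality of a (possibly ramified) Weil restriction $\Res_{K/\Qp}\GL_2$ with Iwahori level, and checking carefully that ``residually Jacobson and analytically normal'' in the sense of~\cite[\S 8]{Emertonformalstacks} is the correct formal-stacks avatar of ``normal and excellent'' for these $\varpi$-adic formal algebraic stacks --- i.e.\ that normality of the algebraic local model scheme plus excellence really does give analytic normality of its $\varpi$-adic completion. There is no serious geometric difficulty; the substance is entirely in the cited results on local models (flatness/reducedness via the coherence of the special fibre, Cohen--Macaulayness via Frobenius-splitting or explicit resolution), together with the transfer lemmas along smooth morphisms and torsors, which are standard.
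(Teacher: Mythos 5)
Your proposal follows the same structure as the paper's proof (scalar case via $\cM_{\loc,\hyp}^{\BT}$, principal series via $\cM_{\loc,\Iw}^{\BT}$ and the naive local model for $\Res_{K/\Qp}\GL_2$, cuspidal via the smooth morphism to the Iwahori local model over $L$, then descent of the properties along smooth and torsor morphisms). However, there are two places where you gloss over real content.

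First, the justification for ``naive $=$ honest'' is wrong as stated. You attribute it to the coweight being minuscule, but that reasoning would apply equally to $\Res_{F/F_0}\GL_n$ for all $n$, where the naive local model at hyperspecial level is famously \emph{not} flat in general (Pappas's counterexample). The correct mechanism, which the paper uses, is the identification of the naive Iwahori-level local model with the \emph{vertical} local model of Pappas--Rapoport \cite[\S 8]{pappas-rapoport1} (defined by intersecting preimages of the flat hyperspecial local models), and this identification holds here because specifically for $\Res\GL_2$ the hyperspecial naive local model is already flat, a special case of \cite[Cor.\ 4.3]{pappas-rapoport1}. Once this identification is made, flatness, normality, and reducedness of the special fibre are exactly \cite[Thm.\ 2.17]{MR3135437}. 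Your argument arrives at a correct conclusion for $\GL_2$, but for the wrong reason, and the reason matters since it is precisely what singles out $\GL_2$.

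Second, the Cohen--Macaulay property is not, in fact, ``exactly the content'' of \cite[Thm.\ 2.17]{MR3135437}, and ``via Frobenius-splitting or explicit resolution'' is not a substitute for an argument (Frobenius splitting gives reducedness/normality, not Cohen--Macaulayness). At the time of writing this required a separate ingredient: the paper verifies G\"ortz's combinatorial criterion \cite[Prop.\ 4.24]{MR1871975} by an explicit computation of the $\mu$-admissible set for $\mu = (e,0)$ in the Iwahori Weyl group (one element of each length $0$ and two of each length from $1$ to $e$, with Bruhat order coinciding with length order), which shows the set is $e$-Cohen--Macaulay; the much more general results of Haines--Richarz are cited as an alternative. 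You should either reproduce such an argument or cite the relevant specific result rather than leaving this to ``the references therein.''
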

\begin{proof}
For scalar types, this follows from~\cite[Prop.\
  2.2.2]{kis04} by Lemma~\ref{lem: local model scalar type is equivalent to what you
    expect}, 
and so we turn to studying the case of a non-scalar type.
The properties in question can be checked smooth locally
(see \cite[\S 8]{Emertonformalstacks} for~(1),
and \cite[\href{http://stacks.math.columbia.edu/tag/04YH}{Tag 04YH}]{stacks-project} for~(2);
for~(3), note that morphisms that are representable by algebraic spaces and smooth
are also flat, and take into account the final statement
of~\cite[Lem.~8.34]{Emertonformalstacks}),
and so 
  by Propositions~\ref{prop: principal series case local model isomorphic to Iwahori
  local model} and~\ref{prop: cuspidal local model formally smooth over
  Iwahori}
we reduce to checking the assertions of the theorem
for $\cM^{\BT}_{\loc,\Iw}.$  
Proposition~\ref{prop: explicit description of local models stack
    as naive local model}
then reduces us to checking these assertions for the $\varpi$-adic completion
of the naive local model at Iwahori level for $\Res_{K/\Q_p}\GL_2$.

Since this naive local model is a scheme of finite presentation over $\cO$,
its special fibre (i.e.\ its base-change to $\F$) is Jacobson,
and it is excellent; thus its $\varpi$-adic completion satisfies the properties
of~(1) if and only if the naive local model itself is normal and Cohen--Macaulay.
The special fibre of its $\varpi$-adic completion is of course just equal
to its own special fibre, and so verifying~(2) for the first of these special fibres
is equivalent to verifying it for the second.  Finally, the $\varpi$-adic completion
of an $\cO$-flat Noetherian ring is again $\cO$-flat, and so the $\varpi$-adic
completion of the naive local model will be $\cO$-flat if the naive local
model itself is.

All the properties of the naive local model that are described in the preceding
paragraph,
other than the Cohen--Macaulay property, 
 are contained 
  in~\cite[Thm.\ 2.17]{MR3135437},
  if we can identify the naive local models at Iwahori level with the vertical local
  models at Iwahori level, in the sense of~\cite[\S 8]{pappas-rapoport1} 
  (see also the discussion above~\emph{loc.\ cit.}\
  in~\cite{MR3135437}).
  
  The vertical local models are obtained
  by intersecting the preimages at Iwahori level of the flat local models at hyperspecial level. Since the naive 
  local models for the Weil restriction of~$\GL_2$
  at hyperspecial level are already flat by a special case
  of~\cite[Cor.\ 4.3]{pappas-rapoport1}, the naive local models at Iwahori level
  are identified with the vertical ones and~\cite[Thm.\
  2.17]{MR3135437} applies to them directly.  (To be precise, the results
  of~\cite{pappas-rapoport1} apply to restrictions of scalars
  $\Res_{F/F_0}\GL_2$ with $F/F_0$ totally ramified. However, thanks
  to the decomposition $\cO_K \otimes_{\Zp} A \cong \oplus_{\sigma :
    W(k) \into \cO} \cO_K \otimes_{W(k),\sigma} A$ the local
  model for
  $\Res_{K/\Qp}\GL_2$ decomposes as a product of local models for totally
  ramified extensions.) 

  Finally, Cohen--Macaulayness can be proved as in~\cite[Prop.\
  4.24]{MR1871975} and the discussion immediately following. We thank
  U.\ G\"ortz for explaining this argument to us.  As in the
  previous paragraph we reduce to the case of local models at Iwahori
  level for $\Res_{F/F_0}\GL_2$ with $F/F_0$ totally ramified of
  degree~$e$. In this
  setting the admissible set $M$ for the coweight $\mu = (e,0)$ has precisely
   one element of length $0$ and two elements of each length between $1$
   and $e$.  
Moreover, for elements $x,y \in M$ we have
$x<y$ in the Bruhat order if and only if $\ell(x)<\ell(y)$. One checks easily
that $M$ is $e$-Cohen--Macaulay in the sense of~\cite[Def.\
  4.23]{MR1871975}, and we conclude by~\cite[Prop.\
  4.24]{MR1871975}. Alternatively, this also follows from the much more
  general recent results of Haines--Richarz \cite{HainesRicharz}.
\end{proof}

\begin{cor} \label{cor: Kisin moduli consequences of local models}Suppose that~$d=2$
  and that $\tau$
  is a tame inertial type. Then
  \begin{enumerate}
  \item $\cC^{\tau,\BT}$ is analytically normal, and Cohen--Macaulay.
  \item The special fibre $\cC^{\tau,\BT,1}$ is reduced.
  \item  $\cC^{\tau,\BT}$ is flat over~$\cO$.
  \end{enumerate}
  \end{cor}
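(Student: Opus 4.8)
The plan is to deduce Corollary~\ref{cor: Kisin moduli consequences of local models} from Theorem~\ref{thm: local consequences of local models} by descending the relevant properties along $\Psi$, in exactly the same spirit as the proof of Theorem~\ref{thm: local consequences of local models} itself. The first step is to record that, for $d=2$ and $h=1$, unwinding Definition~\ref{defn: C^tau BT} together with the identification $\cC^{(\tau)}=\cC^{\dd}\times_{\cM_{\loc}^{K'/K}}\cM_{\loc}^{(\tau)}$ produces a natural isomorphism $\cC^{\tau,\BT}\cong\cC^{\dd}\times_{\cM_{\loc}^{K'/K}}\cM_{\loc}^{\tau,\BT}$. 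Since the morphism $\Psi\colon\cC^{\dd}\to\cM_{\loc}^{K'/K}$ is representable by algebraic spaces and smooth by Theorem~\ref{thm: map from Kisin to general local model is formally smooth}, its base change along the (open and closed) immersion $\cM_{\loc}^{\tau,\BT}\hookrightarrow\cM_{\loc}^{K'/K}$ gives a morphism $\cC^{\tau,\BT}\to\cM_{\loc}^{\tau,\BT}$ which is again representable by algebraic spaces and smooth. Note that we do not need $\Psi$ to be surjective (cf.\ Remark~\ref{rem:Mloc-is-too-big}), since we are only transferring properties from the target to the source.

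Granting this, each of (1)--(3) follows formally. For~(1): analytic normality and Cohen--Macaulayness of a $\varpi$-adic formal algebraic stack are properties that can be checked smooth-locally on a presentation, in the sense of \cite[\S8]{Emertonformalstacks} used in the proof of Theorem~\ref{thm: local consequences of local models}; composing a smooth presentation of $\cC^{\tau,\BT}$ (which exists since, by Proposition~\ref{prop:C tau BT is algebraic}, $\cC^{\tau,\BT}$ is a $\varpi$-adic formal algebraic stack of finite presentation over $\cO$) with the smooth morphism $\cC^{\tau,\BT}\to\cM_{\loc}^{\tau,\BT}$ yields a smooth morphism to $\cM_{\loc}^{\tau,\BT}$, so these properties pass from $\cM_{\loc}^{\tau,\BT}$ to $\cC^{\tau,\BT}$. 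For~(2): base-changing along $\Spec\F\to\Spf\cO$, the induced morphism $\cC^{\tau,\BT,1}\to\cM_{\loc}^{\tau,\BT,1}$ is smooth, and ``reduced'' can be checked smooth-locally (\cite[\href{http://stacks.math.columbia.edu/tag/04YH}{Tag 04YH}]{stacks-project}), so the reducedness of $\cM_{\loc}^{\tau,\BT,1}$ from Theorem~\ref{thm: local consequences of local models}(2) gives that of $\cC^{\tau,\BT,1}$. For~(3): a morphism which is representable by algebraic spaces and smooth is flat, and a composite of flat morphisms is flat, so the flatness of $\cM_{\loc}^{\tau,\BT}$ over $\cO$ (Theorem~\ref{thm: local consequences of local models}(3)) combined with the flatness of $\cC^{\tau,\BT}\to\cM_{\loc}^{\tau,\BT}$ gives the flatness of $\cC^{\tau,\BT}$ over $\cO$.

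Because every step reduces to a statement already established earlier in the paper, there is no genuine obstacle here. The only points requiring minor care are the identification $\cC^{\tau,\BT}\cong\cC^{\dd}\times_{\cM_{\loc}^{K'/K}}\cM_{\loc}^{\tau,\BT}$ (so that the smoothness of $\Psi$ is actually available after imposing \emph{both} the descent-type condition and the strong determinant condition), and the bookkeeping that analytic normality, the Cohen--Macaulay property, reducedness of the special fibre, and flatness over $\cO$ are all of the kind that descend along smooth (indeed need not be faithfully flat or surjective) morphisms.
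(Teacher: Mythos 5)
Your proposal is correct and takes essentially the same approach as the paper, which likewise deduces the corollary from the smoothness of $\Psi$ (Theorem~\ref{thm: map from Kisin to general local model is formally smooth}) and Theorem~\ref{thm: local consequences of local models} by observing that all the relevant properties are smooth-local. The only (inconsequential) slip is that the immersion $\cM_{\loc}^{\tau,\BT}\hookrightarrow\cM_{\loc}^{K'/K}$ is a closed immersion (the intersection of the open-and-closed substack $\cM_{\loc}^{\tau}$ with the closed substack $\cM_{\loc}^{K'/K,\BT}$), not an open-and-closed immersion; but the argument only uses stability of smoothness under base change, so nothing is affected.
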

\begin{proof}
This follows from Theorems~\ref{thm: map from Kisin to general local model is
    formally smooth} and~\ref{thm: local consequences of local models},
since all of these properties can be verified smooth locally
(as was already noted in the proof of the second of these theorems).
\end{proof}

\begin{remark}
  There is another structural result about vertical local models
that is proved in~\cite[\S 8]{pappas-rapoport1} but which we haven't incorporated
into our preceding results, namely, that the irreducible components of
the special fibre are normal.  Since the notion of irreducible component
is not \'etale local (and so in particular not smooth local), this statement
does not imply the corresponding statement for the special fibres of
$\cM_{\loc}^{\tau,\BT}$ or $\cC^{\tau,\BT}$.  Rather, it implies the weaker,
and somewhat
more technical, statement that each of the analytic branches passing through each
closed point of the special fibre of these $\varpi$-adic formal algebraic
stacks is normal.  We won't discuss this further here, since we don't 
need this result.
\end{remark}

\subsection{Scheme-theoretic images}\label{subsec:results from EG
  and PR on finite flat moduli}
We continue to fix $d=2$, $h=1$, and we set
$K'=L(\pi^{1/{p^{2f-1}}})$, where $L/K$ is the unramified quadratic
extension, and~$\pi$ is a uniformiser of~$K$. (This is the choice of
$K'$ that we made before in the cuspidal case, and contains the choice
of $K'$ that we made in the principal series case; since the category
of Breuil--Kisin
modules with descent data for the smaller extension is by Proposition~\ref{prop: change of K'} naturally a
full subcategory of the category of Breuil--Kisin modules with descent data
for the larger extension, we can consider both principal series and
cuspidal types in this setting.)

\begin{df}
\label{def:defining the Z's}
For each $a\ge 1$ we write $\cZ^{\dd,a}$ and $\cZ^{\tau,a}$ for the scheme-theoretic
images (in the sense of~\cite[Defn.\ 3.2.8]{EGstacktheoreticimages}) of the morphisms
$\cC^{\dd,\BT,a}\to\cR^{\dd,a}$ and
$\cC^{\tau,\BT,a}\to\cR^{\dd,a}$ respectively. We write $\ocZ$,
$\ocZ^\tau$ for $\cZ^1, \cZ^{\tau,1}$ respectively. 
\end{df}

The following theorem records some basic properties of these
scheme-theoretic images. We refer to Appendix~\ref{sec: appendix on tame
  types} for the notion of representations admitting a potentially
Barsotti--Tate lift of a given type, and for the definition of tr\`es ramifi\'ee representations.
\begin{thm} 
  \label{thm: existence of picture with descent data and its basic
    properties}
  \begin{enumerate}
  \item  For each $a\ge 1$, $\cZ^{\dd,a}$ is an algebraic
    stack of finite presentation over 
    $\cO/\varpi^a$, and is a closed substack
    of~$\cR^{\dd,a}$. In turn, each $\cZ^{\tau,a}$ is
    a closed substack of~$\cZ^{\dd,a}$, and thus in particular is an
    algebraic stack of finite presentation over $\cO/\varpi^a$; and $\cZ^{\dd,a}$ is the union of
    the $\cZ^{\tau,a}$. 
  \item The morphism $\cC^{\dd,\BT,a}\to\cR^{\dd,a}$ factors through a morphism
$\cC^{\dd,\BT,a}\to \cZ^{\dd,a}$
which is representable by algebraic spaces, scheme-theoretically dominant, and proper.
Similarly, the morphism
    $\cC^{\tau,\BT,a}\to\cR^{\dd,a}$ factors through a morphism
    $\cC^{\tau,\BT,a}\to\cZ^{\tau,a}$
which is representable by algebraic spaces, scheme-theoretically dominant, and proper.
  \item The $\Fpbar$-points of $\ocZ$ are naturally in bijection with the
    continuous representations $\rbar:G_K\to\GL_2(\Fpbar)$ which are not a
    twist of a tr\`es ramifi\'ee extension of the trivial character by the
    mod~$p$ cyclotomic character. Similarly, the $\Fpbar$-points of $\ocZ^\tau$ are naturally in bijection with the
    continuous representations $\rbar:G_K\to\GL_2(\Fpbar)$ which have
    a potentially Barsotti--Tate lift of type~$\tau$. 
  
  \end{enumerate}

\end{thm}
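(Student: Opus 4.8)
The plan is to deduce parts (1) and (2) from the general theory of scheme-theoretic images of \cite[\S3.2]{EGstacktheoreticimages}, arguing exactly as in the proof of Theorem~\ref{thm: R^dd_h is an algebraic stack}, and part (3) by combining that theory with Fontaine's and Kisin's classifications of finite-flat Galois representations and the structural results on $\cC^{\tau,\BT}$ proved above. For (1) and (2): recall that $\cC^{\dd,\BT,a}$ is an algebraic stack of finite presentation over $\cO/\varpi^a$ with affine diagonal (Corollary~\ref{cor: C^dd,a is an algebraic stack}(1) and Proposition~\ref{prop:C tau BT is algebraic}), that $\cC^{\dd,\BT,a}\into\cC^{\dd,a}$ is a closed immersion, and that $\cC^{\dd,a}\to\cR^{\dd,a}$ is proper and representable by algebraic spaces (Corollary~\ref{cor: C^dd,a is an algebraic stack}(3)); hence $\cC^{\dd,\BT,a}\to\cR^{\dd,a}$ is proper, representable by algebraic spaces, and quasi-compact, so factors through the algebraic stack $\cR^{\dd,a}_1$ of Theorem~\ref{thm: R^dd_h is an algebraic stack}. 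Then \cite[Prop.~3.2.31, Lem.~3.2.29]{EGstacktheoreticimages} give that $\cZ^{\dd,a}$ is an algebraic stack of finite presentation over $\cO/\varpi^a$, a closed substack of $\cR^{\dd,a}$, and that $\cC^{\dd,\BT,a}\to\cR^{\dd,a}$ factors through a morphism $\cC^{\dd,\BT,a}\to\cZ^{\dd,a}$ that is representable by algebraic spaces, scheme-theoretically dominant and proper (properness and representability being inherited since $\cZ^{\dd,a}\into\cR^{\dd,a}$ is a monomorphism). The same argument applied to the closed immersion $\cC^{\tau,\BT,a}\into\cC^{\dd,\BT,a}$ gives the statements for $\cZ^{\tau,a}$; since $\cC^{\dd,\BT,a}=\coprod_\tau\cC^{\tau,\BT,a}$ (Corollary~\ref{cor: BT C is union of tau C}) the scheme-theoretic image of $\cC^{\dd,\BT,a}$ is the scheme-theoretic union of those of the $\cC^{\tau,\BT,a}$, so $\cZ^{\dd,a}=\bigcup_\tau\cZ^{\tau,a}$, and each $\cZ^{\tau,a}$ is closed in $\cZ^{\dd,a}$ because it is closed in $\cR^{\dd,a}$ and contained in $\cZ^{\dd,a}$; taking $a=1$ gives the assertions for $\ocZ$ and $\ocZ^\tau$.

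For (3), by \cite[Lem.~3.2.14]{EGstacktheoreticimages} the $\Fpbar$-points of $\ocZ$ (resp.\ $\ocZ^\tau$) are precisely the $\Fpbar$-points of $\cR^{\dd,1}$ over which the fibre of $\cC^{\dd,\BT,1}$ (resp.\ of $\cC^{\tau,\BT,1}$) is nonempty; since any such point is defined over a finite subfield $\F'\subset\Fpbar$, Lemma~\ref{lem: Galois rep is a functor if A is actually finite local} (and compatibility of $T$ with finite base change) identifies the $\Fpbar$-points of $\cR^{\dd,1}$ with the isomorphism classes of continuous $\rhobar_\infty\col G_{K_\infty}\to\GL_2(\Fpbar)$. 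I must then show that $\rhobar_\infty\cong T(\gM[1/u])$ for some $\gM\in\cC^{\tau,\BT}(\Fpbar)$ if and only if $\rhobar_\infty\cong\rbar|_{G_{K_\infty}}$ for some $\rbar\col G_K\to\GL_2(\Fpbar)$ having a potentially Barsotti--Tate lift of type $\tau$, the representation $\rbar$ then being unique by Lemma~\ref{lem: restricting to K_infty doesn't lose information about rbar}.

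If such $\rbar$ is given, pick a lift $r\col G_K\to\GL_2(\Zpbar)$ of Hodge type $0$ and inertial type $\tau$; by our choice of $K'$ every tame type is trivial on $I_{K'}$, so $r$ becomes Barsotti--Tate over $K'$, and by Lemma~\ref{lem: O points of moduli stacks} it corresponds to a point of $\cC^{\tau,\BT}(\cO_{E'})$ for a suitable finite $E'/E$ --- a Breuil--Kisin module $\gM_{\cO_{E'}}$ whose associated $G_{K_\infty}$-representation is $r|_{G_{K_\infty}}$ --- and reducing modulo $\varpi$ produces $\gM\in\cC^{\tau,\BT}(\F')$ with $T(\gM[1/u])\cong\rbar|_{G_{K_\infty}}$. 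Conversely, given $\gM\in\cC^{\tau,\BT}(\F')$, the $G_{K_\infty}$-representation $T(\gM[1/u])$ extends canonically to $\rbar\col G_K\to\GL_2(\F')$ by \cite[Prop.~1.1.13]{kis04} (the descent data accounting for the extension from $G_{K'_\infty}$ to $G_{K_\infty}$); using that $\cC^{\tau,\BT}$ is flat over $\cO$ (Corollary~\ref{cor: Kisin moduli consequences of local models}(3)) I would lift $\gM$ to a point $\gM_{\cO_{E'}}\in\cC^{\tau,\BT}(\cO_{E'})$ for some finite $E'/E$ (e.g.\ by passing to a smooth cover and normalising a curve through $\gM$ not contained in the special fibre, or by noting that a versal ring of $\cC^{\tau,\BT}$ at $\gM$ is $\cO$-flat and hence carries an $\cO_{E'}$-point); by Lemma~\ref{lem: O points of moduli stacks} this gives a potentially Barsotti--Tate representation $r$ of type $\tau$, and since $r\otimes\F'$ is again the canonical extension of $T(\gM[1/u])=\rbar|_{G_{K_\infty}}$ it is isomorphic to $\rbar\otimes\F'$, so $\rbar$ has a potentially Barsotti--Tate lift of type $\tau$. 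Finally, for $\ocZ$ part (1) gives $\ocZ=\bigcup_\tau\ocZ^\tau$, so its $\Fpbar$-points are the $\rbar$ admitting a potentially Barsotti--Tate lift of some tame type, and by the facts recalled in Appendix~\ref{sec: appendix on tame types} these are exactly the $\rbar$ that are not a twist of a tr\`es ramifi\'ee extension of the trivial character by $\varepsilonbar$.

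I expect the main obstacle to be the ``only if'' direction of (3): promoting a mod-$p$ Breuil--Kisin module in $\cC^{\tau,\BT}(\F')$ to a characteristic-zero point and verifying that the resulting potentially Barsotti--Tate representation genuinely lifts the Kisin canonical extension of $T(\gM[1/u])$. This rests on the $\cO$-flatness of $\cC^{\tau,\BT}$ (ultimately the flatness of the local models, Theorem~\ref{thm: local consequences of local models}), on a careful comparison of the functor $T$, Kisin's classification, and reduction modulo $\varpi$ (in particular that $T$ of a reduction is the reduction of the associated Galois representation, and that the canonical extension of \cite[Prop.~1.1.13]{kis04} is functorial, so that two $G_K$-lifts arising from $\gM\otimes\F'[1/u]$ must agree), and on Lemma~\ref{lem: restricting to K_infty doesn't lose information about rbar} for uniqueness of the extension. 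Each ingredient is in place; the real work is assembling them while keeping the various normalisation conventions straight.
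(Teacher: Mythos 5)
Your proof is correct and follows essentially the same route as the paper's: parts (1) and (2) by reducing to Theorem~\ref{thm: R^dd_h is an algebraic stack} and the general machinery of \cite[\S 3.2]{EGstacktheoreticimages}, and part (3) by combining \cite[Lem.~3.2.14]{EGstacktheoreticimages} with Lemma~\ref{lem: O points of moduli stacks}, the $\cO$-flatness in Corollary~\ref{cor: Kisin moduli consequences of local models}, and Lemma~\ref{lem: list of things we need to know about Serre weights}. The paper's published proof of (3) is far terser than yours, but it is carrying out precisely the lifting argument you spell out: flatness of $\cC^{\tau,\BT}$ guarantees an $\F'$-point $\gM$ lifts to a $\Spf(\cO_{E'})$-point (e.g.\ via an $\cO$-flat versal ring, which is the cleaner of your two proposed mechanisms), and Lemma~\ref{lem: O points of moduli stacks} identifies that with a potentially Barsotti--Tate representation of type $\tau$. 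One small simplification worth noting: in the ``only if'' direction you needn't introduce the Kisin canonical extension of $T(\gM[1/u])$ and then compare it with the reduction of $r$ --- simply take $\rbar$ to be the reduction of $r$; its restriction to $G_{K_\infty}$ is $T(\gM[1/u])$ by construction, which is exactly what is needed to exhibit the required bijection on $\Fpbar$-points (uniqueness then coming from Lemma~\ref{lem: restricting to K_infty doesn't lose information about rbar}, as you correctly invoke). This avoids the appeal to functoriality of the canonical extension entirely.
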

\begin{proof} 

Part~(1) follows easily from Theorem~\ref{thm: R^dd_h is an algebraic
  stack}. Indeed, by \cite[Prop.\ 3.2.31]{EGstacktheoreticimages} we may think of $\cZ^{\dd,a}$ as the scheme-theoretic image of the
proper morphism of algebraic stacks~$\cC^{\dd,\BT,a}\to\cR^{\dd,a}_1$ and
similarly for each $\cZ^{\tau,a}$. The
existence of the factorisations in~(2) is then formal.

By~\cite[Lem.\ 3.2.14]{EGstacktheoreticimages}, for each finite
extension $\F'/\F$, 
the $\F'$-points of $\ocZ$ (respectively $\ocZ^\tau$) correspond
to the \'etale $\varphi$-modules with descent data of the form~$\gM[1/u]$,
where~$\gM$ is a Breuil--Kisin module of rank $2$ with descent data and
$\F$-coefficients which satisfies the strong
determinant condition (respectively, which satisfies the strong determinant
condition and is of type~$\tau$). By Lemma~\ref{lem: O points of
  moduli stacks} and Corollary~\ref{cor: Kisin moduli consequences of local models}, these precisely
correspond to the Galois representations $\rbar:G_K\to\GL_2(\F)$ which
admit potentially Barsotti--Tate lifts of some tame type (respectively, of
type~$\tau$). The result follows from Lemma~\ref{lem: list of things
  we need to know about Serre weights}. 
\end{proof}

The thickenings 
$\cC^{\dd, \BT, a} \hookrightarrow \cC^{\dd, \BT, a+1}$
and $\cR^{\dd, a} \hookrightarrow \cR^{\dd, a+1}$
induce closed immersions
$\cZ^{\dd, a} \hookrightarrow \cZ^{\dd, a+1}$.
Similarly, the thickenings $\cC^{\tau,\BT,a}\hookrightarrow \cC^{\tau,\BT, a+1}$
give rise to closed immersions $\cZ^{\tau,a} \hookrightarrow \cZ^{\tau,a+1}.$

\begin{lemma}
\label{lem:Z thickenings}
Fix $a \geq 1$.
Then the morphism
$\cZ^{\dd,a} \hookrightarrow \cZ^{\dd,a+1}$ is a thickening,
and for each tame type~$\tau,$
the morphism
$\cZ^{\tau,a} \hookrightarrow \cZ^{\tau,a+1}$ is a thickening.
\end{lemma}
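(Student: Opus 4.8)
I want to show that each closed immersion $\cZ^{\dd,a} \hookrightarrow \cZ^{\dd,a+1}$ (and similarly $\cZ^{\tau,a}\hookrightarrow\cZ^{\tau,a+1}$) is a thickening, i.e.\ a surjection on underlying topological spaces. Since these are already known to be closed immersions (as noted just before the statement), it suffices to check surjectivity on, say, $\Fpbar$-points — or more precisely to check that the closed immersion is a bijection on the level of the reduced substacks, equivalently that it is surjective. The key input is that $\cZ^{\dd,a}$ is the scheme-theoretic image of $\cC^{\dd,\BT,a}\to\cR^{\dd,a}$, together with the fact (Theorem~\ref{thm: existence of picture with descent data and its basic properties}(2)) that $\cC^{\dd,\BT,a}\to\cZ^{\dd,a}$ is scheme-theoretically dominant and proper, hence in particular \emph{surjective} on topological spaces (a scheme-theoretically dominant morphism of finite type has dense image, and properness makes the image closed, so it is all of $\cZ^{\dd,a}$).

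\textbf{Main step.} First I would reduce to showing that the underlying topological space of $\cZ^{\dd,a}$ does not depend on $a$, via the composite $\cC^{\dd,\BT,a}\to\cZ^{\dd,a}\hookrightarrow\cZ^{\dd,a+1}$. Because $\cC^{\dd,\BT,a}\hookrightarrow\cC^{\dd,\BT,a+1}$ is itself a thickening (it is the pullback of the thickening $\cC^{\dd,a}\hookrightarrow\cC^{\dd,a+1}$ along the closed immersion defining $\cC^{\dd,\BT}$, and thickenings are stable under base change), the two stacks $\cC^{\dd,\BT,a}$ and $\cC^{\dd,\BT,a+1}$ have the same underlying topological space. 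Now $\cZ^{\dd,a+1}$ is the scheme-theoretic image of $\cC^{\dd,\BT,a+1}\to\cR^{\dd,a+1}$, so by \cite[Lem.\ 3.2.14]{EGstacktheoreticimages} its finite type points are exactly the finite type points of $\cR^{\dd,a+1}$ over which the fibre of $\cC^{\dd,\BT,a+1}$ is nonzero; and since a finite type point of $\cR^{\dd,a+1}$ has residue field of characteristic $p$, it factors through $\cR^{\dd,1}\subset\cR^{\dd,a+1}$, and the fibre of $\cC^{\dd,\BT,a+1}$ over it is nonzero iff the fibre of $\cC^{\dd,\BT,1}$ is (same topological space). So the finite type points of $\cZ^{\dd,a+1}$ coincide with those of $\ocZ=\cZ^{\dd,1}$, and likewise for every $a$. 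Hence the closed immersion $\cZ^{\dd,a}\hookrightarrow\cZ^{\dd,a+1}$ induces a bijection on finite type points; since both stacks are of finite presentation over a Noetherian base (Theorem~\ref{thm: existence of picture with descent data and its basic properties}(1)) their topological spaces are Jacobson, so a closed immersion that is a bijection on finite type points is surjective, i.e.\ a thickening.

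\textbf{The typed case} is identical: $\cC^{\tau,\BT,a}\hookrightarrow\cC^{\tau,\BT,a+1}$ is a thickening for the same reason, $\cZ^{\tau,a}$ is the scheme-theoretic image of $\cC^{\tau,\BT,a}\to\cR^{\dd,a}$, and \cite[Lem.\ 3.2.14]{EGstacktheoreticimages} identifies its finite type points with those $\Fpbar$-points of $\cR^\dd$ lying under a nonzero fibre of $\cC^{\tau,\BT,1}$, independently of $a$. Then the same Jacobson-space argument upgrades the bijection on finite type points to a surjection of topological spaces.

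\textbf{Expected main obstacle.} The one point requiring care is the passage from ``bijection on finite type points'' to ``thickening'': I need to know the closed immersion is actually an isomorphism on the reductions, which for Jacobson algebraic stacks locally of finite type follows since a closed subscheme through which every finite type point factors must be everything. Alternatively, one can argue more directly: $\cZ^{\dd,a}$ contains the scheme-theoretic image of $\cC^{\dd,\BT,a}\to\cR^{\dd,a}\hookrightarrow\cR^{\dd,a+1}$, and since $\cC^{\dd,\BT,a}\to\cC^{\dd,\BT,a+1}$ is a thickening (so induces an isomorphism on reductions), this image has the same underlying space as the scheme-theoretic image of $\cC^{\dd,\BT,a+1}\to\cR^{\dd,a+1}$, namely $|\cZ^{\dd,a+1}|$; so $|\cZ^{\dd,a}|\to|\cZ^{\dd,a+1}|$ is surjective, which is exactly the claim. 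I would present this second, more topological, argument as the clean proof, and mention the finite-type-points version as motivation. The routine verifications — that thickenings are stable under base change, and that scheme-theoretic image commutes with the relevant reductions — are cited from \cite{EGstacktheoreticimages} and the Stacks Project rather than reproved.
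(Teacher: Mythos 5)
Your proof is correct, but it takes a genuinely different route from the paper's. The paper extracts a completely abstract lemma: given a commutative square of algebraic stacks in which the top horizontal arrow is a thickening, the bottom is a closed immersion, and the two vertical arrows are representable by algebraic spaces, quasi-compact, and scheme-theoretically dominant, the bottom arrow is automatically a thickening. This is proved by smooth-local descent to a square of affine schemes, where it reduces to the observation that if $A'\to A$ and $B'\to B$ are surjective, $B'\to B$ has nilpotent kernel, and the vertical maps $A'\to B'$, $A\to B$ are injective, then $\ker(A'\to A)$ injects into $\ker(B'\to B)$ and hence is nilpotent. Applied with $\cX=\cC^{\dd,\BT,a}$, $\cX'=\cC^{\dd,\BT,a+1}$, $\cY=\cZ^{\dd,a}$, $\cY'=\cZ^{\dd,a+1}$, this gives the result. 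By contrast, you argue via the topological description of scheme-theoretic images: using \cite[Lem.~3.2.14]{EGstacktheoreticimages} (finite-type-point criterion) or the compatibility of scheme-theoretic image with passage to underlying reduced substacks (Lemma~\ref{lem: scheme-theoretic image commutes with passing to underlying reduced substacks}), together with the fact that $|\cC^{\dd,\BT,a}|=|\cC^{\dd,\BT,a+1}|$, you show $|\cZ^{\dd,a}|=|\cZ^{\dd,a+1}|$ and then invoke Jacobsonness to upgrade this to surjectivity of the closed immersion. Both arguments are valid. The paper's version buys generality and avoids any appeal to finite-type or Jacobson hypotheses, isolating exactly the combinatorics of dominance plus nilpotence; your version is shorter in this specific context because it reuses structure already developed for the $\cZ$'s, at the cost of importing Jacobsonness of the ambient stacks and the precise scheme-theoretic-image machinery. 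Note also that your ``cleaner'' second argument makes the Jacobson appeal unnecessary, since $(\cZ^{\dd,a})_{\red}=(\cZ^{\dd,a+1})_{\red}$ follows directly from Lemma~\ref{lem: scheme-theoretic image commutes with passing to underlying reduced substacks} once one observes that $\cZ^{\dd,a}$ is itself the scheme-theoretic image of $\cC^{\dd,\BT,a}\to\cR^{\dd,a+1}$ (composing with a closed immersion does not change scheme-theoretic image); that is arguably the crisper formulation of your idea.
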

\begin{proof}
In each case, the claim of the lemma follows from the following more general
statement:
if $$\xymatrix{ \cX \ar[r] \ar[d] & \cX' \ar[d] \\ \cY \ar[r] & \cY'}$$
is a diagram of morphisms of algebraic stacks in which the upper horizontal arrow
is a thickening, the lower horizontal arrow is a closed immersion,
and each of the vertical arrows is representable by algebraic spaces, quasi-compact,
and scheme-theoretically
dominant, then the lower horizontal arrow is also a thickening.

Since the property of being a thickening may be checked smooth locally, and since
scheme-theoretic dominance of quasi-compact morphisms is preserved 
by flat base-change, we may show this
after pulling the entire diagram back over a smooth surjective morphism
$V' \to \cY$  whose source is a scheme, and thus reduce to the case in which
the lower arrow is a morphism of schemes, and the upper arrow is a morphism
of algebraic spaces.  A surjecive \'etale morphism is also scheme-theoretically
dominant, and so pulling back the top arrow over a surjective \'etale morphism
$U' \to V'\times_{\cY'} \cX'$ whose source is a scheme, we finally reduce to
considering a diagram of morphisms of schemes
$$\xymatrix{U \ar[r]\ar[d] & U' \ar[d] \\ V \ar[r] & V'}$$
in which the top arrow is a thickening,
the vertical arrows are quasi-compact and scheme-theoretically dominant,
and the bottom arrow is a closed immersion.

Pulling back over an affine open subscheme of $V'$, and then pulling back the top arrow
over the disjoint union of the members of a finite affine open
cover of the preimage of this affine open in $U'$ (note that this preimage is quasi-compact),
we further reduce
to the case when all the schemes involved are affine.  That is, we have a diagram
of ring morphisms
$$\xymatrix{A' \ar[r] \ar[d] & A \ar[d] \\ B' \ar[r] & B}$$
in which the vertical arrows are injective, the horizontal arrows are surjective,
and the bottom arrow has nilpotent kernel.  One immediately verifies that
the top arrow has nilpotent kernel as well.
\end{proof}

We write $\cC^{\dd, \BT} := \varinjlim_a\cC^{\dd,\BT,a}$
and $\cZ^{\dd}:=\varinjlim_a\cZ^{\dd,a}$; 
we then
have evident morphisms of 
Ind-algebraic stacks
$$ {\cC^{\dd ,\BT}} \to {\cZ^{\dd}} \to
{\cR^{\dd}}$$
lying over $\Spf \cO$,
both representable by algebraic spaces, with the first being furthermore
proper and scheme-theoretically dominant in the sense of~\cite[Def.\ 6.13]{Emertonformalstacks}, 
and the second being a closed immersion.

Similarly, for each choice of tame type $\tau$,
we set
$\cC^{\tau,\BT}=\varinjlim_a\cC^{\tau,a}$, and $\cZ^{\tau}:=\varinjlim_a\cZ^{\tau,a}$.
We again have morphisms 
$$ {\cC^{\tau ,\BT}} \to {\cZ^{\tau}} \to
{\cR^{\dd}}$$
of Ind-algebraic stacks over $\Spf \cO$,
both being representable by algebraic spaces,
the first being proper and scheme-theoretically dominant, and the second
being a closed immersion. Note that by Corollary~\ref{cor: BT C is
  union of tau C}, $\cC^{\dd,\BT}$ is the disjoint union of the
~$\cC^{\tau,\BT}$, so it follows that~$\cZ^{\dd,\BT}$ is the union
(but \emph{not} the disjoint union) of
the~$\cZ^{\tau}$.

Proposition~\ref{prop:C tau BT is algebraic} shows that $\cC^{\tau,\BT} $ is a $\varpi$-adic formal algebraic stack
of finite presentation over $\Spf \cO$.
Each $\cZ^{\tau,a}$ is an algebraic stack of finite presentation
over $\Spec \cO/\varpi^a$ by Theorem~\ref{thm: existence of picture with descent data and its basic
    properties}. 
Analogous remarks apply in the case of $\cC^{\dd,\BT}$ and~$\cZ^{\dd}$.

\begin{prop} 
	\label{prop: Z is a p-adic formal stack}
	$\cZ^{\dd}$, and each $\cZ^{\tau}$, are $\varpi$-adic formal algebraic stacks,
	of finite presentation over $\Spf \cO$.
\end{prop}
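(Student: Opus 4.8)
The statement to prove is Proposition~\ref{prop: Z is a p-adic formal stack}: that $\cZ^{\dd}$ and each $\cZ^{\tau}$ are $\varpi$-adic formal algebraic stacks of finite presentation over $\Spf\cO$. The structure to exploit is that each of these is an inductive limit $\cZ^{\dd} = \varinjlim_a \cZ^{\dd,a}$ (resp.\ $\cZ^{\tau} = \varinjlim_a \cZ^{\tau,a}$) of algebraic stacks of finite presentation over $\Spec\cO/\varpi^a$ (by Theorem~\ref{thm: existence of picture with descent data and its basic properties}(1)), along closed immersions which by Lemma~\ref{lem:Z thickenings} are \emph{thickenings}, and moreover each $\cZ^{\dd,a}$ (resp.\ $\cZ^{\tau,a}$) is the base change of $\cZ^{\dd}$ along $\Spec\cO/\varpi^a \to \Spf\cO$. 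This is exactly the situation covered by the notion of a $\varpi$-adic formal algebraic stack from Appendix~\ref{sec:formal stacks} / \cite{Emertonformalstacks}; so the proof is essentially a matter of citing the right criterion and checking its hypotheses.

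First I would recall (from \cite{Emertonformalstacks}) the characterisation: an Ind-algebraic stack $\cX = \varinjlim_a \cX_a$, where the $\cX_a$ are algebraic stacks, the transition maps $\cX_a \hookrightarrow \cX_{a+1}$ are thickenings, and the $\cX_a$ fit into Cartesian squares exhibiting $\cX_a \cong \cX \times_{\Spf\cO} \Spec\cO/\varpi^a$, is a $\varpi$-adic formal algebraic stack provided $\cX_1$ (equivalently each $\cX_a$) is an algebraic stack. Here $\cX_1 = \cZ^{\dd,1} = \ocZ$ (resp.\ $\ocZ^\tau$) is an algebraic stack of finite presentation over $\F$ by Theorem~\ref{thm: existence of picture with descent data and its basic properties}(1). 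The thickening hypothesis on the transition maps is precisely the content of Lemma~\ref{lem:Z thickenings}. The compatibility with base change over $\cO/\varpi^a$ — that $\cZ^{\dd,a}$ really is the pullback of the Ind-stack $\cZ^{\dd}$ — I would deduce from the analogous compatibility for $\cR^{\dd}$ and $\cC^{\dd,\BT}$ (which holds by construction, as noted for $\cR^{\dd}$ just after Definition~\ref{defn: R^dd} and for $\cC^{\dd,\BT}$ via Proposition~\ref{prop:C tau BT is algebraic}), together with the fact that scheme-theoretic image commutes with the relevant flat base change; alternatively one invokes \cite[Prop.~3.2.31]{EGstacktheoreticimages} as in the proof of Theorem~\ref{thm: existence of picture with descent data and its basic properties}(1) to realise $\cZ^{\dd,a}$ as a scheme-theoretic image computed compatibly in $a$.

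Having established that $\cZ^{\dd}$ (resp.\ $\cZ^{\tau}$) is a $\varpi$-adic formal algebraic stack, the finite presentation over $\Spf\cO$ follows by the definition of finite presentation in the formal setting (\cite[\S8]{Emertonformalstacks}), since it is checked after base change to $\cO/\varpi^a$ where each $\cZ^{\dd,a}$, $\cZ^{\tau,a}$ is of finite presentation over $\Spec\cO/\varpi^a$ by Theorem~\ref{thm: existence of picture with descent data and its basic properties}(1). I would also note that the closed immersion $\cZ^{\dd} \hookrightarrow \cR^{\dd}$ (resp.\ $\cZ^{\tau} \hookrightarrow \cR^{\dd}$) together with the fact that $\cR^{\dd}$ has affine — in particular quasi-compact, locally of finite presentation — diagonal (Corollary~\ref{cor:diagonal of R}) gives, if needed, the representability of the diagonal of $\cZ^{\dd}$ by algebraic spaces directly, as a sanity check consistent with the formal-algebraicity conclusion.

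**Main obstacle.** None of the steps is technically deep; the one point requiring genuine care is verifying the base-change compatibility $\cZ^{\dd,a} \cong \cZ^{\dd}\times_{\Spf\cO}\Spec\cO/\varpi^a$ — i.e.\ that forming scheme-theoretic images is compatible with the thickenings in the tower — rather than merely having a compatible system of closed substacks of the $\cR^{\dd,a}$. This is what makes Lemma~\ref{lem:Z thickenings} (and the scheme-theoretic dominance of $\cC^{\dd,\BT,a}\to\cZ^{\dd,a}$ from Theorem~\ref{thm: existence of picture with descent data and its basic properties}(2)) do real work: it is precisely scheme-theoretic dominance, plus the fact that $\cC^{\dd,\BT,a}$ is the honest pullback of $\cC^{\dd,\BT}$, that forces $\cZ^{\dd,a}$ to be the honest pullback of $\cZ^{\dd}$ and hence that the tower satisfies the hypotheses of the formal-algebraicity criterion. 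So I would isolate and prove that compatibility carefully, and then the rest is invocation of the cited results.
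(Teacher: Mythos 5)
Your proposal has a genuine gap, and it is exactly at the point you flag as the ``main obstacle.''  You assert that the base-change compatibility
\[
\cZ^{\dd,a} \;\cong\; \cZ^{\dd}\times_{\Spf\cO}\Spec\cO/\varpi^a
\]
can be deduced ``from the analogous compatibility for $\cR^{\dd}$ and $\cC^{\dd,\BT}$ \ldots together with the fact that scheme-theoretic image commutes with the relevant flat base change,'' and later that scheme-theoretic dominance plus the honest pullback property of $\cC^{\dd,\BT,a}$ ``forces $\cZ^{\dd,a}$ to be the honest pullback of $\cZ^{\dd}$.''  Neither claim holds.  The base change $\cO/\varpi^{a+1}\to\cO/\varpi^a$ is not flat, and scheme-theoretic images are not compatible with non-flat base change; this is precisely the phenomenon the surrounding framework is designed to accommodate.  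Indeed, the remark immediately following the proposition in the paper states explicitly that the thickening $\cZ^{\dd,a}\hookrightarrow\cZ^{\dd}\times_{\cO}\cO/\varpi^a$ ``need not be an isomorphism \emph{a priori}, and we have no reason to expect that it is,'' and that establishing even a \emph{generic} version of this statement (Proposition~\ref{prop: existence of dense open substack of R such that C is a mono}) requires the detailed study of irreducible components carried out only in Section~\ref{sec: extensions of rank one Kisin modules}.  So the criterion you appeal to — a tower of algebraic stacks $\cX_a$ with each $\cX_a \cong \cX\times_{\Spf\cO}\Spec\cO/\varpi^a$ — cannot be verified here, and the argument breaks down.

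The paper sidesteps this entirely: it applies Proposition~\ref{prop:Y is p-adic formal} (an instance of \cite[Prop.~10.5]{Emertonformalstacks}), whose hypotheses are on the \emph{morphism} $\cC^{\dd,\BT}\to\cZ^{\dd}$ — proper and scheme-theoretically dominant, with $\varpi$-adic finite-type source and target locally Ind-finite type, quasi-compact, and quasi-separated — and do \emph{not} include the Cartesianness you need.  The framework of Subsection~\ref{subsec:situation} is set up so that $\cY^a$ is merely a closed substack of $\cY\times_{\cO}\cO/\varpi^a$, which is all that is available.  Your finite-presentation argument inherits the same gap; the paper instead runs a stabilization argument: since $\cZ^{\dd}\times_{\cO}\cO/\varpi^b \cong \varinjlim_a(\cZ^{\dd,a}\times_{\cO}\cO/\varpi^b)$ is quasi-compact and quasi-separated, the system stabilises, so $\cZ^{\dd}\times_{\cO}\cO/\varpi^b$ agrees with $\cZ^{\dd,a}\times_{\cO}\cO/\varpi^b$ for some \emph{sufficiently large} $a$ (not $a=b$), from which finite presentation follows.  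To repair your proof, replace the appeal to the Cartesian tower criterion with the appeal to Proposition~\ref{prop:Y is p-adic formal}, and replace the finite-presentation step with the stabilization argument.
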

\begin{proof}We give the argument for~$\cZ^\dd$, the argument
  for~$\cZ^{\tau}$ being identical. (Alternatively,
  this latter case follows from the former and the fact
  that the canonical morphism $\cZ^{\tau} \hookrightarrow \cZ^{\dd}$
  is a closed immersion.) That $\cZ^{\dd}$ is a $\varpi$-adic formal
  algebraic stack will follow from Proposition~\ref{prop:Y is p-adic
    formal} once we show that the morphism $\cC^{\dd,\BT} \to
  \cZ^{\dd}$ fits into the framework of Section~\ref{subsec:situation}.
	It follows from Lemma~\ref{lem:ind to formal}
	that $\cZ^{\dd}$ is a formal algebraic stack,
	and by construction it is locally Ind-finite type over $\Spec \cO$.
	Furthermore, since each $\cZ^{\dd,a}$ is quasi-compact and
	quasi-separated (being of finite presentation over $\cO/\varpi^a$),
	we see that $\cZ^{\dd}$ is quasi-compact and
        quasi-separated. Thus Proposition~\ref{prop:Y is p-adic
    formal} indeed applies.

	The isomorphism $\cZ^{\dd} \iso \varinjlim_a \cZ^{\dd,a}$ 
	induces an isomorphism
	$$\cZ^{\dd}\times_{\cO} \cO/\varpi^b \iso
	\varinjlim_a \cZ^{\dd,a}\times_{\cO} \cO/\varpi^b,$$
	for any fixed $b \geq 1$.
	Since $\cZ^{\dd}$ is quasi-compact and quasi-separated,
	so is $\cZ^{\dd}\times_{\cO} \cO/\varpi^b,$ and thus this isomorphism
	factors through $\cZ^{\dd,a}\times_{\cO} \cO/\varpi^b$ for some $a$.
	Thus the directed system $\cZ^{\dd,a}\times_{\cO}
        \cO/\varpi^b$ in $a$ 
	eventually stabilises, and so we see that
	$$\cZ^{\dd}\times_{\cO} \cO/\varpi^b \iso \cZ^{\dd,a} \times_{\cO}
	\cO/\varpi^b$$
	for sufficiently large values of $a$.  Since $\cZ^{\dd,a}$
	is of finite presentation over $\cO/\varpi^a$,
	we find that $\cZ^{\dd}\times_{\cO} \cO/\varpi^b$
	is of finite presentation over $\cO/\varpi^b$.  Consequently,
	we conclude that $\cZ^{\dd}$ is of finite presentation
	over $\Spf \cO$, as claimed.
\end{proof}

\begin{remark}
As observed in the general context of Subsection~\ref{subsec:situation},
the thickening $\cZ^{\dd,a} \hookrightarrow \cZ^{\dd}\times_{\cO}
\cO/\varpi^a$ need not be an isomorphism {\em a priori},
and we have no reason to expect that it is.  
Nevertheless, in Subsection~\ref{subsec: generically reduced}
we will prove that this thickening
is {\em generically} an isomorphism for {\em every} value of $a \geq 1$,
and we will furthermore show that each 
$(\cZ^{\dd,a})_{/\F}$ is generically reduced;
see
Proposition~\ref{prop: existence of dense open substack of R such that C is a mono}
and Remark~\ref{rem:U^a = U mod pi^a} below.
The proof of this result involves an application of Proposition~\ref{prop:opens},
and 
depends on the detailed analysis of the irreducible
	components of the algebraic stacks $\cC^{\tau,a}$ 
	and $\cZ^{\tau,a}$ that we will make in
	Section~\ref{sec: extensions of rank one Kisin modules}.
\end{remark}

We conclude this subsection
by establishing some basic lemmas about the reduced substacks underlying 
each of $\cC^{\tau, \BT}$ and $\cZ^{\tau}$.

\begin{lem}
    \label{lem: underlying reduced substack mod p}Let $\cX$ be an
    algebraic stack over $\cO/\varpi^a$, and let $\cX_{\red}$ be
    the underlying reduced substack of~$\cX$. Then $\cX_{\red}$ is a
    closed substack of $\cX_{/\F}:=\cX\times_{\cO/\varpi^a}\F$.
  \end{lem}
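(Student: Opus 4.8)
The plan is to exploit the fact that $\varpi$ is nilpotent on $\cX$, since $\varpi^a=0$ in $\cO/\varpi^a$. First I would record that both $\cX_{\red}$ and $\cX_{/\F}=\cX\times_{\cO/\varpi^a}\F$ are closed substacks of $\cX$: the former by the existence of the reduction of an algebraic stack (\cite[\href{http://stacks.math.columbia.edu/tag/0509}{Tag 0509}]{stacks-project}), the latter because it is the pullback, along the structure morphism $\cX\to\Spec\cO/\varpi^a$, of the closed immersion $\Spec\F\hookrightarrow\Spec\cO/\varpi^a$ (which is cut out by $\varpi$, since $\F=\cO/\varpi$). Let $\cI_{\red}$ and $\cI$ denote the corresponding quasi-coherent sheaves of ideals on $\cX$ (closed immersions into an algebraic stack correspond to such ideal sheaves, \cite[\href{http://stacks.math.columbia.edu/tag/06FM}{Tag 06FM}]{stacks-project}); by construction $\cI$ is the ideal sheaf generated by the image of $\varpi$.

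The key step is to show $\cI\subseteq\cI_{\red}$. This holds because the image of $\varpi$ in the structure sheaf $\cO_{\cX_{\red}}=\cO_\cX/\cI_{\red}$ is a nilpotent section of a \emph{reduced} algebraic stack, hence vanishes. To verify that a nilpotent global section of a reduced algebraic stack is zero one passes to a smooth scheme cover, where it is the elementary statement that a nilpotent element of a reduced ring is zero.

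Granting $\cI\subseteq\cI_{\red}$, it follows formally that $\cX_{\red}$ is the closed substack of $\cX_{/\F}$ cut out by the quasi-coherent ideal sheaf $\cI_{\red}/\cI\subseteq\cO_\cX/\cI=\cO_{\cX_{/\F}}$; in particular $\cX_{\red}\hookrightarrow\cX_{/\F}$ is a closed immersion, which is the assertion of the lemma. (Equivalently, one may observe via the universal property of $\cX_{/\F}\hookrightarrow\cX$ that $\cX_{\red}\hookrightarrow\cX$ factors through a morphism $h\colon\cX_{\red}\to\cX_{/\F}$, and then conclude that $h$ is a closed immersion by a standard cancellation argument, using that $\cX_{/\F}\hookrightarrow\cX$ is a monomorphism, so that the graph of $h$ is an isomorphism onto $\cX_{\red}\times_{\cX}\cX_{/\F}$ and $h$ is identified with the base change of $\cX_{\red}\hookrightarrow\cX$ along $\cX_{/\F}\hookrightarrow\cX$.) I do not anticipate a real obstacle here; the only point needing a little care is the passage between the reduction of an algebraic stack and that of the schemes in a smooth presentation, which is routine.
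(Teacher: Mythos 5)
Your proof is correct, and it essentially coincides with the paper's argument: the paper observes that $\cX_{\red}\to\Spec\cO/\varpi^a$ factors through $(\Spec\cO/\varpi^a)_{\red}=\Spec\F$, giving a factorization $\cX_{\red}\to\cX_{/\F}\to\cX$, and then invokes cancellation for closed immersions — which is precisely the alternative you sketch in your parenthetical remark. Your primary framing via ideal sheaves (showing $\varpi$ vanishes in $\cO_{\cX_{\red}}$ because it is a nilpotent section of a reduced stack) is just the structure-sheaf-level restatement of the same observation, so there is no substantive difference.
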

  \begin{proof}The structural morphism $\cX\to\Spec\cO/\varpi^a$
    induces a natural morphism $\cX_{\red}\to
    (\Spec\cO/\varpi^a)_{\red}=\Spec\F$, so the natural morphism
    $\cX_{\red}\to \cX$ factors through $\cX_{/\F}$.  Since the
    morphisms $\cX_{\red}\to\cX$ and $\cX_{/\F}\to\cX$ are both closed
    immersions, so is the morphism $\cX_{\red}\to\cX_{/\F}$.
      \end{proof}
\begin{lem}
  \label{lem: scheme-theoretic image commutes with passing to
    underlying reduced substacks}If $f:\cX\to\cY$ is a quasi-compact morphism of
  algebraic stacks, and $\cW$ is the scheme-theoretic image of $f$,
  then the scheme-theoretic image of the induced morphism of
  underlying reduced substacks $f_{\red}:\cX_{\red}\to\cY_{\red}$ is the
  underlying reduced substack $\cW_{\red}$. 
\end{lem}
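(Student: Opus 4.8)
The plan is to reduce everything to the universal property of the scheme-theoretic image, using that for quasi-compact morphisms the scheme-theoretic image commutes with flat base change and can be checked smooth-locally. First I would recall that by \cite[Defn.\ 3.2.8]{EGstacktheoreticimages} the scheme-theoretic image $\cW$ of $f\colon\cX\to\cY$ is characterised as the smallest closed substack of $\cY$ through which $f$ factors (or, in the setting of \emph{loc.\ cit.}, via its functor of points together with the fact that it is a Zariski substack). Since $f$ factors through $\cW$, the morphism $f_{\red}\colon\cX_{\red}\to\cY_{\red}$ factors through $\cW\times_{\cY}\cY_{\red}$; but $\cW\hookrightarrow\cY$ is a closed immersion, so $\cW\times_{\cY}\cY_{\red}$ is a closed substack of $\cY_{\red}$, and since $\cY_{\red}$ is reduced this fibre product is itself reduced, hence equals $\cW_{\red}$. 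Thus $f_{\red}$ factors through the closed substack $\cW_{\red}$ of $\cY_{\red}$, and it remains to see that $\cW_{\red}$ is the \emph{scheme-theoretic} image, i.e.\ that it is the \emph{smallest} closed substack of $\cY_{\red}$ through which $f_{\red}$ factors.

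For that, let $\cW'\hookrightarrow\cY_{\red}$ be any closed substack through which $f_{\red}$ factors; I want to show $\cW_{\red}\subseteq\cW'$. Composing with the closed immersion $\cY_{\red}\hookrightarrow\cY$, we get that $\cW'$ is a closed substack of $\cY$ through which $f_{\red}$, and a fortiori $f$ (since $\cX_{\red}\to\cX$ is a nil-thickening and closed immersions into $\cY$ can be detected after passing to the reduction of the source — here one uses that a closed substack of $\cY$ containing the image of $\cX_{\red}$ contains the image of $\cX$, because scheme-theoretic image is insensitive to nilpotents in the source for the purpose of \emph{set-theoretic} containment, and then that $\cW'$ being reduced forces the scheme-theoretic factorisation), factors. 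Hence by minimality of $\cW$ we have $\cW\subseteq\cW'$ as closed substacks of $\cY$; intersecting with $\cY_{\red}$ and using that $\cW'$ is already reduced gives $\cW_{\red}\subseteq\cW'$, as desired. The only slightly delicate point is the claim that a closed substack of $\cY$ through which $f\circ(\text{nil-thickening})$ factors automatically receives a map from all of $\cX$: this follows because a closed immersion $\cV\hookrightarrow\cY$ such that the composite $\cX_{\red}\to\cX\to\cY$ factors through $\cV$ has the property that $\cX\times_{\cY}\cV$ is a closed substack of $\cX$ whose underlying reduced substack is all of $\cX_{\red}$, hence (being a closed substack with full support) it is all of $\cX$ if... no — one must be careful, so instead I would argue directly with the explicit description of the scheme-theoretic image.

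Concretely, the cleanest route is: the formation of the scheme-theoretic image of a quasi-compact morphism commutes with flat (in particular smooth) base change on the target, by \cite[Lem.\ 3.2.5]{EGstacktheoreticimages} or its analogue, and the statement is smooth-local on $\cY$; so I may choose a smooth surjection $V\to\cY$ from a scheme, replace $\cY$ by $V$ and $\cX$ by $\cX\times_{\cY}V$, and then choose a smooth surjection $U\to\cX\times_{\cY}V$ from a scheme, reducing to the case of a quasi-compact morphism of schemes $f\colon U\to V$ (affine-local, so $\Spec B\to\Spec A$ with $B$ quasi-compact over... ). In the affine case, the scheme-theoretic image of $\Spec B\to\Spec A$ is $\Spec(A/\ker(A\to B))$, its reduction is $\Spec(A/\sqrt{\ker(A\to B)})$, the reduction of the source corresponds to $B_{\red}=B/\mathrm{nil}(B)$, and one checks $\sqrt{\ker(A\to B)}=\sqrt{\ker(A_{\red}\to B_{\red})}=\ker(A_{\red}\to B_{\red})$ since the last ring is reduced — an elementary commutative-algebra verification. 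The main obstacle is bookkeeping: making sure the base-change compatibility of scheme-theoretic images (and of ``underlying reduced substack'') is applied correctly in the stacky setting, i.e.\ that passing to $U$ and $V$ genuinely reduces the stacky statement to the scheme statement; once that is set up, the ring-theoretic core is routine.
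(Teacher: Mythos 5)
Your final ``cleanest route'' --- reduce by smooth-locality of both scheme-theoretic images and underlying reduced substacks to a quasi-compact morphism of affine schemes, then check $\sqrt{\ker(A\to B)}=\ker(A_{\red}\to B_{\red})$ --- is precisely the paper's proof, which simply cites \cite[\href{http://stacks.math.columbia.edu/tag/056B}{Tag 056B}]{stacks-project} for the scheme-level statement rather than verifying it by hand. You were also right to abandon the first, universal-property attempt: the claim that $f$ factors through a reduced closed substack $\cW'$ of $\cY$ as soon as $f_{\red}$ does is false in general (take $f\colon\Spec k[\epsilon]/\epsilon^2\to\Spec k[x]$, $x\mapsto\epsilon$; the reduction factors through $V(x)$, but $f$ itself does not).
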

\begin{proof} 
  Since the definitions of
  the scheme-theoretic image and of the underlying reduced substack
  are both smooth local (in the former case
  see~\cite[Rem. 3.1.5(3)]{EGstacktheoreticimages}, and in the latter
  case it follows immediately from the construction in~\cite[\href{http://stacks.math.columbia.edu/tag/0509}{Tag 0509}]{stacks-project}), we immediately reduce to the case of
  schemes, which follows from~\cite[\href{http://stacks.math.columbia.edu/tag/056B}{Tag 056B}]{stacks-project}. 
  \end{proof}

\begin{lem}
  \label{lem: C 1 and Z 1 are the underlying reduced substacks}
For each $a\ge 1$, $\cC^{\dd,\BT,1}$ is the underlying reduced
  substack of~$\cC^{\dd,\BT,a}$, and $\cZ^{\dd,1}$ is the
  underlying reduced substack of~$\cZ^{\dd,a}$; consequently,
 $\cC^{\dd,\BT,1}$ is the underlying reduced substack of $\cC^{\dd,\BT}$,
 and $\cZ^{\dd,1}$ is the underlying reduced substack of~$\cZ^{\dd}$.
 Similarly, for each tame type $\tau,$
  $\cC^{\tau,\BT,1}$ is the underlying reduced
  substack of each~$\cC^{\tau,\BT,a}$, and of~$\cC^{\tau,\BT}$,
  while $\cZ^{\tau,1}$ is the
  underlying reduced substack of each ~$\cZ^{\tau,a}$, and of~$\cZ^{\tau}$.
\end{lem}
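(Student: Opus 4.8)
The plan is to deduce all of these statements from the reducedness of the special fibres on the Breuil--Kisin side, together with Lemma~\ref{lem:Z thickenings}. Recall first that $\cC^{\tau,\BT,1}$ is reduced for every tame type~$\tau$ by Corollary~\ref{cor: Kisin moduli consequences of local models}(2), and hence that $\cC^{\dd,\BT,1}$, being the disjoint union of the finitely many $\cC^{\tau,\BT,1}$ by Corollary~\ref{cor: BT C is union of tau C}, is also reduced.

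First I would handle the stacks $\cC^{\dd,\BT,a}$. Since $\varpi$ is nilpotent in $\cO/\varpi^a$, the closed immersion $\cC^{\dd,\BT,1}=(\cC^{\dd,\BT,a})_{/\F}\hookrightarrow\cC^{\dd,\BT,a}$ is a thickening; by Lemma~\ref{lem: underlying reduced substack mod p}, $(\cC^{\dd,\BT,a})_{\red}$ is a closed substack of $(\cC^{\dd,\BT,a})_{/\F}=\cC^{\dd,\BT,1}$, and since this last stack is reduced and has the same underlying topological space as $(\cC^{\dd,\BT,a})_{\red}$, that closed immersion is an isomorphism. Thus $(\cC^{\dd,\BT,a})_{\red}=\cC^{\dd,\BT,1}$, and the same argument applied to $\cC^{\tau,\BT,a}$ (using reducedness of $\cC^{\tau,\BT,1}$) gives $(\cC^{\tau,\BT,a})_{\red}=\cC^{\tau,\BT,1}$.

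Next I would handle the $\cZ$'s. Two ingredients are needed. First, $\cZ^{\dd,1}$ is reduced: it is the scheme-theoretic image of $\cC^{\dd,\BT,1}\to\cR^{\dd,1}$, and the scheme-theoretic image of a quasi-compact morphism with reduced source is reduced --- this follows from Lemma~\ref{lem: scheme-theoretic image commutes with passing to underlying reduced substacks} applied to that morphism, together with the observations that a morphism out of a reduced stack factors through the reduced substack of its target and that forming the scheme-theoretic image is unchanged by replacing the target with a closed substack through which the morphism factors. (Likewise $\cZ^{\tau,1}$ is reduced.) Second, by Lemma~\ref{lem:Z thickenings} each $\cZ^{\dd,a}\hookrightarrow\cZ^{\dd,a+1}$ is a thickening, hence so is the composite $\cZ^{\dd,1}\hookrightarrow\cZ^{\dd,a}$, so $(\cZ^{\dd,a})_{\red}=(\cZ^{\dd,1})_{\red}=\cZ^{\dd,1}$; and similarly $(\cZ^{\tau,a})_{\red}=\cZ^{\tau,1}$.

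For the Ind-algebraic stacks, $\cC^{\dd,\BT}=\varinjlim_a\cC^{\dd,\BT,a}$ has thickenings for transition morphisms, so $\cC^{\dd,\BT,1}$ is a reduced closed substack of $\cC^{\dd,\BT}$ with the same underlying topological space, and is therefore its underlying reduced substack; the same reasoning applies to $\cZ^{\dd}=\varinjlim_a\cZ^{\dd,a}$, to $\cC^{\tau,\BT}$, and to $\cZ^{\tau}$. I do not expect a genuine obstacle here: the substantive inputs --- reducedness of the special fibres $\cC^{\tau,\BT,1}$ (ultimately the local model computations behind Corollary~\ref{cor: Kisin moduli consequences of local models}) and Lemma~\ref{lem:Z thickenings} --- are already in hand, and the remaining work is the bookkeeping that ``thickening'' is stable under composition and under passage to the Ind-limit, and that the underlying reduced substack of a $\varpi$-adic formal algebraic stack is computed by any one of its defining thickenings.
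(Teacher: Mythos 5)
Your proof is correct, and for the $\cC$'s it coincides with the paper's argument: establish reducedness of $\cC^{\tau,\BT,1}$ from Corollary~\ref{cor: Kisin moduli consequences of local models}, then invoke Lemma~\ref{lem: underlying reduced substack mod p}. For the $\cZ$'s your route is mildly different from the paper's. The paper applies Lemma~\ref{lem: scheme-theoretic image commutes with passing to underlying reduced substacks} directly to the scheme-theoretically dominant proper morphism $\cC^{\tau,\BT,a}\to\cZ^{\tau,a}$ at each level $a$, identifying $(\cZ^{\tau,a})_{\red}$ with the scheme-theoretic image of $\cC^{\tau,\BT,1}$ in one step; you instead apply that same lemma only at level $1$ (to $\cC^{\dd,\BT,1}\to\cR^{\dd,1}$) to get reducedness of $\cZ^{\dd,1}$, and then propagate to all $a$ via Lemma~\ref{lem:Z thickenings} and the fact that the composite $\cZ^{\dd,1}\hookrightarrow\cZ^{\dd,a}$ is a thickening. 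The two routes use the same substantive inputs; yours adds a dependence on Lemma~\ref{lem:Z thickenings} in exchange for isolating ``$\cZ^{\tau,1}$ is reduced'' as a standalone step, whereas the paper gets away without it but leaves the identification of the scheme-theoretic image with $\cZ^{\tau,1}$ slightly more implicit. Either is fine; the paper's is marginally shorter.
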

\begin{proof} The statements for the $\varpi$-adic formal algebraic stacks
	follow directly from the corresponding statements for the various
	algebraic stacks modulo $\varpi^a$, and so we focus on proving 
	these latter statements,
	beginning with the case of~$\cC^{\tau,\BT,a}$. Note that
  $\cC^{\tau,\BT,1}=\cC^{\tau,\BT,a}\times_{\cO/\varpi^a}\F$ is
  reduced by Corollary~\ref{cor: Kisin moduli consequences of
    local models}, so
  $\cC^{\dd,\BT,1}=\cC^{\dd,\BT,a}\times_{\cO/\varpi^a}\F$ is
  also reduced by Corollary~\ref{cor:
  BT C is union of tau C}.  The claim follows for $\cC^{\dd,\BT,a}$
and $\cC^{\tau,\BT,a}$  from Lemma~\ref{lem:
    underlying reduced substack mod p}. 
  
The claims for $\cZ^{\tau,a}$  and $\cZ^{\dd,a}$ are then immediate from Lemma~\ref{lem: scheme-theoretic image commutes with passing to
    underlying reduced substacks}, applied to the morphisms
  $\cC^{\tau,\BT,a}\to\cZ^{\tau,a}$ and  $\cC^{\dd,\BT,a}\to\cZ^{\dd,a}$.
\end{proof}

\subsection{Versal rings and equidimensionality}\label{subsec:Galois
  deformation rings}
We now show that $\cC^{\dd,\BT}$ and $\cZ^{\dd,\BT}$ (and their
substacks $\cC^{\tau,\BT}$, $\cZ^{\tau}$) are equidimensional, and
compute their dimensions, by making use of their versal
rings. In~\cite[\S 5]{EGstacktheoreticimages} these versal rings were
constructed in a more general setting in terms of liftings of \'etale
$\varphi$-modules; in our particular setting, we will find it
convenient to interpret them as Galois deformation rings.

  Fix a finite type point $x:\Spec\F'\to\cZ^{\tau,a}$, where $\F'/\F$
is a finite extension; we also denote the induced finite type point of 
$\cR^{\dd,a}$ 
by~$x$.  Let $\rbar:G_K\to\GL_2(\F')$ be the
 Galois representation corresponding to~$x$ by Theorem~\ref{thm: existence of picture with descent data and its basic
    properties}~(3). Let~$E'$ be the compositum of~$E$
  and~$W(\F')[1/p]$, with ring of integers~$\cO_{E'}$ and residue
  field~$\F'$. 

  As in Appendix~\ref{sec:appendix on geom BM}, we 
  have the universal framed deformation $\cO_{E'}$-algebra~$R^\square_{\rbar}$, and we let $R^\square_{\rbar,0,\tau}$ be the reduced
  and $p$-torsion free quotient of $R^\square_{\rbar}$ whose
  $\Qpbar$-points correspond to the potentially Barsotti--Tate lifts
  of~$\rbar$ of type~$\tau$. In this section we will denote
  $R^\square_{\rbar,0,\tau}$ by the more suggestive name
  $R^{\tau,\BT}_{\rbar}$. We recall, for instance from
  \cite[Thm.~3.3.8]{BellovinGee}, that the ring $R^{\tau,\BT}_{\rbar}[1/p]$
  is regular.

As in Section~\ref{subsec: etale phi modules
  and Galois representations}, we write $R_{\rbar|_{G_{K_\infty}}}$
for the universal framed deformation
$\cO_{E'}$-algebra 
for~$\rbar|_{G_{K_\infty}}$. By Lemma~\ref{lem: Galois rep is a
  functor if A is actually finite local}, we have a natural morphism
\numequation
\label{eqn:versal morphism to R}
\Spf R_{\rbar|_{G_{K_\infty}}}\to\cR^{\dd}.
\end{equation}

\begin{lemma}
\label{lem:versal morphism to R}
The morphism~{\em \eqref{eqn:versal morphism to R}} 
is versal {\em (}at~$x${\em )}.
\end{lemma}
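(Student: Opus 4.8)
The plan is to verify the infinitesimal lifting criterion for versality at $x$ directly, translating everything into the deformation theory of $G_{K_\infty}$-representations by means of the equivalence of categories of Lemma~\ref{lem: Galois rep is a functor if A is actually finite local}. Recall that $R_{\rbar|_{G_{K_\infty}}}$ pro-represents the functor sending an Artinian local $\cO_{E'}$-algebra $A$ with residue field $\F'$ to the set of framed deformations of $\rbar|_{G_{K_\infty}}$ over $A$, i.e. continuous liftings $\rho_A\colon G_{K_\infty}\to\GL_2(A)$ of $\rbar|_{G_{K_\infty}}$ together with the standard basis; under~\eqref{eqn:versal morphism to R} such a datum maps to the object $T_A^{-1}(\rho_A)$ of $\cR^{\dd}(A)=\cR^{\dd,a}(A)$, where $a$ is chosen with $\varpi^a A=0$ (note $A$ is finite, since $\F'$ is). Versality then amounts to the following: for every surjection $A'\twoheadrightarrow A$ of such algebras, every framed deformation $(\rho_A,\mathrm{std})$ over $A$, every object $N'$ of $\cR^{\dd}(A')$ deforming $x$, and every isomorphism in $\cR^{\dd}(A)$ between $N'\otimes_{A'}A$ and $T_A^{-1}(\rho_A)$, there is a framed deformation over $A'$ lifting $(\rho_A,\mathrm{std})$ whose associated object of $\cR^{\dd}(A')$ is isomorphic to $N'$ compatibly with the given isomorphism.

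The main step is to rewrite this in terms of representations. Applying $T_{A'}$ and $T_A$ and using the base-change compatibility $T_A(M)\otimes_AA'\iso T_{A'}(M\otimes_AA')$ of Lemma~\ref{lem: Galois rep is a functor if A is actually finite local}, the object $N'$ corresponds to a continuous $G_{K_\infty}$-representation $\rho_{A'}$ on the finite $A'$-module $T_{A'}(N')$ lifting $\rbar|_{G_{K_\infty}}$, and the given isomorphism identifies the reduction $T_{A'}(N')\otimes_{A'}A$ with the free rank-two $A$-module underlying $(\rho_A,\mathrm{std})$. Here $T_{A'}(N')$ is free of rank two over $A'$: since $\F'$ is finite, $A'$ is local Artinian and $N'$, being a rank-two projective $\gS_{A'}[1/u]$-module, is free, so $T_{A'}(N')$ is free of rank two by the final assertion of Lemma~\ref{lem: Galois rep is a functor if A is actually finite local} (alternatively, $T_{A'}(N')$ is $A'$-flat and $T_{A'}(N')\otimes_{A'}A$ is free of rank two, so Nakayama applies).

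It then remains to lift the framing, which is the standard fact that the framed deformation functor is formally smooth over the deformation groupoid: transport the standard basis of $(\rho_A,\mathrm{std})$ through the given isomorphism to a basis of $T_{A'}(N')\otimes_{A'}A$, and lift it to an $A'$-basis of the free module $T_{A'}(N')$; since the basis over $A$ reduces to the standard basis modulo $\frakm_A$, this lift reduces to the standard basis modulo $\frakm_{A'}$, so $(\rho_{A'},\text{this basis})$ is a framed deformation over $A'$. By construction its image in $\cR^{\dd}(A')$ is $N'$ and its reduction isomorphism is the given one, as required. I do not expect a genuine obstacle here: the only points needing care — the finiteness of $A'$ (so that Lemma~\ref{lem: Galois rep is a functor if A is actually finite local} applies), the compatibility of $T$ with finite base change, and the freeness of $T_{A'}(N')$ — are all either immediate or already recorded in that lemma, and the argument is essentially an unwinding of the definition of versality through the equivalence of categories.
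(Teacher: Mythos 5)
Your proof is correct and follows essentially the same route as the paper's: the crux in both is that the natural morphism $\GL_d(A')\to\GL_d(A)$ is surjective, so that a change-of-basis witnessing the isomorphism over $A$ can be lifted to $A'$; the paper phrases this directly as conjugating the $\GL_d$-valued representation over $A'$ by a lift of the matrix $M$, while you phrase it as lifting the transported basis through the equivalence $T_{A'}$, which comes to the same thing. The extra care you take to check that $T_{A'}(N')$ is a free $A'$-module of rank two (via freeness of a rank-two projective over the semi-local ring $\gS_{A'}[1/u]$, or Nakayama) is a legitimate point that the paper's formulation avoids by working with $\GL_d$-valued representations from the start.
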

\begin{proof}
By definition, it suffices to show that if  
$\rho: G_{K_\infty} \to \GL_d(A)$ is a representation with $A$ a
finite Artinian $\cO_{E'}$-algebra, and if $\rho_B: G_{K_\infty} \to \GL_d(B)$
is a second representation, with $B$ a finite Artinian $\cO_{E'}$-algebra
admitting a surjection onto $A$, such that the base change
$\rho_A$ of $\rho_B$ to $A$ is isomorphic to $\rho$
(more concretely, so that there exists $M \in \GL_d(A)$ with
$\rho = M \rho_A M^{-1}$),
then we may find $\rho': G_{K_\infty} \to \GL_d(B)$ which lifts $\rho$,
and is isomorphic to $\rho_B$.  This is straightforward:\ the natural
morphism $\GL_d(B) \to \GL_d(A)$ is surjective, 
and so if $M'$ is any lift of $M$ to an element of $\GL_d(B)$,
then we may set $\rho' = M' \rho_B (M')^{-1}$.
\end{proof}

\begin{df}
\label{def:GL_2-hat}
For any pro-Artinian $\cO_{E'}$-algebra $R$ with residue field $\F'$ we let  $\widehat{\GL_2}_{/R}$ denote the completion
of~$(\GL_2)_{/R}$ along the closed
subgroup of its special fibre given by the centraliser of
$\rbar|_{G_{K_\infty}}$. 
\end{df}

\begin{remark}\label{rem:pullback-GL_2}
For $R$ as above we have $\widehat{\GL_2}_{/R} = \Spf R \times_{\cO_{E'}} \widehat{\GL_2}_{/\cO_{E'}}$.
Indeed, if $R \cong \varprojlim_i A_i,$ then
$\Spf R \times_{\cO_{E'}} \widehat{\GL_2}_{/\cO_{E'}} \cong \varinjlim_i
\Spec A_i \times_{\cO_{E'}} \widehat{\GL_2}_{/\cO_{E'}}$,
and 
$\Spec A_i \times_{\cO_{E'}} \widehat{\GL_2}_{/\cO_{E'}}$ agrees with the 
completion of $(\GL_2)_{/A_i}$ because $A_i$ is a finite $\cO_{E'}$-module.

It follows from this that $\widehat{\GL_2}_{/R}$ has nice base-change properties
more generally:
if $R \to S$ is a morphism of pro-Artinian $\cO_{E'}$-algebras each
with residue field~$\F'$, then there is an isomorphism $\widehat{\GL_2}_{/S}
\cong \Spf S \times_{\Spf R} \widehat{\GL_2}_{/R}.$
We apply this fact without further comment in various arguments below.
\end{remark}

There is a pair of morphisms
$\widehat{\GL_2}_{/R_{\rbar|_{G_{K_\infty}}}} \rightrightarrows
\Spf R_{\rbar|_{G_{K_\infty}}},$
the first being simply the projection to~$\Spf R_{\rbar|_{G_{K_\infty}}},$
and the second being given by ``change of framing''. 
Composing such changes of framing endows $\widehat{\GL_2}_{/R_{\rbar|_{G_{K_\infty}}}}$ with
the structure of a groupoid over~$\Spf R_{\rbar|_{G_{K_\infty}}}.$
Note that the two morphisms 
$$\widehat{\GL_2}_{/R_{\rbar|_{G_{K_\infty}}}} \rightrightarrows \Spf R_{\rbar|_{G_{K_\infty}}}
\buildrel \text{\eqref{eqn:versal morphism to R}} \over \longrightarrow
\cR^{\dd}$$
coincide, since changing the framing does not change the isomorphism
class (as a Galois representation) of a deformation of~$\rbar|_{G_{K_\infty}}$.
Thus there is an induced morphism of groupoids over~$\Spf R_{\rbar|_{G_{K_\infty}}}$
\numequation
\label{eqn:GL_2 hat equivariance}
\widehat{\GL_2}_{/R_{\rbar|_{G_{K_\infty}}}} \to
\Spf R_{\rbar|_{G_{K_\infty}}}\times_{\cR^{\dd}}\Spf R_{\rbar|_{G_{K_\infty}}}.
\end{equation}

\begin{lemma}
\label{lem:GL_2 hat equivariance}
The morphism~{\em \eqref{eqn:GL_2 hat equivariance}} is an isomorphism.
\end{lemma}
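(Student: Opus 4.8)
The plan is to compare the functors of points of the two sides of \eqref{eqn:GL_2 hat equivariance} directly. First I would reduce to checking that \eqref{eqn:GL_2 hat equivariance} induces a bijection on $A$-valued points for $A$ running over Artinian local $\cO_{E'}$-algebras with residue field~$\F'$: both the source and the target of \eqref{eqn:GL_2 hat equivariance} are $\varpi$-adic formal algebraic spaces over $\Spf\cO_{E'}$ --- the source by Remark~\ref{rem:pullback-GL_2}, and the target because $\cR^{\dd}$ has affine diagonal by Corollary~\ref{cor:diagonal of R}, so that $\Spf R_{\rbar|_{G_{K_\infty}}}\times_{\cR^{\dd}}\Spf R_{\rbar|_{G_{K_\infty}}}$ is affine over $\Spf R_{\rbar|_{G_{K_\infty}}}\times_{\cO_{E'}}\Spf R_{\rbar|_{G_{K_\infty}}}$ --- and so this is a standard reduction. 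Note also that, since $\F'$ is finite, any such $A$ is automatically finite, so that Lemma~\ref{lem: Galois rep is a functor if A is actually finite local} applies with $A$-coefficients.

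Next I would unwind the $A$-points of each side. By Remark~\ref{rem:pullback-GL_2}, an $A$-point of the source is a pair $(\rho_A,g)$ where $\rho_A\colon G_{K_\infty}\to\GL_2(A)$ is a framed deformation of $\rbar|_{G_{K_\infty}}$ (i.e.\ an $A$-point of $\Spf R_{\rbar|_{G_{K_\infty}}}$) and $g\in\GL_2(A)$ is an element whose reduction modulo~$\mathfrak{m}_A$ lies in the centraliser of $\rbar|_{G_{K_\infty}}$. An $A$-point of the target is a triple $(\rho_A,\rho'_A,\iota)$ with $\rho_A,\rho'_A$ framed deformations of $\rbar|_{G_{K_\infty}}$ and $\iota$ an isomorphism in $\cR^{\dd}(A)$ between the \'etale $\varphi$-modules with descent data attached to $\rho_A$ and to $\rho'_A$ via \eqref{eqn:versal morphism to R}. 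Here is where the one genuine input enters: by Lemma~\ref{lem: Galois rep is a functor if A is actually finite local} the functor $T_A$ is an equivalence of categories, hence in particular fully faithful, so the datum of~$\iota$ is the same as the datum of an isomorphism $\rho_A\isoto\rho'_A$ of $G_{K_\infty}$-representations over~$A$, that is, of an element $g\in\GL_2(A)$ with $g\rho_A g^{-1}=\rho'_A$; reducing this identity modulo~$\mathfrak{m}_A$ and using that $\rho_A$ and $\rho'_A$ both reduce to $\rbar|_{G_{K_\infty}}$ forces $\bar g$ to lie in the centraliser of $\rbar|_{G_{K_\infty}}$.

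Finally I would check that, under these identifications, \eqref{eqn:GL_2 hat equivariance} carries $(\rho_A,g)$ to the triple $(\rho_A,\,g\rho_A g^{-1},\,\iota_g)$, where $\iota_g$ is the isomorphism of \'etale $\varphi$-modules corresponding under $T_A$ to the isomorphism $g\colon\rho_A\isoto g\rho_A g^{-1}$ of $G_{K_\infty}$-representations --- this $\iota_g$ being precisely the tautological $2$-isomorphism comparing the two composites $\widehat{\GL_2}_{/R_{\rbar|_{G_{K_\infty}}}}\rightrightarrows\Spf R_{\rbar|_{G_{K_\infty}}}\to\cR^{\dd}$ --- so that the assignment $(\rho_A,\rho'_A,\iota)\mapsto(\rho_A,g_\iota)$, with $g_\iota$ the element of $\GL_2(A)$ attached to $\iota$ as above, is a two-sided inverse. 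The ``main obstacle'' is conceptual rather than computational: the point is to recognise that an isomorphism in the stack $\cR^{\dd}(A)$ is literally the same datum as an isomorphism of the corresponding $G_{K_\infty}$-representations over~$A$ (full faithfulness in Lemma~\ref{lem: Galois rep is a functor if A is actually finite local}), and that the rigidity of \emph{framed} deformations is exactly what cuts the ambiguity down from all of $\GL_2$ to its completion along the centraliser of $\rbar|_{G_{K_\infty}}$; granting this, the rest is bookkeeping.
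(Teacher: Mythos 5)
Your proof is correct and takes essentially the same approach as the paper's. The paper's proof is terser — it simply asserts that a pair of $A$-valued points of $\Spf R_{\rbar|_{G_{K_\infty}}}$ map to the same point of $\cR^{\dd}$ if and only if they give rise to isomorphic deformations once the framings are forgotten, and that such an isomorphism is a change of framing by an $A$-point of $\widehat{\GL_2}$ — whereas you unwind this more carefully: you make explicit the reduction to Artinian local test rings, spell out the functor-of-points description of both sides, and explicitly invoke the full faithfulness of $T_A$ from Lemma~\ref{lem: Galois rep is a functor if A is actually finite local} to pass from an isomorphism $\iota$ of \'etale $\varphi$-modules in $\cR^{\dd}(A)$ to a genuine $g\in\GL_2(A)$ conjugating the framed deformations, and you note that reduction modulo $\mathfrak{m}_A$ forces $\bar g$ to centralise $\rbar|_{G_{K_\infty}}$. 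This last point (full faithfulness of $T_A$) is indeed the one piece of non-formal input, left implicit in the paper's two-sentence argument, and you are right to surface it.
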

\begin{proof}
If $A$ is an Artinian $\cO_{E'}$-algebra, with residue field $\F'$,
then a pair of $A$-valued points of 
$\Spf R_{\rbar|_{G_{K_\infty}}}$ map to the same point of $\cR^{\dd}$
if and only if they give rise to isomorphic deformations of $\rhobar$,
once we forget the framings.  But this precisely means that the second of them is obtained
from the first by changing the framing via an $A$-valued point of $\widehat{\GL_2}$.
\end{proof}

It follows from Lemma~\ref{lem:versal morphism to R} that,
for each~$a\ge 1$, the quotient $R_{\rbar|_{G_{K_\infty}}}/\varpi^a$ is a
(non-Noetherian) versal ring for~$\cR^{\dd,a}$ at~$x$. By~\cite[Lem.\
3.2.16]{EGstacktheoreticimages}, for each~$a\ge 1$ a versal ring for
$\cZ^{\tau,a}$ at~$x$ is given by the
scheme-theoretic image of the
morphism
\numequation
\label{eqn:pull-back morphism mod a}
\cC^{\tau,\BT,a}\times_{\cR^{\dd,a}}\Spf R_{\rbar|_{G_{K_\infty}}}/\varpi^a \to\Spf
R_{\rbar|_{G_{K_\infty}}}/\varpi^a,
\end{equation}
in the sense that we now explain.

In general, the notion of scheme-theoretic image for morphisms 
of formal algebraic stacks can be problematic; at the very least it should
be handled with care.  But in this particular context,
a definition is given in~\cite[Def.\ 3.2.15]{EGstacktheoreticimages}: 
we write $R_{\rbar|_{G_{K_\infty}}}/\varpi^a$ as an inverse limit of Artinian local
rings $A$, form the corresponding scheme-theoretic images
of the induced morphisms 
$\cC^{\tau,\BT,a}\times_{\cR^{\dd,a}}\Spec A \to\Spec A,$
and then take the inductive limit of these scheme-theoretic images;
this is a formal scheme, which is in fact of the form $\Spf R^{\tau,a}$
for some quotient $R^{\tau,a}$ of $R_{\rbar_{| G_{K_{\infty}}}}/\varpi^a$
(where {\em quotient} should be understood in the sense of topological rings),
and is by definition the scheme-theoretic
image in question.

The closed
immersions $\cC^{\tau,\BT,a}\into \cC^{\tau,\BT,a+1}$ induce
corresponding closed immersions
$$
\cC^{\tau,\BT,a}\times_{\cR^{\dd,a}}\Spf R_{\rbar|_{G_{K_\infty}}}/\varpi^a \to
\cC^{\tau,\BT,a+1}\times_{\cR^{\dd,a+1}}\Spf R_{\rbar|_{G_{K_\infty}}}/\varpi^{a+1},$$
and hence closed immersions of scheme-theoretic images
$\Spf R^{\tau,a}\to \Spf R^{\tau,a+1}$, corresponding to surjections
$R^{\tau,a+1} \to R^{\tau,a}$.
(Here we are using the fact that
an projective limit of surjections of finite Artin
rings is surjective.)
Thus we may form the pro-Artinian ring 
$\varprojlim_a R^{\tau,a}.$
This projective limit is a quotient (again in the sense of topological rings)
of $R_{\rbar_{| G_{K_{\infty}}}}$, 
and the closed formal subscheme $\Spf (\varprojlim R^{\tau,a})$ 
of $\Spf R_{\rbar_{| G_{K_{\infty}}}}$ 
is the scheme-theoretic image (computed in the sense 
described above) of the projection
\numequation
\label{eqn:pull-back morphism}
{\cC^{\tau,\BT}}\times_{{\cR^{\dd}}}\Spf
R_{\rbar|_{G_{K_\infty}}}\to \Spf R_{\rbar|_{G_{K_\infty}}}
\end{equation}
(This is a formal consequence of the construction
of the $\Spf R^{\tau,a}$ as scheme-theoretic images,
since any discrete Artinian quotient of
$R_{\rbar |_{G_{K_{\infty}}}}$
is a discrete Artinian quotient of 
$R_{\rbar |_{G_{K_{\infty}}}}/\varpi^a$,
for some~$a~\geq~1$.)
It also follows formally (for example, by the
same argument as in the proof of~\cite[Lem.\
4.2.14]{EGstacktheoreticimages})  that $\varprojlim R^{\tau,a}$ is a versal
ring to ${\cZ^\tau}$ at~$x$.
Our next aim is to identify 
this projective limit
with $R^{\tau,\BT}_{\rbar}$. 

Before we do this, we have to establish some preliminary facts 
related to the various objects and morphisms we have just introduced.

\begin{lemma}  
\label{lem:R-tau-a properties}\leavevmode
\begin{enumerate}
\item
Each of the rings $R^{\tau,a}$ is a complete local Noetherian ring,
endowed with its $\mathfrak m$-adic topology,
and the same is true of the inverse limit $\varprojlim_a R^{\tau,a}$.
\item
For each $a \geq 1,$ the morphism $\Spf R^{\tau,a} \to \Spf R_{\rbar_{|G_{K_{\infty}}}}$
induces an isomorphism
$$
\cC^{\tau,\BT,a}\times_{\cR^{\dd,a}}\Spf R^{\tau,a} \iso
\cC^{\tau,\BT,a}\times_{\cR^{\dd,a}}\Spf R_{\rbar|_{G_{K_\infty}}}/\varpi^a.
$$
\item
For each $a \geq 1$,
the morphism $\Spf R^{\tau,a} \to \cR^{\dd,a}$ is effective, i.e.\
may be promoted {\em (}in a unique manner{\em )} to a morphism
$\Spec R^{\tau,a} \to \cR^{\dd,a}$,
and the induced morphism $$\cC^{\tau,a} \times_{\cR^{\dd,a}} \Spec R^{\tau,a} 
\to \Spec R^{\tau,a}$$ is proper and scheme-theoretically dominant.
\item
Each transition morphism $\Spec R^{\tau,a} \hookrightarrow \Spec R^{\tau,a+1}$
is a thickening.
\end{enumerate}
\end{lemma}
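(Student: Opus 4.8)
The plan is to prove the four statements of Lemma~\ref{lem:R-tau-a properties} essentially in order, since each relies on the preceding ones.

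For part~(1), the key point is that $\cZ^{\tau,a}$ is an algebraic stack of finite presentation over $\cO/\varpi^a$ (Theorem~\ref{thm: existence of picture with descent data and its basic properties}), and that $x$ is a finite type point with residue field $\F'$; since $\cZ^{\tau,a}$ has affine diagonal (being a closed substack of $\cR^{\dd,a}$, whose diagonal is affine by Corollary~\ref{cor:diagonal of R}), a versal ring at $x$ is Noetherian, complete, and local. The ring $R^{\tau,a}$ was constructed precisely as such a versal ring (via~\cite[Lem.~3.2.16]{EGstacktheoreticimages}), so it is Noetherian complete local with its $\frakm$-adic topology. For $\varprojlim_a R^{\tau,a}$, I would invoke the fact, established just above in the excerpt, that it is a versal ring to $\cZ^\tau$ at $x$; since $\cZ^\tau$ is a $\varpi$-adic formal algebraic stack of finite presentation over $\Spf \cO$ (Proposition~\ref{prop: Z is a p-adic formal stack}) with affine diagonal, the same reasoning shows its versal ring is Noetherian complete local. (Alternatively: the surjections $R^{\tau,a+1}\to R^{\tau,a}$ are all surjections of finite-length-quotient systems compatible with a single versal deformation, and one checks the limit is Noetherian directly because $R^{\tau,a}/\frakm$ and $\frakm/\frakm^2$ stabilise.)

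For part~(2), this is the statement that forming the scheme-theoretic image does not change the fibre product with $\cC^{\tau,\BT,a}$. I would argue as follows: by construction $\Spf R^{\tau,a}$ is the scheme-theoretic image of the morphism in~\eqref{eqn:pull-back morphism mod a}, and by~\cite[Lem.~3.2.9]{EGstacktheoreticimages} (or the relevant lemma on scheme-theoretic images) the morphism $\cC^{\tau,\BT,a}\times_{\cR^{\dd,a}}\Spf R_{\rbar|_{G_{K_\infty}}}/\varpi^a \to \Spf R_{\rbar|_{G_{K_\infty}}}/\varpi^a$ factors through $\Spf R^{\tau,a}$, so there is a natural map from the left-hand side of the desired isomorphism to the right-hand side. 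Surjectivity and injectivity (i.e.\ that it is an isomorphism) follow because $\Spf R^{\tau,a}\hookrightarrow \Spf R_{\rbar|_{G_{K_\infty}}}/\varpi^a$ is a closed immersion through which the projection from $\cC^{\tau,\BT,a}\times_{\cR^{\dd,a}}(-)$ factors scheme-theoretically; since the morphism $\cC^{\tau,\BT,a}\to\cR^{\dd,a}$ is proper (Corollary~\ref{cor: C^dd,a is an algebraic stack}~(3)), hence quasi-compact, base change and the universal property of scheme-theoretic image give the claim. Concretely one reduces to the affine level: write $R_{\rbar|_{G_{K_\infty}}}/\varpi^a = \varprojlim A_i$, use that $R^{\tau,a}$ is the limit of the scheme-theoretic images $\Spec B_i$ of $\cC^{\tau,\BT,a}\times\Spec A_i\to\Spec A_i$, and observe $\cC^{\tau,\BT,a}\times_{\Spec A_i}\Spec B_i = \cC^{\tau,\BT,a}\times_{\Spec A_i}\Spec A_i$ because the morphism to $\Spec A_i$ already factors through the closed subscheme $\Spec B_i$.

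For part~(3), effectivity is the statement that the formal morphism $\Spf R^{\tau,a}\to\cR^{\dd,a}$ comes from an actual morphism of the scheme $\Spec R^{\tau,a}$; this holds because $\cR^{\dd,a}$ is an algebraic stack with affine (in particular quasi-compact) diagonal and $R^{\tau,a}$ is Noetherian and complete, so one can apply the effectivity/algebraization results for versal morphisms to algebraic stacks (e.g.\ the relevant statement in~\cite{EGstacktheoreticimages} or Artin approximation / formal GAGA type results for complete Noetherian local rings) — more precisely, any morphism $\Spf R \to \cX$ with $R$ complete Noetherian local and $\cX$ locally Noetherian algebraic with quasi-compact separated diagonal promotes uniquely to $\Spec R \to \cX$. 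Then, using the isomorphism of part~(2) together with the algebraized form, I get $\cC^{\tau,\BT,a}\times_{\cR^{\dd,a}}\Spec R^{\tau,a}$, which maps to $\Spec R^{\tau,a}$; properness follows by base change from properness of $\cC^{\tau,\BT,a}\to\cR^{\dd,a}$ (Corollary~\ref{cor: C^dd,a is an algebraic stack}~(3), Theorem~\ref{thm: R^dd_h is an algebraic stack}), and scheme-theoretic dominance follows because $\Spec R^{\tau,a}$ is, by construction, the scheme-theoretic image of exactly this morphism at the formal level, and scheme-theoretic image is compatible with the algebraization (the scheme-theoretic image of a proper, hence quasi-compact, morphism to a Noetherian scheme is detected on the completion since $R^{\tau,a}\to \prod (\text{Artinian quotients})$ is injective). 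Finally, part~(4): the thickening statement is the analogue of Lemma~\ref{lem:Z thickenings}, and I would deduce it the same way — the transition morphism $\Spec R^{\tau,a}\hookrightarrow\Spec R^{\tau,a+1}$ fits into a square with the thickening $\cC^{\tau,\BT,a}\hookrightarrow\cC^{\tau,\BT,a+1}$ on top, vertical arrows (base-changed projections) which are representable by algebraic spaces, quasi-compact, and scheme-theoretically dominant by part~(3), and a closed immersion $\Spec R^{\tau,a}\hookrightarrow\Spec R^{\tau,a+1}$ on the bottom; the general lemma proved inside the proof of Lemma~\ref{lem:Z thickenings} (if the top arrow is a thickening, the bottom closed immersion is too) applies verbatim.

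The main obstacle I expect is part~(3): carefully justifying effectivity (algebraization of the formal morphism $\Spf R^{\tau,a}\to\cR^{\dd,a}$) and then checking that the scheme-theoretic image genuinely commutes with this algebraization — i.e.\ that the $R^{\tau,a}$ defined as an inverse limit of Artinian scheme-theoretic images really is the coordinate ring of the scheme-theoretic image of the algebraized proper morphism. This is where one must use Noetherianness of $R^{\tau,a}$ crucially (so that the scheme-theoretic image, a closed subscheme, is determined by a finitely generated ideal and hence detected $\frakm$-adically), together with the finiteness properties of $\cC^{\tau,\BT,a}\to\cR^{\dd,a}$. Parts~(1), (2), (4) are then relatively formal given the machinery of~\cite{EGstacktheoreticimages} already invoked in the excerpt.
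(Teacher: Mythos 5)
Your approach to parts~(2) and~(4) is essentially the paper's, and those portions are fine. But there are two genuine gaps elsewhere.

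\textbf{Part (1).} You assert that $R^{\tau,a}$ is Noetherian because it is ``a versal ring'' to the finite type algebraic stack $\cZ^{\tau,a}$ with affine diagonal. This does not follow: versal rings are not unique, and the property of being a versal pro-Artinian ring does not force Noetherianness. The ring $R^{\tau,a}$ is constructed as a pro-Artinian quotient of $R_{\rbar|_{G_{K_\infty}}}/\varpi^a$, which is a \emph{non-Noetherian} ring (since $G_{K_\infty}$ has an infinite-dimensional space of first-order deformations), and a quotient of a non-Noetherian complete local ring need not be Noetherian. The general theory only guarantees the \emph{existence} of a Noetherian complete local versal ring, not that the particular one arising from the scheme-theoretic-image construction of \cite[Lem.~3.2.16]{EGstacktheoreticimages} is Noetherian. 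What the paper actually does is show, via \cite[Lem.~5.4.15]{EGstacktheoreticimages} (an argument like the proof of \cite[Prop.~5.4.17]{EGstacktheoreticimages}), that the morphism $\Spf\varprojlim_a R^{\tau,a} \to \Spf R_{\rbar|_{G_{K_\infty}}}$ factors through $\Spf R_{\rbar|_{G_{K_\infty}}}^{\le 1} = \Spf R_{\rbar}^{[0,1]}$, realising $\varprojlim_a R^{\tau,a}$ as a quotient of the Noetherian ring $R_{\rbar}^{[0,1]}$. That factorisation is the missing ingredient.

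\textbf{Part (3).} You apply effectivity directly to the morphism $\Spf R^{\tau,a}\to\cR^{\dd,a}$, asserting that ``$\cR^{\dd,a}$ is an algebraic stack with affine diagonal.'' This is false: $\cR^{\dd,a}$ is an Ind-algebraic stack (Theorem~\ref{thm: R^dd is an ind algebraic stack}), not an algebraic stack, so \cite[\href{https://stacks.math.columbia.edu/tag/07X8}{Tag 07X8}]{stacks-project} does not apply to it. The correct route (which the paper takes) is to observe that $\Spf R^{\tau,a}\to\cR^{\dd,a}$ factors through the closed substack $\cZ^{\tau,a}$, which \emph{is} an algebraic stack of finite type over $\cO/\varpi^a$, and to apply effectivity there. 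Since the conclusion is the same, this is a repairable error rather than a failure of the approach, but as stated the step is invalid. Your argument for scheme-theoretic dominance is also loose: injectivity of $R^{\tau,a}\to\prod(\text{Artinian quotients})$ alone does not give compatibility of scheme-theoretic images with completion; the paper invokes the theorem on formal functions (applied to the proper morphism over the complete local Noetherian ring $R^{\tau,a}$) precisely to get this compatibility, and you should cite that explicitly rather than gesture at ``detected on the completion.''
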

\begin{proof}
Recall that in Section~\ref{subsubsec: deformation rings and Kisin
  modules} we defined a Noetherian quotient $R_{\rbar|_{G_{K_\infty}}}^{\le 1}$
of $R_{\rbar|_{G_{K_\infty}}}$, which is naturally identified with the framed
deformation ring $R_{\rbar}^{[0,1]}$ by Proposition~\ref{prop:
  restriction gives an equivalence of height at most 1
  deformation}. It follows from~\cite[Lem.\
5.4.15]{EGstacktheoreticimages}  (via an argument almost identical to
the one in the proof of~\cite[Prop.\ 5.4.17]{EGstacktheoreticimages}) that the morphism
$\Spf \varprojlim R^{\tau,a}\into\Spf R_{\rbar|_{G_{K_\infty}}}$ factors through
$\Spf R_{\rbar|_{G_{K_\infty}}}^{\le 1}=\Spf R_{\rbar}^{[0,1]}$, and indeed
that $\varprojlim R^{\tau,a}$ is a quotient of $\Spf R_{\rbar}^{[0,1]};$
this proves~(1).

It follows by the very construction of the $R^{\tau,a}$ that the
morphism~\eqref{eqn:pull-back morphism mod a}
factors through the closed subscheme $\Spf R^{\tau,a}$
of $\Spf R_{\rbar_{|G_{K_{\infty}}}}/\varpi^a$.
The claim of~(2) is a formal consequence of this.

We have already observed that the morphism $\Spf R^{\tau,a} \to \cR^{\dd,a}$
factors through $\cZ^{\tau,a}$.  This latter stack is algebraic, and of finite
type over~$\cO/\varpi^a$.  
    It follows
from~\cite[\href{https://stacks.math.columbia.edu/tag/07X8}{Tag
  07X8}]{stacks-project}
that the morphism $\Spf R^{\tau,a} \to \cZ^{\tau,a}$ is effective.
Taking into account part~(1) of the present lemma,
we deduce from the theorem on formal functions that
the formal completion of the scheme-theoretic image of the projection
$$\cC^{\tau,a}\times_{\cR^{\dd,a}} \Spec R^{\tau,a} \to \Spec R^{\tau,a}$$
at the closed point of $\Spec R^{\tau,a}$ coincides
with the scheme-theoretic image of the morphism
$$\cC^{\tau,a}\times_{\cR^{\dd,a}} \Spf R^{\tau,a} \to \Spf R^{\tau,a}.$$
Taking into account~(2), we see that this latter scheme-theoretic image coincides with
$\Spf R^{\tau,a}$ itself.  This completes the proof of~(3).

The claim of~(4) follows from a consideration of the diagram
$$\xymatrix{\cC^{\tau,a} \times_{\cR^{\dd,a}} \Spec R^{\tau,a} \ar[r] \ar[d]
& \cC^{\tau,a+1} \times_{\cR^{\dd,a+1}} \Spec R^{\tau,a+1}  \ar[d] \\
\Spec R^{\tau, a} \ar[r] & \Spec R^{\tau,a+1} }$$
just as in the proof of Lemma~\ref{lem:Z thickenings}.
\end{proof}

\begin{lemma}
\label{lem:algebraization}\leavevmode
\begin{enumerate}
\item The projection
$\cC^{\tau,\BT} \times_{\cR^{\dd}} 
\Spf R_{\rbar_{|G_{K_{\infty}}}}
\to
\Spf R_{\rbar_{|G_{K_{\infty}}}}$
factors through a morphism
$\cC^{\tau,\BT} \times_{\cR^{\dd}} 
\Spf R_{\rbar_{|G_{K_{\infty}}}}
\to \Spf(\varprojlim R^{\tau,a})$,
which is scheme-theoretically dominant
in the sense that its scheme-theoretic image {\em (}computed in the manner
described above{\em )} is equal to its target.
\item There is a projective
morphism of schemes $X_{\rbar} \to \Spec (\varprojlim R^{\tau,a})$,
which is uniquely 
determined, up to unique isomorphism, by the requirement that its
$\mathfrak m$-adic completion {\em (}where $\mathfrak m$ denotes
the maximal ideal of $\varprojlim R^{\tau,a}${\em )} may be identified with
the morphism
$\cC^{\tau,\BT} \times_{\cR^{\dd}} 
\Spf R_{\rbar_{|G_{K_{\infty}}}}
\to \Spf(\varprojlim R^{\tau,a})$
of~(1).
\end{enumerate}
\end{lemma}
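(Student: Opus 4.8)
The strategy is to deduce both statements from formal GAGA / the theorem on formal functions, using the Noetherian quotient rings $R^{\tau,a}$ and the fact that $\cC^{\tau,\BT,a}\to\cZ^{\tau,a}$ is proper and scheme-theoretically dominant (Theorem~\ref{thm: existence of picture with descent data and its basic properties}(2)). For part~(1), I would first observe that, by Lemma~\ref{lem:R-tau-a properties}(2), after base change to $\Spf R^{\tau,a}$ the fibre products $\cC^{\tau,\BT,a}\times_{\cR^{\dd,a}}\Spf R^{\tau,a}$ and $\cC^{\tau,\BT,a}\times_{\cR^{\dd,a}}\Spf R_{\rbar|_{G_{K_\infty}}}/\varpi^a$ agree; passing to the limit over $a$ and using that $\cC^{\tau,\BT}\times_{\cR^{\dd}}\Spf R_{\rbar|_{G_{K_\infty}}}$ is an inductive limit of the mod-$\varpi^a$ pieces (since $\cC^{\tau,\BT,a}$ is obtained from $\cC^{\tau,\BT}$ by reduction mod $\varpi^a$), one gets that the projection $\cC^{\tau,\BT}\times_{\cR^{\dd}}\Spf R_{\rbar|_{G_{K_\infty}}}\to\Spf R_{\rbar|_{G_{K_\infty}}}$ factors through $\Spf(\varprojlim_a R^{\tau,a})$. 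Scheme-theoretic dominance is then essentially the definition of $\varprojlim_a R^{\tau,a}$: each $\Spf R^{\tau,a}$ is by construction the scheme-theoretic image of $\cC^{\tau,\BT,a}\times_{\cR^{\dd,a}}\Spf R_{\rbar|_{G_{K_\infty}}}/\varpi^a\to \Spf R_{\rbar|_{G_{K_\infty}}}/\varpi^a$, and by Lemma~\ref{lem:R-tau-a properties}(2) this equals the scheme-theoretic image of the projection from $\cC^{\tau,\BT,a}\times_{\cR^{\dd,a}}\Spf R^{\tau,a}$; taking the inductive limit of these scheme-theoretic images recovers the target $\Spf(\varprojlim R^{\tau,a})$, which is what ``scheme-theoretically dominant'' means in this setting.

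For part~(2), I would proceed one level at a time: by Lemma~\ref{lem:R-tau-a properties}(3), for each $a$ the morphism $\Spf R^{\tau,a}\to\cR^{\dd,a}$ is effective, so we get $\Spec R^{\tau,a}\to\cZ^{\tau,a}\subset\cR^{\dd,a}$, and the base-changed morphism $X_{\rbar,a}:=\cC^{\tau,a}\times_{\cR^{\dd,a}}\Spec R^{\tau,a}\to\Spec R^{\tau,a}$ is proper and scheme-theoretically dominant. In fact it is projective: $\cC^{\tau,\BT,a}\to\cR^{\dd,a}_1$ (hence $\to\cZ^{\tau,a}$) is representable by algebraic spaces and proper, and over the affine Noetherian base $\Spec R^{\tau,a}$ the Breuil--Kisin moduli carry a relatively ample line bundle — one can exhibit $X_{\rbar,a}$ concretely as a closed subscheme of a product of Grassmannians/Quot schemes parametrizing the module $\gM/E(u)\gM$ together with its submodule $\im\Phi_\gM/E(u)\gM$, exactly as in the local-model description of Section~\ref{subsec: local models generalities}; alternatively invoke that $\cC^{\tau,\BT}\to\cZ^\tau$ factors through the local model stack, which is a global quotient of a projective scheme. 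The transition maps $X_{\rbar,a}\hookrightarrow X_{\rbar,a+1}$ are thickenings by Lemma~\ref{lem:R-tau-a properties}(4) together with the same diagram argument as in Lemma~\ref{lem:Z thickenings}, so $\{X_{\rbar,a}\}_a$ is a compatible system of projective schemes over the compatible system $\{\Spec R^{\tau,a}\}$; since $\varprojlim R^{\tau,a}$ is Noetherian and complete with $R^{\tau,a}=(\varprojlim R^{\tau,b})/I_a$ for a cofinal system of ideals $I_a$ (this uses Lemma~\ref{lem:R-tau-a properties}(1) and the surjectivity of the transition maps), Grothendieck's existence theorem (formal GAGA, \cite[\href{https://stacks.math.columbia.edu/tag/087V}{Tag 087V}]{stacks-project} or EGA III) produces a unique projective scheme $X_{\rbar}\to\Spec(\varprojlim R^{\tau,a})$ whose base change to each $R^{\tau,a}$ is $X_{\rbar,a}$, i.e.\ whose $\mathfrak m$-adic completion is $\varinjlim_a X_{\rbar,a}=\cC^{\tau,\BT}\times_{\cR^{\dd}}\Spf R_{\rbar|_{G_{K_\infty}}}$ by part~(1) and Lemma~\ref{lem:R-tau-a properties}(2). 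Uniqueness up to unique isomorphism is the uniqueness clause in formal GAGA.

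The main obstacle is establishing \emph{projectivity} (as opposed to mere properness) of the $X_{\rbar,a}\to\Spec R^{\tau,a}$, since Grothendieck's existence theorem in the form we want requires projective — or at least proper with some compatibility of line bundles — morphisms, and the stacks $\cC^{\tau,\BT,a}$ are only known a priori to map properly and representably to $\cZ^{\tau,a}$. I expect this to come out of the explicit moduli description: over an affine base the Breuil--Kisin module $\gM$ (of bounded $u$-adic ``size'', since the height is $1$ and the coefficient ring is $\varpi^a$-torsion, so $E(u)^h\mid u^{e(a+h-1)}$ by the proof of Lemma~\ref{lem:kisin injective}) is determined by a finite amount of linear-algebra data, realizing $X_{\rbar,a}$ as a closed subscheme of a projective scheme (a product of Grassmannians over $\Spec R^{\tau,a}$) via the assignment $\gM\mapsto(\gM/u^N\gM,\ \im\Phi_\gM \bmod u^N)$ for $N$ large; the relatively ample bundle is then pulled back from the Plücker embedding. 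Once projectivity is in hand, everything else is a routine application of formal GAGA and the already-established properties of the $R^{\tau,a}$. A secondary, purely bookkeeping, point will be to check that $\{I_a=\ker(\varprojlim R^{\tau,b}\to R^{\tau,a})\}$ is indeed cofinal among the powers of $\mathfrak m$ — this follows because $\varprojlim R^{\tau,a}$ is $\mathfrak m$-adically complete and Noetherian by Lemma~\ref{lem:R-tau-a properties}(1) and each $R^{\tau,a}$ is Artinian-by-$\varpi$-adic, equivalently $\varpi$-adically the topology is the $\mathfrak m$-adic one, so the $I_a$ form a neighbourhood basis of $0$.
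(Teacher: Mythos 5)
Your proposal is correct and follows essentially the same route as the paper: part (1) is a formal consequence of the constructions, and part (2) is an application of formal GAGA (EGA III, Thm.\ 5.4.1 and 5.4.5) to the compatible system of proper morphisms over the Noetherian complete local ring $\varprojlim R^{\tau,a}$. The projectivity concern you flag is real and is precisely what the paper delegates to the proof of \cite[Prop.\ 2.1.10]{kis04}, where the relevant moduli of Breuil--Kisin lattices is realized as a closed subscheme of a (bounded piece of an) affine Grassmannian over the Artinian base, giving a closed embedding into projective space; your sketch via $\gM \mapsto (\gM/u^N\gM,\ \im\Phi_{\gM}\bmod u^N)$ for $N \gg 0$ is the same idea, so no gap remains.
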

\begin{proof}
Part~(1) follows formally from the various constructions and definitions
of the objects involved (just like part~(2) of Lemma~\ref{lem:R-tau-a properties}).

We now consider the morphism
\[{\cC^{\tau,\BT}}\times_{{\cR^{\dd}}}\Spf R_{\rbar|_{G_{K_\infty}}}
\to \Spf (\varprojlim R^{\tau,a}).\]  Once we recall
that
$\varprojlim R^{\tau,a}$
is Noetherian, by Lemma~\ref{lem:R-tau-a properties}~(1),
it follows exactly as in the proof
of~\cite[Prop.\ 2.1.10]{kis04} (which treats the case that
$\tau$ is the trivial type), 
via an application of formal GAGA~\cite[Thm.\ 5.4.5]{MR217085},
that this morphism arises as the formal completion along the maximal ideal of
$\varprojlim R^{\tau,a}$
of a projective morphism
$X_{\rbar}\to \Spec (\varprojlim R^{\tau,a})$
(and $X_{\rbar}$ is unique up to unique isomorphism, by~\cite[Thm.\ 5.4.1]{MR217085}). 
\end{proof}

We next establish various properties of the scheme $X_{\rbar}$
constructed in the previous lemma.
To ease notation going forward, we write $\widehat{X}_{\rbar}$ to
denote the fibre product 
${\cC^{\tau,\BT}}\times_{{\cR^{\dd}}}\Spf R_{\rbar|_{G_{K_\infty}}}$
(which is reasonable, since this fibre product is isomorphic to the
formal completion of $X_{\rbar}$).

\begin{lemma}
\label{lem:X is normal and flat}
The scheme $X_{\rbar}$ is Noetherian, normal, and flat over $\cO_{E'}$.
\end{lemma}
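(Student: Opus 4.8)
The plan is to transfer the properties of $X_{\rbar}$ from those of $\cC^{\tau,\BT}$ established in Corollary~\ref{cor: Kisin moduli consequences of local models}, using the fact that $\widehat{X}_{\rbar} = \cC^{\tau,\BT}\times_{\cR^\dd}\Spf R_{\rbar|_{G_{K_\infty}}}$ is both the $\mathfrak m$-adic completion of $X_{\rbar}$ and a completion of $\cC^{\tau,\BT}$ along a closed point. First I would note that $X_{\rbar}$ is Noetherian because it is projective over $\Spec(\varprojlim R^{\tau,a})$, which is Noetherian by Lemma~\ref{lem:R-tau-a properties}~(1). For the remaining properties, the strategy is: a property $P$ (normality, $\cO_{E'}$-flatness) holds for the Noetherian scheme $X_{\rbar}$ if it holds for all the complete local rings at closed points; since $X_{\rbar}$ is projective over a complete local ring, its closed points all lie in the special fibre over the maximal ideal $\mathfrak m$, and the completions of $X_{\rbar}$ at these points are recovered from $\widehat{X}_{\rbar}$. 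So it suffices to check $P$ on $\widehat{X}_{\rbar}$, or rather on the complete local rings appearing there.

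The key observation is that $\Spf R_{\rbar|_{G_{K_\infty}}}\to\cR^\dd$ is versal at $x$ by Lemma~\ref{lem:versal morphism to R}, so — after passing through the groupoid presentation, i.e.\ using Lemma~\ref{lem:GL_2 hat equivariance} to identify the relevant fibre product with $\widehat{\GL_2}$ — the morphism $\widehat{X}_{\rbar}\to\cC^{\tau,\BT}$ (the base change of the versal morphism along the proper morphism $\cC^{\tau,\BT}\to\cR^\dd$) is formally smooth, in fact is a completion of a smooth morphism composed with a $\widehat{\GL_2}$-torsor. Concretely, $\widehat{X}_{\rbar}$ is obtained from the completion of $\cC^{\tau,\BT}$ at the point(s) of $\cC^{\tau,\BT,1}$ lying over $x$ by a smooth (indeed, $\widehat{\GL_2}$-torsor-type) base change. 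Since normality and flatness over $\cO_{E'}$ are properties that ascend and descend along formally smooth morphisms of Noetherian (formal) schemes — using that $\cC^{\tau,\BT}$ is analytically normal by Corollary~\ref{cor: Kisin moduli consequences of local models}~(1), so its complete local rings are normal, and is flat over $\cO$ by Corollary~\ref{cor: Kisin moduli consequences of local models}~(3) — we conclude that the complete local rings of $X_{\rbar}$ are normal and $\cO_{E'}$-flat, and hence so is $X_{\rbar}$ itself. (Here one uses that $\cO_{E'}$ is a finite extension of $\cO$, so $\cO$-flatness and $\cO_{E'}$-flatness agree for $\cO_{E'}$-algebras.)

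I expect the main obstacle to be bookkeeping rather than conceptual: making precise the claim that $\widehat{X}_{\rbar}$ is a formally smooth (up to a $\widehat{\GL_2}$-torsor) neighbourhood of a closed point of $\cC^{\tau,\BT}$, and that "analytically normal" for the formal algebraic stack $\cC^{\tau,\BT}$ (in the sense of~\cite[\S8]{Emertonformalstacks}) does indeed say that the appropriate complete local rings are normal, so that the transfer along the smooth base change is legitimate. One must be a little careful that $\cC^{\tau,\BT}\to\cR^\dd$ is proper and representable by algebraic spaces (Theorem~\ref{thm: existence of picture with descent data and its basic properties}~(2), or Corollary~\ref{cor: C^dd,a is an algebraic stack}~(3)), so that the fibre product $\widehat{X}_{\rbar}$ is a genuine Noetherian formal scheme and the formal GAGA/algebraisation of Lemma~\ref{lem:algebraization}~(2) applies. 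Apart from spelling out these compatibilities, the argument is routine descent of ring-theoretic properties along (formally) smooth morphisms.
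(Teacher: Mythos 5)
Your proposal follows essentially the same route as the paper's proof: Noetherianness from projectivity over $\varprojlim R^{\tau,a}$, then transferring normality and $\cO_{E'}$-flatness from $\cC^{\tau,\BT}$ to $\widehat{X}_{\rbar}$ via a formally smooth morphism arising from versality, and finally algebraizing via the formal GAGA lemma. The one technical difference worth noting is that the paper replaces the fibre product over $\cR^{\dd}$ with $\Spf R_{\rbar|_{G_{K_\infty}}}$ (which is non-Noetherian) by the isomorphic description $\widehat{X}_{\rbar}\cong\cC^{\tau,\BT}\times_{\cZ^{\tau}}\Spf(\varprojlim R^{\tau,a})$ with a Noetherian base, and then verifies normality and flatness by separate arguments (the former via explicit smooth covers and the infinitesimal lifting criterion, the latter via effectivity of the versal morphism and \cite[\href{https://stacks.math.columbia.edu/tag/0DR2}{Tag 0DR2}]{stacks-project}); this bookkeeping is precisely the ``main obstacle'' you correctly anticipated.
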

\begin{proof}
Since $X_{\rbar}$ is projective over the Noetherian ring $\varprojlim R^{\tau,a}$,
it is Noetherian.   The other claimed properties of~$X_{\rbar}$
will be deduced from the corresponding properties of $\cC^{\tau,\BT}$ that
are proved in 
Corollary~\ref{cor: Kisin moduli consequences of local models}.

To this end, we first note that,
since the morphism $\cC^{\tau,\BT} \to \cR^{\dd}$ factors through~$\cZ^{\tau}$,
it follows (for example as in the proof of~\cite[Lem.\
3.2.16]{EGstacktheoreticimages}) that we have isomorphisms
\begin{multline*}
\cC^{\tau,\BT}\times_{\cZ^{\tau}}\Spf (\varprojlim R^{\tau,a})
\iso
\cC^{\tau,\BT}\times_{\cZ^{\tau}} \cZ^{\tau}\times_{\cR^{\dd}}
\Spf R_{\rbar|_{G_{K_\infty}}}
\\
\iso
\cC^{\tau,\BT}\times_{\cR^{\dd}} 
\Spf R_{\rbar|_{G_{K_\infty}}} =: \widehat{X}_{\rbar}.
\end{multline*}
In summary, we may identify
$\widehat{X}_{\rbar}$ with the fibre product
$\cC^{\tau,\BT}\times_{\cZ^{\tau}}\Spf (\varprojlim R^{\tau,a})$.

We now show that
$\widehat{X}_{\rbar}$ 
is analytically normal.   To see this,
let $\Spf B \to \widehat{X}_{\rbar}$
be a morphism whose source is a Noetherian affine formal algebraic
space, which is
representable by algebraic spaces and smooth. 
We must show
that the completion $\widehat{B}_{\mathfrak n}$ is normal,
for each maximal ideal $\mathfrak n$ of $B$.
In fact, it suffices to verify this for some collection of such $\Spf B$
which cover $\widehat{X}_{\rbar}$, and so without loss of generality
we may choose our $B$ as follows:
first, choose a collection of morphisms 
$\Spf A \to \cC^{\tau,\BT}$
whose sources are Noetherian affine formal algebraic spaces, and which are
representable by algebraic spaces and smooth, which, taken together, cover~$\cC^{\tau,\BT}$.
Next, for each such $A$, choose a collection of morphisms
$$\Spf B \to \Spf_A \times_{\cC^{\tau,\BT}} \widehat{X}_{\rbar}$$
whose sources are Noetherian affine formal algebraic spaces, and which are
representable by algebraic spaces and smooth, which, taken together, cover the
fibre product.  Altogether (considering all such $B$ associated to all such $A$),
the composite morphisms
$$\Spf B \to \Spf_A \times_{\cC^{\tau,\BT}} \widehat{X}_{\rbar} \to \widehat{X}_{\rbar}$$
are representable by algebraic spaces and smooth,
and cover~$\widehat{X}_{\rbar}$. 

Now, let $\mathfrak n$ be a maximal ideal in one of these rings $B$,
lying over a maximal ideal $\mathfrak m$ in the corresponding ring $A$.
The extension of residue fields $A/\mathfrak m \to B/\mathfrak n$ is
finite, and each of these fields is finite over $\F'$. Enlarging $\F'$ sufficiently,
we may assume that in fact each of these residue fields coincides with $\F'$.
(On the level of rings, this amounts to forming various tensor products of the
form $\text{--}\otimes_{W(\F')} W(\F'')$, which doesn't affect the question
of normality.)  
The morphism $\Spf B_{\mathfrak n} \to \Spf A_{\mathfrak n}$ is then
seen to be smooth
in the sense of~\cite[\href{https://stacks.math.columbia.edu/tag/06HG}{Tag
  06HG}]{stacks-project}, i.e., it satisfies the infinitesimal lifting property for 
finite Artinian $\cO'$-algebras with residue field $\F'$:\ this follows from
the identification of $\widehat{X}_{\rbar}$ above as a fibre product,
and the fact that $\Spf (\varprojlim R^{\tau,a}) \to \cZ^{\tau}$ is versal at
the closed point~$x$.
Thus $\Spf B_{\mathfrak n}$ is a formal power series ring
over $\Spf A_{\mathfrak m}$, 
by~\cite[\href{https://stacks.math.columbia.edu/tag/06HL}{Tag
  06HL}]{stacks-project}, 
and hence $\Spf B_{\mathfrak n}$ is indeed normal,
since $\Spf A_{\mathfrak m}$ is so,
by Corollary~\ref{cor: Kisin moduli consequences of local models}.
By Lemma~\ref{lem:GAGA facts} below,
this implies that the algebraization $X_{\rbar}$ of $\widehat{X}_{\rbar}$ is normal.

We next claim that the morphism
\numequation
\label{eqn:versal morphism to Z}
\Spf (\varprojlim R^{\tau,a}) \to \cZ^{\tau}
\end{equation}
is a flat morphism of formal algebraic stacks,
in the sense of \cite[Def.~8.35]{Emertonformalstacks}.
Given this, we find that the base-changed morphism
$\widehat{X}_{\rbar} \to \cC^{\tau,\BT}$
is also flat.
Since
Corollary~\ref{cor: Kisin moduli consequences of local models} shows that
$\cC^{\tau,\BT}$ is flat over $\cO_{E'},$ 
we conclude that the same is true of~$\widehat{X}_{\rbar}$.
Again, by Lemma~\ref{lem:GAGA facts}, this implies that the algebraization $X_{\rbar}$
is also flat over~$\cO_{E'}$.

It remains to show the claimed flatness.  To this end, we note first
that for each $a \geq 1$, the morphism
\numequation
\label{eqn:versal morphism to Z-a}
\Spf R^{\tau,a} \to \cZ^{\tau,a}
\end{equation}
is a versal morphism from a complete Noetherian local ring to an
algebraic stack which is locally of finite type over $\cO/\varpi^a$.
We already observed in the proof of Lemma~\ref{lem:R-tau-a properties}~(3) 
that~\eqref{eqn:versal morphism to Z-a} is effective,
i.e.\ can be promoted to a morphism
$\Spec R^{\tau,a} \to \cZ^{\tau,a}$.  It then follows 
from~\cite[\href{https://stacks.math.columbia.edu/tag/0DR2}{Tag
  0DR2}]{stacks-project} that this latter morphism is flat,
and thus that~\eqref{eqn:versal morphism to Z-a}
is flat in the sense of \cite[Def.~8.35]{Emertonformalstacks}.
It follows easily that the morphism~\eqref{eqn:versal morphism to Z}
is also flat:\ use the fact that a morphism
 of $\varpi$-adically complete local Noetherian $\cO$-algebras
 which becomes flat upon reduction modulo $\varpi^a$, for each $a \geq 1$,
 is itself flat, which follows from (for example) \cite[\href{https://stacks.math.columbia.edu/tag/0523}{Tag 0523}]{stacks-project}.
\end{proof}

The following lemma is standard, and is presumably well-known.  We sketch the proof,
since we don't know a reference.

\begin{lemma}
\label{lem:GAGA facts}
If $S$ is a complete Noetherian local $\cO$-algebra and $Y \to \Spec S$ is a proper
morphism of schemes, then $Y$ is flat over $\Spec \cO$
{\em (}resp.\ normal{\em )} if and only
$\widehat{Y}$ {\em (}the $\mathfrak m_S$-adic completion of~$Y${\em )}
is flat over $\Spf \cO$ {\em (}resp.\ is analytically normal{\em )}.
\end{lemma}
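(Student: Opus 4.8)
The plan is to reduce both statements — flatness over $\cO$ and normality — to a comparison between properties of a Noetherian local ring and its completion, via the theorem on formal functions and a faithful-flatness argument. Since $Y \to \Spec S$ is proper and $S$ is a complete Noetherian local $\cO$-algebra, $Y$ is covered by finitely many affine opens $\Spec B_i$ with each $B_i$ a finite-type $S$-algebra; the $\mathfrak m_S$-adic completion $\widehat Y$ is then covered by the corresponding formal affines $\Spf \widehat{B}_i$, where $\widehat{B}_i$ is the $\mathfrak m_S B_i$-adic completion of $B_i$. Because $S$ is $\mathfrak m_S$-adically complete and Noetherian, $\widehat{B}_i$ is a Noetherian $S$-algebra and the map $B_i \to \widehat{B}_i$ is flat; moreover this map is faithfully flat after localizing at any maximal ideal of $B_i$ lying over $\mathfrak m_S$ — and every point of $Y$ in the closed fibre does lie over $\mathfrak m_S$, these being the only points ``seen'' by $\widehat Y$.

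For the flatness statement I would argue as follows. If $Y$ is flat over $\cO$, then each $B_i$ is flat over $\cO$, hence so is $\widehat{B}_i$ (completion of a flat module along an ideal of the base, using Noetherianness), giving flatness of $\widehat Y$ over $\Spf\cO$ in the sense of \cite[Def.~8.35]{Emertonformalstacks}. Conversely, suppose $\widehat Y$ is flat over $\Spf\cO$. Flatness over $\cO$ of $Y$ can be checked at each point, and by generic flatness / the structure of $\cO$ (a DVR, or more generally using \cite[\href{https://stacks.math.columbia.edu/tag/0523}{Tag 0523}]{stacks-project}) it suffices to check that $B_i \otimes_\cO \cO/\varpi^a$ is flat over $\cO/\varpi^a$ after passing to the local rings at maximal ideals over $\varpi$; every such maximal ideal lies over $\mathfrak m_S$ (since $S/\mathfrak m_S$ is a finite extension of $\F$, so $\varpi \in \mathfrak m_S$), and there $B_i \to \widehat{B}_i$ is faithfully flat, so flatness of $\widehat{B}_i$ over $\cO$ descends to flatness of $(B_i)_{\mathfrak n}$ over $\cO$. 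This gives flatness of $Y$ over $\cO$.

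For normality the argument is parallel but uses that normality is a local property that ascends and descends along faithfully flat maps with suitable fibre conditions — concretely, $A \to \widehat A$ for $A$ local Noetherian has geometrically normal (indeed regular) fibres, so $A$ is normal if and only if $\widehat A$ is (this is the statement that excellent, or at least the relevant, local rings are ``analytically normal'' precisely when normal; cf. Serre's criterion $R_1 + S_2$ and flat descent/ascent of these conditions). Thus $Y$ is normal iff each $(B_i)_{\mathfrak n}$ is normal at every maximal ideal $\mathfrak n$, iff each completion $(\widehat{B}_i)_{\widehat{\mathfrak n}}$ is normal, iff $\widehat Y$ is analytically normal in the sense used in the excerpt; one must be slightly careful that the notion of ``analytically normal'' for the formal algebraic space $\widehat Y$ is exactly the completed-local-ring condition, which is how it is set up in \cite[\S 8]{Emertonformalstacks}. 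The main obstacle, and the only genuinely delicate point, is bookkeeping the passage between $Y$ and $\widehat Y$ on overlaps and confirming that the local rings of $\widehat Y$ at its points are precisely the completions $\widehat{(B_i)_{\mathfrak n}}$ at maximal ideals over $\mathfrak m_S$ — i.e. that no points of $Y$ outside the closed fibre interfere — together with invoking the correct ascent/descent statements for flatness (Tag 0523 plus faithfully flat descent) and for the $R_1, S_2$ conditions (e.g. \cite[\href{https://stacks.math.columbia.edu/tag/0C4H}{Tag 0C4H}]{stacks-project} or the analogous tags); everything else is routine.
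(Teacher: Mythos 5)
Your proof follows essentially the same route as the paper's: reduce to local rings at points lying over $\mathfrak m_S$ (which properness and Noetherianness let you do), use faithful flatness of $A_{\mathfrak m} \to \widehat{A}_{\mathfrak m}$ to pass $\cO$-flatness back and forth, and use the normality-completion comparison for the $R_1 + S_2$ condition. The one place you should be careful is the unqualified assertion that ``$A \to \widehat A$ for $A$ local Noetherian has geometrically normal (indeed regular) fibres'': this is precisely the $G$-ring condition, and it fails for general Noetherian local rings (Nagata's examples). You do gesture at the fix in your parenthetical, but the point needs to be made and not merely hedged: the descent direction (completion normal $\Rightarrow$ original normal) holds by faithfully flat descent and \cite[\href{https://stacks.math.columbia.edu/tag/033G}{Tag 033G}]{stacks-project}, whereas the ascent direction (original normal $\Rightarrow$ completion normal) requires excellence, and excellence holds here because each affine piece of $Y$ is of finite type over the complete Noetherian local ring $S$ and hence excellent — this is exactly what the paper invokes via \cite[\href{https://stacks.math.columbia.edu/tag/0C23}{Tag 0C23}]{stacks-project}. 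With that justification inserted, your argument is complete and coincides in substance with the paper's.
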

\begin{proof}
The properties of $Y$ that are in question can be tested by considering the 
various local rings $\cO_{Y,y}$, as $y$ runs over the points of $Y$; namely,
we have to consider whether or not these rings are flat over~$\cO$,
or normal.  Since
any point $y$ specializes to a closed point $y_0$ of $Y$, so that 
$\cO_{Y,y}$ is a localization of $\cO_{Y,y_0}$, and thus $\cO$-flat (resp.\
normal) if $\cO_{Y,y_0}$ is, it suffices to consider the rings $\cO_{Y,y_0}$
for closed points $y_0$ of~$Y$.  
Note also that since $Y$ is proper over $\Spec S$, any closed point of $Y$ lies over
the closed point of $\Spec S$.  

Now let $\Spec A$ be an affine neighbourhood of a closed point $y_0$ of $Y$;
let $\mathfrak m$ be the corresponding maximal ideal of~$A$.  As we noted,
$\mathfrak m$ lies over~$\mathfrak m_S$, and so gives rise to a maximal
ideal $\widehat{\mathfrak m} := \mathfrak m \widehat{A}$ of~$\widehat{A}$,
the $\mathfrak m_S$-adic completion of~$A$;
and any
maximal ideal of $\widehat{A}$ contains $\mathfrak m_S \widehat{A}$,
and so arises from a maximal ideal of~$A$ in this manner (since $A/\mathfrak m_S \iso
\widehat{A}/\mathfrak m_S$).  Write $\widehat{A}_{\mathfrak m}$ to denote
the $\mathfrak m$-adic completion of $A$ (which maps isomorphically to
the $\widehat{\mathfrak m}$-adic completion of $\widehat{A}$).  Then $\widehat{A}$
is faithfully flat over the localization $A_{\mathfrak m} = \cO_{Y,y_0}$,
and hence $A_{\mathfrak m}$ is flat over $\cO$ if and only if
$\widehat{A}_{\mathfrak m}$ is. Consequently we see that $Y$ is flat over 
$\cO$ if and only if, for each affine open subset $\Spec A$ of $Y$, 
the corresponding $\mathfrak m_S$-adic completion $\widehat{A}$ becomes flat over
$\cO$ after completing at each of its maximal ideals.  Another application
of faithful flatness of completions of Noetherian local rings shows
that this holds if and only if each such $\widehat{A}$ is flat over $\cO$ after localizing
at each of its maximal ideals, which holds if and only each
such $\widehat{A}$ is flat over~$\cO$.  This is precisely what it means for
$\widehat{Y}$ to be flat over~$\cO$.

The proof that analytic normality of $\widehat{Y}$ implies that $Y$ is normal
is similar.  Indeed, analytic normality by definition means that the
completion of $\widehat{A}$ at each of its maximal ideals is normal.  This completion
is faithfully flat over the localization of $\Spec A$ at its corresponding 
maximal ideal,
and so~\cite[\href{https://stacks.math.columbia.edu/tag/033G}{Tag 033G}]{stacks-project}
implies that this localization is also normal.  The discussion of the first paragraph
then implies that $Y$ is normal.  For the converse direction, we have to deduce
normality of the completions $\widehat{A}_{\mathfrak m}$ from the normality of 
the corresponding localizations~$A_{\mathfrak m}$.  This follows from that fact
that  $Y$
is an excellent scheme (being of finite type over the complete local ring~$S$),
so that each $A$ is an excellent
ring~\cite[\href{https://stacks.math.columbia.edu/tag/0C23}{Tag
    0C23}]{stacks-project}.
\end{proof}

\begin{prop}\label{prop: projective morphism as in Kisin}The projective morphism $X_{\rbar}\to \Spec R_{\rbar}^{[0,1]}$ factors
  through a projective and scheme-theoretically dominant morphism
  \numequation
  \label{eqn:projective morphism as in Kisin}
  X_{\rbar}\to \Spec R^{\tau,\BT}_{\rbar}
  \end{equation}
  which becomes an isomorphism after inverting~$\varpi$.  \end{prop}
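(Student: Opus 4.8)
The plan is to identify $\Spec R^{\tau,\BT}_{\rbar}$ with the scheme-theoretic image of $X_{\rbar}$ inside $\Spec R^{[0,1]}_{\rbar}$, and then to deduce that the resulting morphism is an isomorphism on generic fibres from the rigidity of Breuil--Kisin modules after inverting $\varpi$ (Kisin's argument in \cite[Prop.~2.1.12]{kis04}).

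First I would produce the factorisation. By Lemma~\ref{lem:R-tau-a properties}(1) (together with Proposition~\ref{prop: restriction gives an equivalence of height at most 1 deformation}) the ring $\varprojlim_a R^{\tau,a}$ is a quotient of $R^{[0,1]}_{\rbar}$, so composing with the projective morphism of Lemma~\ref{lem:algebraization}(2) yields a projective morphism $X_{\rbar}\to\Spec R^{[0,1]}_{\rbar}$. By Lemma~\ref{lem:X is normal and flat}, $X_{\rbar}$ is reduced (being normal) and $\cO_{E'}$-flat, hence $\varpi$-torsion free; so its scheme-theoretic image in $\Spec R^{[0,1]}_{\rbar}$ is $\Spec S$ for a reduced, $\varpi$-torsion free quotient $S$ of $R^{[0,1]}_{\rbar}$, and $X_{\rbar}\to\Spec S$ is scheme-theoretically dominant. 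I claim $S=R^{\tau,\BT}_{\rbar}$. A reduced $\varpi$-torsion free quotient of the Noetherian ring $R^{[0,1]}_{\rbar}$ is determined by the (reduced) closed subscheme it cuts out in $\Spec R^{[0,1]}_{\rbar}[1/\varpi]$, its defining ideal being the preimage of the defining ideal there; and $R^{\tau,\BT}_{\rbar}$ is, by construction, such a quotient of $R^{[0,1]}_{\rbar}$. So it suffices to check that $\Spec S[1/\varpi]$ and $\Spec R^{\tau,\BT}_{\rbar}[1/\varpi]$ have the same $\Qpbar$-points. Here I would use Lemma~\ref{lem: O points of moduli stacks}: since $X_{\rbar}$ is $\cO_{E'}$-flat it is the scheme-theoretic closure of $X_{\rbar}[1/\varpi]$, and an $\cO_{E''}$-point of $X_{\rbar}$ (equivalently, since $X_{\rbar}$ is proper over the complete local ring $\varprojlim_a R^{\tau,a}$, of $\widehat X_{\rbar}$) consists of a deformation of $\rbar|_{G_{K_\infty}}$ together with an object of $\cC^{\tau,\BT}(\cO_{E''})$ inducing it in $\cR^{\dd}$, which by Lemma~\ref{lem: O points of moduli stacks} and Proposition~\ref{prop: restriction gives an equivalence of height at most 1 deformation} corresponds to a type-$\tau$ potentially Barsotti--Tate lift of $\rbar$; thus $\Spec S[1/\varpi]\subseteq\Spec R^{\tau,\BT}_{\rbar}[1/\varpi]$. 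Conversely, every $\Qpbar$-point of $\Spec R^{\tau,\BT}_{\rbar}$ is, after enlarging $E'$, an $\cO_{E''}$-point corresponding to a type-$\tau$ potentially Barsotti--Tate lift $r$ of $\rbar$; by Lemma~\ref{lem: O points of moduli stacks} again $r$ arises from an object of $\cC^{\tau,\BT}(\cO_{E''})$, while $r|_{G_{K_\infty}}$ defines a point of $\Spf R_{\rbar|_{G_{K_\infty}}}$ with the same image in $\cR^{\dd}$, so the pair gives an $\cO_{E''}$-point of $\widehat X_{\rbar}$, hence of $X_{\rbar}$, over that $\Qpbar$-point. As $\Qpbar$-points are dense in the reduced scheme $\Spec R^{\tau,\BT}_{\rbar}[1/\varpi]$, the two inclusions give $\Spec S[1/\varpi]=\Spec R^{\tau,\BT}_{\rbar}[1/\varpi]$, whence $S=R^{\tau,\BT}_{\rbar}$. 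Then \eqref{eqn:projective morphism as in Kisin} is the induced morphism; it is projective because $\Spec R^{\tau,\BT}_{\rbar}$ is closed in $\Spec R^{[0,1]}_{\rbar}$, over which $X_{\rbar}$ is projective, and scheme-theoretically dominant because $S=R^{\tau,\BT}_{\rbar}$.

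It remains to show \eqref{eqn:projective morphism as in Kisin} becomes an isomorphism after inverting $\varpi$. The base change $X_{\rbar}[1/\varpi]\to\Spec R^{\tau,\BT}_{\rbar}[1/\varpi]$ is proper, and still scheme-theoretically dominant (scheme-theoretic dominance of quasi-compact morphisms is preserved by the flat base change along $\Spec R^{\tau,\BT}_{\rbar}[1/\varpi]\to\Spec R^{\tau,\BT}_{\rbar}$). It therefore suffices to prove it is a monomorphism: then it is a proper monomorphism, hence a closed immersion, and a scheme-theoretically dominant closed immersion is an isomorphism. Being a monomorphism --- equivalently, being radicial and unramified --- amounts to the assertion that for every finite $E'$-algebra $A$ (and it is enough to treat reduced, resp.\ Artinian local, such $A$) a height $\le 1$ Breuil--Kisin module with descent data, $A$-coefficients, type $\tau$ and satisfying the strong determinant condition is determined, up to canonical isomorphism, by its associated $G_{K_\infty}$-representation. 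This is the rigidity of Breuil--Kisin modules in the generic fibre: once $\varpi$ is inverted, $u$ acts invertibly modulo $E(u)$, so the filtration $\mathrm{Fil}^1_i\subset\varphi^*(\gM_i)/E(u)\varphi^*(\gM_i)$, and with it the Breuil--Kisin module structure and its descent data, is forced by the associated filtered $(\varphi,N)$-module, exactly as in the proof of Lemma~\ref{lem: O points of moduli stacks} and \cite[Cor.~2.6.2]{kisindefrings}; concretely one follows \cite[Prop.~2.1.12]{kis04}, which proves this in the case of trivial type and no descent data, the tame descent data and the fixed type playing no essential extra role.

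I expect this last rigidity input to be the main obstacle: making the uniqueness of $\mathrm{Fil}^1_i$ precise over an arbitrary finite $E'$-algebra, and checking its compatibility with the descent data and with the strong determinant condition, so as to conclude that \eqref{eqn:projective morphism as in Kisin} is a monomorphism on the generic fibre. Everything else is a formal manipulation with scheme-theoretic images, flatness, and the definition of $R^{\tau,\BT}_{\rbar}$, using the results already established in Lemmas~\ref{lem:R-tau-a properties}, \ref{lem:algebraization}, \ref{lem:X is normal and flat} and~\ref{lem: O points of moduli stacks}.
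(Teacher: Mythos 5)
Your first part --- producing the factorisation through $\Spec R^{\tau,\BT}_{\rbar}$ and establishing scheme-theoretic dominance by identifying the scheme-theoretic image of $X_{\rbar}$ with $\Spec R^{\tau,\BT}_{\rbar}$ via a comparison of $\Qpbar$-points --- is correct and in the same spirit as the paper, which similarly reduces to a bijection on closed points of the generic fibres (via~\cite[Prop.~2.4.8]{kis04} and Lemma~\ref{lem: O points of moduli stacks}). The genuine divergence, and the place where your proof has a gap, is the argument that the generic fibre $X_{\rbar}[1/\varpi]\to\Spec R^{\tau,\BT}_{\rbar}[1/\varpi]$ is an isomorphism.

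You reduce this to showing the morphism is a monomorphism, which for a morphism of finite type $E'$-schemes means checking injectivity on $A$-points for all finite, in particular non-reduced Artinian local, $E'$-algebras $A$. You then assert this is equivalent to a rigidity statement about Breuil--Kisin modules with $A$-coefficients. This is where the argument breaks down within the framework of the paper: Breuil--Kisin modules with $A$-coefficients (Definition~\ref{defn: Kisin module with descent data}) are only defined when $A$ is $p$-adically complete, which a finite $E'$-algebra is not. The scheme $X_{\rbar}$ is produced by formal GAGA as an algebraization of the fibre product $\widehat{X}_{\rbar}=\cC^{\tau,\BT}\times_{\cR^{\dd}}\Spf R_{\rbar|_{G_{K_\infty}}}$; its $A$-points for $A$ a finite $E'$-algebra therefore have no direct moduli-theoretic description in terms of Breuil--Kisin modules, and the rigidity assertion you wish to invoke is not available without developing a theory of Breuil--Kisin (or $S$-) modules with $E$-coefficients and proving full faithfulness there, as Kisin does at substantially greater length.

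The paper sidesteps the monomorphism entirely, and this is the key point you miss. Having established a bijection on \emph{closed} points (the only statement that Kisin's $D_{\cris}$ computation plus Lemma~\ref{lem: O points of moduli stacks} cleanly yield, and a strictly weaker statement than injectivity on Artinian test objects), the paper observes that the morphism is projective over a Jacobson target, hence quasi-finite, hence finite; and then uses that $X_{\rbar}$ is reduced (indeed normal, by Lemma~\ref{lem:X is normal and flat}) while $\Spec R^{\tau,\BT}_{\rbar}[1/\varpi]$ is normal (indeed regular). A finite morphism with reduced source and normal Noetherian target which is bijective on finite type points is automatically an isomorphism --- this is the elementary argument given in parentheses in the proof, ending with \cite[\href{https://stacks.math.columbia.edu/tag/0AB1}{Tag 0AB1}]{stacks-project}. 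The normality of the target does all the work that your monomorphism claim was supposed to do, at the much cheaper cost of only needing bijectivity on closed points. Your approach is not wrong in principle, but it requires a rigidity input that the paper never establishes and does not need.
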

\begin{proof}
We begin by showing the existence of~\eqref{eqn:projective morphism as in Kisin},
and that it induces a bijection on closed points after inverting~$\varpi$.
    Since $X_{\rbar}$ 
    is $\cO$-flat, by Lemma~\ref{lem:X is normal and flat},
it suffices to show that the induced morphism
    $$\Spec E \times_{\cO} X_{\rbar} \to \Spec R^{[0,1]}_{\rbar}[1/\varpi]$$
    factors through a morphism
  \numequation
  \label{eqn:projective morphism as in Kisin p inverted}
    \Spec E\times_{\cO} X_{\rbar} \to \Spec R^{\tau,\BT}_{\rbar}[1/\varpi],
\end{equation}
    which induces a bijection on closed points.

  This can be proved in exactly the same way as~\cite[Prop.\
2.4.8]{kis04}, 
    which treats the case that $\tau$ is
    trivial. 
    Indeed,
the computation of the $D_{\cris}$ of a Galois
    representation in the proof
    of~\cite[Prop.\ 2.4.8]{kis04} goes over essentially unchanged to
    the case of a Galois representation coming from $\cC^{\tau,\BT}$, and
    finite type points of $\Spec R_{\rbar}^{\tau,\BT}[1/\varpi]$ yield $p$-divisible
    groups and thus Breuil--Kisin modules exactly as in the proof
    of~\cite[Prop.\ 2.4.8]{kis04} (bearing in mind Lemma~\ref{lem: O
        points of moduli stacks} above). 
    The tame descent data comes along
    for the ride.

The morphism~\eqref{eqn:projective morphism as in Kisin p inverted}
is a projective morphism whose target is Jacobson, and which induces a bijection
on closed points.  It is thus proper and quasi-finite, and hence finite.
Its source is reduced (being even normal, by Lemma~\ref{lem:X is normal and flat}),
and its target is normal (as it is even regular, as we noted above).
A finite morphism whose source is reduced,
whose target is normal and Noetherian, and which
induces a bijection on finite type points, is indeed an isomorphism.
(The connected components of a normal scheme are integral,
and so base-changing over the connected components of the target,
we may assume that the target is integral.  The source is a union of finitely many 
irreducible components, each of which has closed image in the target.
Since the morphism is surjective on finite type  points, it is surjective,
and thus one of these closed images coincides with the target.  The injectivity
on finite type points then shows that the source is also irreducible,
and thus integral, as it is reduced.   
It follows from~\cite[\href{https://stacks.math.columbia.edu/tag/0AB1}{Tag 0AB1}]{stacks-project} that the morphism is an isomorphism.)
Thus~\eqref{eqn:projective morphism as in Kisin p inverted}
is an isomorphism.
Finally, since $R^{\tau,\BT}_{\rbar}$ is also flat over $\cO$ (by its definition),
this implies
that~\eqref{eqn:projective morphism as in Kisin}
is scheme-theoretically dominant.
  \end{proof}

\begin{cor}
  \label{cor: R tau BT is a versal ring to Z-hat}$\varprojlim
  R^{\tau,a}=R^{\tau,\BT}_{\rbar}$; thus $R^{\tau,\BT}_{\rbar}$ is a
  versal ring to ${\cZ^\tau}$ at~$x$.
\end{cor}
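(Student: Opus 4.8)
The plan is to identify $\varprojlim_a R^{\tau,a}$ with $R^{\tau,\BT}_{\rbar}$ by exhibiting \emph{both} as the scheme-theoretic image of a single morphism into $\Spec R_{\rbar}^{[0,1]}$, and then to transport the versality already established for $\varprojlim_a R^{\tau,a}$. Write $S:=\varprojlim_a R^{\tau,a}$. By Lemma~\ref{lem:R-tau-a properties}(1) and its proof, $S$ is a complete Noetherian local ring which is a quotient of $R_{\rbar|_{G_{K_\infty}}}^{\le 1}=R_{\rbar}^{[0,1]}$, so $\Spec S$ is a closed subscheme of $\Spec R_{\rbar}^{[0,1]}$, and the projective morphism $X_{\rbar}\to\Spec R_{\rbar}^{[0,1]}$ of Proposition~\ref{prop: projective morphism as in Kisin} factors through it by construction. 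The first substantive point I would check is that $X_{\rbar}\to\Spec S$ is scheme-theoretically dominant: by Lemma~\ref{lem:algebraization} its $\mathfrak{m}_S$-adic completion is the morphism $\widehat{X}_{\rbar}\to\Spf S$, which is scheme-theoretically dominant; since $X_{\rbar}\to\Spec S$ is proper and $S$ is a complete Noetherian local ring, the theorem on formal functions (exactly as in the proof of Lemma~\ref{lem:R-tau-a properties}(3)) shows that the $\mathfrak{m}_S$-adic completion of the scheme-theoretic image of $X_{\rbar}\to\Spec S$ equals the scheme-theoretic image of $\widehat{X}_{\rbar}\to\Spf S$, i.e.\ all of $\Spf S$; as $S$ is already complete this forces the scheme-theoretic image to be all of $\Spec S$. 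Because a quasi-compact scheme-theoretically dominant morphism into a closed subscheme of a Noetherian scheme realises that subscheme as the scheme-theoretic image of the composite, I conclude that $\Spec S$ \emph{is} the scheme-theoretic image of $X_{\rbar}\to\Spec R_{\rbar}^{[0,1]}$.

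Next I would run the parallel argument with $R^{\tau,\BT}_{\rbar}$. Since $R^{\tau,\BT}_{\rbar}$ is reduced, $p$-torsion free, and its $\Qpbar$-points parametrise potentially Barsotti--Tate lifts of $\rbar$ of type $\tau$ — hence potentially crystalline representations all of whose Hodge--Tate weights lie in $\{0,1\}$ and which are crystalline over $K'$ — the surjection $R^{\square}_{\rbar}\to R^{\tau,\BT}_{\rbar}$ factors through $R_{\rbar}^{[0,1]}$, so $\Spec R^{\tau,\BT}_{\rbar}$ is a closed subscheme of $\Spec R_{\rbar}^{[0,1]}$. By Proposition~\ref{prop: projective morphism as in Kisin}, the morphism $X_{\rbar}\to\Spec R_{\rbar}^{[0,1]}$ factors through a scheme-theoretically dominant morphism $X_{\rbar}\to\Spec R^{\tau,\BT}_{\rbar}$, so the same reasoning shows that $\Spec R^{\tau,\BT}_{\rbar}$ is also the scheme-theoretic image of $X_{\rbar}\to\Spec R_{\rbar}^{[0,1]}$. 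By uniqueness of scheme-theoretic images, $\Spec S=\Spec R^{\tau,\BT}_{\rbar}$ as closed subschemes of $\Spec R_{\rbar}^{[0,1]}$, hence $\varprojlim_a R^{\tau,a}=S=R^{\tau,\BT}_{\rbar}$. Finally, the discussion preceding the corollary already records that $\varprojlim_a R^{\tau,a}$ is a versal ring to $\cZ^{\tau}$ at $x$, so the same holds for $R^{\tau,\BT}_{\rbar}$.

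The step I expect to demand the most care is the transfer of scheme-theoretic dominance from the formal morphism $\widehat{X}_{\rbar}\to\Spf S$ to the algebraized morphism $X_{\rbar}\to\Spec S$, i.e.\ verifying that the scheme-theoretic image of the projective morphism $X_{\rbar}\to\Spec S$ is compatible with $\mathfrak{m}_S$-adic completion. This is precisely the formal-functions argument already used in the proof of Lemma~\ref{lem:R-tau-a properties}(3), so I expect it to go through essentially verbatim, but it is the only input that is not purely formal; everything else is a diagram chase with closed immersions and scheme-theoretic images inside $\Spec R_{\rbar}^{[0,1]}$.
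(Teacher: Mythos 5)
Your proof is correct and uses the same essential ingredients as the paper's: Lemma~\ref{lem:algebraization}, the theorem on formal functions, and the scheme-theoretic dominance of $X_{\rbar}\to\Spec R^{\tau,\BT}_{\rbar}$ established in Proposition~\ref{prop: projective morphism as in Kisin}. The paper's proof is slightly more compact — it applies formal functions directly to the morphism~\eqref{eqn:projective morphism as in Kisin}, writing its scheme-theoretic image as $\Spec B$, deduces $\Spf B = \Spf(\varprojlim R^{\tau,a})$ from formal functions, and then uses scheme-theoretic dominance to conclude $B = R^{\tau,\BT}_{\rbar}$ — whereas you establish scheme-theoretic dominance of $X_{\rbar}\to\Spec(\varprojlim R^{\tau,a})$ first and then invoke uniqueness of the scheme-theoretic image inside $\Spec R_{\rbar}^{[0,1]}$. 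These are logically equivalent presentations of the same argument, and your identification of the formal-functions transfer as the only non-formal input is accurate.
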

\begin{proof}
The theorem on formal functions shows that if we write the
scheme-theoretic image
of~\eqref{eqn:projective morphism as in Kisin}
in the form $\Spec B$, for some quotient $B$ of $R_{\rbar_{|G_{K_{\infty}}}}$,
then the scheme-theoretic image of the morphism~\eqref{eqn:pull-back morphism}
coincides with $\Spf B$.
The corollary then follows from Proposition~\ref{prop: projective morphism
as in Kisin}, which shows that~\eqref{eqn:projective morphism as in Kisin}
is scheme-theoretically dominant.
  \end{proof}

\begin{prop}
  \label{prop: dimensions of the Z stacks}
The algebraic stacks $\cZ^{\dd,a}$ and  $\cZ^{\tau,a}$ are equidimensional of
dimension~$[K:\Qp]$.
\end{prop}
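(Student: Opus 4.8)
The plan is to read off the dimension of these stacks from their versal rings at finite type points, which by Corollary~\ref{cor: R tau BT is a versal ring to Z-hat} are (reductions of) the potentially Barsotti--Tate framed deformation rings $R^{\tau,\BT}_{\rbar}$, and then to invoke the known dimension of those deformation rings. First I would reduce to the stacks $\cZ^{\tau,a}$: by Theorem~\ref{thm: existence of picture with descent data and its basic properties}(1) the stack $\cZ^{\dd,a}$ is the union of the finitely many closed substacks $\cZ^{\tau,a}$, and a finite union of algebraic stacks each equidimensional of dimension $[K:\Qp]$ is again equidimensional of dimension $[K:\Qp]$. Moreover, by Lemma~\ref{lem: C 1 and Z 1 are the underlying reduced substacks} the underlying reduced substack of $\cZ^{\tau,a}$ is $\cZ^{\tau,1}=\ocZ^{\tau}$ for every $a\ge 1$, and both the dimension and the property of equidimensionality of an algebraic stack depend only on its underlying reduced substack; so it suffices to prove that $\ocZ^{\tau}$ is equidimensional of dimension $[K:\Qp]$.

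Fix a finite type point $x\colon\Spec\F'\to\ocZ^{\tau}$, corresponding by Theorem~\ref{thm: existence of picture with descent data and its basic properties}(3) to a representation $\rbar\colon G_K\to\GL_2(\F')$ which admits a potentially Barsotti--Tate lift of type~$\tau$ (so in particular $R^{\tau,\BT}_{\rbar}\neq 0$). By Corollary~\ref{cor: R tau BT is a versal ring to Z-hat}, $R^{\tau,\BT}_{\rbar}$ is a versal ring to $\cZ^{\tau}$ at $x$; since $\ocZ^{\tau}$ is the underlying reduced substack of $\cZ^{\tau}$, the versal ring to $\ocZ^{\tau}$ at $x$ has the same reduction as $R^{\tau,\BT}_{\rbar}/\varpi$, and dimension and equidimensionality are insensitive to nilpotents. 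Now $R^{\tau,\BT}_{\rbar}$ is $\cO$-flat by its very definition (compatibly with Proposition~\ref{prop: projective morphism as in Kisin} and Lemma~\ref{lem:X is normal and flat}), $R^{\tau,\BT}_{\rbar}[1/\varpi]$ is regular (as recalled above, following~\cite{BellovinGee}), and by the standard dimension computation for potentially Barsotti--Tate deformation rings (see~\cite{kisindefrings}, \cite{geekisin}, and Appendix~\ref{sec:appendix on geom BM}) it is equidimensional with $R^{\tau,\BT}_{\rbar}[1/\varpi]$ of pure dimension $[K:\Qp]+4$; hence $R^{\tau,\BT}_{\rbar}$ is equidimensional of dimension $[K:\Qp]+5$, and cutting by the nonzerodivisor $\varpi$ (the rings in sight being excellent) shows that $(R^{\tau,\BT}_{\rbar}/\varpi)_{\red}$ is equidimensional of dimension $[K:\Qp]+4$.

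It remains to pass from the versal ring to the stack. Since $\ocZ^{\tau}\hookrightarrow\cR^{\dd,1}$ is a closed immersion, and the versal morphism $\Spf (R_{\rbar|_{G_{K_{\infty}}}}/\varpi)\to\cR^{\dd,1}$ has, by Lemmas~\ref{lem:versal morphism to R} and~\ref{lem:GL_2 hat equivariance}, the property that its fibre product with itself over $\cR^{\dd,1}$ is identified with the base change of $\widehat{\GL_2}$ to the relevant ring, pulling back along $\ocZ^{\tau}\hookrightarrow\cR^{\dd,1}$ shows that the fibre product of the versal chart for $\ocZ^{\tau}$ at $x$ with itself over $\ocZ^{\tau}$ is again such a base change of $\widehat{\GL_2}$. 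Thus the formal completion of $\ocZ^{\tau}$ at $x$ is presented by the action groupoid of $\widehat{\GL_2}_{/\F'}$, which by Remark~\ref{rem:pullback-GL_2} is a formal group scheme of dimension $\dim\GL_2=4$; hence, exactly as $\dim[X/G]=\dim X-\dim G$, we get $\dim_x\ocZ^{\tau}=\dim (R^{\tau,\BT}_{\rbar}/\varpi)_{\red}-4=[K:\Qp]$. Since the versal ring is moreover equidimensional of dimension $[K:\Qp]+4$, every irreducible component of $\ocZ^{\tau}$ passing through $x$ has dimension exactly $[K:\Qp]$; as $\ocZ^{\tau}$ is a Jacobson algebraic stack of finite type over $\F$, every irreducible component contains such a finite type point, and we conclude that $\ocZ^{\tau}$ is equidimensional of dimension $[K:\Qp]$.

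The step I expect to be the main obstacle is the dimension bookkeeping in the last paragraph: one must check that the $\GL_2$-framing contributes exactly $-4$ to the dimension, uniformly in $\rbar$, i.e.\ that the versal morphism $\Spf R^{\tau,\BT}_{\rbar}\to\cZ^{\tau}$ is a $\widehat{\GL_2}$-``torsor onto its image'' rather than a torsor under the (possibly smaller) automorphism group scheme of $\rbar$. This is exactly what Lemma~\ref{lem:GL_2 hat equivariance} furnishes, combined with the fact that in this height-one setting the deformation theory of $\rbar$ agrees with that of $\rbar|_{G_{K_{\infty}}}$. The other mild subtlety --- that the finite-level stacks $\cZ^{\tau,a}$ need not coincide with $\cZ^{\tau}\times_{\cO}\cO/\varpi^a$, so that one cannot literally identify their versal rings with $R^{\tau,\BT}_{\rbar}/\varpi^a$ --- is already neutralised by the initial reduction to $\ocZ^{\tau}$, since all the stacks in question share the reduced substack $\ocZ^{\tau}$.
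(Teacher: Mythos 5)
Your proof takes essentially the same approach as the paper: identify the versal ring $R^{\tau,\BT}_{\rbar}$ at a finite type point via Corollary~\ref{cor: R tau BT is a versal ring to Z-hat}, use its known equidimensionality ($5+[K:\Qp]$, hence $4+[K:\Qp]$ after cutting by~$\varpi$), and divide out the $\widehat{\GL_2}$-framing action (Lemma~\ref{lem:GL_2 hat equivariance}) to land on $[K:\Qp]$; the $\cZ^{\dd,a}$ case then follows by taking unions. The one step you should justify more carefully is the claim that the versal ring to $\ocZ^{\tau}$ at $x$ has the same underlying reduced ring as $R^{\tau,\BT}_{\rbar}/\varpi$: ``$\ocZ^{\tau}$ is the underlying reduced substack of $\cZ^{\tau}$'' is not by itself enough, since this is a statement about a formal algebraic stack and pullback of underlying reduced substacks along a non-flat (versal) morphism is not automatic; the paper instead uses that $\cZ^{\tau}$ is a $\varpi$-adic formal algebraic stack, so that $\cZ^{\tau,1}\to\cZ^{\tau}\times_{\Spf\cO}\F$ is a thickening of genuine algebraic stacks, whose base change to $\Spec R^{\tau,\BT}_{\rbar}/\varpi$ is then a thickening.
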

\begin{proof}
Let $x$ be a finite type point of $\cZ^{\tau,a}$, defined over some finite extension
$\F'$ of $\F$, and corresponding to a Galois representation $\rbar$
with coefficients in~$\F'$.
By Corollary~\ref{cor: R tau BT is a versal ring to Z-hat}
the ring $R^{\tau,\BT}_{\rbar}$ coincides with the versal ring $\varprojlim_a R^{\tau, a}$
at $x$ of the $\varpi$-adic formal
algebraic stack~$\cZ^{\tau}$,
and so $\Spf R^{\tau,a} \iso \Spf R^{\tau,\BT}_{\rbar} \times_{\cZ^{\tau}} \cZ^{\tau,a}.$
Since $\cZ^{\tau}$ is a $\varpi$-adic formal algebraic stack,
the natural morphism $\cZ^{\tau,1} \to \cZ^{\tau}\times_{\Spf \cO}\F$
is a thickening, and thus the same is true of the morphism
$\Spf R^{\tau,1} \to \Spf R^{\tau,\BT}_{\rbar}/\varpi$
obtained by pulling the former morphism back over~$\Spf R^{\tau,\BT}_{\rbar}/\varpi$.

  Since $R^{\tau,\BT}_{\rbar}$ is
  flat over $\cO_{E'}$ and 
  equidimensional of dimension $5+[K:\Qp]$, it follows that
  $R^{\tau,1}$ 
  is equidimensional of dimension~$4~+~[K:\Qp]$.
  The same is then true of each~$R^{\tau,a}$, since these are thickenings
  of~$R^{\tau,1}$, by Lemma~\ref{lem:R-tau-a properties}~(4).

We have a versal morphism $\Spf
  R^{\tau,a}\to\cZ^{\tau,a}$ at the finite type point~$x$ of $\cZ^{\tau,a}$. It follows from Lemma~\ref{lem:GL_2 hat
    equivariance} that 
\[\widehat{\GL_2}_{/\Spf R^{\tau,a}} \iso
\Spf R^{\tau,a}\times_{\cZ^{\tau,a}}\Spf R^{\tau,a}.\]

To find the dimension of $\cZ^{\tau,a}$ it
suffices to compute its dimension at finite type points (\emph{cf}.\ \cite[\href{https://stacks.math.columbia.edu/tag/0DRX}{Tag
  0DRX}]{stacks-project}, recalling the definition of the dimension of
an algebraic stack,
\cite[\href{https://stacks.math.columbia.edu/tag/0AFP}{Tag
  0AFP}]{stacks-project}).  It follows from~\cite[Lem.\ 2.40]{2017arXiv170407654E}
applied to the presentation
  $[\Spf R^{\tau,a} / \widehat{\GL_2}_{/\Spf R^{\tau,a}}]$ 
  of $\widehat{\cZ}^{\tau,a}_{x}$, together with Remark~\ref{rem:pullback-GL_2},
that $\cZ^{\tau,a}$ is
equidimensional of dimension $[K:\Qp]$. Since $\cZ^{\dd,a}$ is the
 union of the $\cZ^{\tau,a}$ by Theorem~\ref{thm: existence of picture with descent data and its basic
    properties},  $\cZ^{\dd,a}$ is also
equidimensional of dimension $[K:\Qp]$
by~\cite[\href{https://stacks.math.columbia.edu/tag/0DRZ}{Tag
  0DRZ}]{stacks-project}. 
  \end{proof}

\begin{prop}
  \label{prop: C tau is equidimensional of the expected dimension}The
  algebraic stacks $\cC^{\tau,\BT,a}$ are equidimensional of
  dimension $[K:\Qp]$.
\end{prop}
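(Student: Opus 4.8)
The plan is to imitate the proof of Proposition~\ref{prop: dimensions of the Z stacks}, replacing the Galois deformation ring $R^{\tau,\BT}_{\rbar}$ (which there served as a versal ring to $\cZ^{\tau}$) by a versal ring to $\cC^{\tau,\BT}$ built out of the auxiliary projective scheme $X_{\rbar}$ of Subsection~\ref{subsec:Galois deformation rings}, and then feeding it into \cite[Lem.\ 2.40]{2017arXiv170407654E}. As in that proof it suffices to compute the local dimension $\dim_x\cC^{\tau,\BT,a}$ at an arbitrary finite type point $x$, defined over a finite extension $\F'/\F$, since the claim then follows from \cite[\href{https://stacks.math.columbia.edu/tag/0DRX}{Tag 0DRX}]{stacks-project} and \cite[\href{https://stacks.math.columbia.edu/tag/0DRZ}{Tag 0DRZ}]{stacks-project} and the definition of the dimension of an algebraic stack. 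Such an $x$ corresponds to a rank two Breuil--Kisin module $\gM$ over $\F'$ of height $1$ and type $\tau$ satisfying the strong determinant condition; under the morphism $\cC^{\tau,\BT,a}\to\cZ^{\tau,a}$ of Theorem~\ref{thm: existence of picture with descent data and its basic properties} it maps to the finite type point of $\cZ^{\tau,a}$ attached by Theorem~\ref{thm: existence of picture with descent data and its basic properties}(3) to a Galois representation $\rbar\colon G_K\to\GL_2(\F')$.

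The key input would be a dimension count for $X_{\rbar}$. Recall that $R^{\tau,\BT}_{\rbar}$ is flat over $\cO_{E'}$ and equidimensional of dimension $5+[K:\Qp]$ (as used in the proof of Proposition~\ref{prop: dimensions of the Z stacks}); that by Lemma~\ref{lem:X is normal and flat} and Proposition~\ref{prop: projective morphism as in Kisin} the projective, scheme-theoretically dominant morphism $X_{\rbar}\to\Spec R^{\tau,\BT}_{\rbar}$ has normal, $\cO$-flat source and becomes an isomorphism after inverting $\varpi$; and that its formal completion along the fibre over the closed point is identified with $\widehat{X}_{\rbar}=\cC^{\tau,\BT}\times_{\cZ^{\tau}}\Spf R^{\tau,\BT}_{\rbar}$, which is flat over $\cC^{\tau,\BT}$ (as noted in the proof of Lemma~\ref{lem:X is normal and flat}). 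Because $X_{\rbar}$ is normal it is a disjoint union of integral schemes, and because it is $\cO$-flat and isomorphic after inverting $\varpi$ to the equidimensional scheme $\Spec R^{\tau,\BT}_{\rbar}$, each such component has $\varpi$ a nonzerodivisor and meets its generic fibre in an irreducible component of $\Spec R^{\tau,\BT}_{\rbar}[1/\varpi]$; hence $X_{\rbar}$ is equidimensional of dimension $5+[K:\Qp]$. Since $X_{\rbar}$ is moreover Cohen--Macaulay (being flat over the Cohen--Macaulay stack $\cC^{\tau,\BT}$ of Corollary~\ref{cor: Kisin moduli consequences of local models}, with regular fibres) it is catenary, so the completed local ring $\widehat{\cO}_{X_{\rbar},y}$ at any closed point $y$ (which necessarily lies over the closed point of $\Spec R^{\tau,\BT}_{\rbar}$) is flat over $\cO$ of dimension $5+[K:\Qp]$; in particular $\widehat{\cO}_{X_{\rbar},y}/\varpi^a$ has dimension $4+[K:\Qp]$.

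It then remains to transfer this computation to $\cC^{\tau,\BT,a}$, exactly along the lines of the proof of Proposition~\ref{prop: dimensions of the Z stacks}. Since $R^{\tau,\BT}_{\rbar}$ is versal to $\cZ^{\tau}$ at $x$ and, by the evident analogue of Lemma~\ref{lem:GL_2 hat equivariance}, $\widehat{\GL_2}_{/\Spf R^{\tau,\BT}_{\rbar}}\cong\Spf R^{\tau,\BT}_{\rbar}\times_{\cZ^{\tau}}\Spf R^{\tau,\BT}_{\rbar}$, the quotient $[\Spf R^{\tau,\BT}_{\rbar}/\widehat{\GL_2}]$ is the formal completion of $\cZ^{\tau}$ at $x$; pulling back along $\cC^{\tau,\BT}\to\cZ^{\tau}$ and completing $X_{\rbar}$ at a closed point $y$ over $x$ should produce a presentation $[\Spf(\widehat{\cO}_{X_{\rbar},y}/\varpi^a)/\widehat{\GL_2}]$ of the formal completion of $\cC^{\tau,\BT,a}$ at $x$, with the same group $\widehat{\GL_2}$ (cf.\ Remark~\ref{rem:pullback-GL_2}). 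Feeding this into \cite[Lem.\ 2.40]{2017arXiv170407654E} would then give $\dim_x\cC^{\tau,\BT,a}=(4+[K:\Qp])-4=[K:\Qp]$, just as in the computation of $\dim\cZ^{\tau,a}$. The step I expect to be the main obstacle is precisely this last identification: checking that base-changing the versal, $\widehat{\GL_2}$-equivariant morphism $\Spf R^{\tau,\BT}_{\rbar}\to\cZ^{\tau}$ along the morphism $\cC^{\tau,\BT}\to\cZ^{\tau}$ (which is algebraic over $\cZ^{\tau}$), and then completing at $y$, genuinely yields a presentation of $\cC^{\tau,\BT,a}$ near $x$ of the required form — so that the transitivity of the $\widehat{\GL_2}$-action on the fibre of $\widehat{X}_{\rbar}$ over $x$, and the resulting cancellation of the contributions of the automorphism group of $\gM$ and of the Kisin variety, make the dimension come out to $[K:\Qp]$. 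Once this is granted, the remaining bookkeeping is forced by the flatness and equidimensionality statements assembled above.
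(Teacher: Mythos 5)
Your proposal follows the same route as the paper, and the step you flag as the main obstacle is not in fact one: since versality is preserved under arbitrary base change and $\widehat{X}_{\rbar}\cong\cC^{\tau,\BT}\times_{\cZ^{\tau}}\Spf R^{\tau,\BT}_{\rbar}$, the morphism $\Spf\widehat{\cO}_{X_{\rbar},\tx}\to\cC^{\tau,\BT}$ is versal at $\tx$, and pulling back the isomorphism~\eqref{eqn:GL_2 hat equivariance} over $\cC^{\tau,\BT}$ gives $\widehat{\GL_2}_{/\widehat{\cO}_{X_{\rbar},\tx}}\iso\widehat{\cO}_{X_{\rbar},\tx}\times_{\cC^{\tau,\BT}}\widehat{\cO}_{X_{\rbar},\tx}$, which is exactly the presentation needed for \cite[Lem.\ 2.40]{2017arXiv170407654E}. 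Two small remarks: the flatness over $\cC^{\tau,\BT}$ is proved (in Lemma~\ref{lem:X is normal and flat}) for the formal scheme $\widehat{X}_{\rbar}$ rather than for $X_{\rbar}$ itself, which is all you need when working with $\widehat{\cO}_{X_{\rbar},\tx}$; and your Cohen--Macaulay/catenary detour, while a reasonable way to pass from global equidimensionality of $X_{\rbar}$ to equidimensionality of the completed local ring, is elided in the paper, which deduces this directly from the equidimensionality and $\cO$-flatness of $X_{\rbar}$ supplied by Lemma~\ref{lem:X is normal and flat} and Proposition~\ref{prop: projective morphism as in Kisin}.
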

\begin{proof}
Let $x'$ be a finite type point of  $\cC^{\tau,\BT,a}$, defined
over some finite extension $\F'$ of $\F$,
lying over the
finite type point~$x$ of~$\cZ^{\tau,a}$.  Let $\rbar$ be the Galois
representation with coefficients in $\F'$ corresponding to~$x$,
and recall that $X_{\rbar}$ denotes a projective $\Spec R_{\rbar}^{\tau,\BT}$-scheme
whose pull-back $\widehat{X}_{\rbar}$
over $\Spf R_{\rbar_{|G_{K_{\infty}}}}$ is isomorphic to
${\cC^{\tau,\BT}}\times_{{\cR^{\dd}}}\Spf R_{\rbar|_{G_{K_\infty}}}.$
The point $x'$ gives rise to a closed point $\tx$ of $X_{\rbar}$ (of which $x'$
is the image under the morphism $X_{\rbar} \to \cC^{\tau,\BT}$).
Let
$\widehat{\cO}_{X_{\rbar},\tx}$ denote the complete local ring to
$X_{\rbar}$ at the point~$\tx$; then the natural morphism $\Spf
\widehat{\cO}_{X_{\rbar},\tx}\to {\cC^{\tau,\BT}}$
is versal at~$\tx$,
so that $\widehat{\cO}_{X_{\rbar},\tx}/\varpi^a$ is a versal ring for the point
$x'$ of $\cC^{\tau,\BT,a}$.

The isomorphism~\eqref{eqn:GL_2 hat equivariance} induces
(after pulling back over $\cC^{\tau,\BT}$)
an isomorphism 
$$\widehat{\GL_2}_{/\widehat{X}_{\rbar}} \iso
\widehat{X}_{\rbar}\times_{\cC^{\tau,\BT}} \widehat{X}_{\rbar},$$
and thence an isomorphism
$$\widehat{\GL_2}_{/\widehat{\cO}_{X_{\rbar},\tx}} \iso
\widehat{\cO}_{X_{\rbar},\tx}
\times_{\cC^{\tau,\BT}} 
\widehat{\cO}_{X_{\rbar},\tx}.$$

Since $R^{\tau,\BT}$ is  equidimensional of dimension $5+[K:\Qp]$,
it follows from Proposition~\ref{prop: projective morphism as in Kisin} that
$X_{\rbar}$ is equidimensional of dimension $5+[K:\Qp]$,
and thus (taking into account the flatness statement of
Lemma~\ref{lem:X is normal and flat}) that
$\widehat{\cO}_{X_{\rbar},\tx}/\varpi^a$ is equidimensional of
dimension $4+[K:\Qp]$.
As in the proof of 
  Proposition~\ref{prop: dimensions of the Z stacks},
an application of~\cite[Lem.\ 2.40]{2017arXiv170407654E}
shows that $\dim_{x'}{\cC^{\tau,\BT,a}}$ is equal to $[K:\Qp]$. 
Since~$x'$ was an arbitrary finite type point, the result follows.
\end{proof}

\subsection{The Dieudonn\'e stack}\label{subsec: Dieudonne
  stack} 

We now specialise the choice of $K'$ in the following way. Choose a
tame inertial type $\tau=\eta\oplus\eta'$.
Fix a uniformiser $\pi$ of~$K$. If $\tau$ is a tame
principal series type, we take $K'=K(\pi^{1/(p^f-1)})$, while
if~$\tau$ is a tame cuspidal type, we let $L$ be an unramified
quadratic extension of~$K$, and set $K'=L(\pi^{1/(p^{2f}-1)})$. Let
$N$ be the maximal unramified extension of $K$ in $K'$. In
either case $K'/K$ is a Galois extension; in the principal series
case, we have $e'=(p^f-1)e$, $f'=f$, and in the cuspidal case we have
$e'=(p^{2f}-1)e$, $f'=2f$.  We refer to this choice of extension as the
\emph{standard choice} (for the fixed type $\tau$ and uniformiser
$\pi$). 

For the rest of this section we assume that
$\eta\ne\eta'$ (we will not need to consider Dieudonn\'e modules for
scalar types).

Let $\gM$ be a Breuil--Kisin module with $A$-coefficients and descent data of type
$\tau$ and height at most $1$, and let $D := \gM/u\gM$ be
its corresponding Dieudonn\'e module as in Definition~\ref{def: Dieudonne module formulas}. 
If we write
$D_i:=e_iD$, then this Dieudonn\'e module is given by rank two
projective modules
 $D_j$ over $A$ ($j = 0,\ldots, f'-1$) with linear maps $F: D_j \to D_{j+1}$
and $V: D_j \to D_{j-1}$ (subscripts understood modulo $f'$) 
such that $FV = VF = p$.

Now,
  $I(K'/K)$ is abelian of order prime to $p$, so we can write
  $D=D_\eta\oplus D_{\eta'}$, where $D_\eta$ is the submodule on which
  $I(K'/K)$ acts via~$\eta$. Since $\gM_\eta$ is obtained from the projective $\gS_A$-module $\gM$
by applying a projector, each $D_{\eta,j}$ is an invertible
$A$-module, and $F,V$ induce linear
maps $F:D_{\eta,j}\to D_{\eta,j+1}$ and $V: D_{\eta,j+1} \to D_{\eta,j}$ 
such that $FV = VF = p$. 

We can of course apply the same construction with $\eta'$ in the place
of $\eta$, obtaining a Dieudonn\'e module $D_{\eta'}$.
We now prove some lemmas relating these various Dieudonn\'e
modules. We will need to make use of a variant of the strong
determinant condition, so we begin by discussing this and its relationship to
the strong determinant condition of Subsection~\ref{subsec: local
  models rank two}. 

\begin{defn}
  \label{defn: determinant condition over L} 
Let~$(\gL,\gL^+)$ be a
  pair consisting of a rank two projective
  $\cO_{K'}\otimes_{\Zp}A$-module $\gL$, and an
  $\cO_{K'}\otimes_{\Zp}A$-submodule $\gL^+ \subset \gL$, such that
  Zariski locally on~$\Spec A$, $\gL^+$ is a direct summand of~$\gL$
  as an $A$-module. 

Then we say that the pair~$(\gL,\gL^+)$ \emph{satisfies the Kottwitz
  determinant condition over~$K'$} if for all~$a\in\cO_{K'}$, we
have \[\det{\!}_A(a|\gL^+)=\prod_{\psi:K'\into E}\psi(a) \]as polynomial functions on $\cO_{K'}$ in the sense of~\cite[\S
  5]{MR1124982}.
\end{defn}

There is a finite type stack~$\cM_{K',\det}$ over~$\Spec \cO$, with
$\cM_{K',\det}(\Spec A)$ being the groupoid of pairs~$(\gL,\gL^+)$ as
above which satisfy the Kottwitz determinant condition over~$K'$. As we
have seen above, by a
result of Pappas--Rapoport, this
stack is flat over~$\Spec\cO$ (see~\cite[Prop.\ 2.2.2]{kis04}).

\begin{lem}
  \label{lem: determinant condition  generic fibre}If $A$ is an
  $E$-algebra, then a pair~$(\gL,\gL^+)$ as in Definition~{\em \ref{defn: determinant condition over L}} satisfies the Kottwitz
  determinant condition over~$K'$ if and only if~$\gL^+$ is a rank one
  projective $\cO_{K'}\otimes_{\Zp}A$-module.
\end{lem}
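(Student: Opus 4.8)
The plan is to unwind the Kottwitz determinant condition over $K'$ explicitly, using the decomposition of $\cO_{K'}\otimes_{\Zp}A$ into factors indexed by embeddings of the residue field, exactly as in Remark~\ref{rem: explicit version of determinant condition local models version}. Since $A$ is an $E$-algebra, the ring $\cO_{K'}\otimes_{\Zp}A$ decomposes as $\bigoplus_{\psi:K'\into E}A_\psi$ where each $A_\psi \cong A$ (because $E$ contains the images of all embeddings of $K'$), and under this decomposition the projective module $\gL$ becomes $\bigoplus_\psi \gL_\psi$ with each $\gL_\psi$ a rank two projective $A$-module, while $\gL^+ = \bigoplus_\psi \gL_\psi^+$ with each $\gL_\psi^+$ a direct summand of $\gL_\psi$ as an $A$-module (Zariski locally on $\Spec A$). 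For $a \in \cO_{K'}$, writing $a_\psi = \psi(a) \in A$ for its image in the $\psi$-component, the action of $a$ on $\gL^+_\psi$ is multiplication by the scalar $a_\psi$, so $\det_A(a \mid \gL^+_\psi) = a_\psi^{r_\psi}$ where $r_\psi$ is the rank of $\gL^+_\psi$ (which may be checked to be locally constant, hence constant on connected components; one may reduce to the case $\Spec A$ connected, or simply argue locally). Thus $\det_A(a\mid\gL^+) = \prod_\psi \psi(a)^{r_\psi}$, while the right-hand side of the Kottwitz condition is $\prod_\psi \psi(a)$.

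The key step is then to conclude from the equality of these two polynomial functions on $\cO_{K'}$ — in the sense of \cite[\S5]{MR1124982}, i.e.\ as functions compatible with all scalar extensions — that $r_\psi = 1$ for every $\psi$. This is a matter of separating variables: the embeddings $\psi$ are linearly independent, so working over a suitable faithfully flat extension (or just plugging in generic elements, treating the $\psi(a)$ as independent indeterminates as in the explicit formulation of Remark~\ref{rem: explicit version of determinant condition local models version}) forces the monomial $\prod_\psi X_\psi^{r_\psi}$ to equal $\prod_\psi X_\psi$, whence $r_\psi = 1$ for all $\psi$. Conversely, if $\gL^+$ is rank one projective over $\cO_{K'}\otimes_{\Zp}A$, then each $\gL_\psi^+$ has rank one over $A$, and the computation above immediately yields $\det_A(a\mid\gL^+) = \prod_\psi \psi(a)$, so the Kottwitz condition holds.

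The main (minor) obstacle is handling the "polynomial function" formalism of \cite[\S5]{MR1124982} cleanly rather than sloppily — in particular making precise that equality of polynomial laws, which a priori is an equality of scalar-valued functions compatible with base change, does give equality of the corresponding "universal" polynomials and hence comparison of exponents. But this is standard and is precisely the content of the explicit reformulation recalled in Remark~\ref{rem: explicit version of determinant condition local models version} (applied with $K'$ in place of $N$ and with $e=1$, $e'=1$ in the relevant sense, since $A$ is an $E$-algebra so there is no $\pi'$-nilpotent contribution). Once one grants that reformulation, the proof is the short monomial comparison above, together with the elementary observation that over an $E$-algebra the ranks $r_\psi$ are the only invariants in play. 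I would also note that the rank of $\gL^+$ over $\cO_{K'}\otimes_{\Zp}A$ being locally constant lets me reduce to $\Spec A$ connected at the outset, which is harmless.
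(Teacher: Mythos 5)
Your proof is correct and takes essentially the same approach as the paper's: decompose $\cO_{K'}\otimes_{\Zp}A\cong\prod_{\psi:K'\into E}A$ via the idempotents $e_\psi$, identify $\det{\!}_A(a\mid\gL^+)$ with $\prod_\psi\psi(a)^{\mathrm{rk}_A\, e_\psi\gL^+}$, and compare exponents with $\prod_\psi\psi(a)$. The paper states the exponent comparison as an immediate consequence of the polynomial-law formalism of~\cite[\S5]{MR1124982}, whereas you spell out that step a bit more, but the argument is the same.
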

\begin{proof}
  We may write
  $\cO_{K'}\otimes_{\Zp}A=K'\otimes_{\Qp}A\cong\prod_{\psi:K'\into E}A$, where
  the embedding $\psi:K'\into E$ corresponds to an idempotent
  $e_\psi\in K'\otimes_{\Qp}A$. Decomposing $\gL^+$ as $\oplus_{\psi}
  e_{\psi} \gL^+$, the left-hand side of the Kottwitz determinant condition
becomes $\prod_{\psi} \det{\!}_A(a|e_\psi \gL^+) = \prod_{\psi}
\psi(a)^{\mathrm{rk}_A e_{\psi} \gL^+}$. It follows that the Kottwitz
  determinant condition is satisfied if and only if the projective
  $A$-module $e_\psi
  \gL^+$ has rank one for all $\psi$, which is equivalent to~$\gL^+$ being a
  rank one projective~$K'\otimes_{\Qp}A$-module, as required.
\end{proof}

\begin{prop}
  \label{prop: for us strong det implies det}If~$\gM$ is an object
  of~$\cC^{\tau,\BT}(A)$, 
  then
  the pair \[(\gM/E(u)\gM,\im\Phi_{\gM}/E(u)\gM)\] satisfies the
  Kottwitz determinant condition for~$K'$.
\end{prop}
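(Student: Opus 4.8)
The plan is to deduce the Kottwitz determinant condition over $K'$ from the strong determinant condition (which $\gM$ satisfies, being an object of $\cC^{\tau,\BT}(A)$) by bookkeeping the decomposition of the determinant polynomial into its isotypic pieces for the $I(K'/K)$-action. First I would pass to the pair $(\gL,\gL^+) := \Psi(\gM) = (\gM/E(u)\gM, \im\Phi_\gM/E(u)\gM)$ in $\cM_{\loc}^{\tau,\BT}(A)$, so that the strong determinant condition \eqref{eqn: strong det condn} holds for all $a \in \cO_N$ and all characters $\xi : I(K'/K) \to \cO^\times$, where we use the decomposition $(\gL,\gL^+) = \oplus_\xi (\gL_\xi,\gL^+_\xi)$ over $\cO_N$-submodules from \eqref{eqn: breaking up local model over xi}. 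The goal is to show $\det_A(a \mid \gL^+) = \prod_{\psi : K' \into E} \psi(a)$ for all $a \in \cO_{K'}$.

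The key computation is to relate the two factorizations of $\cO_{K'} \otimes_{\Zp} A$. Since $K'/N$ is totally tamely ramified with $\Gal(K'/N) = I(K'/K)$ cyclic of order $e(K'/K)$, each embedding $\psi : K' \into E$ restricts to an embedding $N \into E$ and is determined by that restriction together with a choice of character $\xi$ (via the action of $I(K'/K)$ on the roots of the Eisenstein polynomial, i.e. on $\pi'$). Concretely, I would use Remark~\ref{rem: explicit version of determinant condition local models version}: writing $a \in \cO_{K'}$ as a polynomial in $\pi'$ over $\cO_N$, its action on $\gL^+$ breaks up over the idempotents $e_i$ and over the characters $\xi$, and the left-hand side $\det_A(a \mid \gL^+)$ becomes a product over $\xi$ of the quantities $\det_A(a \mid \gL^+_\xi)$. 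For the $a$ that lie in $\cO_N$, each such factor is governed by \eqref{eqn: strong det condn}; for general $a \in \cO_{K'}$ one observes that multiplication by $\pi'$ permutes the $\xi$-isotypic pieces cyclically (the maps $p_{i,\xi}$ of the discussion preceding Lemma~\ref{lem: local model of type only have to check the strong determinant condition on that type}), which matches the way a single embedding $\psi$ of $K'$ "sees" all the conjugate embeddings. Assembling these, $\det_A(a \mid \gL^+) = \prod_\xi \prod_{\psi' : N \into E} \psi'(\cdot) = \prod_{\psi : K' \into E} \psi(a)$, as polynomial functions in the sense of \cite[\S5]{MR1124982}.

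An alternative, possibly cleaner, route is to reduce to the case $A = \F'$ a finite field via the flatness of $\cM_{K',\det}$ over $\Spec\cO$ (\cite[Prop.\ 2.2.2]{kis04}) combined with flatness of $\cC^{\tau,\BT}$ over $\cO$ (Corollary~\ref{cor: Kisin moduli consequences of local models}): both conditions cut out closed substacks, and an equality of closed substacks that are flat over $\cO$ can be checked on a Zariski-dense set of finite type points, or after inverting $p$. Over an $E$-algebra, by Lemma~\ref{lem: determinant condition generic fibre} the Kottwitz condition over $K'$ is equivalent to $\gL^+$ being rank one projective over $K' \otimes_{\Qp} A$, and I would check this directly from the fact that $\im\Phi_\gM/E(u)\gM$ has rank one over $\cO_{K'} \otimes_{\Zp} A[1/p]$ when $\gM$ has height one and the strong determinant condition holds (as in the proof of Lemma~\ref{lem: O points of moduli stacks}, where the relevant $\mathrm{Fil}^1_i$ is seen to be free of rank one). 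Then one uses $\cO$-flatness to propagate from the generic fibre to all of $\cC^{\tau,\BT}$.

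The main obstacle I anticipate is the bookkeeping in matching up the set of embeddings $\{\psi : K' \into E\}$ with the pairs (embedding of $N$, character $\xi$ of $I(K'/K)$), and making sure the "polynomial function" identity of \cite[\S5]{MR1124982} — which must hold functorially in $A$, not just pointwise — is genuinely verified and not merely checked on scalars; this is why the explicit formulation \eqref{eqn: explicit strong det condn} with the indeterminates $X_{j,\sigma_i}$ is the right tool, since it encodes exactly this functorial identity, and the comparison of the two sides reduces to the observation already used in the proof of Lemma~\ref{lem: determinant condition explicit finite field local models version} that multiplication by $\pi'$ is nilpotent modulo $E(u)$ and the relevant norm from $\cO_{K'}$ down to $\cO_N$ (and then to $W(k')$) behaves as expected. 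The reduction-to-finite-fields approach sidesteps some of this but requires care that the density/flatness argument actually applies to the stack $\cC^{\tau,\BT}$ rather than just to versal rings; given the results already assembled, either approach should go through without serious difficulty.
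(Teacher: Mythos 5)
Your ``alternative, possibly cleaner, route'' is essentially the paper's proof: both conditions cut out closed substacks of $\cC^{\tau}$, one uses the flatness of $\cC^{\tau,\BT}$ over $\Spf\cO$ (Corollary~\ref{cor: Kisin moduli consequences of local models}) to reduce to checking containment on the generic fibre, and then invokes Lemma~\ref{lem: determinant condition  generic fibre}. The final step in the paper is even lighter than you indicate: one does not need to revisit the $\mathrm{Fil}^1_i$ analysis from Lemma~\ref{lem: O points of moduli stacks}. Rather, over an $E$-algebra $A$ the strong determinant condition says each $(\im\Phi_{\gM})_{\xi}/E(u)\gM_{\xi}$ is rank one projective over $N\otimes_{\Qp}A$, while the Kottwitz condition over $K'$ says $\im\Phi_{\gM}/E(u)\gM$ is rank one projective over $K'\otimes_{\Qp}A$; these are visibly equivalent because $\im\Phi_{\gM}/E(u)\gM = \oplus_{\xi}(\im\Phi_{\gM})_{\xi}/E(u)\gM_{\xi}$ and $\pi'$ acts invertibly after inverting $p$.

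One small correction to the framing of your second route: you propose reducing ``to the case $A = \F'$ a finite field,'' but that is not the right target. The flatness argument reduces containment of closed substacks to a statement about the generic fibre (i.e.\ $E$-algebras, as you then correctly pivot to), not to special-fibre points; checking only on finite fields would not suffice without an extra reducedness input, which is not what is being used here. Your first route (direct bookkeeping of the decomposition of $\det_A(a\mid\gL^+)$ over $\xi$-isotypic pieces) is not incorrect, but it is not carried out: the key difficulty, which you identify but do not resolve, is that for $a \in \cO_{K'}\setminus\cO_N$ multiplication permutes the $\xi$-pieces, so the determinant does not factor as a product over $\xi$ of contributions from the strong determinant condition in any immediate way. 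This is precisely the complication the paper's flatness reduction is designed to avoid.
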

\begin{proof}Let~$\cC^{\tau,\BT'}$ be the closed substack
  of~$\cC^{\tau}$ consisting of those~$\gM$ for which the
  pair~$(\gM/E(u)\gM,\im\Phi_{\gM}/E(u)\gM)$ satisfies the Kottwitz
  determinant condition for~$K'$. We need to show
  that~$\cC^{\tau,\BT}$ is a closed substack
  of~$\cC^{\tau,\BT'}$. Since~$\cC^{\tau,\BT}$ is flat over~$\Spf\cO$
  by Corollary~\ref{cor: Kisin moduli consequences of local models},
  it is enough to show that if~$A$ is an $E$-algebra, then
  $\cC^{\tau,\BT}(A)=\cC^{\tau,\BT'}(A)$. 
  
To see this, let~$\gM$ be an object of~$\cC^{\tau}(A)$. By Lemma~\ref{lem: determinant condition
  generic fibre}, $\gM$ is an object of~$\cC^{\tau,\BT'}(A)$ if and
only if $\im\Phi_{\gM}/E(u)\gM$ is a rank one
projective~$K'\otimes_{\Qp}A$-module. Similarly,  $\gM$ is an object of~$\cC^{\tau,\BT}(A)$ if and
only if for each~$\xi$, $(\im\Phi_{\gM})_{\xi}/E(u)\gM_{\xi}$ is a rank one
projective~$N\otimes_{\Qp}A$-module. Since \[\im\Phi_{\gM}/E(u)\gM=\oplus_{\xi}(\im\Phi_{\gM})_{\xi}/E(u)\gM_{\xi},\]the
equivalence of these two conditions is clear.
\end{proof}

\begin{lemma}
\label{lem:dets for local models}
If $(\gL,\gL^+)$ is an object of $\cM_{K',\det}(A)$ {\em (}i.e.\
satisfies the Kottwitz determinant condition over $K'${\em )}, 
then the morphism $\bigwedge^2_{\cO_{K'}\otimes_{\Z_p} A} \gL^+ \to
\bigwedge^2_{\cO_{K'}\otimes_{\Z_p} A} \gL$ induced
by the inclusion $\gL^+ \subset \gL$ is identically zero.
\end{lemma}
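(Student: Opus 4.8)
The plan is to reduce the assertion to the generic fibre, where it becomes an immediate consequence of Lemma~\ref{lem: determinant condition  generic fibre}, using the flatness of the Pappas--Rapoport local model over~$\cO$. Since the formation of exterior powers, and of the morphism $\bigwedge^2\gL^+\to\bigwedge^2\gL$ induced by $\gL^+\subset\gL$, commutes with arbitrary base change in~$A$, it suffices to treat the universal pair $(\gL,\gL^+)$ over the stack~$\cM_{K',\det}$ (the given pair being the pullback of this universal one along its classifying morphism). As $\cM_{K',\det}$ is an algebraic stack of finite type over~$\cO$, we may choose a smooth surjective morphism $\Spec B\to\cM_{K',\det}$ with $B$ of finite type over~$\cO$; since a morphism of quasi-coherent sheaves which becomes zero after a faithfully flat base change is itself zero, it is enough to prove the vanishing after pulling back to~$\Spec B$. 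Writing $R:=\cO_{K'}\otimes_{\Zp}B$ and retaining the notation $(\gL,\gL^+)$ for the pulled-back pair, the goal becomes to show that $\bigwedge^2_R\gL^+\to\bigwedge^2_R\gL$ is the zero map.

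The key point is that, as recalled above (following Pappas--Rapoport; see~\cite[Prop.\ 2.2.2]{kis04}), the stack $\cM_{K',\det}$ is flat over~$\cO$; combined with the smoothness of $\Spec B\to\cM_{K',\det}$, this shows that $B$ is flat over~$\cO$, and hence $p$-torsion free. Since $\gL$ is a rank two projective $R$-module, $\bigwedge^2_R\gL$ is an invertible $R$-module, so in particular it is finite locally free over~$B$ and therefore also $p$-torsion free. A homomorphism of $B$-modules into a $p$-torsion free module vanishes as soon as it vanishes after inverting~$p$, so it remains only to check that the induced morphism $\bigwedge^2_{R[1/p]}\gL^+[1/p]\to\bigwedge^2_{R[1/p]}\gL[1/p]$ is zero.

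This last step is where Lemma~\ref{lem: determinant condition  generic fibre} enters. Here $R[1/p]=K'\otimes_{\Qp}B[1/p]$, and $B[1/p]$ is an $E$-algebra; since $(\gL,\gL^+)$ is an object of $\cM_{K',\det}$, its base change to $B[1/p]$ still satisfies the Kottwitz determinant condition over~$K'$, so by Lemma~\ref{lem: determinant condition  generic fibre} the module $\gL^+[1/p]$ is invertible over $R[1/p]$. The exterior square of an invertible module is zero, so $\bigwedge^2_{R[1/p]}\gL^+[1/p]=0$ and the morphism is automatically zero, which completes the argument. I do not anticipate any serious obstacle: the only non-formal input is the flatness of the local model over~$\cO$, which has already been established, and everything else is routine (compatibility of exterior powers with base change, faithfully flat descent of the vanishing of a morphism of sheaves, and the reduction of a vanishing statement over a $p$-torsion free module to its generic fibre).
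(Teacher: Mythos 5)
Your proof is correct and follows essentially the same strategy as the paper's: reduce to the $\cO$-flat case (justified by the $\cO$-flatness of the local model), then invert~$p$ so that $\gL^+$ becomes a rank one projective $K'\otimes_{\Qp}A$-module, whose exterior square therefore vanishes. You merely spell out the reduction to the flat case more explicitly (via a smooth cover of $\cM_{K',\det}$) and cite Lemma~\ref{lem: determinant condition  generic fibre} by name, whereas the paper asserts the rank-one statement directly.
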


\begin{remark}
Note that, although $\gL^+$ need not be locally free over
$\cO_{K'}\otimes_{\Z_p} A$, its exterior square is nevertheless defined,
so that the statement of the lemma makes sense.
\end{remark}

\begin{proof}[Proof of Lemma~{\ref{lem:dets for local models}}]
Since~$\cM_{K',\det}$ is $\cO$-flat, 
it is enough to treat the case that $A$ is $\cO$-flat. In this case $\gL$, and thus
also $\bigwedge^2 \gL$, are $\cO$-flat.  Given this additional assumption,
it suffices to prove that the morphism of the lemma becomes zero after tensoring
with $\Q_p$ over $\Z_p$.   This morphism may naturally be identified with
the morphism
$\bigwedge^2_{K'\otimes_{\Z_p} A} \gL^+ \to \bigwedge^2_{K'\otimes_{\Z_p} A} \gL$
induced by the injection
$\Q_p\otimes_{\Z_p} \gL^+ \hookrightarrow \Q_p\otimes_{\Z_p} \gL$.
Locally on $\Spec A$, this is the embedding of a free $K'\otimes_{\Z_p} A$-module
of rank one as a direct summand of a free $K'\otimes_{\Z_p} A$-module of rank
two.  Thus $\bigwedge^2$ of the source in fact vanishes, and hence so does $\bigwedge^2$ of the embedding.
\end{proof}

\begin{lemma}
\label{lem:BT condition gives control of det Phi}
If $\gM$ is an object of $\cC^{\tau,\BT}(A)$, then
$\bigwedge^2 \Phi_{\gM}: \bigwedge^2 \varphi^*\gM  \hookrightarrow \bigwedge^2 \gM$
is exactly divisible by $E(u)$, i.e.\ can be written as $E(u)$ times an
isomorphism of $\gS_A$-modules.
\end{lemma}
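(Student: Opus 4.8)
The plan is to reduce the divisibility statement to the Kottwitz determinant condition established in Proposition~\ref{prop: for us strong det implies det}, together with the cokernel computation of Lemma~\ref{lem:cokernel is projective}. First I would apply $\bigwedge^2$ (as $\gS_A$-modules) to the injection $\Phi_\gM : \varphi^*\gM \hookrightarrow \gM$; since $\gM$ is projective of rank $2$, so is $\varphi^*\gM$, and both $\bigwedge^2$ terms are invertible $\gS_A$-modules, so $\bigwedge^2\Phi_\gM$ is given (locally on $\Spec A$, or after trivialising the line bundles) by multiplication by a single element $\delta \in \gS_A$. The content of the lemma is precisely that $\delta$ equals $E(u)$ times a unit. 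Because $\cC^{\tau,\BT}$ is $\cO$-flat by Corollary~\ref{cor: Kisin moduli consequences of local models}, and because the assertion "$\delta/E(u)$ is a unit in $\gS_A$" is, for $\gM$ projective, a closed condition that can be checked after a faithfully flat base change, I would reduce (as in the proof of Proposition~\ref{prop: for us strong det implies det} and Lemma~\ref{lem:dets for local models}) to the case where $A$ is $\cO$-flat, and then even to the case where $A$ is an $E$-algebra, provided I separately control the ``total'' order of vanishing; more precisely I will argue the two divisibilities ``$E(u) \mid \delta$'' and ``$\delta \mid E(u)\cdot(\text{unit})$'' slightly differently.

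For the divisibility $E(u)^? \mid \delta$ in the appropriate sense: the height $\le 1$ condition says $\coker\Phi_\gM$ is killed by $E(u)$, and by Lemma~\ref{lem:cokernel is projective} the submodule $\im\Phi_\gM / E(u)\gM \subset \gM/E(u)\gM$ is a direct summand as an $A$-module; moreover by Proposition~\ref{prop: for us strong det implies det} the pair $(\gM/E(u)\gM,\ \im\Phi_\gM/E(u)\gM)$ satisfies the Kottwitz determinant condition over $K'$. Now I would invoke Lemma~\ref{lem:dets for local models} with this pair: it gives that the induced map $\bigwedge^2_{\cO_{K'}\otimes A}(\im\Phi_\gM/E(u)\gM) \to \bigwedge^2_{\cO_{K'}\otimes A}(\gM/E(u)\gM)$ is zero. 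Unwinding this: the cokernel $\gM/\im\Phi_\gM$, which lives over $\cO_{K'}\otimes_{\Zp} A$, is an invertible module over the quotient $(\cO_{K'}\otimes A)/(\text{something})$ — in fact the determinant condition pins down $\gM/\im\Phi_\gM$ to be, Zariski-locally, a free rank-one module over a rank-one $(\cO_{K'}\otimes_{\Zp} A)$-algebra quotient on which $E(u)$ acts as $0$ but no proper power of a smaller ideal does. Translating back through the snake lemma for $0 \to \varphi^*\gM \to \gM \to \gM/\im\Phi_\gM \to 0$ and taking determinants, this forces $\delta$ to generate exactly the ideal $(E(u))$ locally, i.e. $\delta = E(u)\cdot(\text{unit})$.

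Concretely, the cleanest route for the last step is: work Zariski-locally on $\Spec A$ so that $\gM \cong \gS_A^2$ and $\varphi^*\gM \cong \gS_A^2$, write $\Phi_\gM$ as a $2\times 2$ matrix $X$ over $\gS_A$, so $\delta = \det X$. The height condition gives $X^{-1} E(u) \in M_2(\gS_A)$, hence $E(u)^2 / \det X \in \gS_A$, i.e. $\det X \mid E(u)^2$; the Kottwitz determinant condition for $(\gM/E(u)\gM, \im X/E(u))$ says exactly (after decomposing into eigenspaces for $I(K'/K)$ and embeddings, as in Remark~\ref{rem: explicit version of determinant condition local models version} and Lemma~\ref{lem: determinant condition explicit finite field}) that $\coker X$ has, in each graded piece, total $u$-length matching $\deg E(u)$ on the nose, so that $\det X$ and $E(u)$ generate the same ideal in each localisation of $\gS_A$. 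Since $\gS_A$-module maps between invertible modules that generate the same ideal differ by a unit, $\bigwedge^2\Phi_\gM = E(u)\cdot(\text{isomorphism})$, as claimed. The main obstacle I anticipate is the bookkeeping in this final identification — matching the Kottwitz determinant condition (a statement about polynomial functions on $\cO_{K'}$) with an exact $u$-adic valuation of $\det X$ uniformly over all of $\Spec A$ rather than just over points; this is handled by the $\cO$-flatness reduction together with the fact, already used repeatedly above, that both sides are invertible $\gS_A$-modules so the comparison is a single unit, checkable after faithfully flat base change to the $\cO$-flat (indeed $E$-algebra) case where Lemma~\ref{lem: determinant condition generic fibre} applies directly.
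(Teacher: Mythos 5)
The first half of your argument — that $\bigwedge^2\Phi_\gM$ vanishes modulo $E(u)$, so that the map factors through $E(u)\bigwedge^2\gM$ — is correct and follows the same route as the paper, via Proposition~\ref{prop: for us strong det implies det} and Lemma~\ref{lem:dets for local models}.

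The gap is in the second half, where you need to show the cokernel $X$ of $\bigwedge^2\varphi^*\gM \to E(u)\bigwedge^2\gM$ vanishes, i.e.\ that $\delta/E(u)$ is a unit. You propose to reduce first to $\cO$-flat $A$ (legitimate, since cokernels commute with base change and the universal $A$ is $\cO$-flat), and then to the $E$-algebra case "by faithfully flat base change." But $A \to A[1/p]$ is flat, not faithfully flat, and for the statement you want this distinction is fatal: $X$ is a finitely generated $A$-module, and $X\otimes_A A[1/p]=0$ only tells you that $X$ is $p$-power torsion, not that it vanishes. This is precisely why the technique you are borrowing from Lemma~\ref{lem:dets for local models} does not transport: there the goal is to show a \emph{morphism into} an $\cO$-flat module is zero, which can be detected after inverting $p$ because $\bigwedge^2\gL \hookrightarrow \bigwedge^2\gL \otimes \Qp$; here the goal is to show a \emph{cokernel} vanishes, and a quotient has no such injectivity. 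The correct direction of reduction is the opposite one. Since $A$ is (in the relevant test cases) a $\cO/\varpi^a$-algebra or $\varpi$-adically complete, $p$ lies in the Jacobson radical, so by Nakayama $X/pX=0$ implies $X=0$; one then reduces to $\F$-algebras of finite type, uses reducedness of the special fibre $\cC^{\tau,\BT,1}$ (Corollary~\ref{cor: Kisin moduli consequences of local models}) to reduce to finite fields, and there invokes \cite[Lem.\ 2.5.1]{kis04}, which is exactly the "exact $u$-adic valuation" statement that your "cleanest route" paragraph asserts but cannot justify uniformly in $A$. The bound $\det X \mid E(u)^2$ coming from the height condition, which you include, is not actually used in this closing argument.
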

\begin{proof}
It follows from Proposition~\ref{prop: for us strong det implies det} and
Lemma~\ref{lem:dets for local models} that the reduction of
$\bigwedge^2 \Phi_{\gM}$ modulo $E(u)$ vanishes, so we can think
of~$\bigwedge^2\Phi_{\gM}$ as a morphism
$\bigwedge^2\varphi^*\gM\to E(u)\bigwedge^2\gM$.
We need to show that the cokernel~$X$ of this
morphism vanishes. Since~$\im\Phi_{\gM}\supseteq E(u)\gM$, $X$ is a finitely generated $A$-module, so that in order to prove
that it vanishes, it is enough to prove that~$X/pX=0$. 

Since the formation of cokernels is compatible with
base change, this means that we can (and do) assume that~$A$ is an
$\F$-algebra. Since the special fibre~$\cC^{\tau,\BT}$ is of finite
type over~$\F$, we can and do assume that~$A$ is furthermore of finite
type over~$\F$. 
The special fibre of~$\cC^{\tau,\BT}$ is reduced by
Corollary~\ref{cor: Kisin moduli consequences of local models}, 
so we
may assume that~$A$ is reduced, 
and it is therefore enough to prove that~$X$ vanishes
modulo each maximal ideal of~$A$. Since the residue fields at such
maximal ideals are finite, 
we are reduced to the case that~$A$ is a finite
field, when the result follows from~\cite[Lem.\ 2.5.1]{kis04}. 
\end{proof}

\begin{lemma}
\label{lem:Dieudonne iso}
There is a canonical isomorphism
$$\text{``}(F\otimes F)/p\text{''}
:D_{\eta,j} \otimes_A
D_{\eta',j} \iso D_{\eta,j+1} \otimes_A D_{\eta',j+1},$$
characterised by the fact that it is compatible with change of scalars,
and that $$p \cdot \text{``}(F\otimes F)/p\text{''}= F\otimes F.$$
\end{lemma}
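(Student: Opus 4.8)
The plan is to obtain the map by forming the top exterior power of the Frobenius and dividing by $E(u)$, using Lemma~\ref{lem:BT condition gives control of det Phi} to guarantee that this division is possible and yields an isomorphism. First I would record the bookkeeping: since $I(K'/K)$ has order prime to $p$ we have $\gM=\gM_\eta\oplus\gM_{\eta'}$ as $\gS_A$-modules, and reducing modulo $u$ and taking $e_j$-parts gives $D_j=D_{\eta,j}\oplus D_{\eta',j}$, so that there is a canonical identification $\bigwedge^2_A D_j\iso D_{\eta,j}\otimes_A D_{\eta',j}$ under which the $A$-linear map $\bigwedge^2_A(F\colon D_j\to D_{j+1})$ is carried to $F\otimes F$. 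Likewise $\bigwedge^2_{\gS_A}\gM$ and $\bigwedge^2_{\gS_A}\varphi^*\gM$ are invertible $\gS_A$-modules whose reductions modulo $u$, broken up by the $e_i$ and the $I(K'/K)$-isotypic pieces, recover the modules $D_{\eta,j}\otimes_A D_{\eta',j}$, and the reduction of $\bigwedge^2\Phi_{\gM}$ recovers the various $F\otimes F$.

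Next I would apply Lemma~\ref{lem:BT condition gives control of det Phi}: $\bigwedge^2\Phi_{\gM}$ can be written as $E(u)\cdot\psi$ for an isomorphism $\psi\colon\bigwedge^2\varphi^*\gM\iso\bigwedge^2\gM$, and $\psi$ is necessarily unique since $E(u)$ is a non-zero-divisor in $\gS_A$ (as in the proof of Lemma~\ref{lem:kisin injective}), hence acts injectively on the invertible module $\bigwedge^2\gM$. (One may alternatively cite Proposition~\ref{prop: for us strong det implies det} together with Lemma~\ref{lem:dets for local models} to see that the reduction of $\bigwedge^2\Phi_{\gM}$ modulo $E(u)$ vanishes, so that the division by $E(u)$ makes sense, and then Lemma~\ref{lem:BT condition gives control of det Phi} for the fact that the quotient is an isomorphism.) Reducing the identity $\bigwedge^2\Phi_{\gM}=E(u)\psi$ modulo $u$, and using that $E(u)\equiv\czero p\pmod u$ with $\czero$ a unit (Definition~\ref{def: Dieudonne module formulas}), I find on each $(e_j,\eta\text{--}\eta')$-piece that $F\otimes F=(\text{unit})\cdot p\cdot g_j$ with $g_j\colon D_{\eta,j}\otimes_A D_{\eta',j}\iso D_{\eta,j+1}\otimes_A D_{\eta',j+1}$ the corresponding reduction of $\psi$ (an isomorphism, this being the twist of $\psi$ by the relevant $\sigma_{j+1}(\czero)^{-1}$). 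I then define $\text{``}(F\otimes F)/p\text{''}$ to be this isomorphism $g_j$, so that $p\cdot\text{``}(F\otimes F)/p\text{''}=F\otimes F$ by construction, and compatibility with change of scalars is immediate because $\Phi_{\gM}$, $E(u)$, $\czero$ and the (uniquely determined) $\psi$ are all compatible with base change.

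For the characterisation I would argue as follows. If $A$ is flat over $\cO$ then $p$ is a non-zero-divisor on the invertible $A$-module $D_{\eta,j+1}\otimes_A D_{\eta',j+1}$, so there is at most one $A$-linear map $h$ with $ph=F\otimes F$; hence the constructed map is the unique such $h$ in this case. Since $\cC^{\tau,\BT}$ is flat over $\cO$ (Corollary~\ref{cor: Kisin moduli consequences of local models}), it admits a smooth cover by a ($\cO$-flat) formal algebraic space, so any object of $\cC^{\tau,\BT}(A)$ is, smooth-locally on $\Spec A$, pulled back along a morphism to an $\cO$-flat ring; the general uniqueness statement then follows from the flat case together with compatibility with change of scalars. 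I do not expect any serious obstacle here: the one substantive input — exact divisibility of $\bigwedge^2\Phi_{\gM}$ by $E(u)$ — is already provided by Lemma~\ref{lem:BT condition gives control of det Phi}, and the only mildly delicate point is keeping the twisted idempotent decomposition and the index shifts straight when reducing $\bigwedge^2\Phi_{\gM}=E(u)\psi$ modulo $u$.
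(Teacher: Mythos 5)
Your argument is correct and follows essentially the same route as the paper: reduce $\bigwedge^2\Phi_{\gM}=E(u)\psi$ (via Lemma~\ref{lem:BT condition gives control of det Phi}) modulo $u$ to produce the isomorphism, and prove the characterisation by invoking the $\cO$-flatness of $\cC^{\tau,\BT}$ so that $p$ is a non-zero-divisor on the universal object and then base-changing. The one small slip is the bookkeeping of the unit: if $F\otimes F=(\text{unit})\cdot p\cdot g_j$ then you should define $\text{``}(F\otimes F)/p\text{''}$ to be $(\text{unit})\cdot g_j$ rather than $g_j$ itself, or your displayed identity $p\cdot\text{``}(F\otimes F)/p\text{''}=F\otimes F$ does not actually hold ``by construction''; the paper avoids this by defining the map directly as the (unique, over a flat base) $h$ with $ph = F\otimes F$ rather than first factoring out $\bar\psi$.
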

\begin{proof}
Since $\cC^{\BT}$ is flat over $\cO$, we see that in the universal case,
the formula $$p \cdot \text{``}(F\otimes F)/p\text{''}= F\otimes F$$
uniquely determines the isomorphism
$\text{``}(F\otimes F)/p\text{''}$ (if it exists).  Since any 
Breuil--Kisin module with descent data is obtained from the universal case by
change of scalars, we see that the isomorphism
$\text{``}(F\otimes F)/p\text{''}$ is indeed characterised by the properties
stated in the lemma, provided that it exists. 

To check that the isomorphism exists, we can again consider the universal case,
and hence assume that $A$ is a flat $\cO$-algebra. 
In this case, it suffices to check that the morphism
$F\otimes F: D_{\eta,j}\otimes_A D_{\eta',j} \to D_{\eta,j+1}\otimes_A D_{\eta,
j+1}$ is divisible by $p$, and that the formula
$(F\otimes F)/p$ is indeed an isomorphism.
Noting that the direct sum over $j = 0,\ldots, f'-1$ of these morphisms may
be identified with the reduction modulo $u$ of the morphism
$\bigwedge^2 \Phi_{\gM}: \bigwedge^2 \varphi^*\gM \to \bigwedge^2
\gM$, 
this follows from Lemma~\ref{lem:BT condition gives control of det Phi}.
\end{proof}

The isomorphism $\text{``}(F\otimes F)/p \text{''}$ of the preceding lemma
may be rewritten as an isomorphism of invertible $A$-modules
\numequation
\label{eqn:Dieudonne hom iso}
\Hom_A(D_{\eta,j}, D_{\eta,j+1}) \iso \Hom_A(D_{\eta',j+1},D_{\eta',j}).
\end{equation}

\begin{lem}
\label{lem:swapping F and V}
The isomorphism~{\em (\ref{eqn:Dieudonne hom iso})} takes $F$ to $V$.
\end{lem}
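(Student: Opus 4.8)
The plan is to trace through the construction of the isomorphism~\eqref{eqn:Dieudonne hom iso} and check compatibility with $V$ by using the defining relation $FV = VF = p$ together with the characterising property of $\text{``}(F\otimes F)/p\text{''}$ from Lemma~\ref{lem:Dieudonne iso}. As in the proofs of the preceding lemmas, it suffices to treat the universal case, so I may assume that $A$ is flat over $\cO$; then all the invertible $A$-modules in sight are $p$-torsion free, and an equality of $A$-linear maps can be checked after multiplying by $p$.

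First I would unwind the identification~\eqref{eqn:Dieudonne hom iso}: the isomorphism $\text{``}(F\otimes F)/p\text{''}\colon D_{\eta,j}\otimes_A D_{\eta',j}\iso D_{\eta,j+1}\otimes_A D_{\eta',j+1}$ of invertible modules is the same datum as an isomorphism $\Hom_A(D_{\eta,j},D_{\eta,j+1})\iso\Hom_A(D_{\eta',j+1},D_{\eta',j})$, obtained by tensoring with the inverse of the source line and using $\Hom_A(L,M)\otimes_A(L\otimes_A N)\cong M\otimes_A N$ for invertible $L$. Concretely, for $\alpha\in\Hom_A(D_{\eta,j},D_{\eta,j+1})$, its image $\beta\in\Hom_A(D_{\eta',j+1},D_{\eta',j})$ is characterised by $(\alpha\otimes\beta) = (\text{``}(F\otimes F)/p\text{''})\circ(\mathrm{id}_{D_{\eta,j}}\otimes\,(\,\cdot\,))$ read appropriately — more usefully, by the relation that $\alpha\otimes\beta$, post-composed with $\text{``}(F\otimes F)/p\text{''}^{-1}$, is multiplication by the element of $A$ corresponding to $\alpha$ under the chosen trivialisation. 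The cleanest formulation to work with: $\beta$ is the unique map with $(F_{\eta,j}\circ\alpha^{\vee}?)\dots$; rather than fuss with trivialisations, I would simply verify the claim by the universal property by exhibiting $\beta = V$ and checking the identity it must satisfy.

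So the key computation: set $\alpha = F_{\eta,j}\colon D_{\eta,j}\to D_{\eta,j+1}$, the $\eta$-component of Frobenius. I claim its image under~\eqref{eqn:Dieudonne hom iso} is $V_{\eta',j+1}\colon D_{\eta',j+1}\to D_{\eta',j}$. By the characterisation, this amounts to the assertion that under the identification of $\Hom_A(D_{\eta,j},D_{\eta,j+1})\otimes_A\Hom_A(D_{\eta',j+1},D_{\eta',j})$ with $\Hom_A\bigl(D_{\eta,j}\otimes_A D_{\eta',j+1},\,D_{\eta,j+1}\otimes_A D_{\eta',j}\bigr)$, the element $F_{\eta,j}\otimes V_{\eta',j+1}$ corresponds, via $\text{``}(F\otimes F)/p\text{''}$ on the $\eta'$-factors, to the canonical section. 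Multiplying through by $p$ and using $V_{\eta',j+1}F_{\eta',j} = p = F_{\eta',j}V_{\eta',j+1}$ on $D_{\eta',\bullet}$, one gets $p\cdot(F_{\eta,j}\otimes V_{\eta',j+1}) = F_{\eta,j}\otimes (p\,V_{\eta',j+1})$; and I would rewrite $p\,V_{\eta',j+1}$ via $F V = p$ so that the composite matches $F_{\eta,j}\otimes F_{\eta',j}$ followed by the inverse Frobenius on the appropriate factor, which by the defining formula $p\cdot\text{``}(F\otimes F)/p\text{''} = F\otimes F$ is exactly $p$ times the canonical map. Cancelling $p$ (legitimate by $\cO$-flatness) gives the result. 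Passing from the flat universal case to general $A$ is immediate from the compatibility of everything with base change.

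The main obstacle will be bookkeeping: getting the indices $j$, $j+1$ and the direction of the maps $F$ (raising) versus $V$ (lowering) consistent with the way~\eqref{eqn:Dieudonne hom iso} was written, and being careful that the ``division by $p$'' in $\text{``}(F\otimes F)/p\text{''}$ interacts correctly with the two relations $FV = p$ and $VF = p$ used on the $\eta$- and $\eta'$-strands. Once the identification of~\eqref{eqn:Dieudonne hom iso} with ``swap the $\eta'$-factor's $F$ for its $V$'' is pinned down precisely, the verification is a one-line consequence of $FV = VF = p$ and $\cO$-flatness; there is no substantive geometric input beyond Lemmas~\ref{lem:BT condition gives control of det Phi} and~\ref{lem:Dieudonne iso}, already established.
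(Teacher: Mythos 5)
This is the paper's approach: reduce to the universal ($\cO$-flat) case, where $p$ is a non-zero-divisor, and verify the required identity, namely $(\text{``}(F\otimes F)/p\text{''})\circ(\id\otimes V)=F\otimes\id$, by multiplying both sides by $p$ and invoking $FV=p$. Your intermediate unwinding of~\eqref{eqn:Dieudonne hom iso} is more convoluted than it needs to be --- the phrase about an ``inverse Frobenius on the appropriate factor'' does not parse, since $F$ is not invertible on the Dieudonn\'e module --- but the reduction you outline, including the decisive step of multiplying by $p$ and using $FV=p$, is the same as the paper's, and the cleaner formulation of the identity above resolves the bookkeeping you flag as the main obstacle.
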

\begin{proof}
The claim of the lemma is equivalent to showing that
the composite 
$$D_{\eta,j} \otimes_A D_{\eta',j+1} \buildrel
\id\otimes V \over \longrightarrow 
D_{\eta,j}\otimes_A D_{\eta',j} \buildrel \text{``}(F\otimes F)/p \text{''}
\over \longrightarrow D_{\eta,j+1}\otimes_A D_{\eta',j+1}$$
coincides with the morphism $F \otimes \id.$  
It suffices to check this in the universal case, and thus we may assume that $p$ is a non-zero divisor in $A$, and hence verify the required identity of morphisms after multiplying each of them by $p$.
The identity to be verified then becomes
$$(F\otimes F) \circ (\id \otimes V) \buildrel ? \over = p (F\otimes \id),$$
which follows immediately from the formula $F V = p$.
\end{proof}

We now consider the moduli stacks classifying the Dieudonn\'e modules
with the properties we have just established, and the maps from the
moduli stacks of Breuil--Kisin modules to these stacks.

Suppose first that we are in the principal series case. Then there is a moduli stack classifying the data of the $D_{\eta,j}$
together with the
$F$ and $V$, namely the stack 
$$\cD_{\eta} :=
\Big[
\bigl( \Spec W(k)[X_0,Y_0,\ldots,X_{f-1},Y_{f-1}]/(X_j Y_j - p)_{j = 0,\ldots, f-1}) \bigr) /
\mathbb G_m^f  \big],$$
where the $f$ copies of $\mathbb G_m$ act as follows:
$$(u_0,\ldots,u_{f-1}) \cdot (X_j,Y_j) \mapsto (u_j u_{j+1}^{-1} X_j, u_{j+1} u_j^{-1} Y_j).$$
To see this, recall that the stack
$$[\text{point}/\Gm]$$ classifies line bundles, so the $f$ copies of $\mathbb G_m$
in $\cD_\eta$ correspond to $f$ line bundles, which are the line bundles $D_{\eta,j}$
($j = 0,\ldots,f-1$).  If we locally trivialise these line bundles, then the
maps $F: D_{\eta,j} \to D_{\eta,j+1}$ and $V:D_{\eta,j+1} \to D_{\eta,j}$ act by
scalars, which we denote by $X_j$ and $Y_j$ respectively.  The $f$ copies of
$\mathbb G_m$ are then encoding possible changes of trivialisation, by units
$u_j$, which induce the indicated changes on the $X_j$'s and $Y_j$'s.

There is then a natural map
\[\cC^\tau
 \to \cD_{\eta},\]
classifying the Dieudonn\'e modules
underlying the Breuil--Kisin modules with descent data. 

There is a more geometric way to think about what $\cD_{\eta}$ 
classifies.
To begin with, we just rephrase what we've already indicated:
it represents the functor which associates to a $W(k)$-scheme the
groupoid whose objects are $f$-tuples of line bundles
$(D_{\eta,j})_{j = 0,\ldots,f-1}$ equipped with morphisms $X_j: D_{\eta,j} \to D_{\eta,j+1}$
and $Y_j:D_{\eta,j+1} \to D_{\eta,j}$ such that $Y_j X_j = p$.
(Morphisms in the groupoid are just isomorphisms between collections of such data.)
Equivalently, we can think of this as giving the line bundle
$D_{\eta,0}$, and then the $f$ line bundles $\cD_j
:= D_{\eta,j+1}\otimes D_{\eta,j}^{-1}$,
equipped with sections $X_j \in \cD_j$ and $Y_j \in \cD_j^{-1}$ whose
product in $\cD_j \otimes \cD_j^{-1} = \cO$
(the trivial line bundle) is equal to the element~$p$.
Note that it superficially looks like we are remembering $f+1$ line bundles,
rather than $f$, but this is illusory, since in fact
$\cD_0 \otimes \cdots \otimes \cD_{f-1}$ is trivial; indeed,
the isomorphism 
$\cD_0 \otimes \cdots \otimes \cD_{f-1} \iso \cO$ is part of the data we should remember.

It will be helpful to introduce another stack,
the stack $\cG_{\eta}$ of $\eta$-gauges.  This classifies
$f$-tuples of line bundles $\cD_j$ ($j = 0,\ldots,f-1$) equipped
with sections $X_j \in \cD_j$ and $Y_j \in \cD_j^{-1}$.
Explicitly, it can be written as the quotient stack
$$\cG_{\eta} :=
\Big[
\bigl( \Spec W(k)[X_0,Y_0,\ldots,X_{f-1},Y_{f-1}]/(X_j Y_j - p)_{j = 0,\ldots, f-1}) \bigr) /
\mathbb G_m^f  \big],$$
where the $f$ copies of $\mathbb G_m$ act as follows:
$$(v_0,\ldots,v_{f-1}) \cdot (X_j,Y_j) \mapsto (v_j X_j, v_j^{-1} Y_j).$$
There is a natural morphism of stacks $\cD_{\eta} \to \cG_{\eta}$
given by forgetting forgetting $D_0$ and the isomorphism
$\cD_0 \otimes \cD_1 \otimes \dots\otimes \cD_{f-1} \iso \cO$.
In terms of the explicit descriptions via quotient stacks, 
we have a morphism $\Gm^f \to \Gm^f$ given by
$(u_j)_{j = 0,\ldots,f-1} \mapsto (u_j u_{j+1}^{-1})_{j = 0,\ldots,f-1}$,
which is compatible with the actions of these two groups on
$\Spec W(k)[(X_j,Y_j)_{j=0,\ldots,f-1}]/(X_j Y_j - p)_{j = 0,
\ldots , f-1},$ and we are just considering the map from the quotient
by the first $\Gm^f$ to the quotient by the second~$\Gm^f$.

Composing our morphism $\cC^\tau \to \cD_{\eta}$ with the forgetful morphism
$\cD_{\eta} \to \cG_{\eta}$, we obtain a morphism $\cC^\tau \to \cG_{\eta}$.

We now turn to the case that~$\tau$ is a cuspidal type. In this case
our Dieudonn\'e modules have unramified as well as inertial descent
data; accordingly, we let $\varphi^f$ denote the element of
$\Gal(K'/K)$ which acts trivially on $\pi^{1/(p^{2f}-1)}$ and
non-trivially on $L$. Then the descent data of~ $\varphi^f$ induces
isomorphisms $D_j\isoto D_{j+f}$, which are compatible with the $F,V$,
and which identify $D_{\eta,j}$ with $D_{\eta',f+j}$. 

If we choose local trivialisations of the line bundles
$D_{\eta,0},\dots,D_{\eta,f}$, then the maps $F : D_{\eta,j} \to
D_{\eta,j+1}$ and $V : D_{\eta,j+1} \to D_{\eta,j}$ for $0 \le j \le
f-1$ are given by scalars $X_j$ and $Y_j$ respectively. The identification of $D_{\eta,j}$ and $D_{\eta',f+j}$ given
by~$\varphi^f$ identifies $D_{\eta,j}\otimes D_{\eta,j+1}^{-1}$ with
$D_{\eta',f+j}\otimes D_{\eta',f+j+1}^{-1}$, which via the isomorphsim
\eqref{eqn:Dieudonne hom iso} is identified with
$D_{\eta,f+j+1}\otimes D_{\eta,f+j}^{-1}$. It follows that for $0\le j\le f-2$ the
data of $D_{\eta,j}$, $D_{\eta,j+1}$ and $D_{\eta,f+j}$ recursively
determines $D_{\eta,f+j+1}$. From Lemma~\ref{lem:swapping F and V}
we see, again recursively  for $0\le j\le f-2$, that there are unique trivialisations
of $D_{\eta,f+1},\dots,D_{\eta,2f-1}$ such that
$F:D_{\eta,f+j}\to D_{\eta,f+j+1}$ is given by~$Y_{j}$, and
$V:D_{\eta,f+j+1}\to D_{\eta,f+j}$ is given by~$X_j$. Furthermore,
there is some unit~$\alpha$ such that $F:D_{\eta,2f-1}\to D_{\eta,0}$
is given by~$\alpha Y_{f-1}$, and
$V:D_{\eta,0}\to D_{\eta,2f-1}$ is given by~$\alpha^{-1}
X_{f-1}$. Note that the map $F^{2f} : D_{\eta,0} \to D_{\eta,0}$ is
precisely $ p^f\alpha$.

Consequently, we see that the data of the  $D_{\eta,j}$ (together with
the $F,V$) is classified by the stack  $$\cD_{\eta} :=
\Big[
\bigl( \Spec W(k)[X_0,Y_0,\ldots,X_{f-1},Y_{f-1}]/(X_j Y_j - p)_{j = 0,\ldots, f-1}) \times\Gm\bigr) /
\mathbb G_m^{f+1}  \big],$$
where the $f+1$ copies of $\mathbb G_m$ act as follows:
$$(u_0,\ldots,u_{f-1},u_f) \cdot ((X_j,Y_j),\alpha) \mapsto ((u_j u_{j+1}^{-1}
X_j, u_{j+1} u_j^{-1} Y_j),\alpha ). $$

We again define $$\cG_{\eta} :=
\Big[
\bigl( \Spec W(k)[X_0,Y_0,\ldots,X_{f-1},Y_{f-1}]/(X_j Y_j - p)_{j = 0,\ldots, f-1}) \bigr) /
\mathbb G_m^f  \big],$$
where the $f$ copies of $\mathbb G_m$ act as
$$(v_0,\ldots,v_{f-1}) \cdot (X_j,Y_j) \mapsto (v_j X_j, v_j^{-1} Y_j).$$
There are again  natural morphisms of stacks $\cC^\tau\to\cD_{\eta} \to
\cG_{\eta}$, where the second morphism is given 
in terms of the explicit descriptions via quotient stacks as follows:
we have a morphism $\Gm^{f+1} \to \Gm^f$ given by
$(u_j)_{j = 0,\ldots,f} \mapsto (u_j u_{j+1}^{-1})_{j = 0,\ldots,f-1}$,
and the morphism $\cD_{\eta} \to
\cG_{\eta}$ is the obvious one which forgets the factor of~$\Gm$
coming from~$\alpha$. 

For our analysis of the irreducible components of the stacks
$\cC^{\tau,\BT,1}$ at the end of Section~\ref{sec: extensions of rank one Kisin
  modules}, it will be useful to have a 
more directly geometric interpretation
of a morphism $S \to \cG_{\eta}$, in the case that
the source is a {\em flat} $W(k)$-scheme, or, more generally,
a flat $p$-adic formal algebraic stack over~$\Spf
W(k)$. In order to do this we will need some basic material on
effective Cartier divisors for (formal) algebraic stacks; while it is
presumably possible to develop this theory in considerable generality,
 we only need a very special case, and we limit ourselves to this
 setting.

The property of a closed subscheme being an effective Cartier divisor is not 
preserved under arbitrary pull-back, but it is preserved under flat
pull-back. More precisely, we have the following result.

\begin{lemma}\label{lem:Cartier divisors are flat local}
       	If $X$ is a scheme,
	and $Z$ is a closed subscheme of $X$,
	then the following are equivalent:
	\begin{enumerate}
		\item $Z$ is an effective Cartier divisor on $X$.
		\item For any flat morphism of schemes $U \to X$,
			the pull-back $Z\times_{X} U$
			is an effective Cartier divisor on $U$.
	 	\item For some fpqc covering $\{X_i \to X\}$ of $X$,
	 		each of the pull-backs $Z\times_{X} X_i$
	 		is an effective Cartier divisor on $X_i$.
	\end{enumerate}
\end{lemma}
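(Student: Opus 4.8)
\textbf{Proof plan for Lemma~\ref{lem:Cartier divisors are flat local}.}
The plan is to prove the cycle of implications $(1)\Rightarrow(2)\Rightarrow(3)\Rightarrow(1)$, reducing everything to a statement about rings and then to a single fact about faithfully flat descent of regular (non-zero-divisor) elements. First I would recall that for a closed subscheme $Z$ of a scheme $X$ cut out locally by an ideal $\cI$, being an effective Cartier divisor means precisely that $\cI$ is, locally on $X$, generated by a single element which is a non-zero-divisor in the structure sheaf. So the whole statement is local on $X$, and I may assume $X = \Spec A$ and $Z = \Spec A/I$ for an ideal $I \subset A$.

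For $(1)\Rightarrow(2)$: if $Z$ is an effective Cartier divisor, then Zariski-locally on $X$ the ideal $I$ is generated by a non-zero-divisor $f \in A$. Given a flat morphism $U \to X$, working locally we may take $U = \Spec B$ with $B$ a flat $A$-algebra; then $Z\times_X U$ is cut out by the image of $f$ in $B$, and flatness of $B$ over $A$ ensures that multiplication by $f$ remains injective on $B$ (tensoring the injection $A \xrightarrow{f} A$ with $B$). Hence $Z\times_X U$ is again an effective Cartier divisor. The implication $(2)\Rightarrow(3)$ is trivial, since the members of an fpqc covering are in particular flat over $X$ (one just takes the given covering $\{X_i \to X\}$, which consists of flat morphisms by definition of fpqc).

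The implication $(3)\Rightarrow(1)$ is the one requiring actual work, and it is where the main obstacle lies. Since the question is local on $X$ and effective Cartier divisors can be checked on an fpqc cover by a single affine (refining the given cover, and using that a finite disjoint union of affines mapping to an affine open is faithfully flat onto it), we reduce to the following algebraic assertion: if $A \to B$ is a faithfully flat ring map, $I \subset A$ an ideal, and $IB$ is generated by a single non-zero-divisor of $B$, then $I$ is generated by a single non-zero-divisor of $A$. First, $I$ is finitely generated: $IB$ is principal hence finitely generated over $B$, and finite generation descends along faithfully flat maps (one can lift finitely many generators of $IB$ to elements of $I$ and check by faithful flatness that they generate $I$). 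Next, the quotient $A/I$ has the property that $B/IB = B \otimes_A (A/I)$ has $\Tor_1^A(A/I, B)$ computed by the Koszul-type resolution $0 \to IB \to B \to B/IB \to 0$ with $IB \cong B$; more precisely, from the exact sequence $0 \to I \to A \to A/I \to 0$ one gets $\Tor_1^A(A/I,B) \cong \ker(I\otimes_A B \to B)$, and $I\otimes_A B \cong IB$ since $B$ is flat, while $IB \cong B$ is free, so this $\Tor$ vanishes — meaning $A/I$ is an $A$-module of projective dimension $\leq 1$ with $\Tor_1 = 0$, forcing $A/I$ to have a length-one finite free resolution after faithfully flat base change and hence, by descent of the relevant flatness/projectivity, $I$ is a projective $A$-module of rank one which is moreover flat-locally free. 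Finally one checks $I$ is invertible and that a local generator is a non-zero-divisor: the non-zero-divisor property of a generator $f$ amounts to injectivity of $A \xrightarrow{f} A$, which after tensoring with the faithfully flat $B$ becomes injectivity of $B \xrightarrow{f} B$ (up to a unit, matching the given generator of $IB$), and injectivity descends along faithfully flat maps. I would cite the relevant \texttt{stacks-project} tags for faithfully flat descent of finite generation, of invertible modules, and of regular elements rather than reproving them; the only genuinely delicate point is organizing the reduction to the affine faithfully flat case and keeping track of why ``principal'' (as opposed to merely ``invertible'') descends, which follows because an invertible module that becomes free along a faithfully flat cover, and whose generator pulls back to a non-zero-divisor, is itself generated by a non-zero-divisor Zariski-locally.
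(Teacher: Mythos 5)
Your overall strategy is sound and converges on the same underlying descent facts that the paper uses, but the paper's proof is much shorter: it simply observes that $Z$ is an effective Cartier divisor if and only if $\cI_Z$ is invertible, and then invokes the standard fact that invertibility of a quasi-coherent sheaf is fpqc-local. (The one implicit point — that the ideal sheaf of $Z\times_X U$ is $f^*\cI_Z$ when $f:U\to X$ is flat — is exactly your implication $(1)\Rightarrow(2)$ restated.) Your route amounts to unpacking that one-line citation into an explicit affine faithfully flat descent argument, which is a legitimate alternative, but two things in your write-up do not hold up as written.

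First, the ``algebraic assertion'' you reduce to is false: from $A\to B$ faithfully flat and $IB$ principal with a non-zero-divisor generator you can only conclude that $I$ is \emph{invertible} (i.e.\ Zariski-locally principal with non-zero-divisor generators), not that $I$ is globally principal. Invertibility, not principality, is what descends. You flag this yourself in the final sentence, but the assertion should be stated correctly in the first place, since this is exactly the point where a careless reduction would break.

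Second, the $\Tor$ paragraph is a red herring and does not establish what you need. The vanishing $\Tor_1^A(A/I,B)=0$ is automatic because $B$ is flat over $A$; it carries no information about $I$ or about the projective dimension of $A/I$, and the claim that $A/I$ has ``projective dimension $\le 1$'' is not justified (indeed it is essentially equivalent to what you are trying to prove). The correct argument replacing that paragraph is shorter: $I$ is finitely generated by descent; $I\otimes_A B\cong IB\cong B$ is finite free, hence finitely presented and flat over $B$; by faithfully flat descent of finite presentation and of flatness, $I$ is finitely presented and flat over $A$, hence finite projective; its rank is $1$ at every prime (computed after base change to $B$ via faithful flatness). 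Thus $I$ is invertible, and any local generator of an invertible ideal $I\subset A$ is automatically a non-zero-divisor, since the local isomorphism $A\iso I\subset A$, $1\mapsto f$, is injective. With these two repairs your proof is correct; without them the key implication $(3)\Rightarrow(1)$ has a genuine gap.
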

\begin{proof}
	Since $Z$ is an effective Cartier divisor if and only if its ideal sheaf
	$\cI_Z$ is an invertible sheaf on $X$, this follows from
	the fact that the invertibility of a quasi-coherent sheaf
	is a local property in the {\em fpqc} topology.
\end{proof}

\begin{lemma}
	\label{lem:comparing closed subsets}
	If $A$ is a Noetherian adic topological ring,
	then pull-back under the natural morphism $\Spf A \to \Spec A$
	induces a bijection between the closed subschemes of 
	$\Spec A$ and the closed subspaces of
	$\Spf A$.
\end{lemma}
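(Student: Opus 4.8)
The plan is to describe both sides of the asserted bijection explicitly in terms of ideals of $A$. Fix an ideal of definition $I\subseteq A$, so that $\Spf A=\varinjlim_n\Spec(A/I^n)$, the transition maps $\Spec(A/I^n)\hookrightarrow\Spec(A/I^{n+1})$ being thickenings. The commutative-algebra input I would use — standard for Noetherian adic rings — is that every ideal $J\subseteq A$ is closed in the $I$-adic topology, i.e.\ $\bigcap_n(J+I^n)=J$, and that the quotient $A/J$ is again $I$-adically complete for the induced topology. Both follow from the Artin--Rees lemma: on the finitely generated submodule $J$ the subspace topology agrees with the $I$-adic topology, so $J$ (being a finite module over the complete Noetherian ring $A$) is complete, hence closed; and taking inverse limits of the exact sequences $0\to J/(J\cap I^n)\to A/I^n\to A/(J+I^n)\to 0$ (the first system having surjective transition maps, so $\varprojlim{}^1$ vanishes) gives $\varprojlim_n A/(J+I^n)=A/J$.

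Granting this, the closed subschemes of $\Spec A$ are exactly the $\Spec(A/J)$ for ideals $J\subseteq A$, and the pull-back of such along $\Spf A\to\Spec A$ is $\Spf A\times_{\Spec A}\Spec(A/J)=\varinjlim_n\Spec(A/(J+I^n))=\Spf(A/J)$, a closed subspace of $\Spf A$. This defines the map in the statement, and it remains to prove it is bijective. For injectivity, if ideals $J,J'$ satisfy $\Spf(A/J)=\Spf(A/J')$ inside $\Spf A$, then intersecting with $\Spec(A/I^n)$ shows $J+I^n=J'+I^n$ for every $n$; intersecting over $n$ and using that ideals are $I$-adically closed yields $J=J'$.

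For surjectivity, let $\mathcal{Z}\hookrightarrow\Spf A$ be a closed subspace and set $\mathcal{Z}_n:=\mathcal{Z}\times_{\Spf A}\Spec(A/I^n)$, a closed subscheme of the affine scheme $\Spec(A/I^n)$, hence $\mathcal{Z}_n=\Spec(A/J_n)$ for a unique ideal $I^n\subseteq J_n\subseteq A$. Since $\Spf A=\varinjlim_n\Spec(A/I^n)$ along thickenings, a closed immersion into this colimit is the colimit of its pull-backs, so $\mathcal{Z}=\varinjlim_n\mathcal{Z}_n$; and since $\mathcal{Z}_n=\mathcal{Z}_{n+1}\times_{\Spec(A/I^{n+1})}\Spec(A/I^n)$ we get $J_n=J_{n+1}+I^n$. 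Consequently the transition maps $J_{n+1}/I^{n+1}\to J_n/I^n$ are surjective, so $J:=\varprojlim_n J_n/I^n\subseteq\varprojlim_n A/I^n=A$ is an ideal with $J+I^n=J_n$ for all $n$; therefore $\mathcal{Z}=\varinjlim_n\Spec(A/(J+I^n))$ is the pull-back of $\Spec(A/J)\subseteq\Spec A$, as required.

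The step requiring the most care is the one I would single out as the crux: the identification of closed subspaces of $\Spf A$ with compatible systems of closed subschemes of the $\Spec(A/I^n)$. This relies on the presentation $\Spf A=\varinjlim_n\Spec(A/I^n)$ as an increasing union along thickenings, and on the fact that pulling a closed immersion back along this presentation and then re-taking the colimit recovers it; I would either invoke the foundational material on formal algebraic spaces used elsewhere in the paper (e.g.\ \cite{Emertonformalstacks}) or the Stacks project, or check it by hand in the affine case, where it amounts to the statement that a closed subspace of $\Spf A$ is $\Spf(A/J)$ for a closed — hence, by the above, arbitrary — ideal $J$. Everything else is the standard commutative algebra recalled above together with a routine diagram chase.
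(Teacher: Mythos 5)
Your argument is correct, but it takes a visibly different route from the paper's. The paper's proof leans on the Stacks project's abstract structure theory for closed immersions into McQuillan affine formal algebraic spaces (Tag 0ANQ): such a closed immersion corresponds to a continuous map $A \to B$ that is taut with closed kernel and dense image; then the countable neighbourhood basis (Tag 0APT, with Tag 0AMV) forces $A \to B$ to be surjective, finite generation of $I$ (Tag 0APU) forces $B$ to carry the $I$-adic topology, and Noetherianness forces every ideal to be $I$-adically closed, so the data reduces to an arbitrary ideal of $A$. You instead unwind the definition of $\Spf A$ as $\varinjlim_n \Spec(A/I^n)$ and analyse a closed subspace $\cZ$ directly through its truncations $\cZ_n = \Spec(A/J_n)$; the commutative-algebra input is the Artin--Rees package (every ideal of a Noetherian adic ring is closed, every quotient is complete), and the inverse-limit construction of $J$ from the $J_n$ (which is legitimate because the transition maps are surjective, so Mittag--Leffler gives $J + I^n = J_n$). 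The remaining point — that a closed immersion $\cZ \hookrightarrow \Spf A$ satisfies $\cZ = \varinjlim_n \cZ_n$ — is exactly where your argument touches the same foundational terrain the paper outsources to Tag 0ANQ; you rightly flag it as the crux, and it does hold (a closed immersion is a monomorphism of fppf sheaves, and two subsheaves of $\Spf A$ agreeing on all affine test schemes coincide, since every affine $T \to \Spf A$ factors through some $\Spec(A/I^n)$). In short: the paper's route is shorter once the Stacks project results are granted and gives the ``taut/dense/closed-kernel'' description as a byproduct, while yours is more elementary and makes the role of Artin--Rees and the countable, surjective nature of the tower completely explicit — at the cost of having to argue directly that a closed subspace is recovered from its finite-level truncations.
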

\begin{proof}
	It follows
	from~\cite[\href{http://stacks.math.columbia.edu/tag/0ANQ}{Tag
 0ANQ}]{stacks-project}
that closed immersions $Z \to \Spf A$
are necessarily of the form $\Spf B \to \Spf A$,
and correspond to continuous morphisms $A \to B$, for some complete 
linearly topologized
ring $B$, which are taut (in the sense
	of~\cite[\href{http://stacks.math.columbia.edu/tag/0AMX}{Tag
 0AMX}]{stacks-project}),
have closed kernel, and dense image.
Since $A$ is adic, it admits a countable basis of neighbourhoods of the origin,
and so it follows 
	from~\cite[\href{http://stacks.math.columbia.edu/tag/0APT}{Tag
 0APT}]{stacks-project} (recalling also~\cite[\href{http://stacks.math.columbia.edu/tag/0AMV}{Tag
 0AMV}]{stacks-project})  that $A\to B$ is surjective.  
Because any ideal of definition $I$ of $A$ is finitely generated, it follows 
	from~\cite[\href{http://stacks.math.columbia.edu/tag/0APU}{Tag
 0APU}]{stacks-project} that $B$ is endowed with the $I$-adic topology.
Finally, since $A$ is Noetherian, any ideal in $A$ is $I$-adically closed.
Thus closed immersions $\Spf B \to \Spf A$ are determined by giving
the kernel of the corresponding morphism $A \to B$, which can be arbitrary.
The same is true of closed immersions $\Spec B \to \Spec A$,
and so the lemma follows.
\end{proof}

\begin{df} If $A$ is a Noetherian adic topological ring,
then we say that a closed subspace of $\Spf A$
is an {\em effective Cartier divisor} on $\Spf A$ if the corresponding closed
subscheme of $\Spec A$ is an effective Cartier divisor on $\Spec A$.
\end{df}

\begin{lemma}
	Let $\Spf B \to \Spf A$ be a flat adic morphism of Noetherian
	affine formal algebraic spaces.
	If $Z \hookrightarrow \Spf A$ is a Cartier divisor,
	then $Z \times_{\Spf A} \Spf B \hookrightarrow \Spf B$ 
	is a Cartier divisor.  Conversely, if $\Spf B \to \Spf A$
	is furthermore surjective, and if $Z \hookrightarrow \Spf A$
	is a closed subspace for which the base-change
	$Z \times_{\Spf A} \Spf B \hookrightarrow \Spf B$ 
	is a Cartier divisor,
	then $Z$ is a Cartier divisor on $\Spf A$.
\end{lemma}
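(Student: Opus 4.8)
The plan is to reduce the whole statement to the scheme-theoretic Lemma~\ref{lem:Cartier divisors are flat local} by means of the dictionary of Lemma~\ref{lem:comparing closed subsets} and the definition of an effective Cartier divisor on a formal spectrum. Fix an ideal of definition $I$ of the Noetherian adic ring $A$; since the morphism $\Spf B \to \Spf A$ is adic, $IB$ is an ideal of definition of $B$, and since it is flat the corresponding continuous ring map $A \to B$ is flat, so $\Spec B \to \Spec A$ is a flat morphism of Noetherian schemes. Under the bijection of Lemma~\ref{lem:comparing closed subsets} the closed subspace $Z \hookrightarrow \Spf A$ corresponds to a closed subscheme $\overline{Z} = \Spec(A/\cJ) \hookrightarrow \Spec A$ for some ideal $\cJ \subseteq A$ (every ideal of $A$ being closed, as $A$ is Noetherian), and, by definition, $Z$ is an effective Cartier divisor on $\Spf A$ exactly when $\overline{Z}$ is one on $\Spec A$; the analogous statement holds over $B$.

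First I would record that these bijections are compatible with base change along $\Spf B \to \Spf A$ and along $\Spec B \to \Spec A$: the closed subspace $Z\times_{\Spf A}\Spf B \hookrightarrow \Spf B$ is cut out by $\cJ B$, which is also the ideal defining the closed subscheme $\overline{Z}\times_{\Spec A}\Spec B \hookrightarrow \Spec B$, so these two correspond to one another under Lemma~\ref{lem:comparing closed subsets} applied to $B$. Granting this, the first assertion of the lemma is immediate from the implication (1)$\Rightarrow$(2) of Lemma~\ref{lem:Cartier divisors are flat local} applied to the flat morphism $\Spec B \to \Spec A$, together with the definition above.

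For the converse I would first upgrade surjectivity of the adic morphism $\Spf B \to \Spf A$ to faithful flatness of $A \to B$. The underlying topological space of $\Spf A$ is $V(I) \subseteq \Spec A$, homeomorphic to $\Spec(A/I)$, and likewise for $B$, so surjectivity of $\Spf B \to \Spf A$ means that $\Spec(B/IB) \to \Spec(A/I)$ is surjective; in particular every maximal ideal of $A$ — which contains $I$, since $A$ is $I$-adically complete and Noetherian, so that $I$ lies in the Jacobson radical of $A$ — is in the image of $\Spec B \to \Spec A$. Since $A \to B$ is flat, going-down holds, so the image of $\Spec B \to \Spec A$ is stable under generization, and, every prime of $A$ being contained in a maximal ideal, we conclude that $\Spec B \to \Spec A$ is surjective, i.e.\ that $A \to B$ is faithfully flat. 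Then $\{\Spec B \to \Spec A\}$ is an fpqc covering, the pull-back $\overline{Z}\times_{\Spec A}\Spec B$ is an effective Cartier divisor on $\Spec B$ by hypothesis and the compatibility above, and the implication (3)$\Rightarrow$(1) of Lemma~\ref{lem:Cartier divisors are flat local} shows that $\overline{Z}$ is an effective Cartier divisor on $\Spec A$, hence $Z$ is one on $\Spf A$. The only slightly delicate points — and the place where care is needed — are this passage between the formal and affine pictures: matching $Z$, $\overline{Z}$ and their base changes under Lemma~\ref{lem:comparing closed subsets}, and extracting faithful flatness from surjectivity of an adic morphism; both are routine once the ideals of definition are kept track of.
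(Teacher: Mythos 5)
Your proof is correct and follows essentially the same route as the paper's: both reduce via the dictionary of Lemma~\ref{lem:comparing closed subsets} (and the definition of an effective Cartier divisor on $\Spf A$) to Lemma~\ref{lem:Cartier divisors are flat local}. The only difference is that the paper cites \cite[Lem.\ 8.18]{Emertonformalstacks} for the equivalence between surjectivity of $\Spf B \to \Spf A$ and faithful flatness of $A \to B$, whereas you give a self-contained argument (using that $I$ lies in the Jacobson radical of the complete Noetherian ring $A$, so surjectivity onto $V(I)$ plus going-down for the flat ring map yields surjectivity of $\Spec B \to \Spec A$) — a nice, elementary substitute for the citation.
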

\begin{proof}
	The morphism $\Spf B \to \Spf A$ corresponds to an adic flat morphism
	$A \to B$ 
(\cite[\href{http://stacks.math.columbia.edu/tag/0AN0}{Tag
0AN0}]{stacks-project}
and \cite[Lem.\ 8.18]{Emertonformalstacks})
	and hence is induced by a flat morphism $\Spec B \to \Spec A$,
	which is furthermore faithfully flat if and only if 
	$\Spf B \to \Spf A$ is surjective
(again by \cite[Lem.\ 8.18]{Emertonformalstacks}).
	The present lemma thus follows from Lemma~\ref{lem:Cartier divisors
	are flat local}. 
\end{proof}

The preceding lemma justifies the following
definition.

\begin{df} We say that a closed substack $\cZ$ of a locally Noetherian
	formal algebraic stack
	$\cX$
	is an {\em effective Cartier divisor} on $\cX$ if
		 for any morphism $U \to \cX$ whose source
			is a Noetherian affine formal algebraic space,
			and which is representable by algebraic spaces and
			flat,
			the pull-back $\cZ\times_{\cX} U$
			is an effective Cartier divisor on $U$.
\end{df}

We consider the $W(k)$-scheme $\Spec W(k)[X,Y]/(XY - p)$, which we endow with
a $\Gm$-action via $u\cdot (X,Y) := (uX, u^{-1} Y).$
There is an obvious morphism
$$\Spec W(k)[X,Y]/(XY -p) \to \Spec W(k)[X]=\mathbb A^1$$
given by $(X,Y) \to X$, which is $\Gm$-equivariant (for the action
of~$\Gm$ on~$\mathbb A^1$ given by $u\cdot X:=uX$),
and so induces a morphism
\numequation
\label{eqn:Cartier divisor construction}
[\bigl(\Spec W(k)[X,Y]/(XY -p)\bigr) / \Gm]
\to  [\mathbb A^1/\Gm].
\end{equation}

\begin{lemma}\label{lem: f equals 1 effective Cartier} If $\cX$ is a
  locally Noetherian $p$-adic formal algebraic stack
	which is furthermore flat over $\Spf W(k)$,
        then the groupoid of morphisms
	\[\cX \to [\Spec W(k)[X,Y]/(XY - p) / \Gm]\] is in fact 
	a setoid, and is equivalent to the set of effective Cartier divisors 
	on $\cX$ that are contained in the effective Cartier divisor
	$(\Spec k) \times_{\Spf W(k)} \cX$ on~$\cX$. 
\end{lemma}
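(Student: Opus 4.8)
The plan is to unwind the target stack concretely and then match it with Cartier divisors by an essentially ring-theoretic computation, pushed through flat charts. First I would record the standard description of maps to the quotient stack: a $1$-morphism $\cX \to [\Spec W(k)[X,Y]/(XY-p)/\Gm]$ is the same datum as an invertible sheaf $\cL$ on $\cX$ together with sections $s \in \Gamma(\cX,\cL)$ and $t \in \Gamma(\cX,\cL^{-1})$ (the pullbacks of the coordinates $X$, $Y$) satisfying $s\cdot t = p$ under the canonical pairing $\cL \otimes_{\cO_\cX} \cL^{-1} \isoto \cO_\cX$, with $2$-morphisms being isomorphisms of invertible sheaves carrying $s$ to $s$ and $t$ to $t$; this follows from the facts that $[\mathrm{pt}/\Gm]$ classifies invertible sheaves and $[\mathbb A^1/\Gm]$ classifies invertible sheaves with a section, together with descent along a smooth chart of $\cX$. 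In these terms the morphism~\eqref{eqn:Cartier divisor construction} simply records $(\cL,s)$, forgetting $t$.

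Next I would bring in flatness over $\Spf W(k)$. Pulling back along any chart $U=\Spf A \to \cX$ of the sort used in the definition of effective Cartier divisor (so $A$ Noetherian and $A$ flat, hence $p$-torsion free, over $W(k)$), the element $p$ is a nonzerodivisor; consequently $(\Spec k)\times_{\Spf W(k)}\cX$ is an effective Cartier divisor on $\cX$, and moreover, Zariski-localizing on $\Spec A$ so that $\cL$ becomes trivial, the sections $s,t$ become elements $x,y\in A$ with $xy=p$, whence $x$ and $y$ are also nonzerodivisors. Thus $s\colon \cO_\cX \to \cL$ is a regular section: tensoring with $\cL^{-1}$ it exhibits an invertible ideal sheaf $\cI_s \subset \cO_\cX$ isomorphic to $\cL^{-1}$, cutting out an effective Cartier divisor $\cZ_s \into \cX$, and the identity $p=s\cdot t$ shows $p\in \cI_s$, i.e.\ $\cZ_s \subseteq (\Spec k)\times_{\Spf W(k)}\cX$. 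This defines the functor from morphisms to Cartier divisors; and since $u\cdot s=s$ for a unit $u$ forces $u=1$ (as $s$ is a regular section), every object of the Hom-groupoid has trivial automorphism group, so this groupoid is a setoid.

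For the quasi-inverse, given an effective Cartier divisor $\cZ \subseteq (\Spec k)\times_{\Spf W(k)}\cX$ I would put $\cL:=\cI_\cZ^{-1}$, let $s\colon\cO_\cX\to\cL$ be dual to the inclusion $\cI_\cZ \into \cO_\cX$, and let $t\in\Gamma(\cX,\cL^{-1})=\Gamma(\cX,\cI_\cZ)$ be the section $p$, which lies in $\cI_\cZ$ precisely because $\cZ\subseteq\{p=0\}$; one checks $s\cdot t=p$ Zariski-locally, where it is the identity $x\cdot(p/x)=p$. That the two constructions are mutually inverse, naturally in $\cX$, reduces --- after pulling back to a chart $\Spf A$ and Zariski-localizing on $\Spec A$, using Lemma~\ref{lem:comparing closed subsets} to pass freely between closed subschemes of $\Spf A$ and of $\Spec A$ together with the analogous statements for invertible sheaves and for Zariski opens --- to the evident bijection, for $A$ a $p$-torsion free Noetherian ring, between $\{(x,y)\in A^2 : xy=p\}/A^\times$ and the set of principal ideals of $A$ generated by a nonzerodivisor dividing $p$.

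The main obstacle is not the algebra, which is trivial, but the formal-stacks bookkeeping: justifying the concrete description of maps into the quotient stack in the formal setting, and checking that the notion of effective Cartier divisor on a formal algebraic stack --- defined via pullback to flat Noetherian affine formal charts --- is compatible with all of the above operations (duals of invertible ideal sheaves, Zariski localization, and descent along smooth charts). All of these steps are designed to be local for the flat (indeed Zariski) topology and to involve only Noetherian adic rings, so the cited results on formal algebraic spaces, in particular Lemmas~\ref{lem:Cartier divisors are flat local} and~\ref{lem:comparing closed subsets}, suffice; but one must take care that the invertible sheaves occurring really do descend and trivialize Zariski-locally on the relevant schemes $\Spec A$, which is where I would concentrate the verification.
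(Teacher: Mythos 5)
Your proposal takes essentially the same route as the paper: reduce to a flat Noetherian affine formal chart $\Spf B$, unwind $[\Spec W(k)[X,Y]/(XY-p)/\Gm]$ in terms of a line bundle with a pair of sections $s,t$ satisfying $st=p$, use flatness to see $p$ (hence $s$, $t$) is regular so that $t$ is determined by $s$ and the datum collapses to a regular section, and identify regular sections with effective Cartier divisors contained in $\{p=0\}$. Your explicit observation that regularity of $s$ kills automorphisms, hence the groupoid is a setoid, is a nice touch that the paper leaves implicit.

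The one place your sketch doesn't actually close the argument is the passage from $\Spf B$-points to $\Spec B$-points of the quotient stack. You gesture at Lemma~\ref{lem:comparing closed subsets} ``together with the analogous statements for invertible sheaves,'' but that lemma only treats closed subspaces, and a priori the datum of a morphism $\Spf B \to [\cdots/\Gm]$ is a \emph{compatible family} of line bundles with sections over the $\Spec B/p^i$, not an actual line bundle with section over $\Spec B$. The paper handles exactly this by proving that the restriction
\[[\Spec W(k)[X,Y]/(XY-p)/\Gm](\Spec B) \longrightarrow [\Spec W(k)[X,Y]/(XY-p)/\Gm](\Spf B)\]
is an equivalence of groupoids: essential surjectivity because if $\{M_i\}$ is a compatible family of locally free rank-one $B/p^iB$-modules then $M:=\varprojlim M_i$ is locally free of rank one over $B$ with $M/p^iM \iso M_i$, and full faithfulness because a locally free rank-one $B$-module is $p$-adically complete and hence recovered from its truncations. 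This is the concrete verification you flag as needing attention; citing it explicitly (or reproving it) is what would complete your argument.
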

\begin{proof}
  Essentially by definition (and taking into account \cite[Lem.\
  8.18]{Emertonformalstacks}), it suffices to prove this in the case
  when $\cX = \Spf B$, where $B$ is a flat Noetherian adic
  $W(k)$-algebra admitting $(p)$ as an ideal of definition.  In this
  case, the restriction map
  \[[\Spec W(k)[X,Y]/(XY - p) / \Gm](\Spec B)\to [\Spec W(k)[X,Y]/(XY -
  p) / \Gm](\Spf B)\] is an equivalence of groupoids. 
Indeed, the
  essential surjectivity follows from the (standard and easily
  verified) fact that if $\{M_i\}$ is a compatible family of locally
  free $B/p^iB$-modules of rank one, then $M := \varprojlim M_i$ is a
  locally free $B$-module of rank one, for which each of the natural
  morphisms $M/p^iM \to M_i$ is an isomorphism.  The full
  faithfulness follows from the fact that a locally free $B$-module
  of rank one is $p$-adically complete, and so is recovered as the
  inverse limit of its compatible family of quotients $\{M/p^iM\}.$

We are therefore reduced to the same statement with $\cX = \Spec B$.  The composite morphism $\Spec B \to  [\mathbb A^1/\Gm]$ induced
  by~\eqref{eqn:Cartier divisor construction} corresponds to giving a
  pair~$(\cD,X)$ where~$\cD$ is a line bundle  on~$\Spec B$, and~$X$
  is a global section of~$\cD^{-1}$. Indeed, giving a morphism $\Spec
  B \to  [\mathbb A^1/\Gm]$ is equivalent
	to giving a $\Gm$-torsor $P \to \Spec B$, together with a $\Gm$-equivariant
	morphism $P \to \mathbb A^1$.   Giving a $\Gm$-torsor 
	$P$ over $\Spec B$ is equivalent to giving an invertible sheaf
        $\cD$ on $\Spec B$
	(the associated $\Gm$-torsor is then obtained by deleting the
	zero section from the line bundle $D\to X$ corresponding to $\cD$),
	and giving a $\Gm$-equivariant morphism $P \to \mathbb A^1$
	is equivalent to giving a global section of $\cD^{-1}$.

It follows that giving a morphism
  $\Spec B \to [\Spec W(k)[X,Y]/(XY - p) / \Gm]$ corresponds to giving
  a line bundle $\cD$ and sections $X \in \cD^{-1}$, $Y \in \cD$ satisfying
  $X Y = p$.  To say that $B$ is flat over $W(k)$ is just to say that
  $p$ is a regular element on $B$, and so we see that $X$ 
  (resp.\ $Y$) is a regular section of $\cD^{-1}$ (resp.\ $\cD$).
  Again, since $p$ is a
  regular element on $B$, we see that $Y$ is uniquely determined by
  $X$ and the equation $X Y = p$, and so giving a morphism
  $\Spec B\to [\Spec W(k)[X,Y]/(XY - p) / \Gm]$ is equivalent to giving a line
  bundle $\cD$ and a regular section $X$ of $\cD^{-1}$, such that
  $pB \subset X\otimes_B \cD \subset \cD^{-1}\otimes_B \cD\iso B$;
  this last condition guarantees the existence of the (then uniquely
  determined) $Y$.

Now giving a line bundle $\cD$ on $\Spec B$ and a regular section
$X\in\cD^{-1}$ is the
same as giving the zero locus $D$ of $X$, which is a Cartier divisor
on $\Spec B$.
(There is a canonical isomorphism $(\cD,X) \cong
\bigl(\cI_D,1\bigr)$, where $\cI_D$ denotes the ideal sheaf of $D$.) 
The condition that $pB \subset X\otimes_B \cD$ is equivalent 
to the condition that $p \in \cI_D$,
i.e.\ that $D$ be contained in $\Spec B/pB$, and we are done.
\end{proof}

\begin{lemma}\label{lem: maps to gauge stack as Cartier
    divisors} 
If $\cS$ is a locally Noetherian $p$-adic formal algebraic stack which
is flat over $W(k)$, 
then giving a morphism $\cS \to \cG_{\eta}$
over $W(k)$ is equivalent to giving a collection
of effective Cartier divisors $\cD_j$ on $\cS$ {\em (}$j = 0,
\ldots,f-1${\em )}, with each $\cD_j$ contained in the Cartier divisor
$\overline{\cS}$ cut out by the equation $p = 0$ on $\cS$ {\em (}i.e.\
the {\em special fibre} of $\cS${\em )}.
\end{lemma}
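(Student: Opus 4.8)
The plan is to reduce the assertion to the case $f=1$, which is precisely Lemma~\ref{lem: f equals 1 effective Cartier}, by exhibiting $\cG_{\eta}$ as an $f$-fold fibre product. Writing $Z_j := \Spec W(k)[X_j,Y_j]/(X_jY_j-p)$, the scheme underlying $\cG_{\eta}$ is the fibre product $Z_0 \times_{\Spec W(k)} \cdots \times_{\Spec W(k)} Z_{f-1}$, and the acting group $\Gm^f$ is the product of the copies of $\Gm$ acting separately on the factors $Z_j$ by $v_j\cdot(X_j,Y_j)=(v_jX_j,v_j^{-1}Y_j)$. Since the formation of quotient stacks is compatible with products in this sense, this yields a canonical isomorphism $\cG_{\eta}\cong \cG\times_{\Spec W(k)}\cdots\times_{\Spec W(k)}\cG$ ($f$ factors), where $\cG:=[\Spec W(k)[X,Y]/(XY-p)/\Gm]$. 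By the universal property of the fibre product, for any locally Noetherian $p$-adic formal algebraic stack $\cS$ over $W(k)$ the groupoid of morphisms $\cS\to\cG_{\eta}$ over $W(k)$ is then equivalent to the product of $f$ copies of the groupoid of morphisms $\cS\to\cG$ over $W(k)$. (As in the proof of Lemma~\ref{lem: f equals 1 effective Cartier}, any morphism from the $p$-adic formal stack $\cS$ to the algebraic stack $\cG$ automatically factors through its $p$-adic completion, so this is consistent with the formal-stacks setting there.)

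Next I would invoke Lemma~\ref{lem: f equals 1 effective Cartier} for each of the $f$ factors: since $\cS$ is locally Noetherian and flat over $W(k)$, that lemma identifies the groupoid of morphisms $\cS\to\cG$ over $W(k)$ with the set of effective Cartier divisors on $\cS$ contained in the special fibre $\overline{\cS}=(\Spec k)\times_{\Spf W(k)}\cS$. In particular each such groupoid is a setoid, hence so is their $f$-fold product; combining with the previous paragraph gives an identification of the set of morphisms $\cS\to\cG_{\eta}$ over $W(k)$ with the set of $f$-tuples $(\cD_j)_{j=0,\ldots,f-1}$ of effective Cartier divisors on $\cS$, each $\cD_j$ contained in $\overline{\cS}$. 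This is exactly the assertion of the lemma; it also makes transparent the geometric content of the composite $\cC^\tau\to\cG_{\eta}$, namely that for $\cS$ flat over $W(k)$ mapping to $\cC^\tau$ the divisor $\cD_j$ is the zero locus of the section $X_j$, i.e.\ (after trivialising $D_{\eta,j}$) the locus where $F\colon D_{\eta,j}\to D_{\eta,j+1}$ vanishes.

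The only genuine point to verify is the compatibility of quotient stacks with products used in the first step, i.e.\ that $[(\prod_j Z_j)/\prod_j\Gm]$ is the fibre product $\prod_j[Z_j/\Gm]$ over $\Spec W(k)$. This is standard — it follows either by fppf descent or directly from the functor-of-points description, noting that a $\prod_j\Gm$-torsor over a test object is the same as an $f$-tuple of $\Gm$-torsors, which maps $\prod_j\Gm$-equivariantly to $\prod_j Z_j$ if and only if each component maps equivariantly to $Z_j$ — so I expect no real obstacle here; the substantive work has already been carried out in Lemma~\ref{lem: f equals 1 effective Cartier}. The only care needed is to keep the base consistently $\Spec W(k)$ (resp.\ $\Spf W(k)$ after $p$-adic completion) throughout, and to record, as above, that morphisms out of the $p$-adic formal stack $\cS$ are insensitive to replacing $\cG_{\eta}$ by its $p$-adic completion.
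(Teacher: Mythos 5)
Your proposal is correct and takes essentially the same route as the paper, which simply remarks that the lemma ``follows immediately from Lemma~\ref{lem: f equals 1 effective Cartier}, by the definition of~$\cG_\eta$.'' You have merely made explicit the implicit observation that $\cG_\eta$ is the $f$-fold fibre product over $\Spec W(k)$ of copies of $[\Spec W(k)[X,Y]/(XY-p)/\Gm]$, which is what makes the reduction to the $f=1$ case immediate.
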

\begin{proof}
	This follows immediately from Lemma~\ref{lem: f equals 1
          effective Cartier}, by the definition of~$\cG_\eta$.
\end{proof}

\section{Extensions of rank one Breuil--Kisin modules with descent
  data}\label{sec: extensions of rank one Kisin modules}
The goal of this section is to construct certain universal families of
extensions of rank one Breuil--Kisin modules over $\F$ with descent data, and
to use these to describe the generic behaviour of the various irreducible 
components of the special fibres of~$\cC^{\tau,\BT}$ and~$\cZ^{\tau}$.

In Subsection~\ref{subsec:ext generalities} we present some generalities
on extensions of Breuil--Kisin modules.
In Subsection~\ref{subsec:universal families}
we explain how to construct our desired families of extensions.
In Subsection~\ref{subsec: Diamond--Savitt} we recall the fundamental
computations related to extensions of rank one Breuil--Kisin modules
from \cite{DiamondSavitt}, to which the results of
Subsection~\ref{subsec:universal families} will be applied.

We assume throughout this section that $[K':K]$ is not divisible
by~$p$; since we are assuming throughout the paper that $K'/K$ is
tamely ramified, this is equivalent to assuming that $K'$ does not
contain an unramified extension of $K$ of degree~$p$. In our final
applications $K'/K$ will contain unramified extensions of degree at
most~$2$, and $p$ will be odd, so this assumption will be satisfied.
(In fact, we specialize to such a context begining in
Subsection~\ref{subsec:irreducible}.)

\subsection{Extensions of Breuil--Kisin modules with descent data}
\label{subsec:ext generalities}
When discussing the general theory of extensions of Breuil--Kisin
modules, it is convenient to embed the category of Breuil--Kisin modules
in a larger category which is abelian,
contains enough injectives and projectives,
and is closed under passing to arbitrary limits and colimits.
The simplest way to obtain such a category is as the category of modules 
over some ring, and so we briefly recall how a Breuil--Kisin module with
$A$-coefficients and  descent 
data can be interpreted as a module over a certain $A$-algebra.

Let $\gS_A[F]$ denote the twisted polynomial ring over $\gS_A$,
in which the variable $F$ obeys the following commutation relation
with respect to elements $s \in \gS_A$:
$$ F \cdot s = \varphi(s) \cdot F.$$
Let $\gS_A[F, \Gal(K'/K)]$ denote the twisted group ring over $\gS_A[F]$,
in which the elements $g \in \Gal(K'/K)$ commute with $F$,
and obey the following commutation
relations with elements $s  \in \gS_A$:
$$ g \cdot s = g(s) \cdot g.$$
One immediately confirms that giving a left $\gS_A[F, \Gal(K'/K)]$-module $\gM$
is equivalent to equipping the underlying $\gS_A$-module
$\gM$ with a $\varphi$-linear morphism
$\varphi:\gM \to \gM$ and a semi-linear action of $\Gal(K'/K)$
which commutes with $\varphi$.

In particular, if we let $\K{A}$ denote the category of left $\gS_A[F,
\Gal(K'/K)]$-modules, then
a Breuil--Kisin module with descent data from $K'$ to $K$ 
 may naturally be regarded as an object of $\K{A}$.
In the following lemma, we record the fact that extensions of Breuil--Kisin modules
with descent data may be computed as extensions in the category~$\K{A}$.  

\begin{lemma}
\label{lem:ext of a Kisin module is a Kisin module}
If $0 \to \gM' \to \gM \to \gM'' \to 0$ is a short exact sequence
in $\K{A}$, such that $\gM'$ {\em (}resp.\ $\gM''${\em )}
is a Breuil--Kisin module with descent data
of rank $d'$ and height at most $h'$ 
{\em (}resp.\ of rank $d''$ and height at most $h''$\emph{)}, 
then $\gM$ is a Breuil--Kisin module with descent data
of rank $d'+d''$ and height at most $h'+h''$.  

More generally, if
$E(u)^h\in\Ann_{\gS_A}(\coker\Phi_{\gM'})\Ann_{\gS_A}(\coker\Phi_{\gM''})$,
then $\gM$ is a Breuil--Kisin module with descent data of height at most~$h$.
\end{lemma}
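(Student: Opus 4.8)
The plan is to verify the defining conditions of a (weak) Breuil--Kisin module for $\gM$ one at a time, using the short exact sequence and the hypotheses on $\gM'$ and $\gM''$. First I would record what needs checking: that $\gM$ is a finitely generated $\gS_A$-module which is either projective of the correct rank (in the main statement) or merely $u$-torsion free (in the general statement), that $\Phi_{\gM}$ becomes an isomorphism after inverting $E(u)$ (equivalently, in light of Lemma~\ref{lem:kisin injective} and Lemma~\ref{rem: Kisin modules are etale}, that $\Phi_\gM$ is injective with cokernel killed by a power of $E(u)$), and that the descent data $\hat g$ have the required properties. The descent data conditions are automatic: they hold in $\gM'$ and $\gM''$, and a short exact sequence in $\K{A}$ realizes $\gM$ as an extension of $\gS_A[F,\Gal(K'/K)]$-modules, so the $\hat g$ on $\gM$ automatically commute with $\varphi_\gM$, satisfy $\hat g_1\circ\hat g_2=\widehat{g_1\circ g_2}$, and are semilinear over $\gS_A$.

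Next I would handle the finiteness and torsion-freeness. Finite generation of $\gM$ is immediate from finite generation of $\gM'$ and $\gM''$. For the rank/projectivity statement in the first paragraph, since $\gM'$ and $\gM''$ are projective $\gS_A$-modules, the sequence $0\to\gM'\to\gM\to\gM''\to 0$ splits as a sequence of $\gS_A$-modules (projectivity of $\gM''$), so $\gM\cong\gM'\oplus\gM''$ is projective of rank $d'+d''$. For the more general statement (where $\gM',\gM''$ are only assumed $u$-torsion free), I note that $u$-torsion freeness passes to extensions: if $u m=0$ then $um$ maps to $0$ in $\gM''$, so already $u\cdot(\text{image of }m)=0$ in $\gM''$ forces the image of $m$ to be $0$, hence $m\in\gM'$, and then $m=0$ by $u$-torsion freeness of $\gM'$.

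The remaining point—and the only place where the height hypothesis enters—is the behaviour of $\Phi_\gM$. Applying $\varphi^*(-)=\gS_A\otimes_{\varphi,\gS_A}(-)$ to the short exact sequence and using that $\varphi:\gS_A\to\gS_A$ is flat (indeed free), I get a short exact sequence $0\to\varphi^*\gM'\to\varphi^*\gM\to\varphi^*\gM''\to 0$, and the maps $\Phi_{\gM'},\Phi_\gM,\Phi_{\gM''}$ fit into a morphism of short exact sequences. The snake lemma then gives an exact sequence $0\to\ker\Phi_{\gM'}\to\ker\Phi_\gM\to\ker\Phi_{\gM''}\to\coker\Phi_{\gM'}\to\coker\Phi_\gM\to\coker\Phi_{\gM''}\to 0$. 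Since $\Phi_{\gM'}$ and $\Phi_{\gM''}$ are injective (being Breuil--Kisin modules, via Lemma~\ref{lem:kisin injective}/Lemma~\ref{rem: Kisin modules are etale}), this shows $\ker\Phi_\gM=0$, and it presents $\coker\Phi_\gM$ as an extension of a submodule of $\coker\Phi_{\gM''}$ by $\coker\Phi_{\gM'}$. Therefore $\Ann_{\gS_A}(\coker\Phi_{\gM'})\cdot\Ann_{\gS_A}(\coker\Phi_{\gM''})$ annihilates $\coker\Phi_\gM$; in particular if both cokernels are killed by $E(u)^{h'}$ and $E(u)^{h''}$ respectively then $\coker\Phi_\gM$ is killed by $E(u)^{h'+h''}$, and more generally the stated divisibility hypothesis $E(u)^h\in\Ann(\coker\Phi_{\gM'})\Ann(\coker\Phi_{\gM''})$ gives that $E(u)^h$ kills $\coker\Phi_\gM$. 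Combined with injectivity of $\Phi_\gM$, this is exactly the condition that $\Phi_\gM$ be an isomorphism after inverting $E(u)$ (by Lemma~\ref{lem:kisin injective}, applicable since $A$ is a $\Z/p^a\Z$-algebra or $p$-adically separated with $\gM$ projective).

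\textbf{Main obstacle.} There is no serious obstacle; the one subtlety worth care is the application of Lemma~\ref{lem:kisin injective} to pass between the ``isomorphism after inverting $E(u)$'' formulation and the ``injective with cokernel killed by a power of $E(u)$'' formulation—one should check its hypotheses are met in both the $\Z/p^a\Z$-coefficient case and the $p$-adically separated projective case, which they are by assumption. The snake lemma argument and the flatness of $\varphi$ are the technical core, and both are routine.
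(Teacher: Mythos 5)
Your argument is correct and follows essentially the same route as the paper: apply the snake lemma to the commutative diagram obtained by comparing the $\varphi$-pullback of the short exact sequence with the sequence itself, then read off injectivity of $\Phi_{\gM}$ and the annihilator of $\coker\Phi_{\gM}$. The paper's own proof is terser (it observes directly that $\Phi_{\gM}[1/E(u)]$ is an isomorphism because $\Phi_{\gM'}[1/E(u)]$ and $\Phi_{\gM''}[1/E(u)]$ are, and then notes the resulting short exact sequence of cokernels), while you spell out the injectivity and projectivity bookkeeping explicitly, but the content is the same.

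One small slip worth correcting: after using injectivity of $\Phi_{\gM''}$ to kill the connecting map, the snake lemma gives a genuine short exact sequence
\[
0\to\coker\Phi_{\gM'}\to\coker\Phi_{\gM}\to\coker\Phi_{\gM''}\to 0,
\]
so $\coker\Phi_{\gM}$ is an extension of $\coker\Phi_{\gM''}$ itself (not of a submodule of it) by $\coker\Phi_{\gM'}$; and if one did not assume $\Phi_{\gM''}$ injective, the correct description would be an extension of $\coker\Phi_{\gM''}$ by a \emph{quotient} of $\coker\Phi_{\gM'}$. Either version suffices for the annihilator conclusion, so the slip does not affect the proof, but your phrasing ``a submodule of $\coker\Phi_{\gM''}$'' is backwards.
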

\begin{proof}
Note that since $\Phi_{\gM'}[1/E(u)]$ and $\Phi_{\gM''}[1/E(u)]$ are both isomorphisms by
assumption, it follows from the snake lemma that~$\Phi_{\gM}[1/E(u)]$ is
isomorphism.  Similarly we have a short exact sequence of
$\gS_A$-modules \[0\to\coker\Phi_{\gM'}\to\coker\Phi_{\gM}\to\coker\Phi_{\gM''}\to
0.\]
The claims about the height and rank of~$\gM$ follow immediately.
\end{proof}

We now turn to giving an explicit description of the functors
$\Ext^i(\gM, \text{--} \, )$ for a Breuil--Kisin module with descent data
$\gM$. 

\begin{df}
\label{def:explicit Ext complex}
Let $\gM$ be a 
Breuil--Kisin module with $A$-coefficients and descent data (of some 
height).
If $\gN$ is any object of $\K{A}$, then 
we let $C^{\bullet}_{\gM}(\gN)$ denote the complex
$$ 
\Hom_{\gS_A[\Gal(K'/K)]}(\gM,\gN) \to
\Hom_{\gS_A[\Gal(K'/K)]}(\varphi^*\gM,\gN), 
$$
with differential being given by 
$$\alpha \mapsto \Phi_{\gN} \circ \varphi^* \alpha - \alpha \circ \Phi_{\gM}.$$
Also let $\Phi_{\gM}^*$ denote the map $C^0_{\gM}(\gN) 
\to C^1_{\gM}(\gN)$ given by $\alpha \mapsto \alpha \circ
\Phi_{\gM}$. When $\gM$ is clear from the context we will usually suppress it from the
notation and write simply $C^{\bullet}(\gN)$. 
\end{df}

Each $C^i(\gN)$ is naturally an $\gS_A^0$-module. 
The formation of $C^{\bullet}(\gN)$ is evidently functorial in $\gN$,
and is also exact in $\gN$, since $\gM$, and hence also $\varphi^*\gM$,
is projective over $\gS_A$, and since $\Gal(K'/K)$ has prime-to-$p$ order.
Thus the cohomology functors
$H^0\bigl(C^{\bullet}(\text{--})\bigr)$
and
$H^1\bigl(C^{\bullet}(\text{--})\bigr)$
form a $\delta$-functor on $\K{A}$.

\begin{lemma}\label{lem: C computes Hom}
There is a natural isomorphism
$$\Hom_{\K{A}}(\gM,\text{--}\, ) \cong
H^0\bigl(C^{\bullet}(\text{--}\,)\bigr).$$
\end{lemma}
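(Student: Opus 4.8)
The plan is to unwind both sides of the claimed isomorphism and observe that they are literally describing the same data. On the left, a morphism $\gM \to \gN$ in $\K{A}$ is by definition a morphism of $\gS_A$-modules which is $\Gal(K'/K)$-equivariant and which commutes with the $\varphi$-semilinear operators; that is, it is an element $\alpha \in \Hom_{\gS_A[\Gal(K'/K)]}(\gM,\gN)$ satisfying $\Phi_{\gN}\circ\varphi^*\alpha = \alpha\circ\Phi_{\gM}$ (after adjunction, using that $\varphi^*$ is left adjoint to restriction along $\varphi$, so that a $\varphi$-semilinear map $\gM\to\gN$ is the same as a $\gS_A$-linear map $\varphi^*\gM\to\gN$). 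On the right, $H^0(C^\bullet(\gN))$ is by definition the kernel of the differential $\alpha \mapsto \Phi_{\gN}\circ\varphi^*\alpha - \alpha\circ\Phi_{\gM}$ on $C^0_{\gM}(\gN) = \Hom_{\gS_A[\Gal(K'/K)]}(\gM,\gN)$. So the two sets coincide on the nose.

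Concretely, first I would spell out the adjunction identifying, for any $\gS_A[\Gal(K'/K)]$-module $\gN$ carrying a compatible $\varphi$-structure, the data of a $\varphi$-semilinear $\Gal(K'/K)$-equivariant map $f\colon\gM\to\gN$ with the data of a $\gS_A[\Gal(K'/K)]$-linear map $\tilde f\colon\varphi^*\gM\to\gN$; here one uses that $\varphi\colon\gS_A\to\gS_A$ makes $\gS_A$ free (hence flat) over itself and is $\Gal(K'/K)$-equivariant, so $\varphi^*$ interacts well with the group action. Second, under this identification, the compatibility of $f$ with the Frobenii $\Phi_{\gM},\Phi_{\gN}$ translates into the equation $\tilde f\circ\Phi_{\gM} = \Phi_{\gN}\circ\varphi^*(\cdot)$ — more precisely, writing $\alpha$ for the element of $\Hom_{\gS_A[\Gal(K'/K)]}(\gM,\gN)$ underlying $f$, the condition is exactly $\Phi_\gN\circ\varphi^*\alpha = \alpha\circ\Phi_\gM$, which is the vanishing of the differential defining $C^\bullet(\gN)$. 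Third, I would note this identification is visibly natural in $\gN$ (both sides are subfunctors of $\Hom_{\gS_A[\Gal(K'/K)]}(\gM,-)$ cut out by the same linear condition), which gives the asserted isomorphism of functors.

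There is no real obstacle here; the lemma is a bookkeeping statement, and the only thing to be careful about is getting the adjunction and the direction of the differential exactly right, so that "commutes with $\varphi$" matches "lies in the kernel of the first differential of $C^\bullet$". I would therefore keep the proof to a couple of sentences: after recalling the adjunction $\Hom_{\gS_A}(\varphi^*\gM,\gN)\cong\{\varphi\text{-semilinear }\gM\to\gN\}$ and its $\Gal(K'/K)$-equivariant refinement, a morphism in $\K{A}$ is precisely an $\alpha\in\Hom_{\gS_A[\Gal(K'/K)]}(\gM,\gN)$ with $\Phi_\gN\circ\varphi^*\alpha = \alpha\circ\Phi_\gM$, i.e.\ an element of $H^0(C^\bullet(\gN))$, and this identification is manifestly functorial in $\gN$. (This also sets up the subsequent identification of $H^1(C^\bullet(-))$ with $\Ext^1_{\K{A}}(\gM,-)$ via the $\delta$-functor remark already recorded before the lemma.)
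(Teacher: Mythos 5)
Your proposal is correct and matches the paper's approach; the paper in fact dismisses this with a one-line "This is immediate," and your unwinding of the adjunction $\Hom_{\gS_A}(\varphi^*\gM,\gN)\cong\{\varphi\text{-semilinear maps }\gM\to\gN\}$ and the resulting identification of $\ker\delta$ with $\Hom_{\K{A}}(\gM,\gN)$ is precisely the bookkeeping that justifies that claim.
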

\begin{proof}
This is immediate.
\end{proof}

It follows from this lemma and 
a standard dimension shifting argument (or, equivalently, the theory
of $\delta$-functors) that there is an embedding of functors
\numequation
\label{eqn:ext embedding}
\Ext^1_{\K{A}}(\gM, \text{--} \, ) \hookrightarrow
H^1\bigl(C^{\bullet}(\text{--})\bigr).
\end{equation}

\begin{lemma}\label{lem: C computes Ext^1}
The embedding of functors~\eqref{eqn:ext embedding}
is an isomorphism.
\end{lemma}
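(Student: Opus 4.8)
The plan is to realise the complex $C^\bullet_\gM(\gN)$ as $\Hom_{\K{A}}(P_\bullet,\gN)$ for an explicit length-one projective resolution $P_\bullet$ of $\gM$ in $\K{A}$; then $H^i\bigl(C^\bullet(\gN)\bigr)=\Ext^i_{\K{A}}(\gM,\gN)$ canonically for all $i$, and since \eqref{eqn:ext embedding} is by construction the unique morphism of $\delta$-functors extending the degree-$0$ isomorphism of Lemma~\ref{lem: C computes Hom}, it must agree with this canonical isomorphism in degree $1$, hence be an isomorphism.

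Concretely, write $R:=\gS_A[F,\Gal(K'/K)]$ and $R_0:=\gS_A[\Gal(K'/K)]$, so that $\K{A}=R\text{-Mod}$ and $R_0\subseteq R$ is the subring of elements of $F$-degree $0$. Set $P_0:=R\otimes_{R_0}\gM$ and $P_1:=R\otimes_{R_0}\varphi^*\gM$ (induction along $R_0\hookrightarrow R$, with $\gM$ and $\varphi^*\gM$ regarded as $R_0$-modules via their $\gS_A$-module structure and descent data). Let $P_0\to\gM$ be the counit $r\otimes m\mapsto r\cdot m$, and let $d\colon P_1\to P_0$ be the $R$-linear extension of the $R_0$-linear map $\varphi^*\gM\to P_0$ given by $\sum_i s_i\otimes_{\varphi}m_i\mapsto \sum_i s_i\cdot(F\otimes m_i)-1\otimes\Phi_\gM\!\left(\sum_i s_i\otimes_{\varphi}m_i\right)$. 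First I would check that these maps are well defined and that the composite $P_1\to P_0\to\gM$ is zero; both are short computations using the commutation relations $Fs=\varphi(s)F$, $gs=g(s)g$ and the identity $\Phi_\gM(s\otimes_{\varphi}m)=s\varphi_\gM(m)$.

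Next I would show that $0\to P_1\xrightarrow{d}P_0\to\gM\to0$ is exact in $\K{A}$. Exactness can be checked Zariski-locally on $\Spec A$, so by Remark~\ref{rem:projectivity for Kisin modules}(2) one reduces to the case in which $\gM$ is free over $\gS_A$; then, writing $P_0\cong\bigoplus_{n\ge0}F^n\otimes\varphi^{n\ast}\gM$ and $P_1\cong\bigoplus_{n\ge0}F^n\otimes\varphi^{(n+1)\ast}\gM$ as $R_0$-modules, the map $d$ is ``upper triangular'' with respect to the $F$-degree, and a direct verification gives injectivity of $d$ together with $\im d=\ker(P_0\to\gM)$. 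This bookkeeping with the twisted rings is the one genuinely fiddly step and is where I expect the main work to lie; everything else is formal. Finally, since restriction $R\text{-Mod}\to R_0\text{-Mod}$ is exact, its left adjoint $R\otimes_{R_0}(-)$ preserves projectives, and $\gM$ and $\varphi^*\gM$ are projective over $R_0=\gS_A[\Gal(K'/K)]$ — being projective over $\gS_A$ and carrying a semilinear action of a group of order prime to $p$, exactly as recorded in the discussion of twisted group rings preceding Theorem~\ref{thm: map from Kisin to general local model is formally smooth} — the objects $P_0$ and $P_1$ are projective in $\K{A}$, so $P_\bullet\to\gM$ is a projective resolution.

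It then remains to match the resulting computation with $C^\bullet$. The adjunction isomorphism $\Hom_{\K{A}}(R\otimes_{R_0}X,\gN)\cong\Hom_{\gS_A[\Gal(K'/K)]}(X,\gN)$ identifies $\Hom_{\K{A}}(P_\bullet,\gN)$ with $C^\bullet_\gM(\gN)$ term by term, and unwinding $\alpha\mapsto\alpha\circ d$ through this adjunction recovers precisely the differential $\alpha\mapsto\Phi_\gN\circ\varphi^*\alpha-\alpha\circ\Phi_\gM$ of Definition~\ref{def:explicit Ext complex} (here the term $s\cdot(F\otimes m)$ produces $\Phi_\gN\circ\varphi^*\alpha$ and the term $1\otimes\Phi_\gM(-)$ produces $\alpha\circ\Phi_\gM$). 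Hence $H^i\bigl(C^\bullet(\gN)\bigr)\cong\Ext^i_{\K{A}}(\gM,\gN)$ naturally in $\gN$; in particular $H^{>0}\bigl(C^\bullet(-)\bigr)$ vanishes on injective objects of $\K{A}$, so the $\delta$-functor $H^\bullet\bigl(C^\bullet(-)\bigr)$ is universal, and therefore the natural transformation \eqref{eqn:ext embedding} — which is the identity in degree $0$ after Lemma~\ref{lem: C computes Hom} — is an isomorphism in degree $1$, as claimed.
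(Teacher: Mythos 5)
Your proposal is correct, but it takes a genuinely different route from the paper. The paper's proof is entirely elementary and constructive: it makes the embedding~\eqref{eqn:ext embedding} explicit (given an extension $\gE$, split it $\gS_A[\Gal(K'/K)]$-linearly by $\sigma$ and form the cocycle $\Phi_{\gE}\circ\varphi^*\sigma-\sigma\circ\Phi_{\gM}$), and then proves surjectivity by directly exhibiting, for each $\nu\in\Hom_{\gS_A[\Gal(K'/K)]}(\varphi^*\gM,\gN)$, the rank-two Breuil--Kisin module $\gE=\gN\oplus\gM$ with $\Phi_\gE=\begin{pmatrix}\Phi_\gN&\nu\\0&\Phi_\gM\end{pmatrix}$ and checking that it realises $\nu$. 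Your proof instead builds a length-one projective resolution $P_\bullet\to\gM$ by inducing up from $R_0=\gS_A[\Gal(K'/K)]$ to $R=\gS_A[F,\Gal(K'/K)]$, identifies $C^\bullet(\gN)$ with $\Hom_{\K{A}}(P_\bullet,\gN)$ by adjunction, and concludes via uniqueness of morphisms from the universal $\delta$-functor $\Ext^\bullet_{\K{A}}(\gM,-)$. Both approaches are sound; yours is more systematic (it makes the dimension-shifting step honest and yields $H^i(C^\bullet(\gN))\cong\Ext^i_{\K{A}}(\gM,\gN)$ for every $i$ in a single stroke), at the cost of having to verify the resolution is exact, which requires some bookkeeping with the graded decomposition $R\otimes_{R_0}\gM\cong\bigoplus_{n\ge0}\varphi^{n*}\gM$ (your reduction to $\gM$ free is actually unnecessary: the ``shift-by-one plus identity'' form of $d$ works for arbitrary $\gM$, and injectivity and $\im d=\ker(P_0\to\gM)$ follow from the finite support of elements of a direct sum). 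The paper's route has the ancillary advantage that the explicit construction of $\gE$ from a cocycle is reused directly later, e.g.\ to write down the extensions $\gP(h)$ in the discussion following Theorem~\ref{thm: extensions of rank one Kisin modules}. One small logical point: rather than arguing that $H^\bullet(C^\bullet(-))$ is itself universal, it is cleaner to invoke universality of the \emph{source} $\Ext^\bullet_{\K{A}}(\gM,-)$: any two morphisms of $\delta$-functors out of a universal $\delta$-functor agreeing in degree $0$ coincide, so \eqref{eqn:ext embedding} agrees with the isomorphism coming from your resolution and is therefore itself an isomorphism.
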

\begin{proof}
We first describe the embedding~\eqref{eqn:ext embedding} explicitly.
Suppose that 
$$0 \to \gN \to \gE \to \gM \to 0$$
is an extension in $\K{A}$.  Since $\gM$ is projective over $\gS_A$,
and since $\Gal(K'/K)$ is of prime-to-$p$ order, 
we split this
short exact sequence
over the twisted group ring $\gS_A[\Gal(K'/K)],$ 
say via some element $\sigma \in \Hom_{\gS_A[\Gal(K'/K)]}(\gM,\gE).$
This splitting is well-defined up to the addition of an
element $\alpha \in \Hom_{\gS_A[\Gal(K'/K)]}(\gM,\gN).$

This splitting is a homomorphism in $\K{A}$ if and only
if the element
$$\Phi_{\gE}\circ \varphi^*\sigma - \sigma \circ \Phi_{\gM}
\in \Hom_{\gS_A[\Gal(K'/K)]}(\varphi^*\gM,\gN)$$ vanishes.
If we replace $\sigma$ by $\sigma +\alpha,$ then this element
is replaced by
$$(\Phi_{\gE}\circ \varphi^*\sigma - \sigma \circ \Phi_{\gM})
+  (\Phi_{\gN} \circ \varphi^* \alpha - \alpha \circ \Phi_{\gM}).$$
Thus the coset of 
$\Phi_{\gE}\circ \varphi^*\sigma - \sigma \circ \Phi_{\gM}$
in $H^1\bigl(C^{\bullet}(\gN)\bigr)$ is well-defined, independent
of the choice of $\sigma,$
and this coset is the image of the class of the extension
$\gE$ under the embedding
\numequation
\label{eqn:explicit embedding}
\Ext^1_{\K{A}}(\gM,\gN) \hookrightarrow 
H^1\bigl(C^{\bullet}(\gN)\bigr)
\end{equation}
(up to a possible overall sign, which we ignore, since it doesn't
affect the claim of the lemma).

Now, given any element  
$\nu \in \Hom_{\gS_A[\Gal(K'/K)]}(\varphi^*\gM,\gN)$,
we may give the $\gS_A[\Gal(K'/K)]$-module $\gE := \gN \oplus \gM$ the
structure of a $\gS_A[F,\Gal(K'/K)]$-module as follows: we need to
define a $\varphi$-linear morphism $\gE\to\gE$, or equivalently a linear
morphism $\Phi_{\gE}:\varphi^*\gE\to\gE$. We do this
by setting $$\Phi_{\gE} := \begin{pmatrix} \Phi_{\gN} & \nu \\ 0 & \Phi_{\gM} 
\end{pmatrix}.$$
Then $\gE$ is an extension of $\gM$ by $\gN$,
and if we let $\sigma$ denote the obvious embedding of $\gM$ into $\gE$,
then one computes that 
$$\nu = \Phi_{\gE}\circ \varphi^*\sigma - \sigma \circ \Phi_{\gM} .$$
This shows that~\eqref{eqn:explicit embedding} is an isomorphism, as claimed.
\end{proof}

Another dimension shifting argument, taking into account the preceding
lemma, shows that $\Ext^2_{\K{A}}(\gM,\text{--} \,)$ embeds into
$H^2\bigl( C^{\bullet}(\text{--}) \bigr).$ Since the target of this
embedding vanishes,
we find that the same is true of the source.  This yields the
following corollary.

\begin{cor}
\label{cor:ext2 vanishes}
If $\gM$ is a Breuil--Kisin module with $A$-coefficients and descent data,
then $\Ext^2_{\K{A}}(\gM, \text{--} \, ) = 0.$
\end{cor}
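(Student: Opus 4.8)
The plan is to use a standard dimension-shifting argument, leveraging the machinery already set up in Lemmas~\ref{lem: C computes Hom} and~\ref{lem: C computes Ext^1}. First I would recall that the functors $H^i\bigl(C^\bullet(\text{--})\bigr)$ form a $\delta$-functor on $\K{A}$ (this was already noted right after Definition~\ref{def:explicit Ext complex}, using that $\gM$ and $\varphi^*\gM$ are projective over $\gS_A$ and that $\Gal(K'/K)$ has prime-to-$p$ order, so that $C^\bullet(\text{--})$ is exact in the second variable). The crucial structural observation is that the complex $C^\bullet_{\gM}(\gN)$ is concentrated in degrees $0$ and $1$ — it has only two terms, $\Hom_{\gS_A[\Gal(K'/K)]}(\gM,\gN)$ and $\Hom_{\gS_A[\Gal(K'/K)]}(\varphi^*\gM,\gN)$ — so $H^i\bigl(C^\bullet(\gN)\bigr) = 0$ for all $i \ge 2$ and all $\gN$.

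Next I would run the dimension-shifting step. Given any object $\gN$ of $\K{A}$, embed it into an injective object $\gI$ of $\K{A}$ (the category $\K{A}$ of left $\gS_A[F,\Gal(K'/K)]$-modules is a module category, hence has enough injectives — this is exactly why the larger category was introduced in Subsection~\ref{subsec:ext generalities}), giving a short exact sequence $0 \to \gN \to \gI \to \gQ \to 0$. The long exact sequence for $\Ext^\bullet_{\K{A}}(\gM,\text{--})$ gives an isomorphism $\Ext^2_{\K{A}}(\gM,\gN) \cong \Ext^1_{\K{A}}(\gM,\gQ)$ since $\Ext^i_{\K{A}}(\gM,\gI) = 0$ for $i \ge 1$. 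By Lemma~\ref{lem: C computes Ext^1}, $\Ext^1_{\K{A}}(\gM,\gQ) \cong H^1\bigl(C^\bullet(\gQ)\bigr)$; chasing the long exact cohomology sequence of the $\delta$-functor $H^\bullet\bigl(C^\bullet(\text{--})\bigr)$ attached to $0 \to \gN \to \gI \to \gQ \to 0$, the connecting map $H^1\bigl(C^\bullet(\gQ)\bigr) \to H^2\bigl(C^\bullet(\gN)\bigr)$ lands in a group which vanishes because $C^\bullet$ has no term in degree $2$. More precisely: the image of the class of an extension of $\gM$ by $\gN$ under $\Ext^2_{\K{A}}(\gM,\gN) \hookrightarrow H^2\bigl(C^\bullet(\gN)\bigr)$ (this embedding itself being produced by the same dimension-shifting argument applied one more step, exactly as sketched in the paragraph preceding the corollary) must be zero since the target is zero; hence $\Ext^2_{\K{A}}(\gM,\gN) = 0$.

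Honestly there is essentially no obstacle here: the statement is a formal consequence of the fact that the two-term complex $C^\bullet_{\gM}(\text{--})$ computes $\Hom$ and $\Ext^1$ together with its vanishing in degrees $\ge 2$, combined with the existence of enough injectives in $\K{A}$. The only point requiring a modicum of care is making the dimension-shifting bookkeeping precise — namely writing down the embedding $\Ext^2_{\K{A}}(\gM,\text{--}) \hookrightarrow H^2\bigl(C^\bullet(\text{--})\bigr)$ as the next stage of the inductive argument that already produced~\eqref{eqn:ext embedding}, and then invoking $H^2\bigl(C^\bullet(\text{--})\bigr) = 0$. Since the excerpt has already done the hard cases ($i = 0, 1$) explicitly, the write-up can be extremely short, just citing Lemma~\ref{lem: C computes Ext^1} and the two-term nature of $C^\bullet$.
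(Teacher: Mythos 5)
Your proof is correct, and the ``more precisely'' formulation at the end of your second paragraph is exactly the paper's argument: dimension-shifting produces an embedding $\Ext^2_{\K{A}}(\gM, \text{--}\,) \hookrightarrow H^2\bigl(C^{\bullet}(\text{--})\bigr)$, and the target vanishes since $C^{\bullet}$ is concentrated in degrees $0$ and $1$. One minor imprecision in the chain of isomorphisms earlier in that paragraph: having established $\Ext^2_{\K{A}}(\gM,\gN) \cong \Ext^1_{\K{A}}(\gM,\mathfrak{Q}) \cong H^1\bigl(C^{\bullet}(\mathfrak{Q})\bigr)$, the observation that the connecting map into $H^2\bigl(C^{\bullet}(\gN)\bigr)=0$ vanishes does not by itself yield $H^1\bigl(C^{\bullet}(\mathfrak{Q})\bigr)=0$ --- by exactness it only gives that $H^1\bigl(C^{\bullet}(\mathfrak{I})\bigr)$ surjects onto $H^1\bigl(C^{\bullet}(\mathfrak{Q})\bigr)$, and one must also note $H^1\bigl(C^{\bullet}(\mathfrak{I})\bigr) \cong \Ext^1_{\K{A}}(\gM,\mathfrak{I}) = 0$ (Lemma~\ref{lem: C computes Ext^1} together with injectivity of $\mathfrak{I}$) to close that variant. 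Since your reformulation via the embedding $\Ext^2 \hookrightarrow H^2 = 0$ bypasses this and reproduces the paper's one-line remark, the proposal as a whole is sound.
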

We summarise the above discussion in the following corollary.
\begin{cor}
  \label{cor:complex computes Hom and Ext}If $\gM$ is a 
  Breuil--Kisin module with $A$-coefficients and descent data, and $\gN$ is an
  object of~$\K{A}$, then we have a natural short exact
  sequence \[0\to\Hom_{\K{A}}(\gM,\gN)\to C^0(\gN)\to
    C^1(\gN)\to\Ext^1_{\K{A}}(\gM,\gN)\to 0.\]
\end{cor}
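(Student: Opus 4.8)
The plan is to assemble the short exact sequence by splicing together the four isomorphisms and embeddings already established. The key facts in hand are: Lemma~\ref{lem: C computes Hom}, which identifies $\Hom_{\K{A}}(\gM,-)$ with $H^0(C^\bullet(-))$; Lemma~\ref{lem: C computes Ext^1}, which identifies $\Ext^1_{\K{A}}(\gM,-)$ with $H^1(C^\bullet(-))$; and the description of $C^\bullet(\gN)$ as the two-term complex $C^0(\gN)\to C^1(\gN)$ concentrated in degrees $0$ and $1$, so that $H^0(C^\bullet(\gN))=\ker(C^0(\gN)\to C^1(\gN))$ and $H^1(C^\bullet(\gN))=\coker(C^0(\gN)\to C^1(\gN))$ by definition.

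First I would write down, for a fixed object $\gN$ of $\K{A}$, the tautological exact sequence attached to a two-term cochain complex: since $C^\bullet(\gN)$ is $[\,C^0(\gN)\xrightarrow{d} C^1(\gN)\,]$ with $d(\alpha)=\Phi_{\gN}\circ\varphi^*\alpha-\alpha\circ\Phi_{\gM}$, we have by definition of cohomology the exact sequence
\[
0\to H^0(C^\bullet(\gN))\to C^0(\gN)\xrightarrow{d} C^1(\gN)\to H^1(C^\bullet(\gN))\to 0.
\]
Then I would substitute the identifications $H^0(C^\bullet(\gN))\cong\Hom_{\K{A}}(\gM,\gN)$ from Lemma~\ref{lem: C computes Hom} and $H^1(C^\bullet(\gN))\cong\Ext^1_{\K{A}}(\gM,\gN)$ from Lemma~\ref{lem: C computes Ext^1}. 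This yields exactly the claimed four-term exact sequence $0\to\Hom_{\K{A}}(\gM,\gN)\to C^0(\gN)\to C^1(\gN)\to\Ext^1_{\K{A}}(\gM,\gN)\to 0$. Naturality in $\gN$ is inherited from the naturality of each of the constituent isomorphisms and of the complex $C^\bullet(-)$ itself, all of which were noted when those objects were introduced (the formation of $C^\bullet(\gN)$ is functorial in $\gN$, and the isomorphisms of Lemmas~\ref{lem: C computes Hom} and~\ref{lem: C computes Ext^1} are isomorphisms of functors).

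There is essentially no obstacle here: the statement is a formal repackaging of the two preceding lemmas together with the definition of the cohomology of a length-one complex. The only point requiring a word of care is the identification $H^1(C^\bullet(\gN))=\coker(d)$, which holds precisely because $C^i(\gN)=0$ for $i\ge 2$ (there is no $C^2$ term to quotient the image of $d$ by anything smaller), and this is visible directly from Definition~\ref{def:explicit Ext complex}. So the proof is a one- or two-line citation: extract the universal exact sequence of the two-term complex $C^\bullet(\gN)$ and rewrite its outer terms using Lemmas~\ref{lem: C computes Hom} and~\ref{lem: C computes Ext^1}.
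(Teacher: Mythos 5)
Your proposal is correct and matches the paper's implicit argument: the corollary is introduced with the phrase ``We summarise the above discussion,'' and the intended proof is precisely the splicing you describe, combining Lemma~\ref{lem: C computes Hom}, Lemma~\ref{lem: C computes Ext^1}, and the tautological four-term exact sequence attached to the two-term complex $C^{\bullet}(\gN)$.
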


The following lemma records the behaviour of these complexes with
respect to base change.

\begin{lemma}\label{lem:base-change-complexes}
  Suppose that $\gM$, $\gN$ are Breuil--Kisin modules with descent data and
  $A$-coefficients, that $B$ is an $A$-algebra, and that $Q$ is a
  $B$-module. Then the
  complexes $C^{\bullet}_{\gM}(\gN \cotimes_A Q)$ and
  $C^{\bullet}_{\gM \cotimes_A B}(\gN \cotimes_A Q)$ coincide, the
  former complex
  formed with respect to $\K{A}$ and the latter with respect to $\K{B}$.
\end{lemma}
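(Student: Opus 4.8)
The plan is to reduce the whole statement to the tensor--hom adjunction along the ring homomorphism $\gS_A[\Gal(K'/K)]\to\gS_B[\Gal(K'/K)]$, combined with the compatibility of Frobenius pullback with base change.

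First I would observe that $\gN\cotimes_A Q$, which by Definition~\ref{def:completed tensor} is the $u$-adic completion of $\gN\otimes_A Q$, is naturally a $\gS_B$-module: the $B$-module structure comes from $Q$ and commutes with $u$, so $\gN\otimes_A Q$ is a module over $\gS_A\otimes_A B$, whose $u$-adic completion is $\gS_B$. Moreover $\varphi_\gN$ and the $\hat g$ extend $B$-linearly to $\gN\otimes_A Q$ and pass to the $u$-adic completion, exhibiting $\gN\cotimes_A Q$ as an object of $\K{B}$. Its structure as an object of $\K{A}$ — the one used in forming $C^\bullet_{\gM}(\gN\cotimes_A Q)$ — is exactly the restriction of this $\K{B}$-structure along $\gS_A[F,\Gal(K'/K)]\to\gS_B[F,\Gal(K'/K)]$; in particular the maps $\varphi$ and $\hat g$ on $\gN\cotimes_A Q$ are literally the same map whether the module is regarded in $\K{A}$ or in $\K{B}$.

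Next I would record the base-change identities for $\gM$. By (the proof of) Lemma~\ref{rem: base change of locally free Kisin module is a locally free Kisin module} we have $\gM\cotimes_A B\cong\gM\otimes_{\gS_A}\gS_B$; the same computation applies to any finitely generated projective $\gS_A$-module, in particular to $\varphi^*\gM$, so that $(\varphi^*\gM)\cotimes_A B\cong(\varphi^*\gM)\otimes_{\gS_A}\gS_B$, and, since $\gS_A\to\gS_B$ is $\varphi$-equivariant, there are natural identifications $\varphi^*(\gM\cotimes_A B)\cong(\varphi^*\gM)\otimes_{\gS_A}\gS_B$ and $\Phi_{\gM\cotimes_A B}=\Phi_\gM\otimes_{\gS_A}\gS_B$. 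Now $\gS_B[\Gal(K'/K)]\cong\gS_B\otimes_{\gS_A}\gS_A[\Gal(K'/K)]$ as $(\gS_B,\gS_A[\Gal(K'/K)])$-bimodules, so $\gS_B[\Gal(K'/K)]\otimes_{\gS_A[\Gal(K'/K)]}\gM\cong\gS_B\otimes_{\gS_A}\gM\cong\gM\cotimes_A B$, and the tensor--hom adjunction gives natural isomorphisms
\[
\Hom_{\gS_A[\Gal(K'/K)]}(\gM,\gN\cotimes_A Q)\iso\Hom_{\gS_B[\Gal(K'/K)]}(\gM\cotimes_A B,\gN\cotimes_A Q)
\]
and the analogous one with $\varphi^*\gM$ in place of $\gM$. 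These identify the two terms of $C^\bullet_{\gM}(\gN\cotimes_A Q)$ with the two terms of $C^\bullet_{\gM\cotimes_A B}(\gN\cotimes_A Q)$.

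Finally I would check that the differentials match. Writing $P=\gN\cotimes_A Q$, a homomorphism $\alpha\colon\gM\to P$ of $\gS_A[\Gal(K'/K)]$-modules corresponds under the adjunction to $\alpha\otimes\id\colon\gM\cotimes_A B\to P$, and under $\varphi^*(\gM\cotimes_A B)\cong(\varphi^*\gM)\otimes_{\gS_A}\gS_B$ one has $\varphi^*(\alpha\otimes\id)=(\varphi^*\alpha)\otimes\id$. Combining this with $\Phi_{\gM\cotimes_A B}=\Phi_\gM\otimes\id$ and the fact that $\Phi_P$ is the same map in both pictures, one obtains $\Phi_P\circ\varphi^*(\alpha\otimes\id)-(\alpha\otimes\id)\circ\Phi_{\gM\cotimes_A B}=(\Phi_P\circ\varphi^*\alpha-\alpha\circ\Phi_\gM)\otimes\id$, which is exactly the image, under the adjunction isomorphism on $C^1$, of the value of the $\K{A}$-differential at $\alpha$. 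Hence the two complexes coincide. None of this is hard; the only point demanding care is the bookkeeping with $u$-adic completions — pinning down $\gM\cotimes_A B\cong\gS_B\otimes_{\gS_A}\gM$ and the compatibility of $\varphi^*(-)$ with $\cotimes_A B$ — which is the natural candidate for the main obstacle, though it is already essentially contained in the earlier lemmas.
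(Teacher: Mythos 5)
Your proof is correct and takes essentially the same approach as the paper, which simply states the tensor--hom adjunction isomorphism $\Hom_{\gS_A[\Gal(K'/K)]}(\gM,\gN\cotimes_A Q)\cong\Hom_{\gS_B[\Gal(K'/K)]}(\gM\cotimes_A B,\gN\cotimes_A Q)$ (and the analogue for $\varphi^*\gM$) and leaves the bookkeeping to the reader. You have merely filled in the verification that the differentials agree and that $\gM\cotimes_A B\cong\gS_B\otimes_{\gS_A}\gM$, which is compatible with the paper's one-line proof.
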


\begin{proof}
Indeed, there is a natural isomorphism  $$\Hom_{\gS_A[\Gal(K'/K)]}(\gM, \gN\cotimes_A Q) \cong
\Hom_{\gS_B[\Gal(K'/K)]}(\gM\cotimes_A B, \gN\cotimes_A Q),$$
and similarly with $\varphi^*\gM$ in place of $\gM$.
\end{proof}

The following slightly technical lemma is
crucial for establishing 
finiteness properties, and also base-change properties,
of Exts of Breuil--Kisin modules.

\begin{lem}
  \label{lem:truncation argument used to prove f.g. of Ext Q version}Let $A$ be
  a $\cO/\varpi^a$-algebra for some $a\ge 1$, suppose that
  $\gM$ 
is a Breuil--Kisin module with descent data and $A$-coefficients,
of height at most~$h$,
and suppose that $\gN$ is a $u$-adically complete, $u$-torsion
free object of $\K{A}$. 

Let $C^\bullet$ be the complex defined
  in \emph{Definition~\ref{def:explicit Ext complex}}, and write~$\delta$ for
  its differential. Suppose that $Q$ is an $A$-module with
the property that $C^i\otimes_A Q$ is $v$-torsion free for $i=0,1$
and $v$-adically  separated for $i=0$.

 Then:
  \begin{enumerate}
  \item For any integer $M\ge (eah+1)/(p-1)$, $\ker (\delta\otimes \id_Q)\cap
    v^MC^0\otimes_AQ=0$.
  \item For any integer $N \ge (peah+1)/(p-1)$, $\delta\otimes\id_Q$ induces an isomorphism
\[(\Phi_{\gM}^*)^{-1}(v^N C^1\otimes_AQ) \isoto v^N (C^1\otimes_AQ).\]
  \end{enumerate}
Consequently, for $N$ as in \emph{(2)} the natural morphism of complexes of $A$-modules
$$[ C^0\otimes_AQ \buildrel \delta\otimes\id_Q \over \longrightarrow C^1\otimes_AQ] \to
[C^0\otimes_AQ/\bigl((\Phi_{\gM}^*)^{-1}(v^N C^1\otimes_AQ) \bigr) \buildrel \delta\otimes\id_Q
\over \longrightarrow C^1\otimes_AQ/v^N C^1\otimes_AQ ]$$
is a quasi-isomorphism.
\end{lem}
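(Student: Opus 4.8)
The plan is to first establish parts (1) and (2) by exploiting the divisibility $E(u)^h \mid u^{e(a+h-1)}$ in $\gS_A$ that was already invoked in the proof of Lemma~\ref{lem:kisin injective}; since $v = u^{e(K'/K)}$ and the complex $C^\bullet$ consists of $\gS_A^0$-modules, this translates into a statement about how $\Phi_{\gM}^*$ interacts with the $v$-adic filtration. Concretely, the condition $\gM$ has height at most $h$ means $E(u)^h \gM \subseteq \im \Phi_{\gM}$, so there is an auxiliary map $\Psi_{\gM}: \gM \to \varphi^*\gM$ with $\Phi_{\gM}\circ\Psi_{\gM}$ and $\Psi_{\gM}\circ\Phi_{\gM}$ both equal to multiplication by $E(u)^h$ (up to the appropriate identifications). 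Dualizing, this gives maps between $C^0$ and $C^1$ going the ``wrong way'' whose composites with $\delta$ and $\Phi_{\gM}^*$ are multiplication by $E(u)^h$; combined with $E(u)^h \mid u^{e(a+h-1)} \mid v^{a+h-1}$ (or a mild variant), one reads off that $\Phi_{\gM}^*$ shifts $v$-adic valuations by a bounded amount, and that $\delta$ differs from $\Phi_{\gM}^*$ by the ``bounded'' term $\alpha \mapsto \Phi_{\gN}\circ\varphi^*\alpha$, where $\varphi^*$ raises $v$-valuations (as $\varphi(v) = v^p$ up to adjusting by a unit power of $u$; more precisely $\varphi$ sends $v^M$ into $v^{pM}\gS_A^0$).

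For (1): if $\alpha \in v^M C^0 \otimes_A Q$ lies in $\ker(\delta\otimes\id_Q)$, then $\alpha\circ\Phi_{\gM} = \Phi_{\gN}\circ\varphi^*\alpha \in v^{pM'}(\cdots)$ for the appropriate $M'$, while precomposing with $\Psi_{\gM}$ shows $E(u)^h \alpha$ (viewed in $C^0$) is divisible by a large power of $v$; iterating, $\alpha$ is divisible by arbitrarily large powers of $v$, hence $\alpha = 0$ by the assumed $v$-adic separatedness of $C^0\otimes_A Q$. The numerics $M \ge (eah+1)/(p-1)$ are exactly what makes the geometric series $M, pM - (eah), p(pM-eah) - eah, \dots$ diverge. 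For (2): surjectivity of $\delta\otimes\id_Q : (\Phi_{\gM}^*)^{-1}(v^N C^1\otimes_A Q) \to v^N(C^1\otimes_A Q)$ is proved by a successive-approximation argument solving $\delta(\alpha) \equiv \beta \pmod{v^{\text{large}}}$ using the auxiliary map $\Psi_{\gM}^*$ as an approximate inverse to $\Phi_{\gM}^*$ and using that $\varphi^*$ contracts; $v$-torsion-freeness of $C^1\otimes_A Q$ guarantees the correction terms are well-defined and the process converges in the $v$-adic topology on the target (which is $v$-adically complete since $\gN$ is $u$-adically complete and $C^1 \otimes_A Q$ is a suitable quotient/submodule — here one may need to pass through $C^1 = \Hom_{\gS_A[\Gal(K'/K)]}(\varphi^*\gM, \gN)$ and note completeness is inherited). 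Injectivity on that subspace follows from (1). The threshold $N \ge (peah+1)/(p-1)$ is $p$ times the bound in (1) shifted appropriately, reflecting that a preimage under $\Phi_{\gM}^*$ of something in $v^N C^1$ lies in $v^{N/p - eah}C^0 \supseteq v^M C^0$.

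Finally, the quasi-isomorphism statement is formal given (1) and (2): write $K^\bullet$ for the source two-term complex and $L^\bullet$ for the target; the morphism $K^\bullet \to L^\bullet$ is the obvious quotient in each degree. Its kernel is the subcomplex $[(\Phi_{\gM}^*)^{-1}(v^N C^1 \otimes_A Q) \xrightarrow{\delta\otimes\id_Q} v^N C^1\otimes_A Q]$, and by part (2) this map is an isomorphism, so the kernel complex is acyclic; hence $K^\bullet \to L^\bullet$ is a quasi-isomorphism. (One should double-check that the differential on $L^\bullet$ is well-defined, i.e.\ that $\delta\otimes\id_Q$ sends $(\Phi_{\gM}^*)^{-1}(v^N C^1\otimes_A Q)$ into $v^N C^1\otimes_A Q$ — but that is immediate since for $\alpha$ in that preimage, $\alpha\circ\Phi_{\gM} = \Phi_{\gM}^*(\alpha) \in v^N C^1\otimes_A Q$ and $\Phi_{\gN}\circ\varphi^*\alpha$ has even higher $v$-valuation, at least when $N$ is large enough, so $\delta(\alpha)$ lands in $v^N C^1\otimes_A Q$; a small care with the exact bound $N$ versus $pN - eah$ is needed, and the hypothesis on $N$ is arranged so that $pN - eah \ge N$.)

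\textbf{Main obstacle.} The genuinely delicate point is the convergence and well-definedness in part (2): one must run a $v$-adic successive approximation inside $C^0 \otimes_A Q$ and $C^1\otimes_A Q$, which are $\Hom$-modules tensored with an arbitrary $A$-module $Q$, so they need not be well-behaved $\gS_A^0$-modules in general — this is precisely why the hypotheses that $C^0\otimes_A Q$ is $v$-adically separated and $C^i\otimes_A Q$ is $v$-torsion free are imposed. Threading the exact numerical bounds $(eah+1)/(p-1)$ and $(peah+1)/(p-1)$ through the iteration (so that each step strictly improves the $v$-adic approximation) is the part requiring genuine care rather than routine bookkeeping; everything else, including the deduction of the quasi-isomorphism, is soft.
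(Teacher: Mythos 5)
Your overall strategy is the same as the paper's: build an auxiliary ``almost-inverse'' to $\Phi_{\gM}$ coming from the height bound (the paper calls it $\Upsilon$ and arranges $\Phi_{\gM}\circ\Upsilon = v^{eah}$ directly, using that $u^{e'ah} = E(u)^h H(u)$ in $\gS_A$; your $\Psi_{\gM}$ with $\Phi_{\gM}\circ\Psi_{\gM} = E(u)^h$ followed by multiplication by $H(u)$ gives the same thing), then do an inductive valuation bootstrap for (1), a successive approximation for the surjectivity in (2), and deduce the quasi-isomorphism formally. Part (1) and the final deduction are fine as you describe them.

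However, there is a genuine gap in your treatment of surjectivity in part (2). Your successive approximation needs the module in which the approximants live to be $v$-adically \emph{complete}, so that the Cauchy sequence of approximate solutions actually has a limit. But the hypotheses on $Q$ grant only $v$-torsion-freeness of $C^i\otimes_A Q$ and $v$-adic \emph{separatedness} of $C^0\otimes_A Q$ --- not completeness, and not even separatedness of $C^1\otimes_A Q$. Your parenthetical assertion that ``$C^1\otimes_A Q$ is $v$-adically complete since $\gN$ is $u$-adically complete and ... completeness is inherited'' is false in general: tensoring a complete module with an arbitrary $A$-module destroys completeness (and indeed separatedness). So the iteration as you set it up does not converge for arbitrary $Q$, and in any case the module you should be tracking is $C^0\otimes_A Q$ (where the $\alpha$'s live), not the target.

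The missing idea is the reduction to $Q = A$. The surjectivity claim is equivalent to the vanishing of the composite $v^N C^1\otimes_A Q \to (C^1\otimes_A Q)/\bigl((\delta\otimes\id_Q)(v^M C^0\otimes_A Q)\bigr)$, and since formation of cokernels commutes with $-\otimes_A Q$, this morphism is obtained by applying $-\otimes_A Q$ to the corresponding morphism $v^N C^1 \to C^1/\delta(v^M C^0)$. Hence one may assume $Q = A$, and only there does one invoke that $C^0$ is a $v$-adically complete $A[[v]]$-module (because $\gM$ is finite projective over $A[[u]]$ and $\gN$ is $u$-adically complete; see Remark~\ref{rem:truncation-remark}(1)), so that the successive approximation converges. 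Without this reduction your argument does not go through.
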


Since we are assuming that the $C^i\otimes_AQ$ are $v$-torsion free, 
the expression $v^r C^i(\gN) \otimes_A Q$ may be interpreted as
denoting either $v^r \bigl( C^i(\gN)\otimes_A Q\bigr)$ or
$\bigl(v^r C^i(\gN) \bigr)\otimes_A Q$, the two being naturally
isomorphic.

\begin{rem}\label{rem:truncation-remark} Before giving the proof of Lemma~\ref{lem:truncation
    argument used to prove f.g. of Ext Q version}, we observe that the
  hypotheses on the $C^i \otimes_A Q$ are satisfied if either $Q=A$, or
  else $\gN$ is a projective $\gS_A$-module and $Q$ is a finitely
  generated $B$-module for some finitely
generated $A$-algebra $B$.  (Indeed $C^1 \otimes_A Q$
  is $v$-adically separated as well in these cases.)

(1)  Since $\gM$ is projective of finite rank over $A[[u]]$,
and since $\gN$ is $u$-adically complete and $u$-torsion free,
each  $C^i$ is $v$-adically 
separated
and $v$-torsion free. In particular the hypothesis on~$Q$ is always satisfied
by~$Q=A$. (In fact since $\gN$ is $u$-adically complete it also follows that 
the $C^i$ are $v$-adically complete. Here we use that $\Gal(K'/K)$ has
order prime to $p$ to see that $C^0$ is an $\gS_A^0$-module direct
summand of $\Hom_{\gS_A}(\gM,\gN)$, and similarly for $C^1$.)

(2) Suppose $\gN$ is a projective $\gS_A$-module. Then the $C^i$ are
projective $\gS_A^0$-modules, again using that $\Gal(K'/K)$ has
order prime to $p$. Since each $C^i(\gN)/v C^i(\gN)$ is $A$-flat, it
follows that $C^i(\gN) \otimes_A Q$ is $v$-torsion free. If furthermore $B$ is a finitely
generated $A$-algebra, and $Q$ is a finitely generated $B$-module,
then the $C^i(\gN)\otimes_A Q$ are
$v$-adically separated (being finitely generated modules over
the ring $A[[v]]\otimes_A B$, which is a finitely generated 
algebra over the Noetherian ring $A[[v]]$, and hence is itself
Noetherian).
\end{rem} 
\begin{proof}[Proof of Lemma~{\ref{lem:truncation argument used to
    prove f.g. of Ext Q version}}] Since $p^a=0$ in $A$, there exists $H(u)\in\gS_A$ with
  $u^{e'ah}=E(u)^hH(u)$ in $\gS_A$. Thus the image of $\Phi_{\gM}$
contains $u^{e'ah} \gM=v^{eah}\gM$, and there exists a map $\Upsilon : \gM \to
\varphi^* \gM$ such that $\Phi_{\gM} \circ \Upsilon$ is multiplication by $v^{eah}$.

We begin with~(1). Suppose that 
  $f\in\ker (\delta\otimes\id_Q)\cap v^MC^0\otimes_AQ$. 
Since $C^0\otimes_AQ$ is $v$-adically separated, 
it is enough, applying induction on $M$, 
to show that $f\in v^{M+1}C^0\otimes_AQ$. Since
  $f\in\ker(\delta\otimes\id_Q)$, we have
  $f\circ\Phi_{\gM}=\Phi_{\gN}\circ\varphi^*f$. Since $f\in v^MC^0\otimes_AQ$,
  we have $f \circ \Phi_{\gM} = \Phi_{\gN} \circ \varphi^*f \in
  v^{pM} C^1 \otimes_A Q$. Precomposing with $\Upsilon$ gives 
 $v^{eah} f \in v^{pM} C^0 \otimes_A Q$. 
 Since~$C^0 \otimes_A Q$ is $v$-torsion free, it follows that $f\in
  v^{pM-eah}C^0\otimes_AQ\subseteq u^{M+1}C^0\otimes_AQ$, as required.
  
We now move on to~(2).  Set $M = N - eah$. 
By precomposing with $\Upsilon$ we see that $\alpha \circ \Phi_{\gM} \in v^N C^1\otimes_A Q$ implies $\alpha \in v^M C^0\otimes_A Q$; from this, together with the inequality
$pM \ge N$, it is
straightforward to check that
\[(\Phi_{\gM}^*)^{-1}(v^N C^1\otimes_AQ) = (\delta \otimes \id_Q)^{-1}(v^N
C^1\otimes_AQ)\cap  v^M C^0\otimes_AQ.\]
Note that $M$ satisfies the condition in~(1). To complete the proof we
will show that for any $M$ as in (1) and any $N \ge M + eah$ the map
$\delta$ induces an isomorphism 
\[ (\delta \otimes \id_Q)^{-1}(v^N C^1\otimes_AQ)\cap  v^M C^0\otimes_AQ \isoto v^N C^1\otimes_AQ.\]
 By~(1), $\delta\otimes\id_Q$ induces an injection $(\delta\otimes\id_Q)^{-1}(v^N C^1\otimes_AQ)\cap
    v^M C^0\otimes_AQ\hookrightarrow v^N C^1\otimes_AQ$, so it is enough to show that
    $(\delta\otimes\id_Q)(v^MC^0\otimes_AQ)\supseteq v^N
    C^1\otimes_AQ$. 
Equivalently, we need to show that 
$$
v^N C^1 \otimes_A Q
\to
(C^1\otimes_A Q) / 
(\delta\otimes\id_Q)\bigl(v^M C^0\otimes_A Q) 
$$
is identically zero. Since the formation of cokernels is compatible with tensor products,
we see that this morphism is obtained by tensoring the corresponding morphism
$$
v^N C^1 
\to
C^1/
\delta\bigl(v^M C^0\bigr)
$$
with $Q$ over $A$, so we are reduced to the case $Q=A$. (Recall from
Remark~\ref{rem:truncation-remark}(1) that the
hypotheses of the Lemma are satisfied in this case, and that $C^1$ is 
$v$-adically separated.)

We claim that for
    any $g\in v^NC^1$, we can find an $f\in v^{N-eah}C^0$ such that
    $\delta(f)-g\in v^{p(N-eah)}C^1$. Admitting the claim, given any
    $g\in v^N C^1$, we may find $h\in v^MC^0$ with $\delta(h)=g$ by
    successive approximation in the following way: 
Set $h_0=f$ for~$f$
    as in the claim; then $h_0\in v^{N-eah}C^0\subseteq v^MC^0$, and
    $\delta(h_0)-g\in v^{p(N-eah)}C^1\subseteq v^{N+1}C^1$. Applying
    the claim again with $N$ replaced by $N+1$, and $g$ replaced by
    $g-\delta(h_0)$, we find $f\in v^{N+1-eah}C^0\subseteq v^{M+1}C^0$
    with $\delta(f)-g+\delta(h_0)\in v^{p(N+1-eah)}C^1\subseteq
    v^{N+1}C^1$. Setting $h_1=h_0+f$, and proceeding inductively, we
    obtain a Cauchy sequence converging (in the $v$-adically
complete $A[[v]]$-module $C^0$) to the required element~$h$.

It remains to prove the claim. Since
$\delta(f)=\Phi_{\gN}\circ\varphi^*f-f\circ\Phi_{\gM} $, and since if
$f\in v^{N-eah}C^0$ then $\Phi_{\gN}\circ\varphi^*f\in v^{p(N-eah)}C^1$,
it is enough to show that we can find an $f\in v^{N-eah}C^0$ with
$f\circ\Phi_{\gM}=-g$. Since $\Phi_{\gM}$ is injective, the map
$\Upsilon \circ \Phi_\gM$ is also multiplication by $v^{eah}$, and so
it suffices to take $f$ with $v^{eah} f = -g \circ \Upsilon \in v^N C^0$. 
\end{proof}

\begin{cor}\label{cor: base change completion for complex in free case}
  Let $A$ be a Noetherian 
  $\cO/\varpi^a$-algebra,
  and let $\gM$, $\gN$ be Breuil--Kisin modules with descent data
  and $A$-coefficients. If $B$ is a finitely generated 
$A$-algebra, and $Q$ is a finitely generated $B$-module,
then the natural morphism of complexes of $B$-modules
$$[ C^0(\gN)\otimes_A Q \buildrel {\delta\otimes \id_Q} \over \longrightarrow
C^1(\gN)\otimes_A Q] \to
[C^0(\gN\cotimes_A Q) \buildrel \delta \over \longrightarrow
C^1(\gN\cotimes_A Q)]$$
is a quasi-isomorphism.
\end{cor}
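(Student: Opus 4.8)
The plan is to deduce the statement from Lemma~\ref{lem:truncation argument used to prove f.g. of Ext Q version}, applied twice, via the observation that both complexes in the corollary become quasi-isomorphic, compatibly with the natural map between them, to a common complex obtained by truncating modulo a large power of~$v$. Write $D^i := C^i_{\gM}(\gN)$ and $E^i := C^i_{\gM}(\gN\cotimes_A Q)$, so that the morphism to be analysed is $[D^0\otimes_A Q \to D^1\otimes_A Q]\to [E^0 \to E^1]$. By Lemma~\ref{lem:base-change-complexes} we may identify $E^\bullet$ with $C^\bullet_{\gM\cotimes_A B}(\gN\cotimes_A Q)$, a complex in~$\K{B}$; here $\gM\cotimes_A B$ is a Breuil--Kisin module with $B$-coefficients, of height at most~$h$, by Lemma~\ref{rem: base change of locally free Kisin module is a locally free Kisin module}, and $\gN\cotimes_A Q$ is a $u$-adically complete (by construction) and $u$-torsion free (by Remark~\ref{rem:completed tensor}(2), since $\gN$ is projective, hence Zariski locally free) object of~$\K{B}$.

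First I would apply Lemma~\ref{lem:truncation argument used to prove f.g. of Ext Q version} to the source complex, with the given $A$, $\gM$, $\gN$, $Q$: since $\gN$ is projective over $\gS_A$ and $Q$ is finitely generated over the finitely generated $A$-algebra~$B$ (and $A$ is Noetherian), Remark~\ref{rem:truncation-remark}(2) guarantees that $D^i\otimes_A Q$ is $v$-torsion free for $i=0,1$ and $v$-adically separated for $i=0$, so that for all sufficiently large~$N$ the lemma provides a quasi-isomorphism from $[D^0\otimes_A Q\to D^1\otimes_A Q]$ to $[D^0\otimes_A Q/(\Phi_{\gM}^*)^{-1}(v^N D^1\otimes_A Q)\to D^1\otimes_A Q/v^N D^1\otimes_A Q]$. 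Next I would apply the same lemma to the target complex, now over~$B$, with Breuil--Kisin module $\gM\cotimes_A B$, object $\gN\cotimes_A Q$, and ``$Q$'' taken to be~$B$ itself; the hypotheses hold by Remark~\ref{rem:truncation-remark}(1) (using that $\gN\cotimes_A Q$ is $u$-adically complete and $u$-torsion free), yielding for $N$ large a quasi-isomorphism from $[E^0\to E^1]$ to $[E^0/(\Phi^*)^{-1}(v^N E^1)\to E^1/v^N E^1]$. Choosing a single~$N$ that works for both applications, it then suffices to prove that the natural map between the two truncated complexes is an isomorphism.

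The main work is in this last identification. The key point is that $\gN\cotimes_A Q$ and $\gN\otimes_A Q$ have the same reduction modulo $u^m$ for every~$m$, and that, since $\gM$ and hence $\varphi^*\gM$ are finitely generated projective over~$\gS_A$, the functor $\Hom_{\gS_A}(\gM,-)$ (resp.\ $\Hom_{\gS_A}(\varphi^*\gM,-)$) commutes both with $-\otimes_A Q$ and with reduction modulo~$v^N$ (which is a power of~$u$); taking $\Gal(K'/K)$-invariants (exact, the order being prime to~$p$) this gives natural isomorphisms $(D^i\otimes_A Q)/v^N \isoto E^i/v^N E^i$ for $i=0,1$, compatible with the maps $D^i\otimes_A Q\to E^i$ and with the operators $\Phi_{\gM}^*$. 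A short diagram chase, using that $D^0\otimes_A Q\to E^0/v^N E^0$ is surjective, then identifies $(\Phi_{\gM}^*)^{-1}(v^N D^1\otimes_A Q)$ with the kernel of the composite $D^0\otimes_A Q\to E^0/v^N E^0 \buildrel \Phi^*\over\longrightarrow E^1/v^N E^1$, and hence identifies $(D^0\otimes_A Q)/(\Phi_{\gM}^*)^{-1}(v^N D^1\otimes_A Q)$ with $E^0/(\Phi^*)^{-1}(v^N E^1)$. Together with the isomorphism on the degree-one terms this produces the desired isomorphism of truncated complexes, and one checks it is compatible with the natural maps from the two original complexes, so that the relevant square of complexes commutes. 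Since the horizontal truncation maps are quasi-isomorphisms and the vertical map of truncated complexes is an isomorphism, the morphism of the corollary is a quasi-isomorphism.

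I expect the only genuine subtlety --- the hard part --- to be the bookkeeping of the last paragraph: verifying that the isomorphisms $(D^i\otimes_A Q)/v^N\isoto E^i/v^N E^i$ are compatible with the differential~$\delta$ and with~$\Phi_{\gM}^*$ on the nose, so that the truncated complexes literally coincide, together with the routine but necessary checking that the hypotheses of Lemma~\ref{lem:truncation argument used to prove f.g. of Ext Q version} are met in both applications --- in particular the $v$-adic separatedness, which is exactly where the Noetherian hypothesis on~$A$ enters, via Remark~\ref{rem:truncation-remark}.
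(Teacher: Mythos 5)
Your proposal is correct and takes essentially the same approach as the paper: apply Lemma~\ref{lem:truncation argument used to prove f.g. of Ext Q version} to both complexes (using Remarks~\ref{rem:truncation-remark} and~\ref{rem:completed tensor}(2) for the hypotheses) and then observe that the induced map of truncated complexes is an isomorphism. The paper's proof simply asserts this last isomorphism without spelling out the projective-module and base-change bookkeeping that you supply.
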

\begin{proof} 
By Remarks~\ref{rem:truncation-remark} and~\ref{rem:completed tensor}(2)  we can apply Lemma~\ref{lem:truncation argument used to
  prove f.g. of Ext Q version} to both
$C^i(\gN\cotimes_AQ)$ and  $C^i(\gN)\otimes_AQ$, and we see that it is enough to show that the
natural morphism of complexes 
\[\begin{adjustbox}{max width=\textwidth}
\begin{tikzcd}
{[\bigl(C^0(\gN)\otimes_A Q \bigr)/
(\Phi_{\gM}^*\otimes \id_Q)^{-1}\bigl(v^N C^1(\gN)\otimes_A Q \bigr) 
\buildrel \delta
\over \longrightarrow \bigl(C^1(\gN)\otimes_A Q\bigr)/\bigl(v^N C^1(\gN)
\otimes_A Q\bigr) ]}
\arrow{d}{} \\
{[C^0(\gN\cotimes_A Q)
/\bigl(\Phi_{\gM}^*)^{-1}(v^N C^1(\gN\cotimes_A Q)\bigr) 
\buildrel \delta
\over \rightarrow C^1(\gN\cotimes_A Q)/v^N C^1(\gN\cotimes_A Q) ]}
\end{tikzcd}
\end{adjustbox}\]
is a quasi-isomorphism. In fact, it is even an isomorphism. 
\end{proof}

\begin{prop}
\label{prop:exts are f.g. over A} Let $A$ be a 
$\cO/\varpi^a$-algebra for some $a\ge 1$, and let $\gM$, $\gN$ be
Breuil--Kisin modules with descent data and $A$-coefficients. 
Then  $\Ext^1_{\K{A}}(\gM,\gN)$ and $\Ext^1_{\K{A}}(\gM,\gN/u^i\gN)$
for $i \ge 1$ are finitely presented $A$-modules.

If
furthermore $A$ is Noetherian, then $\Hom_{\K{A}}(\gM,\gN)$ and
$\Hom_{\K{A}}(\gM,\gN/u^i\gN)$ for $i\ge 1$ are also
finitely presented {\em (}equivalently, finitely generated{\em )} $A$-modules.
\end{prop}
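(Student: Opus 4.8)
The plan is to deduce everything from the explicit complex $C^\bullet(\gN)$ of Definition~\ref{def:explicit Ext complex} together with the truncation result of Lemma~\ref{lem:truncation argument used to prove f.g. of Ext Q version} (applied with $Q = A$, which is a legitimate choice by Remark~\ref{rem:truncation-remark}(1)). Recall from Corollary~\ref{cor:complex computes Hom and Ext} that $\Hom_{\K{A}}(\gM,\gN) = \ker\delta$ and $\Ext^1_{\K{A}}(\gM,\gN) = \coker\delta$, where $\delta \col C^0(\gN) \to C^1(\gN)$. The point of Lemma~\ref{lem:truncation argument used to prove f.g. of Ext Q version} is that, for $N \ge (peah+1)/(p-1)$ (with $h$ a height bound for $\gM$), the natural map of two-term complexes
\[
[C^0(\gN) \xrightarrow{\delta} C^1(\gN)] \to [C^0(\gN)/(\Phi_{\gM}^*)^{-1}(v^N C^1(\gN)) \xrightarrow{\delta} C^1(\gN)/v^N C^1(\gN)]
\]
is a quasi-isomorphism; so $\Hom$ and $\Ext^1$ are computed by the truncated complex, whose terms I will argue are finitely generated (and, under Noetherian hypotheses, finitely presented) over $A$.

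First I would handle $\Ext^1$. The target of the truncated differential is $C^1(\gN)/v^N C^1(\gN)$. Since $\gM$ (hence $\varphi^*\gM$) is a finitely generated projective $\gS_A$-module, $C^1(\gN) = \Hom_{\gS_A[\Gal(K'/K)]}(\varphi^*\gM, \gN)$ is a direct summand of $\Hom_{\gS_A}(\varphi^*\gM, \gN)$, which is finitely generated over $\gS_A^0 = (W(k)\otimes_{\Zp} A)[[v]]$; hence $C^1(\gN)/v^N C^1(\gN)$ is finitely generated over $(W(k)\otimes_{\Zp} A)[v]/v^N$, which is a finite free $A$-module, so $C^1(\gN)/v^N C^1(\gN)$ is a finitely generated $A$-module. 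Its quotient $\Ext^1_{\K{A}}(\gM,\gN) = \coker\delta = \coker(\delta_{\mathrm{trunc}})$ is therefore finitely generated over $A$. For finite presentation: $\Ext^1$ is the cokernel of the map $C^0(\gN)/(\Phi_{\gM}^*)^{-1}(v^NC^1(\gN)) \to C^1(\gN)/v^NC^1(\gN)$ of finitely generated $A$-modules, and a cokernel of a map between finitely presented modules is finitely presented; since the target is finite free over $A$ (hence finitely presented) and the source is finitely generated, one gets finite presentation of the cokernel once the source is itself finitely presented — but actually it is cleaner to observe directly that $\coker$ of \emph{any} map from a finitely generated module to a finitely presented module is finitely presented, which settles it. The same argument applies verbatim with $\gN$ replaced by $\gN/u^i\gN$, which is again an object of $\K{A}$ of the required sort (it is $u$-torsion and $u$-adically complete, so Lemma~\ref{lem:truncation argument used to prove f.g. of Ext Q version} applies with the same $h$).

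For the $\Hom$ statements, assume $A$ Noetherian. Then $C^0(\gN)$ is a finitely generated module over $\gS_A^0$, which is itself Noetherian (a power series ring over the Noetherian ring $W(k)\otimes_{\Zp}A$), so $C^0(\gN)$ is a Noetherian module; the submodule $(\Phi_{\gM}^*)^{-1}(v^N C^1(\gN))$ is then finitely generated, and the truncated source $C^0(\gN)/(\Phi_{\gM}^*)^{-1}(v^N C^1(\gN))$ is a finitely generated $\gS_A^0/(\text{ideal})$-module. By part (1) of Lemma~\ref{lem:truncation argument used to prove f.g. of Ext Q version}, $\delta$ is injective on $v^M C^0(\gN)$ for $M \gg 0$; combined with the quasi-isomorphism, $\Hom_{\K{A}}(\gM,\gN) = \ker(\delta_{\mathrm{trunc}})$ is a submodule of the finitely generated module $C^0(\gN)/(\Phi_{\gM}^*)^{-1}(v^N C^1(\gN))$ — here I should check this truncated $C^0$ is in fact a finitely generated \emph{$A$}-module, which follows because $v$ acts nilpotently on it (it is killed by $v^N$ after the quotient, more precisely the quotient is a quotient of $C^0(\gN)/v^{pN}C^0(\gN)$ or similar, which is finite over $A$ as above). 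Hence $\Hom$ is a finitely generated module over the Noetherian ring $A$, thus finitely presented. Again the identical argument gives the claim for $\gN/u^i\gN$.

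The main obstacle I anticipate is purely bookkeeping: making sure that the truncated $C^0$-term is genuinely finite over $A$ and not merely over $\gS_A^0$. The resolution is that $(\Phi_\gM^*)^{-1}(v^N C^1(\gN)) \supseteq v^{N'} C^0(\gN)$ for a suitable $N'$ (because $\Phi_\gM^*$ sends $v^{N'}C^0$ into $v^{N'}C^1 \subseteq v^N C^1$ once $N' \ge N$, after possibly enlarging using the factorization $\Phi_\gM \circ \Upsilon = v^{eah}$ from the proof of the Lemma to control things in the other direction), so the truncated $C^0$ is a quotient of $C^0(\gN)/v^{N'}C^0(\gN)$, which, by the projectivity/finite-generation of $C^0(\gN)$ over $\gS_A^0$, is a finitely generated module over $(W(k)\otimes_{\Zp}A)[v]/v^{N'}$ and hence over $A$. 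With that in hand, all the finiteness assertions fall out as above; none of the remaining steps requires more than standard commutative algebra (Noetherianity, cokernels of maps of finitely presented modules, submodules of Noetherian modules).
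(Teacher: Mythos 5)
Your argument for $\gN$ itself is essentially the paper's: truncate the complex $C^\bullet(\gN)$ via Lemma~\ref{lem:truncation argument used to prove f.g. of Ext Q version} with $Q = A$, observe that $v^N C^0 \subseteq (\Phi_\gM^*)^{-1}(v^N C^1)$, and use that $C^i(\gN)/v^N C^i(\gN)$ is a finitely generated projective $A$-module (not merely ``finite free'' as you write, but projectivity is all you need for finite presentation). Your digression about ``$v^{pN}$ or similar'' and the factorization $\Phi_\gM\circ\Upsilon = v^{eah}$ is a red herring: the inclusion $v^N C^0 \subseteq (\Phi_\gM^*)^{-1}(v^N C^1)$ is immediate, since $\Phi_\gM^*$ is $\gS_A^0$-linear and hence commutes with multiplication by $v^N$.

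However, there is a genuine gap in your treatment of the $\gN/u^i\gN$ cases. You assert that ``the same argument applies verbatim with $\gN$ replaced by $\gN/u^i\gN$\ldots (it is $u$-torsion and $u$-adically complete, so Lemma~\ref{lem:truncation argument used to prove f.g. of Ext Q version} applies).'' But the lemma explicitly requires $\gN$ to be \emph{$u$-torsion free}, which $\gN/u^i\gN$ is not (it is killed by $u^i$, hence entirely $u$-torsion). Concretely, Remark~\ref{rem:truncation-remark}(1) explains that the $u$-torsion freeness of $\gN$ is what makes the $C^i(\gN)$ $v$-torsion free, which is an essential hypothesis of the truncation lemma; for $\gN/u^i\gN$ this fails badly, and the quasi-isomorphism conclusion of the lemma is not available. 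The paper avoids this by deducing the $\gN/u^i\gN$ cases from the $\gN$ case: apply $\Ext^\bullet_{\K{A}}(\gM,-)$ to the short exact sequence $0 \to u^i\gN \to \gN \to \gN/u^i\gN \to 0$ in $\K{A}$, and use $\Ext^2_{\K{A}}(\gM,-) = 0$ (Corollary~\ref{cor:ext2 vanishes}) to see that $\Ext^1_{\K{A}}(\gM, \gN/u^i\gN)$ is the cokernel of a map between two instances of the case already handled (note that $u^i\gN$ is again a Breuil--Kisin module with descent data), while $\Hom_{\K{A}}(\gM, \gN/u^i\gN)$ sits, via the long exact sequence, as an extension of a submodule of $\Ext^1_{\K{A}}(\gM, u^i\gN)$ by a quotient of $\Hom_{\K{A}}(\gM,\gN)$, both finitely generated when $A$ is Noetherian.
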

\begin{proof} 
The statements for $\gN/u^i\gN$
  follow easily from those for $\gN$, by considering the short exact sequence $0 \to u^i\gN \to \gN \to
\gN/u^i\gN \to 0$ in $\K{A}$ and applying Corollary~\ref{cor:ext2
  vanishes}.
	By Corollary~\ref{cor:complex computes Hom and Ext}, it is enough to consider the cohomology of the
  complex~$C^\bullet$. By Lemma~\ref{lem:truncation argument used to
    prove f.g. of Ext Q version} with $Q=A$, 
the cohomology of~$C^\bullet$ agrees with the
  cohomology of the induced complex \[C^0/\bigl((\Phi_{\gM}^*)^{-1}(v^N C^1) )\to C^1/ v^N C^1,\] 
for an appropriately chosen value of $N$. It follows that for an
appropriately chosen value of~$N$, $\Ext^1_{\K{A}}(\gM,\gN)$ can be
computed as the cokernel of the induced morphism $C^0/v^N C^0 \to C^1/ v^N C^1$.

Under our hypothesis on~$\gN$, $C^0/v^N C^0$ and $C^1/v^NC^1$ are finitely
generated projective $A$-modules, and thus finitely presented. It follows  that
$\Ext^1_{\K{A}}(\gM,\gN)$ is finitely presented. 

In the case that $A$ is furthermore assumed to be Noetherian, it is
enough to note that since $v^NC^0\subseteq (\Phi_{\gM}^*)^{-1}(v^N C^1)$,
the quotient $C^0/\bigl((\Phi_{\gM}^*)^{-1}(v^N C^1) \bigr)$ is a finitely generated $A$-module.
\end{proof}

\begin{prop}
  \label{prop:descent for Homs of free Kisin modules}Let $A$ be a 
$\cO/\varpi^a$-algebra for some $a\ge 1$,
and let $\gM$ and~$\gN$ be Breuil--Kisin modules with descent
data and $A$-coefficients. Let $B$ be an $A$-algebra, and let
      $f_B:\gM\cotimes_AB\to\gN\cotimes_AB$ be  a morphism of Breuil--Kisin
      modules with $B$-coefficients.

      Then there is a finite type $A$-subalgebra $B'$ of~$B$ and a morphism
      of Breuil--Kisin modules $f_{B'}:\gM\cotimes_A B'\to\gN\cotimes_A B'$
      such that $f_B$ is the base change of~$f_{B'}$.
\end{prop}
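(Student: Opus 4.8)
The plan is to write $B$ as the filtered colimit $\varinjlim_i B_i$ of its finite type $A$-subalgebras $B_i$, and to prove that the natural base-change map
\[
\varinjlim_i \Hom_{\K{B_i}}(\gM\cotimes_A B_i,\ \gN\cotimes_A B_i)\ \longrightarrow\ \Hom_{\K{B}}(\gM\cotimes_A B,\ \gN\cotimes_A B)
\]
is an isomorphism; surjectivity already suffices, since it exhibits $f_B$ as the base change of some $f_{B'}$ with $B' = B_i$ a finite type $A$-subalgebra. One cannot argue by commuting the complex $C^\bullet$ of Definition~\ref{def:explicit Ext complex} through the colimit, because the completed tensor product $\gN\cotimes_A(\text{--})$ does not commute with colimits. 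Instead the point is that, by Lemma~\ref{lem:truncation argument used to prove f.g. of Ext Q version}, the cohomology of $C^\bullet$ is computed by the truncated complex obtained by reducing modulo $v^N$, and this truncated data does behave well under arbitrary base change (this is essentially the phenomenon already isolated in the proof of Corollary~\ref{cor: base change completion for complex in free case}, which handles the finite type stages).

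First I would record the base-change identifications for the truncated complex. Fix an integer $N$ as in Lemma~\ref{lem:truncation argument used to prove f.g. of Ext Q version}; since the relevant lower bound depends only on $e,a,h,p$, the same $N$ works over $A$, over each $B_i$, and over $B$. Set $P_0 := C^0_{\gM}(\gN)/v^N C^0_{\gM}(\gN)$ and $P_1 := C^1_{\gM}(\gN)/v^N C^1_{\gM}(\gN)$; these are finitely generated projective $A$-modules, since the $C^i_{\gM}(\gN)$ are projective $\gS_A^0$-modules (Remark~\ref{rem:truncation-remark}(2)) and $\gS_A^0/v^N\gS_A^0$ is finite free over $A$. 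I claim that for any $A$-algebra $A'$ there are natural isomorphisms $C^i_{\gM\cotimes_A A'}(\gN\cotimes_A A')/v^N \cong P_i\otimes_A A'$ under which the differential $\delta$ and the map $\Phi_{\gM}^*$ become $\bar\delta\otimes\id_{A'}$ and $\bar\Phi_{\gM}^*\otimes\id_{A'}$. This follows from Lemma~\ref{lem:base-change-complexes}, from the identity $(\gN\cotimes_A A')/v^N(\gN\cotimes_A A')\cong(\gN/v^N\gN)\otimes_A A'$ (using that $\gN$ is a projective $\gS_A$-module, so that $\gN\cotimes_A A' = \gN\otimes_{\gS_A}\gS_{A'}$ as in the proof of Lemma~\ref{rem: base change of locally free Kisin module is a locally free Kisin module}), together with the fact that $\Hom_{\gS_A[\Gal(K'/K)]}(\varphi^*\gM,\text{--})$ and $\Hom_{\gS_A[\Gal(K'/K)]}(\gM,\text{--})$ commute with $\text{--}\otimes_A A'$ because $\varphi^*\gM$ and $\gM$ are finitely generated projective. (One also uses that $v^N$ is a non-zero-divisor on $\gN\cotimes_A A'$, which holds by Remark~\ref{rem:completed tensor}(2), to identify $\Hom_{\gS_A[\Gal(K'/K)]}(\gM, v^N(\gN\cotimes_A A'))$ with $v^N C^0_{\gM\cotimes_A A'}(\gN\cotimes_A A')$, and similarly for $C^1$.)

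Given this, I would apply Lemma~\ref{lem:truncation argument used to prove f.g. of Ext Q version} over the base ring $A'$ with $Q = A'$ itself — legitimate by Remark~\ref{rem:truncation-remark}(1) applied over $A'$, together with Lemma~\ref{rem: base change of locally free Kisin module is a locally free Kisin module} and Remark~\ref{rem:completed tensor}(2) — combined with Lemma~\ref{lem: C computes Hom} and Lemma~\ref{lem:base-change-complexes}, to obtain a natural identification
\[
\Hom_{\K{A'}}(\gM\cotimes_A A',\ \gN\cotimes_A A')\ \cong\ \ker\bigl(\bar\delta\otimes\id_{A'}\colon P_0\otimes_A A'\to P_1\otimes_A A'\bigr)\,\big/\,\ker\bigl(\bar\Phi_{\gM}^*\otimes\id_{A'}\colon P_0\otimes_A A'\to P_1\otimes_A A'\bigr),
\]
where the inclusion of the second kernel in the first is part of the content of that lemma over $A'$ (it comes from the inequalities on $N$ that force $\delta$ to carry $(\Phi_{\gM}^*)^{-1}(v^N C^1)$ into $v^N C^1$). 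The right-hand side is a subquotient of $P_1\otimes_A A'$ built functorially in $A'$ out of base changes of the fixed $A$-module maps $\bar\delta$ and $\bar\Phi_{\gM}^*$, and kernels, cokernels and tensor products all commute with filtered colimits; applying this with $A' = B = \varinjlim_i B_i$ (and with $A' = B_i$) therefore yields $\Hom_{\K{B}}(\gM\cotimes_A B,\gN\cotimes_A B) = \varinjlim_i \Hom_{\K{B_i}}(\gM\cotimes_A B_i,\gN\cotimes_A B_i)$, so that $f_B$ descends to a finite type $A$-subalgebra $B' = B_i$, as desired. The only real obstacle is the base-change behaviour of the truncated complex asserted in the previous paragraph — that is, verifying that no completion issues intervene modulo $v^N$; everything else is formal manipulation of colimits once that input is in place.
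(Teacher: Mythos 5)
Your proposal is correct and rests on exactly the same technical input as the paper's proof: Lemma~\ref{lem:truncation argument used to prove f.g. of Ext Q version} reduces $\Hom$ to a subquotient of the truncated complex $C^\bullet/v^N$, whose terms are finitely generated projective $A$-modules. The packaging differs slightly — the paper lifts $\fbar_B$ via projectivity of $C^0(\gN)/v^N$ and then shows $\delta(\fbar_{B'}) = 0$ by a diagram chase using injectivity of $B'\hookrightarrow B$, whereas you prove the (mildly stronger) statement that $\Hom_{\K{(-)}}$ commutes with filtered colimits of $A$-algebras — but this is a reorganisation of the same argument rather than a genuinely different route. (One small typo: where you write ``a subquotient of $P_1\otimes_A A'$'' you mean $P_0\otimes_A A'$.)
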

\begin{proof}

By Lemmas~\ref{lem: C computes Hom}
and~\ref{lem:base-change-complexes} (the latter applied with $Q=B$) we can and do think
  of $f_B$ as being an element of the kernel of
  $\delta:C^0(\gN\cotimes_A B)\to C^1(\gN\cotimes_A
  B)$, the complex $C^\bullet$ here and throughout this proof denoting
  $C^{\bullet}_{\gM}$ as usual.

Fix $N$ as in
  Lemma~\ref{lem:truncation argument used to prove f.g. of Ext Q version}, and
  write~$\fbar_B$ for the corresponding element of
  $C^0(\gN\cotimes_A B)/v^N=(C^0(\gN)/v^N)\otimes_A B$ (this equality
  following easily from the assumption that $\gM$ and $\gN$ are
  projective $\gS_A$-modules of finite rank). Since $C^0(\gN)/v^N$
  is a projective $A$-module of finite
  rank, 
  it follows 
  that for some finite type
  $A$-subalgebra $B'$ of~$B$, there is an element $\fbar_{B'}\in
  (C^0(\gN)/v^N)\otimes_A B'=C^0(\gN\cotimes_A B')/v^N$ such that
  $\fbar_{B'}\otimes_{B'}B=\fbar_B$. Denote also by $\fbar_{B'}  $ the induced element of \[C^0(\gN\cotimes
_A B')/\bigl(\Phi_{\gM}^*)^{-1}(v^N C^1(\gN\cotimes
_A B')).\] 

By Lemma~\ref{lem:truncation argument used to prove
f.g. of Ext Q version} (and Lemma~\ref{lem: C computes Hom}) we have a
commutative diagram with exact rows \[\xymatrix{0 \ar[r] & H^0(C^{\bullet}(\gN\cotimes
_A B')) \ar[d] \ar[r] & C^0(\gN\cotimes
_A B')/\bigl((\Phi_{\gM}^*)^{-1}(v^N C^1(\gN\cotimes
_A B')) 
\bigr) \ar[r]^-{\delta}\ar[d] & C^1(\gN\cotimes
_A B')/v^N\ar[d]\\ 0 \ar[r] & H^0(C^{\bullet}(\gN\cotimes
_A B)) \ar[r] & C^0(\gN\cotimes
_A B)/\bigl((\Phi_{\gM}^*)^{-1}(v^N C^1 (\gN\cotimes
_A B)) 
\bigr) \ar[r]^-{\delta} & C^1(\gN\cotimes
_A B)/v^N }  \]
in which the vertical arrows 
are induced by $\cotimes_{B'}B$.  By a diagram chase we only need to
show that $\delta(\fbar_{B'})=0$. Since $\delta(f_B)=0$, it is
enough to show that the right hand vertical arrow is an
injection. This arrow can be rewritten as the tensor product of the
injection of $A$-algebras $B'\into B$ with the flat (even projective
of finite rank) $A$-module $C^1(\gN)/v^N$, so the result follows.
\end{proof}
We have the following key base-change result for $\Ext^1$'s of 
Breuil--Kisin modules with descent data.

\begin{prop}
\label{prop:base-change for exts}
Suppose that $\gM$ and $\gN$ are Breuil--Kisin modules with
descent data and coefficients in a $\cO/\varpi^a$-algebra $A$.
Then for any $A$-algebra $B$, and for any $B$-module $Q$,
there are natural isomorphisms
$\Ext^1_{\K{A}}(\gM,\gN)\otimes_A Q \iso
\Ext^1_{\K{B}}(\gM\cotimes_A B, \gN\cotimes_A B) \otimes_B Q
\iso 
\Ext^1_{\K{B}}(\gM\cotimes_A B,\gN\cotimes_A Q).$
\end{prop}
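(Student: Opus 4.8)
The plan is to push everything down to the explicit complexes $C^{\bullet}_{\gM}$ of Definition~\ref{def:explicit Ext complex}, truncated in the $v$-adic direction, where base change becomes transparent. First I would reduce the two displayed isomorphisms to the single statement that, for any $\cO/\varpi^a$-algebra $A$, any Breuil--Kisin modules $\gM,\gN$ with descent data and $A$-coefficients, and any $A$-module $Q$, the natural map $\Ext^1_{\K{A}}(\gM,\gN)\otimes_A Q\to\Ext^1_{\K{A}}(\gM,\gN\cotimes_A Q)$ is an isomorphism. Indeed, by Lemma~\ref{lem:base-change-complexes} (together with Corollary~\ref{cor:complex computes Hom and Ext}) we have $\Ext^1_{\K{B}}(\gM\cotimes_A B,\gN\cotimes_A Q)\cong H^1\bigl(C^{\bullet}_{\gM}(\gN\cotimes_A Q)\bigr)\cong\Ext^1_{\K{A}}(\gM,\gN\cotimes_A Q)$, and one checks that $\gN\cotimes_A Q\cong(\gN\cotimes_A B)\cotimes_B Q$ (both sides are the $u$-adic completion of $\gN\otimes_A Q$, using that $\gN$ is flat over $A[[u]]$ so that $\gN\otimes_A B\to\gN\cotimes_A B$ is an isomorphism modulo every power of $u$); applying the single statement over $A$ with module $B$, over $A$ with module $Q$, and over $B$ with module $Q$, then yields both isomorphisms of the proposition, with naturality coming from the naturality of all the maps involved.

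Next I would fix once and for all an integer $N\ge(peah+1)/(p-1)$ as in Lemma~\ref{lem:truncation argument used to prove f.g. of Ext Q version}; crucially $N$ depends only on $e$, $a$ and the height $h$ of $\gM$, not on $\gN$ or $Q$. The object $\gN\cotimes_A Q$ of $\K{A}$ is $u$-adically complete (by construction) and $u$-torsion free (by Remark~\ref{rem:completed tensor}(2), since $\gN$ is projective, hence Zariski locally free), so by Remark~\ref{rem:truncation-remark}(1) the complexes $C^{i}_{\gM}(\gN\cotimes_A Q)$ are $v$-torsion free and $v$-adically separated, and Lemma~\ref{lem:truncation argument used to prove f.g. of Ext Q version} (with trivial coefficient module $A$) applies to it, exactly as it applies to $\gN$. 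Combining this with Corollary~\ref{cor:complex computes Hom and Ext} and the inclusion $v^N C^0\subseteq(\Phi_{\gM}^*)^{-1}(v^N C^1)$ — precisely the computation already carried out in the proof of Proposition~\ref{prop:exts are f.g. over A} — shows that for $\gN_0$ equal to either $\gN$ or $\gN\cotimes_A Q$,
$$\Ext^1_{\K{A}}(\gM,\gN_0)\;\cong\;\coker\Bigl(C^0_{\gM}(\gN_0)/v^N C^0_{\gM}(\gN_0)\;\xrightarrow{\ \overline{\delta}\ }\;C^1_{\gM}(\gN_0)/v^N C^1_{\gM}(\gN_0)\Bigr).$$

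The remaining point, which is also the main technical one, is to construct a natural isomorphism of two-term complexes
$$\bigl(C^{\bullet}_{\gM}(\gN)/v^N C^{\bullet}_{\gM}(\gN)\bigr)\otimes_A Q\;\cong\;C^{\bullet}_{\gM}(\gN\cotimes_A Q)/v^N C^{\bullet}_{\gM}(\gN\cotimes_A Q).$$
Granting this, the result follows: the left-hand side has cokernel $\Ext^1_{\K{A}}(\gM,\gN)\otimes_A Q$ by right-exactness of $\otimes_A Q$ and the displayed formula for $\gN_0=\gN$, while the right-hand side has cokernel $\Ext^1_{\K{A}}(\gM,\gN\cotimes_A Q)$ by the formula for $\gN_0=\gN\cotimes_A Q$. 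To build the isomorphism I would use that $\gM$ and $\varphi^*\gM$ are finitely generated projective $\gS_A$-modules (the latter because $\varphi\colon\gS_A\to\gS_A$ is free), so that for such a module $P$ and any $\gS_A$-module $Y$ one has natural isomorphisms $\Hom_{\gS_A}(P,Y)\otimes_A Q\cong\Hom_{\gS_A}(P,Y\otimes_A Q)$ and $\Hom_{\gS_A}(P,Y)/v^N\cong\Hom_{\gS_A/v^N}(P/v^N,Y/v^N Y)$ (both immediate for $P$ free and compatible with passage to direct summands); taking $\Gal(K'/K)$-invariants (exact, as $\Gal(K'/K)$ has prime-to-$p$ order) and using the identity $(\gN\cotimes_A Q)/v^N(\gN\cotimes_A Q)\cong(\gN/v^N\gN)\otimes_A Q$ (since $u$-adic completion does not change quotients by powers of $u$, and $\otimes_A Q$ is right exact) then yields the identification of each term, and its compatibility with the differentials $\overline{\delta}$ is automatic from the naturality in $\gN$ of the construction $\gN\mapsto C^{\bullet}_{\gM}(\gN)$.

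The main obstacle I expect is exactly this last step: keeping careful track of the interaction between the $\Hom_{\gS_A[\Gal(K'/K)]}$-functors, reduction modulo $v^N$, the non-exact $u$-adic completion functor $\cotimes_A$, and the tensor product $\otimes_A Q$, and verifying that all the identifications are compatible with the twisted differential $\delta$ (which is only $\gS_A^0$-linear, not $\gS_A$-linear, owing to the $\varphi^*$-twist). Everything else is formal bookkeeping with the results of Subsection~\ref{subsec:ext generalities}; in particular, the uniform choice of $N$ is what lets us avoid any Noetherian or finiteness hypotheses on $B$ or $Q$.
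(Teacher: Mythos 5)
Your proof is correct and follows essentially the same route as the paper's: both reduce to the single case of an $A$-module $Q$, both compute $\Ext^1$ as the cokernel of the truncated complex $C^0/v^N \to C^1/v^N$ using Lemma~\ref{lem:truncation argument used to prove f.g. of Ext Q version} with a bound $N$ independent of the module in the second variable, and both observe that the truncated complex commutes with $\otimes_A Q$ because $\gM$ and $\varphi^*\gM$ are finitely generated projective over $\gS_A$. The only (immaterial) difference is in the bookkeeping at the end: the paper proves the second displayed isomorphism first and deduces the first by specialising to $Q=B$ and re-tensoring, whereas you instead apply the single statement a third time over $B$ together with the identification $(\gN\cotimes_A B)\cotimes_B Q\cong\gN\cotimes_A Q$.
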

\begin{proof}
We first prove the lemma in the case of an $A$-module $Q$.
It follows from Lemmas \ref{lem: C computes Ext^1}
and~\ref{lem:truncation argument used to prove f.g. of Ext Q version}
that we may compute
$\Ext^1_{\K{A}}(\gM,\gN)$
as the cokernel of the morphism
$$C^0(\gN)/v^N C^0(\gN)
\buildrel \delta \over \longrightarrow C^1(\gN)/v^N C^1(\gN),$$for
some sufficiently large value of $N$ (not depending on $\gN$),
and hence that we may compute
$\Ext^1_{\K{A}}(\gM,\gN)\otimes_A Q$
as the cokernel of the morphism
$$\bigl(C^0(\gN)/v^N C^0(\gN)\bigr) \otimes_A Q
\buildrel \delta \over \longrightarrow 
\bigl(C^1(\gN)/v^N C^1(\gN)\bigr) \otimes_A Q.$$
We may similarly compute
$\Ext^1_{\K{A}}(\gM,\gN\cotimes_A Q)$
as the cokernel of the morphism
$$C^0(\gN\cotimes_A Q)/v^N C^0(\gN\cotimes_A Q)
\buildrel \delta \over \longrightarrow
C^1(\gN\cotimes_A Q)/v^N C^1(\gN\cotimes_A Q).$$
(Remark~\ref{rem:completed tensor}~(2) shows
that $\gN\cotimes_A Q$ satisfies the necessary hypotheses
for Lemma~\ref{lem:truncation argument used to prove f.g. of Ext Q version}
to apply.)
Once we note that the natural morphism
$$\bigl( C^i(\gN)/v^N C^i(\gN)\bigr)\otimes_A Q \to C^i(\gN\cotimes_A Q)/v^N
C^i(\gN\cotimes_A Q)$$
is an isomorphism for $i = 0$ and $1$ (because $\gM$ is a finitely
generated projective $\gS_A$-module), 
we obtain the desired isomorphism
$$\Ext^1_{\K{A}}(\gM,\gN)\otimes_A Q \iso \Ext^1_{\K{A}}(\gM,\gN\cotimes_A Q).$$

If $B$ is an $A$-algebra, and $Q$ is a $B$-module,
then by Lemma~\ref{lem:base-change-complexes} 
there is a natural isomorphism
$$\Ext^1_{\K{A}}(\gM, \gN\cotimes_A Q) \iso
\Ext^1_{\K{B}}(\gM\cotimes_A B, \gN\cotimes_A Q);$$
combined with the preceding base-change result, this yields
one of our claimed isomorphisms, namely
\[\Ext^1_{\K{A}}(\gM,\gN)\otimes_A Q \iso 
\Ext^1_{\K{B}}(\gM\cotimes_A B, \gN\cotimes_A Q).\]
Taking $Q$ to be $B$ itself, we then obtain
an isomorphism
\[\Ext^1_{\K{A}}(\gM,\gN)\otimes_A B \iso 
\Ext^1_{\K{B}}(\gM\cotimes_A B, \gN\cotimes_A B).\]
This allows us to identify
$\Ext^1_{\K{A}}(\gM,\gN)\otimes_A Q$,
which is naturally isomorphic to 
$\bigr(\Ext^1_{\K{A}}(\gM,\gN)\otimes_A B\bigr) \otimes_B Q$,
with 
$\Ext^1_{\K{B}}(\gM\cotimes_A B, \gN\cotimes_A B)\otimes_B Q$,
yielding the second claimed isomorphism.
\end{proof}

In contrast to the situation for extensions
(\emph{cf}.\ Proposition~\ref{prop:base-change for exts}), the formation of
homomorphisms between Breuil--Kisin modules is in general
not compatible with arbitrary base-change, as the following example shows.

\begin{example}\label{example:rank one unramified}
Take $A = (\Z/p\Z)[x^{\pm 1}, y^{\pm 1}]$, and let $\gM_x$ be the
free Breuil--Kisin module of rank one and $A$-coefficients with $\varphi(e)
= xe$ for some generator $e$ of $\gM_x$. Similarly define $\gM_y$ with
$\varphi(e') = ye'$ for some generator $e'$ of $\gM_y$. Then
$\Hom_{\K{A}}(\gM_x,\gM_y)=0$.  On the other hand, if $B=A/(x-y)$ then
$\gM_x \cotimes_A B$ and $\gM_y \cotimes_A B$ are isomorphic, so that 
$\Hom_{\K{B}}(\gM_x \cotimes B, \gM_y \cotimes B) \not\cong
\Hom_{\K{A}}(\gM_x,\gM_y) \otimes_A B$. 
\end{example}

However, it is possible
to establish such a compatibility in some settings.
Corollary~\ref{cor:vanishing of homs non Noetherian}, which
gives a criterion for the vanishing of $\Hom_{\K{B}} (\gM\cotimes_A
B, \gN\cotimes_A B)$ for any $A$-algebra $B$,  is a first example of
a result in this direction. Lemma~\ref{lem: flat base change for
  Homs} deals with flat base change, and Lemma~\ref{lem: vanishing of Kisin module homs implies vanishing on dense open}, which will
be important in Section~\ref{subsec:universal
families}, proves that formation of
homomorphisms is  compatible with base-change over a dense open
subscheme of $\Spec A$. 

\begin{prop}
\label{prop:vanishing of homs}
Suppose that $A$ is a Noetherian $\cO/\varpi^a$-algebra,
and that $\gM$ and $\gN$ are objects of $\K{A}$ that are
finitely generated over $\gS_A$ {\em (}or, equivalently,
over $A[[u]]${\em )}. 
Consider the following conditions:
\begin{enumerate}
\item
$\Hom_{\K{B}} (\gM\cotimes_A B, \gN\cotimes_A B) = 0$
for any finite type $A$-algebra $B$.
\item
$\Hom_{\K{\kappa(\m)}}\bigl(\gM\otimes_A \kappa(\mathfrak m),
\gN\otimes_A \kappa(\mathfrak m) \bigr) = 0$ 
for each maximal ideal $\mathfrak m$ of $A$.
\item
$\Hom_{\K{A}}(\gM, \gN\otimes_A Q) = 0$ 
for any 
finitely generated $A$-module $Q$.
\end{enumerate}
Then we have (1)$\implies$(2)$\iff$(3).  If $A$ is furthermore
Jacobson, then all three conditions are equivalent.
\end{prop}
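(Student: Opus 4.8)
The implications to be established are (1)$\implies$(2), (2)$\iff$(3), and, under the additional Jacobson hypothesis, (3)$\implies$(1) (equivalently (2)$\implies$(1)). The plan is to treat the three implications in the order (2)$\iff$(3), then (1)$\implies$(2), then (2)$\implies$(1) under the Jacobson assumption, using the finite-presentation and base-change machinery developed in Propositions~\ref{prop:exts are f.g. over A} and~\ref{prop:base-change for exts}, together with the complex $C^\bullet_\gM$ of Definition~\ref{def:explicit Ext complex}.

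\textbf{(2)$\iff$(3).} First I would reduce everything to a statement about the cohomology of $C^\bullet_\gM$. Although $\gM$ and $\gN$ are here only assumed finitely generated (not projective) over $\gS_A$, one still has $\Hom_{\K{A}}(\gM,\gN\otimes_A Q) = H^0\bigl(C^\bullet_\gM(\gN\otimes_A Q)\bigr)$ by Lemma~\ref{lem: C computes Hom} (this lemma does not use projectivity of $\gN$), and, since $\gM$ is projective over $\gS_A$ in our setting of interest --- more precisely, I would either invoke projectivity of $\gM$ or work directly with the presentation --- the functor $Q \mapsto C^i_\gM(\gN\otimes_A Q)$ is computed by tensoring: $C^i_\gM(\gN\otimes_A Q) \cong C^i_\gM(\gN)\otimes_A Q$ for $i=0,1$, because $\Hom_{\gS_A[\Gal(K'/K)]}(\gM,\gN\otimes_A Q)\cong\Hom_{\gS_A[\Gal(K'/K)]}(\gM,\gN)\otimes_A Q$ when $\gM$ is finitely generated projective over $\gS_A$ and $\Gal(K'/K)$ has prime-to-$p$ order. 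Hence $\Hom_{\K{A}}(\gM,\gN\otimes_A Q)$ is the kernel of $\delta\otimes\id_Q\colon C^0_\gM(\gN)\otimes_A Q\to C^1_\gM(\gN)\otimes_A Q$. So condition (3) says this kernel vanishes for all finitely generated $Q$, and condition (2) is the special case $Q = \kappa(\mathfrak m)$. The implication (3)$\implies$(2) is then trivial. For (2)$\implies$(3): the kernel $H := \Hom_{\K{A}}(\gM,\gN)$ is a finitely generated $A$-module (here I use that $A$ is Noetherian, as in the last sentence of Proposition~\ref{prop:exts are f.g. over A}), and for a left-exact functor the formation of $\ker(\delta\otimes\id_Q)$ need not commute with $\otimes_A Q$; but if $H\otimes_A\kappa(\mathfrak m)\hookrightarrow \ker(\delta\otimes\kappa(\mathfrak m)) = 0$ for all maximal $\mathfrak m$, then Nakayama forces $H = 0$, whence $\delta$ is injective, hence $\delta\otimes\id_Q$ is injective for \emph{every} $A$-module $Q$ (tensoring an injection of the form $C^0\hookrightarrow C^1$ where the cokernel, $\coker\delta$, maps to a module --- one must be slightly careful: injectivity of $\delta$ alone does not give injectivity of $\delta\otimes Q$). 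The clean way around this: $H = 0$ means $0\to C^0_\gM(\gN)\xrightarrow{\delta} C^1_\gM(\gN)$ is exact, and I would like $\delta$ to remain injective after $\otimes_A Q$. This is \emph{not} automatic, so instead I would argue: $H\otimes_A\kappa(\mathfrak m) = 0$ for all $\mathfrak m$ gives $H = 0$; then $\Ext^1_{\K{A}}(\gM,\gN)$ has a two-step flat (projective, after the truncation of Lemma~\ref{lem:truncation argument used to prove f.g. of Ext Q version}) resolution-type presentation, and one reruns the base-change argument of Proposition~\ref{prop:base-change for exts}: $\Ext^1$ commutes with $\otimes_A Q$, and since $H^0$ fits in the exact sequence with the truncated $C^\bullet$, vanishing of $H$ together with finite presentation of all the modules in play gives vanishing of $\Hom_{\K{A}}(\gM,\gN\otimes_A Q)$ for finitely generated $Q$. (Concretely: after the truncation, $C^0/((\Phi^*_\gM)^{-1}(v^NC^1))$ and $C^1/v^NC^1$ are finitely generated projective $A$-modules whose $\otimes_A\kappa(\mathfrak m)$ never kills the injected copy of $H$; so $H = 0$ forces the map of projective modules to be a split injection, which \emph{does} remain injective after any base change.)

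\textbf{(1)$\implies$(2).} This is immediate: $\kappa(\mathfrak m)$ is a finite type $A$-algebra (being a quotient of a polynomial ring over $A$ modulo a maximal ideal --- here one uses nothing beyond $A$ Noetherian, since $A/\mathfrak m$ is a field finite type over... well, over $A$), and $\gM\otimes_A\kappa(\mathfrak m) = \gM\cotimes_A\kappa(\mathfrak m)$ since $\kappa(\mathfrak m)$ is a finitely generated $A$-module (Remark~\ref{rem:completed tensor}(1)). So condition (1) applied to $B = \kappa(\mathfrak m)$ is exactly condition (2).

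\textbf{(2)$\implies$(1) when $A$ is Jacobson.} This is the substantive implication and the main obstacle. Let $B$ be any finite type $A$-algebra; I want $\Hom_{\K{B}}(\gM\cotimes_A B,\gN\cotimes_A B) = 0$. By Proposition~\ref{prop:descent for Homs of free Kisin modules} any such homomorphism is defined over a finite type $A$-subalgebra of $B$, so without loss of generality $B$ is finite type over $A$, hence a Jacobson ring whose maximal ideals $\mathfrak n$ have residue fields $\kappa(\mathfrak n)$ finite over $\kappa(\mathfrak m)$ where $\mathfrak m = \mathfrak n\cap A$ is maximal in $A$ (by the Nullstellensatz over the Jacobson ring $A$). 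Now $\Hom_{\K{B}}(\gM\cotimes_A B,\gN\cotimes_A B)$ is a finitely generated $B$-module (Proposition~\ref{prop:exts are f.g. over A} applied over the Noetherian ring $B$), so by Nakayama over the Jacobson ring $B$ it suffices to show it vanishes modulo every maximal ideal $\mathfrak n$ of $B$; applying the base-change identity $C^i_\gM(\gN)\otimes_A B\otimes_B\kappa(\mathfrak n) = C^i_{\gM\cotimes_A\kappa(\mathfrak n)}(\gN\cotimes_A\kappa(\mathfrak n))$ (Lemma~\ref{lem:base-change-complexes} plus the tensor identity above, and noting $\kappa(\mathfrak n)$ is a finitely generated $B$-module so $\cotimes = \otimes$), this $\Hom$ modulo $\mathfrak n$ injects into $\Hom_{\K{\kappa(\mathfrak n)}}\bigl(\gM\otimes_A\kappa(\mathfrak n),\gN\otimes_A\kappa(\mathfrak n)\bigr)$. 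Finally $\kappa(\mathfrak n)$ is a finite extension of $\kappa(\mathfrak m)$, and forming $\Hom$ of Breuil--Kisin modules over a field is compatible with finite extension of the coefficient field --- indeed with \emph{any} flat base change of coefficients, which I would cite as Lemma~\ref{lem: flat base change for Homs} --- so this last $\Hom$ is $\Hom_{\K{\kappa(\mathfrak m)}}\bigl(\gM\otimes\kappa(\mathfrak m),\gN\otimes\kappa(\mathfrak m)\bigr)\otimes_{\kappa(\mathfrak m)}\kappa(\mathfrak n) = 0$ by hypothesis (2). The chief delicacy is making sure the Nullstellensatz is available in exactly the generality needed (maximal-to-maximal under $B/A$ finite type, residue field finite over residue field), which is precisely the definition of $A$ Jacobson, and threading the identification of $C^\bullet$-modules under the two successive base changes $A\to B\to\kappa(\mathfrak n)$ carefully enough that the injectivity of $\Hom$ modulo $\mathfrak n$ into the field-coefficient $\Hom$ is legitimate.
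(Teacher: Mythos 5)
Your proof takes a genuinely different route from the paper's, and the difference is not cosmetic: it introduces gaps that cannot be closed by the apparatus you invoke. The paper does not touch the complex $C^{\bullet}_{\gM}$ at all in this proposition; its entire proof of (2)$\implies$(3) rests on embedding $\gN\otimes_A Q$ into $\varprojlim_I \gN\otimes_A (Q/IQ)$ (using $u$-adic completeness and the Artin--Rees/Krull intersection theorem applied over $\gS_A$), then reducing by d\'evissage to the case of residue fields. This works precisely because it never needs $\gM$ or $\gN$ to be projective --- only finitely generated, which is all the proposition assumes.

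Your approach has two concrete problems. First, $C^{\bullet}_{\gM}$ is only defined (Definition~\ref{def:explicit Ext complex}) when $\gM$ is a Breuil--Kisin module, i.e.\ a projective $\gS_A$-module, and the identification $C^i_\gM(\gN\otimes_A Q)\cong C^i_\gM(\gN)\otimes_A Q$ again uses projectivity of $\gM$. The proposition only assumes $\gM$ is finitely generated, so you would be proving a strictly weaker statement than what is used later (e.g.\ in the proof of Proposition~\ref{prop:descent for Homs of free Kisin modules}). Second, and more seriously, the central step of your (2)$\implies$(3) --- that $\ker(\delta\otimes\kappa(\mathfrak m))=0$ for all $\mathfrak m$ implies $H:=\ker\delta$ satisfies $H\otimes_A\kappa(\mathfrak m)=0$ --- silently assumes that $H\otimes_A\kappa(\mathfrak m)\to\ker(\delta\otimes\kappa(\mathfrak m))$ is injective. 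For a left-exact functor this is false in general: the short exact sequence $0\to H\to C^0\to \operatorname{im}\delta\to 0$ yields $\Tor_1^A(\operatorname{im}\delta,\kappa(\mathfrak m))\to H\otimes\kappa(\mathfrak m)\to C^0\otimes\kappa(\mathfrak m)$, and there is no reason the $\Tor_1$ term vanishes. Your ``clean way around this'' via the truncated complex fares no better: $C^0/\bigl((\Phi^*_{\gM})^{-1}(v^N C^1)\bigr)$ is not visibly projective (the paper only asserts that $C^0/v^NC^0$ is), and in any case an injection of finitely generated projective modules over a Noetherian ring is \emph{not} automatically split (e.g.\ multiplication by $t$ on $\F[t]$), so ``$H=0$ forces the map to be a split injection'' is simply wrong as a general implication. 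Your (1)$\implies$(2) and the outline of the Jacobson case are fine in spirit, but since the Jacobson case as in the paper is deduced by applying (2)$\implies$(3) over $B$, the above gap propagates. To fix the argument you would need to replace the $C^{\bullet}$-based reasoning with something like the paper's completion-and-d\'evissage argument, which sidesteps the base-change and flatness issues entirely.
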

\begin{proof}
If $\mathfrak m$ is a maximal ideal of $A$, then $\kappa(\mathfrak m)$
is certainly a finite type $A$-algebra, and so evidently~(1) implies~(2). 
It is even a finitely generated $A$-module, and so also~(2) follows
from~(3). 

We next
prove that~(2) implies~(3).
To this end, recall that if $A$ is any ring, and $M$ is any $A$-module,
then $M$ injects into the product of its localizations at all maximal ideals. 
If $A$ is Noetherian, and $M$ is finitely generated, then, by combining
this fact with the Artin--Rees
Lemma, we see that $M$ embeds into the product of its completions at all 
maximal ideals.   Another way to express this is that, if $I$ runs
over all cofinite length ideals in $A$ (i.e.\ all ideals for which $A/I$
is finite length), then $M$ embeds into the projective limit 
of the quotients $M/IM$  (the point being that
this projective limit is the same as the product
over all $\mathfrak m$-adic completions).
We are going to apply this observation with $A$ replaced by $\gS_A$,
and with $M$ taken to be $\gN\otimes_A Q$ for some finitely generated
$A$-module $Q$.

In $A[[u]]$, one sees that $u$ lies in the Jacobson radical (because 
$1  + fu$ is invertible in $A[[u]]$ for every $f \in A[[u]]$), and thus
in every maximal ideal, and so the maximal ideals of $A[[u]]$ are of
the form $(\mathfrak m, u)$, where $\mathfrak m$ runs over the maximal 
ideals of~$A$.
Thus the ideals of the form $(I,u^n)$, where $I$ is a cofinite length
ideal in $A$, are
cofinal in all cofinite length ideals in $A[[u]]$.
Since $\gS_A$ is finite over $A[[u]]$, we see that the ideals
$(I,u^n)$ in $\gS_A$ are also
cofinal in all cofinite length ideals in $A[[u]]$.
Since $A[[u]]$, and hence $\gS_A$, is furthermore Noetherian when $A$ is,
we see that if $Q$ is a
finitely generated $A$-module, and $\gN$ is a finitely generated
$\gS_A$-module,
then $\gN\otimes_A (Q/IQ)$ is $u$-adically complete,
for any cofinite length ideal $I$ in $A$, and
hence equal to the limit over $n$ of $\gN \otimes_A Q/(I,u^n)$.  
Putting this together with the observation of the preceding paragraph,
we see that the natural morphism
$$\gN\otimes_A Q \to \varprojlim_I \gN\otimes_A (Q/IQ)$$ 
(where $I$ runs over all cofinite length ideals of $A$)
is an embedding.
The induced morphism
$$ \Hom_{\K{A}}(\gM,\gN\otimes_A Q)
\to
\varprojlim_I \Hom_{\K{A}}(\gM,\gN\otimes_A (Q/IQ))$$
is then evidently also an embedding.

Thus, to conclude that 
$ \Hom_{\K{A}}(\gM,\gN\otimes_A Q)$
vanishes,
it suffices to show that
$\Hom_{\K{A}}(\gM,\gN\otimes_A (Q/IQ))$  vanishes for each
cofinite length ideal $I$ in $A$.  An easy induction on the
length of $A/I$ reduces this to showing that
$\Hom_{\K{A}}\bigl(\gM,\gN\otimes_A \kappa(\mathfrak m)\bigr),$
or, equivalently, $\Hom_{\K{\kappa(\mathfrak{m})}}\bigl(\gM\otimes_A \kappa(\mathfrak m),
\gN\otimes_A \kappa(\mathfrak m)\bigr),$
vanishes for each maximal ideal~$\mathfrak m$.
Since this is 
the hypothesis of~(2), we see that indeed~(2) implies~(3).

It remains to show that~(3) implies~(1) when $A$ is Jacobson. 
Applying the result
``(2) implies~(3)'' (with $A$ replaced by~$B$, and taking $Q$ in~(3) to be $B$ itself as a $B$-module) to $\gM\cotimes_A B$ and $\gN\cotimes_A B$,
we see that it suffices to prove the vanishing of
$$\Hom_{\K{B}}\bigl( (\gM\cotimes_A B)\otimes_B \kappa(\mathfrak n),
(\gN\cotimes_A B)\otimes_B \kappa(\mathfrak n) \bigr)
= \Hom_{\K{A}}\bigl( \gM, \gN\cotimes_A \kappa(\mathfrak n) \bigr)
$$
for each maximal ideal $\mathfrak n$ of $B$.
Since $A$ is Jacobson, the field $\kappa(\mathfrak n)$ is in fact a
finitely generated
$A$-module, hence $\gN\cotimes\kappa(\mathfrak n) = \gN\otimes_A
\kappa(\mathfrak n)$, and so the desired vanishing is a special case of~(3).
\end{proof}

\begin{cor}
\label{cor:vanishing of homs non Noetherian}
If $A$ is a Noetherian and Jacobson $\cO/\varpi^a$-algebra,
and if $\gM$ and $\gN$ are Breuil--Kisin modules with descent
data and $A$-coefficients, 
then the
following three conditions are equivalent:
\begin{enumerate}
\item
$\Hom_{\K{B}} (\gM\cotimes_A B, \gN\cotimes_A B) = 0$
for any $A$-algebra $B$.
\item
$\Hom_{\K{\kappa(\mathfrak{m})}}\bigl(\gM\otimes_A \kappa(\mathfrak m),
\gN\otimes_A \kappa(\mathfrak m) \bigr) = 0$ 
for each maximal ideal $\mathfrak m$ of $A$.
\item
$\Hom_{\K{A}}(\gM, \gN\otimes_A Q) = 0$ 
for any 
finitely generated $A$-module $Q$.
\end{enumerate}
\end{cor}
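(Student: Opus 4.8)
The plan is to deduce this from Proposition~\ref{prop:vanishing of homs}, combined with the descent result Proposition~\ref{prop:descent for Homs of free Kisin modules}. First I would observe that a Breuil--Kisin module with descent data and $A$-coefficients is in particular projective, hence finitely generated, over $\gS_A$ (equivalently over $A[[u]]$, by Lemma~\ref{lem:projectivity descends}), so that $\gM$ and $\gN$ satisfy the hypotheses of Proposition~\ref{prop:vanishing of homs}. Since $A$ is assumed Noetherian and Jacobson, that proposition shows that the following three conditions are equivalent: (1$'$) $\Hom_{\K{B}}(\gM\cotimes_A B, \gN\cotimes_A B) = 0$ for every \emph{finite type} $A$-algebra $B$; (2) $\Hom_{\K{\kappa(\mathfrak m)}}\bigl(\gM\otimes_A \kappa(\mathfrak m), \gN\otimes_A \kappa(\mathfrak m)\bigr) = 0$ for every maximal ideal $\mathfrak m$ of $A$; and (3) $\Hom_{\K{A}}(\gM,\gN\otimes_A Q) = 0$ for every finitely generated $A$-module $Q$. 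Conditions (2) and (3) here are exactly conditions (2) and (3) of the corollary, while (1$'$) is the a priori weaker version of condition (1).

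It therefore remains only to show that (1$'$) is equivalent to condition (1) of the corollary, namely the vanishing of $\Hom_{\K{B}}(\gM\cotimes_A B, \gN\cotimes_A B)$ for \emph{every} $A$-algebra $B$. One implication is immediate, since every finite type $A$-algebra is in particular an $A$-algebra. For the converse, I would assume (1$'$) and let $B$ be an arbitrary $A$-algebra, with $f_B \colon \gM\cotimes_A B \to \gN\cotimes_A B$ a morphism of Breuil--Kisin modules with $B$-coefficients; note that $\gM\cotimes_A B$ and $\gN\cotimes_A B$ really are Breuil--Kisin modules over $B$, by Lemma~\ref{rem: base change of locally free Kisin module is a locally free Kisin module}. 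By Proposition~\ref{prop:descent for Homs of free Kisin modules} there is a finite type $A$-subalgebra $B' \subseteq B$ and a morphism of Breuil--Kisin modules $f_{B'} \colon \gM\cotimes_A B' \to \gN\cotimes_A B'$ whose base change along $B' \to B$ is $f_B$. Applying (1$'$) with $B$ replaced by $B'$ gives $f_{B'} = 0$, and since the base change of the zero morphism is the zero morphism, we conclude $f_B = 0$. Hence condition (1) holds, and all three conditions of the corollary are equivalent.

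There is essentially no serious obstacle here, as all the substantive work is already contained in Propositions~\ref{prop:vanishing of homs} and~\ref{prop:descent for Homs of free Kisin modules}. The only points requiring any care are the routine base-change bookkeeping: that $\gM\cotimes_A B$ is genuinely a Breuil--Kisin module over $B$ (via Lemma~\ref{rem: base change of locally free Kisin module is a locally free Kisin module}), that $(\gM\cotimes_A B')\cotimes_{B'} B \cong \gM\cotimes_A B$ so that the phrase ``$f_B$ is the base change of $f_{B'}$'' is meaningful, and that base change carries the zero morphism to the zero morphism — all of which are immediate from the definitions.
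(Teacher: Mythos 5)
Your proof is correct and follows exactly the same route as the paper: invoke Proposition~\ref{prop:vanishing of homs} for the equivalence with finite type $B$, then upgrade to arbitrary $B$ via Proposition~\ref{prop:descent for Homs of free Kisin modules}. The paper's proof is just a more terse statement of this same two-step argument, which you have simply spelled out in full.
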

\begin{proof}By Proposition~\ref{prop:vanishing of homs}, we need only
  prove that if $\Hom_{\K{B}} (\gM\cotimes_A B, \gN\cotimes_A B)$
  vanishes 
for all finitely generated $A$-algebras~$B$, then it vanishes for all
$A$-algebras~$B$. This is immediate from Proposition~\ref{prop:descent for Homs of free Kisin modules}.
\end{proof}

\begin{cor}
\label{cor:freeness for exts}
Suppose that $\gM$ and $\gN$ are Breuil--Kisin modules with
descent data and coefficients in a Noetherian $\cO/\varpi^a$-algebra $A$,
and that furthermore
$\Hom_{\K{A}}\bigl(\gM\otimes_A \kappa(\mathfrak m),
\gN\otimes_A \kappa(\mathfrak m) \bigr)$ vanishes
for each maximal ideal $\mathfrak m$ of $A$.
Then the $A$-module
$\Ext^1_{\K{A}}(\gM,\gN)$
is projective
of finite rank. 
\end{cor}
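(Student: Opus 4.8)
The plan is to compute $\Ext^1_{\K{A}}(\gM,\gN)$ as the first cohomology of the two-term complex $C^{\bullet}_{\gM}(\gN)\colon C^0(\gN) \xrightarrow{\delta} C^1(\gN)$ of Definition~\ref{def:explicit Ext complex}, and to deduce projectivity from a $\Tor$-vanishing statement obtained by base change. The first point I would establish is that forming this complex commutes with base change along finitely generated $A$-modules. Since $\gM$ and $\varphi^*\gM$ are projective over $\gS_A$, hence finitely generated and projective over the twisted group ring $\gS_A[\Gal(K'/K)]$, the functor $\Hom_{\gS_A[\Gal(K'/K)]}(\gM,-)$ is exact and commutes with finite direct sums; applying it to a finite presentation $A^m \to A^n \to Q \to 0$ shows that the natural map $C^i(\gN) \otimes_A Q \to C^i(\gN \otimes_A Q)$ is an isomorphism for $i = 0,1$ and any finitely generated $A$-module $Q$, compatibly with the differentials (here $\gN \otimes_A Q$ is regarded as an object of $\K{A}$ in the evident way). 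By Corollary~\ref{cor:complex computes Hom and Ext} applied to $\gN \otimes_A Q$, the complex $C^{\bullet}_{\gM}(\gN) \otimes_A Q$ therefore has $H^0 = \Hom_{\K{A}}(\gM,\gN\otimes_A Q)$ and $H^1 = \Ext^1_{\K{A}}(\gM,\gN\otimes_A Q)$.

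Next I would feed in the hypothesis. By Proposition~\ref{prop:vanishing of homs} (the implication (2)$\Rightarrow$(3), valid for Noetherian $A$), the assumed vanishing of $\Hom_{\K{A}}\bigl(\gM\otimes_A\kappa(\mathfrak m),\gN\otimes_A\kappa(\mathfrak m)\bigr)$ for all maximal ideals $\mathfrak m$ gives $\Hom_{\K{A}}(\gM,\gN\otimes_A Q) = 0$ for every finitely generated $A$-module $Q$. Taking $Q = A$ shows $\Hom_{\K{A}}(\gM,\gN) = 0$, i.e.\ $\delta\colon C^0(\gN)\to C^1(\gN)$ is injective, so that $0 \to C^0(\gN) \xrightarrow{\delta} C^1(\gN) \to \Ext^1_{\K{A}}(\gM,\gN) \to 0$ is a short exact sequence of $A$-modules. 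The terms $C^0(\gN)$ and $C^1(\gN)$ are projective over $\gS_A^0$ (since $\gN$ is $\gS_A$-projective and $\Gal(K'/K)$ has prime-to-$p$ order, by Remark~\ref{rem:truncation-remark}(2)), hence flat over $A$ because $\gS_A^0$ is flat over the Noetherian ring $A$ (cf.\ Remark~\ref{rem:projectivity for Kisin modules}(3)). Tensoring the short exact sequence with a finitely generated $A$-module $Q$ and using $\Tor_1^A(C^1(\gN),Q) = 0$, the long exact sequence yields $0 \to \Tor_1^A\bigl(\Ext^1_{\K{A}}(\gM,\gN),Q\bigr) \to C^0(\gN)\otimes_A Q \xrightarrow{\delta\otimes\id} C^1(\gN)\otimes_A Q$; but the kernel of $\delta\otimes\id$ is $H^0\bigl(C^{\bullet}_{\gM}(\gN)\otimes_A Q\bigr) = \Hom_{\K{A}}(\gM,\gN\otimes_A Q) = 0$ by the previous step. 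Hence $\Tor_1^A\bigl(\Ext^1_{\K{A}}(\gM,\gN),Q\bigr) = 0$ for all finitely generated $Q$, and since $\Tor$ commutes with filtered colimits this shows $\Ext^1_{\K{A}}(\gM,\gN)$ is $A$-flat. It is finitely presented by Proposition~\ref{prop:exts are f.g. over A} (indeed finitely generated over the Noetherian ring $A$), and a finitely presented flat module is finitely generated projective, which completes the argument.

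The routine part is the homological bookkeeping in the second paragraph; the step requiring care is the base-change compatibility of $C^{\bullet}$ in the first paragraph, and in particular the assertion that the base-changed complex still computes $\Hom$ and $\Ext^1$ over $\K{A}$. This uses crucially that $\gM$ is finitely generated \emph{projective} over $\gS_A[\Gal(K'/K)]$, so that $\Hom_{\gS_A[\Gal(K'/K)]}(\gM,-)$ is exact and commutes with the relevant cokernels, together with the fact (Corollary~\ref{cor:complex computes Hom and Ext}) that $C^{\bullet}_{\gM}(-)$ computes $\Hom$ and $\Ext^1$ for an \emph{arbitrary} object of $\K{A}$, not merely for Breuil--Kisin modules. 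One could alternatively invoke Proposition~\ref{prop:base-change for exts} together with the truncated complex of Lemma~\ref{lem:truncation argument used to prove f.g. of Ext Q version}, but passing through the short exact sequence $0 \to C^0(\gN)\to C^1(\gN)\to \Ext^1_{\K{A}}(\gM,\gN)\to 0$ seems the most economical route.
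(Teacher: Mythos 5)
Your proof is correct, and it differs in a genuine (if modest) way from the paper's argument, so let me compare.

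The paper's proof also reduces to showing $\Ext^1_{\K{A}}(\gM,\gN)$ is flat, but proceeds by invoking Proposition~\ref{prop:base-change for exts} to identify $Q \mapsto \Ext^1_{\K{A}}(\gM,\gN)\otimes_A Q$ with $Q \mapsto \Ext^1_{\K{A}}(\gM,\gN\otimes_A Q)$, and then checks that the latter functor is exact in $Q$ by combining the long exact $\Ext$-sequence in $\K{A}$ (applied to the exact sequence $0 \to \gN\otimes Q' \to \gN\otimes Q \to \gN\otimes Q'' \to 0$, using that $\gS_A$ is $A$-flat) with the vanishing of $\Ext^2_{\K{A}}$ from Corollary~\ref{cor:ext2 vanishes} on one end, and the vanishing of $\Hom_{\K{A}}(\gM,\gN\otimes Q'')$ on the other. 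Your route avoids Proposition~\ref{prop:base-change for exts} and the $\Ext^2$-vanishing altogether: you re-derive the relevant base-change compatibility at the level of the two-term complex $C^\bullet$ (which is quite painless since $\gM$ is finitely generated projective over $\gS_A[\Gal(K'/K)]$, so $\Hom_{\gS_A[\Gal(K'/K)]}(\gM,-)$ preserves the cokernel presentation of $\gN\otimes_A Q$), and then exploit the short exact sequence $0 \to C^0(\gN) \to C^1(\gN) \to \Ext^1_{\K{A}}(\gM,\gN) \to 0$ --- which exists precisely because the Hom vanishing makes $\delta$ injective --- to convert the question into a $\Tor_1$ vanishing against the $A$-flat modules $C^0(\gN),C^1(\gN)$. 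Both proofs bottom out in the same two inputs, namely Proposition~\ref{prop:vanishing of homs} (2)$\Rightarrow$(3) and the finite generation from Proposition~\ref{prop:exts are f.g. over A}; what your version buys is a more elementary and self-contained deduction that sidesteps the $\Ext^2$-vanishing and the (truncated-complex-based) Proposition~\ref{prop:base-change for exts}, at the mild cost of redoing a simplified base-change statement in the first paragraph. One small stylistic caveat: your phrasing ``applying $\Hom_{\gS_A[\Gal(K'/K)]}(\gM,-)$ to a finite presentation $A^m \to A^n \to Q \to 0$'' should really say you first tensor the presentation with $\gN$ to obtain the right-exact sequence $\gN^m \to \gN^n \to \gN\otimes_A Q \to 0$ of $\gS_A[\Gal(K'/K)]$-modules, and then apply the exact functor $\Hom_{\gS_A[\Gal(K'/K)]}(\gM,-)$; this is clearly what you intend, and the conclusion is correct.
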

\begin{proof}
By Proposition~\ref{prop:exts are f.g. over A},
in order to prove that $\Ext^1_{\K{A}}(\gM,\gN)$
is projective of finite rank over $A$,
it suffices to prove that it is flat over $A$.
For this, it suffices to show that
$Q \mapsto \Ext^1_{\K{A}}(\gM,\gN)\otimes_A Q$
is exact when applied to finitely generated $A$-modules $Q$.
Proposition~\ref{prop:base-change for exts} (together with 
Remark~\ref{rem:completed tensor}~(1)) allows us to identify
this functor with the functor
$Q \mapsto \Ext^1_{\K{A}}(\gM,\gN\otimes_A Q).$
Note that the functor $Q\mapsto\gN\otimes_A Q$ is an exact functor of $Q$,
since $\gS_A$ is a flat $A$-module (as $A$ is Noetherian; see Remark~\ref{rem:projectivity for Kisin modules}(3)).
Thus, taking into account
Corollary~\ref{cor:ext2 vanishes},
we see that it suffices to show that
$\Hom_{\K{A}}(\gM,\gN\otimes_A Q) = 0$
for each finitely generated $A$-module~$Q$,
under the hypothesis that
$\Hom_{\K{A}}\bigl(\gM\otimes_A \kappa(\mathfrak m),
\gN\otimes_A \kappa(\mathfrak m) \bigr) = 0$ 
for each maximal ideal $\mathfrak m$ of~$A$.
This is the implication (2) $\implies$ (3) of  Proposition~\ref{prop:vanishing of homs}.
\end{proof}

\begin{lemma}\label{lem: flat base change for Homs}
Suppose that $\gM$ is a Breuil--Kisin modules with
descent data and coefficients in a Noetherian $\cO/\varpi^a$-algebra
$A$. Suppose that $\gN$ is either a Breuil--Kisin module with
$A$-coefficients, or that $\gN=\gN'/u^N\gN'$, where $\gN'$  a Breuil--Kisin module with
$A$-coefficients and $N\ge 1$. 
Then,
if $B$ is a finitely generated flat 
$A$-algebra, we have a natural isomorphism
\[\Hom_{\K{B}}(\gM\cotimes_{A} B, 
\gN\cotimes_{A} B) \iso \Hom_{\K{A}}(\gM,\gN)\otimes_{A}B.  \] 
\end{lemma}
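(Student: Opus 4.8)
The plan is to compute both sides using the two-term complex $C^\bullet_\gM(-)$ of Definition~\ref{def:explicit Ext complex}, and then to exploit the flatness of $B$ over $A$. Recall from Lemma~\ref{lem: C computes Hom} (or Corollary~\ref{cor:complex computes Hom and Ext}) that $\Hom_{\K{A}}(\gM,\gN)=H^0\bigl(C^\bullet_\gM(\gN)\bigr)$, and likewise $\Hom_{\K{B}}(\gM\cotimes_AB,\gN\cotimes_AB)=H^0\bigl(C^\bullet_{\gM\cotimes_AB}(\gN\cotimes_AB)\bigr)$. So it will suffice to construct a natural isomorphism of complexes of $B$-modules
\[C^\bullet_\gM(\gN)\otimes_A B\;\cong\;C^\bullet_{\gM\cotimes_AB}(\gN\cotimes_AB),\]
and then to note that, since $B$ is flat over $A$, the functor $\otimes_A B$ is exact and therefore commutes with the formation of $H^0$.

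First I would recall that $\gM$ and $\varphi^*\gM$, being projective over $\gS_A$ and $\Gal(K'/K)$ having order prime to $p$, are finitely generated projective as left $\gS_A[\Gal(K'/K)]$-modules (as was used before Theorem~\ref{thm: map from Kisin to general local model is formally smooth}). Write $P_0=\gM$ and $P_1=\varphi^*\gM$. For any finitely generated projective module $P$ over a ring $R$ and any ring homomorphism $R\to R'$ there is a canonical isomorphism $\Hom_R(P,M)\otimes_R R'\cong\Hom_{R'}(P\otimes_RR',M\otimes_RR')$, functorial in $M$. I would apply this with $R=\gS_A[\Gal(K'/K)]$ and $R'=\gS_B[\Gal(K'/K)]$; here I use that $A$ is Noetherian and $B$ is finitely generated over $A$, so that $\gS_B=\gS_A\otimes_AB$ and hence $\gS_B[\Gal(K'/K)]=\gS_A[\Gal(K'/K)]\otimes_AB$. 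This gives, for $i=0,1$, canonical isomorphisms
\[C^i_\gM(\gN)\otimes_A B=\Hom_{\gS_A[\Gal(K'/K)]}(P_i,\gN)\otimes_A B\;\cong\;\Hom_{\gS_B[\Gal(K'/K)]}\bigl(P_i\otimes_A B,\ \gN\otimes_A B\bigr),\]
compatible with the differentials by functoriality in $\gN$. It then remains to identify $P_i\otimes_AB$ with $P_i\cotimes_AB$ and $\gN\otimes_AB$ with $\gN\cotimes_AB$: the former holds since $P_i$ is projective over $\gS_A$ (hence locally free, so its base change is $\gS_B$-locally free and in particular $u$-adically complete), and the latter holds in both cases of the hypothesis --- when $\gN$ is a Breuil--Kisin module the same argument applies, and when $\gN=\gN'/u^N\gN'$ the module $\gN\otimes_AB$ is killed by $u^N$ and so is trivially $u$-adically complete (in this case one also uses $\gN'\otimes_AB=\gN'\cotimes_AB$, $\gN'$ being projective, to recognise the base change as $(\gN'\cotimes_AB)/u^N$).

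Combining these steps with the exactness of $\otimes_A B$, which gives $H^0\bigl(C^\bullet_\gM(\gN)\bigr)\otimes_A B\cong H^0\bigl(C^\bullet_\gM(\gN)\otimes_A B\bigr)$, I obtain the desired identification $\Hom_{\K{A}}(\gM,\gN)\otimes_A B\cong\Hom_{\K{B}}(\gM\cotimes_AB,\gN\cotimes_AB)$, and by construction the composite is the natural base-change map. I do not anticipate a genuine obstacle: the only points needing care are the bookkeeping identity $\gS_B=\gS_A\otimes_AB$ (immediate from $A$ Noetherian and $B$ finitely generated) and the identification $\gN\cotimes_AB=\gN\otimes_AB$ in the $u$-power-torsion case, both routine. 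It is worth emphasising that, unlike in the proof of Proposition~\ref{prop:base-change for exts}, no input from Lemma~\ref{lem:truncation argument used to prove f.g. of Ext Q version} is needed here, precisely because $H^0$ is left exact and $B$ is flat over $A$; the flatness hypothesis is genuinely necessary, as Example~\ref{example:rank one unramified} shows.
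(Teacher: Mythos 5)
Your plan is to upgrade the paper's argument to a literal isomorphism of complexes
\[C^\bullet_\gM(\gN)\otimes_A B\;\cong\;C^\bullet_{\gM\cotimes_AB}(\gN\cotimes_AB),\]
and you then assert at the end that ``no input from Lemma~\ref{lem:truncation argument used to prove f.g. of Ext Q version} is needed here.'' Unfortunately this is exactly where the argument breaks down, in the case where $\gN$ is a (projective) Breuil--Kisin module. The key false step is the claim that ``$A$ is Noetherian and $B$ is finitely generated over $A$, so that $\gS_B=\gS_A\otimes_AB$.'' This would be correct if $B$ were a \emph{finite} $A$-module, but $B$ is only assumed to be a finitely generated (flat) $A$-\emph{algebra}, and for such $B$ the natural map $A[[u]]\otimes_A B\to B[[u]]$ is in general only injective, not surjective. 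For example, with $B=A[t]$ (free over $A$, so flat, and finitely generated as an algebra) the element $\sum_n t^nu^n$ lies in $B[[u]]$ but not in $A[[u]]\otimes_A B = A[[u]][t]$. Consequently $\gS_A\otimes_A B\neq \gS_B$, so your appeal to base change of Hom along $\gS_A[\Gal(K'/K)]\to\gS_B[\Gal(K'/K)]$ only identifies $C^i_\gM(\gN)\otimes_A B$ with $\Hom_{(\gS_A\otimes_AB)[\Gal(K'/K)]}(P_i\otimes_A B,\gN\otimes_AB)$, which is \emph{not} the same as $C^i_{\gM\cotimes_AB}(\gN\cotimes_AB)=\Hom_{\gS_B[\Gal(K'/K)]}(P_i\cotimes_AB,\gN\cotimes_AB)$. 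Already in the toy case $\gM=\gS_A$ with trivial descent data, $C^0(\gN)\otimes_AB=\gN\otimes_AB$ while $C^0(\gN\cotimes_AB)=\gN\cotimes_AB$, and these differ.

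Your argument does go through in the second case ($\gN=\gN'/u^N\gN'$), since every morphism out of a projective $\gS_A$-module into a $u^N$-torsion module factors through a finite truncation $P_i/u^NP_i$, and $\gS_A/u^N\gS_A$ is a finite $A$-module, so there the completed and uncompleted tensor products genuinely coincide. But in the projective case what one actually has is only a \emph{quasi-isomorphism} of complexes rather than an isomorphism; this is precisely the content of Corollary~\ref{cor: base change completion for complex in free case}, whose proof (contrary to your closing remark) does rest on Lemma~\ref{lem:truncation argument used to prove f.g. of Ext Q version}. The paper's proof therefore proceeds differently: it tensors the left-exact sequence $0\to\Hom_{\K{A}}(\gM,\gN)\to C^0(\gN)\to C^1(\gN)$ with the flat $A$-algebra $B$ (which stays left-exact), then invokes the quasi-isomorphism of Corollary~\ref{cor: base change completion for complex in free case} to replace $C^i(\gN)\otimes_AB$ by $C^i(\gN\cotimes_AB)$ at the level of $H^0$, and finally applies Lemma~\ref{lem:base-change-complexes}. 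You should adopt that route for the projective case; an isomorphism of complexes is not available.
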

\begin{proof}
   By Corollary~\ref{cor:complex computes Hom and Ext} and the
   flatness of~$B$,
  we have a left exact sequence
  \[0\to \Hom_{\K{A}}(\gM,\gN)\otimes_AB\to C^0(\gN)\otimes_AB\to
    C^1(\gN)\otimes_AB\]  and therefore (applying
Corollary~\ref{cor: base change completion for complex in free case}
to treat the case that $\gN$ is projective)  a left exact sequence
\[0\to \Hom_{\K{A}}(\gM,\gN)\otimes_AB\to C^0(\gN\cotimes_AB)\to
  C^1(\gN\cotimes_AB).\]
The result follows from Corollary~\ref{cor:complex computes Hom and
  Ext} and Lemma~\ref{lem:base-change-complexes}.
\end{proof}

\begin{lemma}\label{lem: vanishing of Kisin module homs implies vanishing on dense open}
Suppose that $\gM$ is a Breuil--Kisin module with
descent data and coefficients in a Noetherian $\cO/\varpi^a$-algebra
$A$ which is furthermore a domain.
Suppose also that $\gN$ is either a Breuil--Kisin module with
$A$-coefficients, or that $\gN=\gN'/u^N\gN'$, where $\gN'$  is a Breuil--Kisin module with
$A$-coefficients and $N\ge 1$. 
Then there is some nonzero $f\in
A$ with the following property: 
writing 
$\gM_{A_f}=\gM\cotimes_A A_f$ and $\gN_{A_f}=\gN\cotimes_A A_f$, then for any
finitely generated $A_f$-algebra $B$, and any finitely 
generated $B$-module $Q$, there are natural isomorphisms
\begin{multline*}
\Hom_{\K{A_f}}(\gM_{A_f},\gN_{A_f})\otimes_{A_f}Q \iso
\Hom_{\K{B}}(\gM_{A_f}\cotimes_{A_f} B, \gN_{A_f}\cotimes_{A_f} B)\otimes_B Q
\\
\iso
\Hom_{\K{B}}(\gM_{A_f}\cotimes_{A_f} B, \gN_{A_f}\cotimes_{A_f} Q).
\end{multline*}
\end{lemma}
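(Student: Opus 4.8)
The plan is to use the explicit complex $C^\bullet_{\gM}(\gN)$ of Definition~\ref{def:explicit Ext complex} together with the truncation result of Lemma~\ref{lem:truncation argument used to prove f.g. of Ext Q version}, exactly as in the proof of Proposition~\ref{prop:base-change for exts}, but now tracking $\Hom = H^0$ rather than $\Ext^1 = H^1$. The key point is that although $H^0$ of the full complex is not compatible with arbitrary base change (Example~\ref{example:rank one unramified}), the truncated complex
\[
C^0(\gN)/\bigl((\Phi_{\gM}^*)^{-1}(v^N C^1(\gN))\bigr) \longrightarrow C^1(\gN)/v^N C^1(\gN)
\]
computes the same cohomology, and its terms are finitely generated $A$-modules, with the second even finite projective. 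So the obstruction to base-changing $H^0$ is entirely concentrated in the first term, which is merely finitely generated. The point of introducing the localisation $A_f$ is to make that first term also flat (indeed projective) over $A_f$, using the standard fact (generic freeness / flattening stratification over a Noetherian domain) that a finitely generated module over a Noetherian domain is free after inverting a single nonzero element.

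First I would recall, via Corollary~\ref{cor:complex computes Hom and Ext} and Lemma~\ref{lem:truncation argument used to prove f.g. of Ext Q version} applied with $Q=A$, that for $N$ large enough (independent of $\gN$, depending only on $h$ and $a$) one may compute $\Hom_{\K{A}}(\gM,\gN)$ as the kernel of the truncated differential $\bar\delta\colon C^0(\gN)/\bigl((\Phi_{\gM}^*)^{-1}(v^N C^1(\gN))\bigr)\to C^1(\gN)/v^N C^1(\gN)$; the same holds after any finitely generated flat base change by Corollary~\ref{cor: base change completion for complex in free case}, and the terms $C^i(\gN)/v^N C^i(\gN)$ are finite projective $A$-modules while $C^0(\gN)/\bigl((\Phi_{\gM}^*)^{-1}(v^N C^1(\gN))\bigr)$ is finitely generated over $A$ (using $v^N C^0 \subseteq (\Phi_{\gM}^*)^{-1}(v^N C^1)$). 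When $\gN = \gN'/u^N\gN'$ the same applies by the exact sequence $0\to u^N\gN'\to\gN'\to\gN\to 0$ and Corollary~\ref{cor:ext2 vanishes}, as in Proposition~\ref{prop:exts are f.g. over A}. Then, since $A$ is a Noetherian domain, I would invoke generic freeness to find a nonzero $f\in A$ such that $C^0(\gN)/\bigl((\Phi_{\gM}^*)^{-1}(v^N C^1(\gN))\bigr)\otimes_A A_f$ is free of finite rank over $A_f$ (the $C^i(\gN)/v^N$ are already projective, hence free after localising further if needed; absorb all the finitely many localisations into a single $f$).

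With that $f$ fixed, over $A_f$ the complex computing $\Hom$ is a two-term complex of finite projective $A_f$-modules, and now the argument of Proposition~\ref{prop:base-change for exts} goes through verbatim for $H^0$ in place of $H^1$: forming the kernel of $\bar\delta$ commutes with the exact functor $\otimes_{A_f} Q$ for a finitely generated $A_f$-module $Q$ because a two-term complex of flat modules has the property that its $H^0$ (a kernel) base-changes correctly as soon as the cokernel $H^1$ is also flat — and here both $C^i/v^N$ and the subquotient $C^0/\bigl((\Phi_\gM^*)^{-1}(v^N C^1)\bigr)$ are flat over $A_f$, so the whole complex is a complex of flats with flat cohomology, and hence is "universally exact" in the relevant sense. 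Concretely: $\Hom_{\K{A_f}}(\gM_{A_f},\gN_{A_f})\otimes_{A_f} Q$ is the kernel of $\bar\delta\otimes_{A_f}Q$, which by Lemma~\ref{lem:truncation argument used to prove f.g. of Ext Q version} (whose hypotheses on $C^i\otimes_{A_f}Q$ being $v$-torsion free and $v$-adically separated hold since these modules are now finitely generated over $A_f[[v]]\otimes_{A_f}B$, cf.\ Remark~\ref{rem:truncation-remark}) and Lemma~\ref{lem:base-change-complexes} equals $\Hom_{\K{B}}(\gM_{A_f}\cotimes_{A_f}B,\gN_{A_f}\cotimes_{A_f}Q)$. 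Taking $Q=B$ gives the first isomorphism and then tensoring further by $Q$ over $B$ gives the composite, just as at the end of Proposition~\ref{prop:base-change for exts}.

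The main obstacle — really the only subtle point — is verifying that, after inverting $f$, the kernel $\ker(\bar\delta)$ genuinely commutes with arbitrary finitely generated base change and not merely with flat base change; this is where one must be careful to phrase things so that $C^0/\bigl((\Phi_\gM^*)^{-1}(v^N C^1)\bigr)$, $C^1/v^N$, \emph{and} the quotient $C^1/(v^N + \operatorname{im}\bar\delta)$ (i.e.\ $\Ext^1$, which base-changes by Proposition~\ref{prop:base-change for exts}) are all flat over $A_f$, so that the short exact sequences splicing the two-term complex stay exact after $\otimes_{A_f}Q$. Once generic freeness supplies flatness of the first term, flatness of $\Ext^1$ follows from Corollary~\ref{cor:freeness for exts}-type reasoning (or directly: it's the cokernel of a map of flats with flat kernel $\Hom$, forcing $\Ext^1$ flat too after shrinking $\Spec A$ further if necessary), and the argument closes. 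Everything else is a routine transcription of the proofs of Proposition~\ref{prop:base-change for exts}, Proposition~\ref{prop:exts are f.g. over A}, and Lemma~\ref{lem: flat base change for Homs}.
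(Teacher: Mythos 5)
Your overall strategy is close in spirit to the paper's, but there is a real gap in how you establish flatness of $\Ext^1$ over $A_f$, which is the one ingredient you actually need for the kernel of your two-term complex to commute with $\otimes_{A_f}Q$. You write that once the first truncated term $C^0/\bigl((\Phi_\gM^*)^{-1}(v^N C^1)\bigr)$ is made flat by generic freeness, flatness of $\Ext^1$ then follows ``from Corollary~\ref{cor:freeness for exts}-type reasoning (or directly: it's the cokernel of a map of flats with flat kernel $\Hom$, forcing $\Ext^1$ flat too\dots)''. Neither of these works: Corollary~\ref{cor:freeness for exts} requires the fibrewise vanishing of $\Hom$, which is not assumed in the present lemma; and the implication ``kernel and both terms flat $\Rightarrow$ cokernel flat'' is simply false (consider $\Z \xrightarrow{\ \cdot p\ } \Z$ over $\Z$). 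In fact you never establish that $\Hom$ itself is flat either---it is the kernel of a map of flats, which is not automatically flat---so the parenthetical argument is doubly unsupported. The parenthetical hedge ``after shrinking $\Spec A$ further if necessary'' does rescue the logic, but only because it silently amounts to applying generic freeness to $\Ext^1_{\K{A}}(\gM,\gN)$, at which point your initial application of generic freeness to the truncated $C^0$ term becomes unnecessary.

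The paper's proof sidesteps this entirely by working with the full four-term exact sequence $0 \to \Hom_{\K{A}}(\gM,\gN) \to C^0(\gN) \to C^1(\gN) \to \Ext^1_{\K{A}}(\gM,\gN) \to 0$ rather than the truncated complex. The terms $C^0(\gN)$ and $C^1(\gN)$ are automatically $A$-flat (without any localisation) because $\gM$ is $\gS_A$-projective and $\gN$ is $A$-flat; one then applies generic freeness once, to the finitely generated $A$-module $\Ext^1_{\K{A}}(\gM,\gN)$ (finite generation from Proposition~\ref{prop:exts are f.g. over A}). With the last three terms of the four-term sequence flat over $A_f$, the sequence remains exact after $\otimes_{A_f}Q$, and the rest follows from Corollary~\ref{cor: base change completion for complex in free case}, Lemma~\ref{lem:base-change-complexes}, and Corollary~\ref{cor:complex computes Hom and Ext}. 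If you prefer to stay with the truncated complex as you set things up, you should apply generic freeness both to $C^0/\bigl((\Phi_\gM^*)^{-1}(v^N C^1)\bigr)$ and to $\Ext^1$; that is a legitimate alternative, but it requires two invocations of generic freeness where the paper needs only one, and the justification you offer for the second flatness must be replaced by a genuine appeal to generic freeness rather than the incorrect implication you state.
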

\begin{proof}[Proof of Lemma~{\ref{lem: vanishing of Kisin module homs implies vanishing on dense open}}.]
Note that since $A$ is Noetherian, by Remark~\ref{rem:projectivity for
  Kisin modules}(3) we see that~$\gN$ is $A$-flat. 
 By Corollary~\ref{cor:complex computes Hom and Ext}
  we have an exact sequence
  \[0\to \Hom_{\K{A}}(\gM,\gN)\to C^0(\gN)\to C^1(\gN) \to
    \Ext^1_{\K{A}}(\gM,\gN)\to 0.\]
  Since by assumption $\gM$ is a projective $\gS_A$-module, and 
 $\gN$ is a flat
$A$-module, the $C^i(\gN)$ are also flat $A$-modules.

By Proposition~\ref{prop:exts are f.g. over A},
$\Ext^1_{\K{A}}(\gM,\gN)$ is a finitely generated $A$-module, so
by the generic freeness
theorem~\cite[\href{http://stacks.math.columbia.edu/tag/051R}{Tag
    051R}]{stacks-project} there is some nonzero $f\in A$ such that $\Ext^1_{\K{A}}(\gM,\gN)_f$ is
free over~$A_f$.

Since localisation is exact, we obtain an exact
sequence
\[0\to \Hom_{\K{A_f}}(\gM,\gN)_f \to C^0(\gN)_f \to C^1(\gN)_f \to
  \Ext^1_{\K{A}}(\gM,\gN)_f\to 0\]and therefore (applying
Corollary~\ref{cor: base change completion for complex in free case}
to treat the case that $\gN$ is a Breuil--Kisin module) an exact sequence
\[0\to \Hom_{\K{A_f}}(\gM_{A_f},\gN_{A_f})\to C^0(\gN_{A_f})\to C^1(\gN_{A_f}) \to
  \Ext^1_{\K{A}}(\gM,\gN)_f\to 0.\] 

Since the last three terms are flat over~$A_f$, this sequence remains
exact upon tensoring over $A_f$
with $Q$.
Applying  Corollary~\ref{cor: base change completion for complex in free case}
again to treat the case that $\gN$ is a Breuil--Kisin module, we see that in particular we
have a left exact sequence
\[0\to
\Hom_{\K{A_f}}(\gM_{A_f},\gN_{A_f})\otimes_{A_f}Q
\to
  C^0(\gN_{A_f}\cotimes_{A_f}Q)\to C^1(\gN_{A_f}\cotimes_{A_f}Q),\]
and Corollary~\ref{cor:complex computes Hom and Ext} together with Lemma~\ref{lem:base-change-complexes}
yield one of the desired isomorphisms, namely
$$\Hom_{\K{A_f}}(\gM_{A_f},\gN_{A_f})\otimes_{A_f}Q \iso 
\Hom_{\K{B}}(\gM_{A_f}\cotimes_{A_f}B ,\gN_{A_f}\cotimes_{A_f} Q).$$
If we consider the case when $Q = B$, we  obtain an isomorphism
$$\Hom_{\K{A_f}}(\gM_{A_f},\gN_{A_f})\otimes_{A_f}B \iso 
\Hom_{\K{B}}(\gM_{A_f}\cotimes_{A_f}B ,\gN_{A_f}\cotimes_{A_f} B).$$
Rewriting the tensor product $\text{--}\otimes_{A_f} Q $ as
$\text{--}\otimes_{A_f} B \otimes_B Q,$
we then find that 
$$
\Hom_{\K{B}}(\gM_{A_f}\cotimes_{A_f}B ,\gN_{A_f}\cotimes_{A_f} B)\otimes_B Q
\iso
\Hom_{\K{B}}(\gM_{A_f}\cotimes_{A_f}B ,\gN_{A_f}\cotimes_{A_f} Q),$$
which gives the second desired isomorphism.
\end{proof}

Variants on the preceding result may be proved using other
versions of the generic freeness theorem.

\begin{example}\label{example:rank one unramified redux} Returning to
  the setting of
  Example~\ref{example:rank one unramified},
 one can check using Corollary~\ref{cor:vanishing of homs non
  Noetherian} that the conclusion of Lemma~\ref{lem: vanishing of
  Kisin module homs implies vanishing on dense open} (for $\gM =
\gM_x$ and $\gN = \gM_y)$  holds with $f =
x-y$. In this case all of the resulting $\Hom$ groups
vanish (\emph{cf}.\ also the proof of Lemma~\ref{lem: generically no Homs}).
It then follows from
Corollary~\ref{cor:freeness for exts} that
$\Ext^1_{\K{A}}(\gM,\gN)_{f}$ is projective over $A_f$, so that the proof
of Lemma~\ref{lem: vanishing of Kisin module homs implies vanishing on
  dense open} even goes through with this choice of $f$.
\end{example}

As well as considering homomorphisms and extensions of Breuil--Kisin modules, we need to
consider the homomorphisms and extensions of their associated \'etale $\varphi$-modules;
recall that the passage to associated \'etale $\varphi$-modules amounts
to inverting $u$, and so we briefly discuss this process in the general
context of the category $\K{A}$.

We let $\K{A}[1/u]$ denote the full subcategory of $\K{A}$
consisting of objects on which multiplication by $u$ is invertible.
We may equally well regard it as the category of left
$\gS_A[1/u][F,\Gal(K'/K)]$-modules (this notation being interpreted in
the evident manner).   
There are natural isomorphisms (of bi-modules)
\numequation
\label{eqn:left tensor iso}
\gS_A[1/u]\otimes_{\gS_A} \gS_A[F,\Gal(K'/K)] 
\iso \gS_A[1/u][F,\Gal(K'/K)]
\end{equation}
and
\numequation
\label{eqn:right tensor iso}
\gS_A[F,\Gal(K'/K)] \otimes_{\gS_A} \gS_A[1/u]
\iso \gS_A[1/u][F,\Gal(K'/K)].
\end{equation}
Thus (since $\gS_A \to \gS_A[1/u]$ is a flat morphism of commutative rings)
the morphism of rings $\gS_A[F,\Gal(K'/K)] \to \gS_A[1/u][F,\Gal(K'/K)]$
is both left and right flat.

If $\gM$ is an object of $\K{A}$, then we see from~(\ref{eqn:left tensor
iso}) that 
$\gM[1/u] := \gS_A[1/u]\otimes_{\gS_A} \gM \iso \gS_A[1/u][F,\Gal(K'/K)]
\otimes_{\gS_A[F,\Gal(K'/K)]} \gM$ is naturally an object
of $\K{A}[1/u]$.   Our preceding remarks about flatness show
that $\gM \mapsto \gM[1/u]$ is an exact functor $\K{A}\to \K{A}[1/u]$.

\begin{lemma}\label{lem:ext-i-invert-u}
\begin{enumerate}
\item If $M$ and $N$ are objects
of $\K{A}[1/u]$, then 
there is a natural isomorphism
$$\Ext^i_{\K{A}[1/u]}(M,N) \iso \Ext^i_{\K{A}}(M,N).$$
\item
If $\gM$ is an object of $\K{A}$ and $N$ is an object of $\K{A}[1/u]$,
then there is a natural isomorphism
$$\Ext^i_{\K{A}}(\gM,N) \iso \Ext^i_{\K{A}}(\gM[1/u],N),$$
for all $i\geq 0$.
\end{enumerate}
\end{lemma}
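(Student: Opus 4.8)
The plan is to deduce both isomorphisms from the flatness of the ring map $\gS_A[F,\Gal(K'/K)] \to \gS_A[1/u][F,\Gal(K'/K)]$ on both sides, which is already recorded in the discussion preceding the lemma via the bimodule isomorphisms~\eqref{eqn:left tensor iso} and~\eqref{eqn:right tensor iso}. The key categorical input is that for a flat ring extension $R \to R'$, restriction of scalars $R'\text{-mod} \to R\text{-mod}$ is exact and fully faithful on the subcategory of $R'$-modules, and moreover for $R'$-modules $M, N$ the comparison map $\Ext^i_{R'}(M,N) \to \Ext^i_R(M,N)$ is an isomorphism provided one can compute the right-hand side using a resolution of $M$ by $R'$-modules that remain projective (or at least acyclic) after restriction to $R$. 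So the first step is to observe that if $P_\bullet \to M$ is a projective resolution of $M$ in $\K{A}[1/u]$ (equivalently, by $\gS_A[1/u][F,\Gal(K'/K)]$-modules), then by right-flatness of $\gS_A[F,\Gal(K'/K)] \to \gS_A[1/u][F,\Gal(K'/K)]$ — using~\eqref{eqn:right tensor iso} — each $P_j$, viewed as a $\gS_A[F,\Gal(K'/K)]$-module, is a direct summand of a (possibly infinite) direct sum of copies of $\gS_A[1/u][F,\Gal(K'/K)] = \gS_A[F,\Gal(K'/K)]\otimes_{\gS_A}\gS_A[1/u]$, which is flat, hence $P_j$ is a flat $\gS_A[F,\Gal(K'/K)]$-module. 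Unfortunately flat is not projective, so to compute $\Ext$ one wants a bit more care; the cleanest route is instead to invoke that $\Hom_{\K{A}[1/u]}(P_j, N) = \Hom_{\K{A}}(P_j,N)$ for any $N$ in $\K{A}[1/u]$ (this is an identity, not merely an isomorphism, since a $\gS_A$-linear map between modules on which $u$ acts invertibly is automatically $\gS_A[1/u]$-linear), so the two $\Hom$-complexes computing the respective $\Ext$ groups literally coincide once one knows $P_\bullet$ is also a resolution in $\K{A}$ by objects that are $\Ext$-acyclic against $N$ there. For (1), since $P_\bullet$ is a projective resolution in $\K{A}[1/u]$ and $\K{A}[1/u]$ is a full subcategory closed under the relevant constructions, and since $\Ext^i_{\K{A}}(-,N)$ for $N \in \K{A}[1/u]$ is computed by any resolution by objects acyclic for $\Hom_{\K{A}}(-,N)$, it suffices to check that the $\gS_A[1/u][F,\Gal(K'/K)]$-projectives are $\Hom_{\K{A}}(-,N)$-acyclic; this follows because they are flat over $\gS_A[F,\Gal(K'/K)]$ and direct summands of frees, and $\Hom_{\K{A}}(\gS_A[1/u][F,\Gal(K'/K)], N) = N$ with higher $\Ext$ vanishing (the ring is a localization, so $\Ext^{>0}$ against it vanishes by flatness, or directly: it is a filtered colimit of copies of the free module). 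I would write this out via the standard ``flat base change for Ext'' bookkeeping rather than belabor it.

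For part (2), the strategy is to reduce to part (1) by replacing $\gM$ with $\gM[1/u]$ and comparing $\Ext^i_{\K{A}}(\gM, N)$ with $\Ext^i_{\K{A}}(\gM[1/u], N)$ directly. Here the point is that $\Hom_{\K{A}}(\gM, N) \iso \Hom_{\K{A}}(\gM[1/u], N)$ whenever $u$ acts invertibly on $N$, again essentially tautologically: a map $\gM \to N$ with target $u$-invertible factors uniquely through $\gM[1/u] = \gM\otimes_{\gS_A}\gS_A[1/u]$. To promote this to all $\Ext^i$ I would take a projective resolution $\gP_\bullet \to \gM$ in $\K{A}$; applying the exact functor $(-)[1/u]$ yields a complex $\gP_\bullet[1/u] \to \gM[1/u]$ which is a resolution of $\gM[1/u]$ by objects of $\K{A}[1/u]$, and these objects are flat (direct summands of localizations of frees) hence $\Hom_{\K{A}[1/u]}(-,N)$-acyclic. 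Then $\Hom_{\K{A}}(\gP_\bullet, N) = \Hom_{\K{A}}(\gP_\bullet[1/u], N) = \Hom_{\K{A}[1/u]}(\gP_\bullet[1/u], N)$ as complexes, and the first computes $\Ext^i_{\K{A}}(\gM,N)$ while the last computes $\Ext^i_{\K{A}[1/u]}(\gM[1/u], N)$, which by part (1) equals $\Ext^i_{\K{A}}(\gM[1/u], N)$. Chaining these identifications gives the claim.

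The main obstacle I anticipate is the standard but slightly delicate point that ``flat'' is weaker than ``projective,'' so one cannot simply say projective resolutions on one side are projective resolutions on the other; the argument must be phrased in terms of $\Hom$-complexes and acyclicity of the localized-free modules, rather than via projectivity. Relatedly, one has to be a little careful because the module categories here are not over commutative rings, so I would make sure to cite~\eqref{eqn:left tensor iso} and~\eqref{eqn:right tensor iso} at exactly the points where left- versus right-flatness is used (left-flatness to know $(-)[1/u]$ is exact on $\K{A}$, right-flatness to know $\gS_A[1/u][F,\Gal(K'/K)]$ is flat as a left $\gS_A[F,\Gal(K'/K)]$-module). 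None of these steps involves any real computation, and given the infrastructure already built (in particular Corollary~\ref{cor:ext2 vanishes}, which incidentally shows all the $\Ext^i$ in sight vanish for $i \geq 2$, so really only $i = 0, 1$ need checking), the proof should be short.
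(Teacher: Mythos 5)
Your treatment of part~(2) is essentially the paper's: take a projective resolution of $\gM$ in $\K{A}$, localize to obtain a projective resolution of $\gM[1/u]$ in $\K{A}[1/u]$, compare $\Hom$-complexes via tensor adjunction, and invoke part~(1). That part is fine.

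Your argument for part~(1), however, has a genuine gap. You start from a projective resolution $P_\bullet \to M$ in the \emph{smaller} category $\K{A}[1/u]$, which forces you to argue that the $P_j$ are $\Hom_{\K{A}}(-,N)$-acyclic, i.e.\ that $\Ext^j_{\K{A}}\bigl(\gS_A[1/u][F,\Gal(K'/K)], N\bigr) = 0$ for $j > 0$ whenever $N$ lies in $\K{A}[1/u]$. Neither of the reasons you give actually establishes this: flatness of a module implies vanishing of $\Tor$ against it, not of $\Ext$ (for instance, $\Q$ is flat over $\Z$, yet $\Ext^1_{\Z}(\Q,\Z)\neq 0$); and writing the module as a filtered colimit of frees does not help either, since $\Ext^j(-,N)$ is contravariant in the first variable and does not commute with filtered colimits there. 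The acyclicity you need is in fact just the special case $M = \gS_A[1/u][F,\Gal(K'/K)]$ of part~(1) itself, so as written the argument is circular.

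The paper sidesteps the issue by resolving in the \emph{larger} category instead. Take a projective resolution $\gP^\bullet \to M$ in $\K{A}$; applying the exact functor $(-)[1/u]$ (exactness coming from left flatness via~\eqref{eqn:left tensor iso}) yields a resolution $\gP^\bullet[1/u] \to M[1/u] \cong M$, and this new resolution is by \emph{projectives} of $\K{A}[1/u]$, because localization takes projectives to projectives: a direct summand of a free $\gS_A[F,\Gal(K'/K)]$-module localizes to a direct summand of a free $\gS_A[1/u][F,\Gal(K'/K)]$-module. Adjunction then identifies the complexes $\Hom_{\K{A}}(\gP^\bullet, N)$ and $\Hom_{\K{A}[1/u]}(\gP^\bullet[1/u], N)$, and passing to cohomology proves~(1) without any acyclicity claim. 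Notice this is exactly the strategy you already use for~(2); you simply applied it in the wrong direction for~(1).

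One small further point: your parenthetical that Corollary~\ref{cor:ext2 vanishes} reduces everything to $i\in\{0,1\}$ does not quite apply, since that corollary assumes the first argument is a Breuil--Kisin module (hence projective over $\gS_A$), whereas in the lemma $M$ and $\gM$ are arbitrary objects of $\K{A}[1/u]$ and $\K{A}$ respectively.
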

\begin{proof}
The morphism of~(1) can be understood in various ways; for example,
by thinking in terms of Yoneda Exts, and recalling that $\K{A}[1/u]$
is a full subcategory of $\K{A}.$   If instead we think in terms
of projective resolutions, we can begin with a projective resolution
$\gP^{\bullet} \to M$ in $\K{A}$, and then consider the induced
projective resolution $\gP^{\bullet}[1/u]$ of $M[1/u]$.  Noting 
that $M[1/u] \iso M$ for any object $M$ of $\K{A}[1/u]$,
we then find (via tensor adjunction) that $\Hom_{\K{A}}(\gP^{\bullet},
N) \iso \Hom_{\K{A}[1/u]}(\gP^{\bullet}[1/u], N)$,
which induces the desired isomorphism of $\Ext$'s by passing to 
cohomology.

Taking into account the isomorphism of~(1), the claim of~(2) is a general
fact about tensoring over a flat ring map (as can again be seen by
considering projective resolutions). 
\end{proof}

\begin{remark}
The preceding lemma is fact an automatic consequence of the abstract 
categorical properties of our situation:\ the functor $\gM \mapsto \gM[1/u]$
is left adjoint to the inclusion $\K{A}[1/u] \subset\K{A},$
and restricts to (a functor naturally equivalent to) the identity functor
on $\K{A}[1/u]$.
\end{remark}
The following lemma expresses the Hom between \'etale $\varphi$-modules
arising from Breuil--Kisin modules in terms
of a certain direct limit.

\begin{lem} 
  \label{lem:computing Hom as direct limit}Suppose that  $\gM$ is a
   Breuil--Kisin module with descent data in a Noetherian $\cO/\varpi^a$-algebra~$A$, and that~$\gN$ is an object of $\K{A}$ which is
   finitely generated and $u$-torsion free as an
   $\gS_A$-module. 
   Then there is a natural isomorphism
\[ \varinjlim_i\Hom_{\K{A}}(u^i\gM,\gN) \iso
	\Hom_{\K{A}[1/u]}(\gM[1/u],\gN[1/u]),\]
where the transition maps are induced by the inclusions $u^{i+1} \gM 
\subset u^i \gM$.
\end{lem}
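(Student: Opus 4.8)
The plan is to build the isomorphism directly from the definitions of the objects involved, relying on the key finiteness input of Lemma~\ref{lem:truncation argument used to prove f.g. of Ext Q version}. First I would note that $\gM[1/u]$ is a finitely presented $\gS_A[1/u]$-module, so any $\gS_A[1/u]$-linear $\varphi$- and $\Gal(K'/K)$-equivariant map $h : \gM[1/u] \to \gN[1/u]$ has the property that, for $i$ sufficiently large, $h(u^i\gM) \subseteq \gN$ (use a finite set of generators of $\gM$ and clear denominators; here we use that $\gN$ is $u$-torsion free, so $\gN \hookrightarrow \gN[1/u]$, and that multiplication by $u$ on $\gS_A$ shifts the $F$-action compatibly, so the restriction of $h$ to $u^i\gM$ is again a morphism in $\K{A}$). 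This produces a well-defined map from the right-hand side to the colimit. Conversely, every element of the colimit $\varinjlim_i \Hom_{\K{A}}(u^i\gM,\gN)$ is represented by some $f : u^i\gM \to \gN$, and inverting $u$ gives a morphism $f[1/u] : (u^i\gM)[1/u] = \gM[1/u] \to \gN[1/u]$; these are compatible with the transition maps, so we get a map from the colimit to the right-hand side. The two maps are visibly mutually inverse on the level of underlying set-maps, so it remains to check the map from the colimit into $\Hom_{\K{A}[1/u]}$ is injective, i.e.\ that if $f : u^i\gM \to \gN$ becomes zero after inverting $u$, then $f$ is already zero on $u^j\gM$ for some $j \geq i$. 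But $f[1/u] = 0$ means $f$ kills $u^i\gM$ after inverting $u$, i.e.\ $u^N f = 0$ on $u^i\gM$ for some $N$; since $\gN$ is $u$-torsion free this forces $f$ to vanish on $u^{i+N}\gM$, which is zero in the colimit.

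An alternative, cleaner organisation that I would prefer to write up: identify $\Hom_{\K{A}}(u^i\gM,\gN)$ with $H^0$ of the complex $C^{\bullet}_{u^i\gM}(\gN)$ via Lemma~\ref{lem: C computes Hom}, and observe that $\varphi^*(u^i\gM) = u^{pi}\varphi^*\gM$ inside $\varphi^*\gM[1/u]$, so that $C^{\bullet}_{u^i\gM}(\gN)$ is naturally a subcomplex of $C^{\bullet}_{\gM}(\gN[1/u]) = C^{\bullet}_{\gM[1/u]}(\gN[1/u])$, with the union over $i$ being exactly the latter complex (every $\gS_A$-linear map out of the finitely generated module $\gM$ or $\varphi^*\gM$ into $\gN[1/u]$ lands in $u^{-i}\gN$ for $i$ large). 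Since filtered colimits are exact, $H^0$ commutes with the colimit, giving
\[
\varinjlim_i \Hom_{\K{A}}(u^i\gM,\gN) \cong \varinjlim_i H^0\bigl(C^{\bullet}_{u^i\gM}(\gN)\bigr) \cong H^0\bigl(C^{\bullet}_{\gM[1/u]}(\gN[1/u])\bigr) \cong \Hom_{\K{A}[1/u]}(\gM[1/u],\gN[1/u]),
\]
the last isomorphism again by Lemma~\ref{lem: C computes Hom} (applied over the ring $\gS_A[1/u]$) together with Lemma~\ref{lem:ext-i-invert-u}(1).

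I would also spell out the one genuinely substantive point, which is why $\bigcup_i C^{\bullet}_{u^i\gM}(\gN) = C^{\bullet}_{\gM[1/u]}(\gN[1/u])$: a homomorphism $\alpha : \gM \to \gN[1/u]$ of $\gS_A[\Gal(K'/K)]$-modules sends a finite generating set of $\gM$ into $\gN[1/u] = \bigcup_i u^{-i}\gN$, hence into $u^{-i}\gN$ for some single $i$, so $\alpha$ restricts to a map $u^i\gM \to \gN$; and the $\varphi$-equivariance condition defining $C^{\bullet}$ is preserved under restriction because $\Phi_{u^i\gM}$ is obtained from $\Phi_\gM$ by the identification $\varphi^*(u^i\gM) = u^{pi}\varphi^*\gM$ and multiplication by $u^i$. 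The transition maps in the colimit are induced by $u^{i+1}\gM \subset u^i\gM$, which is precisely the inclusion of subcomplexes, so everything is compatible.

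The main obstacle is really just bookkeeping: getting the identification of $C^{\bullet}_{u^i\gM}(\gN)$ as a subcomplex of $C^{\bullet}_{\gM}(\gN[1/u])$ exactly right, including the twist by $\varphi$ (so that $\varphi^*(u^i \gM)$ is $u^{pi}\varphi^*\gM$, not $u^i\varphi^*\gM$), and making sure the differential $\alpha \mapsto \Phi_{\gN}\circ\varphi^*\alpha - \alpha\circ\Phi_{u^i\gM}$ is compatible with the one on $C^{\bullet}_{\gM[1/u]}(\gN[1/u])$. There is no deep difficulty here — no appeal to the truncation estimates of Lemma~\ref{lem:truncation argument used to prove f.g. of Ext Q version} is needed for this statement, only the exactness of filtered colimits and the finite generation of $\gM$ over $\gS_A$ — so the proof should be short, and I would keep it to a few lines invoking Lemma~\ref{lem: C computes Hom} and the exactness of $\varinjlim$.
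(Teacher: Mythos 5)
Your first organisation is exactly the paper's proof: the paper constructs the map on the $i$-th term by $f'\mapsto (m\mapsto u^{-i}f'(u^im))$, observes these are compatible injections, and for surjectivity notes that any $f:\gM\to\gN[1/u]$ has $f(\gM)\subset u^{-i}\gN$ for some $i$ by finite generation of $\gM$. So the substance of your direct approach matches the paper's. Your second organisation --- realising $\varinjlim_i C^\bullet_{u^i\gM}(\gN)$ as $C^\bullet_{\gM[1/u]}(\gN[1/u])$ (with the crucial identification $\varphi^*(u^i\gM)=u^{pi}\varphi^*\gM$ kept straight) and passing $H^0$ through the filtered colimit --- is a valid alternative packaging, slightly more categorical, and has the modest advantage of simultaneously yielding the analogous statement for $\Ext^1$ if one ever needed it; but for $\Hom$ alone it buys little over the two-line direct argument. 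One small overcomplication in your injectivity check: since $\gN$ is $u$-torsion free, $\gN\into\gN[1/u]$, so $f[1/u]=0$ forces $f=0$ directly, without any detour through ``$u^Nf=0$ on $u^i\gM$''.
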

\begin{rem}
  \label{rem: maps in direct limit are injections}Note that since
  $\gN$ is $u$-torsion free, the transition maps in the colimit are
  injections, so the colimit is just an increasing union.
\end{rem}
\begin{proof}There are compatible injections $\Hom_{\K{A}}(u^i\gM,\gN) \to
	\Hom_{\K{A}[1/u]}(\gM[1/u],\gN[1/u])$, taking $f'\in
        \Hom_{\K{A}}(u^i\gM,\gN)$ to $f\in\Hom_{\K{A}}(\gM,\gN[1/u])$
        where $f(m)=u^{-i}f'(u^im)$. Conversely, given
        $f\in\Hom_{\K{A}}(\gM,\gN[1/u])$, there is some~$i$ such that
        $f(\gM)\subset u^{-i}\gN$, as required.
\end{proof}
We have the following analogue of Proposition~\ref{prop:vanishing of homs}.
\begin{cor}
  \label{cor:vanishing homs with u inverted}Suppose that  $\gM$ and
  $\gN$ are Breuil--Kisin modules with descent data in a Noetherian $\cO/\varpi^a$-algebra~$A$. 
Consider the following conditions:
\begin{enumerate}
\item
$\Hom_{\K{B}[1/u]} \bigl((\gM\cotimes_A B)[1/u], (\gN\cotimes_A B)[1/u]\bigr) = 0$
for any finite type $A$-algebra $B$.
\item
$\Hom_{\K{\kappa(\m)}[1/u]}\bigl((\gM\otimes_A \kappa(\mathfrak m))[1/u],
(\gN\otimes_A \kappa(\mathfrak m))[1/u] \bigr) = 0$ 
for each maximal ideal $\mathfrak m$ of $A$.
\item
$\Hom_{\K{A}[1/u]}\bigl(\gM[1/u], (\gN\otimes_A Q)[1/u]\bigr) = 0$ 
for any 
finitely generated $A$-module $Q$.
\end{enumerate}
Then we have (1)$\implies$(2)$\iff$(3).  If $A$ is furthermore
Jacobson, then all three conditions are equivalent.
\end{cor}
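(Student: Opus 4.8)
The plan is to deduce the corollary from the already-proven Proposition~\ref{prop:vanishing of homs}, reducing assertions about $\Hom$'s of \'etale $\varphi$-modules to assertions about $\Hom$'s of Breuil--Kisin modules via the direct limit description of Lemma~\ref{lem:computing Hom as direct limit}. The starting point is that for each $i\ge 0$ the $\gS_A$-submodule $u^i\gM$ of $\gM$, equipped with the restriction of $\varphi_\gM$ and of the descent data, is a finitely generated, $u$-torsion free object of $\K{A}$, and that by Lemma~\ref{lem:computing Hom as direct limit} one has $\Hom_{\K{A}[1/u]}(\gM[1/u],N[1/u]) = \varinjlim_i \Hom_{\K{A}}(u^i\gM,N)$ for any $u$-torsion free object $N$ of $\K{A}$ which is finitely generated over $\gS_A$ --- and, crucially (Remark~\ref{rem: maps in direct limit are injections}), the transition maps of this colimit are injective. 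Hence such a $\Hom$ with $u$ inverted vanishes if and only if \emph{every} $\Hom_{\K{A}}(u^i\gM,N)$ vanishes; this is the device that lets the whole question be dismantled term by term in $i$. We will also repeatedly use that, since $A$ is Noetherian, for any finitely generated $A$-module $Q$ (in particular for $Q=\kappa(\mathfrak m)=A/\mathfrak m$) one has $\gN\otimes_A Q=\gN\cotimes_A Q$ by Remark~\ref{rem:completed tensor}(1), and this is $u$-torsion free and finitely generated over $\gS_A$ by Remark~\ref{rem:completed tensor}(2); likewise $\gM\cotimes_A B$ and $\gN\cotimes_A B$ are Breuil--Kisin modules over any $A$-algebra $B$ by Lemma~\ref{rem: base change of locally free Kisin module is a locally free Kisin module}.

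First I would prove (1)$\implies$(2): for $A$ Noetherian, $B=\kappa(\mathfrak m)$ is a finite type $A$-algebra and $\gM\cotimes_A\kappa(\mathfrak m)=\gM\otimes_A\kappa(\mathfrak m)$ (and similarly for $\gN$) by Remark~\ref{rem:Kisin-mod-ideal}, so (2) is literally the instance $B=\kappa(\mathfrak m)$ of (1). For (3)$\implies$(2) I would specialise (3) to $Q=\kappa(\mathfrak m)$ and identify $\Hom_{\K{A}[1/u]}(\gM[1/u],(\gN\otimes_A\kappa(\mathfrak m))[1/u])$ with $\Hom_{\K{\kappa(\mathfrak m)}[1/u]}((\gM\otimes_A\kappa(\mathfrak m))[1/u],(\gN\otimes_A\kappa(\mathfrak m))[1/u])$: write each side as a colimit over $i$ (using Lemma~\ref{lem:computing Hom as direct limit} over $A$ and over $\kappa(\mathfrak m)$ respectively) and match the terms via Lemma~\ref{lem:base-change-complexes} applied to $u^i\gM$ with $B=Q=\kappa(\mathfrak m)$, using that $(u^i\gM)\cotimes_A\kappa(\mathfrak m)=u^i(\gM\cotimes_A\kappa(\mathfrak m))$ as objects of $\K{\kappa(\mathfrak m)}$ (a routine consequence of projectivity of $\gM$). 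Conversely, assuming (2), the same identification together with injectivity of the transition maps forces $\Hom_{\K{\kappa(\mathfrak m)}}(u^i(\gM\otimes_A\kappa(\mathfrak m)),\gN\otimes_A\kappa(\mathfrak m))=0$ for all $i$ and all maximal $\mathfrak m$; this is exactly hypothesis (2) of Proposition~\ref{prop:vanishing of homs} for the pair $(u^i\gM,\gN)$ (whose members are finitely generated over $\gS_A$). That proposition then gives $\Hom_{\K{A}}(u^i\gM,\gN\otimes_A Q)=0$ for every finitely generated $A$-module $Q$ and every $i$, and taking the colimit over $i$ via Lemma~\ref{lem:computing Hom as direct limit} yields $\Hom_{\K{A}[1/u]}(\gM[1/u],(\gN\otimes_A Q)[1/u])=0$, which is (3).

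Finally, when $A$ is Jacobson I would show (3)$\implies$(1). From (3) and injectivity of the transition maps we get $\Hom_{\K{A}}(u^i\gM,\gN\otimes_A Q)=0$ for all $i$ and all finitely generated $Q$, i.e.\ hypothesis (3) of Proposition~\ref{prop:vanishing of homs} holds for each pair $(u^i\gM,\gN)$; since $A$ is Jacobson, that proposition gives $\Hom_{\K{B}}((u^i\gM)\cotimes_A B,\gN\cotimes_A B)=0$ for every finite type $A$-algebra $B$ and every $i$. As $B$ is then Noetherian, $\gN\cotimes_A B$ is a Breuil--Kisin module over $B$, and $(u^i\gM)\cotimes_A B=u^i(\gM\cotimes_A B)$, one more application of Lemma~\ref{lem:computing Hom as direct limit} (over $B$) lets us pass to the colimit over $i$ to obtain $\Hom_{\K{B}[1/u]}((\gM\cotimes_A B)[1/u],(\gN\cotimes_A B)[1/u])=0$, which is (1).

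I expect the only genuine work to be bookkeeping: checking that base change along $A\to\kappa(\mathfrak m)$ (resp.\ $A\to B$) is compatible with the operation $\gM\mapsto u^i\gM$, that the colimits over $i$ commute with this base change and with the functors $C^\bullet$ of Definition~\ref{def:explicit Ext complex}, and that the transition maps are indeed injective (which rests on the $u$-torsion-freeness of base changes of projective Breuil--Kisin modules). None of this is deep; the one structural idea carrying everything is that a filtered colimit with injective transition maps vanishes precisely when every term does, so that each of the four implications reduces, term by term in $i$, to the corresponding implication of Proposition~\ref{prop:vanishing of homs} for the pairs $(u^i\gM,\gN)$.
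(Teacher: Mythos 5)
Your proposal is correct and follows the same route as the paper: Lemma~\ref{lem:computing Hom as direct limit} (with injectivity of the transition maps, per Remark~\ref{rem: maps in direct limit are injections}) translates each of the three conditions into the vanishing, for all $i\ge 0$, of the corresponding $\Hom$ group with $u^i\gM$ in the first slot, and then projectivity of $\gM$ (so $(u^i\gM)\cotimes_A B = u^i(\gM\cotimes_A B)$) lets you identify these with the three conditions of Proposition~\ref{prop:vanishing of homs} applied term-by-term to the pairs $(u^i\gM,\gN)$. The paper's own proof is just a compressed version of the bookkeeping you spell out, so there is nothing to add.
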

\begin{proof}
By Lemma~\ref{lem:computing Hom as direct limit}, the three conditions
are respectively equivalent to the following conditions. 
\begin{enumerate}[label=(\arabic*$'$)]
\item
$\Hom_{\K{B}} \bigl(u^i(\gM\cotimes_A B), \gN\cotimes_A B\bigr) = 0$
for any finite type $A$-algebra $B$ and all $i\ge 0$.
\item
$\Hom_{\K{\kappa(\m)}}\bigl(u^i(\gM\otimes_A \kappa(\mathfrak m)),
\gN\otimes_A \kappa(\mathfrak m) \bigr) = 0$ 
for each maximal ideal $\mathfrak m$ of $A$ and all $i\ge 0$.
\item
$\Hom_{\K{A}}\bigl(u^i\gM, \gN\otimes_A Q\bigr) = 0$ 
for any 
finitely generated $A$-module $Q$ and all $i\ge 0$.
\end{enumerate}
Since $\gM$ is projective, the first two conditions are in turn
equivalent to
\begin{enumerate}[label=(\arabic*$''$)]
\item
$\Hom_{\K{B}} \bigl((u^i\gM)\cotimes_A B, \gN\cotimes_A B\bigr) = 0$
for any finite type $A$-algebra $B$ and all $i\ge 0$.
\item
$\Hom_{\K{\kappa(\m)}}\bigl((u^i\gM)\otimes_A \kappa(\mathfrak m),
\gN\otimes_A \kappa(\mathfrak m) \bigr) = 0$ 
for each maximal ideal $\mathfrak m$ of $A$ and all $i\ge 0$.
\end{enumerate}
The result then follows from Proposition~\ref{prop:vanishing of homs}.
\end{proof}

\begin{df}\label{def:kext}
If $\gM$ and $\gN$ are objects of $\K{A}$, then we define
$$\kExt^1_{\K{A}}(\gM,\gN)
:=
\ker\bigl(\Ext^1_{\K{A}}(\gM,\gN)\to\Ext^1_{\K{A}}(\gM[1/u],\gN[1/u])\bigr).$$  
The point of this definition is to capture, in the setting of
Lemma~\ref{lem: Galois rep is a functor if A is actually finite local}, the non-split extensions
of Breuil--Kisin modules whose underlying extension of Galois
representations is split. 
\end{df}

Suppose now that $\gM$ is a Breuil--Kisin module. 
The exact sequence in~$\K{A}$ \[0\to\gN\to \gN[1/u]\to\gN[1/u]/\gN\to 0\]
gives an exact sequence of complexes \[\xymatrix{0\ar[r]&
  C^0(\gN)\ar[d]\ar[r]&
  C^0(\gN[1/u])\ar[d]\ar[r]&C^0(\gN[1/u]/\gN)\ar[d]\ar[r]&0\\ 0\ar[r]&
  C^1(\gN)\ar[r]&
  C^1(\gN[1/u])\ar[r]&C^1(\gN[1/u]/\gN)\ar[r]&0. } \] It follows from
Corollary~\ref{cor:complex computes Hom and Ext},
Lemma~\ref{lem:ext-i-invert-u}(2), and 
the snake lemma that we have an exact
sequence  \numequation\label{eqn: computing kernel of Ext groups}\begin{split}0\to\Hom_{\K{A}}(\gM,\gN)\to\Hom_{\K{A}}(\gM,\gN[1/u])
\qquad \qquad \\
\to\Hom_{\K{A}}(\gM,\gN[1/u]/\gN) \to \kExt^1_{\K{A}}(\gM,\gN)\to
0.\end{split}\end{equation}

\begin{lem}\label{lem: bound on torsion in kernel of Exts}If $\gM$, $\gN$ are Breuil--Kisin modules with descent data and
  coefficients in a Noetherian $\cO/\varpi^a$-algebra~$A$, and $\gN$ has
  height at most~$h$, then $f(\gM)$ 
is killed by $u^i$ for any $f \in \Hom_{\K{A}}(\gM,\gN[1/u]/\gN)$ and
any $i \ge \lfloor e'ah/(p-1) \rfloor$. 
  \end{lem}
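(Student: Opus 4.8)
The plan is to work inside the category $\K{A}$ and exploit the $u$-adic filtration on the quotient module $\gN[1/u]/\gN$. First I would note that $P:=f(\gM)$ is a $\gS_A[F,\Gal(K'/K)]$-submodule of $\gN[1/u]/\gN$ which is finitely generated over $\gS_A$ (being a quotient of $\gM$), and hence, since $\gN[1/u]/\gN$ is $u$-power torsion, is annihilated by $u^N$ for some $N$. It therefore suffices to bound the exponent $j:=\min\{m\ge 0:u^mP=0\}$. As $\gN$ is a projective $\gS_A$-module it is $u$-torsion free, so for $z\in\gN[1/u]$ the quantity $\operatorname{ord}(z):=\min\{r\ge 0:u^rz\in\gN\}$ is well defined and depends only on the image of $z$ in $\gN[1/u]/\gN$; thus $j=\max_{y\in P}\operatorname{ord}(y)$. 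Assuming $j\ge 1$, the lemma will follow once I show $(p-1)j\le e'ah$, since then $j\le\lfloor e'ah/(p-1)\rfloor$.

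Two estimates are needed. The first is a growth bound for the Frobenius: since $A$ is a $\Z/p^a\Z$-algebra there is $H(u)\in\gS_A$ with $u^{e'ah}=E(u)^hH(u)$ (as in the proof of Lemma~\ref{lem:truncation argument used to prove f.g. of Ext Q version}), and because $\gN$ has height at most $h$ we have $E(u)^h\gN\subseteq\Phi_\gN(\varphi^*\gN)$. Writing $\operatorname{ord}'$ for the analogous order function on $\varphi^*\gN[1/u]$ relative to the $u$-torsion free submodule $\varphi^*\gN$, I would deduce from these facts together with the injectivity of $\Phi_\gN$ (Lemma~\ref{lem:kisin injective}) that $\operatorname{ord}'(w)\le\operatorname{ord}(\Phi_{\gN[1/u]}(w))+e'ah$ for every $w\in\varphi^*\gN[1/u]$: indeed, if $u^s\Phi_{\gN[1/u]}(w)\in\gN$ then $u^{s+e'ah}\Phi_{\gN[1/u]}(w)\in E(u)^h\gN\subseteq\Phi_\gN(\varphi^*\gN)$, and injectivity forces $u^{s+e'ah}w\in\varphi^*\gN$. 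The second estimate is the multiplicativity $\operatorname{ord}'(1\otimes z)=p\cdot\operatorname{ord}(z)$ for $z\in\gN[1/u]$; I would check this Zariski-locally on $\Spec A$, where $\gN$ is free, using that $\varphi\colon\gS_A\to\gS_A$ is flat and acts on $W(k')\otimes_{\Zp}A$ by an automorphism, so that $\varphi$ multiplies $u$-adic orders exactly by $p$.

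Finally I would choose $y\in P$ with $\operatorname{ord}(y)=j$, lift it to $\tilde y\in\gN[1/u]$ (so $\operatorname{ord}(\tilde y)=j$), and put $w:=1\otimes\tilde y\in\varphi^*\gN[1/u]$. On one hand $\operatorname{ord}'(w)=p\cdot\operatorname{ord}(\tilde y)=pj$. On the other hand $\Phi_{\gN[1/u]}(w)=\varphi_{\gN[1/u]}(\tilde y)$, whose image in $\gN[1/u]/\gN$ is the image of $y$ under the Frobenius of the object $\gN[1/u]/\gN$, and this image lies in $P$ because $P=f(\gM)$ is a sub-object in $\K{A}$; hence $\operatorname{ord}(\Phi_{\gN[1/u]}(w))\le j$ by maximality of $j$. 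Combining with the growth bound gives $pj=\operatorname{ord}'(w)\le\operatorname{ord}(\Phi_{\gN[1/u]}(w))+e'ah\le j+e'ah$, i.e.\ $(p-1)j\le e'ah$, as wanted. The only real work beyond bookkeeping lies in the two order estimates — in particular in checking that the reduction to the case $\gN$ free, via a Zariski cover of $\Spec A$, is compatible with $\operatorname{ord}$, $\operatorname{ord}'$ and with the formation of $\varphi^*$; this is where the projectivity and $u$-torsion-freeness of $\gN$ and of $\varphi^*\gN$ are used, and I expect it to be the main (though routine) obstacle.
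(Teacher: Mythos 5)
Your argument is correct and is essentially the paper's own argument, phrased element-wise rather than module-theoretically. The paper takes $i$ to be the exponent of the finitely generated submodule $f(\gM)\subset\gN[1/u]/\gN$, observes (via flatness of $\varphi$ and the fact that $u^{i-1}f(\gM)$ is a nonzero $W(k')\otimes A$-module killed by $u$) that $(\varphi^*f)(\varphi^*\gM)$ has exponent exactly $ip$, and then bounds this exponent above by $i+e'ah$ by noting that $u^i(\varphi^*f)(x)$ lands in $\ker\bigl(\Phi_{\gN}\colon\varphi^*\gN[1/u]/\varphi^*\gN\to\gN[1/u]/\gN\bigr)$, which the snake lemma together with the height condition shows is killed by $u^{e'ah}$. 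Your ``$\operatorname{ord}'(1\otimes\tilde y)=p\operatorname{ord}(\tilde y)$'' is the element-level form of the first step, your growth bound $\operatorname{ord}'(w)\le\operatorname{ord}(\Phi_{\gN[1/u]}(w))+e'ah$ is the element-level form of the second (with the snake lemma replaced by a direct use of the factorization $u^{e'ah}=E(u)^hH(u)$ and the injectivity of $\Phi_{\gN[1/u]}$), and the closure of $P$ under Frobenius plays the same role as $u^if(\Phi_\gM(x))=0$ in the paper. The ingredients (flatness of $\varphi$, projectivity hence $u$-torsion-freeness of $\gN$ and $\varphi^*\gN$, the height bound, and the compatibility of $f$ with $\varphi$) are identical; the only genuine difference is presentational.
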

  \begin{proof}
    Suppose that $f$ is an  element of
    $\Hom_{\K{A}}(\gM,\gN[1/u]/\gN)$. Then $f(\gM)$ is a finitely
    generated submodule of $\gN[1/u]/\gN$, and it therefore killed
    by~$u^i$ for some $i\ge 0$. Choosing~$i$ to be the exponent of
    $f(\gM)$ (that is, choosing $i$ to be minimal), it follows
    that
    $(\varphi^*f)(\varphi^*\gM)$ has exponent 
    precisely~$ip$. (From the choice of $i$, we see that $u^{i-1}
    f(\gM)$ is nonzero but killed by $u$, i.e., it is just a $W(k')
    \otimes A$-module, and so its pullback by~$\varphi:\gS_A\to\gS_A$
    has exponent precisely $p$.  Then by the flatness  
   of~$\varphi:\gS_A\to\gS_A$  we have
    $u^{ip-1}(\varphi^*f)(\varphi^*\gM)=u^{p-1}\varphi^*(u^{i-1}
    f(\gM)) \neq 0$.)

We claim that $u^{i+e'ah}(\varphi^*f)(\varphi^*\gM)=0$; admitting this, we
deduce that $i+e'ah\ge ip$, as required. To see the claim, take
$x\in\varphi^*\gM$, so that $\Phi_{\gN}((u^i\varphi^*f)(x))=
u^if(\Phi_\gM(x))=0$. It is therefore enough to show that the kernel
of \[\Phi_{\gN}:\varphi^*\gN[1/u]/\varphi^*\gN\to \gN[1/u]/\gN\] is killed
by $u^{e'ah}$; but this follows immediately from an application of the
snake lemma to the commutative diagram \[\xymatrix{0\ar[r]&
  \varphi^*\gN\ar[r]\ar[d]_{\Phi_\gN}&\varphi^*\gN[1/u]\ar[r]\ar[d]_{\Phi_\gN}
&\varphi^*\gN[1/u]/\varphi^*\gN\ar[r]\ar[d]_{\Phi_\gN}&0
\\ 0\ar[r]&
  \gN\ar[r]&\gN[1/u]\ar[r]
&\gN[1/u]/\gN\ar[r]&0}
\]together with the assumption that $\gN$ has height at most~$h$ and
an argument as in the first line of the proof of
Lemma~\ref{lem:truncation argument used to prove f.g. of Ext Q version}.
  \end{proof}

\begin{lem}\label{lem:computing kernel of Ext groups finite level}If $\gM$, $\gN$ are Breuil--Kisin modules with descent data and
  coefficients in a Noetherian $\cO/\varpi^a$-algebra~$A$, and $\gN$ has
  height at most~$h$, then for any $i \ge \lfloor e'ah/(p-1) \rfloor$  we have an exact
  sequence 
 \[\begin{split}0\to\Hom_{\K{A}}(u^i\gM,u^i\gN)\to\Hom_{\K{A}}(u^i\gM,\gN) \qquad \qquad \\
 \to\Hom_{\K{A}}(u^i\gM,\gN/u^i\gN) \to \kExt^1_{\K{A}}(\gM,\gN)\to
0.\end{split}\]
  \end{lem}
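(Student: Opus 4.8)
The plan is to take the exact sequence~\eqref{eqn: computing kernel of Ext groups} for $\gM$ and $\gN$, and to ``renormalise'' it by replacing $\gM$ with $u^i\gM$ throughout, observing that the $\kExt^1$ term is insensitive to this replacement provided $i$ is large enough. The key point is that $\kExt^1_{\K{A}}(\gM,\gN)$ is computed as the cokernel of the map $\Hom_{\K{A}}(\gM,\gN[1/u])\to\Hom_{\K{A}}(\gM,\gN[1/u]/\gN)$, and that both of these groups, as well as the map between them, are unchanged (via the identification of Lemma~\ref{lem:computing Hom as direct limit}, or rather its proof) when we pass from $\gM$ to $u^i\gM$, since inverting $u$ kills the distinction. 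So the cokernel is literally the same group.

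First I would write down the analogue of the four-term exact sequence~\eqref{eqn: computing kernel of Ext groups} with $\gM$ replaced by $u^i\gM$ and with $\gN$ replaced by $u^i\gN$ in the first slot: applying Corollary~\ref{cor:complex computes Hom and Ext} and Lemma~\ref{lem:ext-i-invert-u}(2) to the short exact sequence $0\to u^i\gN\to u^i\gN[1/u]\to u^i\gN[1/u]/u^i\gN\to 0$ (noting $u^i\gN[1/u]=\gN[1/u]$ and $u^i\gN[1/u]/u^i\gN\cong \gN[1/u]/\gN$ canonically, via multiplication by $u^{-i}$), together with the snake lemma, gives an exact sequence
\[\begin{split}0\to\Hom_{\K{A}}(u^i\gM,u^i\gN)\to\Hom_{\K{A}}(u^i\gM,\gN[1/u])\qquad\qquad\\
\to\Hom_{\K{A}}(u^i\gM,\gN[1/u]/\gN)\to\kExt^1_{\K{A}}(u^i\gM,u^i\gN)\to 0.\end{split}\]
Next I would identify the final term with $\kExt^1_{\K{A}}(\gM,\gN)$: since inverting $u$ sends $u^i\gM\mapsto\gM[1/u]$ and $u^i\gN\mapsto\gN[1/u]$, the defining map $\Ext^1_{\K{A}}(u^i\gM,u^i\gN)\to\Ext^1_{\K{A}[1/u]}(\gM[1/u],\gN[1/u])$ has the same target as for $(\gM,\gN)$, and one checks the kernels agree — concretely, this follows from the fact that $\Hom_{\K{A}}(u^i\gM,\gN[1/u])=\Hom_{\K{A}[1/u]}(\gM[1/u],\gN[1/u])$ (the colimit in Lemma~\ref{lem:computing Hom as direct limit} having stabilised, or simply because inverting $u$ on the source is harmless) and similarly that the cokernel presentation of $\kExt^1$ via~\eqref{eqn: computing kernel of Ext groups} depends only on the étale $\varphi$-modules attached to $\gM$ and $\gN$.

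Then I would rewrite the third term $\Hom_{\K{A}}(u^i\gM,\gN[1/u]/\gN)$ as $\Hom_{\K{A}}(u^i\gM,\gN/u^i\gN)$. This is exactly where the hypothesis $i\ge\lfloor e'ah/(p-1)\rfloor$ and the height bound on $\gN$ enter: by Lemma~\ref{lem: bound on torsion in kernel of Exts}, every $f\in\Hom_{\K{A}}(\gM,\gN[1/u]/\gN)$ has $f(\gM)$ killed by $u^i$, hence (precomposing with the inclusion $u^i\gM\hookrightarrow\gM$, and noting that any homomorphism $u^i\gM\to\gN[1/u]/\gN$ extends uniquely) the image of any $f\in\Hom_{\K{A}}(u^i\gM,\gN[1/u]/\gN)$ lands in the $u^i$-torsion submodule $(u^{-i}\gN)/\gN$ of $\gN[1/u]/\gN$, which is canonically isomorphic to $\gN/u^i\gN$ via multiplication by $u^i$. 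Under this isomorphism $\Hom_{\K{A}}(u^i\gM,\gN[1/u]/\gN)\iso\Hom_{\K{A}}(u^i\gM,\gN/u^i\gN)$; one must check this is an isomorphism of $\varphi$- and $\Gal(K'/K)$-modules, which is immediate since multiplication by $u^i$ is $\Gal(K'/K)$-equivariant up to the character $h^i$ but — more carefully — since we are only claiming an isomorphism of abelian groups $\Hom$-sets compatible with the complex structure, and the relevant identification $u^{-i}\gN/\gN\cong\gN/u^i\gN$ commutes with the operators appearing in $C^\bullet$ after the twist is accounted for; I would verify this bookkeeping explicitly. Substituting this identification into the four-term sequence above yields precisely the claimed exact sequence.

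The main obstacle I anticipate is the last step: making the identification $\Hom_{\K{A}}(u^i\gM,\gN[1/u]/\gN)\cong\Hom_{\K{A}}(u^i\gM,\gN/u^i\gN)$ genuinely canonical and $\K{A}$-linear, since multiplication by $u^i$ interacts nontrivially with the semilinear $\Gal(K'/K)$-action (it scales by $h(g)^i$) and with $\varphi$ (it is not $\varphi$-equivariant on the nose). The resolution is that we are not asserting $u^{-i}\gN/\gN$ and $\gN/u^i\gN$ are isomorphic as objects of $\K{A}$ in a way independent of $i$ — rather, the map ``$f\mapsto u^i\circ f$'' from $\Hom_{\K{A}}(u^i\gM,u^{-i}\gN/\gN)$ to $\Hom_{\K{A}}(u^i\gM,\gN/u^i\gN)$ is a bijection because both sides sit inside $\Hom_{\K{A}[1/u]}$ of the same étale modules, and the twist cancels on Hom-sets; I would spell this out by unwinding Lemma~\ref{lem:computing Hom as direct limit} and the snake-lemma diagram. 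Everything else is a routine diagram chase using results already in the excerpt.
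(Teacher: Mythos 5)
Your overall strategy — using the four‑term sequence~\eqref{eqn: computing kernel of Ext groups}, the stabilisation of the direct limit, and Lemma~\ref{lem: bound on torsion in kernel of Exts} to control where homomorphisms land — is the right one, and is essentially the paper's. But your execution diverges from the paper's in a way that introduces genuine gaps.

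The paper keeps the sequence~\eqref{eqn: computing kernel of Ext groups} for $(\gM,\gN)$ unchanged and relabels its first three terms via the bijection $f \mapsto f'$ with $f'(u^i m) := u^i f(m)$. Crucially, $u^i$ is applied simultaneously to the source and the target, so the $\varphi$- and $\Gal(K'/K)$-semilinearity twists cancel, and $f'$ really is a morphism in $\K{A}$. This identifies $\Hom_{\K{A}}(\gM,\gN)$ with $\Hom_{\K{A}}(u^i\gM,u^i\gN)$, $\Hom_{\K{A}}(\gM,\gN[1/u])$ with $\Hom_{\K{A}}(u^i\gM,\gN)$ (the direct limit of Lemma~\ref{lem:computing Hom as direct limit} having stabilised, which is where the hypothesis $i \geq \lfloor e'ah/(p-1)\rfloor$ enters), and $\Hom_{\K{A}}(\gM,\gN[1/u]/\gN)$ with $\Hom_{\K{A}}(u^i\gM,\gN[1/u]/u^i\gN)$; the last of these is then identified with $\Hom_{\K{A}}(u^i\gM,\gN/u^i\gN)$ because the image of any such map lies in $\gN/u^i\gN$. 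The fourth term $\kExt^1_{\K{A}}(\gM,\gN)$ is never touched.

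Two places in your argument go wrong. First, the claimed canonical isomorphism $u^i\gN[1/u]/u^i\gN \cong \gN[1/u]/\gN$ via multiplication by $u^{-i}$ is not a morphism in $\K{A}$: $\varphi(u^{-i}x) = u^{-ip}\varphi(x) \neq u^{-i}\varphi(x)$, and the descent data twists by $h(g)^{-i}$. So the third term of the four‑term sequence for $(u^i\gM, u^i\gN)$ must remain $\Hom_{\K{A}}(u^i\gM,\gN[1/u]/u^i\gN)$, and the ensuing manipulations you make on the wrong group don't go through. You flag this as your ``main obstacle,'' but the proposed resolution — that the map $f\mapsto u^i\circ f$ between $\Hom_{\K{A}}(u^i\gM,u^{-i}\gN/\gN)$ and $\Hom_{\K{A}}(u^i\gM,\gN/u^i\gN)$ is a bijection because ``both sides sit inside $\Hom_{\K{A}[1/u]}$ of the same \'etale modules'' — doesn't hold: both targets are $u$-power torsion, so their $\K{A}[1/u]$-Hom groups vanish, and the map $f \mapsto u^i\circ f$ (a twist on the target alone) again fails equivariance. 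Second, the identification of $\kExt^1_{\K{A}}(u^i\gM,u^i\gN)$ with $\kExt^1_{\K{A}}(\gM,\gN)$ is the real content of your route but is not established: the fact that both are kernels of maps with the same target is not enough when the sources are different groups, and the claim that the cokernel presentation~\eqref{eqn: computing kernel of Ext groups} ``depends only on the \'etale $\varphi$-modules'' is false (its first and third terms depend on the lattices $\gM$, $\gN$). One can in fact repair your route by matching the first three terms of the two four-term sequences via the paper's $f\mapsto f'$ map (checking compatibility with the differentials) and deducing the isomorphism of cokernels, but the paper's approach of never introducing $\kExt^1_{\K{A}}(u^i\gM,u^i\gN)$ in the first place is simpler. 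You should also note, if you do apply~\eqref{eqn: computing kernel of Ext groups} to $u^i\gM$, that $u^i\gM$ is again a Breuil--Kisin module (projective via $u^i\colon\gM\iso u^i\gM$, with $\Phi_{u^i\gM}[1/E(u)]$ still an isomorphism since $u$ is invertible in $\gS_A[1/E(u)]$ when $p^a=0$ in $A$), as Corollary~\ref{cor:complex computes Hom and Ext} requires this.
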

\begin{proof} Comparing Lemma~\ref{lem: bound on torsion in kernel of
    Exts} with the proof of Lemma~\ref{lem:computing Hom as direct
    limit}, we see that the direct limit in that proof has stabilised
  at $i$, and we obtain an isomorphism $\Hom_{\K{A}}(\gM,\gN[1/u])
  \toisom \Hom_{\K{A}}(u^i \gM,\gN)$ sending a map $f$ to $f' : u^i m
  \mapsto u^i f(m)$.  The same formula evidently identifies
  $\Hom_{\K{A}}( \gM,\gN)$ with $\Hom_{\K{A}}(u^i\gM,u^i\gN)$ and
  $\Hom_{\K{A}}(\gM,\gN[1/u]/\gN)$ with $\Hom_{\K{A}}(u^i
  \gM,\gN[1/u]/u^i \gN)$. But any map in the latter group has image
 contained  in $\gN/u^i \gN$ (by Lemma~\ref{lem: bound on torsion in kernel of
    Exts} applied to  $\Hom_{\K{A}}(\gM,\gN[1/u]/\gN)$, together with
  the identification in the previous sentence), so that $\Hom_{\K{A}}(u^i
  \gM,\gN[1/u]/u^i \gN) = \Hom_{\K{A}}(u^i
  \gM,\gN/u^i \gN)$.
  \end{proof}

\begin{prop}
  \label{prop: base change for kernel of map to etale Ext}
Let $\gM$ and $\gN$ be Breuil--Kisin modules with descent data and
coefficients in a Noetherian $\cO/\varpi^a$-domain~$A$. 
Then there is some nonzero $f\in
A$ with the following property: if we write 
$\gM_{A_f}=\gM\cotimes_A A_f$ and $\gN_{A_f}=\gN\cotimes_A A_f$, then if~$B$
is any finitely generated $A_f$-algebra, and if $Q$ is any finitely
generated $B$-module, we have natural isomorphisms
\begin{multline*}
\kExt^1_{\K{A_f}}(\gM,\gN)\otimes_{A_f}Q \iso
\kExt^1_{\K{A_f}}(\gM_{A_f}\cotimes_{A_f} B, 
\gN\cotimes_{A_f} B)\otimes_B Q
\\
\iso
\kExt^1_{\K{A_f}}(\gM_{A_f}\cotimes_{A_f} B, 
\gN\cotimes_{A_f} Q).
\end{multline*}
\end{prop}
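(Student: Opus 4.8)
The plan is to deduce the result for $\kExt^1$ from the corresponding base-change statements for $\Hom$ (Lemma~\ref{lem: vanishing of Kisin module homs implies vanishing on dense open}) by using the exact sequence~\eqref{eqn: computing kernel of Ext groups}, or more precisely its truncated finite-level version in Lemma~\ref{lem:computing kernel of Ext groups finite level}. First I would choose $i \ge \lfloor e'ah/(p-1)\rfloor$ (where $h$ is a common height bound for $\gM$ and $\gN$), so that Lemma~\ref{lem:computing kernel of Ext groups finite level} presents $\kExt^1_{\K{A}}(\gM,\gN)$ as the cokernel of
\[
\Hom_{\K{A}}(u^i\gM,\gN) \to \Hom_{\K{A}}(u^i\gM,\gN/u^i\gN).
\]
Here $u^i\gM$ is again a Breuil--Kisin module (it is projective of the same rank), and $\gN/u^i\gN$ is of the form $\gN'/u^N\gN'$ in the sense allowed by Lemma~\ref{lem: vanishing of Kisin module homs implies vanishing on dense open}; so that lemma applies both to the pair $(u^i\gM,\gN)$ and to the pair $(u^i\gM,\gN/u^i\gN)$.

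The key point is that Lemma~\ref{lem: vanishing of Kisin module homs implies vanishing on dense open} produces, for \emph{one} fixed pair of Breuil--Kisin modules, a single nonzero $f$ after inverting which the formation of $\Hom$ commutes with all further base change (to finitely generated algebras, and then tensoring with finitely generated modules). Applying this lemma to each of the two pairs above yields nonzero elements $f_1, f_2 \in A$; I would then take $f := f_1 f_2$ (or, since $A$ is a domain, any common multiple), so that over $A_f$ both
\[
\Hom_{\K{A_f}}(u^i\gM_{A_f},\gN_{A_f}) \quad\text{and}\quad \Hom_{\K{A_f}}(u^i\gM_{A_f},(\gN/u^i\gN)_{A_f})
\]
have the desired base-change behaviour: each commutes with $\otimes_{A_f} B$ and then with $\otimes_B Q$, functorially in the obvious sense. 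Since the four-term exact sequence of Lemma~\ref{lem:computing kernel of Ext groups finite level} is functorial in $\gM$ and $\gN$, base-changing it along $A_f \to B$ (using Lemma~\ref{lem:base-change-complexes} to identify the complexes $C^\bullet$ over $A_f$ and $B$) identifies $\kExt^1_{\K{B}}(\gM_{A_f}\cotimes_{A_f}B, \gN\cotimes_{A_f}B)$ with the cokernel of the base-changed $\Hom$ map. Now I would invoke right-exactness of $\otimes$: tensoring the presentation of $\kExt^1_{\K{A_f}}(\gM,\gN)$ as a cokernel with $Q$ over $A_f$, and comparing with the presentation of $\kExt^1_{\K{B}}(\gM_{A_f}\cotimes B, \gN\cotimes B)\otimes_B Q$ obtained the same way, the isomorphisms on the two $\Hom$ terms furnish the claimed isomorphism on cokernels. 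Taking $Q = B$ gives the first isomorphism, and then rewriting $\otimes_{A_f} Q = \otimes_{A_f} B \otimes_B Q$ gives the second, exactly as in the proof of Lemma~\ref{lem: vanishing of Kisin module homs implies vanishing on dense open}.

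The main obstacle I anticipate is bookkeeping rather than anything deep: I must be careful that the shift by $u^i$ is carried out consistently, i.e.\ that $i$ is chosen large enough to apply Lemma~\ref{lem:computing kernel of Ext groups finite level} simultaneously over $A$ and over every $B$ that arises (the bound $\lfloor e'ah/(p-1)\rfloor$ depends only on $a$, $e'$, $h$, not on the coefficient ring, so this is fine), and that the base-change isomorphism for $\Hom_{\K{A_f}}(u^i\gM_{A_f}, (\gN/u^i\gN)_{A_f})$ really is covered by the hypotheses of Lemma~\ref{lem: vanishing of Kisin module homs implies vanishing on dense open} (which explicitly allows $\gN$ of the form $\gN'/u^N\gN'$). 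One should also check that the connecting maps in the base-changed four-term sequence agree with the ones obtained by functoriality --- but this is immediate from naturality of the constructions in Corollary~\ref{cor:complex computes Hom and Ext} and~\eqref{eqn: computing kernel of Ext groups}. No new ideas beyond those already in place for the $\Hom$ and $\Ext^1$ base-change results should be required.
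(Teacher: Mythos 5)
Your proposal is correct and takes essentially the same route as the paper: the paper's proof is a one-line reduction, invoking Lemma~\ref{lem:computing kernel of Ext groups finite level} to present $\kExt^1$ as a cokernel of a map of $\Hom$ groups and then applying Lemma~\ref{lem: vanishing of Kisin module homs implies vanishing on dense open} to $u^i\gM$ paired with each of $\gN$ and $\gN/u^i\gN$. Your write-up merely makes explicit the bookkeeping (taking $f = f_1 f_2$, right-exactness of $\otimes$, and uniformity of the bound on $i$) that the paper leaves implicit.
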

\begin{proof}In view of Lemma~\ref{lem:computing kernel of Ext groups
    finite level}, this follows from  Lemma~\ref{lem: vanishing of
    Kisin module homs implies vanishing on dense open}, 
  with $\gM$ there being our $u^i\gM$, and $\gN$ being
  each of $\gN$,  $\gN/u^i\gN$ in turn.
\end{proof}

The following result will be crucial in our investigation of the
decomposition of $\cC^{\dd,1}$ and $\cR^{\dd,1}$ into
irreducible components.

\begin{prop}
  \label{prop: we have vector bundles}
Suppose that $\gM$ and $\gN$ are Breuil--Kisin modules with
descent data and coefficients in a Noetherian $\cO/\varpi^a$-algebra $A$
which is furthermore a domain,
and suppose that
$\Hom_{\K{A}}\bigl(\gM\otimes_A \kappa(\mathfrak m),
\gN\otimes_A \kappa(\mathfrak m) \bigr)$ vanishes
for each maximal ideal $\mathfrak m$ of $A$.
Then there is some nonzero $f\in
A$ with the following property: if we write 
$\gM_{A_f}=\gM\cotimes_A A_f$ and $\gN_{A_f}=\gN\cotimes_A A_f$, then 
for any finitely generated $A_f$-algebra $B$,
each of $\kExt^1_{\K{B}}(\gM_{A_f}\cotimes_{A_f} B,\gN_{A_f}\cotimes_{A_f}B)$,
$\Ext^1_{\K{B}}(\gM_{A_f}\cotimes_{A_f} B,\gN_{A_f}\cotimes_{A_f}B)$,
and
$$\Ext^1_{\K{B}}(\gM_{A_f}\cotimes_{A_f}B ,\gN_{A_f}\cotimes_{A_f}B)/
\kExt^1_{\K{A_f}}(\gM_{A_f}\cotimes_{A_f} B,\gN_{A_f}\cotimes_{A_f} B)$$
is a finitely generated projective $B$-module.
\end{prop}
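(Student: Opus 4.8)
The strategy is to reduce everything to the already-established results on $\Ext^1$ and $\kExt^1$ over a dense open, and then to bootstrap projectivity from generic freeness plus a suitable base-change compatibility. First I would invoke Proposition~\ref{prop: base change for kernel of map to etale Ext} and (the implicit companion statement for $\Ext^1$ coming from) Proposition~\ref{prop:base-change for exts}, together with Lemma~\ref{lem: vanishing of Kisin module homs implies vanishing on dense open}, to choose a single nonzero $f \in A$ so that simultaneously: (i) $\Hom$, $\Ext^1$, and $\kExt^1$ of $\gM_{A_f}$ and $\gN_{A_f}$ all commute with base change to any finitely generated $A_f$-algebra $B$ and any finitely generated $B$-module $Q$; and (ii) $\Ext^1_{\K{A_f}}(\gM_{A_f},\gN_{A_f})$ is $A_f$-projective of finite rank (this last is available because, shrinking $\Spec A$ further if needed so that the hypothesis $\Hom\bigl(\gM\otimes\kappa(\m),\gN\otimes\kappa(\m)\bigr)=0$ persists at \emph{every} maximal ideal of $A_f$ — which holds on a dense open since $A$ is a domain and the locus where a finitely generated module is nonzero is closed — Corollary~\ref{cor:freeness for exts} applies over $A_f$). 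One takes $f$ to be a common multiple of the finitely many $f$'s produced by these lemmas.

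Granting this choice of $f$, the argument proceeds as follows. By the base-change isomorphisms of Proposition~\ref{prop:base-change for exts}, for any finitely generated $A_f$-algebra $B$ we have $\Ext^1_{\K{B}}(\gM_{A_f}\cotimes_{A_f}B,\gN_{A_f}\cotimes_{A_f}B) \cong \Ext^1_{\K{A_f}}(\gM_{A_f},\gN_{A_f})\otimes_{A_f}B$, which is a finitely generated projective $B$-module since $\Ext^1_{\K{A_f}}(\gM_{A_f},\gN_{A_f})$ is $A_f$-projective of finite rank. Similarly, using Proposition~\ref{prop: base change for kernel of map to etale Ext}, $\kExt^1_{\K{B}}(\gM_{A_f}\cotimes_{A_f}B,\gN_{A_f}\cotimes_{A_f}B) \cong \kExt^1_{\K{A_f}}(\gM_{A_f},\gN_{A_f})\otimes_{A_f}B$. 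To see that this is $B$-projective, I would show that $\kExt^1_{\K{A_f}}(\gM_{A_f},\gN_{A_f})$ is itself $A_f$-projective of finite rank: it is finitely generated (being a submodule of the finitely generated $\Ext^1$), and by the exact sequence~\eqref{eqn: computing kernel of Ext groups} it is the cokernel of $\Hom_{\K{A_f}}(\gM_{A_f},\gN_{A_f}[1/u]) \to \Hom_{\K{A_f}}(\gM_{A_f},\gN_{A_f}[1/u]/\gN_{A_f})$; shrinking $\Spec A_f$ once more via generic freeness (Lemma~\ref{lem: vanishing of Kisin module homs implies vanishing on dense open} applied to $u^i\gM$ against $\gN$ and $\gN/u^i\gN$, as in the proof of Proposition~\ref{prop: base change for kernel of map to etale Ext}, and Lemma~\ref{lem:computing kernel of Ext groups finite level}) we may assume all the $\Hom$-modules in that sequence are $A_f$-projective of finite rank and that the sequence remains exact after any base change, whence the cokernel $\kExt^1$ is finitely presented and its formation commutes with all base change; flatness then follows since base change to finitely generated modules $Q$ is exact on it. Finally, the quotient $\Ext^1/\kExt^1$ over $B$ injects, by~\eqref{eqn: computing kernel of Ext groups} and Lemma~\ref{lem:ext-i-invert-u}, into $\Ext^1_{\K{B}}\bigl((\gM_{A_f}\cotimes_{A_f}B)[1/u],(\gN_{A_f}\cotimes_{A_f}B)[1/u]\bigr)$; more to the point, it is the base change to $B$ of $\Ext^1_{\K{A_f}}(\gM_{A_f},\gN_{A_f})/\kExt^1_{\K{A_f}}(\gM_{A_f},\gN_{A_f})$, which is a quotient of one finitely generated projective $A_f$-module by a direct-summand submodule (the inclusion $\kExt^1 \hookrightarrow \Ext^1$ splits because, after the shrinking above, both are projective and the quotient is finitely presented; alternatively one enlarges $f$ so that the quotient is $A_f$-free), hence is itself $A_f$-projective of finite rank, and the base-change compatibility then gives the claim over $B$.

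\textbf{Main obstacle.} The delicate point is keeping track of a \emph{single} choice of $f$ that simultaneously makes all three modules — $\Hom$, $\Ext^1$, $\kExt^1$, and also the various auxiliary $\Hom$'s appearing in~\eqref{eqn: computing kernel of Ext groups} and Lemma~\ref{lem:computing kernel of Ext groups finite level} — behave well under base change, so that projectivity of the quotient $\Ext^1/\kExt^1$ can be read off from a clean short exact sequence of projective $A_f$-modules whose exactness is preserved by $\otimes_{A_f}B$. This is essentially a bookkeeping exercise: each of the cited lemmas produces its own dense open, and one must check that their common refinement still carries the vanishing hypothesis on $\Hom(\gM\otimes\kappa(\m),\gN\otimes\kappa(\m))$ at every closed point (needed to invoke Corollary~\ref{cor:freeness for exts}), which it does because the non-vanishing locus of a coherent sheaf is closed and proper in $\Spec A$. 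Once the sequence $0 \to \kExt^1 \to \Ext^1 \to \Ext^1/\kExt^1 \to 0$ of finitely generated $A_f$-modules has $\kExt^1$ and $\Ext^1$ projective and its formation commuting with arbitrary base change, projectivity of the quotient follows from flatness, and the statement over $B$ is immediate.
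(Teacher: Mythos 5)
Your proposal takes a genuinely different route from the paper's, but the crucial step has a gap. You attempt to establish projectivity of $\kExt^1_{\K{A_f}}(\gM_{A_f},\gN_{A_f})$ over $A_f$ first, and then transport it to $B$ by base change. The argument you give for $A_f$-projectivity of $\kExt^1$ does not work as stated: from the fact that all the $\Hom$-terms in the sequence $0\to\Hom(u^i\gM,u^i\gN)\to\Hom(u^i\gM,\gN)\to\Hom(u^i\gM,\gN/u^i\gN)\to\kExt^1\to 0$ are projective and that the formation of $\kExt^1$ commutes with base change, you conclude flatness of $\kExt^1$ because ``base change to finitely generated modules $Q$ is exact on it.'' But commuting with base change does not imply flatness --- the cokernel of any map of projective modules commutes with base change (it is a right-exact functor), yet it need not be flat (take $A=k[x]$, $A\xrightarrow{x}A\to A/(x)$). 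The parenthetical alternative ``both are projective and the quotient is finitely presented, hence the inclusion splits'' is also false: a short exact sequence $0\to P_1\to P_2\to M\to 0$ with $P_1,P_2$ projective and $M$ finitely presented need not split unless $M$ is already projective. Your second alternative --- enlarge $f$ by applying generic freeness to the quotient $\Ext^1_{A_f}/\kExt^1_{A_f}$ directly, forcing it to be free and hence the sequence to split --- does repair the argument, but you state it only in passing and do not flesh it out.

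The paper's proof avoids all of this by working directly over $B$ and exploiting the universal injectivity of $\kExt^1\hookrightarrow\Ext^1$: after choosing $f$ as in Proposition~\ref{prop: base change for kernel of map to etale Ext}, the map
\[\kExt^1_{\K{B}}(\gM_{A_f}\cotimes_{A_f}B,\gN_{A_f}\cotimes_{A_f}B)\otimes_B Q\to\Ext^1_{\K{B}}(\gM_{A_f}\cotimes_{A_f}B,\gN_{A_f}\cotimes_{A_f}B)\otimes_B Q\]
is identified (via Propositions~\ref{prop:base-change for exts} and~\ref{prop: base change for kernel of map to etale Ext}) with the map $\kExt^1\to\Ext^1$ for the Breuil--Kisin modules $\gM_{A_f}\cotimes_{A_f}B$ and $\gN_{A_f}\cotimes_{A_f}Q$, which is injective \emph{by the very definition} of $\kExt^1$ as a kernel. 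Combined with flatness of $\Ext^1_{\K{B}}$ (which follows from Corollary~\ref{cor:freeness for exts} and base change), this injectivity immediately gives $\Tor_1^B(Q,\Ext^1/\kExt^1)=0$ for all finitely generated $Q$, so the quotient is flat, hence projective, hence the short exact sequence over $B$ splits and $\kExt^1_{\K{B}}$ is a direct summand of a projective module. This avoids any further shrinking of $\Spec A_f$, needs no generic freeness beyond that already built into Proposition~\ref{prop: base change for kernel of map to etale Ext}, and never has to address whether $\kExt^1_{A_f}$ itself is projective. You should look for this kind of Tor argument before reaching for generic freeness: once you know $\kExt^1\otimes Q\to\Ext^1\otimes Q$ is injective for every $Q$, everything else is formal.
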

\begin{proof}
Choose $f$ as in 
  Proposition~\ref{prop: base change for kernel of map to etale Ext},
let $B$ be a finitely generated $A_f$-algebra,
and let $Q$ be a finitely generated $B$-module.
By Propositions~\ref{prop:base-change for exts} and~\ref{prop: base
    change for kernel of map to etale Ext}, the morphism
  \[\kExt^1_{\K{B}}(\gM_{A_f}\cotimes_{A_f} B,\gN_{A_f}\cotimes_{A_f} B)\otimes_B Q\to
\Ext^1_{\K{B}}(\gM_{A_f}\cotimes_{A_f} B,\gN_{A_f}\cotimes_{A_f}B)\otimes_B Q\]
  is naturally identified with the morphism
  \[\kExt^1_{\K{B}}(\gM_{A_f}\cotimes_{A_f} B,\gN_{A_f}\cotimes_{A_f} Q)\to
\Ext^1_{\K{B}}(\gM_{A_f}\cotimes_{A_f} B,\gN_{A_f}\cotimes_{A_f} Q);\]
  in particular, it is injective. 
By Proposition~\ref{prop:base-change for exts} and
Corollary~\ref{cor:freeness for exts} we see that
  $\Ext^1_{\K{B}}(\gM_{A_f}\cotimes_{A_f} B,\gN_{A_f}\cotimes_{A_f} B)$ is a
finitely generated projective $B$-module; hence it is also flat.
Combining this with the injectivity just proved, we find that
  \[\Tor^1_B\bigl(Q, \Ext^1_{\K{B}}(\gM\cotimes_{A_f}B,\gN_{A_f}\cotimes_{A_f} B)/
\kExt^1_{\K{B}}(\gM_{A_f}\cotimes_{A_f} B,\gN_{A_f}\cotimes_{A_f} B)\bigr)=0\]
  for every finitely generated $B$-module $Q$, and thus that
  $$\Ext^1_{\K{B}}(\gM_{A_f}\cotimes_{A_f}B,\gN_{A_f}\cotimes_{A_f}B)/
\kExt^1_{\K{B}}(\gM_{A_f}\cotimes_{A_f}B,\gN_{A_f}\cotimes_{A_f}B)$$
 is a finitely generated flat,
and therefore finitely generated projective, $B$-module.
Thus $\kExt^1_{\K{B}}(\gM_{A_f}\cotimes_{A_f}B,\gN_{A_f}\cotimes_{A_f}B)$
is a direct summand of
  the finitely generated projective $B$-module
$\Ext^1_{\K{B}}(\gM_{A_f}\cotimes_{A_f} B,\gN_{A_f}\cotimes_{A_f}B)$, and so is
  itself a finitely generated projective $B$-module.
\end{proof}

\subsection{Families of extensions}
\label{subsec:families of extensions}
Let $\gM$ and $\gN$ be Breuil--Kisin modules with descent data
and $A$-coefficients, so that
$\Ext^1_{\K{A}}(\gM,\gN)$ is an $A$-module.
Suppose that $\psi: V \to \Ext^1_{\K{A}}(\gM,\gN)$ is
a homomorphism of $A$-modules whose source is a projective $A$-module of 
finite rank.
Then we may regard $\psi$ as an element of
$$\Ext^1_{\K{A}}(\gM,\gN)\otimes_A V^{\vee} =
\Ext^1_{\K{A}}(\gM, \gN\otimes_A V^{\vee} ),$$ 
and in this way $\psi$ 
corresponds to an extension
\numequation
\label{eqn:universal extension}
0 \to \gN\otimes_A V^{\vee} \to \gE \to \gM \to 0,
\end{equation}
which we refer to as the {\em family of extensions} of $\gM$ by $\gN$
parametrised by $V$ (or by $\psi$, if we want to emphasise our
choice of homomorphism).  We let $\gE_v$ denote the pushforward of $\gE$ under
the morphism $\gN\otimes_A V^{\vee} \to \gN$
given by evaluation on $v\in V$.
In the special case that 
$\Ext^1_{\K{A}}(\gM,\gN)$ itself is a projective $A$-module of finite rank,
we can let $V$ be $\Ext^1_{\K{A}}(\gM,\gN)$ and take $\psi$ be the
identity map;
in this case we refer to~(\ref{eqn:universal extension}) as
the {\em universal extension} of $\gM$ by $\gN$.
The reason for this terminology is as follows:
if $v \in \Ext^1_{\K{A}}(\gM,\gN)$, 
then $\gE_v$ is the extension of $\gM$ by $\gN$ corresponding to the element
$v$. 

Let $B := A[V^{\vee}]$ denote the symmetric algebra over $A$ generated by
$V^{\vee}$.  
The short exact sequence~(\ref{eqn:universal extension}) is a
short exact sequence of Breuil--Kisin modules with descent data,
and so forming its $u$-adically completed tensor product with $B$ over $A$,
we obtain a short exact sequence 
$$
0 \to \gN\otimes_A V^{\vee} \cotimes_A B \to \gE\cotimes_A B \to
\gM\cotimes_A B
\to 0$$
of Breuil--Kisin modules with descent data over $B$ (see Lemma~\ref{rem: base change of locally free Kisin module is a
    locally free Kisin module}).
Pushing this short exact sequence forward under the natural map
$$V^{\vee} \cotimes_A B = V^{\vee} \otimes_A B \to B$$
induced by the inclusion of $V^{\vee}$ in $B$
and the multiplication map $B\otimes_A B \to B$,
we obtain a short exact sequence
\numequation
\label{eqn:geometric universal extension}
0 \to \gN\cotimes_A B \to \widetilde{\gE} \to \gM\cotimes_A B \to 0
\end{equation}
of Breuil--Kisin modules with descent data over $B$,
which we call the {\em family of extensions} of $\gM$  by $\gN$
parametrised by $\Spec B$ (which we note is (the total space of) the
vector bundle over $\Spec A$ corresponding to the projective
$A$-module~$V$).
 
If $\alpha_v: B \to A$ is the morphism induced
by the evaluation map
$V^{\vee} \to A$ given by some element $v \in V$,
then base-changing~(\ref{eqn:geometric universal extension}) by $\alpha_v$,
we recover the short exact sequence
$$0 \to \gN \to \gE_v \to \gM \to 0.$$
More generally, suppose that $A$ is a $\cO/\varpi^a$-algebra for some
$a\ge 1$, and let  $C$ be any $A$-algebra. Suppose that $\alpha_{\tilde{v}}:
B \to C$ is the morphism induced 
by the evaluation map
$V^{\vee} \to C$ corresponding to some element $\tilde{v} \in C\otimes_A V$.
Then base-changing~(\ref{eqn:geometric universal extension}) by
$\alpha_{\tilde{v}}$
yields a short exact sequence
$$0 \to \gN\cotimes_A C \to \widetilde{\gE}\cotimes_B C
\to \gM\cotimes_A C \to 0,$$
whose associated extension class corresponds
to the image of $\tilde{v}$ 
under the natural morphism
$C\otimes_A V \to C\otimes_A \Ext^1_{\K{A}}(\gM,
\gN) \cong \Ext^1_{\K{C}}(\gM\cotimes_A C, \gN\otimes_A C),$
the first arrow being induced by $\psi$
and the second arrow being the isomorphism of Proposition~\ref{prop:base-change
for exts}. 

\subsubsection{The functor represented by a universal family}
We now suppose that the ring~$A$ and the Breuil--Kisin modules $\gM$ and $\gN$ have the following 
properties:

\begin{assumption}
\label{assumption:vanishing}Let $A$ be a  Noetherian and Jacobson
$\cO/\varpi^a$-algebra for some $a\ge 1$, and assume that for each maximal ideal $\mathfrak m$ of $A$, we have that
$$\Hom_{\K{\kappa(\mathfrak{\m})}}\bigl(\gM\otimes_A \kappa(\mathfrak m) , \gN\otimes_A
\kappa(\mathfrak m)\bigr) = \Hom_{\K{\kappa(\mathfrak{\m})}}\bigl(\gN\otimes_A \kappa(\mathfrak m) , \gM\otimes_A
\kappa(\mathfrak m)\bigr) = 0.$$
\end{assumption}

By Corollary~\ref{cor:freeness for exts}, this assumption implies in particular
that $V:= \Ext^1_{\K{A}}(\gM,\gN)$ is projective of finite rank,
and so we may form $\Spec B := \Spec A[V^{\vee}]$,
which parametrised the universal family of 
extensions. 
We are then able to give the following precise description
of the functor represented by $\Spec B$.

\begin{prop}\label{prop: the functor that Spec B represents}
The scheme $\Spec B$ represents the functor which,
to any $\cO/\varpi^a$-algebra $C$, associates the set of isomorphism
classes of tuples $(\alpha, \gE, \iota, \pi)$, where $\alpha$ is a morphism
$\alpha: \Spec C \to \Spec A$, $\gE$ is a Breuil--Kisin module
with descent data and coefficients in $C$, and $\iota$ and $\pi$ are morphisms
$\alpha^* \gN \to \gE$ and $\gE \to \alpha^* \gM$ respectively,
with the property that $0 \to \alpha^*\gN \buildrel \iota
\over \to \gE \buildrel \pi  \over \to \alpha^* \gM \to 0$
is short exact.
\end{prop}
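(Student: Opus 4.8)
The plan is to verify that the natural transformation sending a $C$-point of $\Spec B$ to the tuple $(\alpha,\gE,\iota,\pi)$ obtained by pulling back the family~\eqref{eqn:geometric universal extension} is a bijection of sets, for every $\cO/\varpi^a$-algebra $C$. First I would write down the transformation in one direction: a morphism $\Spec C \to \Spec B$ consists of a morphism $\alpha\colon\Spec C\to\Spec A$ together with a $C$-linear map $\alpha^*V^\vee\to C$, equivalently an element $\tilde v\in C\otimes_A V = C\otimes_A\Ext^1_{\K{A}}(\gM,\gN)\cong \Ext^1_{\K{C}}(\alpha^*\gM,\alpha^*\gN)$ (using Proposition~\ref{prop:base-change for exts}); base-changing~\eqref{eqn:geometric universal extension} along this data produces, as recorded at the end of Subsection~\ref{subsec:families of extensions}, a short exact sequence $0\to\alpha^*\gN\xrightarrow{\iota}\gE\xrightarrow{\pi}\alpha^*\gM\to 0$ whose class is exactly $\tilde v$. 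This gives a well-defined map from $(\Spec B)(C)$ to the set of isomorphism classes of tuples; functoriality in $C$ is immediate from the construction.

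Next I would construct the inverse. Given a tuple $(\alpha,\gE,\iota,\pi)$ over $C$, the short exact sequence $0\to\alpha^*\gN\to\gE\to\alpha^*\gM\to 0$ has an extension class in $\Ext^1_{\K{C}}(\alpha^*\gM,\alpha^*\gN)$; via the isomorphism of Proposition~\ref{prop:base-change for exts} this is an element $\tilde v\in C\otimes_A V$, which together with $\alpha$ determines a morphism $\Spec C\to\Spec B$. The content is that these two assignments are mutually inverse \emph{on isomorphism classes of tuples}, and this is where the vanishing hypothesis of Assumption~\ref{assumption:vanishing} enters. Concretely, I need: (i) two tuples with the same $\alpha$ and the same extension class are isomorphic, and (ii) a tuple is never isomorphic to itself in a nontrivial way that would change the recovered class. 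Both reduce to understanding automorphisms of the situation: an isomorphism of tuples $(\alpha,\gE,\iota,\pi)\to(\alpha,\gE',\iota',\pi')$ is an isomorphism $\gE\to\gE'$ commuting with the $\iota$'s and $\pi$'s, and the standard diagram chase shows the set of such is a torsor under $\Hom_{\K{C}}(\alpha^*\gM,\alpha^*\gN)$ when $\gE=\gE'$; meanwhile two extensions with equal class differ by an element of this same $\Hom$ group. So I must show $\Hom_{\K{C}}(\alpha^*\gM,\alpha^*\gN)=0$ for every $A$-algebra $C$ — but this is precisely Corollary~\ref{cor:vanishing of homs non Noetherian} applied to the first half of Assumption~\ref{assumption:vanishing} (here $A$ is Noetherian and Jacobson). [The second half of the assumption, $\Hom(\gN,\gM)=0$, is not needed for the representability statement itself; it is recorded because it will be used later, e.g.\ to control the automorphisms of $\gE$ itself, but I would remark that it is harmless to carry along.]

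The remaining checks are routine: I would confirm that the zero of $\Hom_{\K{C}}(\alpha^*\gM,\alpha^*\gN)$ forces the extension datum to pin down $\gE$ up to \emph{unique} isomorphism compatible with $\iota,\pi$, so that isomorphism classes of tuples are in bijection with pairs (morphism $\alpha$, extension class), and then that this latter set is exactly $(\Spec B)(C)=\{(\alpha,\tilde v)\}$ by the universal property of the symmetric algebra $B=A[V^\vee]$ combined with Proposition~\ref{prop:base-change for exts}. Compatibility with base change along $C\to C'$ follows since all the identifications used (the base-change isomorphism for $\Ext^1$, pushforward of short exact sequences, the symmetric-algebra adjunction) are themselves natural.

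\textbf{Main obstacle.} The only real subtlety is the bookkeeping around \emph{isomorphism classes} of tuples versus honest tuples: one must be careful that the functor is genuinely set-valued (not a nontrivial groupoid) before asserting representability, and this is exactly what the $\Hom$-vanishing buys — without it one would only get a representable \emph{stack}. So the crux is the clean invocation of Corollary~\ref{cor:vanishing of homs non Noetherian} to reduce the global vanishing of $\Hom$ over arbitrary $A$-algebras $C$ to the fibrewise vanishing hypothesis at maximal ideals of $A$; everything else is formal manipulation of the base-change results from Subsection~\ref{subsec:ext generalities}.
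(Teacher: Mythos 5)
Your proposal is correct and follows essentially the same route as the paper: identify $(\Spec B)(C)$ with pairs (morphism to $\Spec A$, extension class) via the symmetric algebra adjunction and Proposition~\ref{prop:base-change for exts}, and then reduce the ``isomorphism classes of tuples are a set, not a groupoid'' issue to the vanishing of $\Hom_{\K{C}}(\alpha^*\gM,\alpha^*\gN)$, which is supplied by Corollary~\ref{cor:vanishing of homs non Noetherian} under Assumption~\ref{assumption:vanishing}. Your observation that only the $\Hom(\gM,\gN)$ half of the assumption is used here is accurate (the $\Hom(\gN,\gM)$ half is needed elsewhere, e.g.\ in Corollary~\ref{cor: monomorphism to Spec A times C}), and the only blemish is the slightly garbled phrase ``two extensions with equal class differ by an element of this same $\Hom$ group'' --- the correct formulation is that the set of isomorphisms of extensions between two such is a torsor under that $\Hom$ group, which is what you actually use.
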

\begin{proof}
We have already seen that giving a morphism $\Spec C \to \Spec B$
is equivalent to giving the composite morphism $\alpha:\Spec C \to \Spec B 
\to \Spec A$, together with an extension class
$[\gE] \in \Ext^1_{\K{C}}(\alpha^*\gM,\alpha^*\gN).$
Thus to prove the proposition, we just have to show that
any automorphism of $\gE$ which restricts to the identity on $\alpha^*\gN$
and induces the identity on $\alpha^*\gM$ is itself the identity on
$\gE$.   This follows from Corollary~\ref{cor:vanishing of homs non
  Noetherian}, 
together with Assumption~\ref{assumption:vanishing}. 
\end{proof}
Fix an integer $h\ge 0$ so that $E(u)^h\in
\Ann_{\gS_A}(\coker\Phi_{\gM})\Ann_{\gS_A}(\coker\Phi_{\gN})$, so
that by Lemma~\ref{lem:ext of a Kisin module is a Kisin module}, every
Breuil--Kisin module parametrised by $\Spec B$ has height at most~$h$.
There is a natural action of $\Gm\times_{\cO} \Gm$ on $\Spec B$,
given by rescaling each of $\iota$ and $\pi$.
There is also an evident forgetful morphism
$\Spec B \to \Spec A \times_{\cO} \cC^{\dd,a}$, 
given by forgetting $\iota$ and $\pi$, which is evidently invariant
under the $\Gm\times_{\cO} \Gm$-action. (Here and below,
$\cC^{\dd,a}$ denotes the moduli stack defined in Section~\ref{subsec:results from EG
  and PR on finite flat moduli} for our fixed choice of~$h$ and for
$d$ equal to the sum of the ranks of $\gM$ and $\gN$.)  We thus obtain a morphism
\numequation
\label{eqn:ext relation}
\Spec B \times_{\cO} \Gm\times_{\cO} \Gm \to
\Spec B \times_{\Spec A \times_{\cO} \cC^{\dd,a}} \Spec B.
\end{equation}

\begin{cor}\label{cor: monomorphism to Spec A times C}
Suppose that
$\Aut_{\K{C}}(\alpha^*\gM) = \Aut_{\K{C}}(\alpha^*\gN) = C^{\times}$
for any morphism $\alpha: \Spec C \to \Spec A$.
Then the morphism~{\em (\ref{eqn:ext relation})} is an isomorphism,
and consequently the induced morphism
$$[\Spec B/ \Gm\times_{\cO} \Gm] \to \Spec A \times_{\cO} \cC^{\dd,a}$$
is a finite type monomorphism.
\end{cor}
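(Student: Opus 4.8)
The plan is to verify that the morphism~\eqref{eqn:ext relation} is an isomorphism by checking it on $C$-points for an arbitrary $\cO/\varpi^a$-algebra $C$, and then to deduce the statement about the quotient stack. A $C$-point of the target $\Spec B \times_{\Spec A \times_{\cO} \cC^{\dd,a}} \Spec B$ consists of a morphism $\alpha : \Spec C \to \Spec A$, a Breuil--Kisin module $\gE$ over $C$ (the common image in $\cC^{\dd,a}$), together with two pairs $(\iota_1,\pi_1)$ and $(\iota_2,\pi_2)$ exhibiting $\gE$ as an extension of $\alpha^*\gM$ by $\alpha^*\gN$, via Proposition~\ref{prop: the functor that Spec B represents}. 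I would first observe that $\iota_1$ and $\iota_2$ have the same image in $\gE$: indeed $\pi_2 \circ \iota_1 : \alpha^*\gN \to \alpha^*\gM$ lies in $\Hom_{\K{C}}(\alpha^*\gN, \alpha^*\gM)$, which vanishes by Corollary~\ref{cor:vanishing of homs non Noetherian} together with Assumption~\ref{assumption:vanishing}, so $\iota_1$ factors through $\ker \pi_2 = \operatorname{im}\iota_2$; by symmetry the images agree. Hence there is a unique $\lambda \in \Aut_{\K{C}}(\alpha^*\gN)$ with $\iota_1 = \iota_2 \circ \lambda$, and by hypothesis $\lambda \in C^\times$.

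Next I would produce the second scalar. Since $\iota_1 = \iota_2\lambda$, the map $\pi_1$ and $\lambda^{-1}$-adjusted version of $\pi_2$ both kill $\operatorname{im}\iota_1$, so $\pi_1$ and $\pi_2$ differ by an automorphism $\mu$ of $\alpha^*\gM$ after identifying the quotients $\gE/\operatorname{im}\iota_1 \cong \alpha^*\gM$; again by hypothesis $\mu \in C^\times$. One checks that the pair $(\lambda,\mu) \in \Gm(C)\times\Gm(C)$, acting on the $C$-point $(\alpha,\gE,\iota_1,\pi_1)$ of the left factor $\Spec B$ by rescaling $\iota_1$ and $\pi_1$, produces exactly $(\alpha,\gE,\iota_2,\pi_2)$ — this is where one uses that rescaling $\iota$ by $\lambda$ and $\pi$ by $\mu$ is precisely the $\Gm\times\Gm$-action defining~\eqref{eqn:ext relation}. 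Uniqueness of $(\lambda,\mu)$ follows from the same vanishing statements (any ambiguity would be an element of $\Hom_{\K{C}}(\alpha^*\gM,\alpha^*\gN)$ or a scalar acting trivially), so~\eqref{eqn:ext relation} is a bijection on $C$-points, functorially in $C$, hence an isomorphism.

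Finally, to pass to the statement about $[\Spec B / \Gm\times_{\cO}\Gm]$, I recall the standard groupoid/quotient-stack criterion: a morphism of stacks $\cX \to \cY$ is a monomorphism if and only if the natural map $\cX \to \cX \times_{\cY} \cX$ is an isomorphism, and for $\cX = [\Spec B/\Gm\times_{\cO}\Gm]$ one has $\cX \times_{\cY}\cX$ computed (after smooth base change along $\Spec B \to \cX$) by the fibre product $\Spec B \times_{\cY} \Spec B$ together with the groupoid action; the isomorphism~\eqref{eqn:ext relation} says precisely that this groupoid is the action groupoid of $\Gm\times_{\cO}\Gm$ on $\Spec B$, i.e. that the two projections $\Spec B \times_{\cY}\Spec B \rightrightarrows \Spec B$ together with~\eqref{eqn:ext relation} identify it with $[\Spec B/\Gm\times\Gm] \to \Spec A\times_{\cO}\cC^{\dd,a}$ being a monomorphism. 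That the morphism is of finite type follows because $\Spec B \to \Spec A\times_{\cO}\cC^{\dd,a}$ is of finite type (as $B = A[V^\vee]$ is a finitely generated $A$-algebra, $V$ being projective of finite rank by Corollary~\ref{cor:freeness for exts}, and $\cC^{\dd,a}$ is locally Noetherian), a property inherited by the quotient since $\Spec B \to [\Spec B/\Gm\times\Gm]$ is smooth and surjective.

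The main obstacle is the bookkeeping in the middle step: one must be careful that the two scalars $\lambda,\mu$ really do assemble into the $\Gm\times\Gm$-action as defined (rescaling $\iota$ and $\pi$ independently), rather than, say, a single $\Gm$ acting diagonally, and that no further automorphisms of $\gE$ itself intervene — this is exactly controlled by the full strength of Assumption~\ref{assumption:vanishing} (vanishing of $\Hom$ in \emph{both} directions) via Corollary~\ref{cor:vanishing of homs non Noetherian}, together with the rigidity of $\gE$ proved in Proposition~\ref{prop: the functor that Spec B represents}. Everything else is formal manipulation of fibre products of stacks.
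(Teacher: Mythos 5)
Your proof is essentially correct and follows the same approach as the paper: identify the $C$-points of the fibre product, use Assumption~\ref{assumption:vanishing} plus Corollary~\ref{cor:vanishing of homs non Noetherian} to kill cross-homs and thereby extract a pair of scalars, and then pass to the quotient stack via the monomorphism criterion (which is precisely Lemma~\ref{lem: morphism from quotient stack is a monomorphism} in the paper).

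Two small points where you are a bit informal but not wrong. First, a $C$-point of $\Spec B \times_{\Spec A \times_\cO \cC^{\dd,a}} \Spec B$ is a pair of tuples $(\alpha,\gE_1,\iota_1,\pi_1)$, $(\alpha,\gE_2,\iota_2,\pi_2)$ \emph{together with} an isomorphism $\beta:\gE_1\to\gE_2$ in $\cC^{\dd,a}$; you collapse this to a single ``common'' $\gE$ with two pairs $(\iota_i,\pi_i)$. This is harmless because one can always normalize using $\beta$ to replace $(\iota_2,\pi_2)$ by $(\beta^{-1}\iota_2,\pi_2\beta)$, but the paper keeps $\beta$ explicit throughout: it shows the composite $\alpha^*\gN\to\gE\xrightarrow{\beta}\gE'\to\alpha^*\gM$ vanishes, deduces that $\beta$ induces automorphisms of both $\alpha^*\gN$ and $\alpha^*\gM$, and then uses the vanishing of $\Hom(\alpha^*\gM,\alpha^*\gN)$ to conclude that $\beta$ is determined by these automorphisms. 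Second, for the finite-type claim you argue by descent of finite type along the smooth surjection $\Spec B\to[\Spec B/\Gm\times_\cO\Gm]$, whereas the paper applies the cancellation law directly: $[\Spec B/\Gm\times_\cO\Gm]$ is of finite type over $\Spec A$ and $\cC^{\dd,a}$ has finite type diagonal, so the induced map to $\Spec A\times_\cO\cC^{\dd,a}$ is of finite type. Your route works (representability by algebraic spaces, which you get from the first part, plus surjectivity of the smooth cover gives quasi-compactness of the base change, and local finiteness descends), but the cancellation argument is cleaner and avoids invoking any representability.
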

\begin{proof}
By Proposition~\ref{prop: the functor that Spec B represents}, a morphism  \[\Spec C\to \Spec B \times_{\Spec A
  \times_{\cO} \cC^{\dd,a}} \Spec B\]  corresponds to an isomorphism class
of tuples $(\alpha,\beta:\gE\to\gE',\iota,\iota',\pi,\pi')$, where
\begin{itemize}
\item $\alpha$ is a morphism
$\alpha:\Spec C\to\Spec A$,
\item  $\beta:\gE\to\gE'$ is an isomorphism of Breuil--Kisin modules
with descent data and coefficients in $C$, 
\item $\iota:\alpha^* \gN \to \gE$ and $\pi :\gE \to
  \alpha^* \gM$ are morphisms
with the property that $$0 \to \alpha^*\gN \buildrel \iota
\over \to \gE \buildrel \pi  \over \to \alpha^* \gM \to 0$$ is short
exact, 
\item $\iota':\alpha^* \gN \to \gE'$ and $\pi' :\gE' \to
  \alpha^* \gM$ are morphisms
with the property that 
  $$0 \to \alpha^*\gN \buildrel \iota'
\over \to \gE' \buildrel \pi'  \over \to \alpha^* \gM \to 0$$ is
short exact.
\end{itemize}
Assumption~\ref{assumption:vanishing} and Corollary~\ref{cor:vanishing of homs non Noetherian}  together show that 
$\Hom_{\K{C}}(\alpha^*\gN, \alpha^*\gM)~=~0$. It follows that the
composite
$\alpha^*\gN\stackrel{\iota}{\to}\gE\stackrel{\beta}{\to}\gE' $
factors through $\iota'$, and the induced endomorphism of
$\alpha^*\gN$ is injective. Reversing the roles of $\gE$ and $\gE'$,
we see that it is in fact an automorphism of $\alpha^*\gN$, and it
follows easily that $\beta$ also induces an automorphism of
$\alpha^*\gM$. Again, Assumption~\ref{assumption:vanishing} and Proposition~\ref{cor:vanishing of homs non Noetherian} together show that 
$\Hom_{\K{C}}(\alpha^*\gM, \alpha^*\gN) = 0$, from which it follows
easily that $\beta$ is determined by the automorphisms of
$\alpha^*\gM$ and $\alpha^*\gN$ that it induces.

 Since $\Aut_{\K{C}}(\alpha^*\gM) =
\Aut_{\K{C}}(\alpha^*\gN) = C^{\times}$ by assumption, we see that
$\beta \circ \iota, \iota'$ and $\pi,\pi'\circ\beta$ differ only by the action of
$\Gm\times_{\cO}\Gm$, so  the first claim of the corollary
follows. 
The claim regarding the monomorphism is immediate from
Lemma~\ref{lem: morphism from quotient stack is a monomorphism}
below. Finally, note that $[\Spec B/\Gm\times_{\cO} \Gm]$ is 
of finite type over $\Spec A$, while $\cC^{\dd,a}$ has finite type diagonal.
It follows that the morphism 
$[\Spec B / \Gm \times_{\cO} \Gm ] \rightarrow \Spec A\times_{\cO}\cC^{\dd,a}$
is of finite type, as required.
\end{proof} 

\begin{lem}
  \label{lem: morphism from quotient stack is a monomorphism}Let $X$
  be a scheme over a base scheme~$S$, let $G$ be a smooth affine group
  scheme over~$S$, and let $\rho:X\times_S G\to X$ be a {\em
    (}right{\em )} action of $G$
  on~$X$. Let $X\to\cY$ be a $G$-equivariant morphism, whose target is
  an algebraic stack over~$S$ on which $G$ acts trivially. 
Then the induced
  morphism \[[X/G]\to\cY\] is a monomorphism if and only if the
  natural morphism \[X\times_S G\to X\times_{\cY} X\] {\em (}induced by the
  morphisms $\pr_1,\rho:X\times_S G\to X${\em )} is an isomorphism.
\end{lem}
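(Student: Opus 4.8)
The plan is to use the standard groupoid-theoretic criterion for a morphism of stacks to be a monomorphism: a morphism $f \colon \cZ \to \cY$ is a monomorphism if and only if the diagonal $\Delta_f \colon \cZ \to \cZ \times_{\cY} \cZ$ is an isomorphism. Here $\cZ = [X/G]$, and since $G$ acts trivially on $\cY$ the composite $X \to [X/G] \to \cY$ is $G$-equivariant, which is what makes the statement sensible. First I would recall that the quotient stack $[X/G]$ is presented by the groupoid $X \times_S G \rightrightarrows X$ (with source and target given by $\pr_1$ and $\rho$), so that $X \to [X/G]$ is a smooth surjection whose associated Čech nerve in degree one is $X \times_S G$; concretely, $X \times_{[X/G]} X \cong X \times_S G$ via $(\pr_1,\rho)$.

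Next I would compute $[X/G] \times_{\cY} [X/G]$ by pulling back along the smooth atlas $X \to [X/G]$ in both factors. Using the isomorphism $X \times_{[X/G]} X \cong X \times_S G$ and the fact that $X \times_{\cY} X$ is computed as an honest fibre product (since $X \to \cY$ is a morphism of stacks whose target has, by hypothesis in the ambient setup, representable-enough diagonal — in any case the $2$-fibre product makes sense), one obtains that the atlas $X \times_S X \to [X/G] \times_{\cY} [X/G]$ is smooth and surjective, and that the degree-one part of its Čech nerve is $(X \times_S G) \times_S (X \times_S G)$. The diagonal morphism $\Delta_f$ is then, after pulling back along these atlases, identified with the natural map $X \times_S G \to X \times_{\cY} X$ induced by $(\pr_1, \rho)$ — this is precisely the map in the statement. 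Since being an isomorphism is smooth-local on the target (and on the source), $\Delta_f$ is an isomorphism if and only if this pulled-back map is, which gives the claimed equivalence.

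The main technical point — and the one I would be most careful about — is the bookkeeping of the fibre-product identifications: one must check that the square
\[
\begin{CD}
X \times_S G @>>> X \times_{\cY} X \\
@VVV @VVV \\
[X/G] @>{\Delta_f}>> [X/G] \times_{\cY} [X/G]
\end{CD}
\]
is $2$-Cartesian, where the left vertical arrow is the canonical map $X \times_S G = X \times_{[X/G]} X \to [X/G]$ and the right vertical arrow is induced by the atlas $X \times_S X$ together with the isomorphisms $X \times_{[X/G]} X \cong X \times_S G$. Granting this Cartesianness, the equivalence follows formally since the left vertical arrow is a smooth surjection (being a base change of $X \to [X/G]$). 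Verifying the Cartesianness is a routine but slightly fiddly manipulation of $2$-fibre products of stacks, using that $G$ acts trivially on $\cY$ so that the two maps $X \times_S G \rightrightarrows X \to \cY$ agree; I would carry it out by checking the universal property on $T$-points for an arbitrary scheme $T$, where it amounts to the statement that a pair of $T$-points of $X$ together with an isomorphism of their images in $[X/G]$ (i.e.\ a $T$-point of $G$ conjugating one to the other) and a $2$-morphism of their images in $\cY$ (which is trivial, since the composites to $\cY$ are literally equal after applying $\rho$) is the same as a $T$-point of $X \times_{\cY} X$ lifting the given data. Everything else is formal.
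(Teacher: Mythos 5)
The proposal takes essentially the same approach as the paper's proof: exhibit the $2$-Cartesian square
\[\xymatrix{X\times_S G\ar[r]\ar[d]&
    X\times_\cY X\ar[d]\\ [X/G]\ar[r]& [X/G]\times_\cY[X/G]}\]
and conclude by smooth descent that the bottom arrow is an isomorphism if and only if the top arrow is. Your identification of the square and its Cartesianness is correct, and matches the paper.

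However, the last sentence of your plan attributes the descent step to the wrong vertical arrow. You write that the equivalence ``follows formally since the left vertical arrow is a smooth surjection,'' but the principle being invoked --- ``being an isomorphism is smooth-local on the target'' --- requires pulling back $\Delta_f$ along a smooth cover of its \emph{target} $[X/G]\times_\cY[X/G]$, which is exactly the \emph{right} vertical arrow $X\times_\cY X \to [X/G]\times_\cY[X/G]$ (a smooth surjection because it is a base change of $X\times_S X \to [X/G]\times_S [X/G]$). The left vertical arrow being a smooth surjection is neither sufficient nor used: in a Cartesian square one can have the left vertical smooth surjective, the top arrow an isomorphism, and the bottom arrow not an isomorphism (e.g.\ take the bottom arrow to be $\Gm \hookrightarrow \AA^1$ and the right vertical to be the same open immersion; the left vertical and top arrows are then the identity of $\Gm$). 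A side remark: $X \times_S G = X \times_{[X/G]} X \to [X/G]$ is a composite of $X\times_{[X/G]} X \to X$ and $X \to [X/G]$, not itself a base change of $X \to [X/G]$, though it is of course still smooth and surjective.
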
 
\begin{proof}
  We have a Cartesian diagram as follows. \[\xymatrix{X\times_S G\ar[r]\ar[d]&
    X\times_\cY X\ar[d]\\ [X/G]\ar[r]& [X/G]\times_\cY[X/G]}\]The
  morphism $[X/G]\to\cY$ is a monomorphism if and only if the bottom horizontal
  morphism of this square is an isomorphism; since the right hand
  vertical arrow is a smooth surjection, this is the case if and only
  if the top horizontal morphism is an isomorphism, as required.
\end{proof}
\subsection{Families of extensions of rank one Breuil--Kisin modules}
\label{subsec:universal families}
In this section we construct universal families of extensions of rank
one Breuil--Kisin modules. We will use these rank two families to study our
moduli spaces of Breuil--Kisin modules, and the corresponding spaces of
\'etale $\varphi$-modules. We show how to compute the dimensions of
these universal families; in the subsequent sections, we will combine
these results with explicit calculations to determine the irreducible
components of our moduli spaces. In particular, we will show that each
irreducible component has a dense open substack given by a family of
extensions.

\subsubsection{Universal unramified twists}
Fix a free Breuil--Kisin module with descent data 
$\gM$ over $\F$, and write $\Phi_i$   for
$\Phi_{\gM,i}:\varphi^*(\gM_{i-1}) \to\gM_{i}$. (Here we are using the
notation of Section~\ref{subsec: kisin modules with dd}, so
that~$\gM_i=e_i\gM$ is cut out by the idempotent~$e_i$ of Section~\ref{subsec: notation}.) 
We will construct the ``universal unramified twist'' of $\gM$.

\begin{df}
\label{def:unramified twist}
If $\Lambda$ is an $\F$-algebra, and if $\lambda \in \Lambda^\times$,
then we define $\gM_{\Lambda,\lambda}$ to be the free Breuil--Kisin
module with descent data and $\Lambda$-coefficients whose underlying
$\gS_\Lambda[\Gal(K'/K)]$-module 
is equal to $\gM\cotimes_\F\Lambda$
(so the usual base change of $\gM$ to $\Lambda$),
and for which $\Phi_{\gM_{\Lambda,\lambda}}: \varphi^*\gM_{\Lambda,\lambda} \to \gM_{\Lambda,\lambda}$ is defined via the $f'$-tuple
$(\lambda \Phi_0,\Phi_1,\ldots,\Phi_{f'-1}).$
We refer to $\gM_{\Lambda,\lambda}$ as the \emph{unramified twist} of $\gM$ by $\lambda$ over $\Lambda$.

If $M$ is a free \'etale $\varphi$-module with descent data, then we
define $M_{\Lambda,\lambda}$ in the analogous fashion. If we write
$X=\Spec \Lambda$, then we will sometimes write $\gM_{X,\lambda}$
(resp.\ $M_{X,\lambda}$) for $\gM_{\Lambda,\lambda}$ (resp.\
$M_{\Lambda,\lambda}$).
\end{df}

As usual, we write $\Gm := \Spec \F[x,x^{-1}].$ 
We may then 
form the rank one Breuil--Kisin module with descent data
$\gM_{\Gm,x},$ which is the universal instance of an unramified twist:
given $\lambda\in\Lambda^\times$, 
there is a corresponding morphism $\Spec\Lambda
\to \Gm$ determined by the requirement that
$x \in \Gamma(\Gm, \cO_{\Gm}^{\times})$ pulls-back to $\lambda$,
and $\gM_{X,\lambda}$ is obtained by pulling back
$\gM_{\Gm,x}$ under this morphism (that is, by base changing under the
corresponding ring homomorphism $\F[x,x^{-1}]\to\Lambda$).

\begin{lemma}
\label{lem:rank one endos}
If $\gM_\Lambda$ is a Breuil--Kisin module of rank one with $\Lambda$-coefficients, then $\End_{\K{\Lambda}}(\gM) = \Lambda.$
Similarly, if $M_\Lambda$ is a \'etale $\varphi$-module of rank
one with $\Lambda$-coefficients, then
$\End_{\K{\Lambda}}(M_\Lambda) = \Lambda.$ 
\end{lemma}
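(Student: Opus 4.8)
The statement asserts that a rank one Breuil--Kisin module (resp. rank one \'etale $\varphi$-module) with coefficients in $\Lambda$ has endomorphism ring exactly $\Lambda$. The plan is to reduce both assertions to a direct computation with the semilinear structure, using the rank one hypothesis to trivialise the relevant modules Zariski-locally and then checking that any global endomorphism is forced to be multiplication by a scalar.

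First I would treat the \'etale $\varphi$-module case, as it is cleaner: inverting $u$ removes the height condition and leaves only the $\varphi$-semilinear isomorphism and the descent data to contend with. One knows $\Lambda \cdot \id \subseteq \End_{\K{\Lambda}}(M_\Lambda)$, so the content is the reverse inclusion. An endomorphism $f$ of $M_\Lambda$ is in particular an endomorphism of the underlying rank one projective $\gS_\Lambda[1/u]$-module that commutes with $\Phi_M$ and with the $\hat g$. Working Zariski-locally on $\Spec\Lambda$ we may assume $M_\Lambda$ is free; but care is needed because ``rank one'' is over $\gS_\Lambda[1/u]$, not over $\Lambda$, and $\gS_\Lambda[1/u] = (W(k')\otimes_{\Zp}\Lambda)((u))$ has many idempotents coming from $W(k')$. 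The right move is to use the idempotents $e_i$ of Section~\ref{subsec: notation}: write $M_\Lambda = \oplus_i (M_\Lambda)_i$, each $(M_\Lambda)_i$ projective of rank one over $\Lambda((u))$ (after trivialising), and observe that $\Phi_{M,i}:\varphi^*(M_\Lambda)_{i-1}\isoto (M_\Lambda)_i$ is an isomorphism. An endomorphism $f$ is then a tuple $(f_i)$ with $f_i$ multiplication by some $g_i \in \Lambda((u))$ (after trivialising, since $\End_{\Lambda((u))}$ of a rank one free module is $\Lambda((u))$), subject to $\Phi_{M,i}\circ\varphi^* f_{i-1} = f_i \circ \Phi_{M,i}$, which translates into $g_i = \varphi(g_{i-1})$ as elements of $\Lambda((u))$ (here $\varphi$ acts on $\Lambda((u))$ via $u\mapsto u^p$ and trivially on $\Lambda$), and into the $\hat g$-equivariance, which says each $g_i$ is fixed by $\Gal(K'/K)$ in the appropriate twisted sense. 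The equation $g = \varphi^{f'}(g)$ (iterating) then forces $g \in \Lambda$: a Laurent series in $\Lambda((u))$ fixed by $u\mapsto u^{p^{f'}}$ must be constant. The descent data compatibility is then automatic. Thus $f$ is multiplication by a scalar, first Zariski-locally and then globally by a gluing/uniqueness argument.

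Next, for the Breuil--Kisin module case, I would deduce it from the \'etale case. Given a rank one Breuil--Kisin module $\gM_\Lambda$, there is the exact functor $\gM \mapsto \gM[1/u]$ to $\K{\Lambda}[1/u]$, and by Lemma~\ref{lem:ext-i-invert-u}(1) this is fully faithful on the subcategory $\K{\Lambda}[1/u]$; more to the point, since $\gM_\Lambda$ is $u$-torsion free, the natural map $\End_{\K{\Lambda}}(\gM_\Lambda) \to \End_{\K{\Lambda}[1/u]}(\gM_\Lambda[1/u])$ is injective. The right-hand side is $\Lambda$ by the \'etale case just handled, and the image contains $\Lambda\cdot\id$, so $\End_{\K{\Lambda}}(\gM_\Lambda) = \Lambda$. (Alternatively, one repeats the $g_i = \varphi(g_{i-1})$ computation directly over $\gS_\Lambda$, noting that $\gS_\Lambda$-module endomorphisms of a rank one projective are scalars locally, and that a power series fixed by $u\mapsto u^{p^{f'}}$ is constant; I would include whichever version reads more cleanly, probably the reduction to the \'etale case.)

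The main obstacle I anticipate is bookkeeping rather than conceptual: correctly handling the interaction between the Zariski-local trivialisation over $\Lambda$ (needed because $\gM_\Lambda$ is only projective, not free), the $W(k')$-isotypic decomposition via the $e_i$, and the Galois/Frobenius semilinearity, and then arguing that the scalar obtained locally is independent of the chosen trivialisation so that it glues to a global element of $\Lambda$. The key non-formal input is the elementary fact that the only elements of $\Lambda((u))$ (or $\Lambda[[u]]$) fixed by $s \mapsto \varphi(s)$ (with $\varphi$ raising $u$ to the $p$-th power and acting trivially on $\Lambda$, iterated $f'$ times so that the $W(k')$-twisting disappears) are the constants; everything else is routine semilinear algebra, and I would not belabour it.
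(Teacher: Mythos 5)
Your proof is correct and follows essentially the same route as the paper's: reduce to the free case, observe that an endomorphism is multiplication by an element of $\gS_\Lambda[1/u]$, and use $\varphi$-equivariance (by comparing degrees in $u$, or equivalently your componentwise iteration $g_0=\varphi^{f'}(g_0)$) to force that element into $\Lambda$. The one mild variation is your deduction of the Breuil--Kisin case from the \'etale case via the injection $\End_{\K{\Lambda}}(\gM_\Lambda)\hookrightarrow\End_{\K{\Lambda}[1/u]}(\gM_\Lambda[1/u])$, a clean shortcut where the paper simply remarks that the argument is essentially identical.
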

\begin{proof}
We give the proof for~$M_\Lambda$, the argument for~$\gM_\Lambda$ being essentially
identical. One reduces easily to the case where $M_\Lambda$ is free. Since an endomorphism~$\psi$ of~$M_\Lambda$ is in particular an
endomorphism of the underlying $\gS_\Lambda[1/u]$-module, we see that there
is some $\lambda\in \gS_\Lambda[1/u]$ such that $\psi$ is given by
multiplication by~$\lambda$. The commutation relation with $\Phi_{M_\Lambda}$
means that we must have $\varphi(\lambda)=\lambda$, so that
certainly 
(considering the powers of~$u$ in~$\lambda$ of lowest negative and positive degrees)
$\lambda\in W(k')\otimes_{\Zp}\Lambda$, and in fact $\lambda\in
\Lambda$. Conversely, multiplication by any element of~$\Lambda$ is evidently an
endomorphism of~$M_\Lambda$, as required.
\end{proof}

\begin{lem}
  \label{lem:rank one isomorphism over a field}Let $\kappa$ be a field
  of characteristic~$p$, and let $M_\kappa, N_\kappa$ be
  \'etale $\varphi$-modules of rank one with $\kappa$-coefficients and
  descent data. Then any nonzero element of $\Hom_{\K{\kappa}}(M_\kappa, N_\kappa)$
  is an isomorphism. 
\end{lem}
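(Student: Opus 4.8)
The plan is to leverage the fact that both $M_\kappa$ and $N_\kappa$ are rank one \'etale $\varphi$-modules over the field $\kappa$, so that the underlying $\gS_\kappa[1/u]$-modules are free of rank one. Given a nonzero homomorphism $f : M_\kappa \to N_\kappa$ in $\K{\kappa}$, I would first argue that $f$ is automatically injective, and then that it is automatically surjective; the main point is that an $\gS_\kappa[1/u]$-linear map between rank one free modules over the (graded, $u$-localized) ring $\gS_\kappa[1/u] = (W(k')\otimes_{\Fp}\kappa)((u))$ commuting with $\varphi$ and the descent data has very constrained image, and nonvanishing forces the image to be everything.

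Concretely, I would fix generators $m$ of $M_\kappa$ and $n$ of $N_\kappa$ over $\gS_\kappa[1/u]$, and write the $\varphi$-semilinear structure maps as $\Phi_M(1\otimes m) = a\, m$, $\Phi_N(1\otimes n) = b\, n$ with $a,b \in \gS_\kappa[1/u]^\times$ (units, since $\Phi_M$, $\Phi_N$ are isomorphisms). Write $f(m) = \lambda\, n$ for some $\lambda \in \gS_\kappa[1/u]$; nonvanishing of $f$ means $\lambda \neq 0$. The compatibility $f\circ\Phi_M = \Phi_N\circ\varphi^*f$ translates into the relation $\lambda\, a = b\, \varphi(\lambda)$, i.e.\ $\varphi(\lambda)/\lambda = a/b \in \gS_\kappa[1/u]^\times$. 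Comparing the lowest-degree and highest-degree $u$-powers occurring in $\lambda$ (using that $\varphi$ multiplies $u$-degrees by $p$, exactly as in the proof of Lemma~\ref{lem:rank one endos}), and using that $a/b$ is a unit (hence has a single well-defined ``$u$-valuation''), one deduces that $\lambda$ is itself a unit in $\gS_\kappa[1/u]$: if $\lambda$ had more than one monomial in $u$ of distinct degrees, $\varphi(\lambda)/\lambda$ could not be a unit times a power of $u$. Once $\lambda \in \gS_\kappa[1/u]^\times$, the map $f$ is multiplication by a unit, hence an isomorphism of $\gS_\kappa[1/u]$-modules, and it automatically respects $\varphi$ and the descent data since $f$ was assumed to be a morphism in $\K{\kappa}$; so $f$ is an isomorphism in $\K{\kappa}$.

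The main obstacle is making the ``degree comparison'' argument fully rigorous over $\gS_\kappa[1/u] = (W(k')\otimes_{\Fp}\kappa)((u))$ rather than over a field of Laurent series: this ring is a product of fields of Laurent series indexed by the idempotents $e_i$ (at least after a suitable extension of $\kappa$ splitting $W(k')$), and $\varphi$ permutes the factors cyclically. So I would decompose $\lambda = \sum_i e_i \lambda_i$ and $a/b = \sum_i e_i u^{c_i}(\text{unit in }\kappa((u))^\times\text{-part})$; the relation $\varphi(\lambda)/\lambda = a/b$ becomes a cyclic system $\lambda_{i-1}^{(p)}/\lambda_i = $ unit, from which, tracking the minimal and maximal $u$-exponents $m_i \le n_i$ appearing in $\lambda_i$, one gets $p\, m_{i-1} = m_i + c_i$ and $p\, n_{i-1} = n_i + c_i$ for all $i$ (subscripts mod $f'$); subtracting gives $p(n_{i-1}-m_{i-1}) = n_i - m_i$, and iterating around the cycle forces $n_i = m_i$ for all $i$, i.e.\ each $\lambda_i$ is a single monomial, hence a unit. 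This is a short, self-contained computation once set up correctly, and closely parallels the reasoning already used in Lemma~\ref{lem:rank one endos}; the only care needed is in bookkeeping the cyclic indexing of the idempotents under $\varphi$.
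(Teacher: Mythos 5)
Your plan --- translate $f$ into a scalar $\lambda\in\gS_\kappa[1/u]$ with $a\,\lambda = b\,\varphi(\lambda)$, decompose via the idempotents $e_i$, and propagate around the $\varphi$-cycle --- is the right structure and is essentially the decomposition the paper uses. But the step where you argue that each $\lambda_i$ must be a single monomial is a genuine gap: it is both unnecessary and incorrect, and it is where the proof would fail.

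It is unnecessary because $\gS_\kappa[1/u]$ is a finite product of the fields $\kappa((u))$, so $\lambda$ is a unit if and only if every component $\lambda_i$ is nonzero; being a monomial is irrelevant. It is incorrect for two reasons. First, a general element of $\kappa((u))$ is an infinite Laurent series and has no maximal $u$-exponent, so $n_i$ is not defined; and even when $\lambda_i$ and $a_i/b_i$ are Laurent polynomials, the top-degree relation coming from $b_i\varphi(\lambda_{i-1})=a_i\lambda_i$ involves the top degree of $a_i/b_i$, not its valuation $c_i$, so the claimed identity $p\,n_{i-1}=n_i+c_i$ is unjustified and one cannot conclude $n_i=m_i$. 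Second, the implication ``$\lambda$ has more than one monomial $\Rightarrow$ $\varphi(\lambda)/\lambda$ is not of the form $u^c\cdot(\text{unit of }\kappa[[u]])$'' is simply false: taking $k'=\Fp$ and $\lambda=1+u$, in characteristic $p$ one has $\varphi(\lambda)/\lambda=(1+u^p)/(1+u)=(1+u)^{p-1}$, which is a unit of $\kappa[[u]]$. The correct finish is much shorter and is what the paper records in one sentence: since $a_i,b_i\ne 0$ and $\varphi$ is injective on $\kappa((u))$, the relation $b_i\varphi(\lambda_{i-1})=a_i\lambda_i$ shows that $\lambda_{i-1}=0$ if and only if $\lambda_i=0$; so $\lambda\ne 0$ forces every $\lambda_i\ne 0$, hence $\lambda$ is a unit of $\gS_\kappa[1/u]$ and $f$ is an isomorphism.
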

\begin{proof}
  Since $\kappa((u))$ is a field, it is enough to show that if one of
  the induced maps $M_{\kappa,i}\to N_{\kappa,i}$ is nonzero, then
  they all are; but this follows from the commutation relation with~$\varphi$.
\end{proof}

\begin{lemma}\label{lem: isomorphic twists are the same twist} If $\lambda,\lambda' \in \Lambda^{\times}$ and $\gM_{\Lambda,\lambda} \cong \gM_{\Lambda,\lambda'}$
{\em (as Breuil--Kisin modules with descent data over $\Lambda$)}, then $\lambda=\lambda'$. Similarly, if $M_{\Lambda,\lambda}\cong M_{\Lambda,\lambda'}$, then $\lambda=\lambda'$.
\end{lemma}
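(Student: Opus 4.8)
The plan is to deduce the statement for Breuil--Kisin modules from the one for \'etale $\varphi$-modules, and to prove the latter by reducing to a concrete identity of matrices over a Laurent series ring. First, if $\gM_{\Lambda,\lambda}\cong\gM_{\Lambda,\lambda'}$ as Breuil--Kisin modules with descent data, then inverting $u$ gives an isomorphism of the associated \'etale $\varphi$-modules, and $\gM_{\Lambda,\lambda}[1/u]=(\gM[1/u])_{\Lambda,\lambda}$ (immediate from Definition~\ref{def:unramified twist}, since inverting $u$ is compatible with $\cotimes_{\F}\Lambda$); so it suffices to treat the second assertion. Fix an isomorphism $\beta\colon M_{\Lambda,\lambda}\isoto M_{\Lambda,\lambda'}$. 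The underlying $\gS_\Lambda[1/u][\Gal(K'/K)]$-module of both sides is $M\cotimes_\F\Lambda$, so $\beta$ is an automorphism of this module; write $\beta=(\beta_i)_i$ for its components under the idempotents $e_i$. Compatibility of $\beta$ with the two (twisted) Frobenii reads $\beta_i\circ\Phi_i=\Phi_i\circ\varphi^*\beta_{i-1}$ for $i\neq 0$, together with $\beta_0\circ(\lambda\Phi_0)=(\lambda'\Phi_0)\circ\varphi^*\beta_{f'-1}$. Using the first set of relations to eliminate $\beta_1,\dots,\beta_{f'-1}$, the last relation becomes
\[
\beta_0=(\lambda'/\lambda)\,\Theta\circ\varphi^{f'\,*}(\beta_0)\circ\Theta^{-1},
\]
where $\Theta:=\Phi_0\circ\varphi^*\Phi_{f'-1}\circ\cdots\circ\varphi^{(f'-1)\,*}\Phi_1$ is the ``total Frobenius'' on $e_0(M\cotimes\Lambda)$, a linear isomorphism over $e_0\gS_\Lambda[1/u]\cong\Lambda(\!(u)\!)$, with $\varphi^{f'}$ acting on $\Lambda(\!(u)\!)$ by $u\mapsto u^{q}$, $q:=p^{f'}$.

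Now set $\mu:=\lambda'/\lambda$. When $M$ has rank one, $\Theta$ is multiplication by a unit and cancels, giving directly $\beta_0=\mu\,\varphi^{f'}(\beta_0)$ with $\beta_0\in(\Lambda(\!(u)\!))^\times$; in general one takes determinants over $\Lambda(\!(u)\!)$ to get $\delta:=\det\beta_0\in(\Lambda(\!(u)\!))^{\times}$ with $\delta=\mu^{d}\,\varphi^{f'}(\delta)$, $d=\operatorname{rk}M$. In either case one analyses an identity $s=\mu^{e}\varphi^{f'}(s)$ with $s$ a unit: after reducing to the case $\Lambda$ local (harmless, since $\lambda=\lambda'$ may be checked in each $\Lambda_{\mathfrak m}$, and in our applications $\Lambda$ is anyway a finite type $\F$-domain), one compares the lowest-degree coefficient of $s$, which is a unit of $\Lambda$: matching the power of $u$ forces $s$ to have $u$-valuation $0$, matching constant terms forces $\mu^{e}=1$, and iterating forces $s$ constant. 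For rank one this already yields $\mu=1$. For higher rank one must upgrade $\mu^{d}=1$ to $\mu=1$, and for this one returns to the full relation for $\beta_0$ and uses that $\beta_0$ commutes with the descent data, hence respects the $I(K'/K)$-isotypic decomposition of $e_0(M\cotimes\Lambda)$ (and in the cuspidal case the extra compatibility with the unramified part of $\Gal(K'/K)$); tracking a suitable nonzero component through the same leading-coefficient argument gives $\mu=1$, i.e.\ $\lambda=\lambda'$.

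The step I expect to be the main obstacle is precisely this last one: the determinant only detects $\mu^{d}$, so the descent-data structure has to be used in an essential way to pin down $\mu$ itself (in the cases of interest this is cleanest because $\gM$ is a successive extension of rank one Breuil--Kisin modules, so $\beta$ restricts to an isomorphism of rank one unramified twists and one is reduced to the transparent rank one computation above). Everything else — the unwinding into the total-Frobenius identity, the compatibility of inverting $u$ with the twist, and the leading-coefficient bookkeeping over a local base ring — is routine.
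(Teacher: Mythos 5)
Your reduction to the \'etale $\varphi$-module case by inverting $u$, and the resulting functional equation $\beta_0=(\lambda'/\lambda)\,\varphi^{f'}(\beta_0)$ on the $e_0$-component, is exactly the paper's argument (the paper writes $\mu_0=(\lambda'/\lambda)\varphi^{f'}(\mu_0)$ and concludes from the coefficients that $\mu_0$ is constant, hence $\lambda=\lambda'$). So in rank one you have the right proof; the bulk of your discussion about higher rank is, however, moot: although Definition~\ref{def:unramified twist} is phrased for a general free $\gM$, the paper's own proof writes $M_i=\F(\!(u)\!)m_i$, and the lemma is only ever used (and the subsequent remark interprets it) for rank one families. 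You should note this scope restriction rather than trying to salvage a higher-rank version via determinants and the isotypic decomposition, which would require a separate argument and isn't needed anywhere in the paper.

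Two small inaccuracies in your rank-one argument are worth flagging, though neither is fatal. First, the claim that ``the lowest-degree coefficient of $s$ is a unit of $\Lambda$'' is false for a general $\Lambda$ (e.g.\ $1+\epsilon u^{-1}$ is a unit of $(\F[\epsilon]/\epsilon^2)(\!(u)\!)$ with non-unit lowest coefficient). But the claim isn't needed: writing $s=\sum_m a_m u^m\in\Lambda(\!(u)\!)$, the relation $s=\mu\varphi^{f'}(s)$ gives $a_m=0$ when $p^{f'}\nmid m$ and $a_{p^{f'}m}=\mu a_m$ otherwise; choosing $m\neq 0$ with $a_m\neq 0$ minimal in absolute value (possible since the support is bounded below, and if the positive support is nonempty it has a least element) immediately yields a contradiction, so $s=a_0$. \emph{Then} $a_0$ is a unit because a constant is a unit in $\Lambda(\!(u)\!)$ iff it is a unit in $\Lambda$, and $a_0=\mu a_0$ gives $\mu=1$. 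This argument works for any $\Lambda$, so the second inaccuracy --- the detour through localizing at maximal ideals --- is also unnecessary (though harmless).
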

\begin{proof}
Again, we give the proof for~$M$, the argument for~$\gM$ being
essentially identical. Write $M_i=\F((u))m_i$, and write
$\Phi_i(1\otimes m_{i-1})=\theta_{i}m_{i}$, where $\theta_{i}\ne 0$. There
are $\mu_i\in \Lambda[[u]][1/u]$ such that the given isomorphism
$M_{\Lambda,\lambda}\cong M_{\Lambda,\lambda'}$  takes~$m_i$ to $\mu_im_i$. The
commutation relation between the given isomorphism and~$\Phi_{M}$ imposes the
condition \[\lambda_{i}\mu_{i}\theta_{i}m_{i}=\lambda'_{i}\varphi(\mu_{i-1})\theta_{i}m_{i}\]where
$\lambda_{i}$ (resp.\ $\lambda'_i$) equals~$1$ unless $i=0$, when it equals~$\lambda$
(resp.\ $\lambda'$). 

Thus we have $\mu_{i}=(\lambda'_i/\lambda_i)\varphi(\mu_{i-1})$, so that in
particular $\mu_0=(\lambda'/\lambda)\varphi^{f'}(\mu_0)$. Considering the powers
of~$u$ in~$\mu_0$ of lowest negative and positive degrees we
conclude that $\mu_0\in W(k') \otimes \Lambda$; but then
$\mu_0=\varphi^{f'}(\mu_0)$, so that~$\lambda'=\lambda$, as required.
\end{proof}

\begin{remark} 
If $\gM$ has height at most~$h$, and we let $\cC$ (temporarily) denote the
moduli stack of rank one Breuil--Kisin modules of height at most~$h$ with $\F$-coefficients and
descent data then Lemma~\ref{lem: isomorphic twists are the same twist} can be interpreted as saying that 
the morphism $\Gm \to \cC$ 
that classifies $\gM_{\Gm,x}$ 
is a monomorphism, i.e.\ the diagonal morphism $\Gm \to \Gm
\times_{\cC}
\Gm$ is an isomorphism. Similarly, the morphism $\Gm\to\cR$
(where we temporarily let $\cR$ denote the moduli stack
of rank one \'etale $\varphi$-modules with $\F$-coefficients and descent data)
that classifies $M_{\Gm,x}$ is a monomorphism.
\end{remark}

Now choose another rank one Breuil--Kisin module with descent data $\gN$ over $\F$.
Let $(x,y)$ denote the standard coordinates on $\Gm\times_{\F} \Gm$,
and consider the rank one Breuil--Kisin modules with descent data $\gM_{\Gm\times_{\F} \Gm,x}$ 
and $\gN_{\Gm\times_{\F} \Gm,y}$ 
over $\Gm\times_{\F} \Gm$.

\begin{lemma}\label{lem: generically no Homs}
There is a non-empty irreducible affine open subset $\Spec \Adist$ of $\Gm\times_{\F} \Gm$
whose finite type points 
are exactly the maximal ideals $\mathfrak m$ of $\Gm\times_{\F} \Gm$
such that
 \[\Hom_{\K{\kappa(\mathfrak m)}}\bigl( \gM_{\kappa(\mathfrak m),\xbar}[1/u],
\gN_{\kappa(\mathfrak m),\ybar}[1/u]\bigr)=0\]
{\em (}where we have written $\xbar$ and $\ybar$ to
denote the images of $x$ and $y$ in $\kappa(\mathfrak m)^{\times}${\em )}.

Furthermore, if $R$ is any finite-type $\Adist$-algebra,
and if $\mathfrak m$ is any maximal ideal of~$R$,
then 
\[\Hom_{\K{\kappa(\mathfrak m)}}\bigl( \gM_{\kappa(\mathfrak m),\xbar},
\gN_{\kappa(\mathfrak m),\ybar}\bigr)
= \Hom_{\K{\kappa(\mathfrak m)}}\bigl( \gM_{\kappa(\mathfrak m),\xbar}[1/u],
\gN_{\kappa(\mathfrak m),\ybar}[1/u]\bigr)
= 0,\]
and also
\[
\Hom_{\K{\kappa(\mathfrak m)}}\bigl( \gN_{\kappa(\mathfrak m),\ybar},
\gM_{\kappa(\mathfrak m),\xbar}\bigr)
=
 \Hom_{\K{\kappa(\mathfrak m)}}\bigl( \gN_{\kappa(\mathfrak m),\ybar}[1/u],
\gM_{\kappa(\mathfrak m),\xbar}[1/u]\bigr)=0.\]
In particular, Assumption~{\em \ref{assumption:vanishing}}
is satisfied by $\gM_{\Adist,x}$ and $\gN_{\Adist,y}$.
\end{lemma}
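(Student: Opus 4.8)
The plan is to compare $\Hom$ groups of \'etale $\varphi$-modules of rank one via an explicit description of when such a $\Hom$ can be nonzero, and then to cut out the locus where all these $\Hom$s vanish as a suitable open subset of $\Gm\times_\F\Gm$. First I would reduce to the \'etale $\varphi$-module statement: by Lemma~\ref{lem:rank one isomorphism over a field} any nonzero element of $\Hom_{\K{\kappa(\mathfrak m)}}(\gM_{\kappa(\mathfrak m),\xbar},\gN_{\kappa(\mathfrak m),\ybar})$ is an isomorphism, and any map of Breuil--Kisin modules induces a map of the associated \'etale $\varphi$-modules, which is injective (since the Breuil--Kisin modules are $u$-torsion free); so it suffices to understand $\Hom_{\K{\kappa(\mathfrak m)}}(\gM_{\kappa(\mathfrak m),\xbar}[1/u],\gN_{\kappa(\mathfrak m),\ybar}[1/u])$, and indeed to show that (generically) these $\Hom$ groups vanish in both directions. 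Over a field $\kappa$ of characteristic $p$, a rank one \'etale $\varphi$-module with descent data is determined by the tuple of scalars $(\theta_0,\ldots,\theta_{f'-1})$ describing $\Phi$ relative to a choice of basis compatible with the descent data (together with the fixed tame descent character, which is the same for $\gM$ and $\gN$ since they are twists of fixed $\gM,\gN$ over $\F$). A nonzero homomorphism $M_{\kappa,\xbar}[1/u]\to N_{\kappa,\ybar}[1/u]$, written as multiplication by $\mu_i\in\kappa((u))$ on the $i$-th component, satisfies $\mu_i\theta_i^{(M)}\xbar_i=\theta_i^{(N)}\ybar_i\varphi(\mu_{i-1})$; chasing the powers of $u$ of lowest and highest degree exactly as in the proof of Lemma~\ref{lem: isomorphic twists are the same twist} forces each $\mu_i$ to lie in $W(k')\otimes\kappa$, and then composing around the cycle of length $f'$ shows that such a $\mu$ exists if and only if a certain explicit ``norm'' relation of the form $\xbar/\ybar = (\text{fixed nonzero element of }\kappa^\times)$, raised to suitable powers, holds --- concretely, a single polynomial equation $P(\xbar,\ybar)=0$ where $P\in\F[x^{\pm1},y^{\pm1}]$ is a binomial (a monomial in $x,y$ minus a nonzero constant).

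Granting that, the set of maximal ideals $\mathfrak m$ of $\Gm\times_\F\Gm$ with $\Hom_{\K{\kappa(\mathfrak m)}}(\gM_{\kappa(\mathfrak m),\xbar}[1/u],\gN_{\kappa(\mathfrak m),\ybar}[1/u])\ne0$ is the vanishing locus of this binomial $P$, a proper closed subset (it does not contain all of $\Gm\times_\F\Gm$ since $P$ is a nonzero, non-monomial element of the Laurent polynomial ring), and similarly the reverse-direction $\Hom$ is controlled by another binomial $P'$. I would then set $\Spec\Adist$ to be the principal open subset $D(P)\cap D(P')$ of $\Gm\times_\F\Gm$; this is a non-empty affine open, and it is irreducible because $\Gm\times_\F\Gm$ is integral and a non-empty open subscheme of an integral scheme is integral. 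By construction, for a maximal ideal $\mathfrak m$ of $\Gm\times_\F\Gm$ we have $\mathfrak m\in\Spec\Adist$ iff $P(\xbar,\ybar)\ne0$ iff $\Hom_{\K{\kappa(\mathfrak m)}}(\gM_{\kappa(\mathfrak m),\xbar}[1/u],\gN_{\kappa(\mathfrak m),\ybar}[1/u])=0$, which gives the first claim of the lemma (after checking $D(P)\cap D(P')\subseteq D(P)$ doesn't lose any of the relevant points --- but for the first assertion one only needs $D(P)$, so one should phrase $\Adist$ as $D(PP')$ and note its maximal ideals are exactly those of $\Gm\times_\F\Gm$ avoiding $V(P)$, which are the same as those avoiding $V(PP')$ only after intersecting with $D(P')$; I would simply state the first bullet with the understanding that $\Spec\Adist\subseteq D(P)$, so every maximal ideal of $\Spec\Adist$ has vanishing $\Hom$, and conversely enlarge the open if necessary so that it contains all of $D(P)\cap D(P')$).

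For the ``furthermore'' clause: if $R$ is a finite-type $\Adist$-algebra and $\mathfrak m$ a maximal ideal of $R$, then since $\Adist$ is a finitely generated $\F$-algebra (hence Jacobson), $\kappa(\mathfrak m)$ is a finite extension of $\F$ and the composite $\Spec\kappa(\mathfrak m)\to\Spec R\to\Spec\Adist$ hits a \emph{closed} point of $\Spec\Adist$, i.e. a maximal ideal $\mathfrak n$ of $\Gm\times_\F\Gm$ lying in $\Spec\Adist$; base-changing, $\gM_{\kappa(\mathfrak m),\xbar}$ is the scalar extension along $\kappa(\mathfrak n)\hookrightarrow\kappa(\mathfrak m)$ of $\gM_{\kappa(\mathfrak n),\xbar}$, and since $\kappa((u))\otimes_{\kappa(\mathfrak n)((u))}(-)$ is faithfully flat, the vanishing of the $\Hom$ over $\kappa(\mathfrak n)$ established above gives the vanishing over $\kappa(\mathfrak m)$ (using that $\kappa(\mathfrak n)\in\Spec\Adist$ lies off $V(PP')$); the same argument with the roles of $\gM$ and $\gN$ reversed handles the reverse-direction $\Hom$, and the Breuil--Kisin $\Hom$s vanish because they inject into the corresponding \'etale $\varphi$-module $\Hom$s. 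Finally, Assumption~\ref{assumption:vanishing} for $\gM_{\Adist,x}$ and $\gN_{\Adist,y}$ is precisely the conjunction of the two vanishing statements just proved, quantified over all maximal ideals of $\Adist$. The main obstacle I anticipate is the bookkeeping in the explicit cyclic computation that produces the binomials $P,P'$ --- getting the exponents right in the presence of the descent data indices $i\in\Z/f'$ and verifying that $P$ is genuinely not a monomial (so that $D(P)$ is non-empty) --- but this is exactly the computation already carried out in the proofs of Lemmas~\ref{lem: isomorphic twists are the same twist} and~\ref{lem:rank one isomorphism over a field}, so it should go through without surprises.
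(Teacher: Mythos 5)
Your proposal is essentially the same approach as the paper's, but you unwind a computation that the paper packages more cleanly: the paper observes that if some $\Hom_{\K{\F'}}\bigl(\gM_{\F',a}[1/u],\gN_{\F',a'}[1/u]\bigr)$ is nonzero, then by Lemma~\ref{lem:rank one isomorphism over a field} it is an isomorphism, whence the ratio $a'/a$ lies in $\F$ (by comparing with its Frobenius conjugate via Lemma~\ref{lem: isomorphic twists are the same twist}); one then simply takes $\Spec\Adist$ to be the locus where $a'x\ne ay$, and the claimed property of $\Adist$ again follows from Lemma~\ref{lem: isomorphic twists are the same twist}. Your version re-derives the same binomial constraint directly from the cyclic recursion. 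Both routes reach the same $\Adist$ and both use Jacobsonness plus flat base-change for the ``furthermore'' clause, so the structure is the same; the paper's phrasing just avoids the bookkeeping you worry about at the end.

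Two places in your write-up need repair, though neither affects the final conclusion. First, your intermediate claim that the lowest/highest-degree argument ``forces each $\mu_i$ to lie in $W(k')\otimes\kappa$'' is not correct here: Lemma~\ref{lem: isomorphic twists are the same twist} treats twists of a \emph{single} module, where the $u$-adic valuations of the two $\Phi$-matrices agree and so no power of $u$ appears in the recursion $\mu_i = (\lambda'_i/\lambda_i)\varphi(\mu_{i-1})$; in your situation $\gM$ and $\gN$ are different rank-one modules with possibly different exponents $r_i,s_i$, so the recursion $\mu_i = (\ast)\,u^{s_i-r_i}\,\varphi(\mu_{i-1})$ forces each $\mu_i$ to be a \emph{monomial} $u^{\alpha_i}c_i$ with $c_i\in\kappa^\times$, but with $\alpha_i$ possibly nonzero. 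The cyclic constraint on the constant coefficients still produces a single binomial in $\xbar,\ybar$, so your conclusion survives; only the stated shape of $\mu_i$ is wrong. Second, your worry about $D(P)$ versus $D(P)\cap D(P')$ dissolves immediately: by Lemma~\ref{lem:rank one isomorphism over a field}, any nonzero $\Hom$ of rank-one \'etale $\varphi$-modules over a field is an isomorphism, so the forward and reverse $\Hom$s vanish simultaneously, i.e.\ $V(P)=V(P')$. This is exactly how the paper handles the reversed-roles statement in a single line, and adopting it removes the circumlocution at the end of your second paragraph.
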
 
\begin{proof} 
If  $\Hom\bigl(\gM_{\kappa(\mathfrak m),\xbar}[1/u],\gN_{\kappa(\mathfrak m),\ybar}[1/u])=0$ 
for all maximal ideals~$\m$ of $\F[x,y,x^{-1},y^{-1}]$, then we are
done: $\Spec A^{\dist} = \Gm\times\Gm$. Otherwise, we see that
for some finite extension $\F'/\F$ and some $a,a'\in\F'$, we have a
non-zero morphism $\gM_{\F',a}[1/u]\to\gN_{\F',a'}[1/u]$. By
Lemma~\ref{lem:rank one isomorphism over a field}, this morphism must
in fact be an isomorphism. 
Since $\gM$ and $\gN$ are both defined over $\F$, we furthermore see
that the ratio $a'/a$ lies in $\F$.
We then let $\Spec\Adist$ be the affine open subset of $\Gm\times_{\F}
\Gm$ where $a'x\ne ay$; the claimed property of $\Spec A^{\dist}$ 
then follows easily from 
Lemma~\ref{lem: isomorphic twists are the same twist}.

For the remaining statements of the lemma,
note that if $\mathfrak m$ is a maximal
ideal in a finite type $A^{\dist}$-algebra, then its pull-back to $A^{\dist}$
is again a maximal ideal $\mathfrak m'$ 
of $A^{\dist}$ (since $A^{\dist}$ is Jacobson),
and the vanishing of
$$
\Hom_{\K{\kappa(\mathfrak m)}}\bigl( \gM_{\kappa(\mathfrak m),\xbar}[1/u],
\gN_{\kappa(\mathfrak m),\ybar}[1/u]\bigr)
$$
follows from the corresponding statement for $\kappa(\mathfrak m')$,
together with  Lemma~\ref{lem: flat base change for Homs}.

Inverting $u$ induces an embedding
\[\Hom_{\K{\kappa(\mathfrak m)}}\bigl( \gM_{\kappa(\mathfrak m),\xbar},
\gN_{\kappa(\mathfrak m),\ybar}\bigr)
\hookrightarrow
\Hom_{\K{\kappa(\mathfrak m)}}\bigl( \gM_{\kappa(\mathfrak m),\xbar}[1/u],
\gN_{\kappa(\mathfrak m),\ybar}[1/u]\bigr),\]
and so certainly the vanishing of the target implies the vanishing of
the source.

The statements in which the roles of $\gM$ and $\gN$
are reversed follow from 
Lemma~\ref{lem:rank one isomorphism over a field}. 
\end{proof}

Define
$T := \Ext^1_{\K{\Gm\times_{\F}\Gm}}\bigl(\gM_{\Gm\times_{\F} \Gm,x},\gM_{\Gm\times_{\F}\Gm,y})$;
it follows from 
Proposition~\ref{prop:exts are f.g. over A} that
$T$ is finitely generated over $\F[x,x^{-1},y,y^{-1}],$  
while
Proposition~\ref{prop:base-change for exts}
shows that
$T_\Adist := T\otimes_{\F[x^{\pm 1}, y^{\pm 1}]} \Adist$ is naturally isomorphic to
$\Ext^1_{\K{\Adist}}\bigl(\gM_{\Adist,x}, \gN_{\Adist,y}\bigr)$. (Here and elsewhere
we abuse notation by writing $x$, $y$ for $x|_{\Adist}$, $y|_{\Adist}$.)
Corollary~\ref{cor:freeness for exts} and Lemma~\ref{lem: generically no Homs}
show that $T_\Adist$ is in fact a 
finitely generated projective $\Adist$-module.
If, for any $\Adist$-algebra $B$, we write $T_B := T_\Adist \otimes_\Adist B \iso
 T\otimes_{\F[x^{\pm 1}, y^{\pm 1}]} B$,
then 
Proposition~\ref{prop:base-change for exts} again shows that
$T_B \iso \Ext^1_{\K{B}}\bigl(\gM_{B,x}, \gN_{B,y}\bigr)$.

By Propositions~\ref{prop: base change for kernel of map to etale Ext}
and~\ref{prop: we have vector bundles}, together with
Lemma~\ref{lem: generically no Homs},
there is a nonempty (so dense) affine open subset ~$\Spec\Akfree$ of
  $\Spec \Adist$ with the properties that \[U_\Akfree :=
  \kExt^1_{\K{\Akfree}}(\gM_{\Akfree,x},\gN_{\Akfree,y})\] and
\begin{multline*}
	T_\Akfree/U_\Akfree \\
 \iso
  \Ext^1_{\K{\Akfree}}(\gM_{\Akfree,x},\gN_{\Akfree,y})/\kExt^1_{\K{\Akfree}}(\gM_{\Akfree,x},\gN_{\Akfree,y})
  \end{multline*}
  are finitely generated and projective over $\Akfree$, and furthermore so
  that for all finitely generated $\Akfree$-algebras $B$, the formation of
$\kExt^1_{\K{B}}(\gM_{B,x},\gN_{B,y})$ and 
$\Ext^1_{\K{B}}(\gM_{B,x},\gN_{B,y})/\kExt^1_{\K{B}}(\gM_{B,x},\gN_{B,y})$
is compatible with base change from $U_\Akfree$ and $T_\Akfree/U_\Akfree$ respectively.

We choose a finite rank projective module $V$ over $\F[x,x^{-1},y,y^{-1}]$
admitting a surjection $V \to T$.
Thus, if we write $V_\Adist := V\otimes_{\F[x^{\pm 1}, y^{\pm 1}]} \Adist$,
then the induced morphism $V_\Adist \to T_\Adist$ is a (split) surjection of
$\Adist$-modules.

Following the prescription 
of Subsection~\ref{subsec:families of extensions},
we form the symmetric algebra $\Btwist := \F[x^{\pm 1}, y^{\pm
  1}][V^{\vee}],$ 
and construct the  family of extensions $\widetilde{\gE}$
over $\Spec \Btwist$.    We may similarly form the symmetric algebras
$\Bdist := \Adist[T_{\Adist}^{\vee}]$ and $\Bkfree := \Akfree[T_{\Akfree}^{\vee}]$, and construct the families 
of extensions  $\gEdist$ and $\gEkfree$ over $\Spec
\Bdist$ and $\Spec\Bkfree$ respectively. 
Since $T_\Akfree/U_\Akfree$ is projective, the natural morphism
$T_{\Akfree}^{\vee} \to U_{\Akfree}^{\vee}$ is surjective, and hence 
$\Ckfree := A[U_{\Akfree}^{\vee}]$ is a quotient of $\Bkfree$; geometrically,
$\Spec \Ckfree $ is a subbundle of the vector bundle $\Spec \Bkfree$
over $\Spec A$. 

We write $X := \Spec \Bkfree \setminus \Spec \Ckfree$; it is an open subscheme
of the vector bundle $\Spec \Bkfree$.  
The restriction of 
$\widetilde{\gE}'$ to~$X$ is the universal family of extensions over~$A$ which
do not split after inverting~$u$.

\begin{remark}
	\label{rem:Zariski density}
	Since $\Spec A^{\dist}$ and $\Spec A^{\kfree}$ are irreducible,
	each of the vector bundles $\Spec B^{\dist}$ and $\Spec B^{\kfree}$
 	is also irreducible.  In particular, $\Spec B^{\kfree}$ is Zariski dense
	in $\Spec B^{\dist}$, and if $X$ is non-empty, then it is Zariski dense
	in each of $\Spec B^{\kfree}$ and $\Spec B^{\dist}$.   Similarly,
	$\Spec \Btwist \times_{\Gm\times_{\F}\Gm} \Spec A^{\dist}$ is Zariski dense in
	$\Spec \Btwist$.
\end{remark}

The surjection $V_\Adist \to T_\Adist$ induces a surjection of vector bundles
$\pi: \Spec \Btwist\times_{\Gm\times_{\F}\Gm} \Spec \Adist \to \Spec \Bdist$ over $\Spec \Adist$, 
and there is a natural isomorphism
\numequation
\label{eqn:pull-back iso}
 \pi^*\gEdist \iso
\widetilde{\gE}\cotimes_{\F[x^{\pm 1},y^{\pm 1}]} \Adist.
\end{equation}

The rank two Breuil--Kisin module with descent data
$\widetilde{\gE}$ is classified by a morphism
$\xi:\Spec \Btwist \to \cC^{\dd,1}$; 
similarly,
the rank two Breuil--Kisin module with descent data
$\gEdist$ is classified by a morphism
$\xi^\dist: \Spec \Bdist \to \cC^{\dd,1}.$
If we write $\xi_\Adist$ for the restriction of $\xi$ to the open
subset $\Spec \Btwist \times_{\Gm\times_{\F}\Gm} \Spec \Adist$ of $\Spec \Btwist$,
then the isomorphism~(\ref{eqn:pull-back iso}) shows that
$\xi^\dist\circ \pi = \xi_\Adist$.
We also write $\xi^{\kfree}$ for the restriction of $\xi^{\dist}$ to 
$\Spec B^{\kfree}$, and $\xi_X$ for the restriction of $\xi^{\kfree}$
to $X$.

\begin{lemma}\label{lem: scheme theoretic images coincide}
The scheme-theoretic images {\em (}in the sense
  of~{\em \cite[Def.\ 3.1.4]{EGstacktheoreticimages})} of
  $\xi:\Spec \Btwist\to \cC^{\dd,1}$,
$\xi^\dist:\Spec \Bdist\to \cC^{\dd,1}$,
and $\xi^{\kfree}: \Spec B^{\kfree}\to \cC^{\dd,1}$
all coincide; in particular, the
scheme-theoretic image of~$\xi$ is independent of the choice of
surjection $V\to T$, and the scheme-theoretic image of~$\xi^{\kfree}$ is
independent of the choice of~$\Akfree$. 
  If $X$ is non-empty, then the scheme-theoretic image
  of $\xi_X: X \to \cC^{\dd,1}$ also coincides with these other scheme-theoretic images, and is independent of the choice of $\Akfree$.
\end{lemma}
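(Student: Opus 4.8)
The key tool is \cite[Def.\ 3.1.4]{EGstacktheoreticimages} together with the standard fact that scheme-theoretic images commute with (flat, or more generally schematically dominant) pullbacks, combined with Zariski density statements already recorded in Remark~\ref{rem:Zariski density}. The strategy is to exhibit each of the relevant maps as factoring through the others in a way that forces the scheme-theoretic images to agree, and then to deduce independence of the various choices as a formal consequence.

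\textbf{Step 1: relate $\xi$ and $\xi^{\dist}$.} Recall the surjection of vector bundles $\pi: \Spec \Btwist \times_{\Gm\times_{\F}\Gm} \Spec \Adist \to \Spec \Bdist$ and the isomorphism~\eqref{eqn:pull-back iso}, which gave $\xi^{\dist}\circ\pi = \xi_{\Adist}$. Since $\pi$ is a surjection of vector bundles it is faithfully flat, hence schematically dominant, so the scheme-theoretic image of $\xi_{\Adist} = \xi^{\dist}\circ\pi$ coincides with that of $\xi^{\dist}$. On the other hand $\Spec \Btwist \times_{\Gm\times_{\F}\Gm}\Spec\Adist$ is a dense open subscheme of the irreducible scheme $\Spec \Btwist$ (Remark~\ref{rem:Zariski density}), so the scheme-theoretic image of $\xi_{\Adist}$ equals that of $\xi$. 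Combining, the scheme-theoretic images of $\xi$ and $\xi^{\dist}$ coincide; since $\xi$ was built from the auxiliary choice of surjection $V\to T$ while $\xi^{\dist}$ was not, this already yields independence of that choice.

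\textbf{Step 2: relate $\xi^{\dist}$ and $\xi^{\kfree}$, and incorporate $\xi_X$.} Here I would use that $\Spec \Bkfree$ is a dense open subscheme of the irreducible scheme $\Spec \Bdist$ (again Remark~\ref{rem:Zariski density}), so that the scheme-theoretic image of the restriction $\xi^{\kfree} = \xi^{\dist}|_{\Spec \Bkfree}$ coincides with that of $\xi^{\dist}$; this handles $\xi^{\kfree}$ and gives independence of the choice of $\Akfree$ (since all choices have the same scheme-theoretic image, namely that of $\xi^{\dist}$, which does not involve $\Akfree$). For $\xi_X$, when $X$ is non-empty Remark~\ref{rem:Zariski density} records that $X$ is Zariski dense in $\Spec \Bkfree$; as $X$ is an open subscheme of the reduced (indeed integral, being an open in an irreducible affine space over a domain) scheme $\Spec \Bkfree$, the inclusion $X\hookrightarrow \Spec\Bkfree$ is schematically dominant, so the scheme-theoretic image of $\xi_X = \xi^{\kfree}|_X$ equals that of $\xi^{\kfree}$, and hence all five scheme-theoretic images coincide.

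\textbf{Main obstacle.} None of the steps is deep; the one point needing a little care is the interplay between "scheme-theoretic image" in the stacky sense of \cite[Def.\ 3.1.4]{EGstacktheoreticimages} and the two density-type reductions I want to use: (a) that a schematically dominant morphism $U\to W$ composed with a further morphism $W\to\cC^{\dd,1}$ does not change the scheme-theoretic image, and (b) that an open immersion with dense image out of a reduced scheme is schematically dominant. Claim (b) is elementary scheme theory (a reduced scheme has no embedded components, so a dense open is schematically dense). For claim (a), I would appeal to the fact that scheme-theoretic image for morphisms into algebraic (or formal algebraic) stacks is characterized by a universal property with respect to closed substacks, and that $W\to\cC^{\dd,1}$ factors through a closed substack $\cZ$ iff $U\to\cC^{\dd,1}$ does, precisely because $U\to W$ is schematically dominant and the property of factoring through $\cZ$ is "closed". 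This is essentially \cite[Rem.\ 3.1.5]{EGstacktheoreticimages} or follows directly from the construction there after pulling back along a smooth cover, reducing to the scheme case where it is the standard statement that $\mathrm{im}^{\mathrm{sch}}(U\to X\to Y) = \mathrm{im}^{\mathrm{sch}}(X\to Y)$ when $U\to X$ is schematically dominant. I would spell this reduction out once and then invoke it for each of the density statements above.
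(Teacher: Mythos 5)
Your proposal is correct and takes essentially the same approach as the paper: the published proof simply invokes the Zariski density observations of Remark~\ref{rem:Zariski density}, and your Steps 1 and 2 spell out exactly the density and schematic-dominance reductions that are implicit there, with the relation $\xi^{\dist}\circ\pi = \xi_{\Adist}$ from~\eqref{eqn:pull-back iso} playing the same intermediary role.
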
 
\begin{proof} 
	This follows from the various observations about Zariski density 
	made in Remark~\ref{rem:Zariski density}.
\end{proof}

\begin{defn}
	\label{def:scheme-theoretic images}
  We let $\overline{\cC}(\gM,\gN)$ denote the scheme-theoretic image  of
  $\xi^\dist:\Spec \Bdist\to \cC^{\dd,1}$, and
  we let $\overline{\cZ}(\gM,\gN)$ denote the
  scheme-theoretic image of the composite
  $\xi^\dist:\Spec \Bdist\to \cC^{\dd,1}\to \cZ^{\dd,1}$.
Equivalently,
$\overline{\cZ}(\gM,\gN)$ is the scheme-theoretic image of the
composite $\Spec \Bdist\to \cC^{\dd,1}\to \cR^{\dd,1}$ (\emph{cf}.\
\cite[Prop.\ 3.2.31]{EGstacktheoreticimages}), and the scheme-theoretic
image of $\overline{\cC}(\gM,\gN)$
under the morphism $\cC^{\dd,1} \to \cZ^{\dd,1}$.
(Note that Lemma~\ref{lem: scheme theoretic images coincide}
provides various other alternative descriptions of $\overline{\cC}(\gM,\gN)$
(and therefore also~$\overline{\cZ}(\gM,\gN$)) as 
a scheme-theoretic image.)
\end{defn}
\begin{rem}
  \label{rem: C(M,N) is reduced}Note that $\overline{\cC}
		(\gM,\gN)$
and $\overline{\cZ}(\gM,\gN)$
are both reduced (because they are each defined as a scheme-theoretic
		image of~$\Spec\Bdist$, which is reduced by definition).
\end{rem}

As well as scheme-theoretic images, as in the preceding Lemma and Definition,
we will need to consider images of underlying topological spaces.

\begin{lem}
	\label{lem:ext images}
	The image of the morphism on underlying topological spaces
	$| \Spec \Btwist | \to | \cC^{\dd,1}|$ induced by $\xi$ is
	a constructible subset of $| \cC^{\dd,1}|$, and is
	independent of the choice of $V$.
\end{lem}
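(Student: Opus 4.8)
The statement concerns the constructibility (and $V$-independence) of the image of $|\Spec\Btwist|\to|\cC^{\dd,1}|$. The key input is that $\xi:\Spec\Btwist\to\cC^{\dd,1}$ is a morphism of algebraic stacks, each of finite presentation over $\F$ (here $\Spec\Btwist$ is even affine of finite type over $\F$, being a symmetric algebra on a finite rank projective module over $\F[x^{\pm1},y^{\pm1}]$), and Chevalley's theorem in the stacky setting: the image of a finite-type morphism of Noetherian algebraic stacks, or more precisely the image on topological spaces of a morphism that is locally of finite type between quasi-separated algebraic stacks, is constructible. The cleanest route is to cite the relevant Stacks Project tags — the image of a quasi-compact morphism locally of finite presentation is constructible; one can check this after pulling back along a smooth presentation of the target. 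So first I would observe that $\cC^{\dd,1}$ is an algebraic stack of finite presentation over $\F$ (Corollary~\ref{cor: C^dd,a is an algebraic stack}(1)), pick a smooth surjection $U\to\cC^{\dd,1}$ from a scheme $U$ of finite type over $\F$, form the fibre product $\Spec\Btwist\times_{\cC^{\dd,1}}U$, which is an algebraic space (in fact a scheme) of finite type over $\F$, and note the image of $|\Spec\Btwist|$ in $|\cC^{\dd,1}|$ is the image of $|\Spec\Btwist\times_{\cC^{\dd,1}}U|$ in $|U|$ pushed forward along the open (hence image-constructibility-preserving) map $|U|\to|\cC^{\dd,1}|$; then apply classical Chevalley to the finite-type morphism of finite-type $\F$-schemes.

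**Independence of $V$.** For the second assertion, I would argue that the image as a set does not depend on the choice of the surjection $V\to T$. The point is that $\Spec\Btwist$ is (the total space of) a vector bundle over $\Gm\times_\F\Gm$ surjecting, via the quotient $V\to T$, onto the ``honest'' extension-parametrising bundle, and by construction $\widetilde{\gE}$ is pulled back from that quotient bundle — this is exactly the content of the relation $\xi^{\dist}\circ\pi=\xi_\Adist$ over the dense open $\Spec\Adist$, together with the analogous statement over all of $\Gm\times_\F\Gm$ (the family $\widetilde{\gE}$ over $\Spec\Btwist$ is the pullback of the family of extensions parametrised by $\Spec\bigl(\F[x^{\pm1},y^{\pm1}][T^\vee]\bigr)$ under the surjection induced by $V\to T$). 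Since that surjection of vector bundles is itself surjective on points, $\xi$ and the ``universal'' $\xi^{T}:\Spec\bigl(\F[x^{\pm1},y^{\pm1}][T^\vee]\bigr)\to\cC^{\dd,1}$ have the same set-theoretic image; hence the image of $\xi$ depends only on $T$ (equivalently on $\gM$ and $\gN$), not on $V$. Concretely: a finite-type point of $\cC^{\dd,1}$ lies in the image of $\xi$ iff the corresponding Breuil--Kisin module is, after some base change to a field, an extension of an unramified twist of $\gM$ by an unramified twist of $\gN$, and this description makes no reference to $V$.

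**The main obstacle.** The genuinely delicate point is making the stacky Chevalley argument rigorous — i.e.\ being careful that ``image of $|\Spec\Btwist|$ in $|\cC^{\dd,1}|$'' is well-behaved under the smooth presentation and that constructibility descends. Since $|U|\to|\cC^{\dd,1}|$ is open (smooth morphisms are open) and surjective, and the image of $|\Spec\Btwist\times_{\cC^{\dd,1}}U|$ in $|U|$ is constructible by ordinary Chevalley (both being finite-type $\F$-schemes), its image under the open map $|U|\to|\cC^{\dd,1}|$ is constructible (images of constructible sets under open quasi-compact maps are constructible; or invoke \cite[\href{https://stacks.math.columbia.edu/tag/0GWF}{Tag 0GWF}]{stacks-project} or the analogous tag on constructible subsets of algebraic stacks). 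The $V$-independence, by contrast, is essentially formal once one unwinds the construction in Subsection~\ref{subsec:families of extensions}; I would spell out the surjectivity of the vector bundle map $\Spec\Btwist\times_{\Gm\times_\F\Gm}\Spec\Adist\to\Spec\Bdist$ already recorded before~\eqref{eqn:pull-back iso}, note the analogous statement holds over all of $\Gm\times_\F\Gm$ (not just over $\Spec\Adist$) since the quotient $V\to T$ and hence $V^\vee\hookrightarrow T^\vee$ is defined over $\F[x^{\pm1},y^{\pm1}]$, and conclude.
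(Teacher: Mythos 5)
Your constructibility argument is correct, and usefully spells out the content of the reference the paper actually cites (\cite[App.\ D]{MR2818725}): pull back along a smooth presentation $U\to\cC^{\dd,1}$ with $U$ a finite-type $\F$-scheme, apply classical Chevalley to the finite-type morphism $\Spec\Btwist\times_{\cC^{\dd,1}}U\to U$, and use that constructibility of a subset of $|\cC^{\dd,1}|$ can be tested on $|U|$.

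The $V$-independence step has a gap at the intermediate claim, however. You invoke a ``universal'' morphism $\xi^T:\Spec\bigl(\F[x^{\pm1},y^{\pm1}][T^\vee]\bigr)\to\cC^{\dd,1}$ and assert that $\Spec\Btwist\to\Spec\bigl(\F[x^{\pm1},y^{\pm1}][T^\vee]\bigr)$ is surjective on points. But $T$ is only known to be finitely generated over $\F[x^{\pm1},y^{\pm1}]$ (Proposition~\ref{prop:exts are f.g. over A}); it is $T_{\Adist}$ that is projective, and the whole point of introducing $V$ is that $T$ itself need not be. The construction of Subsection~\ref{subsec:families of extensions} is given only for projective $V$, so the family over $\Spec\bigl(\F[x^{\pm1},y^{\pm1}][T^\vee]\bigr)$ is not constructed, and since $T^\vee\hookrightarrow V^\vee$ has no reason to split when $T$ is not projective, the claimed surjectivity is not automatic. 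Your final ``concretely'' sentence does salvage the conclusion --- a finite-type point of $\cC^{\dd,1}$ lies in the image of $\xi$ if and only if the associated Breuil--Kisin module is, after a finite base change, an extension of an unramified twist of $\gM$ by one of $\gN$; this follows from surjectivity of $V\otimes\kappa'\to T\otimes\kappa'\cong\Ext^1_{\K{\kappa'}}(\gM_{\kappa',\xbar},\gN_{\kappa',\ybar})$ over fields $\kappa'$, via Proposition~\ref{prop:base-change for exts}, and makes no reference to $V$; since two constructible subsets of the Jacobson stack $\cC^{\dd,1}$ agreeing on finite-type points coincide, independence follows. You should promote this to the actual argument and drop the appeal to $\xi^T$. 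The paper's own route is cleaner still: given $V$ and $V'$, dominate both by the projective module $W=V\oplus V'$ compatibly with the surjections onto $T$; a surjection of finite-rank projectives $W\twoheadrightarrow V$ splits, so $\Spec\,\F[x^{\pm1},y^{\pm1}][W^\vee]\to\Spec\Btwist$ is a surjective vector bundle and $\xi_W$ factors through $\xi$, so they have the same image, and symmetrically for $V'$.
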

\begin{proof}
	The fact that the image of $|\Spec \Btwist|$ is a constructible 
	subset of $|\cC^{\dd,1}|$ follows from the fact that
	$\xi$ is a morphism of finite presentation between Noetherian
        stacks; see~\cite[App.\ D]{MR2818725}. 
	Suppose now that $V'$ is another choice of finite rank projective
	$\F[x,x^{-1},y,y^{-1}]$-module surjecting onto~$T$.  Choose
	a finite rank projective module surjecting onto each of 
	$V$ and $V'$, compatible with the given surjections of each these
	sheaves onto $T$. (E.g.\ one could take $W = V \oplus V'$.)  
	Thus it suffices to prove the independence claim of the lemma
	in the case when $V'$ admits a surjection onto $V$ (compatible
	with the maps of each of $V$ and $V'$ onto $T$).
	If we write $B' := \F[x^{\pm 1}, y^{\pm 1}][(V')^{\vee}],$
	then the natural morphism $\Spec B' \to \Spec \Btwist$ is a surjection,
	and the morphism $\xi': \Spec B' \to \cC^{\dd,1}$ is the composite
	of this surjection with the morphism $\xi$.  Thus indeed
	the images of $|\Spec B'|$ and of $|\Spec \Btwist|$ coincide as
	subsets of $|\cC^{\dd,1}|$.
\end{proof}

\begin{df}
	\label{df:constructible images}
	We write $|\cC(\gM,\gN)|$ to denote the constructible subset
	of $|\cC^{\dd,1}|$ described in Lemma~\ref{lem:ext images}.
\end{df}

\begin{remark}
	We caution the reader that we don't define a substack $\cC(\gM,\gN)$
	of $\cC^{\dd,1}$. Rather, we have defined a closed substack
	$\overline{\cC}(\gM,\gN)$ of $\cC^{\dd,1}$, and a constructible subset
	$|\cC(\gM,\gN)|$ of $|\cC^{\dd,1}|$.  It follows from
	the constructions that $|\overline{\cC}(\gM,\gN)|$ is the
	closure in $|\cC^{\dd,1}|$ of $|\cC(\gM,\gN)|$.
\end{remark}

As in Subsection~\ref{subsec:families of extensions},
there is a natural action of $\Gm\times_{\F}\Gm$ on $T$,
and hence on each of $\Spec \Bdist$, $\Spec \Bkfree$ and~$X$,
given by the action of $\Gm$ as automorphisms on each of $\gM_{\Gm\times_{\F} \Gm,x}$ 
and $\gN_{\Gm\times_{\F} \Gm,y}$ (which induces a corresponding action on $T$,
hence on $T_\Adist$ and $T_\Akfree$, and hence on $\Spec \Bdist$ and
$\Spec \Bkfree$).   
Thus we may form the corresponding quotient stacks $[\Spec \Bdist / \Gm\times_{\F} \Gm]$ and  
$[X / \Gm\times_{\F} \Gm],$ each of which admits a natural morphism to~$\cC^{\dd,1}$. 

\begin{rem}
  \label{rem: potential confusion of two lots of Gm times Gm}Note that
  we are making use of two independent copies of $\Gm\times_{\F}\Gm$; one
  parameterises the different unramified twists of $\gM$ and $\gN$, and the other the
  automorphisms of (the pullbacks of) $\gM$ and $\gN$. 
\end{rem}
\begin{defn}
  \label{defn: strict situations}We say that the pair $(\gM,\gN)$ is
  \emph{strict} if $\Spec\Adist=\Gm\times_\F\Gm$.
\end{defn}

Before stating and proving the main result of this subsection,
we prove some lemmas (the first two of which amount to recollections
of standard --- and simple --- facts).

\begin{lem}
  \label{lem: fibre products of finite type}If $\cX\to\cY$ is a
  morphism of stacks over~$S$, with $\cX$ algebraic and of
  finite type over~$S$, and $\cY$ having diagonal which is
  representable by algebraic spaces and of finite type, then
  $\cX\times_{\cY}\cX$ is an algebraic stack of finite type
  over~$S$. 
\end{lem}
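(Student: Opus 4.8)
The plan is to exhibit $\cX\times_{\cY}\cX$ as a base change of the diagonal of $\cY$ over $S$ along a morphism whose target is visibly algebraic and of finite type over~$S$. Concretely, for any scheme $T$ over $S$ one checks directly on $T$-valued points that there is a natural isomorphism
\[
\cX\times_{\cY}\cX\;\cong\;(\cX\times_S\cX)\times_{\cY\times_S\cY}\cY ,
\]
where the morphism $\cX\times_S\cX\to\cY\times_S\cY$ is induced by the given morphism $\cX\to\cY$, and the morphism $\cY\to\cY\times_S\cY$ is the relative diagonal $\Delta_{\cY/S}$.

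First I would record that $\cX\times_S\cX$ is an algebraic stack of finite type over~$S$: it is a fibre product of algebraic stacks over the scheme~$S$, hence algebraic, and its structure morphism to~$S$ factors as $\cX\times_S\cX\to\cX\to S$, in which the first arrow is a base change of $\cX\to S$ and the second is of finite type by hypothesis, so the composite is of finite type. Next, by hypothesis $\Delta_{\cY/S}$ is representable by algebraic spaces and of finite type; these properties are stable under base change (see~\cite[\href{http://stacks.math.columbia.edu/tag/04XB}{Tag 04XB}]{stacks-project}), so the projection $\cX\times_{\cY}\cX\to\cX\times_S\cX$, being the base change of $\Delta_{\cY/S}$ along $\cX\times_S\cX\to\cY\times_S\cY$, is representable by algebraic spaces and of finite type.

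It then follows that $\cX\times_{\cY}\cX$ is an algebraic stack, since a stack admitting a morphism that is representable by algebraic spaces to an algebraic stack is itself algebraic (this is the principle used, for instance via~\cite[\href{http://stacks.math.columbia.edu/tag/05UM}{Tag 05UM}]{stacks-project}, elsewhere in this paper); and it is of finite type over~$S$ upon composing the finite type morphism $\cX\times_{\cY}\cX\to\cX\times_S\cX$ with the finite type morphism $\cX\times_S\cX\to S$ established above. There is no serious obstacle in this argument; the only point requiring care is to use the \emph{relative} diagonal $\Delta_{\cY/S}$ in the Cartesian square above, since it is precisely the finite type hypothesis on this diagonal that is assumed in the statement, and one should be careful not to conflate it with the absolute diagonal of~$\cY$.
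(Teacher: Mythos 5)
Your proof is correct and uses the same core observation as the paper's, namely that $\cX\times_{\cY}\cX\to\cX\times_S\cX$ is a base change of the diagonal $\Delta_{\cY/S}$, which is what makes the finite type hypothesis on the diagonal do the work. The only cosmetic difference is the route to algebraicity: the paper cites \cite[\href{http://stacks.math.columbia.edu/tag/04TF}{Tag 04TF}]{stacks-project} directly for the algebraicity of $\cX\times_{\cY}\cX$, whereas you re-derive it via the Cartesian square and \cite[\href{http://stacks.math.columbia.edu/tag/05UM}{Tag 05UM}]{stacks-project}; both are standard and equivalent.
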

\begin{proof}
	The fact that $\cX\times_{\cY} \cX$ is an algebraic 
	stack follows
	from~\cite[\href{http://stacks.math.columbia.edu/tag/04TF}{Tag 04TF}]{stacks-project}.
  Since composites of morphisms of finite type are of finite type, in
  order to show that $\cX\times_{\cY} \cX$ is of finite type over $S$,
  it suffices to show that the natural morphism $\cX\times_{\cY} \cX
  \to \cX\times_S \cX$ is of finite type.  Since this
  morphism is the base-change of the diagonal morphism $\cY \to \cY\times_S \cY,$
  this follows by assumption.
\end{proof}

\begin{lem}\label{lem:fibres of kExt}
The following conditions are equivalent:\\
(1) $\kExt^1_{\K{\kappa(\mathfrak m)}}\bigl( \gM_{\kappa(\mathfrak m),\xbar},
\gN_{\kappa(\mathfrak m),\ybar}\bigr)= 0$
for all maximal ideals $\mathfrak m$ of $\Akfree$.\\
(2) $U_\Akfree = 0$.\\
(3) $\Spec C^{\kfree}$ is the trivial
vector bundle over $\Spec A^{\kfree}$. 
\end{lem}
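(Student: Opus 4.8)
The plan is to treat the equivalence $(2)\Leftrightarrow(3)$ first, as it is essentially formal, and then establish $(1)\Leftrightarrow(2)$ using the projectivity of $U_\Akfree$ together with the base-change compatibility built into the definition of $\Akfree$.

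For $(2)\Leftrightarrow(3)$: by construction $\Ckfree=\Akfree[U_\Akfree^{\vee}]$ is the symmetric algebra over $\Akfree$ on the finitely generated projective module $U_\Akfree^{\vee}$, so $\Spec\Ckfree$ is the total space of the vector bundle over $\Spec\Akfree$ attached to $U_\Akfree$. This bundle is the trivial (rank zero) bundle, i.e.\ the structure morphism $\Spec\Ckfree\to\Spec\Akfree$ is an isomorphism, precisely when the augmentation ideal of $\Ckfree$ vanishes, which happens exactly when $U_\Akfree^{\vee}=0$; since $U_\Akfree$ is finitely generated and projective, this is in turn equivalent to $U_\Akfree=0$.

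For $(1)\Leftrightarrow(2)$: recall from the construction preceding Lemma~\ref{lem: scheme theoretic images coincide} (which invokes Proposition~\ref{prop: we have vector bundles} and Lemma~\ref{lem: generically no Homs}) that $U_\Akfree$ is a finitely generated projective $\Akfree$-module, and that $\Spec\Akfree$ is irreducible by Remark~\ref{rem:Zariski density}, hence connected. Therefore $U_\Akfree$ has constant rank $r\ge 0$ on $\Spec\Akfree$, and $U_\Akfree=0$ if and only if $r=0$, if and only if the fibre $U_\Akfree\otimes_{\Akfree}\kappa(\mathfrak m)$ vanishes for every maximal ideal $\mathfrak m$ of $\Akfree$ (here we use that $U_\Akfree\otimes_{\Akfree}\kappa(\mathfrak m)$ is of dimension $r$ over $\kappa(\mathfrak m)$, and that it suffices to test at maximal ideals since $\Akfree$ is Jacobson and $\Spec\Akfree$ is connected). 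Finally, one of the defining properties of $\Akfree$ is that the formation of $\kExt^1_{\K{B}}(\gM_{B,x},\gN_{B,y})$ is compatible with base change from $U_\Akfree$ for every finitely generated $\Akfree$-algebra $B$; applying this with $B=\kappa(\mathfrak m)$ (equivalently, applying Proposition~\ref{prop: base change for kernel of map to etale Ext} with $B=Q=\kappa(\mathfrak m)$) and using that $\gM_{\Akfree,x}\cotimes_{\Akfree}\kappa(\mathfrak m)\cong\gM_{\kappa(\mathfrak m),\xbar}$ and likewise for $\gN$ (Definition~\ref{def:unramified twist}), we obtain a natural isomorphism $U_\Akfree\otimes_{\Akfree}\kappa(\mathfrak m)\cong\kExt^1_{\K{\kappa(\mathfrak m)}}(\gM_{\kappa(\mathfrak m),\xbar},\gN_{\kappa(\mathfrak m),\ybar})$. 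The equivalence of $(1)$ and $(2)$ follows.

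The argument involves no serious obstacle: the only points requiring care are the correct invocation of the fibrewise base-change identification of $\kExt^1$ and the elementary observation that a finitely generated projective module of constant rank over a connected base vanishes as soon as a single fibre does.
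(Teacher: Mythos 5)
Your proof is correct and follows essentially the same approach as the paper's: the equivalence $(2)\Leftrightarrow(3)$ holds by unwinding the definition of $\Ckfree=\Akfree[U_\Akfree^\vee]$, and $(1)\Leftrightarrow(2)$ follows from the base-change compatibility of $\kExt^1$ built into the choice of $\Akfree$, together with a criterion for vanishing of a finitely generated module in terms of its fibres at maximal ideals. The only stylistic difference is that you use projectivity and connectedness of $\Spec\Akfree$ (constant-rank argument) where the paper simply invokes the Jacobson property of $\Akfree$ and Nakayama; both are valid routes to the same vanishing criterion.
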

\begin{proof}
  Conditions~(2) and~(3) are equivalent by definition. Since the formation of
$\kExt^1_{\K{\Akfree}}(\gM_{\Akfree,x},\gN_{\Akfree,y})$ is compatible with base change,
and since $\Akfree$ is Jacobson, (1) is equivalent to
the assumption that
$$\kExt^1_{\K{\Akfree}}(\gM_{\Akfree,x},\gN_{\Akfree,y})=0,$$
i.e.\ that~$U_\Akfree=0$, as required.
\end{proof}

\begin{lemma}
	\label{lem:C to R, with maps to Akfree}
If the equivalent conditions of Lemma~{\em \ref{lem:fibres of kExt}} hold,
	then the natural morphism
	\begin{multline*}
	\Spec B^{\kfree} \times_{\Spec A^{\kfree} \times_\F \cC^{\dd,1}}
		\Spec B^{\kfree}
		\\
		\to
	\Spec B^{\kfree} \times_{\Spec A^{\kfree} \times_\F \cR^{\dd,1}}
		\Spec B^{\kfree}
		\end{multline*}
		is an isomorphism.
\end{lemma}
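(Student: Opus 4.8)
We must show that the natural morphism
\[
\Spec \Bkfree \times_{\Spec \Akfree \times_\F \cC^{\dd,1}} \Spec \Bkfree
\to
\Spec \Bkfree \times_{\Spec \Akfree \times_\F \cR^{\dd,1}} \Spec \Bkfree
\]
is an isomorphism under the hypothesis $U_{\Akfree}=0$ (one of the equivalent conditions of Lemma~\ref{lem:fibres of kExt}); write $\gM_x:=\gM_{\Akfree,x}$ and $\gN_y:=\gN_{\Akfree,y}$. The plan is to compare $\Spec C$-points for $C$ a finitely generated $\F$-algebra. By Corollary~\ref{cor: monomorphism to Spec A times C}, applied with $A=\Akfree$ (legitimate since a rank one Breuil--Kisin module over any $C$ has endomorphism ring $C$ by Lemma~\ref{lem:rank one endos}, hence automorphism group $C^\times$), the source is canonically $\Spec\Bkfree\times_\F\Gm\times_\F\Gm$, in particular a scheme of finite type over $\F$; the target is likewise, since $\cR^{\dd,1}$ has affine diagonal of finite presentation by Corollary~\ref{cor:diagonal of R}. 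As both are of finite presentation over the Noetherian ring $\F$, it suffices to prove bijectivity on $\Spec C$-points for every finitely generated $\F$-algebra $C$ --- and any such $C$ carrying a map from $\Akfree$ is automatically a finitely generated $\Akfree$-algebra. By Proposition~\ref{prop: the functor that Spec B represents} (with $A=\Akfree$), a $\Spec C$-point of the source is a tuple $(\alpha,\gE_1,\iota_1,\pi_1,\gE_2,\iota_2,\pi_2,\beta')$ consisting of $\alpha\colon\Spec C\to\Spec\Akfree$, short exact sequences $0\to\alpha^*\gN_y\buildrel\iota_j\over\to\gE_j\buildrel\pi_j\over\to\alpha^*\gM_x\to0$ for $j=1,2$, and an isomorphism $\beta'\colon\gE_1\iso\gE_2$ of Breuil--Kisin modules; a $\Spec C$-point of the target is the same data with $\beta'$ replaced by an isomorphism $\beta\colon\gE_1[1/u]\iso\gE_2[1/u]$ of \'etale $\varphi$-modules, and the morphism is $\beta'\mapsto\beta'[1/u]$. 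So I must show every such $\beta$ lifts, uniquely, to such a $\beta'$.

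Uniqueness is immediate because $\gE_1,\gE_2$ are $u$-torsion free, so $\Hom_{\K{C}}(\gE_1,\gE_2)\hookrightarrow\Hom_{\K{C}[1/u]}(\gE_1[1/u],\gE_2[1/u])$. For existence, I would first record that for our finitely generated $\Akfree$-algebra $C$ one has $\Hom_{\K{C}[1/u]}(\alpha^*\gM_x[1/u],\alpha^*\gN_y[1/u])=\Hom_{\K{C}[1/u]}(\alpha^*\gN_y[1/u],\alpha^*\gM_x[1/u])=0$: by Lemma~\ref{lem: generically no Homs} these vanish at every maximal ideal of $\Akfree$ (which is a finite-type $\Adist$-algebra, an open immersion being of finite type), and then the implication $(2)\Rightarrow(1)$ of Corollary~\ref{cor:vanishing homs with u inverted} propagates the vanishing to $C$. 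Given $\beta$, the composite $\pi_2[1/u]\circ\beta\circ\iota_1[1/u]$ lies in the first of these groups and so vanishes; hence $\beta$ carries $\iota_1[1/u](\alpha^*\gN_y[1/u])$ into $\iota_2[1/u](\alpha^*\gN_y[1/u])$, inducing an endomorphism $\gamma$ of $\alpha^*\gN_y[1/u]$ and, on cokernels, an endomorphism $\delta$ of $\alpha^*\gM_x[1/u]$. Applying the same reasoning to $\beta^{-1}$ shows $\gamma,\delta$ are automorphisms, hence are scalars in $C^\times$ by Lemma~\ref{lem:rank one endos}, and therefore lift canonically to automorphisms $\tilde\gamma$ of $\alpha^*\gN_y$ and $\tilde\delta$ of $\alpha^*\gM_x$ over $\gS_C$.

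Next I would twist $\gamma,\delta$ away. Using the $\Gm\times_\F\Gm$-action on $\Spec\Bkfree$ that rescales $\iota$ and $\pi$, replace the point $(\alpha,\gE_1,\iota_1,\pi_1)$ by the translated point $(\alpha,\gE_1,\iota_1\circ\tilde\gamma^{-1},\tilde\delta\circ\pi_1)$, and let $\gE_1'$ denote $\gE_1$ equipped with these new structure maps. By construction $\beta$ is then a morphism of extensions $\gE_1'[1/u]\to\gE_2[1/u]$, so $[\gE_1'[1/u]]=[\gE_2[1/u]]$ in $\Ext^1_{\K{C}[1/u]}(\alpha^*\gM_x[1/u],\alpha^*\gN_y[1/u])$; that is, $[\gE_1']-[\gE_2]\in\kExt^1_{\K{C}}(\alpha^*\gM_x,\alpha^*\gN_y)$, by Definition~\ref{def:kext} and Lemma~\ref{lem:ext-i-invert-u}. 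But the hypothesis $U_{\Akfree}=0$, together with the fact --- built into the choice of $\Akfree$, cf.\ Proposition~\ref{prop: base change for kernel of map to etale Ext} --- that formation of $\kExt^1$ commutes with base change from $\Akfree$, gives $\kExt^1_{\K{C}}(\alpha^*\gM_x,\alpha^*\gN_y)=U_{\Akfree}\otimes_{\Akfree}C=0$. Hence $[\gE_1']=[\gE_2]$ in $\Ext^1_{\K{C}}(\alpha^*\gM_x,\alpha^*\gN_y)$, so there is an isomorphism of extensions $\beta'\colon\gE_1'\iso\gE_2$, i.e.\ an isomorphism $\beta'\colon\gE_1\iso\gE_2$ of Breuil--Kisin modules. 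Finally $\beta'[1/u]$ and $\beta$ induce the same maps on the sub- and quotient-objects, so their difference factors through $\Hom_{\K{C}[1/u]}(\alpha^*\gM_x[1/u],\alpha^*\gN_y[1/u])=0$, whence $\beta'[1/u]=\beta$, completing the lift. The main obstacle is the bookkeeping in this last paragraph: orienting the $\Gm\times_\F\Gm$-twist so that $\beta$ really does become an isomorphism of extensions, and confirming that the defining property of $\Akfree$ delivers exactly the base-changed vanishing of $\kExt^1$ that the argument consumes.
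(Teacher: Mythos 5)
Your proof is correct and follows essentially the same route as the paper's: identify the $C$-points of both fibre products via Proposition~\ref{prop: the functor that Spec B represents}, then use the vanishing of $\kExt^1$ over $\Akfree$ (together with its base-change compatibility built into the choice of $\Akfree$) to show the isomorphism $\beta$ of \'etale $\varphi$-modules restricts to an isomorphism of Breuil--Kisin modules. The one genuine difference is in the initial reduction: the paper observes that $\cC^{\dd,1}\to\cR^{\dd,1}$ is proper (hence separated), so the morphism of fibre products is a base-change of the diagonal $\cC^{\dd,1}\to\cC^{\dd,1}\times_{\cR^{\dd,1}}\cC^{\dd,1}$, hence a closed immersion, reducing everything to surjectivity on $R$-points; you instead establish that both sides are affine finite-type $\F$-schemes (the source via Corollary~\ref{cor: monomorphism to Spec A times C}, the target via the affine diagonal of $\cR^{\dd,1}$) and check bijectivity. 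Both routes work; the paper's is marginally slicker because the closed-immersion observation reuses the global properness result, whereas yours makes the affineness of both sides visible. Your more explicit treatment of the $\Gm\times\Gm$-twist step unpacks what the paper compresses into ``arguing as in the proof of Corollary~\ref{cor: monomorphism to Spec A times C}'' and is a useful expansion.
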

\begin{proof}
  Since $\cC^{\dd,1} \to \cR^{\dd,1}$ is separated (being
  proper) and representable, the diagonal morphism
  $\cC^{\dd,1} \to \cC^{\dd,1}\times_{\cR^{\dd,1}} \cC^{\dd,1}$
  is a closed immersion, and hence the morphism in the statement
  of the lemma is
  a closed immersion.   Thus, in order to show that it is an
  isomorphism, it suffices to show that it induces a surjection
  on $R$-valued points, for any $\F$-algebra $R$.  Since 
the source and target are of finite type
over $\F$, by Lemma~\ref{lem: fibre products of finite type}, 
we may in fact restrict attention to finite type $R$-algebras.

A morphism $\Spec R\to \Spec\Bkfree
  \times_{\Spec A^{\kfree} \times_{\F} \cC^{\dd,1}}\Spec\Bkfree$
  corresponds to an isomorphism class
of tuples $(\alpha,\beta:\gE\to\gE',\iota,\iota',\pi,\pi')$, where
\begin{itemize}
\item $\alpha$ is a  morphism
$\alpha:\Spec R\to\Spec \Akfree$,
\item  $\beta:\gE\to\gE'$ is an isomorphism of Breuil--Kisin modules
with descent data and coefficients in $R$, 
\item $\iota:\alpha^* \gN \to \gE$, $\iota':\alpha^* \gN \to \gE'$, $\pi:\gE \to
  \alpha^* \gM$ and $\pi':\gE' \to
  \alpha^* \gM$ are morphisms
with the properties that $0 \to \alpha^*\gN \buildrel \iota
\over \to \gE \buildrel \pi  \over \to \alpha^* \gM \to 0$ and  $0 \to \alpha^*\gN \buildrel \iota'
\over \to \gE' \buildrel \pi'  \over \to \alpha^* \gM \to 0$ are both
short exact. 
\end{itemize}

Similarly,
a morphism $\Spec R\to \Spec\Bkfree
  \times_{\Spec A^{\kfree} \times_{\F} \cR^{\dd,1}}\Spec\Bkfree$
  corresponds to an isomorphism class
of tuples $(\alpha,\gE,\gE',\beta,\iota,\iota',\pi,\pi')$, where
\begin{itemize}
\item $\alpha$ is a  morphism
$\alpha:\Spec R\to\Spec \Akfree$,
\item  $\gE$ and $\gE'$ are Breuil--Kisin modules
with descent data and coefficients in $R$, 
and $\beta$ is an isomorphism
 $\beta:\gE[1/u]\to\gE'[1/u]$ of etale $\varphi$-modules with
 descent data and coefficients in $R$,
\item $\iota:\alpha^* \gN \to \gE$, $\iota':\alpha^* \gN \to \gE'$, $\pi:\gE \to
  \alpha^* \gM$ and $\pi':\gE' \to
  \alpha^* \gM$ are morphisms
with the properties that $0 \to \alpha^*\gN \buildrel \iota
\over \to \gE \buildrel \pi  \over \to \alpha^* \gM \to 0$ and  $0 \to \alpha^*\gN \buildrel \iota'
\over \to \gE' \buildrel \pi'  \over \to \alpha^* \gM \to 0$ are both
short exact. 
\end{itemize}

Thus to prove the claimed surjectivity, we have to show that, given
a tuple $(\alpha,\gE,\gE',\beta,\iota,\iota',\pi,\pi')$ associated
to a morphism $\Spec R\to \Spec\Bkfree
  \times_{\Spec A^{\kfree} \times_{\F} \cR^{\dd,1}}\Spec\Bkfree$,
  the isomorphism $\beta$ restricts to an isomorphism
  $\gE \to \gE'.$ 

By 
Lemma~\ref{lem:fibres of kExt}, the
  natural map
  $\Ext^1(\alpha^*\gM,\alpha^*\gN)\to\Ext^1_{\K{R}}(\alpha^*\gM[1/u],\alpha^*\gN[1/u])$
  is injective; so the Breuil--Kisin modules $\gE$ and $\gE'$ are isomorphic. Arguing as in the proof of Corollary~\ref{cor:
    monomorphism to Spec A times C}, we see that~$\beta$ is equivalent
  to the data of an $R$-point of~$\Gm\times_{\cO}\Gm$,
  corresponding to the automorphisms of $\alpha^*\gM[1/u]$ and
  $\alpha^*\gN[1/u]$ that it induces. These restrict to
  automorphisms of $\alpha^*\gM$ and $\alpha^*\gN$, so that
   (again by the proof of Corollary~\ref{cor:
    monomorphism to Spec A times C}) 
  $\beta$ indeed restricts to an
  isomorphism $\gE\to\gE'$, as required.
\end{proof}
	
We now present the main result of this subsection.

\begin{prop}\label{prop: construction of family monomorphing to C and R}
{\em (1)} The morphism
			$\xi^{\dist}$ induces a morphism 
\numequation
\label{eqn:unramified morphism}
[\Spec \Bdist / \Gm \times_{\F} \Gm ] \to
\cC^{\dd,1},
\end{equation}which is representable by algebraic spaces, of finite type,
			and unramified,
whose fibres over finite type points  are of degree $\leq 2$. 
In the strict case, this induced morphism is in fact a monomorphism,
while in general, the restriction $\xi_X$ of $\xi^{\dist}$
induces a finite type monomorphism 
  \numequation
  \label{eqn:monomorphism}
[X / \Gm \times_{\F} \Gm ] \hookrightarrow
\cC^{\dd,1}.
\end{equation}

{\em (2)}
If
$\kExt^1_{\K{\kappa(\mathfrak m)}}\bigl( \gM_{\kappa(\mathfrak m),\xbar},
\gN_{\kappa(\mathfrak m),\ybar}\bigr)=0$
for all maximal ideals $\mathfrak m$ of $\Akfree$, 
then the composite morphism
\numequation
\label{eqn:second unramified morphism}
[\Spec B^{\kfree}/\Gm\times_\F\Gm]\to \cC^{\dd,1}\to\cR^{\dd,1}
\end{equation}
is a representable by algebraic spaces, of finite type,
and unramified,
with fibres of degree $\leq 2.$
In the strict case, this induced morphism is in fact a monomorphism,
while in general,
the composite morphism
\numequation
\label{eqn:second monomorphism}
[X/\Gm\times_\F\Gm]\hookrightarrow \cC^{\dd,1}\to\cR^{\dd,1}
\end{equation}
is a finite type monomorphism. 
\end{prop}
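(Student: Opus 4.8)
The plan is to deduce Proposition~\ref{prop: construction of family monomorphing to C and R} from the results already assembled in Subsections~\ref{subsec:families of extensions} and~\ref{subsec:universal families}, and from the behaviour-of-Homs lemmas of Subsection~\ref{subsec:ext generalities}. The key geometric input is Corollary~\ref{cor: monomorphism to Spec A times C}, whose hypotheses --- namely Assumption~\ref{assumption:vanishing} together with $\Aut_{\K{C}}(\alpha^*\gM)=\Aut_{\K{C}}(\alpha^*\gN)=C^\times$ for all $\alpha\colon\Spec C\to\Spec A$ --- are exactly what Lemma~\ref{lem: generically no Homs} (for the vanishing of Homs, hence of Assumption~\ref{assumption:vanishing}) and Lemma~\ref{lem:rank one endos} (for the automorphism groups, noting that a rank one Breuil--Kisin module over any $\F$-algebra $C$ has endomorphism ring $C$, hence automorphism group $C^\times$) provide for the pair $(\gM_{\Adist,x},\gN_{\Adist,y})$ over $A=\Adist$. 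Applying Corollary~\ref{cor: monomorphism to Spec A times C} over $A=\Adist$ with the universal family $\gEdist$ gives that $[\Spec\Bdist/\Gm\times_\F\Gm]\to\Spec\Adist\times_\F\cC^{\dd,1}$ is a finite type monomorphism; composing with the (representable, affine, finite type, and in particular unramified) projection $\Spec\Adist\times_\F\cC^{\dd,1}\to\cC^{\dd,1}$ yields a morphism that is representable by algebraic spaces, of finite type, and unramified.

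First I would record that~\eqref{eqn:unramified morphism} is unramified of fibre degree $\le 2$: the fibre of $[\Spec\Bdist/\Gm\times_\F\Gm]\to\cC^{\dd,1}$ over a finite type point corresponding to a Breuil--Kisin module $\gE$ over a field $\kappa$ is the set of ways of writing $\gE$ as an extension of (an unramified twist of) $\gM$ by (an unramified twist of) $\gN$, modulo the $\Gm\times\Gm$-action; since the monomorphism to $\Spec\Adist\times_\F\cC^{\dd,1}$ pins down the twisting parameters, and since $\Ext^1$ is one-dimensional in the relevant rank one situations only up to the identification forced by Lemma~\ref{lem:rank one isomorphism over a field}, the only ambiguity is the possible swap of $\gM$ and $\gN$ when $\gM_{\kappa,\bar x}\cong\gN_{\kappa,\bar y}$ --- which happens precisely over the locus where $\Spec\Adist\subsetneq\Gm\times_\F\Gm$, i.e.\ over the complement removed in forming $\Adist$. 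This gives the fibre degree bound and, in the strict case, shows the fibre has a single point, hence (together with the monomorphism onto $\Spec\Adist\times_\F\cC^{\dd,1}$, which in the strict case has $\Spec\Adist=\Gm\times_\F\Gm$) that~\eqref{eqn:unramified morphism} is itself a monomorphism. In the general (non-strict) case, restricting from $\Spec\Bdist$ to the open subscheme $X=\Spec\Bkfree\setminus\Spec\Ckfree$ kills the problematic extensions (those split after inverting $u$), and I would invoke Corollary~\ref{cor: monomorphism to Spec A times C} again, now over $A=\Akfree$ applied to the restriction $\xi_X$, together with the observation that distinct unramified twists give non-isomorphic Breuil--Kisin modules (Lemma~\ref{lem: isomorphic twists are the same twist}) to remove the swap ambiguity, concluding that~\eqref{eqn:monomorphism} is a finite type monomorphism.

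For part~(2), the additional hypothesis is exactly the equivalent conditions of Lemma~\ref{lem:fibres of kExt}, so that $U_\Akfree=0$ and $\Spec\Ckfree$ is the zero section; hence $X=\Spec\Bkfree\setminus(\text{zero section})$. The point is then to transport the statements of part~(1) along the morphism $\cC^{\dd,1}\to\cR^{\dd,1}$. The crucial tool is Lemma~\ref{lem:C to R, with maps to Akfree}, which, under precisely this vanishing hypothesis, gives an isomorphism $\Spec\Bkfree\times_{\Spec\Akfree\times_\F\cC^{\dd,1}}\Spec\Bkfree\iso\Spec\Bkfree\times_{\Spec\Akfree\times_\F\cR^{\dd,1}}\Spec\Bkfree$. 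This says that the diagonal of $[\Spec\Bkfree/\Gm\times_\F\Gm]$ relative to $\Spec\Akfree\times_\F\cR^{\dd,1}$ agrees with the one relative to $\Spec\Akfree\times_\F\cC^{\dd,1}$, so the fibre-degree, unramifiedness, and (in the strict case, or after restriction to $X$) monomorphism properties of~\eqref{eqn:second unramified morphism} and~\eqref{eqn:second monomorphism} follow formally from those established for~\eqref{eqn:unramified morphism} and~\eqref{eqn:monomorphism} in part~(1), using Lemma~\ref{lem: morphism from quotient stack is a monomorphism} to convert the diagonal statement into the monomorphism statement, exactly as in the proof of Corollary~\ref{cor: monomorphism to Spec A times C}. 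Representability by algebraic spaces and finite type for the composite then follow since $\cC^{\dd,1}\to\cR^{\dd,1}$ is representable by algebraic spaces and proper (hence of finite type), by Corollary~\ref{cor: C^dd,a is an algebraic stack}(3).

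The main obstacle I anticipate is the careful bookkeeping of the two distinct copies of $\Gm\times_\F\Gm$ (flagged in Remark~\ref{rem: potential confusion of two lots of Gm times Gm}): one copy parametrises the unramified twists and sits inside the base $\Spec\Adist$, while the other acts as automorphisms and is the group we quotient by. Disentangling these when computing fibres --- in order to get the sharp degree bound of $2$ and to identify precisely when the bound $2$ versus $1$ occurs --- is where the argument needs genuine care rather than formal nonsense, and it is also where the hypothesis $[K':K]$ prime to $p$ (ensuring $\Gal(K'/K)$ has order prime to $p$, used throughout Subsection~\ref{subsec:ext generalities} and implicitly in the structure of rank one modules) quietly enters. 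Everything else is an assembly of the quoted lemmas.
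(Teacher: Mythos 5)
The central step of your argument contains a genuine error. You assert that since $[\Spec B^{\dist}/\Gm\times_\F\Gm]\to\Spec A^{\dist}\times_\F\cC^{\dd,1}$ is a finite type monomorphism (which is correct, by Corollary~\ref{cor: monomorphism to Spec A times C}), the composite with the ``(representable, affine, finite type, and in particular unramified) projection $\Spec A^{\dist}\times_\F\cC^{\dd,1}\to\cC^{\dd,1}$'' is unramified. But this projection is not unramified: it is the base change of $\Spec A^{\dist}\to\Spec\F$, which is smooth of relative dimension $2$, and a smooth morphism of positive relative dimension is ramified. More generally, a monomorphism followed by a smooth surjection is not unramified. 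So unramifiedness of~\eqref{eqn:unramified morphism} cannot be obtained by composition of ``nice'' morphisms in the way you propose, and you have no argument for it.

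The paper's proof of unramifiedness is genuinely more delicate. It reduces the question to showing that the diagonal of $[\Spec B^{\dist}/\Gm\times_\F\Gm]$ over $\cC^{\dd,1}$ is \'etale; because the diagonal over $\Spec A^{\dist}\times_\F\cC^{\dd,1}$ is already an isomorphism, this comes down to proving that the closed immersion $\Spec B^{\dist}\times_{\Spec A^{\dist}\times\cC^{\dd,1}}\Spec B^{\dist}\to\Spec B^{\dist}\times_{\cC^{\dd,1}}\Spec B^{\dist}$ is formally smooth. The verification hinges on showing that the composite $\alpha^*\gN\xrightarrow{\iota}\gE\xrightarrow{\beta}\gE'\xrightarrow{\pi'}(\alpha')^*\gM$ vanishes whenever $\alpha\equiv\alpha'\pmod{I}$ for a square-zero ideal $I$; this uses that $I\subset\m$ for any maximal ideal, together with the pointwise vanishing of $\Hom$ over $\Spec A^{\dist}$ and Proposition~\ref{prop:vanishing of homs}. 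You have not supplied any version of this argument. Similarly, your treatment of the fibre-degree bound (``the only ambiguity is the possible swap of $\gM$ and $\gN$'') is much vaguer than the paper's argument, which shows that when $\alpha\neq\alpha'$ the nonvanishing of the key composite forces the fibre to be split after inverting $u$ and that $\alpha'$ is then pinned down by Lemma~\ref{lem: isomorphic twists are the same twist}; and your deduction of the monomorphism property on $X$ likewise skips the mechanism (if the key composite were nonzero then one would have $(\alpha')^*\gN[1/u]\cong(\alpha')^*\gM[1/u]$, contradicting $\alpha'$ mapping into $\Spec A^{\dist}$). On the other hand, your overall outline for part~(2) --- reduce to the assertion that $\Spec B^{\kfree}\times_{\Spec A^{\kfree}\times\cC^{\dd,1}}\Spec B^{\kfree}\to\Spec B^{\kfree}\times_{\Spec A^{\kfree}\times\cR^{\dd,1}}\Spec B^{\kfree}$ is an isomorphism, via Lemma~\ref{lem:C to R, with maps to Akfree}, and then run the same diagonal argument --- is exactly the paper's approach; but it depends on having a correct part~(1), and that is where your argument breaks.
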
 

\begin{remark}\label{rem:explain-hypotheses}
The failure of~\eqref{eqn:unramified morphism} to be a monomorphism in
general is due, effectively,  to the possibility that an extension $\gE$ of some 
$\gM_{R,x}$ by $\gN_{R,y}$ and an extension $\gE'$ of some
$\gM_{R,x'}$ by $\gN_{R,y'}$ might be isomorphic as
Breuil--Kisin modules while nevertheless $(x,y)\neq (x',y')$. As we
will see in the proof,
whenever this happens the map $\gN_{\Lambda,y} \to \gE\to \gE' 
\to \gM_{\Lambda,x'}$ is nonzero, and then
$\gE' \otimes_R \kappa(\frakm)[1/u]$ is split for some maximal ideal $\frakm$
of $R$. This explains why, to obtain a monomorphism,
we can restrict either to the strict case or to the substack of extensions
that are non-split after inverting $u$.
\end{remark}

\begin{remark}
	We have stated this proposition in the strongest form that we
	are able to prove, but in fact its full strength is not required
	in the subsequent applications. 
              In particular, we don't need the precise bounds on the
	degrees of the fibres. 
\end{remark}
\begin{proof}[Proof of Proposition~{\ref{prop:
		construction of family monomorphing to C and R}}]
	By Corollary~\ref{cor: monomorphism to Spec A times C}
  (which we can apply because Assumption~\ref{assumption:vanishing} is
  satisfied, by Lemma~\ref{lem: generically no Homs})
  the natural morphism $[\Spec \Bdist/ \Gm \times_{\F} \Gm ] \to
  \Spec \Adist\times_{\F} \cC^{\dd,1}$ is a finite type monomorphism, 
  and hence so is its restriction to the open substack
  $[X/\Gm\times_{\F} \Gm]$ of its source.

Let us momentarily write $\cX$ to denote either $[\Spec B^{\dist}/
\Gm\times_{\F} \Gm]$  or $[X/\Gm\times_{\F} \Gm]$.  To show that
the finite type morphism
$\cX \to \cC^{\dd,1}$ is representable by algebraic spaces,
resp.\ unramified, resp.\ a
monomorphism,
it suffices to show that the corresponding diagonal morphism
$\cX \to \cX \times_{\cC^{\dd,1}} \cX$ is a monomorphism, resp.\ \'etale, resp.\ 
an isomorphism.

Now since $\cX \to \Spec A^{\dist} \times_{\F} \cC^{\dd,1}$ is a monomorphism,
the diagonal morphism $\cX \to \cX \times_{\Spec A^{\dist}\times_{\F} \cC^{\dd,1}}
\cX$ {\em is} an isomorphism, 
and so it is equivalent to show that the morphism of products
$$\cX \times_{\Spec A^{\dist}\times_{\F} \cC^{\dd,1}} \cX \to
\cX\times_{\cC^{\dd,1}} \cX$$
is a monomorphism, resp.\ \'etale, resp.\ an isomorphism.
This is in turn equivalent to showing the corresponding properties
for the morphisms
\numequation
\label{eqn:first closed immersion}
  \Spec\Bdist\times_{\Spec \Adist\times \cC^{\dd,1}}\Spec\Bdist \to
 \Spec\Bdist \times_{\cC^{\dd,1}}\Spec\Bdist 
 \end{equation}
 or
\numequation
\label{eqn:second closed immersion}
  X \times_{\Spec \Adist\times \cC^{\dd,1}}X \to
 X \times_{\cC^{\dd,1}} X.
 \end{equation}
 Now each of these morphisms is a base-change of the diagonal
 $\Spec A^{\dist}\to \Spec A^{\dist} \times_{\F} \Spec A^{\dist},$
 which is a closed immersion (affine schemes being separated),
 and so is itself a closed immersion.   In particular,
 it is a monomorphism, and so we have proved the representability
 by algebraic spaces
 of each of~(\ref{eqn:unramified morphism}) and~(\ref{eqn:monomorphism}).
Since the source and target of each of these monomorphisms
is of finite type
over~$\F$, by Lemma~\ref{lem: fibre products of finite type}, 
in order to show that either of these monomorphisms is
an isomorphism,
 it suffices to show that it induces a surjection on 
$R$-valued points, for arbitrary finite type $\F$-algebras $R$.
Similarly, to check that the
closed immersion~(\ref{eqn:first closed immersion}) is \'etale,
it suffices to verify that it is formally smooth,
and for this it suffices to verify that it satisfies the
infinitesimal lifting property 
with respect to square zero thickenings of finite type
$\F$-algebras.

A morphism $\Spec R\to \Spec\Bdist
  \times_{\cC^{\dd,1}}\Spec\Bdist$ corresponds to an isomorphism class
of tuples $(\alpha,\alpha',\beta:\gE\to\gE',\iota,\iota',\pi,\pi')$, where
\begin{itemize}
\item $\alpha,\alpha'$ are morphisms
$\alpha,\alpha':\Spec R\to\Spec \Adist$,
\item  $\beta:\gE\to\gE'$ is an isomorphism of Breuil--Kisin modules
with descent data and coefficients in $R$, 
\item $\iota:\alpha^* \gN \to \gE$, $\iota':(\alpha')^* \gN \to \gE'$, $\pi:\gE \to
  \alpha^* \gM$ and $\pi':\gE' \to
  (\alpha')^* \gM$ are morphisms
with the properties that $0 \to \alpha^*\gN \buildrel \iota
\over \to \gE \buildrel \pi  \over \to \alpha^* \gM \to 0$ and  $0 \to (\alpha')^*\gN \buildrel \iota'
\over \to \gE' \buildrel \pi'  \over \to (\alpha')^* \gM \to 0$ are both
short exact. 
\end{itemize}

We begin by proving that~(\ref{eqn:first closed immersion}) satisfies
the infinitesimal lifting criterion (when $R$ is a finite type $\F$-algebra).
Thus we assume given a square-zero ideal $I \subset R$, 
such that the induced morphism
$$\Spec R/I \to \Spec B^{\dist} \times_{\cC^{\dd,1}} \Spec B^{\dist}$$
factors through $\Spec B^{\dist}\times_{\Spec A^{\dist} \times_{\F} \cC^{\dd,1}} 
\Spec B^{\dist}$. In terms of the data
$(\alpha,\alpha',\beta:\gE\to\gE',\iota,\iota',\pi,\pi')$, 
we are assuming that $\alpha$ and $\alpha'$ coincide when restricted
to~$\Spec R/I$, and 
we must show that $\alpha$ and $\alpha'$ themselves coincide.

To this end, we consider the composite
\numequation
\label{eqn:key composite}
\alpha^*\gN\stackrel{\iota}{\to}\gE\stackrel{\beta}{\to}\gE'\stackrel{\pi'}{\to}(\alpha')^*\gM. 
\end{equation}
If we can show the vanishing of this morphism,
then by reversing the roles of $\gE$ and $\gE'$,
we will similarly deduce the vanishing of
$\pi \circ \beta^{-1} \circ \iota'$,  
from which we can conclude that $\beta$ induces an isomorphism between
$\alpha^*\gN$ and $(\alpha')^*\gN$. Consequently, it also induces an
isomorphism between~$\alpha^*\gM$ and~$(\alpha')^*\gM$, so it follows from
Lemma~\ref{lem: isomorphic twists are the same twist} that  $\alpha=\alpha'$,
as required. 

We show the vanishing of~(\ref{eqn:key composite}).
Suppose to the contrary that it doesn't vanish,
so that we have a non-zero morphism
$\alpha^*\gN\to (\alpha')^*\gM.$
It follows from Proposition~\ref{prop:vanishing of homs} that,
for some maximal ideal $\m$ of $R$, there exists a non-zero morphism
\[\alpha^*(\gN)\otimes_R\kappa(\mathfrak m) 
  {\to}(\alpha')^*(\gM)\otimes_R\kappa(\mathfrak
  m).\] 
By assumption $\alpha$ and $\alpha'$ coincide modulo $I$.  Since $I^2 = 0$,
there is an inclusion $I \subset \mathfrak m$,
and so in particular we find that
$$(\alpha')^*(\gM) \otimes_R \kappa(\mathfrak m)
\iso \alpha^*(\gM)\otimes_R \kappa(\mathfrak m).$$
Thus there exists a non-zero morphism
\[\alpha^*(\gN)\otimes_R\kappa(\mathfrak m) 
  {\to}\alpha^*(\gM)\otimes_R\kappa(\mathfrak m).\] 
Then, by Lemma~\ref{lem:rank one isomorphism over a field},
  after inverting~$u$ we obtain an isomorphism
\[\alpha^*(\gN)\otimes_R\kappa(\mathfrak m) [1/u]
  {\iso}\alpha^*(\gM)\otimes_R\kappa(\mathfrak m)[1/u],\] 
contradicting the assumption that $\alpha$ maps $\Spec R$
into $\Spec A^{\dist}$.
This completes the proof that~(\ref{eqn:first closed immersion})
is formally smooth, and hence that~(\ref{eqn:unramified morphism})
is unramified.

We next show that, in the strict case,
the closed immersion~(\ref{eqn:first closed immersion}) 
is an isomorphism, and thus that~(\ref{eqn:unramified morphism})
is actually a monomorphism. As noted above, 
it suffices to show that~(\ref{eqn:first closed immersion})
induces a surjection on $R$-valued points for finite type $\F$-algebras $R$,
which in terms of the data
$(\alpha,\alpha',\beta:\gE\to\gE',\iota,\iota',\pi,\pi')$, 
amounts to showing that necessarily $\alpha = \alpha'$.
Arguing just as we did above,
it suffices show the vanishing of~(\ref{eqn:key composite}).

Again, we suppose for the sake of contradiction that~(\ref{eqn:key composite})
does not vanish. It then follows
from Proposition~\ref{prop:vanishing of homs} that
for some maximal ideal $\m$ of $R$ there exists a non-zero
morphism \[\alpha^*(\gN)\otimes_R\kappa(\mathfrak m) 
  {\to}(\alpha')^*(\gM)\otimes_R\kappa(\mathfrak
  m).\] 
Then, by Lemma~\ref{lem:rank one isomorphism over a field},
  after inverting~$u$ we obtain an isomorphism
  \numequation
  \label{eqn:key isomorphism}
  \alpha^*(\gN)\otimes_R\kappa(\mathfrak
  m)[1/u]
  \iso (\alpha')^*(\gM)\otimes_R\kappa(\mathfrak m)[1/u].
  \end{equation}
In the strict case, such an isomorphism cannot exist by assumption,
and thus~(\ref{eqn:key composite}) must vanish.

We now turn to proving that~(\ref{eqn:second closed immersion}) 
is an isomorphism. Just as in the preceding arguments,
it suffices to show that~(\ref{eqn:key composite}) vanishes, and
if not 
then we obtain an isomorphism~(\ref{eqn:key isomorphism}).
Since 
we are considering points of $X\times X$,
we are given that the
induced extension $\gE'\otimes_R\kappa(\mathfrak m)[1/u]$ is non-split,
so that the base change of the morphism~(\ref{eqn:key composite})
from $R[[u]]$ to $\kappa(\mathfrak m)((u))$ must vanish.  Consequently
the composite $\beta\circ \iota$ induces a non-zero morphism
$\alpha^*(\gN)\otimes_R\kappa(\mathfrak m)[1/u] \to (\alpha')^*(\gN)
\otimes_R\kappa(\mathfrak m)[1/u],$
which, by Lemma~\ref{lem:rank one isomorphism over a field},
must in fact be an isomorphism.  Comparing this isomorphism
with the isomorphism~(\ref{eqn:key isomorphism}),
we find that
$(\alpha')^*(\gN)\otimes_R\kappa(\mathfrak m)[1/u]$
and
$(\alpha')^*(\gM)\otimes_R\kappa(\mathfrak m)[1/u]$
are isomorphic, contradicting the fact that $\alpha'$ maps 
$\Spec R$ to $\Spec A^{\dist}$.
Thus in fact the composite~(\ref{eqn:key composite}) must vanish,
and we have completed the proof that~(\ref{eqn:monomorphism})
is a monomorphism.

To complete the proof of part~(1) of the proposition,
we have to show that the fibres of~(\ref{eqn:unramified morphism})
are of degree at most $2$. We have already observed that $[\Spec \Bdist/ \Gm \times_{\F} \Gm ] \to
  \Spec \Adist\times_{\F} \cC^{\dd,1}$ is a monomorphism, so it is
  enough to check that given a finite extension~$\F'/\F$ and an
  isomorphism class of tuples $(\alpha,\alpha',\beta:\gE\to\gE',\iota,\iota',\pi,\pi')$, where
\begin{itemize}
\item $\alpha,\alpha'$ are distinct morphisms
$\alpha,\alpha':\Spec \F'\to\Spec \Adist$,
\item  $\beta:\gE\to\gE'$ is an isomorphism of Breuil--Kisin modules
with descent data and coefficients in $\F'$, 
\item $\iota:\alpha^* \gN \to \gE$, $\iota':(\alpha')^* \gN \to \gE'$, $\pi:\gE \to
  \alpha^* \gM$ and $\pi':\gE' \to
  (\alpha')^* \gM$ are morphisms
with the properties that $0 \to \alpha^*\gN \buildrel \iota
\over \to \gE \buildrel \pi  \over \to \alpha^* \gM \to 0$ and  $0 \to (\alpha')^*\gN \buildrel \iota'
\over \to \gE' \buildrel \pi'  \over \to (\alpha')^* \gM \to 0$ are both
short exact. 
\end{itemize}
then~$\alpha'$ is determined by the data of~$\alpha$ and~$\gE$. To see
this, note that since we are assuming that~$\alpha'\ne\alpha$, the
arguments above show that~(\ref{eqn:key composite}) does not vanish,
so that (since~$\F'$ is a field), we have an
isomorphism~$\alpha^*\gN[1/u]\isoto(\alpha')^*\gM[1/u]$. Since we are
over~$\Adist$, it follows that~$\gE[1/u]\cong\gE'[1/u]$ is split, and
that we also have an
isomorphism~$\alpha^*\gM[1/u]\isoto(\alpha')^*\gN[1/u]$. Thus
if~$\alpha''$ is another possible choice for~$\alpha'$, we have
$(\alpha'')^*\gM[1/u]\isoto(\alpha')^*\gM[1/u]$ and
$(\alpha'')^*\gN[1/u]\isoto(\alpha')^*\gN[1/u]$,
whence~$\alpha''=\alpha'$ by Lemma~\ref{lem: isomorphic twists are the
  same twist}, as required.

We turn to proving~(2), and thus
assume that
$$\kExt^1_{\K{\kappa(\mathfrak m)}}\bigl( \gM_{\kappa(\mathfrak m),\xbar},
\gN_{\kappa(\mathfrak m),\ybar}\bigr)=0$$
for all maximal ideals $\mathfrak m$ of $\Akfree$.

Lemma~\ref{lem:C to R, with maps to Akfree} shows that
$$\Spec B^{\kfree} \times_{\Spec A^{\kfree}\times_{\F}
	\cC^{\dd,1}}\Spec B^{\kfree} \to
\Spec B^{\kfree} \times_{\Spec A^{\kfree}\times_{\F}
	\cR^{\dd,1}}\Spec B^{\kfree}$$
is an isomorphism, from which we deduce
that
$$[\Spec B^{\kfree}/\Gm\times_{\F} \Gm] \to \Spec A^{\kfree}\times_{\F}
\cR^{\dd,1}$$
is a monomorphism.
Using this as input, the claims of~(2) may be proved in an essentially identical
fashion to those of~(1). 
\end{proof}

 \begin{cor} 
  \label{cor: dimension of families of extensions}
The dimension of $\overline{\cC}(\gM,\gN)$ 
is equal to 
the rank of $T_\Adist$ as a projective $\Adist$-module.  
If 
$$\kExt^1_{\K{\kappa(\mathfrak m)}}\bigl( \gM_{\kappa(\mathfrak m),\xbar},
\gN_{\kappa(\mathfrak m),\ybar}\bigr)=0$$
for all maximal ideals $\mathfrak m$ of $A^{\kfree}$, 
then the dimension of $\overline{\cZ}(\gM,\gN)$ is also equal to this rank,
while 
if
$$\kExt^1_{\K{\kappa(\mathfrak m)}}\bigl( \gM_{\kappa(\mathfrak m),\xbar},
\gN_{\kappa(\mathfrak m),\ybar}\bigr) \neq 0$$
for all maximal ideals $\mathfrak m$ of $A^{\kfree}$, 
then the dimension of $\overline{\cZ}(\gM,\gN)$ is strictly less than this rank.
\end{cor}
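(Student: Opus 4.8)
\textbf{Proof plan for Corollary~\ref{cor: dimension of families of extensions}.}

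The plan is to compute $\dim\overline{\cC}(\gM,\gN)$ and $\dim\overline{\cZ}(\gM,\gN)$ by exploiting the monomorphisms constructed in Proposition~\ref{prop: construction of family monomorphing to C and R} together with the explicit quotient-stack presentations. First I would recall that $\overline{\cC}(\gM,\gN)$ is the scheme-theoretic image of $\xi^{\dist}\colon\Spec\Bdist\to\cC^{\dd,1}$, and that by Lemma~\ref{lem: scheme theoretic images coincide} it equals the closure of the image of $\xi_X\colon X\to\cC^{\dd,1}$ (in the non-strict case; in the strict case $X=\Spec\Bdist$ and one works directly with $\xi^{\dist}$). By part~(1) of Proposition~\ref{prop: construction of family monomorphing to C and R}, the induced morphism $[X/\Gm\times_\F\Gm]\hookrightarrow\cC^{\dd,1}$ is a finite type monomorphism, so $\dim\overline{\cC}(\gM,\gN)=\dim[X/\Gm\times_\F\Gm]=\dim X-2$, using that the $\Gm\times_\F\Gm$-action is free enough (indeed, since the morphism to $\cC^{\dd,1}$ is a monomorphism and $\cC^{\dd,1}$ has affine diagonal, the stabilisers are trivial, so the quotient really does drop dimension by $2$). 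Now $X$ is a dense open in the vector bundle $\Spec\Bkfree$ over $\Spec\Akfree$, which has relative dimension $\rk_{\Akfree}T_{\Akfree}=\rk_{\Adist}T_{\Adist}$, and $\dim\Spec\Akfree=\dim\Spec\Adist=2$ (both being nonempty open subschemes of $\Gm\times_\F\Gm$). Hence $\dim X=2+\rk_{\Adist}T_{\Adist}$ and $\dim\overline{\cC}(\gM,\gN)=\rk_{\Adist}T_{\Adist}$, as claimed. (In the strict case the identical computation applies with $X$ replaced by $\Spec\Bdist$.)

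Next I would treat $\overline{\cZ}(\gM,\gN)$, which is the scheme-theoretic image of $\Spec\Bdist\to\cR^{\dd,1}$, equivalently (by Lemma~\ref{lem: scheme theoretic images coincide} and density, Remark~\ref{rem:Zariski density}) the closure of the image of $\xi_X$ followed by $\cC^{\dd,1}\to\cR^{\dd,1}$. When $\kExt^1_{\K{\kappa(\m)}}(\gM_{\kappa(\m),\xbar},\gN_{\kappa(\m),\ybar})=0$ for all maximal ideals $\m$ of $\Akfree$, part~(2) of Proposition~\ref{prop: construction of family monomorphing to C and R} (and the equivalence of the conditions in Lemma~\ref{lem:fibres of kExt}, which forces $X=\Spec\Bkfree$ since $\Ckfree$ is then the trivial bundle) shows that $[X/\Gm\times_\F\Gm]\hookrightarrow\cR^{\dd,1}$ is a finite type monomorphism, so exactly as above $\dim\overline{\cZ}(\gM,\gN)=\dim X-2=\rk_{\Adist}T_{\Adist}$.

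Finally, in the opposite extreme where $\kExt^1_{\K{\kappa(\m)}}(\gM_{\kappa(\m),\xbar},\gN_{\kappa(\m),\ybar})\neq 0$ for all maximal ideals $\m$ of $\Akfree$, Lemma~\ref{lem:fibres of kExt} gives $U_{\Akfree}\neq 0$ at every point, so $\Ckfree=A^{\kfree}[U_{\Akfree}^\vee]$ is a vector subbundle of $\Spec\Bkfree$ of positive relative rank. The map $\overline{\cZ}(\gM,\gN)$ is the closure of the image of $\Spec\Bkfree$ in $\cR^{\dd,1}$, but by definition of $\kExt^1$ the extensions of Breuil--Kisin modules parametrised by $\Spec\Ckfree$ become split after inverting $u$, i.e.\ they all have the \emph{same} image in $\cR^{\dd,1}$ (namely the point classifying $\gM_{\kappa(\m),\xbar}[1/u]\oplus\gN_{\kappa(\m),\ybar}[1/u]$, fibrewise over $\Spec\Akfree$). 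Thus the image of $\Spec\Bkfree$ in $\cR^{\dd,1}$ coincides set-theoretically with the image of $\Spec\Bkfree/(\text{the }\Ckfree\text{-directions})$, a bundle over $\Spec\Akfree$ of relative rank $\rk(T_{\Akfree}/U_{\Akfree})<\rk T_{\Akfree}$; after dividing by $\Gm\times_\F\Gm$ one gets $\dim\overline{\cZ}(\gM,\gN)\leq 2+\rk(T_{\Adist}/U)-2<\rk_{\Adist}T_{\Adist}$. I expect the main obstacle to be making this last step rigorous: one must argue carefully that the scheme-theoretic image of $\Spec\Bkfree$ in $\cR^{\dd,1}$ really does have dimension controlled by $T_{\Akfree}/U_{\Akfree}$ and not by the full $T_{\Akfree}$ --- that is, that collapsing the $\kExt^1$-directions genuinely loses dimension in $\cR^{\dd,1}$. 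This should follow by factoring the morphism $\Spec\Bkfree\to\cR^{\dd,1}$ through the quotient by the $\Ckfree$-subbundle (using that $\kExt^1$-extensions have isomorphic images in $\cR^{\dd,1}$, proved as in Corollary~\ref{cor: monomorphism to Spec A times C} and Lemma~\ref{lem:C to R, with maps to Akfree}), and then bounding the dimension of the image by the dimension of the source quotient, together with upper semicontinuity of fibre dimension for the monomorphism-up-to-$\Gm\times_\F\Gm$ on the quotient bundle.
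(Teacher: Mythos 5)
Your strategy is the same as the paper's in outline (compute the dimension of the quotient stack, then transfer to the scheme-theoretic image), but you lean on the \emph{monomorphisms} from Proposition~\ref{prop: construction of family monomorphing to C and R} where the paper instead uses the \emph{unramified morphisms with bounded fibres} from the same proposition. That distinction matters and your version has a gap. In the non-strict case you work with the finite-type monomorphism $[X/\Gm\times_\F\Gm]\hookrightarrow\cC^{\dd,1}$; but if $X$ happens to be empty (that is, if $U_{\Akfree}=T_{\Akfree}$, so that every extension over $\Akfree$ becomes split after inverting $u$), then Lemma~\ref{lem: scheme theoretic images coincide} does not identify the scheme-theoretic image of $\xi_X$ with $\overline{\cC}(\gM,\gN)$, and the argument stalls. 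Likewise in your second part you assert ``$X=\Spec\Bkfree$'' when $U_{\Akfree}=0$; in fact $\Spec\Ckfree=\Spec\Akfree$ is the zero section and $X$ is only its complement, which is dense precisely when $\rk T>0$. The paper sidesteps both issues by using the morphism $[\Spec\Bdist/\Gm\times_\F\Gm]\to\cC^{\dd,1}$ (resp.\ $[\Spec\Bkfree/\Gm\times_\F\Gm]\to\cR^{\dd,1}$), which Proposition~\ref{prop: construction of family monomorphing to C and R} shows is always representable by algebraic spaces and unramified with fibres of degree at most~$2$; being locally quasi-finite, the scheme-theoretic image has the same dimension as the source by~\cite[\href{https://stacks.math.columbia.edu/tag/0DS6}{Tag 0DS6}]{stacks-project}, with no case division.

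Your parenthetical about trivial stabilisers is not needed (and is subtly misleading): for any quotient stack $[Y/G]$ the dimension is $\dim Y-\dim G$ regardless of freeness, because $Y\to[Y/G]$ is a $G$-torsor. And in the third part, your intuition is exactly right, but you don't need to construct a quotient bundle and worry about factoring set-theoretically: the paper's cleaner version is that every fibre of $[\Spec\Bkfree/\Gm\times_\F\Gm]\to\cR^{\dd,1}$ is positive-dimensional (because the $\kExt^1$-translates of a given extension all map to the same point of $\cR^{\dd,1}$), and then \cite[\href{https://stacks.math.columbia.edu/tag/0DS6}{Tag 0DS6}]{stacks-project} directly gives the strict inequality on dimensions. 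That Tag is precisely the rigorous form of the ``collapsing directions loses dimension'' heuristic you were worried about.
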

\begin{proof} The dimension of $[\Spec\Bdist/\Gm\times_{\F} \Gm]$ is equal to
	the rank of $T_{A^{\dist}}$ (it is the quotient
	by a two-dimensional group of a vector bundle over a two-dimensional base of rank
	equal to the rank of $T_{A^{\dist}}$). By Lemma~\ref{lem: scheme theoretic images coincide},
	$\overline{\cC}(\gM,\gN)$ is the scheme-theoretic image
	of the morphism
	$[\Spec\Bdist/\Gm\times_{\F}\Gm] \to \cC^{\dd,1}$
	provided by
	Proposition~\ref{prop: construction of family monomorphing to
          C and R}(1), which (by that proposition) is representable 
  by algebraic spaces and unramified.
  Since such a morphism is locally quasi-finite
  (in fact, in this particular case, 
  we have shown that the fibres of this morphism have degree at
        most~$2$), \cite[\href{https://stacks.math.columbia.edu/tag/0DS6}{Tag 0DS6}]{stacks-project}
 ensures
	that $\overline{\cC}(\gM,\gN)$ has the claimed dimension.

	If 
$\kExt^1_{\K{\kappa(\mathfrak m)}}\bigl( \gM_{\kappa(\mathfrak m),\xbar},
\gN_{\kappa(\mathfrak m),\ybar}\bigr) = 0$
for all maximal ideals $\mathfrak m$ of $A^{\kfree}$, 
then an identical argument using Proposition~\ref{prop: construction of family monomorphing to
          C and R}(2) implies the claim regarding the dimension
of $\overline{\cZ}(\gM,\gN)$.

Finally, suppose that 
$$\kExt^1_{\K{\kappa(\mathfrak m)}}\bigl( \gM_{\kappa(\mathfrak m),\xbar},
\gN_{\kappa(\mathfrak m),\ybar}\bigr) \neq 0$$
for all maximal ideals $\mathfrak m$ of $A^{\kfree}$. 
Then the composite $[\Spec\Bkfree/\Gm\times_{\F} \Gm] \to \cC^{\dd,1} \to \cR^{\dd,1}$
has the property that for every point $t$ in the source, the fibre over the
image of $t$ has a positive dimensional fibre.  \cite[\href{https://stacks.math.columbia.edu/tag/0DS6}{Tag 0DS6}]{stacks-project} then implies the remaining
	claim of the present lemma. 
\end{proof}

\subsection{Rank one modules over finite fields, and their extensions}
\label{subsec: Diamond--Savitt}

We now wish to apply the results of the previous subsections to study
the geometry of our various moduli stacks. In order to do this, it
will be convenient for us to have an explicit description of the
rank one Breuil--Kisin modules of height at most one with descent data over a
finite field of characteristic $p$, and of their possible extensions. 
Many of the results in this section are proved (for $p>2$)  in~\cite[\S 1]{DiamondSavitt} in the context of
Breuil modules, and in those cases it
is possible simply to translate the relevant statements to the Breuil--Kisin module context.

Assume from now on that $e(K'/K)$ is divisible by $p^{f}-1$, so
  that we are in the setting of~\cite[Remark 1.7]{DiamondSavitt}.
  (Note that the parallel in \cite{DiamondSavitt} of our field
  extension $K'/K$, with ramification and inertial indices $e',f'$ and
  $e,f$ respectively, is the extension $K/L$ with indices $e,f$ and
  $e',f'$ respectively.) 

Let $\F$ be a finite subfield of $\Fpbar$ containing the image of some (so all)
embedding(s) $k'\into\Fpbar$. 
Recall that for each
$g\in\Gal(K'/K)$ we write $g(\pi')/\pi'=h(g)$ with $h(g)\in
\mu_{e(K'/K)}(K') \subset W(k')$. We abuse notation and
denote the image of $h(g)$ in $k'$ again by $h(g)$, so that we obtain
a map 
$\hchar \colon \Gal(K'/K) \to (k')^{\times}$. 
Note that~$\hchar$ restricts to a character on the inertia
subgroup $I(K'/K)$, and is itself a character when $e(K'/K) = p^f-1$.

\begin{lem}
  \label{lem:rank one Kisin modules with descent data}Every rank one
  Breuil--Kisin module of height at most one with descent data and $\F$-coefficients is isomorphic
  to one of the modules $\gM(r,a,c)$ defined by: 
  \begin{itemize}
  \item $\gM(r,a,c)_i=\F[[u]]\cdot m_i$,
  \item $\Phi_{\gM(r,a,c),i}(1\otimes m_{i-1})=a_{i} u^{r_{i}} m_{i}$,
  \item $\ghat(\sum_i m_i)=\sum_i h(g)^{c_i} m_i$ for all $g\in\Gal(K'/K)$, 
  \end{itemize}
where  $a_i\in\F^\times$,  $r_i\in\{0,\dots,e'\}$ and $c_i\in\Z/e(K'/K)$ are sequences
satisfying 
$pc_{i-1}\equiv c_{i}+r_{i}\pmod{e(K'/K)}$, the sums in the third
bullet point run from $0$ to $f'-1$, and the $r_i,c_i,a_i$ are periodic with
period dividing $f$. 

Furthermore, two such modules $\gM(r,a,c)$ and $\gM(s,b,d)$ are
isomorphic if and only if $r_i=s_i$ and $c_i=d_i$ for all $i$, and $\prod_{i=0}^{f-1}a_i=\prod_{i=0}^{f-1}b_i$.
\end{lem}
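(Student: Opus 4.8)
The strategy is a direct calculation with the defining data, in three stages: first produce the canonical form, then compute the automorphism-free parameters, and finally pin down the coefficient invariant $\prod a_i$.

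First I would establish the classification itself. Given a rank one Breuil--Kisin module $\gM$ of height at most one with descent data and $\F$-coefficients, each $\gM_i=e_i\gM$ is a rank one module over $\F[[u]]$ (by Lemma~\ref{lem:projectivity descends} and the structure theory recalled in Remark~\ref{rem:projectivity for Kisin modules}(1)), hence free; choose a generator $m_i$. The inertial descent data $\ghat$ for $g\in I(K'/K)$ preserves each $\gM_i$ and is $\F[[u]]$-semilinear for the twisted action, so after adjusting $m_i$ by a power of $u$ we may arrange that $\ghat(m_i)=h(g)^{c_i}m_i$ for a well-defined $c_i\in\Z/e(K'/K)$; the full Galois group then acts as in the third bullet because $\Gal(K'/K)$ is generated by $I(K'/K)$ together with Frobenius-type elements permuting the $\gM_i$ cyclically, and periodicity with period dividing $f$ follows from compatibility with the $W(k')$-structure exactly as in \cite[\S1]{DiamondSavitt}. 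Writing $\Phi_{\gM,i}(1\otimes m_{i-1})=\phi_i(u)m_i$, the height $\le 1$ condition forces $\phi_i(u)=a_iu^{r_i}$ with $a_i\in\F^\times$ and $r_i\in\{0,\dots,e'\}$ (here one uses that $E(u)\equiv u^{e'}$ times a unit modulo $p$, cf.\ the proof of Lemma~\ref{lem:kisin injective}). The congruence $pc_{i-1}\equiv c_i+r_i\pmod{e(K'/K)}$ is precisely the condition that $\Phi_{\gM,i}$ commute with the descent data, computed by applying $\ghat$ to both sides of the formula for $\Phi_{\gM,i}$ and using $\varphi(e_{i-1})=e_i$.

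Next I would prove the ``only if'' direction of the isomorphism criterion, which is the bulk of the content. An isomorphism $\gM(r,a,c)\to\gM(s,b,d)$ sends $m_i\mapsto \mu_i m_i'$ for some $\mu_i\in\F[[u]]^\times$ (it must be an isomorphism on each $\gM_i$, and $\F[[u]]$-linearity gives multiplication by an element, which must be a unit). Compatibility with descent data forces $c_i\equiv d_i\pmod{e(K'/K)}$, hence $c_i=d_i$. Compatibility with $\Phi$ gives the relation $\mu_i\, b_i u^{s_i}=a_i u^{r_i}\,\varphi(\mu_{i-1})$, i.e.\ $\mu_i = (a_i/b_i)\,u^{r_i-s_i}\,\varphi(\mu_{i-1})$. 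Comparing lowest-degree terms in $u$ around the cycle $i=0,\dots,f'-1$ shows $r_i=s_i$ for all $i$ (otherwise the $u$-adic valuations of the $\mu_i$ could not be consistent and positive; this is the standard ``going around the cycle'' argument, essentially the one used in Lemma~\ref{lem: isomorphic twists are the same twist}). With $r_i=s_i$ the relation becomes $\mu_i=(a_i/b_i)\varphi(\mu_{i-1})$; writing $\mu_i = \nu_i + u(\cdots)$ with $\nu_i\in\F^\times$ the constant term, and using that $\varphi$ is the identity on $\F$, iterating around one period of length $f$ yields $\nu_0 = \bigl(\prod_{i=0}^{f-1}a_i/b_i\bigr)\nu_0$, hence $\prod_{i=0}^{f-1}a_i=\prod_{i=0}^{f-1}b_i$.

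Finally, for the ``if'' direction one simply constructs the isomorphism: given $r_i=s_i$, $c_i=d_i$, and $\prod a_i=\prod b_i$, solve the recursion $\mu_i=(a_i/b_i)\varphi(\mu_{i-1})$ for constants $\mu_i\in\F^\times$ (the product condition is exactly what makes this solvable consistently around the period), and check that $m_i\mapsto\mu_i m_i'$ is the desired isomorphism. The main obstacle is the ``going around the cycle'' step forcing $r_i=s_i$: one has to handle carefully the interaction between the $\varphi$-twisting, the possibly nonzero exponents $r_i-s_i$, and the requirement that the $\mu_i$ be honest units in $\F[[u]]$; everything else is a routine unwinding of the definitions, and all of it closely parallels \cite[\S1, esp.\ Remark~1.7]{DiamondSavitt} with Breuil modules replaced by Breuil--Kisin modules.
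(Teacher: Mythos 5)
Your plan follows the same route that the paper intends --- the paper simply points to \cite[Thm.~2.1, Thm.~3.5]{SavittRaynaud} (and, implicitly, the parallel calculations of \cite[\S1]{DiamondSavitt}) for the elementary direct calculation --- and the overall outline is sound, but several steps as stated are off, and one of them is a genuine gap.

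On normalising the generators: to put the inertial descent data into the form $\ghat(m_i)=h(g)^{c_i}m_i$ you must adjust $m_i$ by a \emph{unit} of $\F[[u]]$, not a power of $u$ (multiplying $m_i$ by a power of $u$ does not give a new generator of $\gM_i$). What makes this adjustment possible is that $I(K'/K)$ has order prime to $p$. Likewise, the height $\le 1$ condition alone only gives $\Phi_{\gM,i}(1\otimes m_{i-1})=\text{(unit)}\cdot u^{r_i}m_i$ with $r_i\le e'$; to obtain a \emph{constant} unit $a_i$ one needs a further change of generator $m_i\mapsto \beta_i(u)m_i$, with each $\beta_i$ a unit supported in degrees divisible by $e(K'/K)$ (so as not to disturb the normalised descent data), satisfying $\beta_i(u)\varphi(\beta_{i-1})(u)^{-1}$ equal to the unit part of $\Phi_{\gM,i}$ up to constant. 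That recursion is solved by successive approximation because the error terms lie in $1+u^{e(K'/K)}\F[[u]]$ and $\varphi$ raises $u$-adic valuations. This is a real step, not an automatic consequence of the height bound.

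On the ``only if'' direction of the isomorphism criterion: you identify a ``going around the cycle'' argument as the main obstacle for $r_i=s_i$, but that step is actually immediate: since each $\mu_i$ is a unit, comparing $u$-adic valuations in the single equation $a_iu^{r_i}\mu_i=b_iu^{s_i}\varphi(\mu_{i-1})$ already gives $r_i=s_i$. No iteration around the cycle is needed there.

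The genuine gap is in the final step. To deduce $\prod_{i=0}^{f-1}a_i=\prod_{i=0}^{f-1}b_i$ you ``iterate around one period of length $f$'', which takes for granted that $\mu_f=\mu_0$. A priori the $\mu_i$ are indexed modulo $f'$, and iterating the recursion $f'$ times only gives $\bigl(\prod_{i=0}^{f-1}a_i/b_i\bigr)^{f'/f}=1$, which is strictly weaker when $f'>f$ (in the cuspidal case $f'=2f$ it yields only $\prod a_i=\pm\prod b_i$). The $f$-periodicity $\mu_{i+f}=\mu_i$ is not automatic; it comes from requiring the isomorphism to commute with the descent data for a Frobenius lift in $\Gal(K'/K)$, which permutes the idempotents $e_i$ by $i\mapsto i+f$. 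This is exactly the constraint that pins down $\prod a_i$, rather than merely a power of it, as an invariant, and it needs to be made explicit.
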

\begin{proof}
The proof is elementary; see e.g.\ \cite[Thm.~2.1,
Thm.~3.5]{SavittRaynaud} for proofs of analogous results.
\end{proof}

We will sometimes refer to the element $m = \sum_i m_i \in
\gM(r,a,c)$ as the standard generator of $\gM(r,a,c)$.

\begin{rem}
 When $p > 2$
many of the results in this section (such as the above) can
be obtained by translating \cite[Lem.\ 1.3, Cor.\
1.8]{DiamondSavitt} from the Breuil module context to the Breuil--Kisin module context.
We briefly recall the dictionary between these two categories
(\emph{cf.}\ \cite[\S 1.1.10]{kis04}). If $A$ is a finite local
$\Zp$-algebra, write $S_A = S \otimes_{\Zp} A$, where $S$ is Breuil's
ring. We regard $S_A$ as a $\gS_A$-algebra via
$u\mapsto u$, and we let $\varphi:\gS_A\to S_A$ be the composite of this map with
$\varphi$ on $\gS_A$. Then given a Breuil--Kisin module of height at most~$1$ with descent data $\gM$, 
we
set $\cM:=S_A\otimes_{\varphi,\gS_A}\gM$. We have a map $1\otimes\varphi_\gM:S_A\otimes_{\varphi,\gS_A}\gM\to S_A \otimes_{\gS_A}\gM$,
and we set \[\Fil^1\cM:=\{x\in\cM\ :\
(1\otimes \varphi_\gM)(x)\in\Fil^1S_A\otimes_{\gS_A}\gM\subset
S_A\otimes_{\gS_A}\gM\}\]and define $\varphi_1:\Fil^1\cM\to\cM$ as the
composite \[\Fil^1\cM\overset{1\otimes\varphi_\gM}{\longrightarrow}\Fil^1S_A\otimes_{\gS_A}\gM\overset{\varphi_1\otimes
  1}{\longrightarrow}S_A\otimes_{\varphi,\gS_A}\gM=\cM.\]Finally, we define $\hat{g}$ on $\cM$
via $\hat{g}(s\otimes m)=g(s)\otimes \hat{g}(m)$. One checks without difficulty
that this makes $\cM$ a strongly divisible module with descent data 
(\emph{cf.}\ the
proofs of~\cite[Proposition 1.1.11, Lemma 1.2.4]{kis04}).

  In the correspondence described above, the Breuil--Kisin module $\gM((r_i),(a_i),(c_i))$ corresponds to the Breuil module $\cM((e'-r_i),(a_i),(pc_{i-1}))$ 
of~\cite[Lem.\ 1.3]{DiamondSavitt}. 
\end{rem}

\begin{defn}
If $\gM = \gM(r,a,c)$ is a rank one Breuil--Kisin module as described in the 
preceding lemma, we set $\alpha_i(\gM)  :=   (p^{f'-1} r_{i-f'+1} + \cdots +
r_{i})/(p^{f'} - 1)$ (equivalently, $(p^{f-1} r_{i-f+1} +  \cdots 
+ r_{i})/(p^f-1)$). We may abbreviate $\alpha_i(\gM)$ simply as $\alpha_i$
when $\gM$ is clear from the context.

 It follows easily from the congruence $r_i 
\equiv pc_{i-1} - c_i \pmod{e(K'/K)}$ together with the hypothesis 
that $p^f-1 \mid e(K'/K)$  that $\alpha_i \in \Z$ for all $i$. Note
that the $\alpha_i$'s are the unique solution to the system of
equations $p \alpha_{i-1} - \alpha_i = r_i$ for all $i$. Note also
that $(p^f-1)(c_i-\alpha_i) \equiv 0 \pmod{e(K'/K)}$, so that
$\hchar^{c_i-\alpha_i}$ is a character with image in $k^{\times}$.
\end{defn}

\begin{lem}
  \label{lem: generic fibres of rank 1 Kisin modules}We have
  $T(\gM(r,a,c))=\left(\sigma_i\circ\hchar^{c_i-\alpha_{i}}\cdot\ur_{\prod_{i=0}^{f-1}a_i}\right)|_{G_{K_\infty}}$,
  where $\ur_\lambda$ is the unramified character of $G_K$ sending
  geometric Frobenius to $\lambda$. 
\end{lem}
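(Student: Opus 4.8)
The plan is to compute the Galois representation $T(\gM(r,a,c))$ directly from the definition of $T_A$ applied to the \'etale $\varphi$-module $\gM(r,a,c)[1/u]$. First I would reduce to the case where the field of norms correspondence is most transparent: the representation $T(\gM)$ of $G_{K_\infty}$ is determined by its restriction to $G_{K'_\infty}$ (forgetting descent data) together with the semilinear $\Gal(K'/K)$-action encoded by the $\hat g$. So the first step is to compute $T(\gM')$ where $\gM'$ is $\gM(r,a,c)$ with the descent data forgotten; this is a rank one \'etale $\varphi$-module over $\gS'_\F[1/u]$ (working over $K'$), and by the standard Kisin/Fontaine theory a rank one \'etale $\varphi$-module with $\Phi$ given by a single scalar times a power of $u$ has as its associated character an explicitly computable product of a fundamental character (for the $u$-power part) and an unramified character (for the scalar part). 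Concretely, the factorisation $p\alpha_{i-1} - \alpha_i = r_i$ is exactly the recursion that lets one change basis $m_i \mapsto [\underline\pi']^{-\alpha_i} m_i$ (using $u = [\underline{\pi}']$ in $W(R)$) to absorb the $u^{r_i}$ into the Teichm\"uller element, reducing to the unramified twist case; the scalar part contributes $\ur_{\prod_i a_i}$ as in Lemma~\ref{lem:rank one Kisin modules with descent data} (only the product of the $a_i$ matters, consistent with the isomorphism classification there).

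Next I would track the descent data. The key observation is that $g \in G_{K_\infty}$ acts on $u = [\underline{\pi}']$ by $g(u) = h(g) u$ (as recorded in Section~\ref{subsec: etale phi modules and Galois representations}), while $\hat g$ acts on the standard generator $m_i$ by $h(g)^{c_i}$. The change of basis $m_i \mapsto [\underline\pi']^{-\alpha_i} m_i$ that trivialises the $u$-powers therefore introduces a factor $h(g)^{-\alpha_i}$ into the $g$-action, so that in the trivialised basis $\hat g$ acts by $h(g)^{c_i - \alpha_i}$. Since $(p^f-1)(c_i - \alpha_i) \equiv 0 \pmod{e(K'/K)}$, the character $h^{c_i - \alpha_i}$ factors through $\mu_{p^f-1} \cong k^\times$ and hence is inflated from $I(K'/K)/I(K'/K)^{\text{(tame part)}}$, i.e.\ descends to a genuine character of $I_K$; combining this with Lemma~\ref{lem:cft} (which identifies the composite $I_K \to \cO_K^\times \to k^\times$ with the map $g \mapsto g(\pi^{1/(p^f-1)})/\pi^{1/(p^f-1)}$) identifies this with $\sigma_i \circ \hchar^{c_i - \alpha_i}$ as a character of $I_K$, and hence via $\omega_{\sigma_i}$-type fundamental characters with the stated formula. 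The index shift between $c_i - \alpha_i$ and the embedding $\sigma_i$ needs to be checked carefully against the convention $\sigma_{i+1}^p = \sigma_i$ and $\varphi(e_i) = e_{i+1}$.

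The main obstacle I expect is bookkeeping: getting all the indices, the direction of Frobenius (arithmetic vs.\ geometric), and the normalisation of the Artin map and fundamental characters exactly right, so that the final answer is literally $\sigma_i \circ \hchar^{c_i - \alpha_i} \cdot \ur_{\prod_{i=0}^{f-1} a_i}$ restricted to $G_{K_\infty}$ and not some twist or inverse of it. A clean way to organise this, and the route I would actually take in writing the proof, is to avoid re-deriving the field-of-norms dictionary from scratch and instead cite the known computation in the Breuil module setting: via the dictionary recalled after Lemma~\ref{lem:rank one Kisin modules with descent data}, $\gM(r,a,c)$ corresponds to the Breuil module $\cM((e'-r_i),(a_i),(pc_{i-1}))$ of \cite[Lem.~1.3]{DiamondSavitt}, and the associated Galois character is computed in \cite[Cor.~1.8]{DiamondSavitt} (for $p>2$). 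One then checks the two computations agree, and that the formula is insensitive to the case $p=2$ because both sides are manifestly defined by the same elementary recursion; alternatively one gives the direct argument sketched above, which works for all $p$. Either way, once the character of $G_{K_\infty}$ is pinned down on inertia via Lemma~\ref{lem:cft} and on Frobenius via the unramified twist, the proof is complete.
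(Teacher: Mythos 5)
Your proposal is correct and rests on exactly the same key computation as the paper: the change of basis $m_i \mapsto [\underline{\pi}']^{-\alpha_i} m_i$ (equivalently, exhibiting the $\varphi$-fixed element $\gamma\sum_i [\underline{\pi}']^{-\alpha_i}\otimes m_i$), justified by the recursion $p\alpha_{i-1}-\alpha_i = r_i$, and reading off the $G_{K_\infty}$-action from the way $g$ acts on $[\underline{\pi}']$ and on the $m_i$. The paper packages the reduction slightly differently --- it first reduces to $\gM(0,(a_i),0)$ via a citation and then verifies the $\varphi$-fixed element for general $\gM$, rather than first forgetting descent data --- but the calculation is the same, and the Diamond--Savitt route you propose as an alternative is the one the paper itself points to in the Remark following Lemma~\ref{lem:rank one Kisin modules with descent data}.
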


\begin{proof}
 Set $\gN = \gM(0,(a_i),0)$, so that $\gN$ is effectively a Breuil--Kisin module without
 descent data. Then for $\gN$ this result follows from the second paragraph of the
 proof \cite[Lem.~6.3]{MR3164985}. (Note that the functor $T_{\gS}$ of
 \emph{loc.\ cit.} is dual to our functor $T$;\ \emph{cf}.~\cite[A\
 1.2.7]{MR1106901}. Note also that the fact that the base field is
 unramified in \emph{loc.\ cit.} does not change the calculation.) If
 $n = \sum n_i$ is the standard generator of $\gN$ as in Lemma~\ref{lem:rank one Kisin modules with descent data}, let
 $\gamma \in \Zp^{\un} \otimes_{\Zp} (k' \otimes_{\Fp} \F)$ be an element
 so that $\gamma  m \in
 (\cO_{\widehat{\cE^{\text{nr}}}}\otimes_{\gS[1/u]} \gN[1/u])^{\varphi
   = 1}$.

Now for $\gM$ as in the statement of the lemma it is straightforward to verify
that $$\gamma \sum_{i=0}^{f'-1} [\underline{\pi}']^{-\alpha_{i}} \otimes
m_i \in  (\cO_{\widehat{\cE^{\text{nr}}}}\otimes_{\gS[1/u]}
\gM[1/u])^{\varphi=1},$$ and the result follows.
\end{proof}

One immediately deduces the following.

\begin{cor}
  \label{cor: Kisin modules with the same generic fibre} Let $\gM=\gM(r,a,c)$ and
  $\gN=\gM(s,b,d)$ be rank one Breuil--Kisin modules with descent data as
  above.  We have $T(\gM)=T(\gN)$ if and only if $c_i - \alpha_i(\gM)
  \equiv d_i -  \alpha_i(\gN) \pmod{e(K'/K)}$ for some $i$ {\upshape(}hence for all $i${\upshape)} and $\prod_{i=0}^{f-1}a_i=\prod_{i=0}^{f-1}b_i$.
\end{cor}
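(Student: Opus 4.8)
The final statement, Corollary~\ref{cor: Kisin modules with the same generic fibre}, is an essentially immediate consequence of Lemma~\ref{lem: generic fibres of rank 1 Kisin modules}, which computes $T(\gM(r,a,c))$ explicitly as $\left(\sigma_i\circ\hchar^{c_i-\alpha_{i}}\cdot\ur_{\prod_{i=0}^{f-1}a_i}\right)|_{G_{K_\infty}}$. So the plan is to unwind what it means for two such characters to agree, bearing in mind that we have restricted to $G_{K_\infty}$.

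First I would apply Lemma~\ref{lem: generic fibres of rank 1 Kisin modules} to both $\gM$ and $\gN$, writing $T(\gM) = \left(\sigma_i\circ\hchar^{c_i-\alpha_i(\gM)}\cdot\ur_{\prod a_i}\right)|_{G_{K_\infty}}$ and similarly for $T(\gN)$. Since $K_\infty/K$ is totally wildly ramified (as recalled in the proof of Lemma~\ref{lem: restricting to K_infty doesn't lose information about rbar}), restriction to $G_{K_\infty}$ does not kill either the tame part $\sigma_i\circ\hchar^{\bullet}$ (which factors through $I_K$, and $I_{K_\infty}$ surjects onto the prime-to-$p$ quotient of $I_K$) or the unramified part $\ur_\bullet$ (here one uses that the residue field extension for $K_\infty/K$ is trivial, so the unramified characters of $G_K$ and $G_{K_\infty}$ agree). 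Thus $T(\gM) = T(\gN)$ if and only if the two characters of $G_K$ themselves agree, i.e.\ $\sigma_i\circ\hchar^{c_i-\alpha_i(\gM)}\cdot\ur_{\prod a_i} = \sigma_i\circ\hchar^{d_i-\alpha_i(\gN)}\cdot\ur_{\prod b_i}$ as characters $G_K\to\Fpbarx$.

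Next I would separate the unramified and tamely ramified parts. Comparing the restrictions to $I_K$ gives that $\sigma_i\circ\hchar^{c_i-\alpha_i(\gM)}$ and $\sigma_i\circ\hchar^{d_i-\alpha_i(\gN)}$ agree as characters of $I_K$. Since $\hchar$ has image in $k^\times$ (after the twist by $\alpha_i$, as recorded just before the lemma, $\hchar^{c_i-\alpha_i}$ genuinely is a character valued in $k^\times$), and $\hchar$ itself is a generator of the relevant cyclic group of characters, this is equivalent to $c_i-\alpha_i(\gM)\equiv d_i-\alpha_i(\gN)\pmod{e(K'/K)}$; and the fact that this holds for one $i$ if and only if it holds for all $i$ follows from the relations $p\alpha_{i-1}-\alpha_i = r_i$, the congruence $r_i\equiv pc_{i-1}-c_i\pmod{e(K'/K)}$, and their analogues for $\gN$, which together show $p(c_{i-1}-\alpha_{i-1}(\gM)) \equiv c_i-\alpha_i(\gM)\pmod{e(K'/K)}$, so the differences $(c_i-\alpha_i(\gM))-(d_i-\alpha_i(\gN))$ satisfy a linear recursion multiplying by $p$, which is invertible modulo $e(K'/K)$ since $p^f-1\mid e(K'/K)$. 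Once the tame parts match, dividing them out leaves the equality of unramified characters $\ur_{\prod a_i}=\ur_{\prod b_i}$, which (evaluating on a geometric Frobenius) is exactly $\prod_{i=0}^{f-1}a_i = \prod_{i=0}^{f-1}b_i$.

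There is no real obstacle here; the only mild care needed is the bookkeeping in the previous paragraph to see that the congruence at one index propagates to all indices, and to confirm that passing from $G_K$ to $G_{K_\infty}$ loses no information about these particular characters. Both points are handled by the observations already in the excerpt (the recursion $p\alpha_{i-1}-\alpha_i=r_i$ and the remark that $K_\infty/K$ is totally wildly ramified with trivial residue extension), so the corollary follows.
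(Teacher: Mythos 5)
Your proof is correct and is precisely what the paper treats as an immediate deduction (the paper's entire proof is the sentence ``One immediately deduces the following'' after Lemma~\ref{lem: generic fibres of rank 1 Kisin modules}), so you and the paper take the same route; you have simply made the bookkeeping explicit. The care you take over why restriction to $G_{K_\infty}$ is injective on these characters and why the congruence at one index propagates to all indices (via $p(c_{i-1}-\alpha_{i-1})\equiv c_i-\alpha_i\pmod{e(K'/K)}$ and invertibility of $p$ modulo $e(K'/K)$) is exactly the content being suppressed.
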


\begin{lem}
  \label{lem: maps between rank 1 Kisin modules}  In the notation of the
  previous Corollary, there is a nonzero map $\gM\to\gN$
  \emph{(}equivalently, $\dim_{\F} \Hom_{\K{\F}}(\gM,\gN)=1$\emph{)} if
  and only if $T(\gM)=T(\gN)$ and $\alpha_i(\gM) \ge\alpha_i(\gN)$ for each $i$.
\end{lem}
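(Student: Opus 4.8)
The plan is to make everything explicit using the classification of rank one Breuil--Kisin modules from Lemma~\ref{lem:rank one Kisin modules with descent data}. Write $\gM = \gM(r,a,c)$ and $\gN = \gM(s,b,d)$ with standard generators $m = \sum m_i$ and $n = \sum n_i$. Any morphism of the underlying $\gS_\F[\Gal(K'/K)]$-modules that is compatible with the descent data must send $m_i$ to $\mu_i n_i$ for some $\mu_i \in \F[[u]]$, where compatibility with $\hat g$ forces the power of $u$ in $\mu_i$ to lie in a fixed residue class modulo $e(K'/K)$, namely $c_i - d_i \bmod e(K'/K)$ (using the relation $\hat g(\sum \mu_i n_i) = \sum h(g)^{d_i}\mu_i n_i$ and comparing with $h(g)^{c_i}\mu_i n_i$, valid since $\hchar$ has order dividing $e(K'/K)$). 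Compatibility with $\Phi$ then reads $\mu_i a_i u^{r_i} = b_i u^{s_i}\varphi(\mu_{i-1})$, i.e. $\mu_i = (b_i/a_i) u^{s_i - r_i}\varphi(\mu_{i-1})$ for all $i$.

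Next I would solve this recursion. Writing $\mu_i = \lambda_i u^{t_i} + (\text{higher order})$ with $\lambda_i \in \F^\times$ and $t_i$ the $u$-adic valuation (assuming $\mu_i \neq 0$), the leading term forces $t_i = s_i - r_i + p\,t_{i-1}$, whose unique solution is $t_i = \alpha_i(\gN) - \alpha_i(\gM)$ (since the $\alpha_i$'s are characterised by $p\alpha_{i-1} - \alpha_i = r_i$, and subtracting the two systems gives exactly $p(\alpha_{i-1}(\gN) - \alpha_{i-1}(\gM)) - (\alpha_i(\gN)-\alpha_i(\gM)) = s_i - r_i$). For a nonzero map to exist we need each $\mu_i \in \F[[u]]$, which requires $t_i \ge 0$, i.e. $\alpha_i(\gM) \ge \alpha_i(\gN)$ for all $i$. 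Moreover the residue class of $t_i$ modulo $e(K'/K)$ is forced to be $c_i - d_i$, which (since $(p^f-1)(c_i - \alpha_i) \equiv 0$, and similarly for $\gN$) is exactly the condition $c_i - \alpha_i(\gM) \equiv d_i - \alpha_i(\gN) \pmod{e(K'/K)}$; by Corollary~\ref{cor: Kisin modules with the same generic fibre} this together with the condition on the $\lambda_i$ (see below) is $T(\gM) = T(\gN)$. Finally, once the $t_i$ are fixed, the coefficients satisfy $\lambda_i = (b_i/a_i)\lambda_{i-1}^p$ (here one uses $\varphi$ acts as the identity on $\F$, so $\varphi(\mu_{i-1})$ has leading coefficient $\lambda_{i-1}$, not $\lambda_{i-1}^p$ — wait, $\varphi$ on $\gS_\F$ is $\F$-linear only if $\F \subseteq \F_p$; in general $\varphi$ raises coefficients to the $p$th power, giving $\lambda_i = (b_i/a_i)\lambda_{i-1}^p$), and cycling around $f$ steps gives a constraint equivalent to $\prod_{i=0}^{f-1} a_i = \prod_{i=0}^{f-1} b_i$ being compatible with a choice of $\lambda_0$; one checks that a solution exists precisely under this product condition, and that the higher-order terms of the $\mu_i$ are then uniquely determined (the recursion is "contracting" in $u$-adic degree because $t_i$ grows like $p^i t_0$), yielding $\dim_\F \Hom = 1$.

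Assembling these: a nonzero map exists iff all $t_i \ge 0$, the residue classes match, and the product condition on leading coefficients holds; the latter two conditions together are exactly $T(\gM) = T(\gN)$ by Corollary~\ref{cor: Kisin modules with the same generic fibre}, and the first is $\alpha_i(\gM) \ge \alpha_i(\gN)$ for all $i$. Conversely, if $T(\gM)=T(\gN)$ and all the inequalities hold, the recursion produces a (unique up to $\F^\times$-scaling) nonzero $\mu = (\mu_i)$, hence a nonzero map, and $\Hom$ is one-dimensional. I would also remark that this can alternatively be deduced, for $p>2$, by translating \cite[Lem.~1.3]{DiamondSavitt} through the Breuil-module dictionary recalled just above, but the direct argument is short enough to give in full.

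The main obstacle is bookkeeping rather than conceptual: correctly tracking the interaction between the descent-data congruences (which live modulo $e(K'/K)$) and the integrality of the $\alpha_i$ (which uses $p^f-1 \mid e(K'/K)$), and being careful that $\varphi$ acts on $\F$-coefficients by the $p$th power map so that the leading-coefficient recursion is $\lambda_i = (b_i/a_i)\lambda_{i-1}^p$ and the cyclic consistency condition is genuinely the norm condition $\prod a_i = \prod b_i$ appearing in Corollary~\ref{cor: Kisin modules with the same generic fibre}. Once those two points are pinned down the rest is a routine successive-approximation argument showing the higher-order terms are uniquely solvable.
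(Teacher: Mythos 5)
Your strategy (unwinding the explicit description of Lemma~\ref{lem:rank one Kisin modules with descent data} and solving the resulting system on the $\mu_i$) is the natural direct approach, and it is essentially what underlies the translation from~\cite[Lem.~1.6]{DiamondSavitt} that the paper invokes; so the outline is sound. However there are two errors in the details, one cosmetic and one genuine.

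First, the cosmetic one: you claim the unique solution of the recursion $t_i = pt_{i-1} + s_i - r_i$ is $t_i = \alpha_i(\gN) - \alpha_i(\gM)$. But subtracting the two systems $p\alpha_{i-1}(\gM) - \alpha_i(\gM) = r_i$ and $p\alpha_{i-1}(\gN) - \alpha_i(\gN) = s_i$ gives $p\bigl(\alpha_{i-1}(\gM)-\alpha_{i-1}(\gN)\bigr) - \bigl(\alpha_i(\gM)-\alpha_i(\gN)\bigr) = r_i - s_i$, which rearranges to exactly the recursion for $t_i$; so in fact $t_i = \alpha_i(\gM) - \alpha_i(\gN)$. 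You quietly land on the correct inequality $\alpha_i(\gM)\ge\alpha_i(\gN)$ in the next sentence, so this is a sign typo rather than a wrong idea, but the intermediate displayed identity needs fixing.

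Second, and more seriously: in the parenthetical you second-guess your first instinct and decide that $\varphi$ raises $\F$-coefficients to the $p$th power, giving $\lambda_i = (b_i/a_i)\lambda_{i-1}^p$. That is not the convention of this paper. The Frobenius $\varphi$ on $\gS_A$ is defined by extending $\varphi$ on $\gS=W(k')[[u]]$ ``in the obvious ($A$-linear) fashion'' (Section~\ref{subsec: kisin modules with dd}), so $\varphi$ is $\F$-\emph{linear}, and under the identifications $e_{i-1}\gS_\F \cong \F[[u]] \cong e_i\gS_\F$ (using $\sigma_{i-1}=\sigma_i^p$) it simply substitutes $u\mapsto u^p$ while leaving the $\F$-coefficients alone. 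Thus the correct recursion is $\lambda_i = (b_i/a_i)\lambda_{i-1}$, and cycling through one period of length $f$ gives $\prod_i b_i = \prod_i a_i$ on the nose, exactly matching the condition in Corollary~\ref{cor: Kisin modules with the same generic fibre}. With your semilinear recursion the cyclic consistency condition would instead read $\lambda_0^{p^f-1} = \prod_j (a_j/b_j)^{p^{f-j}}$, which over $\Fpbar$ is satisfied for \emph{any} $a,b$ and hence would not recover the hypothesis $\prod a_i = \prod b_i$; that is, your stated conclusion would not actually follow from the argument as written. Once you fix the $\F$-linearity of $\varphi$, the rest of the successive-approximation argument goes through as you describe, and the resulting uniqueness statement $\dim_\F\Hom=1$ is correct.
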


\begin{proof}

  The proof is  essentially the same as that of \cite[Lem.\
  1.6]{DiamondSavitt}. (Indeed, when $p > 2$ this
  lemma can once again be proved by translating directly from
  \cite{DiamondSavitt} to the Breuil--Kisin module context.)
\end{proof}

 Using the material of Section~\ref{subsec:ext
  generalities}, 
one can compute $\Ext^1(\gM,\gN)$ for any pair of rank
one Breuil--Kisin modules $\gM,\gN$ of height at most one. We begin with the
following explicit description of the complex $C^{\bullet}(\gN)$ of  Section~\ref{subsec:ext
  generalities}.

\begin{defn}\label{notn:calh}
 We write  $\Czerofrac = \Czerofrac(\gM,\gN) \subset \F((u))^{\Z/f\Z}$ for 
the space of $f$-tuples $(\mu_i)$ such that each nonzero term of 
$\mu_i$ has degree congruent to $c_i - d_i \pmod{e(K'/K)}$, and set 
$\Czero = \Czerofrac \cap \F[[u]]^{\Z/f\Z}$. 

We further define $\Conefrac  = \Conefrac(\gM,\gN) \subset
\F((u))^{\Z/f\Z}$ to be
 the space of $f$-tuples $(h_i)$ such that each nonzero term of $h_i$
 has degree congruent to $r_i + c_i - d_i \pmod{e(K'/K)}$, and set
 $\Cone = \Conefrac \cap \F[[u]]^{\Z/f\Z}$.   There is a map $\mumap \colon \Czerofrac \to 
\Conefrac$ defined by
\[ \mumap(\mu_i) =  (-a_i u^{r_i} \mu_i + b_i \varphi(\mu_{i-1}) u^{s_i}) \] 
Evidently this restricts to a map
$\mumap \colon \Czero \to \Cone$.
\end{defn}

\begin{lemma}\label{lem:explicit-complex}
There is an isomorphism of complexes 
\[ [ \Czero \xrightarrow{\mumap} \Cone  ] \toisom C^{\bullet}(\gN)\]
in which $(\mu_i) \in \Czero$ is sent to the map $m_i \mapsto \mu_i n_i$
in $C^0(\gN)$, and $(h_i) \in \Cone$ is sent to the map $(1\otimes
m_{i-1}) \mapsto h_i n_i$ in $C^1(\gN)$.
\end{lemma}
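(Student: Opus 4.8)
The statement is essentially a bookkeeping exercise: we must make explicit the complex $C^\bullet(\gN)$ from Definition~\ref{def:explicit Ext complex} in the case that $\gM = \gM(r,a,c)$ and $\gN = \gM(s,b,d)$ are the rank one Breuil--Kisin modules of Lemma~\ref{lem:rank one Kisin modules with descent data}. Recall $C^0(\gN) = \Hom_{\gS_\F[\Gal(K'/K)]}(\gM,\gN)$ and $C^1(\gN) = \Hom_{\gS_\F[\Gal(K'/K)]}(\varphi^*\gM,\gN)$, with differential $\alpha \mapsto \Phi_\gN \circ \varphi^*\alpha - \alpha \circ \Phi_\gM$. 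So the first step is to compute these two Hom groups explicitly, and the second is to identify the differential with $\partial$.

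First I would compute $C^0(\gN)$. Writing $m = \sum_i m_i$ and $n = \sum_i n_i$ for the standard generators, any $\gS_\F$-linear map $\alpha: \gM \to \gN$ is determined by the scalars $\mu_i \in \F[[u]]$ with $\alpha(m_i) = \mu_i n_i$ (using that $\alpha$ must respect the idempotents $e_i$, hence carries $\gM_i$ to $\gN_i$, each of which is free of rank one over $\F[[u]]$). The condition that $\alpha$ commute with the descent data $\hat g$ forces, for all $g \in \Gal(K'/K)$, the identity $h(g)^{d_i}\mu_i = h(g)^{c_i} g(\mu_i)$; since $g$ acts on $u$ by $u \mapsto h(g) u$, writing $\mu_i = \sum_k \mu_{i,k} u^k$ this says $h(g)^{d_i}\mu_{i,k} = h(g)^{c_i + k}\mu_{i,k}$, i.e.\ every nonzero term of $\mu_i$ has degree $k \equiv c_i - d_i \pmod{e(K'/K)}$. (Here one uses that $h$ surjects onto $\mu_{e(K'/K)}(K')$, so these congruences are exactly detected.) This is precisely the definition of $\Czero$, and moreover the periodicity of the $c_i, d_i$ means the tuple $(\mu_i)$ is $f$-periodic, matching the index set $\Z/f\Z$. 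An entirely parallel computation, using that $\varphi^*\gN$ and $\varphi^*\gM$ have standard generators $1 \otimes n_{i-1}$ resp.\ $1 \otimes m_{i-1}$ placed in degree-$i$ component after applying the appropriate idempotent, and that $g$ acts on $1 \otimes m_{i-1}$ through $h(g)^{pc_{i-1}}$, identifies $C^1(\gN)$ with the tuples $(h_i)$, $h_i \in \F[[u]]$, whose nonzero terms have degree $\equiv pc_{i-1} - d_i \equiv r_i + c_i - d_i \pmod{e(K'/K)}$ — which is $\Cone$. (The last congruence uses the relation $pc_{i-1} \equiv c_i + r_i$ from Lemma~\ref{lem:rank one Kisin modules with descent data}.)

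Then I would compute the differential. For $\alpha$ given by $(\mu_i)$: the map $\varphi^*\alpha: \varphi^*\gM \to \varphi^*\gN$ sends $1 \otimes m_{i-1} \mapsto \varphi(\mu_{i-1})(1 \otimes n_{i-1})$; composing with $\Phi_{\gN,i}(1 \otimes n_{i-1}) = b_i u^{s_i} n_i$ gives $b_i \varphi(\mu_{i-1}) u^{s_i} n_i$. On the other hand $\alpha \circ \Phi_{\gM,i}$ sends $1\otimes m_{i-1} \mapsto \alpha(a_i u^{r_i} m_i) = a_i u^{r_i} \mu_i n_i$. So the differential sends $(\mu_i)$ to the element of $C^1(\gN)$ given by $(1 \otimes m_{i-1}) \mapsto (b_i \varphi(\mu_{i-1}) u^{s_i} - a_i u^{r_i} \mu_i) n_i$, i.e.\ to $\partial(\mu_i) = (-a_i u^{r_i}\mu_i + b_i \varphi(\mu_{i-1}) u^{s_i})$, exactly the map in Definition~\ref{notn:calh}. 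This gives the claimed isomorphism of complexes, with the indicated identifications on $C^0$ and $C^1$.

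I do not anticipate a genuine obstacle here; the only points requiring a little care are (i) keeping the index shifts straight between $\gM_i$, $\varphi^*\gM_{i-1}$ and $\gN_i$ (it helps to recall $\varphi(e_i) = e_{i+1}$, so $\varphi^*\gM$ in its degree-$i$ eigencomponent is built from $\gM_{i-1}$), and (ii) verifying that the descent-data constraint $h(g)^{d_i}\mu_i = h(g)^{c_i}g(\mu_i)$ really is equivalent to the stated degree congruences — this is where one uses that $h\colon \Gal(K'/K) \to \mu_{e(K'/K)}(K')$ (and hence $I(K'/K) \to (k')^\times$) has image large enough to separate residues mod $e(K'/K)$, which is guaranteed by the standing assumption. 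The $f$-periodicity in all tuples is then immediate from the corresponding periodicity of $(r_i), (a_i), (c_i)$ and $(s_i),(b_i),(d_i)$.
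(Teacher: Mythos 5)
There is a genuine gap in your proof: you never correctly establish that the tuple $(\mu_i)$ is $f$-periodic (i.e.\ that it lives in the $\Z/f\Z$-indexed space $\Czero$ rather than being a general $\Z/f'\Z$-indexed tuple). You assert that ``the periodicity of the $c_i, d_i$ means the tuple $(\mu_i)$ is $f$-periodic,'' and later that ``the $f$-periodicity in all tuples is then immediate from the corresponding periodicity of $(r_i),(a_i),(c_i)$,'' but neither claim is correct: the periodicity of the structure constants of $\gM$ and $\gN$ only says that the degree congruence imposed on $\mu_i$ matches that imposed on $\mu_{i+f}$; it in no way forces $\mu_i = \mu_{i+f}$. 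When $f' > f$ (e.g.\ for cuspidal types or any $K'/K$ with nontrivial residue extension, which is exactly the case where this lemma is needed), the space of $f'$-tuples satisfying only the $I(K'/K)$-degree constraints is strictly larger than $\Czero$.

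The missing step is the equivariance under the non-inertial part of $\Gal(K'/K)$. Your displayed identity $h(g)^{d_i}\mu_i = h(g)^{c_i} g(\mu_i)$ only holds for $g \in I(K'/K)$, since only such $g$ fix each idempotent $e_i$; for $g$ acting nontrivially on $k'$, $\hat g$ permutes the components $\gM_i$, and the equivariance condition involves an index shift. Concretely, taking $g$ to be a lift to $\Gal(K'/K)$ of the generator of $\Gal(k'/k)$ satisfying $h(g)=1$, one has $\hat g(m_i) = m_{i+f}$ and $\hat g(n_i) = n_{i+f}$, with $g$ acting as the identity on $u$; equivariance of $\alpha$ under this $\hat g$ is exactly the condition $\mu_{i+f} = \mu_i$. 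This is the argument the paper gives for the periodicity (separated from the $I(K'/K)$ analysis, which gives only the degree constraint), and it cannot be replaced by the reasoning you offer. The rest of your proof --- the $I(K'/K)$ computation of degree constraints, the analogous identification of $C^1(\gN)$ with $\Cone$, and the direct computation of the differential --- agrees with the paper and is fine.
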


\begin{proof}
 Each element of $\Hom_{\gS_{\F}}(\gM,\gN)$ has the form $m_i \mapsto \mu_i
 n_i$ for some $f'$-tuple $(\mu_i)_{i \in \Z/f'\Z}$ of elements of $\F[[u]]$.  
The condition that this map 
is $\Gal(K'/K)$-equivariant 
 is easily seen to be
equivalent to the conditions that $(\mu_i)$ is periodic with period dividing $f$, and
that each nonzero term of $\mu_i$ has degree congruent to 
$c_{i}-d_{i} \pmod{e(K'/K)}$. (For the former consider the action
of a lift to $g \in \Gal(K'/K)$ satisfying $h(g)=1$ of a generator of $\Gal(k'/k)$, and for the
latter consider the action of $I(K'/K)$;\ \emph{cf}.\ the proof of
\cite[Lem.~1.5]{DiamondSavitt}.)   It follows that the map $\Czero
\to C^0(\gN)$ in the statement of the Lemma is an isomorphism. An
essentially identical argument shows that the given map $\Cone \to
C^1(\gN)$ is an isomorphism. 

To conclude, it suffices to observe that if $\alpha \in C^0(\gN)$ is given by $m_i
\mapsto \mu_i n_i$ with $(\mu_i)_i \in \Czero$ then
$\delta(\alpha) \in C^1(\gN)$ is the map
given by $$(1\otimes m_{i-1}) \mapsto (-a_i u^{r_i} \mu_i + b_i
\varphi(\mu_{i-1}) u^{s_i}) n_i,$$ which follows by a direct calculation.
\end{proof}

It follows from Corollary~\ref{cor:complex computes Hom and Ext} 
that $\Ext^1_{\K{\F}}(\gM,\gN) \cong \coker \mumap$.
If $h \in \Cone$, we write $\gP(h)$ for the element of
 $\Ext^1_{\K{\F}}(\gM,\gN)$ represented by $h$ under this isomorphism.

\begin{remark}
  \label{prop: extensions of rank one Kisin modules}Let $\gM=\gM(r,a,c)$ and
  $\gN=\gM(s,b,d)$ be rank one Breuil--Kisin modules with descent data as in
Lemma~{\em \ref{lem:rank one Kisin modules with descent data}}. It
follows from the proof of Lemma~\ref{lem: C computes Ext^1}, and in
particular the description of the map~\eqref{eqn:explicit 
  embedding}  found there, that the extension $\gP(h)$  is given by
the formulas 
  \begin{itemize}
  \item  $\gP_i=\F[[u]]\cdot m_i + \F[[u]]\cdot n_i$,
  \item $\Phi_{\gP,i}(1\otimes n_{i-1})=b_{i} u^{s_{i}}n_{i}$,
    $\Phi_{\gP,i}(1\otimes m_{i-1})=a_{i}u^{r_{i}}m_{i}+h_{i}
    n_{i}$.
  \item $\ghat(\sum_i m_i)=\sum_i h(g)^{c_i}m_i$,
    $\ghat(\sum_i n_i)=h(g)^{d_i} \sum_i n_i$  for all $g\in\Gal(K'/K)$.
  \end{itemize}
From this description it is easy to see that the extension $\gP(h)$
has  height at most $1$ if and only if  each $h_i$ is divisible by $u^{r_i+s_i-e'}$.
\end{remark}

\begin{thm}\label{thm: extensions of rank one Kisin modules}
The dimension of $\Ext^1_{\K{\F}}(\gM,\gN)$ is given by the formula
\[\Delta+\sum_{i=0}^{f-1}\#\biggl\{j\in[0,r_i):j\equiv r_i+c_{i}-d_{i}\pmod{e(K'/K)}\biggr\} \]  
where $\Delta =  \dim_{\F} \Hom_{\K{\F}}(\gM,\gN)$ is $1$ if there is a nonzero map $\gM\to\gN$ and $0$
otherwise, while the subspace consisting of extensions of height at most $1$
has dimension
\[\Delta+\sum_{i=0}^{f-1}\#\biggl\{j\in[\max(0,r_i+s_i-e'),r_i):j\equiv r_i+c_{i}-d_{i}\pmod{e(K'/K)}\biggr\} .\] 
\end{thm}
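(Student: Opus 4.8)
The plan is to compute the cohomology of the explicit complex $[\Czero \xrightarrow{\mumap} \Cone]$ of Lemma~\ref{lem:explicit-complex}, using the identifications $\Hom_{\K{\F}}(\gM,\gN) \cong \ker\mumap$ and $\Ext^1_{\K{\F}}(\gM,\gN) \cong \coker\mumap$ from Corollary~\ref{cor:complex computes Hom and Ext}. Since everything is graded by the exponent of $u$ modulo $e(K'/K)$, I would first decompose $\Czerofrac$ and $\Conefrac$ into graded pieces and observe that $\mumap$ respects this grading in an appropriate sense: the map $\mu_i \mapsto -a_iu^{r_i}\mu_i$ shifts degree by $r_i$, while $\mu_{i-1}\mapsto b_i\varphi(\mu_{i-1})u^{s_i}$ sends degree $d$ to degree $pd + s_i$; both land in degrees $\equiv r_i + c_i - d_i \pmod{e(K'/K)}$ as required. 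The key structural point is that $\mumap$ is ``upper triangular with invertible diagonal'' once one orders the $f$ coordinates cyclically and filters by powers of $u$: because $b_i \in \F^\times$, the term $b_i\varphi(\mu_{i-1})u^{s_i}$ dominates at high $u$-adic valuation, so $\mumap$ is injective on the finite-codimension subspace where all $\mu_i$ have large valuation, and its cokernel is concentrated in a finite-dimensional space of ``low-degree'' terms.

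Concretely, I would argue as follows. First, $\ker\mumap = \Hom_{\K{\F}}(\gM,\gN)$, which by Lemma~\ref{lem: maps between rank 1 Kisin modules} has dimension $\Delta \in \{0,1\}$; this handles the ``$\Delta$'' summand. Second, to compute $\dim\coker\mumap$, I would use the standard finite-length argument: restrict $\mumap$ to $C^0 = \Czero$ and its image in $C^1 = \Cone$, and show that $\coker(\Czero \to \Cone)$ has the same dimension as $\coker$ of the map on a suitable finite truncation (this is morally Lemma~\ref{lem:truncation argument used to prove f.g. of Ext Q version}, but here it can be seen by hand). The cleanest approach is an Euler characteristic / rank count: for each $i$, the operator on the $i$-th coordinate sending $\mu_i$ to the degree-$\geq r_i$ part of $-a_iu^{r_i}\mu_i$ is an isomorphism onto $\{h_i \in \F[[u]] : \text{terms have degree} \equiv r_i + c_i - d_i, \deg \geq r_i\}$, and so the contribution of $h$ that is \emph{not} hit, modulo the image of the $\varphi$-part (which I handle by a diagonal/successive-approximation argument since $\varphi$ strictly increases $u$-valuation for positive-degree terms and $p > 1$), is exactly the span of monomials $u^j n_i$ with $0 \le j < r_i$ and $j \equiv r_i + c_i - d_i \pmod{e(K'/K)}$. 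Counting these gives the asserted sum $\sum_{i=0}^{f-1}\#\{j\in[0,r_i): j\equiv r_i+c_i-d_i \pmod{e(K'/K)}\}$.

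For the height-at-most-$1$ subspace, I would invoke the description in Remark~\ref{prop: extensions of rank one Kisin modules}: the extension $\gP(h)$ has height at most $1$ precisely when each $h_i$ is divisible by $u^{r_i+s_i-e'}$. So the height-$\leq 1$ subspace of $\Ext^1$ is the image in $\coker\mumap$ of the subspace $\Cone^{\leq 1} \subseteq \Cone$ of such $h$. I would then redo the cokernel computation with $\Cone$ replaced by $\Cone^{\le 1}$: the $\ker\mumap$ contribution is unaffected (maps $\gM \to \gN$ automatically have height at most $1$, or equivalently the relevant $\mu_i$ can be taken with the right divisibility), and the low-degree monomials that survive are now those $u^j n_i$ with $\max(0, r_i+s_i-e') \le j < r_i$ and $j \equiv r_i + c_i - d_i \pmod{e(K'/K)}$, which is the second claimed formula. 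A small subtlety to check is that the map from the height-$\le 1$ subspace of the complex into $\Ext^1$ is injective, i.e. that $\Cone^{\le 1} \cap \operatorname{im}\mumap = \mumap(\text{appropriate }\Czero)$; this should follow because if $\mumap(\mu) \in \Cone^{\le 1}$ then one can adjust $\mu$ by an element of $\ker\mumap$ (or argue directly on valuations) to arrange that $\mu$ itself has the divisibility making the extension split compatibly with heights.

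The main obstacle I anticipate is the careful bookkeeping in the cokernel computation when $r_i = 0$ for some (or all) $i$, and more generally keeping track of the interaction between the cyclic structure in $i$ (via $\varphi$, which couples coordinate $i-1$ to coordinate $i$ with a $p$-power Frobenius twist) and the degree congruences modulo $e(K'/K)$. One has to ensure the successive-approximation argument genuinely converges $u$-adically and that no ``global'' obstruction classes appear beyond the advertised local ones — this is exactly where the hypothesis $p^f - 1 \mid e(K'/K)$ and the periodicity with period dividing $f$ get used, and where the analogy with \cite[\S1]{DiamondSavitt} (which proves the corresponding statement for Breuil modules when $p > 2$) is the safest guide; indeed, for $p>2$ one could simply translate their computation via the dictionary recalled after Lemma~\ref{lem:rank one Kisin modules with descent data}, and the remaining work is to check the argument is insensitive to $p = 2$.
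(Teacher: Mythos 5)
Your overall strategy — pass to the explicit complex $[\Czero \xrightarrow{\mumap} \Cone]$ of Lemma~\ref{lem:explicit-complex}, truncate via Lemma~\ref{lem:truncation argument used to prove f.g. of Ext Q version}, and then count dimensions — is exactly the route the paper takes, and the remark that for $p>2$ one could alternatively translate \cite[Thm.~1.11]{DiamondSavitt} is also made there. The difference is that the paper carries the counting out purely as an Euler characteristic computation on the finite truncation: $\dim\Ext^1 - \dim\Hom = \dim_\F C^1/v^N C^1 - \dim_\F C^0/\bigl((\Phi_\gM^*)^{-1}(v^N C^1)\bigr)$, with both right-hand terms computed directly. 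It never attempts to identify explicit coset representatives for the cokernel, and it therefore doesn't need a successive-approximation argument at all. Since the truncation lemma is uniform in $p$ (it only needs that $u^{e'ah}$ lies in the ideal generated by $E(u)^h$ when $\varpi^a=0$), the worry you raise at the end about $p=2$ is moot.

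The part of your argument that I would push back on is the explicit identification of the cokernel as "the span of monomials $u^j n_i$ with $0\le j<r_i$." That is not correct when $\Delta=1$. Indeed, Remark~\ref{rem:representatives-for-ext} records that when there is a nonzero map $\gM\to\gN$ the normal forms for $\Ext^1$ classes acquire an extra allowed monomial of degree $\alpha_0(\gM)-\alpha_0(\gN)+r_0 \ge r_0$, precisely because your successive approximation can stall: if the $\mu$ you choose to cancel the leading term of $h_i$ happens to lie in $\ker\mumap$ (which can only occur if $\Delta=1$), then $\mumap(\mu)=0$ and the iteration makes no progress. (A toy case: $f=1$, $r=s$, $a=b$, so $\mumap(\mu)=au^r(\varphi(\mu)-\mu)$; then $\ker\mumap=\F$, $\im\mumap=u^{r+1}\F[[u]]$, and $u^r$ is not equivalent to anything of degree $<r$.) So your low-degree spanning set has dimension $\sum_i\#\{\ldots\}$, which is one short of $\dim\coker\mumap$ when $\Delta=1$; the extra $\Delta$ in the formula has to come from the Euler characteristic, not from an enumeration of low-degree representatives. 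Your sketch actually mentions both viewpoints ("the cleanest approach is an Euler characteristic / rank count" and then a representatives description), and they disagree when $\Delta = 1$; the Euler characteristic one is the one that works. For the height~$\le 1$ formula your plan of replacing $\Cone$ by $\Cone^{\le 1}$ and reusing the same truncation count is reasonable (this is what makes the statement a clean translation of \cite{DiamondSavitt}), but the same caveat applies: the subtlety you flag about $\Cone^{\le 1}\cap\im\mumap$ is genuine and is again best avoided by dimension counting rather than by exhibiting representatives.
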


\begin{proof}
  When $p > 2$, this result (for extensions of height at most $1$) can be obtained by translating
  \cite[Thm.~1.11]{DiamondSavitt} from Breuil modules to Breuil--Kisin
  modules. 
  We argue in the same spirit as \cite{DiamondSavitt} using  the generalities of Section~\ref{subsec:ext generalities}. 

Choose~$N$ as in
Lemma~\ref{lem:truncation argument used to prove f.g. of Ext Q
  version}(2). 
For brevity we
write $C^{\bullet}$ in lieu of $C^{\bullet}(\gN)$. We now use the
description of~$C^{\bullet}$ provided by
Lemma~\ref{lem:explicit-complex}. 
As we have noted, $C^0$ consists of the maps $m_i \mapsto \mu_i
n_i$ with $(\mu_i) \in \Czero$.
Since $(\varphi^*_{\gM})^{-1}(v^N C^1)$ contains precisely the maps $m_i
\mapsto \mu_i n_i$ in $C^0$ such that $v^{N} \mid u^{r_i} \mu_i$, we
compute that $\dim_{\F} C^0/\bigl((\varphi^*_{\gM})^{-1}(v^N C^1)\bigr)$
is the quantity
$$ Nf - \sum_{i=0}^{f-1} \#\biggl\{ j \in [e(K'/K) N-r_i, e(K'/K) N) : j \equiv c_i-d_i
  \pmod{e(K'/K)}\biggr\}.$$ We have
$\dim_{\F} C^1/v^NC^1 = Nf$, so
our formula for the dimension of $\Ext^1_{\K{\F}}(\gM,\gN)$ now follows
from Lemma~\ref{lem:truncation argument used to prove f.g. of Ext Q
  version}.
\end{proof}

\begin{remark}\label{rem:representatives-for-ext}
One can show exactly as in \cite{DiamondSavitt} that each element of $\Ext^1_{\K{\F}}(\gM,\gN)$ can be written uniquely in
the form $\gP(h)$ for $h \in \Cone$
with $\deg(h_i) < r_i$, except that when there exists a nonzero morphism
$\gM\to\gN$, the polynomials $h_i$ for $f \mid i$ may also have a term of degree
$\alpha_0(\gM)-\alpha_0(\gN)+r_0$ in common. Since we will not need
this fact we omit the proof.
\end{remark}

\subsection{Extensions of shape $J$}
\label{sec:extensions-shape-J}

We now begin the work of showing, for each non-scalar tame type $\tau$,
that $\cC^{\tau,\BT,1}$ has $2^f$ irreducible
components, indexed by the subsets $J$ of~$\{0,1,\dots,f-1\}$. We will
also   describe the irreducible
components of~$\cZ^{\tau,1}$. 
The proof of this hinges on examining the extensions considered in
Theorem~\ref{thm: extensions of rank one Kisin modules},
and then applying the results of Subsection~\ref{subsec:universal families}.
We will show that 
most of these families of extensions  have 
positive codimension in $\cC^{\tau,\BT,1}$, and are thus negligible from the
point of view of determining irreducible components.  By a base change
argument, we will also be able to show that we can neglect the irreducible
Breuil--Kisin modules. The rest of Section~\ref{sec: extensions of rank one Kisin modules} is devoted to establishing the
necessary bounds on the dimension of the various families of
extensions, and to studying the map from $\cC^{\tau,\BT,1}$ to
$\cR^{\dd,1}$. 

We now introduce notation that we will use for the remainder of the
paper. 
We fix a
tame inertial type $\tau=\eta\oplus\eta'$ with coefficients in $\Qpbar$.
 We allow the case of scalar
types (that is, the case $\eta=\eta'$). 
Assume that the subfield $\F$ of $\Fpbar$ is large enough so that the reductions modulo $\m_{\Zp}$ of $\eta$ and
$\eta'$ (which by abuse of notation we continue to denote $\eta,\eta'$) have image in $\F$. 
We also fix a uniformiser $\pi$ of~$K$. 

\begin{remark}\label{rk:ordering}
We stress that  when we 
write $\tau=\eta\oplus\eta'$, we are implicitly ordering 
$\eta,\eta'$. Much of the notation in this section depends on 
distinguishing $\eta,\eta'$, as do some of the constructions later in paper (in particular, the 
map to the Dieudonn\'e stack of Section~\ref{subsec: map to 
  Dieudonne stack}). 
\end{remark}

As in Subsection~\ref{subsec: Dieudonne stack}, we make
the following  ``standard choice'' for the extension~$K'/K$: if $\tau$ is a tame
principal series type, we take $K'=K(\pi^{1/(p^f-1)})$, while
if~$\tau$ is a tame cuspidal type, we let $L$ be an unramified
quadratic extension of~$K$, and set $K'=L(\pi^{1/(p^{2f}-1)})$. In
either case $K'/K$ is a Galois extension and $\eta, \eta'$ both factor through
$I(K'/K)$. In the principal series
case, we have $e'=(p^f-1)e$, $f'=f$, and in the cuspidal case we have
$e'=(p^{2f}-1)e$, $f'=2f$.   Either way, we have $e(K'/K) =
p^{f'}-1$.

In either case, it follows from Lemma~\ref{lem:rank one Kisin modules
  with descent data} that a Breuil--Kisin module of rank one with descent data
from $K'$ to $K$ is described by the data of the quantities $r_i,a_i,c_i$ for $0\le i\le
f-1$, and similarly from Lemma~\ref{lem:explicit-complex} that extensions between two such Breuil--Kisin modules are
described by the $h_i$ for $0\le i\le
f-1$. This common description will enable us to treat the principal
series and cuspidal cases largely in parallel.

The character
$\hchar |_{I_K}$ of Section~\ref{subsec: Diamond--Savitt}  is identified via the Artin map $\cO_L^\times \to 
I_L^{\ab} = I_K^{\ab} $ with the reduction map 
$\cO_L^{\times} \to (k')^{\times}$. Thus for each $\sigma \in 
\Hom(k',\Fpbar)$ the map $\sigma \circ 
\hchar|_{I_L}$ is the fundamental character $\omega_{\sigma}$ defined 
in Section~\ref{subsec: notation}. 
Define  $k_i,k'_i\in \Z/(p^{f'}-1)\Z$ for all $i$ by the formulas
$\eta=\sigma_i\circ\hchar^{k_i}|_{I(K'/K)}$ and
$\eta'=\sigma_i\circ\hchar^{k'_i}|_{I(K'/K)}$. In particular we have
$k_i=p^ik_0$, $k'_i=p^ik'_0$ for all $i$.

\begin{defn} Let $\gM = \gM(r,a,c)$ and $\gN=\gM(s,b,d)$ be Breuil--Kisin
  modules of rank one with descent data. We say that the pair
  $(\gM,\gN)$ has \emph{type $\tau$} provided that for all $i$:
  \begin{itemize}
  \item the multisets $\{c_i,d_i\}$ and $\{k_i,k'_i\}$ are equal, and
  \item $r_i + s_i = e'$.
  \end{itemize}
  \end{defn}

  \begin{lemma} The following are equivalent.
    \begin{enumerate}
    \item   The pair $(\gM,\gN)$ has type~$\tau$. 
    \item  Some element of $\Ext^1_{\K{\F}}(\gM,\gN)$ of height at
      most one  satisfies the strong determinant condition and is of type~$\tau$. 
  \item Every element of $\Ext^1_{\K{\F}}(\gM,\gN)$ has height at most
    one, 
satisfies the strong determinant condition, and is of type~$\tau$. 
    \end{enumerate}
(Accordingly, we will sometimes refer to the condition that
$r_i+s_i=e'$ for all~$i$ as the determinant condition.) 
  \end{lemma}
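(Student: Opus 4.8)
The plan is to prove the cyclic chain of implications (1)$\implies$(3)$\implies$(2)$\implies$(1), using the explicit descriptions of rank one Breuil--Kisin modules and their extensions from Subsection~\ref{subsec: Diamond--Savitt} together with the explicit determinant criterion of Lemma~\ref{lem: determinant condition explicit finite field}. Throughout, write $\gM=\gM(r,a,c)$ and $\gN=\gM(s,b,d)$, and recall from Remark~\ref{prop: extensions of rank one Kisin modules} that a class $\gP(h)\in\Ext^1_{\K{\F}}(\gM,\gN)$ is the Breuil--Kisin module $\gP$ with $\gP_i=\F[[u]]m_i\oplus\F[[u]]n_i$, $\Phi_{\gP,i}(1\otimes m_{i-1})=a_iu^{r_i}m_i+h_in_i$, $\Phi_{\gP,i}(1\otimes n_{i-1})=b_iu^{s_i}n_i$, and descent data acting by $h(g)^{c_i}$ on $m_i$ and $h(g)^{d_i}$ on $n_i$.

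First I would handle (1)$\implies$(3). Assume $(\gM,\gN)$ has type $\tau$, so $\{c_i,d_i\}=\{k_i,k_i'\}$ and $r_i+s_i=e'$ for all $i$. For the height bound: by Remark~\ref{prop: extensions of rank one Kisin modules}, $\gP(h)$ has height at most $1$ iff $u^{r_i+s_i-e'}\mid h_i$; since $r_i+s_i-e'=0$ this is automatic. For the type: the descent data on $\gP_i/u\gP_i$ acts through $\hchar^{c_i}\oplus\hchar^{d_i}$, which by hypothesis is $\eta\oplus\eta'$ (as a multiset of characters of $I(K'/K)$), so $\gP(h)$ has type $\tau$ in the sense of Definition~\ref{defn: Kisin module of type tau general rank}. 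For the strong determinant condition I would invoke Lemma~\ref{lem: determinant condition explicit finite field}(1): one must check $\dim_\F(\im\Phi_{\gP,i}/E(u)\gP_i)_\xi=e$ for each $i$ and each character $\xi:I(K'/K)\to\cO^\times$. Here $\im\Phi_{\gP,i}$ is the $\F[[u]]$-span of $a_iu^{r_i}m_i+h_in_i$ and $b_iu^{s_i}n_i$ inside $\gP_i$, and $E(u)\gP_i=u^{e'}\gP_i$; decomposing into $\xi$-eigenspaces using the descent data (the $\xi$-part of $\gP_i$ is spanned by $u^jm_i$ with $j\equiv c_i-(\text{exponent of }\xi)$ and by $u^jn_i$ with $j\equiv d_i-(\dots)$), one computes the $v$-adic valuations and finds that the condition $r_i+s_i=e'$ forces the relevant quotient to have the right dimension, exactly as in the valuation computation in the proof of Lemma~\ref{lem: determinant condition explicit finite field}(2). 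This direct computation is the one slightly laborious point, but it is routine bookkeeping with the eigenspace decomposition.

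Next, (3)$\implies$(2) is trivial provided $\Ext^1_{\K{\F}}(\gM,\gN)$ is nonzero; but in fact $\gP(0)$ (the split extension) always lies in $\Ext^1_{\K{\F}}(\gM,\gN)$, so (3) applied to $h=0$ gives (2) unconditionally. Finally, for (2)$\implies$(1): suppose some $\gP(h)$ of height at most $1$ satisfies the strong determinant condition and has type $\tau$. Having type $\tau$ means the $I(K'/K)$-action on $\gP_i/u\gP_i=\F m_i\oplus\F n_i$ is $\eta\oplus\eta'$; since this action is through $\hchar^{c_i}\oplus\hchar^{d_i}$, we get $\{c_i,d_i\}=\{k_i,k_i'\}$. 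Having height at most $1$ forces $r_i+s_i\geq e'$ by Remark~\ref{prop: extensions of rank one Kisin modules} (since $h_i$ must be divisible by $u^{r_i+s_i-e'}$, which is only possible for all choices of $h$ — or for $h$ general — when the exponent is nonnegative; more robustly, apply Lemma~\ref{lem: determinant condition explicit finite field}(2), which gives that $\det\Phi_{\gP,i}$ has $u$-adic valuation exactly $e'$, and $\det\Phi_{\gP,i}=a_ib_iu^{r_i+s_i}\cdot(\text{unit})$, whence $r_i+s_i=e'$ directly). This last route via Lemma~\ref{lem: determinant condition explicit finite field}(2) simultaneously yields the determinant condition $r_i+s_i=e'$ and shows there is no separate height obstruction, which I expect to be the cleanest argument. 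Thus (1) holds. The only real obstacle is the eigenspace-decomposition computation verifying the strong determinant condition in (1)$\implies$(3); everything else is formal manipulation of the explicit formulas already recorded in Remark~\ref{prop: extensions of rank one Kisin modules} and Lemmas~\ref{lem: determinant condition explicit finite field} and~\ref{lem:explicit-complex}.
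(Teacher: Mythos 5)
Your chain (1)$\Rightarrow$(3)$\Rightarrow$(2)$\Rightarrow$(1) is correct, and two of the three links coincide with the paper's argument: the height bound in (1)$\Rightarrow$(3) via Remark~\ref{prop: extensions of rank one Kisin modules}, the trivial (3)$\Rightarrow$(2), and the deduction of $r_i+s_i=e'$ in (2)$\Rightarrow$(1) from the valuation statement of Lemma~\ref{lem: determinant condition explicit finite field}(2) applied to $\det\Phi_{\gP,i}=a_ib_iu^{r_i+s_i}$ are all exactly what the paper does. (Your parenthetical worry in (2)$\Rightarrow$(1) is well-founded --- a single extension class cannot constrain $r_i+s_i$ by height alone, since $h_i=0$ always satisfies any divisibility --- and your pivot to Lemma~\ref{lem: determinant condition explicit finite field}(2) is precisely the fix the paper uses.)

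The one place you diverge from the paper is the strong determinant condition in (1)$\Rightarrow$(3). The paper does not compute eigenspaces directly: it observes that the multiset condition makes $\gP$ of \emph{unmixed} type $\tau$, invokes Proposition~\ref{prop:types are unmixed} to conclude that $\dim_\F(\im\Phi_{\gP,i}/E(u)\gP_i)_\xi$ is independent of $\xi$, and then divides the total $e'$ (which comes from $r_i+s_i=e'$) by the $p^{f'}-1$ characters to get $e$ in each eigenspace. Your proposed direct eigenspace computation can be made to work, but be careful about the claim that ``$r_i+s_i=e'$ forces the relevant quotient to have the right dimension'': the numerical condition alone does not equidistribute the $e'$ dimensions across eigenspaces. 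What makes it work is the additional multiset condition, which (via the congruences $r_i\equiv pc_{i-1}-c_i$, $s_i\equiv pd_{i-1}-d_i$) forces $r_i\equiv 0$ or $r_i\equiv d_i-c_i\pmod{p^{f'}-1}$; one then uses the $I(K'/K)$-equivariant exact sequence
\[0\to\F[u]/u^{s_i}\cdot n_i\to\gP_i/\im\Phi_{\gP,i}\to\F[u]/u^{r_i}\cdot m_i\to 0\]
(obtained by projecting to $\F[[u]]m_i$) and counts residues mod $p^{f'}-1$ in $[0,r_i)$ and $[0,s_i)$; the congruence on $r_i$ is exactly what makes the two counts sum to $e$ for every $\xi$. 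This is a legitimate, more elementary and self-contained route, essentially re-proving the relevant case of Proposition~\ref{prop:types are unmixed}; the paper's citation of that proposition is shorter. Either way the conclusion is correct, but characterising the computation as ``routine bookkeeping'' from $r_i+s_i=e'$ alone slightly undersells the role of the type hypothesis.
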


  \begin{proof}
    Suppose first that the pair $(\gM,\gN)$ has type $\tau$. The last
    sentence of Remark~\ref{prop: extensions of rank one Kisin
      modules} shows that every element of $\Ext^1_{\K{\F}}(\gM,\gN)$
    has height at most one.  Let $\gP$ be such an element. The
    condition on the multisets $\{c_i,d_i\}$ guarantees that $\gP$ has
    unmixed type $\tau$. By Proposition~\ref{prop:types are unmixed}
    we see that $\dim_{\F} (\im_{\gP,i}/E(u)\gP_i)_{\teta}$ is
    independent of $\teta$. From the condition that $r_i+s_i=e'$ we
    know that the sum over all $\teta$ of these dimensions is equal to
    $e'$; since they are all equal, each is equal to $e$, and
    Lemma~\ref{lem: determinant condition explicit finite field} tells
    us that $\gP$ satisfies the
    strong determinant condition. This proves that (1) implies (3). 

Certainly (3)
    implies (2), so it remains to check that (2) implies (1). Suppose
    that  
$\gP \in \Ext^1_{\K{\F}}(\gM,\gN)$ has height at most one, 
satisfies the strong determinant condition, and has type $\tau$. The
condition that $\{c_i,d_i\}=\{k_i,k'_i\}$ follows from $\gP$ having
type $\tau$, and the condition that $r_i+s_i=e'$ follows from the last
part of Lemma~\ref{lem: determinant condition explicit finite field}.
  \end{proof}

\begin{df}
\label{df:extensions of shape $J$}
If $(\gM,\gN)$ is a pair of type $\tau$ (resp.\ $\gP$ is an extension 
of type~$\tau$), we define the {\em shape} of $(\gM,\gN)$ (resp.\ of $\gP$) to
be the subset $J := \{ i  \, | \, c_i = k_i\} \subseteq \Z/f'\Z$,
unless $\tau$ is scalar, in which case we define the shape to be the
subset~$\varnothing$. 
(Equivalently, $J$ is in all cases the complement in
$\Z/f'\Z$ of the set  $\{i \, | \, c_i = k'_i\}.$)

Observe that in the cuspidal case the equality $c_i = c_{i+f}$ means
that $i \in J$ if and only if $i+f \not\in J$, so that the set $J$ is
determined by its intersection with any $f$ consecutive integers
modulo $f' = 2f$.  

In the cuspidal case we will say that a subset $J \subseteq \Z/f'\Z$ is a
  shape
if it satisfies $i \in J$ if and only if $i+f\not\in J$; in the
principal series case, we may refer to any subset $J \subseteq \Z/f'\Z$
as a shape.

We define the {\em refined shape} of the pair $(\gM,\gN)$ (resp.\ of $\gP$) to consist of its shape $J$,
together with the $f$-tuple of invariants $r:= (r_i)_{i = 0}^{f-1}$.
If $(J,r)$ is a refined shape that arises from some pair (or extension)
of type $\tau$, then we refer to $(J,r)$ as a refined shape for $\tau$.

We say the pair $(i-1,i)$ is a \emph{transition} for $J$ if $i-1 \in
J$, $i \not\in J$ or vice-versa. (In the first case we sometimes say
that the pair $(i-1,i)$ is a transition out of $J$, and in the latter case a transition
into $J$.) Implicit in many of our arguments below
is the observation that in the cuspidal case $(i-1,i)$ is a transition
if and only if $(i+f-1,i+f)$ is a transition.
\end{df}

\subsubsection{An explicit description of refined shapes}
\label{subsubsec:explicitly refined}
The refined shapes for $\tau$ admit an explicit description.
If $\gP$ is of shape $J$, for some fixed $J \subseteq \Z/f'\Z$
then, since $c_i$, $d_i$ are fixed, we see that the $r_i$ and $s_i$
appearing in $\gP$ are determined
modulo $e(K'/K)=p^{f'}-1$. Furthermore, we see that $r_i+s_i\equiv
0\pmod{p^{f'}-1}$, so that these values are consistent with the 
determinant condition; conversely, if we make any choice of the~$r_i$
in the given residue class modulo $(p^{f'}-1)$, then the $s_i$ are
determined by the determinant condition, and the imposed values
are consistent with the descent data. There are of course only
finitely many choices for the~$r_i$, and so there are only finitely 
many possible refined shapes for $\tau$.

To make this precise, recall that we have the congruence
$$r_i \equiv
pc_{i-1} - c_i \pmod{p^{f'}-1}.$$ We will write $[n]$ for the
least non-negative residue class of $n$ modulo $e(K'/K) =
p^{f'}-1$.

If both $i-1$ and $i$ lie in $J$,
then we have $c_{i-1} = k_{i-1}$ and  $c_i = k_i$. The first of these
implies that $pc_{i-1} = k_i$, and therefore $r_i \equiv 0
\pmod{p^{f'}-1}$. The same conclusion holds if neither $i-1$ and $i$
lie in $J$. Therefore if $(i-1,i)$ is not a transition we may write 
\[ r_i =
  (p^{f'}-1)y_i \quad \text{ and } \quad s_i = (p^{f'}-1)(e-y_i).\]  
with $0 \le y_i \le e$.

Now suppose instead that $(i-1,i)$ is a transition. (In
particular the type $\tau$ is not scalar.)  This
time $pc_{i-1} = d_i$ (instead of $pc_{i-1} = c_i$), so that $r_i
\equiv d_i - c_i \pmod{p^{f'}-1}$. In this case we write
 \[ r_i =
  (p^{f'}-1)y_i - [c_i-d_i] \quad \text{ and } \quad s_i = (p^{f'}-1)(e+1-y_i) -
  [d_i-c_i]\]
with $1 \le y_i \le e$.

Conversely, for fixed shape $J$ one checks that each choice of integers $y_i$ in the ranges
described above gives rise to a refined shape for $\tau$.

If $(i-1,i)$ is not a transition 
and $(h_i) \in \Conefrac(\gM,\gN)$ then non-zero terms of $h_i$ have
degree congruent to $r_i + c_i - d_i \equiv c_i - d_i \pmod{p^{f'}-1}$.
If instead $(i-1,i)$ is a transition 
and $(h_i) \in \Conefrac(\gM,\gN)$ then non-zero terms
of $h_i$  have degree congruent to $r_i + c_i - d_i \equiv 0 \pmod{p^{f'}-1}$.
In either case, comparing with the preceding paragraphs we see that $\#\{ j \in 
  [0,r_i) : j \equiv  r_i + c_i - d_i \pmod{e(K'/K)}\}$ is exactly~$y_i$. 

By Theorem~\ref{thm: extensions of rank one Kisin modules}, we conclude
that for a fixed choice of the~$r_i$ 
the dimension of the
corresponding~$\Ext^1$ is $\Delta + \sum_{i=0}^{f-1} y_i$ (with
$\Delta$ as in the statement of \emph{loc.\ cit.}).
We say that the refined shape $\bigl(J, (r_i)_{i =0}^{f-1}\bigr)$
is \emph{maximal} if the $r_i$ are chosen to be maximal subject to the
above conditions, or equivalently if the $y_i$ are all chosen to be $e$; for each
shape~$J$, there is a unique maximal refined shape~$(J,r)$.

\subsubsection{The sets $\cP_{\tau}$}
\label{sec:sets-cp_tau}

To each tame type $\tau$ we now associate a set $\cP_{\tau}$, which
will be a subset of the set of shapes in $\Z/f'\Z$.  (In Appendix~\ref{sec: appendix on tame
  types} we will recall, following~\cite{MR2392355}, that the 
set $\cP_{\tau}$ parameterises the Jordan--H\"older factors of the
reduction mod~$p$ of 
$\sigma(\tau)$.)

We write $\eta (\eta')^{-1} =
\prod_{j=0}^{f'-1} (\sigma_j \circ \hchar)^{\gamma_j}$ for uniquely defined integers $0
\le \gamma_j \le p-1$ not all equal to $p-1$, so that
\begin{equation}\label{eq:k-gamma}
[k_i - k'_i] = \sum_{j=0}^{f'-1} p^{j} \gamma_{i-j} 
\end{equation}
with subscripts taken modulo $f'$.

If~$\tau$ is scalar then
we set $\cP_\tau=\{\varnothing\}$. Otherwise we let 
$\cP_{\tau}$ be the collection of shapes $J \subseteq \Z/f'\Z$ 
satisfying the conditions:
\begin{itemize}
\item if $i-1\in J$ and $i\notin J$ then $\gamma_{i}\ne p-1$, and
\item if $i-1\notin J$ and $i\in J$ then $\gamma_{i}\ne 0$.
\end{itemize}
When $\tau$ is a cuspidal type, so that $\eta' = \eta^q$, the integers 
$\gamma_j$ satisfy $\gamma_{i+f} = p-1-\gamma_i$ for all $i$; thus the
condition that if $(i-1,i)$ is a transition out of $J$ then $\gamma_i
\neq p-1$ translates exactly into the condition that if $(i+f-1,i+f)$ is a
transition into $J$ then $\gamma_{i+f} \neq 0$.

\subsubsection{Moduli stacks of extensions}\label{subsubsection: stacks of extensions}

We now apply the constructions of stacks and topological spaces of
Definitions~\ref{def:scheme-theoretic images}
and~\ref{df:constructible images} to the families of
extensions considered in Section~\ref{sec:extensions-shape-J}. 

\begin{df}\label{defn: M(j,r)}
If $(J,r)$ is a refined shape for $\tau$, then 
we let $\gM(J,r) := \gM(r,1,c)$ and let
$\gN(J,r) := \gM(s,1,d),$ where $c$, $d$, and $s$ are determined
from $J$, $r$, and $\tau$ according to the 
discussion of~(\ref{subsubsec:explicitly refined}); for instance we
take $c_i
= k_i$ when $i \in J$ and $c_i = k'_i$ when $i \not\in J$.
For the unique
maximal shape~$(J,r)$ refining~$J$, we write simply $\gM(J)$ and $\gN(J)$.
\end{df}

\begin{df}
If $(J,r)$ is a refined shape for $\tau$, then following
Definition~\ref{def:scheme-theoretic images}, we may construct the reduced
closed substack $\overline{\cC}\bigl(\gM(J,r),\gN(J,r)\bigr)$ of $\cC^{\tau,\BT,1}$,
as well as the reduced closed substack $\overline{\cZ}\bigl(\gM(J,r),
\gN(J,r)\bigr)$ of $\cZ^{\tau,1}.$
We introduce the notation $\overline{\cC}(J,r)$ and $\overline{\cZ}(J,r)$
for these two stacks, and note that (by definition) $\overline{\cZ}(J,r)$
is the scheme-theoretic image of $\overline{\cC}(J,r)$ under
the morphism $\cC^{\tau,\BT,1} \to \cZ^{\tau,1}$.
\end{df}

\begin{thm}\label{thm: dimension of refined shapes} 
If $(J,r)$ is any refined shape for $\tau$,
then $\dim \overline{\cC}(J,r) \leq [K:\Qp],$ with equality if and only if $(J,r)$ is
maximal.
\end{thm}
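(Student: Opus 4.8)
The plan is to compute the dimension of $\overline{\cC}(J,r)$ via Corollary~\ref{cor: dimension of families of extensions}, which tells us that $\dim \overline{\cC}(J,r)$ equals the rank of $T_{\Adist}$ as a projective $\Adist$-module, i.e.\ (by Proposition~\ref{prop:base-change for exts} and the flatness/generic freeness set-up of Subsection~\ref{subsec:universal families}) the dimension of $\Ext^1_{\K{\F}}\bigl(\gM(J,r)_{\xbar},\gN(J,r)_{\ybar}\bigr)$ for a generic point $(\xbar,\ybar)$ of $\Gm\times_\F\Gm$. So the first step is to reduce the computation to a computation of $\dim_\F \Ext^1_{\K{\F}}$ of the relevant rank one Breuil--Kisin modules with descent data, being careful that $\gM(J,r)$ and $\gN(J,r)$ are defined over $\F$ and that twisting by unramified characters does not change the dimension of $\Ext^1$ generically. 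In fact I would argue that at a generic point there are no nonzero homs in either direction (the pair $(\gM(J,r),\gN(J,r))$ has $T(\gM(J,r)) \ne T(\gN(J,r))$ after an unramified twist, by Corollary~\ref{cor: Kisin modules with the same generic fibre} and Lemma~\ref{lem: generically no Homs}), so that generically $\Delta = 0$.

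Next I would invoke Theorem~\ref{thm: extensions of rank one Kisin modules} together with the explicit analysis of refined shapes in~(\ref{subsubsec:explicitly refined}). There it is computed that for a refined shape $(J,r)$ with associated integers $y_i$ (where $0 \le y_i \le e$ if $(i-1,i)$ is not a transition, and $1 \le y_i \le e$ if it is), the quantity $\#\{j\in[0,r_i): j\equiv r_i+c_i-d_i \pmod{e(K'/K)}\}$ equals $y_i$, so that $\dim_\F \Ext^1_{\K{\F}}(\gM(J,r),\gN(J,r)) = \Delta + \sum_{i=0}^{f-1} y_i$. Setting $\Delta = 0$ (the generic situation), we get $\dim \overline{\cC}(J,r) = \sum_{i=0}^{f-1} y_i$. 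Since each $y_i \le e$, this is at most $ef = [K:\Qp]$, with equality precisely when every $y_i = e$, which by definition is exactly the condition that $(J,r)$ is the maximal refined shape refining $J$. This gives both the inequality and the equality case.

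The one genuine subtlety — and the step I expect to be the main obstacle — is justifying that one may indeed take $\Delta = 0$ when passing to the generic point, i.e.\ that $\dim \overline{\cC}(J,r)$ is governed by the generic fibre of the $\Ext$ sheaf rather than by its value over $\F$. Here I would use that $\overline{\cC}(J,r)$ is by construction the scheme-theoretic image of $[\Spec\Bdist/\Gm\times_\F\Gm]$, that Corollary~\ref{cor: dimension of families of extensions} identifies its dimension with the rank of the projective $\Adist$-module $T_{\Adist}$, and that this rank equals $\dim_{\kappa(\m)}\Ext^1_{\K{\kappa(\m)}}(\gM_{\kappa(\m),\xbar},\gN_{\kappa(\m),\ybar})$ at any (hence every) maximal ideal $\m$ of $\Adist$ — and at such a generic point, by Lemma~\ref{lem: generically no Homs} together with Corollary~\ref{cor: Kisin modules with the same generic fibre}, the Hom space vanishes so $\Delta = 0$. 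One must also observe that the computation of $\#\{j \in [0,r_i) : \ldots\} = y_i$ from Theorem~\ref{thm: extensions of rank one Kisin modules} depends only on the residues $c_i, d_i$ and the $r_i$, which are the same for $\gM(J,r)$ and its unramified twist, so the combinatorial count is unchanged at the generic point. Assembling these pieces yields $\dim\overline{\cC}(J,r) = \sum_{i=0}^{f-1} y_i \le ef = [K:\Qp]$ with equality iff $(J,r)$ is maximal, as claimed.
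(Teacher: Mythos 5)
Your proposal is correct and follows essentially the same route as the paper: reduce via Corollary~\ref{cor: dimension of families of extensions} and Proposition~\ref{prop:base-change for exts} to computing $\dim_\F\Ext^1$ at a point of $\Spec\Adist$, observe that $\Delta=0$ there (which, as you ultimately note, is forced by the very definition of $\Adist$ in Lemma~\ref{lem: generically no Homs} — no separate appeal to Corollary~\ref{cor: Kisin modules with the same generic fibre} is needed), and then read off the dimension $\sum_i y_i \le ef$ from Theorem~\ref{thm: extensions of rank one Kisin modules} and the combinatorial analysis in~(\ref{subsubsec:explicitly refined}), with equality exactly for the maximal refinement.
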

\begin{proof}
  This follows from  Corollary~\ref{cor:
    dimension of families of extensions}, Theorem~\ref{thm:
    extensions of rank one Kisin modules}, and Proposition~\ref{prop:base-change for exts}.  (See also the discussion
  following Definition~\ref{df:extensions of shape $J$}, and note that over
  $\Spec \Adist$, we have $\Delta=0$ by definition.)
\end{proof}

\begin{df} If $J \subseteq \Z/f'\Z$ is a shape, and if $r$ is chosen so that
$(J,r)$ is a maximal refined shape for $\tau$,
then we write $\overline{\cC}(J)$ to denote the closed substack $\overline{\cC}(J,r)$
of $\cC^{\tau,\BT,1}$,
and $\overline{\cZ}(J)$ to denote the closed substack
$\overline{\cZ}(J,r)$ of $\cZ^{\tau,1}$.
Again, we note that by definition
$\overline{\cZ}(J)$ is the scheme-theoretic
image of~$\overline{\cC}(J)$ in~$\cZ^{\tau,1}$. 
\end{df}

We will see later that the~$\overline{\cC}(J)$ are precisely the
irreducible components of~$\cC^{\tau,\BT,1}$; in particular, their
finite type points can correspond to irreducible Galois
representations. While we do not need it in the sequel, we note the
following definition and result, describing the underlying topological
spaces of the loci of reducible Breuil--Kisin modules of fixed refined shape.
\begin{defn}
  For each refined type~$(J,r)$, we write~$|\cC(J,r)^\tau|$ for the
  constructible subset~$|\cC(\gM(J,r),\gN(J,r))|$
  of~$|\cC^{\tau,\BT,1}|$ of Definition~\ref{df:constructible images}
  (where $\gM(J,r)$, $\gN(J,r)$ are the Breuil--Kisin modules
  of Definition~\ref{defn: M(j,r)}). We
  write~$|\cZ(J,r)^\tau|$ for the image of~$|\cC(J,r)^\tau|$
  in~$|\cZ^{\tau,1}|$ (which is again a constructible
  subset). 
\end{defn}

\begin{lem}
  \label{lem: closed points of C(J,r)}The $\Fpbar$-points
  of~$|\cC(J,r)^\tau|$ are precisely the reducible Breuil--Kisin modules
  with $\Fpbar$-coefficients
  of type~$\tau$ and refined shape~$(J,r)$.
\end{lem}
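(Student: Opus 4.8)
\textbf{Proof proposal for Lemma~\ref{lem: closed points of C(J,r)}.}

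The plan is to unwind the definitions and reduce everything to statements that have already been established. Recall that $|\cC(J,r)^\tau| = |\cC(\gM(J,r),\gN(J,r))|$ is by Definition~\ref{df:constructible images} the image of $|\Spec \Btwist|$ under the morphism $\xi$ classifying the family $\widetilde\gE$ of extensions of $\gM(J,r)_{\Gm\times_\F\Gm, x}$ by $\gN(J,r)_{\Gm\times_\F\Gm, y}$; equivalently (Lemma~\ref{lem:ext images}) it is the image of any of the other total spaces $|\Spec\Bdist|$, etc. So first I would observe that an $\Fpbar$-point of $|\cC(J,r)^\tau|$ is, essentially by the moduli interpretation of $\Spec\Btwist$ from Subsection~\ref{subsec:families of extensions} (and the construction of $\xi$), exactly the isomorphism class of a Breuil--Kisin module with descent data and $\Fpbar$-coefficients which sits in a short exact sequence
\[
0 \to \gN(J,r)_{\Fpbar,\, b} \to \gE \to \gM(J,r)_{\Fpbar,\, a} \to 0
\]
for some $a, b \in \Fpbar^\times$; here I use that every $\Fpbar$-point of $\Gm\times_\F\Gm$ is a pair of units, and that $\gM(J,r)_{\Fpbar,a}$ (resp.\ $\gN(J,r)_{\Fpbar,b}$) is precisely the rank one Breuil--Kisin module obtained from $\gM(J,r) = \gM(r,1,c)$ (resp.\ $\gN(J,r) = \gM(s,1,d)$) by the unramified twist of Definition~\ref{def:unramified twist}, which by Lemma~\ref{lem:rank one Kisin modules with descent data} is just $\gM(r,a,c)$ (resp.\ $\gM(s,b,d)$). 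Thus $\gE$ is precisely a reducible Breuil--Kisin module which is an extension of a rank one module with invariants $(s,b,d)$ by a rank one module with invariants $(r,a,c)$ (reading the sub/quotient in the order dictated by $\tau = \eta\oplus\eta'$).

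Next I would match this description up with the notion of ``reducible of type $\tau$ and refined shape $(J,r)$''. By the Lemma preceding Definition~\ref{df:extensions of shape $J$}, a pair $(\gM, \gN)$ of rank one modules has type $\tau$ if and only if every element of $\Ext^1_{\K\Fpbar}(\gM,\gN)$ has height at most one, satisfies the strong determinant condition, and is of type $\tau$; and by Definition~\ref{df:extensions of shape $J$} the refined shape of such a pair, or of such an extension, is $(J,r)$ where $J = \{i : c_i = k_i\}$ and $r = (r_i)$. The modules $\gM(J,r), \gN(J,r)$ of Definition~\ref{defn: M(j,r)} are built precisely so that $(\gM(J,r),\gN(J,r))$ has type $\tau$ and refined shape $(J,r)$ (this is exactly the content of the discussion in Subsection~\ref{subsubsec:explicitly refined}, where $c, d, s$ are read off from $J, r, \tau$). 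Twisting by units $a, b$ does not change $r, c, d$ (only the $a_i$ — and indeed only the product $\prod a_i$ up to isomorphism, by Lemma~\ref{lem:rank one Kisin modules with descent data}), so $(\gM(J,r)_{\Fpbar,a}, \gN(J,r)_{\Fpbar,b})$ also has type $\tau$ and refined shape $(J,r)$. Hence any $\gE$ as above is reducible of type $\tau$ and refined shape $(J,r)$, giving the inclusion $\subseteq$.

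For the reverse inclusion, suppose $\gE$ is a reducible Breuil--Kisin module with $\Fpbar$-coefficients of type $\tau$ and refined shape $(J,r)$; by definition of reducible it is an extension of one rank one module by another, and by definition of the refined shape these two rank one modules have the invariants prescribed by $(J,r)$ and $\tau$, so by Lemma~\ref{lem:rank one Kisin modules with descent data} they are isomorphic to $\gM(r,a,c) \cong \gM(J,r)_{\Fpbar,a}$ and $\gM(s,b,d) \cong \gN(J,r)_{\Fpbar,b}$ for suitable units $a, b$ (note the ordering of sub and quotient is forced by $\tau = \eta\oplus\eta'$, using the shape convention that distinguishes $\eta$ from $\eta'$, cf.\ Remark~\ref{rk:ordering}). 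Such an extension is classified by a class in $\Ext^1_{\K\Fpbar}(\gM(J,r)_{\Fpbar,a}, \gN(J,r)_{\Fpbar,b})$, which by Proposition~\ref{prop:base-change for exts} is the fibre at $(a,b)$ of the family $T$ (or $V \to T$), hence corresponds to an $\Fpbar$-point of $\Spec\Btwist$ lying over $(a,b) \in \Gm\times_\F\Gm$, and its image under $\xi$ is the class of $\gE$. Thus $[\gE] \in |\cC(J,r)^\tau|$. The one point that needs a word of care — and is the only mild obstacle — is matching the ordering/normalisation conventions: one must check that the sub-object in the family $\widetilde\gE$ of Subsection~\ref{subsec:universal families} is $\gN$ and the quotient is $\gM$, and that this is consistent with the way ``reducible of type $\tau$'' is set up via $\tau = \eta\oplus\eta'$; this is bookkeeping with the conventions of Definitions~\ref{defn: M(j,r)} and~\ref{df:extensions of shape $J$}, together with the fact that $\gM(J,r)[1/u]$ has inertial descent character $\eta$ on the relevant piece (via Lemma~\ref{lem: generic fibres of rank 1 Kisin modules}). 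Once this is pinned down, the two inclusions give the claimed equality of sets of $\Fpbar$-points.
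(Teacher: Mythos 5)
Your proof is correct and takes the same approach as the paper, which simply asserts that the statement is immediate from the definitions; your careful unwinding of the moduli interpretation of $\Spec \Btwist$, the surjection $V \to T$, the classification of rank one modules up to isomorphism, and the way the invariants $r,s,c,d$ are read off from $(J,r)$ and $\tau$ is exactly what underlies that assertion.
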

\begin{proof}
  This is immediate from the definition.
\end{proof}

\subsection{\texorpdfstring{$\kExt^1$}{ker-Ext} and vertical
  components}
In this section we will establish some basic facts about $\kExt^1_{\K{\F}}(\gM,\gN)$,
and use these results to study the images of our
irreducible components in $\cZ^{\tau,1}$. 
 Let $\gM = \gM(r,a,c)$ and $\gN = \gM(s,b,c)$ be Breuil--Kisin 
modules as in Section~\ref{subsec:
  Diamond--Savitt}.

Recall from \eqref{eqn: computing kernel of Ext groups} that the
dimension of $\kExt_{\K{\F}}(\gM,\gN)$ is bounded above by the
dimension of $\Hom_{\K{\F}}(\gM,\gN[1/u]/\gN)$;\ more precisely, by
Lemma~\ref{lem: Galois rep is a functor if A is actually finite local}
we find in this setting that 
\numequation\label{eq:ker-ext-formula}\begin{split} \dim_{\F}  \kExt^1_{\K{\F}}(\gM,\gN) = \dim_{\F}
\Hom_{\K{A}}(\gM,\gN[1/u]/\gN)  \\- (\dim_{\F} \Hom_{\F[G_K]}(T(\gM),T(\gN)) -
\dim_{\F} \Hom_{\K{A}}(\gM,\gN)).
\end{split}
\end{equation}

A map $f : \gM \to \gN[1/u]/\gN$ has the form
$f(m_i) = \mu_i n_i$ for some $f'$-tuple of elements $\mu_i \in
\F((u))/\F[[u]]$. By the same argument as in the first paragraph of the proof of
Lemma~\ref{lem:explicit-complex}, such a map belongs to
$C^0(\gN[1/u]/\gN)$ (i.e., it is $\Gal(K'/K)$-equivariant) if and only
if the $\mu_i$ are periodic with
period dividing $f$, and each nonzero term of $\mu_i$ has degree
congruent to $c_i-d_i \pmod{e(K'/K)}$.  One computes that
$\delta(f)(1\otimes m_{i-1}) = (u^{s_i} \varphi(\mu_{i-1}) - u^{r_i}
\mu_i)n_i$ and so $f \in C^0(\gN[1/u]/\gN)$ lies in  $\Hom_{\K{\F}}(\gM,\gN[1/u]/\gN)$
precisely when
\numequation\label{eq:phi-commute-ker-ext} a_i
u^{r_i} \mu_i = b_i \varphi(\mu_{i-1}) u^{s_i}\end{equation} for all
$i$.

\begin{remark}\label{rem:explicit-ker-ext}
Let $f \in  \Hom_{\K{\F}}(\gM,\gN[1/u]/\gN)$ be given as above. Choose
any lifting $\tilde{\mu}_i$ of $\mu_i$ to $\F((u))$. Then (with
notation as in~Definition~\ref{notn:calh}) the tuple
$(\tilde{\mu}_i)$ is an element of $\Czerofrac$,  and we define $h_i =
\mumap(\tilde{\mu}_i)$. Then
$h_i$ lies in $\F[[u]]$ for all $i$, so that $(h_i) \in \Cone$, and a
comparison with Lemma~\ref{lem:explicit-complex} shows that $f$ maps
to the extension class in $\kExt^1_{\K{\F}}(\gM,\gN)$ represented by $\gP(h)$.
\end{remark}

Recall that Lemma~\ref{lem: bound on torsion in kernel of Exts}
 implies that  
nonzero terms appearing in $\mu_i$ have degree at least $-\lfloor
e'/(p-1) \rfloor$. From this we obtain the following trivial bound on $\kExt$.

\begin{lemma}\label{cor:bounds-on-ker-ext}
 We have $\dim_{\F} \kExt^1_{\K{\F}}(\gM,\gN) \le \lceil e/(p-1)
 \rceil f$. 
\end{lemma}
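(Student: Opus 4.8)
The plan is to deduce this from the exact sequence~\eqref{eqn: computing kernel of Ext groups}, which exhibits $\kExt^1_{\K{\F}}(\gM,\gN)$ as a quotient of $\Hom_{\K{\F}}(\gM,\gN[1/u]/\gN)$; it therefore suffices to show $\dim_{\F}\Hom_{\K{\F}}(\gM,\gN[1/u]/\gN)\le\lceil e/(p-1)\rceil f$. I would use the explicit description of such homomorphisms recalled just above the statement: a map $f$ is given by $f(m_i)=\mu_i n_i$ for an $f'$-tuple $(\mu_i)$ of elements of $\F((u))/\F[[u]]$, periodic with period dividing $f$, each of whose nonzero terms has degree divisible by $e(K'/K)$ (using that $d_i=c_i$ here), and, by Lemma~\ref{lem: bound on torsion in kernel of Exts} applied with $a=h=1$ (legitimate since $\F$ is an $\F_p$-algebra and $\gM$, $\gN$ have height at most $1$), of degree at least $-\lfloor e'/(p-1)\rfloor$.

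Granting this, the bound is a direct count. Since $(p-1)\mid p^f-1\mid e(K'/K)$ and $e'=e\cdot e(K'/K)$, the quantity $e'/(p-1)$ is an integer and the number of multiples of $e(K'/K)$ lying in $[-\lfloor e'/(p-1)\rfloor,-1]$ equals $\lfloor e'/((p-1)e(K'/K))\rfloor=\lfloor e/(p-1)\rfloor$. Hence each $\mu_i$ ranges over an $\F$-subspace of $\F((u))/\F[[u]]$ of dimension at most $\lfloor e/(p-1)\rfloor$, and since $f$ is determined by the truncated tuple $(\mu_0,\dots,\mu_{f-1})$ (the remaining components being recovered by periodicity), we get $\dim_{\F}\Hom_{\K{\F}}(\gM,\gN[1/u]/\gN)\le\lfloor e/(p-1)\rfloor f\le\lceil e/(p-1)\rceil f$, as required. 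Note that the $\varphi$-commutation relation~\eqref{eq:phi-commute-ker-ext} is not needed for this estimate; imposing it could only cut the bound down further.

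There is no genuine obstacle: the argument is essentially bookkeeping, and the word ``trivial'' in the statement preceding the lemma signals as much. The only points demanding a little attention are (i) checking that Lemma~\ref{lem: bound on torsion in kernel of Exts} is invoked with the right parameters, so that the relevant truncation bound is $\lfloor e'/(p-1)\rfloor$; (ii) the elementary identity $\lfloor e'/((p-1)e(K'/K))\rfloor=\lfloor e/(p-1)\rfloor$, which hinges on the tower formula $e'=e\cdot e(K'/K)$; and (iii) noticing that in the cuspidal case, where $f'=2f$, it is the period-$f$ periodicity of $(\mu_i)$ rather than its length $f'$ that yields the factor $f$ in the final estimate.
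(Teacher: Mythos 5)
Your proposal follows the same route as the paper — reduce to a dimension count on $\Hom_{\K{\F}}(\gM,\gN[1/u]/\gN)$ via the exact sequence~\eqref{eqn: computing kernel of Ext groups}, invoke Lemma~\ref{lem: bound on torsion in kernel of Exts} for the lower bound on the degrees of the nonzero terms of the $\mu_i$, count the admissible degrees in a fixed residue class modulo $e(K'/K)$, and use the period-$f$ periodicity to obtain the factor $f$. However, there is a genuine gap in the counting step.

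You assert that the nonzero terms of $\mu_i$ have degree \emph{divisible} by $e(K'/K)$, ``using that $d_i = c_i$ here.'' But $d_i = c_i$ is not a hypothesis of the lemma. You were likely misled by the line ``Let $\gM = \gM(r,a,c)$ and $\gN = \gM(s,b,c)$'' at the head of the subsection, which is a typo for $\gM(s,b,d)$: the very next sentence speaks of degrees congruent to $c_i - d_i \pmod{e(K'/K)}$, and later formulas in the same subsection (e.g.\ the quantities $[c_i-d_i]$, $[d_i-c_i]$ in Proposition~\ref{prop:ker-ext-maximal}) would be identically zero under $c = d$. The correct statement is only that the nonzero terms of $\mu_i$ all lie in a \emph{single} congruence class modulo $e(K'/K)$, namely $c_i - d_i$, which is arbitrary. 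The number of integers in $[-\lfloor e'/(p-1)\rfloor, -1]$ lying in a fixed residue class modulo $e(K'/K)$ is at most $\lceil e'/((p-1)e(K'/K))\rceil = \lceil e/(p-1)\rceil$; your count of $\lfloor e/(p-1)\rfloor$ is correct only for the residue class $0$. So your intermediate bound (dimension at most $\lfloor e/(p-1)\rfloor$ per index) is false in general, even though the final display recovers the correct conclusion by the trivial inequality $\lfloor \cdot \rfloor \le \lceil \cdot \rceil$ — which should itself have raised a flag, since the lemma is stated with a ceiling precisely because the floor bound need not hold. The fix is simply to drop the $d_i = c_i$ assumption and count elements of the residue class $c_i - d_i$, obtaining $\lceil e/(p-1)\rceil$ per $i$ directly.
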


\begin{proof} The degrees of nonzero terms of $\mu_i$ all lie in a single
  congruence class modulo $e(K'/K)$, and are bounded below by
  $-e'/(p-1)$. Therefore
  there are at most $\lceil e/(p-1) \rceil$ nonzero terms, and since
  the $\mu_i$ are periodic with period dividing $f$ the lemma follows.
\end{proof}

\begin{remark}\label{rem:half}
It follows directly from Corollary~\ref{cor:bounds-on-ker-ext} that if
$p > 3$ and $e\neq 1$ then we have  $\dim_{\F}
\kExt^1_{\K{\F}}(\gM,\gN) \le [K:\Qp]/2$, for then $\lceil e/(p-1)
\rceil \le e/2$. Moreover these inequalities are strict if $e >
2$. 
\end{remark}

We will require a more precise computation of
$\kExt^1_{\K{\F}}(\gM,\gN)$ in the setting of
Section~\ref{sec:extensions-shape-J} where the pair $(\gM,\gN)$ has 
maximal refined shape $(J,r)$.  We now return to that  setting and its
notation.

Let $\tau$ be a tame type. We
will find the following notation to be helpful. We let $\gamma_i^* =
\gamma_i$ if $i-1 \not\in J$, and $\gamma_i^* = p-1-\gamma_i$
if $i-1 \in J$. (Here the integers $\gamma_i$ are as in
Section~\ref{sec:sets-cp_tau}. In the case of scalar types this means
that we have $\gamma^*_i = 0$ for all $i$.)
Since $p[k_{i-1}-k'_{i-1}] - [k_i - k'_i] = (p^{f'}-1)\gamma_i$, 
 an elementary but useful calculation
shows that
\numequation\label{eq:gammastar}
p[d_{i-1}-c_{i-1}] - [c_i-d_i] = \gamma_i^* (p^{f'}-1),
\end{equation}
when $(i-1,i)$ is a transition, and that in this case $\gamma_i^*
=0$ if and only if $[d_{i-1}-c_{i-1}] < p^{f'-1}$. Similarly, if
$\tau$ is not a scalar type and $(i-1,i)$ is not a transition then
\numequation\label{eq:gammastar-2}
p[d_{i-1}-c_{i-1}] + [c_i-d_i] = (\gamma_i^*+1) (p^{f'}-1).
\end{equation}

The main computational result of this section is the following.

\begin{prop}\label{prop:ker-ext-maximal}
Let $(J,r)$ be any maximal refined shape for $\tau$, and suppose that
the pair $(\gM,\gN)$ has refined shape $(J,r)$.    Then  
$\dim_{\F} \kExt^1_{\K{\F}} (\gM,\gN)$ is equal to 
\[\# \{ 0 \le i < f \, : \, \text{the pair } (i-1,i) \text{ is a
  transition and } \gamma_i^* = 0 \},\]
except that when $e=1$, $\prod_i a_i = \prod_i b_i$,  and the quantity displayed above is $f$,
 then 
the dimension of $\kExt^1_{\K{\F}} (\gM,\gN)$ is equal to $f-1$.
\end{prop}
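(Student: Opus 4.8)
The strategy is to compute $\dim_\F \kExt^1_{\K{\F}}(\gM,\gN)$ directly from the explicit description of $\Hom_{\K{\F}}(\gM,\gN[1/u]/\gN)$ given by~\eqref{eq:phi-commute-ker-ext}, combined with the exact formula~\eqref{eq:ker-ext-formula} relating $\kExt^1$ to this Hom group and to $\Hom_{\F[G_K]}(T(\gM),T(\gN))$ and $\Hom_{\K{\F}}(\gM,\gN)$. Since $(J,r)$ is a maximal refined shape, every $y_i$ equals $e$, so $r_i = (p^{f'}-1)e$ when $(i-1,i)$ is not a transition and $r_i = (p^{f'}-1)(e+1) - [c_i-d_i]$ when it is (and similarly for $s_i$); these explicit values are what make the computation tractable. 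First I would analyze~\eqref{eq:phi-commute-ker-ext}: writing $f(m_i) = \mu_i n_i$ with $\mu_i \in \F((u))/\F[[u]]$ periodic with period dividing $f$ and all terms of degree $\equiv c_i-d_i \pmod{p^{f'}-1}$, the relation $a_i u^{r_i}\mu_i = b_i\varphi(\mu_{i-1})u^{s_i}$ lets one solve for $\mu_i$ in terms of $\mu_{i-1}$, and going around the cycle gives a constraint of the shape $\mu_0 = (\text{const})\cdot\varphi^{f'}(\mu_0)$ together with divisibility conditions. The key point is to determine, for each $i$, the range of allowed (negative) degrees of $\mu_i$: by Lemma~\ref{lem: bound on torsion in kernel of Exts} terms have degree $\geq -\lfloor e'/(p-1)\rfloor$, and~\eqref{eq:phi-commute-ker-ext} propagates degree bounds multiplicatively by $p$ around the cycle.

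Next I would carry out the degree bookkeeping. The lowest-degree term of $\mu_i$, call its degree $-n_i$ (with $n_i \geq 0$), must satisfy $-n_i \equiv c_i - d_i$, i.e. $n_i \equiv d_i - c_i \pmod{p^{f'}-1}$, and the recursion forces $p n_{i-1} - n_i = r_i - s_i$ (reading off the leading negative term), while $\mu_i$ being a genuine element of $\F((u))/\F[[u]]$ requires $n_i \geq 1$ wherever $\mu_i \neq 0$. Plugging in the maximal-shape values of $r_i - s_i$ and using the congruences~\eqref{eq:gammastar} and~\eqref{eq:gammastar-2} should show that a nonzero solution is supported precisely at the transitions $(i-1,i)$ with $\gamma_i^* = 0$: at such a transition the smallest legal value $n_i = [d_i - c_i]$ (when $\gamma_i^*=0$ this is $< p^{f'-1}$, so after applying $\varphi$ one does not overshoot the bound $\lfloor e'/(p-1)\rfloor$), whereas a transition with $\gamma_i^* \neq 0$, or the requirement $n_i \geq 1$ at a non-transition when $e$ is large enough, kills the solution — here one uses $e \geq 1$ to see the non-transition indices contribute nothing. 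This identifies $\Hom_{\K{\F}}(\gM,\gN[1/u]/\gN)$ as having dimension equal to the number of transitions with $\gamma_i^* = 0$, at least generically.

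Finally I would feed this into~\eqref{eq:ker-ext-formula}. For maximal shapes with $(\gM,\gN)$ of type $\tau$, one has $\Hom_{\K{\F}}(\gM,\gN) = 0$ unless $T(\gM) = T(\gN)$ (use Lemma~\ref{lem: maps between rank 1 Kisin modules}, noting $\alpha_i(\gM) \geq \alpha_i(\gN)$ fails in the maximal case for the relevant indices), and $\Hom_{\F[G_K]}(T(\gM),T(\gN))$ is either $0$ or $1$-dimensional by Schur, being $1$-dimensional exactly when $T(\gM) \cong T(\gN)$. One then checks via Corollary~\ref{cor: Kisin modules with the same generic fibre} when the generic fibres agree: in the maximal shape this happens precisely in the exceptional case $e=1$ together with $\prod_i a_i = \prod_i b_i$ and every pair $(i-1,i)$ a transition with $\gamma_i^*=0$ (so the transition count is $f$), and in that case both $\Hom_{\F[G_K]}$ and $\Hom_{\K{\F}}(\gM,\gN)$ are one-dimensional — but in fact $\Hom_{\K{\F}}(\gM,\gN)=0$ still (the inequality on the $\alpha_i$'s fails), so the correction term $-(\dim\Hom_{\F[G_K]} - \dim\Hom_{\K{\F}}) = -1$ drops the dimension from $f$ to $f-1$; in all other cases $T(\gM) \neq T(\gN)$ so both terms vanish and $\dim\kExt^1 = \dim\Hom_{\K{\F}}(\gM,\gN[1/u]/\gN)$ equals the transition count. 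The main obstacle will be the degree-chasing in the second paragraph: one must carefully track, around the full length-$f'$ cycle (not just length $f$), that the only surviving degrees of freedom sit at transitions with $\gamma_i^* = 0$, handling the cuspidal case (where $i\in J \iff i+f\notin J$) and the interplay with the bound $\lfloor e'/(p-1)\rfloor$ from Lemma~\ref{lem: bound on torsion in kernel of Exts}; getting the boundary cases $n_i = [d_i-c_i]$ versus $n_i = [d_i-c_i] + (p^{f'}-1)$ exactly right is where the $\gamma_i^* = 0$ dichotomy and the exceptional $e = 1$ behaviour both emerge.
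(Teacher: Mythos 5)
There is a genuine gap in the second paragraph, where you propose to analyse~\eqref{eq:phi-commute-ker-ext} by a degree recursion around the cycle. You write that one can ``solve for $\mu_i$ in terms of $\mu_{i-1}$'' and that ``going around the cycle gives a constraint of the shape $\mu_0 = (\text{const})\cdot\varphi^{f'}(\mu_0)$'' with a recursion $pn_{i-1}-n_i = r_i - s_i$ read off from leading terms. This is not what happens in the maximal case, and the proposed recursion does not hold. The key structural fact, which your argument misses, is that for the maximal refined shape the left-hand side $a_i u^{r_i}\mu_i$ of~\eqref{eq:phi-commute-ker-ext} is \emph{identically zero} for every admissible $\mu_i$: all nonzero terms of $\mu_i \in \F((u))/\F[[u]]$ have negative degree congruent to $c_i - d_i \pmod{p^{f'}-1}$, hence degree $\geq -e' + [c_i-d_i]$, while the maximality hypothesis makes $r_i = e'$ (no transition) or $r_i = e'-[c_i-d_i]$ (transition); in either case $r_i$ is large enough that $u^{r_i}\mu_i = 0$. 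Consequently the equation~\eqref{eq:phi-commute-ker-ext} degenerates to the single condition $u^{s_i}\varphi(\mu_{i-1}) = 0$ for each $i$, so the $\mu_i$ are \emph{decoupled}: there is no propagation around the cycle, no composite constraint of the form $\mu_0 = c\,\varphi^{f'}(\mu_0)$, and no leading-term matching to set up a recursion for the $n_i$. (Multiplication by $u^{r_i}$ is also not injective on $\F((u))/\F[[u]]$, so even formally one cannot ``solve for $\mu_i$''.) Once the decoupling is in hand, the dimension count is immediate: if $(i-1,i)$ is not a transition then $s_i=0$ forces $\mu_{i-1}=0$; if it is a transition then $s_i = [c_i-d_i]$ forces the terms of $\mu_{i-1}$ to have degree $> -p^{f'-1}$, so $\mu_{i-1}$ has at most one nonzero term (of degree $-[d_{i-1}-c_{i-1}]$), and this happens precisely when $\gamma_i^* = 0$.

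Your third paragraph, analysing the correction term $\dim\Hom_{\F[G_K]}(T(\gM),T(\gN)) - \dim\Hom_{\K{\F}}(\gM,\gN)$ via Corollary~\ref{cor: Kisin modules with the same generic fibre} and Lemma~\ref{lem: maps between rank 1 Kisin modules} and the sign of $\alpha_i(\gM)-\alpha_i(\gN)$, is on the right track and matches the structure of the paper's argument; but it rests on the dimension count from step two, so the gap there has to be repaired first.
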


\begin{proof}
 The argument has two parts. First we show that $\dim_{\F}
 \Hom_{\K{\F}}(\gM,\gN[1/u]/\gN)$ is precisely the displayed quantity
 in the statement of the Proposition;\ then we check that
 $\dim_{\F} \Hom_{\F[G_K]} (T(\gM),T(\gN)) - \dim_{\F} \Hom_{\K{\F}} (\gM,\gN)$ is
 equal to $1$ in the exceptional case of the statement, and $0$
 otherwise. The result then follows from~\eqref{eq:ker-ext-formula}.

Let  $f : m_i \mapsto \mu_i n_i$ be an element of 
$C^0(\gN[1/u]/\gN)$. 
Since $u^{e'}$ kills $\mu_i$, and all nonzero terms of $\mu_i$ have
degree congruent to
$c_i-d_i \pmod{p^{f'}-1}$, certainly all nonzero terms of $\mu_i$ have
degree at least $-e' + [c_i-d_i]$. On the other hand since the shape
$(J,r)$ is maximal we have $r_i = e' - [c_i-d_i]$ when $(i-1,i)$ is a
transition and $r_i = e'$ otherwise. In either case $u^{r_i}$ kills
$\mu_i$, so that \eqref{eq:phi-commute-ker-ext} becomes simply the
condition that $u^{s_i}$ kills $\varphi(\mu_{i-1})$.

If $(i-1,i)$ is not a transition then $s_i=0$, and we conclude that
$\mu_{i-1}=0$. Suppose instead that $(i-1,i)$ is a transition, so
that $s_i = [c_i-d_i]$. Then all nonzero terms of $\mu_{i-1}$ have
degree at least $-s_i/p > -p^{f'-1} > -e(K'/K)$. Since those terms must have
degree congruent to $c_{i-1}-d_{i-1}  \pmod{p^{f'}-1}$, it follows
  that $\mu_{i-1}$ has at most one nonzero term (of degree
  $-[d_{i-1}-c_{i-1}]$), and this only if $[d_{i-1}-c_{i-1}] <
  p^{f'-1}$, or equivalently $\gamma_i^* = 0$ (as noted
  above). Conversely if $\gamma_i^*=0$ then 
\[ u^{s_i} \varphi(u^{-[d_{i-1}-c_{i-1}]}) = u^{[c_i-d_i] -
    p[d_{i-1}-c_{i-1}]} = u^{-\gamma_i^* (p^{f'}-1)}\] vanishes in
$\F((u))/\F[[u]]$. We conclude that $\mu_{i-1}$ may have a single nonzero term if
and only if $(i-1,i)$ is a transition and $\gamma_i^*=0$, and this
completes the first part of the argument.

Turn now to the second part. 
Looking at Corollary~\ref{cor: Kisin
  modules with the same generic fibre} and Lemma~\ref{lem: maps
  between rank 1 Kisin modules}, to compare
$\Hom_{\F[G_K]}(T(\gM),T(\gN))$ and $\Hom_{\K{\F}}(\gM,\gN)$ we need
to compute the quantities $\alpha_i(\gM)-\alpha_i(\gN)$. By definition
this quantity is equal
to 
\numequation\label{eq:alpha-difference-1}\frac{1}{p^{f'}-1} \sum_{j=1}^{f'}  p^{f'-j} \left( r_{i+j} - s_{i+j}
\right).\end{equation}
Suppose first that $\tau$ is non-scalar. When $(i+j-1,i+j)$ is a transition, we have $r_{i+j}-s_{i+j} = (e-1)(p^{f'}-1) +
[d_{i+j}-c_{i+j}] - [c_{i+j}-d_{i+j}]$, and otherwise we
have $r_{i+j}-s_{i+j} = e( p^{f'}-1)=  (e-1)(p^{f'}-1) + [d_{i+j}-c_{i+j}] +
[c_{i+j}-d_{i+j}]$. Substituting these expressions into
\eqref{eq:alpha-difference-1}, adding and subtracting $\frac{1}{p^{f'}-1}
p^{f'} [d_i-c_i]$, and regrouping gives
\[ -[d_i-c_i] + (e-1) \cdot \frac{p^{f'}-1}{p-1} + \frac{1}{p^{f'}-1}\sum_{j=1}^{f'}
p^{f'-j} \left( p[d_{i+j-1} - c_{i+j-1}] \mp [c_{i+j} - d_{i+j}]
\right),\]
where the sign is $-$ if $(i+j-1,i+j)$ is a transition and $+$ if
not. Applying the formulas~\eqref{eq:gammastar}
and~\eqref{eq:gammastar-2} we conclude that
\numequation\label{eq:alpha-difference}
\alpha_i(\gM)-\alpha_i(\gN) = -[d_i-c_i] +  (e-1) \cdot
\frac{p^{f'}-1}{p-1} + \sum_{j=1}^{f'} p^{f'-j} \gamma^*_{i+j} +
\sum_{j \in S_i} p^{f'-j}
\end{equation}
where the set $S_i$ consists of $1 \le j \le f$ such that $(i+j-1,i+j)$ is not a transition.
Finally, a moment's inspection shows that the same formula still holds
if $\tau$ is scalar (recalling that $J = \varnothing$ in that case).

Suppose that we are in the exceptional case of the proposition, so
that $e=1$, $\gamma_i^*=0$ for all $i$, and every pair $(i-1,i)$ is a
transition. The formula~\eqref{eq:alpha-difference} gives
$\alpha_i(\gM)-\alpha_i(\gN) = -[d_i-c_i]$. Since also $\prod_i a_i =
\prod_i b_i$  the conditions of
Corollary~\ref{cor: Kisin modules with the same generic fibre} are
satisfied, so that $T(\gM)=T(\gN)$;\ but on the other hand
$\alpha_i(\gM) < \alpha_i(\gN)$, so that by Lemma~\ref{lem: maps
  between rank 1 Kisin modules} there are no nonzero maps $\gM\to
\gN$, and $\dim_{\F} \Hom_{\F[G_K]} (T(\gM),T(\gN)) - \dim_{\F}
\Hom_{\K{\F}} (\gM,\gN) = 1.$

If instead we are not in the exceptional case of the
proposition, then either $\prod_i a_i \neq \prod_i b_i$, or else~\eqref{eq:alpha-difference}
gives $\alpha_i(\gM) - \alpha_i(\gN) > -[d_i-c_i]$ for all
$i$. Suppose that $T(\gM) \cong T(\gN)$. It follows from
Corollary~\ref{cor: Kisin modules with the same generic fibre} that
$\alpha_i(\gM) - \alpha_i(\gN) \equiv -[d_i-c_i]
\pmod{e(K'/K)}$. Combined with the previous inequality we deduce that
$\alpha_i(\gM) - \alpha_i(\gN)  > 0$, and Lemma~\ref{lem: maps between
  rank 1 Kisin modules} guarantees the existence of a nonzero map $\gM
\to \gN$. We deduce that in any event $\dim_{\F} \Hom_{\F[G_K]}
(T(\gM),T(\gN)) = \dim_{\F} \Hom_{\K{\F}} (\gM,\gN)$, completing the proof.
\end{proof}

\begin{cor}\label{cor:ker-ext-maximal-nonzero}
Let $(J,r)$ be any maximal refined shape for $\tau$, and suppose that
the pair $(\gM,\gN)$ has refined shape $(J,r)$.    Then  
$\dim_{\F} \kExt^1_{\K{\F}} (\gM,\gN) = 0$ if and only if $J \in \cP_{\tau}$.
\end{cor}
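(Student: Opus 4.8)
The plan is to read off the statement from the explicit dimension formula of Proposition~\ref{prop:ker-ext-maximal}. Set
\[
N(J):=\#\{\,0\le i<f \ :\ (i-1,i)\text{ is a transition for }J\text{ and }\gamma_i^*=0\,\}.
\]
By Proposition~\ref{prop:ker-ext-maximal}, $\dim_{\F}\kExt^1_{\K{\F}}(\gM,\gN)=N(J)$ in all cases except the one in which $e=1$, $\prod_i a_i=\prod_i b_i$, and $N(J)=f$, in which case the dimension is $f-1$. So the first task is to show that $N(J)=0$ if and only if $J\in\cP_{\tau}$, and the second is to dispose of the exceptional case.

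For the first task I would unwind the definitions of $\gamma_i^*$ and of $\cP_{\tau}$ one transition at a time. If $(i-1,i)$ is a transition out of $J$, so $i-1\in J$ and $i\notin J$, then $\gamma_i^*=p-1-\gamma_i$ by definition, so $\gamma_i^*=0$ exactly when $\gamma_i=p-1$ --- precisely the negation of the first defining condition of $\cP_{\tau}$. If $(i-1,i)$ is a transition into $J$, so $i-1\notin J$ and $i\in J$, then $\gamma_i^*=\gamma_i$, so $\gamma_i^*=0$ exactly when $\gamma_i=0$ --- the negation of the second defining condition of $\cP_{\tau}$. Hence $N(J)=0$ precisely when no transition $(i-1,i)$ with $0\le i<f$ violates a defining condition of $\cP_{\tau}$. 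In the principal series case $f'=f$, and this is exactly the assertion that $J\in\cP_{\tau}$. In the cuspidal case, where $f'=2f$, one invokes the periodicity facts recorded in Definition~\ref{df:extensions of shape $J$} and \S\ref{sec:sets-cp_tau}: the pair $(i-1,i)$ is a transition if and only if $(i+f-1,i+f)$ is, and $\gamma_{i+f}^*=\gamma_i^*$ (using $\gamma_{i+f}=p-1-\gamma_i$ together with the fact that $i-1\in J\iff i+f-1\notin J$); so controlling $0\le i<f$ controls all of $\Z/f'\Z$, and again $N(J)=0\iff J\in\cP_{\tau}$. For scalar $\tau$ we have $J=\varnothing$, $\gamma_i^*=0$ for all $i$, there are no transitions, and $\cP_{\tau}=\{\varnothing\}$, so both sides hold trivially.

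It remains to treat the exceptional case, in which $\dim_{\F}\kExt^1_{\K{\F}}(\gM,\gN)=f-1$ while $N(J)=f$. If $f\ge 2$ then $f-1\ne0$, while $N(J)=f>0$ forces a transition with $\gamma_i^*=0$ and hence $J\notin\cP_{\tau}$; thus both statements ``$\dim_{\F}\kExt^1=0$'' and ``$J\in\cP_{\tau}$'' are false, and the equivalence holds. When $f=1$ one argues directly: the equalities $N(J)=f=1$ and $\prod_i a_i=\prod_i b_i$, together with the computation of $\alpha_i(\gM)-\alpha_i(\gN)$ carried out in the proof of Proposition~\ref{prop:ker-ext-maximal}, force $T(\gM)\cong T(\gN)$ via Corollary~\ref{cor: Kisin modules with the same generic fibre}; but for the pairs $\gM_{\kappa(\mathfrak m),\xbar}$, $\gN_{\kappa(\mathfrak m),\ybar}$ parametrised by $\Spec\Adist$ --- which are the pairs to which the corollary is applied in the sequel --- the modules have distinct $T$ by the very construction of $\Adist$, so this exceptional case does not arise there and the equivalence of the first task applies verbatim.

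I expect the only genuine subtlety to be the last point: keeping the cuspidal periodicity bookkeeping straight, and pinning down exactly when the exceptional clause of Proposition~\ref{prop:ker-ext-maximal} is in force. Everything else is a mechanical translation of one combinatorial condition into another.
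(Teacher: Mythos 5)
Your proposal follows the same overall strategy as the paper's proof: apply Proposition~\ref{prop:ker-ext-maximal}, translate the vanishing of the non-exceptional count $N(J)$ into the defining conditions of $\cP_\tau$ via the definition of $\gamma_i^*$, and then dispose of the exceptional case separately. Your unwinding of $N(J)=0\iff J\in\cP_\tau$ is correct in all cases (scalar, principal series, and cuspidal, using the periodicity $\gamma_{i+f}^*=\gamma_i^*$).

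Where you diverge is in the treatment of the exceptional case. The paper disposes of it with the single remark that it can occur only if $f$ is even (so $f-1\neq 0$). You instead split on $f$, and for $f\geq 2$ your argument is fine: both sides of the equivalence are false. For $f=1$, however, your argument is only that the exceptional case does not arise \emph{for the pairs over $\Spec\Adist$ to which the corollary is subsequently applied}; that is an observation about the applications and not a proof of the corollary as stated, which quantifies over arbitrary pairs $(\gM,\gN)$ of the given refined shape. Strictly speaking this is a retreat from the statement, and you should either restate the corollary (e.g.\ with the hypothesis $T(\gM)\not\cong T(\gN)$, which is exactly what holds over $\Adist$) or explain why the exceptional case is genuinely impossible when $f=1$.

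That said, your worry at $f=1$ is not misplaced, and in fact your analysis exposes a point the paper treats too briskly. The paper's assertion that the exceptional case forces $f$ even is correct in the principal series case, where an alternating $J\subseteq\Z/f\Z$ exists only for $f$ even. But in the cuspidal case $J\subseteq\Z/2f\Z$ must also satisfy $i\in J\iff i+f\notin J$, and an alternating $J$ (e.g.\ the even residues) is compatible with this constraint precisely when $f$ is \emph{odd}, not even. In particular $f=1$ cuspidal is not excluded by the paper's parity remark: one can take $J=\{0\}\subseteq\Z/2\Z$ with $\gamma_0=0$, $\gamma_1=p-1$, $e=1$, and $\prod_i a_i=\prod_i b_i$, in which case the exceptional clause of Proposition~\ref{prop:ker-ext-maximal} applies, giving $\dim_{\F}\kExt^1=f-1=0$ even though $J\notin\cP_\tau$. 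The substantive fix is exactly the one you identify: in the exceptional case one always has $T(\gM)\cong T(\gN)$ (this is in the proof of Proposition~\ref{prop:ker-ext-maximal}), so the exceptional case is ruled out as soon as one requires $T(\gM)\not\cong T(\gN)$, which is the situation over $\Adist$. So your proof is, if anything, more careful than the paper's at this point; just be aware that what you have established is the corollary under the additional (harmless for the applications, but not explicitly stated) hypothesis $T(\gM)\not\cong T(\gN)$.
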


\begin{proof}
  This is immediate from Proposition~\ref{prop:ker-ext-maximal},
  comparing the definition of $\gamma_i^*$ with the defining
 condition on elements of $\cP_{\tau}$, and
  noting that the exceptional case in Proposition~\ref{prop:ker-ext-maximal} can occur only if
  $f$ is even (so in particular $f-1 \neq 0$ in these exceptional cases).
\end{proof}

Recall 
that
$\overline{\cZ}(J)$ is by definition the scheme-theoretic image of~$\overline{\cC}(J)$ in
$\cZ^{\tau,1}$. In the remainder of this section, we show that the $\overline{\cZ}(J)$
with $J\in\cP_{\tau}$ are pairwise distinct irreducible components
of~$\cZ^{\tau,1}$. In Section~\ref{subsec: irred components} below we
will show that they in fact exhaust the irreducible components of~$\cZ^{\tau,1}$.
\begin{thm}\label{thm: identifying the vertical components} 
$\overline{\cZ}(J)$ has dimension at most $[K:\Qp]$, with equality occurring if
and only if~$J\in\cP_{\tau}$. Consequently, the~$\overline{\cZ}(J)$ with
$J\in\cP_{\tau}$ are irreducible components of~$\cZ^{\tau,1}$.
\end{thm}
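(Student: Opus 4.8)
Given the extensive machinery already developed, the proof of Theorem~\ref{thm: identifying the vertical components} should be a relatively short bootstrapping argument combining the dimension count for $\overline{\cC}(J)$ with the $\kExt$ computation. The plan is as follows.

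First I would invoke Theorem~\ref{thm: dimension of refined shapes} (applied to the maximal refined shape $(J,r)$) to record that $\dim\overline{\cC}(J) = [K:\Qp]$, together with the fact that $\overline{\cC}(J)$ is by construction the scheme-theoretic image of the family $[\Spec B^{\dist}/\Gm\times_{\F}\Gm]$ attached to the pair $\gM(J)$, $\gN(J)$, which has refined shape $(J,r)$. The key dichotomy comes from Corollary~\ref{cor: dimension of families of extensions}: the dimension of $\overline{\cZ}(J) = \overline{\cZ}(\gM(J),\gN(J))$ equals the rank of $T_{A^{\dist}}$ (hence equals $\dim\overline{\cC}(J) = [K:\Qp]$) precisely when $\kExt^1_{\K{\kappa(\mathfrak m)}}(\gM_{\kappa(\mathfrak m),\xbar},\gN_{\kappa(\mathfrak m),\ybar}) = 0$ for the relevant maximal ideals, and is strictly smaller when this $\kExt$ is nonzero at all such points. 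Here I should be a little careful: Corollary~\ref{cor: dimension of families of extensions} is phrased in terms of maximal ideals of $A^{\kfree}$ and assumes the $\kExt$ vanishes (or is nonzero) uniformly; I would note that the formation of $\kExt^1_{\K{B}}(\gM_{B,x},\gN_{B,y})$ is compatible with base change over $A^{\kfree}$ (as arranged in the construction of $A^{\kfree}$ in Subsection~\ref{subsec:universal families}), so that the rank of $U_{A^{\kfree}}$ is constant, and hence the vanishing (resp.\ non-vanishing) at one closed point propagates to all of them. The value of $\dim_\F\kExt^1_{\K{\F}}(\gM,\gN)$ for $(\gM,\gN)$ of maximal refined shape $(J,r)$ was computed in Proposition~\ref{prop:ker-ext-maximal}, and Corollary~\ref{cor:ker-ext-maximal-nonzero} translates this into the clean statement that $\dim_\F\kExt^1_{\K{\F}}(\gM,\gN) = 0$ if and only if $J\in\cP_\tau$. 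Combining these: if $J\in\cP_\tau$ then $\kExt$ vanishes at the generic point (hence, by constancy, at all closed points of $A^{\kfree}$), so $\dim\overline{\cZ}(J) = [K:\Qp]$; if $J\notin\cP_\tau$ then $\kExt$ is nonzero at every closed point, so $\dim\overline{\cZ}(J) < [K:\Qp]$. In all cases $\dim\overline{\cZ}(J)\le[K:\Qp]$, as the scheme-theoretic image of a morphism from a stack of dimension $[K:\Qp]$ (using e.g.\ that the morphism $[\Spec B^{\dist}/\Gm\times_\F\Gm]\to\cZ^{\tau,1}$ has quasi-finite fibres as in the proof of Corollary~\ref{cor: dimension of families of extensions}).

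For the final sentence, I would argue that when $J\in\cP_\tau$, the closed substack $\overline{\cZ}(J)$ of $\cZ^{\tau,1}$ is irreducible of dimension exactly $[K:\Qp]$ (irreducibility because it is the scheme-theoretic image, hence the closure of the image, of the irreducible stack $[\Spec B^{\dist}/\Gm\times_\F\Gm]$ — the source being irreducible since $\Spec A^{\dist}$ is irreducible by Lemma~\ref{lem: generically no Homs}, so $\Spec B^{\dist}$ is an irreducible vector bundle over it, and quotients and scheme-theoretic images of irreducibles are irreducible). Since $\cZ^{\tau,1}$ is equidimensional of dimension $[K:\Qp]$ by Proposition~\ref{prop: dimensions of the Z stacks}, any irreducible closed substack of the full dimension $[K:\Qp]$ must be an irreducible component. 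Hence each $\overline{\cZ}(J)$ with $J\in\cP_\tau$ is an irreducible component of $\cZ^{\tau,1}$.

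I anticipate the main (though still minor) obstacle is the bookkeeping around the base-change compatibility of $\kExt$: one must confirm that the hypotheses ``for all maximal ideals $\mathfrak m$ of $A^{\kfree}$'' in Corollary~\ref{cor: dimension of families of extensions} are genuinely satisfied in the dichotomous all-or-nothing fashion, rather than merely generically. This follows because $U_{A^{\kfree}}$ is a finitely generated projective (hence locally free of constant rank, as $\Spec A^{\kfree}$ is irreducible) $A^{\kfree}$-module whose formation commutes with base change, so its fibre dimension $\dim_\F\kExt^1_{\K{\kappa(\mathfrak m)}}(\gM_{\kappa(\mathfrak m),\xbar},\gN_{\kappa(\mathfrak m),\ybar})$ is independent of $\mathfrak m$; and at the ``constant'' point $x=y$ corresponding to the original $\gM(J), \gN(J)$ over $\F$ this dimension is computed by Proposition~\ref{prop:ker-ext-maximal}/Corollary~\ref{cor:ker-ext-maximal-nonzero}. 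A secondary point to be careful about is the exceptional case in Proposition~\ref{prop:ker-ext-maximal} (where the answer is $f-1$ rather than $f$): one must note, as recorded in the proof of Corollary~\ref{cor:ker-ext-maximal-nonzero}, that this exceptional value is still nonzero (since it only arises when $f$ is even, so $f-1\ge 1$), so it does not interfere with the ``$=0$ iff $J\in\cP_\tau$'' criterion and hence not with the dimension dichotomy.
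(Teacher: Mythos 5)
Your proposal is correct and follows essentially the same route as the paper's proof: both deduce the dimension bound and the $J\in\cP_\tau$ dichotomy by combining Theorem~\ref{thm: dimension of refined shapes}, Corollary~\ref{cor: dimension of families of extensions}, Proposition~\ref{prop:base-change for exts}, and Corollary~\ref{cor:ker-ext-maximal-nonzero}, and then both invoke equidimensionality of $\cZ^{\tau,1}$ (Proposition~\ref{prop: dimensions of the Z stacks}) together with irreducibility of $\overline{\cZ}(J)$ to conclude. The only difference is that you spend effort establishing the ``all-or-nothing'' uniformity of $\kExt$ over $A^{\kfree}$ via projectivity of $U_{A^{\kfree}}$, which is not strictly necessary: Corollary~\ref{cor:ker-ext-maximal-nonzero} already applies uniformly to \emph{every} pair of maximal refined shape $(J,r)$, in particular to the pairs at each maximal ideal of $A^{\kfree}$ (which all share this refined shape regardless of the unramified-twist parameters $a_i,b_i$), so the hypotheses of Corollary~\ref{cor: dimension of families of extensions} hold pointwise without any appeal to constancy of rank.
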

\begin{proof}The first part is immediate from Corollary~\ref{cor: dimension of
    families of extensions}, Proposition~\ref{prop:base-change for
    exts}, 
  Corollary~\ref{cor:ker-ext-maximal-nonzero} and Theorem~\ref{thm:
    dimension of refined shapes}. Since~$\cZ^{\tau,1}$ is
  equidimensional of dimension~$[K:\Qp]$ by Proposition~\ref{prop:
    dimensions of the Z stacks}, and the~$\overline{\cZ}(J)$ are closed and
  irreducible by construction, the second part follows from the first
  together with~\cite[\href{https://stacks.math.columbia.edu/tag/0DS2}{Tag 0DS2}]{stacks-project}.
\end{proof}

We also note the following result.

\begin{prop}
\label{prop:C to Z mono}
If~$J\in\cP_{\tau}$, then there is a dense open substack $\cU$ of 
$\overline{\cC}(J)$ such that the canonical morphism $\overline{\cC}(J) 
\to \overline{\cZ}(J)$ restricts to an open immersion on $\cU$.
\end{prop}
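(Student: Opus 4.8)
The plan is to realise $\overline{\cC}(J)$ and $\overline{\cZ}(J)$ as the scheme-theoretic images of one and the same integral algebraic stack $\cY$, each image being formed along a finite-type monomorphism, and then to invoke the general fact that such a monomorphism is an isomorphism over a dense open of its target. Let $(J,r)$ be the maximal refined shape refining $J$, put $\gM:=\gM(J)$ and $\gN:=\gN(J)$, and — with the notation of Subsection~\ref{subsec:universal families} applied to this pair — let $X\subseteq\Spec\Bkfree$ be the associated open subscheme and $\cY:=[X/\Gm\times_\F\Gm]$. Then $T_{\Adist}$ has rank $[K:\Qp]\ge 1$ (Theorem~\ref{thm: extensions of rank one Kisin modules}, using maximality of $(J,r)$ and $\Delta=0$ over $\Adist$), and $U_{\Akfree}=0$ by the computation in the next paragraph, so $X$ is non-empty; since $\Akfree$ is a domain, $\cY$ is an integral Noetherian algebraic stack. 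By Definition~\ref{def:scheme-theoretic images} and Lemma~\ref{lem: scheme theoretic images coincide}, together with the fact that the atlas $X\to\cY$ is an fppf cover (so scheme-theoretic images may be computed on $\cY$ or on $X$ interchangeably), $\overline{\cC}(J)$ is the scheme-theoretic image of $\cY$ in $\cC^{\dd,1}$, and $\overline{\cZ}(J)$ is the scheme-theoretic image of $\cY$ in $\cR^{\dd,1}$.

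Next I would check the hypothesis of Proposition~\ref{prop: construction of family monomorphing to C and R}(2) for the pair $(\gM,\gN)$: that $\kExt^1_{\K{\kappa(\m)}}\bigl(\gM_{\kappa(\m),\xbar},\gN_{\kappa(\m),\ybar}\bigr)=0$ for every maximal ideal $\m$ of $\Akfree$. The twisted pair $\bigl(\gM_{\kappa(\m),\xbar},\gN_{\kappa(\m),\ybar}\bigr)$ still has the maximal refined shape $(J,r)$, because an unramified twist changes none of the invariants $r_i,c_i,d_i$; and because $J\in\cP_\tau$, the proof of Corollary~\ref{cor:ker-ext-maximal-nonzero} shows that no transition $(i-1,i)$ has $\gamma^*_i=0$, so the integer counted in Proposition~\ref{prop:ker-ext-maximal} equals $0$. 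Since $0\neq f$, the exceptional case of that proposition cannot occur for any choice of unramified twists, whence the desired vanishing; applying the same over $\Akfree$ (which is Jacobson) gives $U_{\Akfree}=0$, as used above. Proposition~\ref{prop: construction of family monomorphing to C and R}(1) and~(2) now show that the two morphisms $j_\cC\colon\cY\to\cC^{\dd,1}$ and $j_\cR\colon\cY\to\cR^{\dd,1}$ are finite-type monomorphisms; each factors through a closed immersion into $\overline{\cC}(J)$, resp.\ $\overline{\cZ}(J)$, so we get scheme-theoretically dominant finite-type monomorphisms $j_\cC\colon\cY\to\overline{\cC}(J)$ and $j_\cZ\colon\cY\to\overline{\cZ}(J)$ sitting in a commutative triangle with the canonical morphism $q\colon\overline{\cC}(J)\to\overline{\cZ}(J)$.

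The remaining ingredient is the general claim that a scheme-theoretically dominant finite-type monomorphism $g\colon\cW\to\cX$ of integral Noetherian algebraic stacks is an isomorphism over a dense open substack of $\cX$. By generic flatness there is a dense open $\cV_0\subseteq\cX$ over which $g$ is flat; then $g^{-1}(\cV_0)\to\cV_0$ is a flat monomorphism of finite presentation, hence an open immersion (checked smooth-locally, where it is \cite[\href{https://stacks.math.columbia.edu/tag/05W8}{Tag 05W8}]{stacks-project}), with image an open substack $\cV\subseteq\cV_0$ which is dense in $\cX$ since it contains the generic point (which lies in the image of the dominant $g$); hence $g^{-1}(\cV)\to\cV$ is a surjective open immersion, so an isomorphism. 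Applying this to $g=j_\cC$ and to $g=j_\cZ$ produces dense open substacks $\cU_\cC\subseteq\overline{\cC}(J)$ and $\cU_\cZ\subseteq\overline{\cZ}(J)$ over which $j_\cC$, resp.\ $j_\cZ$, is an isomorphism; put $\cY':=j_\cC^{-1}(\cU_\cC)\cap j_\cZ^{-1}(\cU_\cZ)$, a dense open of $\cY$. Then $j_\cC$ restricts to an open immersion $\cY'\hookrightarrow\overline{\cC}(J)$ with dense open image $\cU:=j_\cC(\cY')$, and $j_\cZ$ restricts to an open immersion $\cY'\hookrightarrow\overline{\cZ}(J)$; so $q|_{\cU}=j_\cZ|_{\cY'}\circ(j_\cC|_{\cY'})^{-1}$ is an open immersion, and $\cU$ is the required dense open substack of $\overline{\cC}(J)$.

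The main obstacle is the general claim of the last paragraph — that a finite-type monomorphism of Noetherian algebraic stacks is generically an isomorphism onto an open of its target — and, secondarily, organising the bookkeeping so that this statement descends through $\cY$ to the map $q$ itself. The one other delicate point is the verification in the second paragraph that $J\in\cP_\tau$ forces the relevant $\kExt^1$ to vanish for all unramified twists, since it is precisely this that makes the strong form of Proposition~\ref{prop: construction of family monomorphing to C and R}(2) available; the rest is assembly of results already in hand.
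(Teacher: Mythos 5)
Your argument is correct and is precisely the route the paper takes: its one-line proof cites Proposition~\ref{prop: construction of family monomorphing to C and R} and Corollary~\ref{cor:ker-ext-maximal-nonzero}, which are exactly the two ingredients you use, with your verification that the $\kExt^1$ vanishing persists under unramified twist and your general lemma about dominant finite-type monomorphisms being the expected (and standard) glue. The only remark worth making is that the ``main obstacle'' you flag — that a scheme-theoretically dominant finite-type monomorphism of integral Noetherian algebraic stacks is generically an isomorphism onto a dense open of the target — is indeed correct but is a routine consequence of generic flatness plus the fact that a flat, finitely presented monomorphism is an open immersion, so it hardly constitutes a genuine obstacle.
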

\begin{proof}
This follows from 
Proposition~\ref{prop: construction of family monomorphing to C and R}
and Corollary~\ref{cor:ker-ext-maximal-nonzero}.
\end{proof}

For later use, we note the following computation. Recall that we write
$\gN(J) = \gN(J,r)$ for the maximal shape $(J,r)$ refining $J$, and that~$\tau=\eta\oplus\eta'$.

\begin{prop}\label{prop:char-calculation}
For each shape $J$ we have 
\[ T(\gN(J)) \cong \eta \cdot \left(\prod_{i=0}^{f'-1} (\sigma_i \circ
  \hchar)^{t_i}\right)^{-1} |_{G_{K_{\infty}}} \] 
where 
\[ t_i =
\begin{cases}
  \gamma_i + \delta_{J^c}(i) & \text{if } i-1 \in J \\
  0 & \text{if } i-1\not\in J.
\end{cases}\]
Here $\delta_{J^c}$ is the characteristic function of the complement
of $J$ in $\Z/f'\Z$, and we are abusing notation by writing $\eta$ for
the function
$\sigma_i \circ \hchar^{k_i}$, which agrees with $\eta$ on $I_K$.

In particular the map $J \mapsto T(\gN(J))$ is injective on $\cP_{\tau}$.
\end{prop}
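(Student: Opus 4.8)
The plan is to deduce everything from Lemma~\ref{lem: generic fibres of rank 1 Kisin modules}, which computes $T$ of a rank one Breuil--Kisin module in terms of its invariants. Since $\gN(J) = \gM(s,1,d)$ has all $a_i = 1$, the unramified part of $T(\gN(J))$ is trivial, and the lemma gives $T(\gN(J)) = (\sigma_i \circ \hchar^{\,d_i - \alpha_i})|_{G_{K_\infty}}$ for every $i$, where $\alpha_i = \alpha_i(\gN(J))$. So the proposition amounts to an explicit evaluation of the integers $\alpha_i$ modulo $e(K'/K) = p^{f'}-1$, the order of $\hchar$ on $I(K'/K)$. The scalar case is immediate: then $J = \varnothing$ is the only shape, $\gN(\varnothing) = \gM(0,1,k)$ has $\alpha_i = 0$ and $T(\gN(\varnothing)) = \eta$, matching the formula with all $t_i = 0$; so I would assume $\tau$ non-scalar from now on.

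First I would record the explicit $s_i$ and $d_i$ for the \emph{maximal} refined shape $(J,r)$, using~(\ref{subsubsec:explicitly refined}): one has $s_i = 0$ when $(i-1,i)$ is not a transition for $J$, and $s_i = (p^{f'}-1) - [d_i-c_i]$ when it is; and $d_i = k'_i$ for $i \in J$, $d_i = k_i$ for $i \notin J$. Distinguishing transitions into and out of $J$ makes $s_i$ equal to one of $0$, $[k_i-k'_i]$, $(p^{f'}-1)-[k_i-k'_i]$. Then I would compute $\alpha_i$ from the defining relations $p\alpha_{i-1} - \alpha_i = s_i$ (equivalently from the closed formula $\alpha_i = (p^{f'}-1)^{-1}\sum_{m=0}^{f'-1} p^m s_{i-m}$), expand $[k_j - k'_j] = \sum_l p^l \gamma_{j-l}$ via~(\ref{eq:k-gamma}), and collect terms, exactly in the spirit of the computation leading to~(\ref{eq:alpha-difference}). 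Writing $\omega := \sigma_0 \circ \hchar$ (of order $p^{f'}-1$ on $I(K'/K)$) and using $\sigma_i \circ \hchar = \omega^{p^{-i}}$ and $\eta|_{I(K'/K)} = \omega^{k_0}$, one reduces both $T(\gN(J))|_{I(K'/K)} = \omega^{p^{-i}(d_i-\alpha_i)}$ and $(\eta \cdot (\prod_i (\sigma_i\circ\hchar)^{t_i})^{-1})|_{I(K'/K)} = \omega^{\,k_0 - \sum_i p^{-i} t_i}$ to a single congruence modulo $p^{f'}-1$, which~(\ref{eq:k-gamma}) verifies directly; the extra term $\delta_{J^c}(i)$ in the formula for $t_i$ appears precisely at transitions out of $J$. (Since $a_i=1$ there is no unramified twist to match, and the bookkeeping is uniform for principal series and cuspidal types.)

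For injectivity on $\cP_\tau$ I would argue in two steps. For $J \in \cP_\tau$ the defining conditions force $0 \le t_i(J) \le p-1$ for all $i$ — the only way to leave this range is at a transition out of $J$, where $t_i = \gamma_i + 1$ and $\cP_\tau$ demands $\gamma_i \ne p-1$ — and the $t_i(J)$ are not all equal to $p-1$ (that would force $J = \Z/f'\Z$ and all $\gamma_i = p-1$, contrary to the normalisation of the $\gamma_j$). Hence $\sum_i p^{-i} t_i(J)$ has a representative in $[0,p^{f'}-2]$ read off as the base-$p$ string $(t_i(J))$, so $T(\gN(J))|_{I(K'/K)} = \omega^{\,k_0 - \sum_i p^{-i}t_i(J)}$ determines $\sum_i p^{-i}t_i(J)$ exactly, hence the whole tuple $(t_i(J))$. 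Second, I would recover $J$ from $((t_i(J)),\tau)$: from the explicit description of $t_i$, if $t_i \ne 0$ then $t_i$ equals $\gamma_i$ or $\gamma_i+1$, and this pins down both whether $i-1 \in J$ and whether $i \in J$; whereas if $t_i = 0$ then one gets $i-1 \in J \iff i \in J$ (a transition into $J$ at $i$ would force $t_i = \gamma_i \ne 0$ by the $\cP_\tau$ condition, so it cannot occur). Propagating around $\Z/f'\Z$ from any index with $t_i \ne 0$ then determines $J$ completely; and if all $t_i$ vanish then $J = \varnothing$ (the alternative $J = \Z/f'\Z$ being scalar).

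The main obstacle is the middle step: the computation of $\alpha_i(\gN(J))$ modulo $p^{f'}-1$ and the verification of the matching congruence. It is elementary but requires careful case analysis (transition into $J$ / transition out of $J$ / no transition) together with careful handling of the several conventions relating characters of $I(K'/K)$, the fundamental characters $\omega_\sigma$, and the normalisation $\sigma_{i+1}^p = \sigma_i$; the argument runs parallel to, but is more intricate than, the computations already carried out in Subsection~\ref{subsec: Diamond--Savitt}, in particular around~(\ref{eq:alpha-difference}). Once the formula for $T(\gN(J))$ is in hand, the injectivity statement is comparatively soft.
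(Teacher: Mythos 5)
Your approach to the formula for $T(\gN(J))$ is sound and is essentially the one the paper takes: both start from Lemma~\ref{lem: generic fibres of rank 1 Kisin modules} and reduce to an evaluation of $d_0 - \alpha_0(\gN(J))$ modulo $p^{f'}-1$. The only difference is that you propose to compute $\alpha_0(\gN(J))$ directly from the closed formula, whereas the paper pairs the already-computed difference $\alpha_0(\gM(J)) - \alpha_0(\gN(J))$ from \eqref{eq:alpha-difference} with the sum $\alpha_0(\gM(J)) + \alpha_0(\gN(J)) = e(p^{f'}-1)/(p-1)$ (which comes from $r_i + s_i = e'$). Your route is a little more work but entirely equivalent, and you correctly flag it as a careful but elementary case-check.

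The injectivity argument, however, has a genuine gap. Your claimed rule ``if $t_i = 0$ then $i-1 \in J \iff i \in J$'' is false, and the parenthetical justification misreads the definition of $t_i$. By the formula, $t_i = 0$ automatically whenever $i-1 \notin J$, regardless of whether $i \in J$; in particular, at a transition \emph{into} $J$ at $(i-1,i)$ (so $i-1 \notin J$, $i \in J$) one has $t_i = 0$, \emph{not} $t_i = \gamma_i$. The $\cP_\tau$ condition $\gamma_i \ne 0$ at such a transition therefore gives you no handle on $t_i$. Concretely, take $f' = 3$, $J = \{0,2\}$, $\gamma_0 = 0$: then $(t_0,t_1,t_2) = (0,\gamma_1+1,0)$ and your forward propagation from the anchor $t_1 \ne 0$ gives $0 \in J$, $1 \notin J$, and then at $t_2 = 0$ your rule outputs $2 \notin J$, which is wrong. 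The propagation direction matters. The correct argument --- and the one the paper gives --- is a \emph{backward} recursion: knowing $t_i$ and whether $i \in J$, one determines whether $i-1 \in J$. If $t_i \ne 0$ then $i-1 \in J$ directly; if $t_i = 0$ and $i \notin J$ then $i-1 \notin J$ (else $t_i = \gamma_i + 1 \ge 1$); if $t_i = 0$, $i \in J$, and $\gamma_i \ne 0$ then $i-1 \notin J$ (else $t_i = \gamma_i \ne 0$); and in the residual case $t_i = 0$, $i \in J$, $\gamma_i = 0$, the $\cP_\tau$ condition forces $i-1 \in J$. Starting from any anchor with $t_i \ne 0$ and going backward around $\Z/f'\Z$ then recovers $J$. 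The two preliminary observations you made --- that $0 \le t_i \le p-1$ and that the $t_i$ are not all $p-1$, so that the character determines the tuple $(t_i)$ --- are both correct and also appear in the paper.
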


\begin{remark}\label{rk:cuspidal-char-niveau-1}
In the cuspidal case it is not \emph{a priori} clear that the formula in Proposition~\ref{prop:char-calculation} gives
a character of $G_{K_{\infty}}$ (rather than a character only when
restricted to $G_{L_{\infty}}$), but this is an elementary (if
somewhat painful) calculation using the
definition of the $\gamma_i$'s and the relation $\gamma_i +
\gamma_{i+f} = p-1$.
\end{remark}

\begin{proof}
 We begin by explaining how the final statement follows from the rest
 of the Proposition. First observe that if $J \in \cP_\tau$ then  $0 \le t_i \le p-1$ for all
 $i$. Indeed the only possibility for a contradiction would be if
 $\gamma_i = p-1$ and $i \not\in J$, but then the definition of
 $\cP_\tau$ requires that we cannot have $i-1 \in J$. Next, note that
 we never have $t_i = p-1$ for all $i$. Indeed, this would require $J
 = \Z/f'\Z$ and $\gamma_i=p-1$ for all~$i$, but by definition the
 $\gamma_i$ are not all equal to $p-1$.

The observations in the previous paragraph imply that (for $J \in \cP_\tau$) the character
$T(\gN(J))$ uniquely determines the integers $t_i$, and so it remains
to show that the integers $t_i$ determine the set $J$. If $t_i = 0$
for all $i$, then either $J = \varnothing$ or $J = \Z/f'\Z$ (for
otherwise there is a transition out of $J$, and $\delta_{J^c}(i) \neq
0$ for some $i-1 \in J$). But if $J = \Z/f'\Z$ then $\gamma_i = 0$ for
all $i$ and $\tau$ is scalar;\ but for scalar types we have $\Z/f'\Z
\not\in \cP_\tau$, a contradiction. Thus $t_i =0$ for all $i$ implies $J =
\varnothing$. 

For the rest of this part of the argument, we may therefore suppose $t_i \neq 0$
for some $i$, which forces $i-1 \in J$. The entire set $J$ will then
be determined by recursion if we can show that knowledge of $t_i$ along with
whether or not $i \in J$, determines whether or not $i-1 \in J$. Given
the defining formula for $t_i$, the only possible ambiguity is if $t_i
= 0$ and $\gamma_i +
\delta_{J^c}(i) = 0$, so that $\gamma_i = 0$ and $i \in J$. But the definition of $\cP_{\tau}$ requires $i-1
\in J$ in this case. This completes the proof.

We now turn to proving the formula for $T(\gN(J))$. We will use
Lemma~\ref{lem: generic fibres of rank 1 Kisin modules} applied at
$i=0$, for which we have to compute $\alpha_0 - d_0$ writing $\alpha_0
= \alpha_0(\gN)$. Recall
that we have already computed $\alpha_0(\gM(J)) - \alpha_0(\gN(J))$ in
the proof of Proposition~\ref{prop:ker-ext-maximal}. Since
$\alpha_0(\gM(J)) + \alpha_0(\gN(J)) = e(p^{f'}-1)/(p-1)$, taking the
difference between these formulas gives 
\[ 2 \alpha_0 =  [d_0 - c_0] - \sum_{j=1}^{f'} p^{f'-j} \gamma_j^* +
  \sum_{j \in S_0^c} p^{f'-j} \]
where $S_0^c$ consists of those $1\le j \le f$ such that $(j-1,j)$ is
a transition. Subtract $2[d_0]$ from both sides, and add the expression
$-[k_0-k'_0] + \sum_{j=1}^{f'} p^{f'-j} \gamma_j$ (which vanishes by definition) to the
right-hand side. Note that $[d_0 - c_0] - [k_0-k'_0] - 2[d_0]$ is
equal to $-2[k_0]$ if $0 \not\in J$, and to
$e(K'/K)-2[k_0-k_0']-2[k_0']$ if $0 \in J$.
Since $\gamma_j -
\gamma_j^* = 2\gamma_j - (p-1)$ if $j-1 \in J$ and is $0$ otherwise,
the preceding expression rearranges to give (after dividing by $2$)
\[ \alpha_0 - [d_0]  = -\kappa_0 + \sum_{j-1\in J} p^{f'-j}
  \gamma_j  + \sum_{j-1\in J, j\not\in J} p^{f'-j} = -\kappa_0 +
  \sum_{j=1}^{f'}  p^{f'-j} t_j\]
where $\kappa_0 = [k_0]$ if $0 \not\in J$ and $\kappa_0 = [k_0 - k'_0]
+[k'_0]$ if $0 \in J$. Since in either case $\kappa_0 \equiv k_0
\pmod{e(K'/K)}$ the result now follows from Lemma~\ref{lem: generic fibres of rank 1 Kisin modules}.
\end{proof}

\begin{defn}Let $\rbar:G_K\to\GL_2(\F')$ be representation. Then we
  say that a Breuil--Kisin module~$\gM$ with $\F'$-coefficients is a
  \emph{Breuil--Kisin model of~$\rbar$ of type~$\tau$} if~$\gM$ is an
  $\F'$-point of~$\cC^{\tau,\BT,1}$, and
  $T_{\F'}(\gM)\cong\rbar|_{G_{K_\infty}}$.
\end{defn}
\begin{thm}
  \label{thm: unique serre weight}The~$\overline{\cZ}(J)$, with $J\in\cP_\tau$, are pairwise distinct closed
  substacks of $\cZ^{\tau,1}$. For each $J\in\cP_\tau$, there is a dense set of finite type
  points of $\overline{\cZ}(J)$ with the property that the corresponding Galois
  representations have $\sigmabar_J$ as a Serre weight, and which
  furthermore admit a unique Breuil--Kisin model of type~$\tau$.
\end{thm}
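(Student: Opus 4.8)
The plan is to combine the explicit description of the generic points of $\overline{\cZ}(J)$ as families of extensions of rank one Breuil--Kisin modules with the vanishing of $\kExt^1$ for $J\in\cP_\tau$ and the injectivity of $J\mapsto T(\gN(J))$ on $\cP_\tau$. The stacks $\overline{\cZ}(J)$ are closed in $\cZ^{\tau,1}$ by construction, so the issue is distinctness. By Proposition~\ref{prop:C to Z mono} there is a dense open substack $\cU_J$ of $\overline{\cC}(J)$ on which $\overline{\cC}(J)\to\overline{\cZ}(J)$ is an open immersion, with image $\cV_J\subseteq\overline{\cZ}(J)$; shrinking $\cU_J$ and using Proposition~\ref{prop: construction of family monomorphing to C and R} (whose hypotheses hold by Corollary~\ref{cor:ker-ext-maximal-nonzero}, since $J\in\cP_\tau$), we may take the $\Fpbar$-points of $\cU_J$ to be the extensions $0\to\gN(J)_{\bar y}\to\gP\to\gM(J)_{\bar x}\to 0$ that are non-split after inverting $u$, with $T(\gM(J)_{\bar x})\not\cong T(\gN(J)_{\bar y})$. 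Applying the exact functor $T$ after inverting $u$, the corresponding $\rbar:=T(\gP)$ is a non-split extension of the distinct characters $T(\gM(J)_{\bar x})$ by $T(\gN(J)_{\bar y})$, so has $T(\gN(J)_{\bar y})$ as its unique rank one $G_{K_\infty}$-subrepresentation. If $\overline{\cZ}(J)=\overline{\cZ}(J')$ with $J,J'\in\cP_\tau$, then $\cV_J$ and $\cV_{J'}$, being dense opens of an irreducible stack, meet, and a point in the intersection gives $T(\gN(J)_{\bar y})\cong T(\gN(J')_{\bar y'})$. Restricting to $I_{K_\infty}$ kills the unramified twists, and since $K_\infty/K$ is pro-$p$ the map $I_{K_\infty}\to I_K^{\mathrm{tame}}$ is surjective, so a tame character of $I_K$ is determined by this restriction; thus $T(\gN(J))|_{I_{K_\infty}}=T(\gN(J'))|_{I_{K_\infty}}$, hence $T(\gN(J))=T(\gN(J'))$ by Lemma~\ref{lem: generic fibres of rank 1 Kisin modules}, hence $J=J'$ by Proposition~\ref{prop:char-calculation}. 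With Theorem~\ref{thm: identifying the vertical components} this shows the $\overline{\cZ}(J)$, $J\in\cP_\tau$, are pairwise distinct irreducible components of $\cZ^{\tau,1}$.

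Now fix $J\in\cP_\tau$ with maximal refined shape $(J,r)$, so $\overline{\cC}(J)=\overline{\cC}(J,r)$ has dimension $[K:\Qp]$ and, being closed in $\cC^{\tau,\BT,1}$ with scheme-theoretic image $\overline{\cZ}(J)$, surjects onto $\overline{\cZ}(J)$. Let $\cV_J^\circ\subseteq\cV_J$ be the dense open obtained by removing the intersections $\overline{\cZ}(J)\cap\overline{\cZ}(J'',r'')$ over the finitely many refined shapes $(J'',r'')\neq(J,r)$ for $\tau$; each is proper closed in $\overline{\cZ}(J)$ because, by Theorems~\ref{thm: dimension of refined shapes} and~\ref{thm: identifying the vertical components} together with the first paragraph, $\dim\overline{\cZ}(J'',r'')<[K:\Qp]$ unless $(J'',r'')$ is maximal with $J''\in\cP_\tau$, in which case $\overline{\cZ}(J'',r'')=\overline{\cZ}(J'')$ is a distinct irreducible component. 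We claim each $\rbar$ corresponding to a finite type point of $\cV_J^\circ$ admits a unique Breuil--Kisin model of type $\tau$. Existence is clear from surjectivity of $\overline{\cC}(J)\to\overline{\cZ}(J)$. For uniqueness, let $\gP'$ be such a model, a point of $\cC^{\tau,\BT,1}$ with $T(\gP')\cong\rbar|_{G_{K_\infty}}$; as $\rbar|_{G_{K_\infty}}$ has a unique rank one subrepresentation, transporting it through the equivalence of Lemma~\ref{lem: Galois rep is a functor if A is actually finite local} and intersecting with $\gP'$ yields a saturated rank one Breuil--Kisin submodule, and a snake-lemma argument (in the spirit of Lemma~\ref{lem:ext of a Kisin module is a Kisin module}) shows $\gP'$ is an extension $0\to\mathfrak{A}\to\gP'\to\mathfrak{B}\to 0$ of rank one height $\le 1$ Breuil--Kisin modules with descent data. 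Since $\gP'$ has type $\tau$ and satisfies the strong determinant condition, the pair $(\mathfrak{B},\mathfrak{A})$ has type $\tau$ (by the lemma preceding Definition~\ref{df:extensions of shape $J$}), hence some refined shape $(J'',r'')$, so $\gP'\in|\cC(J'',r'')^\tau|$ and $\rbar\in\overline{\cZ}(J'',r'')$; by construction of $\cV_J^\circ$ we get $(J'',r'')=(J,r)$. Thus $\mathfrak{A}$ and $\mathfrak{B}$ are the unramified twists $\gN(J)_{\bar y}$ and $\gM(J)_{\bar x}$ with $T$'s the sub- and quotient-characters of $\rbar|_{G_{K_\infty}}$, and the class of $\gP'$ in $\Ext^1_{\K{\Fpbar}}(\gM(J)_{\bar x},\gN(J)_{\bar y})$ maps to that of $\rbar|_{G_{K_\infty}}$; since $\kExt^1_{\K{\Fpbar}}(\gM(J)_{\bar x},\gN(J)_{\bar y})=0$ by Proposition~\ref{prop:ker-ext-maximal} (as $J\in\cP_\tau$), this map is injective and $\gP'$ is unique.

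Finally, shrinking $\cV_J^\circ$ once more, we may assume that for $\rbar$ a finite type point the inertial characters $T(\gM(J))|_{I_K}$, $T(\gN(J))|_{I_K}$ are generic enough for the explicit description of $W(\rbar)$ in terms of $\rbar|_{I_K}$ (recalled in Appendix~\ref{sec: appendix on tame types}) to apply; since $\rbar$ is then, by the first paragraph and the canonical extension of Breuil--Kisin representations of height $\le 1$ to $G_K$, an extension of $T(\gM(J))$ by $T(\gN(J))$ with restrictions to $I_K$ computed by Proposition~\ref{prop:char-calculation} and its evident analogue for $\gM(J)$, and $\sigmabar_J$ is by definition the Serre weight attached to this pair of characters, we conclude $\sigmabar_J\in W(\rbar)$. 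I expect the substantive points to be the reducibility step (that a Breuil--Kisin model of type $\tau$ of a reducible $G_{K_\infty}$-representation splits as an extension of rank one height $\le 1$ Breuil--Kisin modules) and, conceptually, the matching of the characters of Proposition~\ref{prop:char-calculation} with the recipe defining $\sigmabar_J$, which is what makes the Serre-weight labelling natural; the bookkeeping producing $\cV_J^\circ$ is routine given the dimension estimates already in hand.
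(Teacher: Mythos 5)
Your proposal follows essentially the same route as the paper: use the families of extensions parametrised by $\Spec B^{\dist}$ to exhibit a dense open set of reducible points, deduce pairwise distinctness from Proposition~\ref{prop:char-calculation}, delete the other $\overline{\cZ}(J'',r'')$ and use the vanishing of $\kExt^1$ (Proposition~\ref{prop:ker-ext-maximal} / Corollary~\ref{cor:ker-ext-maximal-nonzero}) to force uniqueness of the Breuil--Kisin model, and finally identify the Serre weight from the inertial form of $\rbar$. Two steps in your write-up need tightening.

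First, the clause that ``$\sigmabar_J$ is by definition the Serre weight attached to this pair of characters'' is not actually a definition. The weight $\sigmabar_J$ is defined via the Jordan--H\"older factors of $\sigmabar(\tau)$ as in Appendix~\ref{sec: appendix on tame types}, while the characters $T(\gN(J))|_{I_K}$, $T(\gM(J))|_{I_K}$ are produced by Proposition~\ref{prop:char-calculation}; matching the two is a non-trivial comparison. The paper performs it by observing that (writing $\sigmabar_J = \sigmabar_{\vec{t},\vec{s}}\otimes(\eta'\circ\det)$) the claimed identity $T(\gN(J))|_{I_K} = \eta'|_{I_K}\prod_i\omega_{\sigma_i}^{s_i+t_i}$ reduces to $\eta|_{I_K} = \eta'|_{I_K}\prod_i\omega_{\sigma_i}^{s_i+2t_i}$, which one checks by comparing central characters of $\sigmabar_J$ and $\sigmabar(\tau)$. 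You should either carry out this computation or cite it; as written the step is a gap.

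Second, your appeal to ``genericity'' in the final paragraph is both unnecessary and slightly off target. Once the form of $\rbar|_{I_K}$ is known, the tool needed is Lemma~\ref{lem: explicit Serre weights with our normalisations}, which is valid for \emph{all} reducible $\rbar$ with the stated inertial form, provided $\rbar$ is not tr\`es ramifi\'ee; no genericity hypothesis is involved. The non-tr\`es-ramifi\'ee verification is the actual point to make, and it comes for free from the fact that $\rbar$ is a point of $\cZ^{\tau,1}$ and hence admits a potentially Barsotti--Tate lift of type $\tau$, together with Lemma~\ref{lem: list of things we need to know about Serre weights}(2). You should replace the genericity assertion with this observation.

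A smaller remark: your detour through $I_{K_\infty}$ for distinctness is more elaborate than needed. Since $K_\infty/K$ is a pro-$p$ extension, a tame character of $I_K$ is indeed recovered from its restriction to $I_{K_\infty}$, but the paper simply uses that $T(\gN(J))$ (including, for the maximal shape, the normalisation $a_i=1$) is determined by the closed point of $\cU\subset\overline{\cZ}(J)$ lying under the unique Breuil--Kisin model, and then applies the last clause of Proposition~\ref{prop:char-calculation}. Your version is correct, merely longer.
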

\begin{proof}
Recall from Definition~\ref{def:scheme-theoretic images}
that $\overline{\cZ}(J)$ is defined to be the scheme-theoretic image
of a morphism $\Spec \Bdist \to \cZ^{\dd,1}.$  As in the proof of
Lemma~\ref{lem:ext images}, since the source
and target of this morphism are of finite presentation over $\F$, 
its image is a dense constructible subset of its scheme-theoretic image, 
and so contains a dense open subset, which we may interpret as 
a dense open substack $\cU$ of $\overline{\cZ}(J).$
From the definition of $\Bdist,$
the finite type points of~$\cU$ 
correspond to
reducible Galois representations admitting a model of type~$\tau$
and refined shape~$(J,r)$, for which~$(J,r)$ is maximal. 

That the~$\overline{\cZ}(J)$ are pairwise distinct is immediate  
from the above and~Proposition~\ref{prop:char-calculation}.  
 Combining this observation with Theorem~\ref{thm: dimension of refined shapes}, 
we see that by deleting the intersections of~$\overline{\cZ}(J)$ with
the~$\overline{\cZ}(J',r')$ for all refined shapes~$(J',r')\ne (J,r)$,
we obtain a dense open substack~$\cU'$ whose finite type points have
the property that every Breuil--Kisin model of type~$\tau$ of the
corresponding Galois representation has shape~$(J,r)$. The unicity of such a Breuil--Kisin model then follows 
from Corollary~\ref{cor:ker-ext-maximal-nonzero}.

It remains to show that every such Galois representation $\rbar$ has $\sigmabar_J$ as a
Serre weight. 
Suppose first that $\tau$ is a principal series type. We claim that (writing $\sigmabar_J =
\sigmabar_{\vec{t},\vec{s}} \otimes (\eta' \circ \det)$ as
in Appendix~\ref{sec: appendix on tame
  types}) we
have \[T(\mathfrak{N}(J))|_{I_K}=\eta'|_{I_K} \prod_{i=0}^{f-1}\omega_{\sigma_i}^{s_i+t_i}.\]To see this, note that by
Proposition~\ref{prop:char-calculation} it is enough to show that
$\eta|_{I_K}=\eta'|_{I_K} \prod_{i=0}^{f-1}\omega_{\sigma_i}^{s_i+2t_i}$, which
follows by comparing the central characters of $\sigmabar_J$
and $\sigmabar(\tau)$ (or from a direct computation with the
quantities $s_i,t_i$).

Since~$\det\rbar|_{I_K}=\eta\eta'\varepsilonbar^{-1}$, we
have \[\rbar|_{I_K}\cong \eta'|_{I_K} \otimes \begin{pmatrix}
      \prod_{i=0}^{f-1}\omega_{\sigma_i}^{s_i+t_i} &*\\ 0 & \varepsilonbar^{-1}\prod_{i=0}^{f-1}\omega_{\sigma_i}^{t_i}
    \end{pmatrix}.\] The result then
follows from Lemma~\ref{lem: explicit Serre weights with our
  normalisations}, using Lemma~\ref{lem: list of things we need to
  know about Serre weights}(2) 
and the fact that the fibre of the morphism $\cC^{\tau,\BT,1} \to \cR^{\dd,1}$
above $\rbar$ is nonempty to see that $\rbar$ is not tr\`es ramifi\'ee.

The argument in the cuspidal case proceeds analogously, noting
that if the character $\theta$ (as in Appendix~\ref{sec: appendix on tame
  types}) corresponds to $\widetilde{\theta}$ under local class field theory
then $\widetilde{\theta} |_{I_K} = \eta'
\prod_{i=0}^{f'-1} \omega_{\sigma'_i}^{t_i}$, and that from central
characters we have 
$\eta\eta' = (\widetilde{\theta} |_{I_K})^2 
\prod_{i=0}^{f-1} \omega_{\sigma_i}^{s_i}$.
\end{proof}

\begin{rem}
  \label{rem: unique Serre weight isn't proved here but could be}With
  more work, we could use the results of~\cite{gls13} and our
  results on dimensions of families of extensions to strengthen
  Theorem~\ref{thm: unique serre weight}, showing that there is a
  dense set of finite type points of~$\cZbar(J)$ with the property that the corresponding Galois
  representations have $\sigmabar_J$ as their \emph{unique} Serre
  weight. In fact, we will prove this as part of our work on
  the geometric Breuil--M\'ezard conjecture, and it is an immediate
  consequence of Theorem~\ref{thm:stack version of geometric
    Breuil--Mezard} below (which uses Theorem~\ref{thm: unique serre
    weight} as an input).
\end{rem}
\subsection{Irreducible Galois representations} 
\label{subsec:irreducible}
We now show
that the points of $\cC^{\tau,\BT,1}$ which are irreducible (that
is, cannot be written as an extension of rank one Breuil--Kisin modules) lie
in a closed substack of positive codimension. We begin with the
following useful observation.

\begin{lem}
  \label{lem: closed points of irred}The rank two Breuil--Kisin modules with
  descent data  and $\Fpbar$-coefficients which are irreducible (that
  is, which cannot be written as an extension of rank~$1$ Breuil--Kisin
  modules with descent data)  are
  precisely those whose corresponding \'etale $\varphi$-modules are
  irreducible, or equivalently whose corresponding
  $G_K$-representations are irreducible.
\end{lem}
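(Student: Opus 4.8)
The plan is to prove the equivalence of three notions of irreducibility for a rank two Breuil--Kisin module $\gP$ with descent data and $\Fpbar$-coefficients: (i) $\gP$ cannot be written as an extension of rank one Breuil--Kisin modules with descent data; (ii) the associated \'etale $\varphi$-module $\gP[1/u]$ cannot be written as an extension of rank one \'etale $\varphi$-modules with descent data; (iii) the associated $G_{K_\infty}$-representation $T(\gP)$ is irreducible, which (using Lemma~\ref{lem: restricting to K_infty doesn't lose information about rbar}, or rather the fact that $K_\infty/K$ is totally wildly ramified so that irreducibility of $T(\gP)$ as a $G_{K_\infty}$-representation is equivalent to irreducibility of the $G_K$-representation it extends to) is equivalent to irreducibility of the corresponding $G_K$-representation. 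The equivalence (ii)$\iff$(iii) is essentially Fontaine's equivalence of categories (Lemma~\ref{lem: Galois rep is a functor if A is actually finite local}): $T$ is an equivalence between \'etale $\varphi$-modules with descent data and finite $\Fpbar$-representations of $G_{K_\infty}$, it is exact, and it sends rank one objects to one-dimensional representations and vice versa, so a sub-object of one side corresponds to a sub-object of the other.

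The substantive content is therefore (i)$\iff$(ii). The implication (i)$\Rightarrow$(ii) is the easy direction: inverting $u$ is a functor, so a short exact sequence $0\to\gM\to\gP\to\gN\to 0$ of Breuil--Kisin modules with descent data gives a short exact sequence $0\to\gM[1/u]\to\gP[1/u]\to\gN[1/u]\to 0$ of \'etale $\varphi$-modules; hence if $\gP[1/u]$ is irreducible then so is $\gP$. For the converse (ii)$\Rightarrow$(i), suppose $\gP[1/u]$ is reducible, so there is a rank one \'etale $\varphi$-submodule $M\subseteq\gP[1/u]$ with rank one quotient. First I would define $\gM := M\cap\gP$ (intersection taken inside $\gP[1/u]$); this is a $\gS_{\Fpbar}[F,\Gal(K'/K)]$-submodule of $\gP$ which is $u$-torsion free and finitely generated over $\gS_{\Fpbar}$ (being a submodule of the Noetherian module $\gP$), with $\gM[1/u]=M$. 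Over the PID $\Fpbar[[u]]$ (using Lemma~\ref{lem:projectivity descends} to reduce from $\gS_{\Fpbar}$ to $\Fpbar[[u]]$-projectivity), $\gM$ is free of rank one, and the quotient $\gP/\gM$ embeds into $\gP[1/u]/M$ hence is also $u$-torsion free, so is free of rank one over $\Fpbar[[u]]$; thus $\gM$ and $\gP/\gM$ are rank one Breuil--Kisin modules with descent data (they inherit the Frobenius and descent data from $\gP$, and the height-at-most-$1$ condition on $\gP$ forces finite height on the sub and quotient by Lemma~\ref{lem:ext of a Kisin module is a Kisin module}, or more elementarily by the snake lemma applied to $\Phi$). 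This exhibits $\gP$ as an extension of rank one Breuil--Kisin modules with descent data, contradicting (i).

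The main obstacle — though it is not a deep one — is the bookkeeping in the saturation step: one must check that $\gM = M\cap\gP$ really is stable under $\varphi$ and under the $\hat g$ (immediate, since both $M$ and $\gP$ are stable and these operators are compatible with the inclusion $\gP\hookrightarrow\gP[1/u]$), that it and the quotient are genuinely \emph{projective} $\gS_{\Fpbar}$-modules of rank one (which over a field follows from $u$-torsion-freeness plus finite generation, via Remark~\ref{rem:projectivity for Kisin modules}(1) applied with $A=\Fpbar$, after descending projectivity using Lemma~\ref{lem:projectivity descends}), and that $\gM$ has height at most one so that it is an object of the relevant category (this follows because $E(u)\gP\subseteq\im\Phi_{\gP}$ and $E(u)\gM = E(u)(M\cap\gP)\subseteq M\cap E(u)\gP\subseteq M\cap\im\Phi_{\gP}$, and one checks $\Phi_{\gM}=\Phi_{\gP}|_{\varphi^*\gM}$ has image containing $E(u)\gM$; alternatively just invoke Lemma~\ref{lem:ext of a Kisin module is a Kisin module}). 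I would write all of this up tersely, since each verification is routine, and conclude by assembling (i)$\iff$(ii)$\iff$(iii) together with the remark on $K_\infty/K$ to get the statement about $G_K$-representations.
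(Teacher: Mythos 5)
Your argument is correct and follows essentially the same route as the paper: both pass through Fontaine's equivalence (Lemma~\ref{lem: Galois rep is a functor if A is actually finite local}) and the total wild ramification of $K_\infty/K$, and the key step (i)$\iff$(ii) is handled by the paper via a citation to~\cite[Lem.~5.5]{MR3164985}, whereas you carry out the saturation argument explicitly. One minor slip: you cannot ``alternatively just invoke Lemma~\ref{lem:ext of a Kisin module is a Kisin module}'' to get the height bound on $\gM$ and $\gP/\gM$, since that lemma runs in the opposite direction (from the sub and quotient having bounded height to the extension having bounded height, not conversely); your direct verification that $E(u)\gM \subseteq \im \Phi_{\gM}$ (or the snake lemma applied to the three $\Phi$'s, using that $\Phi_{\gP}$ and $\Phi_{\gP/\gM}$ are injective) is what is actually needed, and it works.
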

\begin{proof}Let~$\gM$ be a Breuil--Kisin module with descent data
  corresponding to a finite type point of~$\cC^{\tau,\BT,1}_{\dd}$,
  let~$M=\gM[1/u]$, and let~$\rhobar$ be the $G_K$-representation
  corresponding to~$M$. As noted in the proof of Lemma~\ref{lem: restricting to K_infty doesn't lose information about
    rbar}, $\rhobar$ is reducible if and only
  if~$\rhobar|_{G_{K_\infty}}$ is reducible, and by Lemma~\ref{lem:
    Galois rep is a functor if A is actually finite local}, this is
  equivalent to~$M$ being reducible. That this is in turn
  equivalent to~$\gM$ being reducible may be proved in 
  the same way as~\cite[Lem.\ 5.5]{MR3164985}. 
\end{proof}

Recall that~$L/K$ denotes the unramified quadratic extension; then the
irreducible representations~$\rhobar:G_K\to\GL_2(\Fpbar)$ are all
induced from characters of~$G_L$. Bearing in mind Lemma~\ref{lem:
  closed points of irred}, this means that we can study
irreducible Breuil--Kisin modules via a consideration of base-change of Breuil--Kisin modules
from $K$ to~$L$, and our previous
study of reducible Breuil--Kisin modules.
Since this will require us to consider Breuil--Kisin modules (and moduli
stacks thereof) over both $K$ and $L$, we will have to introduce
additional notation in order to indicate over which of the two fields
we might be working.   We do this simply by adding a subscript `$K$'
or `$L$' to our current notation.  We will also
omit other decorations which are being held fixed
throughout the present discussion. Thus we write $\cC_{K}^{\tau}$
to denote the moduli stack that was previously denoted $\cC^{\tau,\BT,1}$,
and $\cC_{L}^{\tau_{|L}}$ to denote the corresponding
moduli stack for Breuil--Kisin modules over $L$, with the type taken
to be the restriction $\tau_{|L}$ of $\tau$ from $K$ to $L$.
(Note that whether $\tau$ is principal series or cuspidal,
the restriction $\tau_{| L}$ is principal series.)

As usual we fix
a uniformiser~$\pi$ of $K$, which we also take to be our fixed
uniformiser of $L$.
Also, throughout
this section we take $K' =
L(\pi^{1/(p^{2f}-1)})$, so that $K'/L$ is the standard choice of
extension for $\tau$ and $\pi$ regarded as a type and uniformiser for~$L$. 

If~$\gP$ is a Breuil--Kisin module with descent data from~$K'$ to~$L$, then we
let
$\gP^{(f)}$ be the Breuil--Kisin module ~$W(k')\otimes_{\Gal(k'/k),W(k')}\gP$,
where the pullback is given by the non-trivial automorphism
of~$k'/k$. 
In particular, we have $\gM(r,a,c)^{(f)} =
\gM(r',a',c')$ where $r'_i = r_{i+f}$, $a'_i=a_{i+f}$, 
and $c'_i=c_{i+f}$.

We let $\sigma$ denote the non-trivial automorphism of $L$ over $K$,
and write $G :=  \Gal(L/K)= \langle \sigma \rangle$, a cyclic group
of order two.
There is an action $\alpha$ of $G$ on 
$\cC_{L}$
defined via $\alpha_{\sigma}: \gP \mapsto \gP^{(f)}$. 
More precisely, this induces an action
of $G:= \langle \sigma \rangle$ on 
$\cC_{L}^{\tau_{|L}}$
in the strict\footnote{From a $2$-categorical perspective,
	it is natural to relax the notion
of group action on a stack so as to allow natural transformations,
rather than literal equalities, when relating multiplication
in the group to the compositions of the corresponding 
equivalences of categories arising in the definition of an action. 
An action
in which actual equalities hold is then called {\em strict}.  Since
our action is strict, we are spared from having to consider
the various $2$-categorical aspects of the situation that would
otherwise arise.}
sense that
$$\alpha_{\sigma} \circ \alpha_{\sigma} =
\id_{\cC_L^{\tau_{|L}}}.$$

We now define the fixed point stack for this action.

\begin{df}
	\label{def:fixed points}
	We let the fixed point stack
$(\cC_{L}^{\tau_{|L}})^G$ denote the stack
whose $A$-valued points consist of an $A$-valued point $\gM$
of $\cC_L^{\tau_{|L}}$, together with an isomorphism
$\imath: \gM \iso \gM^{(f)}$ which satisfies the cocycle condition
that the composite
$$\gM \buildrel \imath \over \longrightarrow \gM^{(f)} 
\buildrel \imath^{(f)} \over \longrightarrow (\gM^{(f)})^{(f)} = \gM$$
is equal to the identity morphism $\id_{\gM}$.
\end{df}

We now give another description of $(\cC_L^{\tau_{|L}})^G$, in terms 
of various fibre products, which is technically useful. 
This alternate description involves two steps.  In the first step,
we define fixed points of the automorphism $\alpha_{\sigma}$,
without imposing the additional condition that the fixed point data
be compatible with the relation $\sigma^2~=~1$ in~$G$.  Namely, we define
$$
(\cC_{L}^{\tau_{|L}})^{\alpha_{\sigma}}
:= 
\cC_{L}^{\tau_{|L}}
	\underset
{\cC_{L}^{\tau_{|L}}
	\times 
\cC_{L}^{\tau_{|L}}
}
{\times}
\cC_{L}^{\tau_{|L}}
	$$where the first morphism $\cC_{L}^{\tau_{|L}}\to\cC_{L}^{\tau_{|L}}
	\times 
\cC_{L}^{\tau_{|L}}$ is the diagonal, and the second is $\id\times\alpha_\sigma$.
	Working through the definitions,
	one finds
	that an $A$-valued point of $(\cC_L^{\tau_{|L}})^{\alpha_{\sigma}}$
	consists of a pair $(\gM,\gM')$ of objects of $\cC_L^{\tau_{|L}}$
	over $A$, equipped with isomorphisms $\alpha: \gM \iso \gM'$
	and $\beta: \gM \iso (\gM')^{(f)}$.  The morphism 
	$$(\gM,\gM',\alpha,\beta) \mapsto (\gM, \imath),$$
	where $\imath := (\alpha^{-1})^{(f)} \circ \beta: \gM \to \gM^{(f)}$,
	induces an isomorphism between $(\cC_L^{\tau_{|L}})^{\alpha_{\sigma}}$
		and the stack classifying points $\gM$ of $\cC_L^{\tau_{|L}}$
		equipped with an isomorphism 
		$\imath: \gM \to \gM^{(f)}$.
		(However, no cocycle condition has been imposed on $\imath$.)

	Let $I_{\cC_L^{\tau_{|L}}}$ denote the inertia stack
	of $\cC_L^{\tau_{|L}}.$
We define a morphism
$$(\cC_L^{\tau_{|L}})^{\alpha_{\sigma}} \to I_{\cC_L^{\tau_{|L}}}$$
via $$(\gM,\imath) \mapsto (\gM, \imath^{(f)}\circ \imath),$$
where, as in Definition~\ref{def:fixed points},
we regard the composite $\imath^{(f)}\circ \imath$ as an automorphism
of $\gM$ via the identity $(\gM^{(f)})^{(f)} = \gM.$
Of course, we also have the identity
	section $e: \cC_L^{\tau_{|L}} \to I_{\cC_L^{\tau_{|L}}}$.
	We now define
	$$(\cC_L^{\tau_{|L}})^G :=
	(\cC_L^{\tau_{|L}})^{\alpha_{\sigma}}
       	\underset{I_{\cC_L^{\tau_{|L}}}}{\times} 
	\cC_L^{\tau_{|L}}.
	$$
	If we use the description of $(\cC_L^{\tau|_{L}})^{\alpha_{\sigma}}$
	as classifying
	pairs $(\gM,\imath),$ then (just unwinding definitions)
	this fibre product classifies tuples $(\gM,\imath,\gM',\alpha)$,
	where $\alpha$ is an isomorphism $\gM \iso \gM'$ which furthermore
	identifies $\imath^{(f)}\circ \imath$
	with $\id_{\gM'}$.  Forgetting $\gM'$ and $\alpha$ then induces
	an isomorphism between~$(\cC_L^{\tau|_L})^G$, as defined
	via the above fibre product, and the stack defined in 
	Definition~\ref{def:fixed points}.

To compare this fixed point stack to $\cC^\tau_K$, we make the
following observations. Given a Breuil--Kisin module with descent
data from~$K'$ to~$K$, we obtain a Breuil--Kisin module with descent data
from~$K'$ to~$L$ via the obvious forgetful map. Conversely, given a
Breuil--Kisin module~$\gP$ with  descent data
from~$K'$ to~$L$, the additional data required to enrich this to a
Breuil--Kisin module with descent data from~$K'$ to~$K$ can be described as follows as follows:  let $\theta \in \Gal(K'/K)$ denote the unique element which fixes
$\pi^{1/(p^{2f}-1)}$ and acts nontrivially on $L$. Then to enrich
the descent data on $\gP$ to descent data from $K'$ to $K$, it is
necessary and 
sufficient to give an additive map $\hat\theta : \gP \to \gP$ satisfying
$\hat\theta(sm) = \theta(s)\hat\theta(m)$ for all $s \in \gS_{\F}$ and
$m \in \gP$, and such that $\hat\theta
\hat g \hat \theta = \hat g^{p^f}$ for all $g \in \Gal(K'/L)$.

In turn, the data of the additive map $\hat\theta:\gP\to\gP$ is
equivalent to giving the data of the map
$\theta(\hat\theta):\gP\to\gP^{(f)}$ obtained by
composing~$\hat\theta$ with the Frobenius on~$L/K$. The defining
properties of $\hat\theta$ are equivalent to asking that
this map is an isomorphism of Breuil--Kisin modules with descent data
satisfying the cocycle condition of Definition~\ref{def:fixed points};
accordingly, we have a natural morphism  $\cC_K^{\tau} \to
	(\cC_L^{\tau_{|L}})^G$, and a restriction morphism
        \numequation\label{eqn: restriction morphism}\cC_K^{\tau} \to
	\cC_L^{\tau_{|L}}. \end{equation}

The following simple lemma summarises the basic facts about
base-change in the situation we are considering.

\begin{lemma}\label{lem: fixed point stack iso}
	There is an isomorphism $\cC_K^{\tau} \iso
	(\cC_L^{\tau_{|L}})^G$.
\end{lemma}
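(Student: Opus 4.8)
The plan is to prove that the natural morphism $\cC_K^{\tau} \to (\cC_L^{\tau_{|L}})^G$ constructed above is an equivalence by checking that it induces an equivalence of groupoids on $A$-valued points for every $\F$-algebra $A$, functorially in $A$; since both sides are fppf stacks this suffices. Concretely, I would use the fibre-product description of $(\cC_L^{\tau_{|L}})^G$ recalled just after Definition~\ref{def:fixed points}, so that an $A$-point is a pair $(\gP,\imath)$ with $\gP \in \cC_L^{\tau_{|L}}(A)$ and $\imath \colon \gP \iso \gP^{(f)}$ satisfying the cocycle condition $\imath^{(f)}\circ\imath = \id_{\gP}$. The key input is the dictionary already spelled out in the paragraph preceding the lemma: enriching the descent data on $\gP$ from $K'/L$ to $K'/K$ amounts to giving an additive map $\hat\theta\colon\gP\to\gP$ with $\hat\theta(sm)=\theta(s)\hat\theta(m)$, commuting with $\varphi_{\gP}$, commuting with $\hat{g}$ for $g\in\Gal(K'/L)$, with $\hat\theta^2 = \id$ and $\hat\theta\,\hat{g}\,\hat\theta = \hat{g}^{p^f}$; and composing $\hat\theta$ with the Frobenius of $L/K$ turns this into precisely an isomorphism $\imath\colon\gP\iso\gP^{(f)}$ of Breuil--Kisin modules with descent data from $K'$ to $L$ satisfying the cocycle condition. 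Thus the assignment $(\gP,\imath)\mapsto$ (the corresponding Breuil--Kisin module over $K$) is visibly inverse, on objects, to the restriction morphism~\eqref{eqn: restriction morphism}, and the same dictionary matches morphisms on the two sides (a morphism over $K$ is a morphism over $L$ intertwining $\hat\theta$, equivalently intertwining $\imath$), giving the desired equivalence of groupoids, compatible with the morphism in the statement and functorial in $A$.

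The one point requiring an actual argument, as opposed to bookkeeping, is that the type and strong determinant conditions cut out compatible substacks over $K$ and over $L$, so that the enrichment just described really lands in $\cC_K^{\tau}=\cC^{\tau,\BT,1}$ rather than in the stack of all Breuil--Kisin modules with descent data from $K'$ to $K$. Here I would observe that, because $L/K$ is unramified, the inertia subgroups coincide, $I(K'/K)=I(K'/L)$, and the indexing set of $f'=2f$ embeddings $k'\hookrightarrow\F$ depends only on $K'$; hence ``$\gM$ has type $\tau$'' is literally the same condition on $\gM_i/u\gM_i$ whether imposed relative to $K$ or relative to $L$ (with $\tau_{|L}$, as a representation of $I(K'/L)=I(K'/K)$, equal to $\tau$). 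Likewise, with the standard choice $K'=L(\pi^{1/(p^{2f}-1)})$ one has $N_K = N_L = L$ and the Eisenstein polynomial $E(u)$ is the minimal polynomial of $\pi'$ over $W(k')[1/p]$, which references neither $K$ nor $L$; so the strong determinant condition of Definition~\ref{defn: strong determinant condition general local model} is literally the same condition over $K$ and over $L$. Consequently restriction carries $\cC_K^{\tau}$ into $\cC_L^{\tau_{|L}}$, and the enrichment carries $A$-points of $(\cC_L^{\tau_{|L}})^G$ back into $\cC_K^{\tau}$.

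Putting this together, the restriction morphism of~\eqref{eqn: restriction morphism} and the enrichment construction identify $\cC_K^{\tau}$ with the stack classifying pairs $(\gP,\hat\theta)$, equivalently $(\gP,\imath)$, which by the discussion before Definition~\ref{def:fixed points} is exactly $(\cC_L^{\tau_{|L}})^G$; and under this identification the natural morphism in the statement becomes the identity. I do not anticipate a serious obstacle: the substance is the descent-theoretic dictionary already assembled in the excerpt, and the only genuine verification is the compatibility of the $\BT$ and type conditions between $K$ and $L$, which reduces to the equalities $I(K'/K)=I(K'/L)$ and $N_K=N_L=L$ noted above.
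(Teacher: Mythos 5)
Your proof is correct and follows essentially the same route as the paper's: the paper's own proof is simply "This follows immediately from the preceding discussion," referring to exactly the dictionary you invoke between the data of $\hat\theta$ on $\gP$ (enriching $K'/L$-descent data to $K'/K$-descent data) and the data of an isomorphism $\imath\colon\gP\iso\gP^{(f)}$ satisfying the cocycle condition. You are slightly more careful than the paper in one respect: you explicitly verify that the type and strong determinant conditions defining $\cC_K^{\tau}=\cC^{\tau,\BT,1}$ and $\cC_L^{\tau_{|L}}$ agree under restriction, via the equalities $I(K'/K)=I(K'/L)$ and $N_K=N_L=L$. The paper evidently takes this as implicit, but it is a genuine point and your verification of it is correct; so your write-up is a fuller version of the same argument rather than a different one.
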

\begin{proof}
 This follows immediately from the preceding discussion.	
\end{proof}

\begin{rem}
  \label{rem: the R version of the fixed point stack}In the proof of
  Theorem~\ref{thm: irreducible Kisin modules can be ignored} we
  will make use of the following analogue of Lemma~\ref{lem: fixed
    point stack iso} for \'etale $\varphi$-modules. Write~$\cR_K$,
  $\cR_L$ for the moduli stacks of Definition~\ref{defn: R^dd}, i.e.\
  for the moduli stacks of rank~$2$ \'etale $\varphi$-modules with
  descent data respectively to~$K$ or to~$L$. Then we have an action
  of~$G$ on~$\cR_L$ defined
  via~$M\mapsto M^{(f)}:=W(k')\otimes_{\Gal(k'/k),W(k')}M$, and we
  define the fixed point stack~$(\cR_L)^G$ exactly as in
  Definition~\ref{def:fixed points}: namely an $A$-valued point
  of~$(\cR_L)^G$ consists of an $A$-valued point~$M$ of $\cR_L$,
  together with an isomorphism $\iota:M\isoto M^{(f)}$ satisfying the
  cocycle condition. The preceding discussion goes through in this
  setting, and shows that there is an isomorphism
  $\cR_K\isoto (\cR_L)^G$.

We also note that 
the morphisms $\cC_K^\tau \to \cC_L^{\tau_{|L}}$ and
$\cC_K^\tau \to \cR_K$ 
induce a monomorphism
\numequation\label{eqn:C into R mono base change}  \cC_K^{\tau} \hookrightarrow \cC_L^{\tau_{|L}} \times_{\cR_L}
\cR_K\end{equation}
One way to see this is to rewrite this morphism (using the previous discussion) 
as a morphism
$$(\cC_L^{\tau_{|L}})^G \to \cC_L^{\tau_{|L}} \times_{\cR_L} (\cR_L)^G,$$
and note that the descent data via $G$ on an object classified by
the source of this morphism is determined by the induced descent data on its
image in $(\cR_L)^G$.
\end{rem}

We now use the Lemma~\ref{lem: fixed point stack iso} to study the locus of finite type points
of $\cC_K^{\tau}$ which correspond to irreducible Breuil--Kisin modules. 
Any irreducible Breuil--Kisin module over $K$ becomes reducible when restricted to $L$,
and so may be described as an extension
$$0 \to \gN \to \gP \to \gM \to 0,$$
where $\gM$ and $\gN$ are 
Breuil--Kisin modules of rank one with descent data from $K'$ to $L$,
and $\gP$ is 
additionally
equipped with an isomorphism $\gP \cong \gP^{(f)}$,
satisfying the cocycle condition of Definition~\ref{def:fixed
  points}. 

Note that the characters
$T(\gM)$, $T(\gN)$ of $G_{L_\infty}$ are distinct and cannot be
extended to characters of $G_K$. Indeed, this condition is plainly
necessary for an extension~$\gP$ to arise as the base change of an irreducible
Breuil--Kisin module 
(see the
proof of  Lemma~\ref{lem: restricting to K_infty doesn't lose information about
    rbar}). 
Conversely, if $T(\gM)$, $T(\gN)$ of $G_{L_\infty}$ are distinct and cannot be
extended to characters of $G_K$, then  for any $\gP \in \Ext^1_{\K{\F}}(\gM,\gN)$
whose descent data can be enriched to give descent data from $K'$ to $K$, this enrichment is necessarily irreducible. 
In  particular, the existence of such a~$\gP$ implies that 
the descent data
on $\gM$ and $\gN$ cannot be enriched to give descent data from~$K'$ to~$K$. 

We additionally have the following observation. 

\begin{lemma}\label{lem:nonempty-then-map}
If $\gM,\gN$ are such that there is an extension \[0 \to \gN \to \gP
  \to \gM \to 0\] whose descent data can be enriched to give an irreducible
Breuil--Kisin module over~$K$, then there exists a
nonzero map $\gN \to \gM^{(f)}$.
\end{lemma}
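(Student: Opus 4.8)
```
The plan is to analyze the consequence of the cocycle-compatible isomorphism $\gP \cong \gP^{(f)}$ on the extension class of $\gP$ in $\Ext^1_{\K{\F}}(\gM,\gN)$. First I would observe that applying the functor $(-)^{(f)}$ to the short exact sequence $0 \to \gN \to \gP \to \gM \to 0$ yields a short exact sequence $0 \to \gN^{(f)} \to \gP^{(f)} \to \gM^{(f)} \to 0$, and the given isomorphism $\imath:\gP \isoto \gP^{(f)}$ must be compatible with the filtrations on both sides --- more precisely, I would argue that the composite $\gN \hookrightarrow \gP \xrightarrow{\imath} \gP^{(f)} \twoheadrightarrow \gM^{(f)}$ is the map whose non-vanishing we must prove, so suppose for contradiction that it is zero. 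Then $\imath$ carries $\gN$ into $\gN^{(f)}$ and (being an isomorphism, by a rank count as in the proof of Corollary~\ref{cor: monomorphism to Spec A times C}) induces isomorphisms $\gN \isoto \gN^{(f)}$ and $\gM \isoto \gM^{(f)}$; combined with the cocycle condition, this would enrich the descent data on each of $\gN$ and $\gM$ to descent data from $K'$ to $K$, i.e. the characters $T(\gN)$, $T(\gM)$ of $G_{L_\infty}$ would extend to $G_K$.

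The hard part --- or rather the point requiring care --- is to rule this out. As noted in the discussion immediately preceding the lemma (and following the argument in the proof of Lemma~\ref{lem: restricting to K_infty doesn't lose information about rbar}), if $\gP$ underlies an irreducible Breuil--Kisin module over $K$, then the corresponding $G_K$-representation $\rhobar$ is irreducible, so $\rhobar|_{G_L}$ is a direct sum of two characters which are swapped by $\Gal(L/K)$ and hence do not individually extend to $G_K$; since $T(\gM)$, $T(\gN)$ are (the restrictions to $G_{L_\infty}$ of) these two characters, neither extends to $G_{K}$. This contradiction forces the composite $\gN \to \gM^{(f)}$ to be nonzero, which is exactly the claim.

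One small technical wrinkle I would address is identifying the map ``$\gN \to \gM^{(f)}$'' in the statement with the composite I am using: the natural enrichment data on $\gP$ is, as explained in the paragraph constructing the morphism $\cC_K^\tau \to \cC_L^{\tau_{|L}} \times_{\cR_L}\cR_K$, precisely an isomorphism of Breuil--Kisin modules with descent data $\theta(\hat\theta):\gP \isoto \gP^{(f)}$ satisfying the cocycle condition, so the composite $\gN \hookrightarrow \gP \xrightarrow{\theta(\hat\theta)} \gP^{(f)} \twoheadrightarrow \gM^{(f)}$ is a well-defined morphism in $\K{\F}$, and this is the map in question. I would also note that the contrapositive formulation used in the discussion before the lemma already contains the essential input (``the existence of such a $\gP$ implies that the descent data on $\gM$ and $\gN$ cannot be enriched''), so the lemma is really just the observation that *failure* of non-vanishing would produce exactly such an enrichment; thus the proof is short, and I would keep it to a few lines citing the rank-count argument from Corollary~\ref{cor: monomorphism to Spec A times C} and Lemma~\ref{lem:rank one isomorphism over a field} for the claim that a nonzero map between the relevant rank-one objects is an isomorphism after inverting $u$.
```
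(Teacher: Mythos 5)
Your proof is correct and takes essentially the same approach as the paper's: the paper likewise considers the composite $\gN \hookrightarrow \gP \to \gP \twoheadrightarrow \gM$ through the enrichment datum (the paper works with the semilinear map $\hat\theta$ directly, noting the composite is linear only when viewed as a map $\gN \to \gM^{(f)}$, which matches your $\theta(\hat\theta) = \imath$), and observes that its vanishing would force $\hat\theta$ to preserve $\gN$, giving $\gN$ descent data from $K'$ to $K$, contradicting the fact established in the discussion preceding the lemma that $T(\gN)$ does not extend to $G_K$. The paper simply cites this fact, whereas you re-derive it and also spell out the snake-lemma/cocycle argument for why $\imath$ restricts to an isomorphism $\gN \iso \gN^{(f)}$; both filled-in steps are correct.
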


\begin{proof}
The composition $\gN
\to \gP \xrightarrow{\hat\theta}  \gP \to \gM$, in which first and
last arrows are the natural inclusions and projections, must be
nonzero (or else $\hat\theta$ would give descent data on $\gN$ from
$K'$ to $K$). It is not itself a map of Breuil--Kisin modules, because $\hat\theta$
is semilinear, but is a map of Breuil--Kisin modules when viewed as a map $\gN \to \gM^{(f)}$.
\end{proof}

We now consider (for our fixed~$\gM$, $\gN$, and working over~$L$
rather than over~$K$) the scheme $\Spec B^{\dist}$ as in
Subsection~\ref{subsec:universal families}. Following
Lemma~\ref{lem:nonempty-then-map}, we assume that there exists a
nonzero map $\gN \to \gM^{(f)}$.  The observations made above show
that we are in the strict case, and thus that
$\Spec A^{\dist} = \Gm\times \Gm$ and that furthermore we may (and do)
set $V = T$.  We consider the fibre product with the restriction morphism~\eqref{eqn: restriction morphism}
$$Y(\gM,\gN):=\Spec B^{\dist} \times_{\cC_L^{\tau_{|L}}}\cC_K^{\tau}.$$

Let $\Gm \hookrightarrow \Gm\times\Gm$ be the diagonal closed immersion,
and let $(\Spec B^{\dist})_{|\Gm}$ denote the pull-back of $\Spec B^{\dist}$
along this closed immersion.
By Lemma~\ref{lem:nonempty-then-map}, the projection $Y(\gM,\gN) \to \Spec B^{\dist}$
factors through
$(\Spec B^{\dist})_{|\Gm},$
and combining this with Lemma~\ref{lem: fixed point stack iso} we see
that  $Y(\gM,\gN)$ may also be described as the fibre product
$$(\Spec B^{\dist})_{|\Gm} \times_{\cC_L^{\tau_{|L}}} (\cC_L^{\tau_{|L}})^G.$$

Recalling the warning of Remark~\ref{rem: potential confusion of two lots of Gm times
  Gm}, Proposition~\ref{prop: construction of family monomorphing to C
  and R} 
now shows that there is a monomorphism
$$[ (\Spec B^{\dist})_{|\Gm} / \Gm\times\Gm] \hookrightarrow \cC_L^{\tau_{|L}},$$
and thus, by
Lemma~\ref{lem: morphism from quotient stack is a monomorphism},
that there is an isomorphism
$$
(\Spec B^{\dist})_{|\Gm}
\times_{\cC_L^{\tau_{|L}}}
(\Spec B^{\dist})_{|\Gm}
\iso
(\Spec B^{\dist})_{|\Gm} \times \Gm\times \Gm.$$
(An inspection of the proof of Proposition~\ref{prop: construction of family monomorphing to C and R} shows that in fact
this result is more-or-less proved directly,
as the key step in proving the proposition.)
An elementary manipulation with fibre products then shows that there
is an isomorphism
$$Y(\gM,\gN) \times_{(\cC_L^{\tau_{|L}})^G} Y(\gM,\gN)
\iso Y(\gM,\gN)\times \Gm\times\Gm,$$
and thus, by another application of 
Lemma~\ref{lem: morphism from quotient stack is a monomorphism},
we find that there is a monomorphism
\numequation\label{eqn: mono from Y}[Y(\gM,\gN)/\Gm\times\Gm] \hookrightarrow (\cC_L^{\tau_{|L}})^G.\end{equation}

We define $\cC_{\irred}$ to be the union over all such pairs $(\gM,\gN)$ of
the scheme-theoretic images of the various projections
$Y(\gM,\gN) \to (\cC_L^{\tau_{|L}})^G$.  Note that this
image 
depends on $(\gM,\gN)$ up to simultaneous 
unramified twists of $\gM$ and $\gN$, and there are only
finitely many such pairs $(\gM,\gN)$ up to such unramified twist. By
definition, $\cC_{\irred}$ is a closed substack of
$\cC^{\tau}_K$ 
which contains every finite
type point of $\cC^{\tau}_K$ corresponding to an irreducible Breuil--Kisin
module.

The following is the main result of this section.

\begin{thm}
  \label{thm: irreducible Kisin modules can be ignored} The
  closed substack  $\cC_{\irred}$ of $\cC_K^{\tau}=\cC^{\tau,\BT,1}$, which contains
 every finite type point of $\cC^{\tau}_K$ corresponding
 to an irreducible Breuil--Kisin module,
  has dimension strictly less than $[K:\Qp]$.
\end{thm}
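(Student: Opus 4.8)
The plan is to bound the dimension of $\cC_{\irred}$ by bounding the dimension of each stack $[Y(\gM,\gN)/\Gm\times\Gm]$ appearing in its definition, using the monomorphism~\eqref{eqn: mono from Y} into $(\cC_L^{\tau_{|L}})^G \cong \cC_K^\tau$ and the fact that a monomorphism does not increase dimension. Since $\cC_{\irred}$ is a finite union of the scheme-theoretic images of the projections $Y(\gM,\gN)\to(\cC_L^{\tau_{|L}})^G$, and these projections factor through the monomorphisms~\eqref{eqn: mono from Y}, it suffices to show $\dim [Y(\gM,\gN)/\Gm\times\Gm] < [K:\Qp]$ for each relevant pair $(\gM,\gN)$.

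First I would analyse the scheme $Y(\gM,\gN) = (\Spec B^{\dist})_{|\Gm} \times_{\cC_L^{\tau_{|L}}} (\cC_L^{\tau_{|L}})^G$. The key point is that $(\Spec B^{\dist})_{|\Gm}$ is a vector bundle over $\Gm$ (the diagonal copy) whose rank is the rank of $T_{A^{\dist}}$ restricted to the diagonal, i.e.\ $\dim_{\F}\Ext^1_{\K{\F}}(\gM_{\Lambda,x},\gN_{\Lambda,x})$ computed fibrewise; by Theorem~\ref{thm: extensions of rank one Kisin modules} and the discussion of refined shapes over $L$, this rank is at most the dimension of the corresponding $\Ext^1$ over $L$, which (since $[L:\Qp] = 2[K:\Qp]$ and the pair has a maximal refined shape for $\tau_{|L}$) equals $2[K:\Qp]$ up to the correction term $\Delta$. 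Thus $\dim (\Spec B^{\dist})_{|\Gm} \leq 1 + 2[K:\Qp]$ and $\dim [(\Spec B^{\dist})_{|\Gm}/\Gm\times\Gm] \leq 2[K:\Qp] - 1$. The passage to the fixed-point stack $(\cC_L^{\tau_{|L}})^G$ is, roughly, a "square root" operation: the fibre product with $(\cC_L^{\tau_{|L}})^G$ over $\cC_L^{\tau_{|L}}$ imposes that the semilinear datum $\hat\theta$ exists, which cuts the dimension essentially in half. Concretely, I would use the isomorphism $Y(\gM,\gN)\times_{(\cC_L^{\tau_{|L}})^G} Y(\gM,\gN) \iso Y(\gM,\gN)\times\Gm\times\Gm$ established just before~\eqref{eqn: mono from Y}, together with the relation between $\dim Y(\gM,\gN)$ and $\dim [Y(\gM,\gN)/\Gm\times\Gm]$, and the factorisation of $[Y(\gM,\gN)/\Gm\times\Gm]$ through the monomorphism into $\cC_L^{\tau_{|L}}$ itself (whose dimension, via $\cC_L^{\tau_{|L}}$ being equidimensional of dimension $[L:\Qp] = 2[K:\Qp]$), to see that the ``diagonal'' family inside $\cC_L^{\tau_{|L}}$ already has positive codimension, and then that the extra constraint of $G$-equivariance forces the dimension of $Y(\gM,\gN)/\Gm\times\Gm$ down to at most $[K:\Qp] - 1$.

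More carefully, I would compute: $[Y(\gM,\gN)/\Gm\times\Gm]$ maps monomorphically to $(\cC_L^{\tau_{|L}})^G$, and there is a forgetful morphism $(\cC_L^{\tau_{|L}})^G \to \cC_L^{\tau_{|L}}$ whose fibres are torsors under a finite group (the fibre over $\gM$ records the choice of $\imath:\gM\isoto\gM^{(f)}$ satisfying the cocycle condition, a torsor under $\Aut(\gM) = \Gm$ intersected with the relevant condition, hence $0$-dimensional or a $\Gm$-torsor contributing at most dimension $1$ but then quotiented). The image of $[(\Spec B^{\dist})_{|\Gm}/\Gm\times\Gm]$ in $\cC_L^{\tau_{|L}}$ has dimension at most $2[K:\Qp]-1$, but more is true: since we are restricting to the diagonal $\Gm$ rather than $\Gm\times\Gm$, we lose a further dimension of twisting freedom, so this image has dimension at most $2[K:\Qp]-2 = 2([K:\Qp]-1)$. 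The $G$-equivariant structure then identifies $Y(\gM,\gN)$ with a substack whose ``$L$-part'' is constrained to be $\sigma$-stable, and a dimension count using the square-root phenomenon (the diagonal $Y \hookrightarrow Y\times_{(\cC^G)} Y \iso Y\times\Gm\times\Gm$ shows $\dim Y + 2 = \dim(Y\times_{(\cC^G)}Y) = 2\dim Y - \dim(\cC^G)$ locally, hence $\dim(\cC^G)_{\text{image}} = \dim Y - 2$, while $\dim(\cC_L^{\tau_{|L}})$-image $= \dim Y$, giving the halving relative to the $L$-picture) yields $\dim[Y(\gM,\gN)/\Gm\times\Gm] \leq [K:\Qp]-1$.

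The main obstacle will be making the ``square root of dimension'' argument rigorous: carefully relating $\dim Y(\gM,\gN)$, $\dim[Y(\gM,\gN)/\Gm\times\Gm]$, and the dimension of the image in $(\cC_L^{\tau_{|L}})^G$, while keeping track of the two independent copies of $\Gm\times\Gm$ (the unramified-twist parameters versus the automorphism group), and ensuring that the bound on $\dim_\F\Ext^1_{\K{\F}}(\gM,\gN)$ over $L$ — namely $\leq 2[K:\Qp]$ with the extra $\Delta \le 1$ correction and the loss of a twist dimension from diagonal restriction — combines correctly. I would also need to handle the finitely many boundary pairs $(\gM,\gN)$ (those with $\prod a_i = \prod b_i$, $e = 1$, etc.) where the $\Ext^1$ or $\kExt^1$ dimensions jump, checking that even in those cases the monomorphism~\eqref{eqn: mono from Y} and the halving keep us strictly below $[K:\Qp]$; here the constraint that $T(\gM), T(\gN)$ are distinct and non-extendable to $G_K$ (which is automatic for pairs contributing to $\cC_{\irred}$) provides the needed slack, since it rules out the degenerate configurations.
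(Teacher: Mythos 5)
Your overall reduction is the right one and matches the paper: since $\cC_{\irred}$ is a finite union of scheme-theoretic images of the monomorphisms $[Y(\gM,\gN)/\Gm\times\Gm] \hookrightarrow (\cC_L^{\tau_{|L}})^G$, and a representable unramified (or locally quasi-finite) morphism does not increase dimension, the theorem reduces to showing $\dim Y(\gM,\gN) \le [K:\Qp]+1$ for each relevant pair. That framing is exactly how the paper proceeds.

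The genuine gap is in the ``square root'' argument, which is not a proof. You write that $\dim Y + 2 = \dim(Y\times_{(\cC^G)}Y) = 2\dim Y - \dim(\cC^G)$ ``locally'' and hence $\dim(\cC^G)_{\text{image}} = \dim Y - 2$. The first equality is genuine (it is the isomorphism $Y\times_{(\cC^G)} Y \iso Y\times\Gm\times\Gm$), but the second is a fibre-dimension identity that requires flatness or smoothness of $Y \to \cC^G$, neither of which is available here. What one can extract rigorously from the isomorphism is only the \emph{inequality} $\dim(\text{image of } Y) \le \dim Y - 2$ (the content of \cite[Tag 0DS6]{stacks-project}), which is what the paper uses; it does not determine $\dim Y$ in terms of $\dim\cC^G$. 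The comparison to $\cC_L^{\tau_{|L}}$ in your argument is therefore circular: the halving is precisely what one wants to conclude, not something one can read off from the ambient dimensions. Moreover, even granting the halving heuristic, one lands at $\dim[Y/\Gm^2]\le [K:\Qp]$, and the theorem requires a \emph{strict} inequality; the strictness is exactly the subtle point.

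The bound $\dim Y(\gM,\gN) \le [K:\Qp]+1$ is the real content, and your sketch does not supply it. In the paper this is the fibre bound $\dim Y(\gM,\gN)_1 \le [K:\Qp]$ of Proposition~\ref{prop:irred-bound}, which requires an explicit parametrisation of the fibre via coordinates $(\mu_i,\xi)$, the construction of the space $\Hzero'$ cut out by the conditions $\nu_i\nu_{i+f}\equiv\xi^2\pmod{u^{x_i}}$ coming from the $\hat\theta$-compatibility, a count $\dim\Hzero' = 1 + \sum_i\lceil x_i/(p^{f'}-1)\rceil \le 1 + \lceil e/(p-1)\rceil f$, and then a separate analysis of the boundary case $e=1$ where this naive bound is only $[K:\Qp]+1$ and one must rule out equality by showing it would force $T(\gM)\cong T(\gN)$ via the exceptional case of Proposition~\ref{prop:ker-ext-maximal}. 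You correctly flag that the hypothesis $T(\gM)\not\cong T(\gN)$ (automatic for pairs contributing to $\cC_{\irred}$) ``provides the needed slack'' in the degenerate configurations, but that observation is the conclusion of a delicate calculation, not an a priori input; the proposal does not carry it out. Separately, the claim that the rank of $T_{A^{\dist}}$ restricted to the diagonal ``equals $2[K:\Qp]$ up to $\Delta$'' only holds for maximal refined shapes, and is not needed for the argument either way --- the relevant bound on $\kExt^1$ comes from Lemma~\ref{cor:bounds-on-ker-ext}, not from the full $\Ext^1$ dimension.
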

\begin{proof}
  As noted above, there are only finitely many pairs $(\gM,\gN)$ up to
  unramified twist, so it is enough to show that for each of them, the
  scheme-theoretic image of the monomorphism~\eqref{eqn: mono from Y}
  has dimension less than $[K:\Q_p]$.

By \cite[\href{https://stacks.math.columbia.edu/tag/0DS6}{Tag 0DS6}]{stacks-project},
to prove the present theorem,
it then suffices to show that 
$\dim Y(\gM,\gN) \leq [K:\Q_p] + 1$
(since $\dim \Gm\times\Gm = 2$).
To establish this, it suffices to show, for each point
$x \in \Gm(\F'),$ where $\F'$ is a finite extension of~$\F$, 
that the dimension of the fibre $Y(\gM,\gN)_x$ is bounded by
$[K:\Q_p]$. After relabelling, as we may, the field $\F'$ as $\F$ and
the Breuil--Kisin modules $\gM_x$ and $\gN_x$ as $\gM$ and $\gN$, we may
suppose that in fact $\F'=\F$ and~$x=1$.

Manipulating
the fibre product appearing in the definition of~$Y(\gM,\gN)$, we find
that
\numequation
\label{eqn:Y fibre blahblah}
Y(\gM,\gN)_1 = 
\Ext^1_{\K{\F}}(\gM,\gN) \times_{\cC_L^{\tau_{|_N}}} \cC_K^{\tau}, 
\end{equation}
where the fibre product is taken with respect to the
morphism $\Ext^1_{\K{\F}}(\gM,\gN) \to \cC_L^{\tau}$ that 
associates the corresponding
rank two extension to an extension
of rank one Breuil--Kisin modules,
and the restriction
morphism~\eqref{eqn: restriction morphism}.

In order to bound the dimension of~$Y(\gM,\gN)_1$, it will be easier
to first embed it into another,
larger, fibre product, which we now introduce. Namely, the
monomorphism~\eqref{eqn:C into R mono base change} 
induces a monomorphism
$$Y(\gM,\gN)_1 \hookrightarrow Y'(\gM,\gN)_1 := 
\Ext^1_{\K{\F}}(\gM,\gN) \times_{\cR_L} \cR_K.$$
Any finite type point of this fibre product lies over a fixed isomorphism
class of finite type points
of $\cR_K$ (corresponding to some fixed irreducible Galois
representation); we let $P$ be a choice of such a point.  The
restriction of $P$ then lies in a fixed isomorphism class of finite
type points of $\cR_L$ (namely, the isomorphism
class of the direct sum
$\gM[1/u]\oplus \gN[1/u] \cong \gM[1/u] \oplus \gM^{(f)}[1/u]$).
Thus the projection $Y'(\gM,\gN)_1 \to \cR_K$ factors through
the residual gerbe of $P$, while the morphism $Y'(\gM,\gN)_1
\to \cR_L$ factors through the residual gerbe 
of 
$\gM[1/u]\oplus \gN[1/u] \cong \gM[1/u] \oplus \gM^{(f)}[1/u]$.
Since $P$ corresponds via
Lemma~\ref{lem: Galois rep is a functor if A is actually finite local}
to an irreducible Galois representation,
we find that $\Aut(P) = \Gm$.
Since 
$\gM[1/u]\oplus \gN[1/u] $  corresponds  via
Lemma~\ref{lem: Galois rep is a functor if A is actually finite local}
to the direct sum of two non-isomorphic Galois characters, we find
that $\Aut(\gM[1/u]\oplus \gN[1/u] ) = \Gm \times \Gm$.  

Thus we obtain monomorphisms
\nummultline
\label{eqn:Y fibre}
Y(\gM,\gN)_1 \hookrightarrow 
Y'(\gM,\gN)_1
\\
\hookrightarrow 
\Ext^1_{\K{\F}}(\gM,\gN)
\times_{[\Spec F'//\Gm\times \Gm]} [\Spec F'//\Gm]
\cong
\Ext^1_{\K{\F}}(\gM,\gN)
\times \Gm.
\end{multline}
In Proposition~\ref{prop:irred-bound} we obtain a description of the image of $Y(\gM,\gN)_1$
under this monomorphism which allows us to bound its dimension
by~$[K:\Qp]$, as required.
\end{proof}

We now prove the bound on the dimension of~$Y(\gM,\gN)_1$ 
that we used in the proof of Theorem~\ref{thm: irreducible Kisin
  modules can be ignored}. Before establishing this bound, we make some further remarks.
To begin with, we remind the reader 
that we are working with Breuil--Kisin modules, \'etale $\varphi$-modules, etc.,
over $L$ rather than $K$, so that e.g.\
the structure parameters of $\gM, \gN$ are periodic modulo $f' = 2f$
(not modulo $f$), and the pair $(\gM,\gN)$ has type $\tau|_L$.
We will readily apply various pieces of notation that were
introduced above in the
context of the field $K$, adapted in the obvious manner to the context
of the field $L$.
(This applies in particular to the notation $\Conefrac$, $\Czerofrac$, etc.\
introduced in Definition~\ref{notn:calh}.)

We write
$m, n$ for the standard generators of $\gM$ and
$\gN$. 
The existence of the nonzero map $\gN \to \gM^{(f)}$ implies that
$\alpha_i(\gN) \ge \alpha_{i+f}(\gM)$ for all $i$, and also that
$\prod_i a_i = \prod_i b_i$. Thanks to the latter we will lose no
generality by assuming that $a_i = b_i =1 $ for all $i$. Let
$\tilde m$ be the standard generator for $\gM^{(f)}$.  The map
$\gN \to \gM^{(f)}$ will (up to a scalar) have the form
$n_i \mapsto u^{x_i} \tilde m_{i}$ for integers $x_i$ satisfying
$px_{i-1}-x_i = s_i - r_{i+f}$ for all $i$; thus
$x_i = \alpha_{i}(\gN) - \alpha_{i+f}(\gM)$ for all $i$. Since the
characters $T(\gM)$ and $T(\gN)$ are conjugate we must have
$x_i \equiv d_i - c_{i+f} \pmod{p^{f'}-1}$ for all $i$ (\emph{cf}.\
Lemma~\ref{lem: generic fibres of rank 1 Kisin modules}).  Moreover,
the strong determinant condition $s_i + r_i = e'$ for all $i$ implies
that $x_i = x_{i+f}$.

We stress that we make no claims about the
optimality of the following result; we merely prove ``just what we
need'' for our applications. Indeed the estimates of
\cite{MR2562792,CarusoKisinVar} suggest that improvement should
be possible.

\begin{prop}\label{prop:irred-bound} 
We have  $\dim 
Y(\gM,\gN)_1 \le [K:\Qp]$. 

\end{prop}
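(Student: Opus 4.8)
Recall from the construction preceding the statement that we have the monomorphism
\[
Y(\gM,\gN)_1 \hookrightarrow \Ext^1_{\K{\F}}(\gM,\gN) \times \Gm,
\]
obtained from~\eqref{eqn:Y fibre} by recording, for a point of $Y(\gM,\gN)_1$, the underlying extension class together with the scalar datum coming from the identification $\Aut(\gM[1/u]\oplus\gN[1/u]) = \Gm\times\Gm$ modulo the diagonal $\Aut(P)=\Gm$. The plan is to identify the image of this monomorphism concretely as (an affine space bundle over) a locally closed subscheme of $\Ext^1_{\K{\F}}(\gM,\gN)\times\Gm$ cut out by the condition that the extension $\gP$ can be enriched with descent data from $K'$ to $K$ via a chosen $\hat\theta$ compatible with the chosen scalar. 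The key point is that, once we fix the scalar $t\in\Gm$, the enrichment datum $\theta(\hat\theta):\gP\isoto\gP^{(f)}$ is essentially unique (because an extension of rank one modules has automorphism group a torus, and we have already pinned down how $\hat\theta$ must act on $\gN$ and on the quotient $\gM$ via the nonzero map $\gN\to\gM^{(f)}$ of Lemma~\ref{lem:nonempty-then-map}); so the fibre of $Y(\gM,\gN)_1$ over a point of $\Gm$ is cut out inside $\Ext^1_{\K{\F}}(\gM,\gN)$ by a system of \emph{linear} equations expressing compatibility of the chosen $\hat\theta$ with $\Phi_{\gP}$.

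\textbf{The dimension count.} First I would compute $\dim \Ext^1_{\K{\F}}(\gM,\gN)$ using Theorem~\ref{thm: extensions of rank one Kisin modules}: since the pair $(\gM,\gN)$ has type $\tau|_L$ with all $r_i+s_i=e'$ and we are in the strict case (so $\Delta=0$ over the generic locus), this dimension is $\sum_{i=0}^{f'-1} y_i$ where the $y_i$ are the structure parameters of the refined shape, and in particular is bounded by $e' f' = e'\cdot 2f$. This is too large by itself; the gain comes from the descent-compatibility equations. Concretely, an element of $\Ext^1_{\K{\F}}(\gM,\gN)$ is represented by a tuple $(h_i)\in\Cone$ with $\deg h_i < r_i$ (Remark~\ref{rem:representatives-for-ext}), i.e.\ by roughly $\sum y_i$ parameters. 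Imposing that $\theta(\hat\theta)$ (which on $\gN$ is $n_i\mapsto u^{x_i}\tilde m_i$ and on the $\gM$-quotient is a fixed isomorphism $m_i \mapsto u^{\text{(something)}} \tilde n_i$ up to the scalar $t$) intertwines $\Phi_{\gP}$ and $\Phi_{\gP^{(f)}}$ gives, for each $i$, an identity in $\F[[u]]$ relating $h_i$ to $\varphi(h_{i-1})$ and $h_{i+f}$, $h_{i+f-1}$ (the $(f)$-twist shifts indices by $f$). Counting: these $f'$ identities, each living in a space of dimension comparable to $y_i$, should cut the dimension of the solution space (with the $h_i$ of degree $<r_i$) down to roughly $\tfrac12\sum y_i \le \tfrac12 e' f' \cdot \tfrac{1}{?}$; more precisely, a careful bookkeeping — essentially that the $f$ equations relating the first half of the indices to the second half are independent — yields $\dim\big(\text{fibre of }Y(\gM,\gN)_1\big) \le e f = [K:\Qp]$, whence $\dim Y(\gM,\gN)_1\le [K:\Qp]$ since the base $\Gm$ contributes $1$ and we are bounding fibre dimensions over finite type points (invoking \cite[\href{https://stacks.math.columbia.edu/tag/0DS6}{Tag 0DS6}]{stacks-project} as in the ambient proof). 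I would organize this as: (i) reduce to bounding $\dim Y(\gM,\gN)_1$ over the point $x=1$; (ii) parametrize extensions by $(h_i)$ with $\deg h_i<r_i$; (iii) write out the linear descent-compatibility equations; (iv) do the rank computation.

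\textbf{Main obstacle.} The genuinely delicate step is step (iii)–(iv): writing down the descent-compatibility equations explicitly and showing that they impose enough independent linear conditions — roughly ``half'' of the $\sum y_i$ coordinates — to bring the total down to exactly $ef$. The subtlety is that the equations couple $h_i$ with $h_{i+f}$ and with $\varphi$-images, so one must track the congruence classes mod $e(K'/K)=p^{f'}-1$ of the allowed exponents carefully (as in Section~\ref{subsubsec:explicitly refined}), and verify that the ``twisting'' map $h_i\leftrightarrow h_{i+f}$ is, after accounting for the $\varphi$-semilinearity and the explicit $x_i$, a bijection on the relevant monomial spaces rather than degenerate. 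One subtle case is the exceptional situation of Proposition~\ref{prop:ker-ext-maximal} (when $e=1$ and all pairs are transitions) where a $\Hom$ appears; but since we only need the inequality $\le[K:\Qp]$ and not sharpness, a crude version of the rank bound — discarding any equations one cannot easily control but keeping at least $ef$ of the $2ef$ total independent conditions — suffices. Finally I would remark, as the paper itself does, that no optimality is claimed and that the estimates of \cite{MR2562792,CarusoKisinVar} suggest room for improvement.
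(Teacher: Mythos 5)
Your broad strategy — exploit the monomorphism~\eqref{eqn:Y fibre} and count dimensions by writing an explicit isomorphism $\lambda$ between $\tgP[1/u]$ and the fixed model $P$, then impose compatibility conditions — is the same as the paper's. However, there are genuine gaps in the way you propose to carry it out, and as stated the dimension count does not close.

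First, the claim that ``once we fix the scalar $t\in\Gm$, the enrichment datum $\theta(\hat\theta)$ is essentially unique'' is incorrect, and the reason matters. After pinning down the action on $\gN$ (the map $n_i\mapsto u^{x_i}\tilde m_i$) and fixing the scalar $\xi$ on the induced map on $\gM[1/u]$, the isomorphism $\lambda$ still has a free lower-left entry: in the paper's notation, it has the matrix shape $\begin{pmatrix} u^{x_i} & 0 \\ \nu_i & \xi \end{pmatrix}$, and the tuple $(\nu_i)$ is genuinely undetermined. Far from being a nuisance, this off-diagonal freedom is the real object of study; the whole point of the paper's argument is to show that the set of admissible $(\nu_i,\xi)$ (equivalently, the set $\Hzero'\cap(\Hzero\times\mathbf{G}_{m,\xi})$) is \emph{small}, and then observe that it surjects onto $Y(\gM,\gN)_1$. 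Tracking only the extension class $(h_i)$ loses this structure.

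Second, the ``cut out by a system of linear equations'' picture is wrong: the cocycle condition $\hat\theta^{(f)}\circ\hat\theta=\id$ forces $\nu_i\nu_{i+f}\equiv\xi^2 \pmod{u^{x_i}}$, which is \emph{quadratic} in the off-diagonal data (and ties index $i$ to index $i+f$). It is precisely this quadratic constraint that produces the factor-of-two-like collapse you are hoping for, and it does not live inside $\Ext^1$ as a linear subspace.

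Third, and most seriously, the arithmetic of your dimension count does not give the desired bound. Over $L$ there are $f'=2f$ indices, so $\dim\Ext^1\le \sum_{i=0}^{f'-1} y_i \le 2ef$. Halving to $ef$ and then adding $1$ for the $\Gm$ factor produces $ef+1=[K:\Qp]+1$, which is one too large. The paper's bound is sharper: the $(\nu_i)$ must be \emph{integral}, so the extension class lands in $\kExt^1$, which already reduces the freedom dramatically; and the cocycle constraint ties $\mu_i$ to $\mu_{i+f}$, so the parameter space $\Hzero'$ has dimension at most $1+\sum_{i=0}^{f-1}\lceil x_i/(p^{f'}-1)\rceil$, where $x_i\le e'/(p-1)$, hence $\lceil x_i/(p^{f'}-1)\rceil \le \lceil e/(p-1)\rceil$. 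For $p>2$ and $e>1$ this gives $\dim Y(\gM,\gN)_1\le 1+\lceil e/(p-1)\rceil f \le 1+(e-1)f\le ef$. Your ``half of $2ef$'' estimate has no hope of closing the gap; one must exploit $x_i\le e'/(p-1)$ and the integrality/$\kExt^1$ constraint, neither of which appears in your outline.

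Fourth, the $e=1$ case is not a loose end one can wave away as ``we only need $\le[K:\Qp]$, not sharpness''. There, even the paper's bound gives $[K:\Qp]+1$, and the proposition is salvaged only by a careful contradiction argument: assuming equality forces $\Hzero'\cong\Gm^{f+1}$ to lie in $\Hzero\times\mathbf{G}_{m,\xi}$, which by a linearity argument forces each $\mu^{(i)}$ to lie in $\Hzero$, which in turn forces every pair $(i-1,i)$ to be a transition with $\gamma_i^*=0$ — and then Proposition~\ref{prop:ker-ext-maximal} gives $T(\gM)\cong T(\gN)$, contradicting irreducibility. You will need this argument; it is not optional.

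In short: your plan has the right flavor, but to make it work you would effectively have to rediscover the paper's reparametrization in terms of $\mu$ and $\xi$, prove that the image lies in $\kExt^1$, observe the quadratic (not linear) cocycle constraint, and then handle $e=1$ separately.
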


\begin{remark}
  Since the image of $Y(\gM,\gN)_1$ in $\Ext^1_{\K{\F}}(\gM,\gN)$ lies
  in $\kExt^1_{\K{\F}}(\gM,\gN)$ with fibres that can be seen to  have dimension at most one,
  many cases of Proposition~\ref{prop:irred-bound} will  already follow from
  Remark~\ref{rem:half} (applied with $L$ in place of~$K$).
\end{remark}

\begin{proof}[Proof of Proposition~\ref{prop:irred-bound}]

Let $\gP = \gP(h)$ be an element of $\Ext^1_{\K{\F}}(\gM,\gN)$ whose
descent data can be enriched to give descent data from $K'$ to $K$, and
let $\tgP$ be such an enrichment.
By Lemma~\ref{lem:nonempty-then-map} (and the discussion preceding
that lemma) 
the \'etale $\varphi$-module $\gP[\frac 1u]$ is
isomorphic to $\gM[\frac 1u] \oplus \gM^{(f)}[\frac 1u]$. All
extensions of the $G_{L_{\infty}}$-representation $T(\gM[\frac 1u] \oplus
\gM^{(f)}[\frac 1u])$ to a representation of $G_{K_{\infty}}$ are
  isomorphic (and given by the induction of $T(\gM[\frac 1u])$ to~$G_{K_\infty}$), 
  so the same is true of the \'etale $\varphi$-modules with
descent data from $K'$ to $K$ that enrich the descent data on
$\gM[\frac 1u] \oplus \gM^{(f)}[\frac 1u]$. One such enrichment, which
we denote $P$, has $\hat\theta$ that interchanges $m$ and
$\tilde m$. Thus $\tgP[\frac 1u]$ is isomorphic to $P$.

As in the proof of Lemma~\ref{lem:nonempty-then-map},  the hypothesis that $T(\gM) \not\cong
T(\gN)$ implies that any non-zero map (equivalently, isomorphism) of
\'etale $\varphi$-modules with descent data $\lambda : \tgP[\frac 1u] \to P$ takes the submodule $\gN[\frac
1u]$ to $\gM^{(f)}[\frac 1u]$. We may scale the map $\lambda$ so that
it  restricts to the map $n_i \to u^{x_i} \tilde m_i$ on $\gN$.  
Then there is an element $\xi \in \F^\times$ so that
$\lambda$ induces multiplication by $\xi$ on the common quotients $\gM[\frac 1u]$.
That is,  the map $\lambda$ may be assumed to have the form
\numequation\label{eq:lambdamap}
\begin{pmatrix}
n_{i} \\ m_{i}
\end{pmatrix} \mapsto
\begin{pmatrix}
u^{x_i} & 0 \\ \nu_i & \xi   
\end{pmatrix}
\begin{pmatrix}
\tilde m_{i} \\ m_{i}
\end{pmatrix}
\end{equation}
for some $(\nu_i) \in \F((u))^{f'}$. The condition that the map
$\lambda$ commutes with the descent data from $K'$ to $L$ is seen to be
equivalent to the condition that nonzero terms in $\nu_i$ have degree
congruent to $c_i -d_i + x_i \pmod{p^{f'}-1}$; or equivalently, if we
define $\mu_i := \nu_i u^{-x_i}$ for all $i$, that the tuple $\mu = (\mu_i)$
is an element of the set $\Czerofrac = \Czerofrac(\gM,\gN)$
of Definition~\ref{notn:calh}.

The condition that $\lambda$ commutes with $\varphi$ can be checked to give
\begin{equation*}
   \varphi \begin{pmatrix}
n_{i-1} \\ m_{i-1}
\end{pmatrix}
=  \begin{pmatrix}
    u^{s_i} & 0 \\ \varphi(\nu_{i-1}) u^{r_{i+f}-x_i} - \nu_i u^{r_i-x_i}
    & u^{r_i} 
  \end{pmatrix}\begin{pmatrix}
n_{i} \\ m_{i}
\end{pmatrix}.
\end{equation*}
The extension $\gP$ is of the form $\gP(h)$, for some $h \in \Cone$
as in Definition~\ref{notn:calh}.
The lower-left entry of the first matrix on the right-hand side of the
above equation must then be $h_i$. Since $r_{i+f}-x_i = s_i - px_{i-1}$,
the resulting condition can be rewritten  as
\[ h_i= \varphi(\mu_{i-1}) u^{s_i} - \mu_i  u^{r_i},\]
or equivalently that $h = \mumap(\mu)$. Comparing with
Remark~\ref{rem:explicit-ker-ext}, we recover  the fact that the
extension class of $\gP$ is an element
of~$\kExt^1_{\K{\F}}(\gM,\gN)$, and the tuple  $\mu$ determines an 
element of the space $\Hzero$ defined as follows.

\begin{defn}\label{defn:calw} The map $\mumap \colon \Czerofrac \to \Conefrac$ induces a map
  $\Czerofrac/\Czero \to \Conefrac/\mumap(\Czero)$, which we also
  denote
  $\mumap$. We let  $\Hzero \subset \Czerofrac/\Czero$
  denote the subspace consisting of elements $\mu$ such that
$\mumap(\mu) \in \Cone/\mumap(\Czero)$. 
\end{defn}

By the discussion following 
Lemma~\ref{lem:explicit-complex}, an element $\mu \in \Hzero$ determines an 
extension $\gP(\mumap(\mu))$. Indeed,
Remark~\ref{rem:explicit-ker-ext} and the proof of \eqref{eqn:
  computing kernel of Ext groups} taken together show that there is a natural isomorphism,
in the style of Lemma~\ref{lem:explicit-complex}, between the morphism
$\mumap : \Hzero \to \Cone/\mumap(\Czero)$ and the connection map
$\Hom_{\K{\F}}(\gM,\gN[1/u]/\gN) \to \Ext^1_{\K{\F}}(\gM,\gN)$, with
$\im\mumap$ corresponding to $\kExt^1_{\K{\F}}(\gM,\gN)$.

Conversely, let $h$ be an element of $\mumap(\Czerofrac) \cap \Cone$,
and set $\nu_i = u^{x_i} \mu_i$. The condition that there is a Breuil--Kisin module
$\tgP$ with descent data from $K'$ to $K$ and $\xi \in \F^{\times}$ such that $\lambda : \tgP[\frac1u]
\to P$ defined as above is an isomorphism is precisely the condition that 
the map $\hat\theta$ on $P$ pulls back via $\lambda$ to a map that
preserves $\gP$. One computes that this pullback is
\begin{equation*}
  \hat\theta \begin{pmatrix}
n_{i} \\ m_{i}
\end{pmatrix}
= \xi^{-1} \begin{pmatrix}
     -\nu_{i+f}  & u^{x_i} \\
 (\xi^2-\nu_i \nu_{i+f}) u^{-x_i} & \nu_i
   \end{pmatrix}
  \begin{pmatrix}
  n_{i+f} \\ m_{i+f}
  \end{pmatrix}
\end{equation*}
recalling that $x_i =x_{i+f}$. 

We deduce that $\hat\theta$ preserves $\gP$
precisely when the $\nu_i$ are
integral and $\nu_i \nu_{i+f} \equiv \xi^2 \pmod{u^{x_i}}$ for
all~$i$.  For~$i$ with $x_i=0$
the latter condition is automatic given the former, which is
equivalent to the condition that $\mu_i$ and $\mu_{i+f}$ are both
integral. If instead $x_i > 0$, then we have the nontrivial
condition $\nu_{i+f} \equiv \xi^2 \nu_{i}^{-1} \pmod{u^{x_i}}$; in other
words that $\mu_i, \mu_{i+f}$ have $u$-adic valuation exactly $-x_i$,
and their principal parts determine one another via the equation
$\mu_{i+f}  \equiv \xi^2 (u^{2x_i} \mu_i )^{-1}
\pmod{1}$. 

 Let
$\mathbf{G}_{m,\xi}$ be the multiplicative group with parameter
$\xi$. We now (using the notation of Definition~\ref{defn:calw}) define  $\Hzero' \subset \Czerofrac/\Czero \times
\mathbf{G}_{m,\xi}$ to be the subvariety 
consisting of the pairs
$(\mu,\xi)$ with exactly the preceding properties; that is, we
regard~$\Czerofrac/\Czero$ as an Ind-affine space in the obvious way, and
define~$\Hzero'$ to be the pairs $(\mu,\xi)$ satisfying 
\begin{itemize}
\item if $x_i=0$ then $\val_i \mu = \val_{i+f} \mu =\infty$,
	and
\item if $x_i >0$ then $\val_i \mu = \val_{i+f} \mu = -x_i$ and $\mu_{i+f}
  \equiv \xi^2  (u^{2x_i} \mu_i)^{-1} \pmod{u^0}$
\end{itemize} 
where we write $\val_i \mu$ for the $u$-adic valuation of $\mu_i$, putting $\val_i \mu = \infty$ when $\mu_i$ is integral.

Putting all this together with~\eqref{eqn:Y fibre blahblah}, we find that the map 
\[ \Hzero' \cap (\Hzero \times \mathbf{G}_{m,\xi}) \to
  Y(\gM,\gN)_1 \] 
sending $(\mu,\xi)$ to the pair $(\gP,\tgP)$ is a well-defined
surjection, 
where $\gP =
\gP(\mumap(\mu))$, $\tgP$ is the enrichment of $\gP$ to a Breuil--Kisin
module with descent data from $K'$ to $K$ in which $\hat\theta$ is
pulled back to  $\gP$ from $P$ via the map $\lambda$ as in
\eqref{eq:lambdamap}. 
(Note that~$Y(\gM,\gN)_1$ is reduced and
of finite type, for
example by~\eqref{eqn:Y fibre}, so the surjectivity can be checked on
$\Fpbar$-points.)
In particular $\dim Y(\gM,\gN)_1 \le \dim \Hzero'.$

Note that $\Hzero'$ will be empty if for some $i$ we have $x_i > 0$ but
$x_i + c_i-d_i \not\equiv 0 \pmod{p^{f'}-1}$ (so that $\nu_i$ cannot
be a $u$-adic unit). 
Otherwise, the dimension of $\Hzero'$ is easily computed to be
$D =  1+\sum_{i=0}^{f-1} \lceil x_i/(p^{f'}-1) \rceil$ (indeed if~$d$ is the number of nonzero
$x_i$'s, then $\Hzero' \cong \Gm^{d+1} \times \Ga^{D-d}$), 
 and since
$x_i \le e'/(p-1)$ we find that $\Hzero'$ has dimension at most  $1 + \lceil e/(p-1)  \rceil f$.
This establishes  the bound  $\dim 
Y(\gM,\gN)_1 \le 1 + \lceil e/(p-1) \rceil f$. 

Since $p > 2$ this bound already establishes the theorem when $e
> 1$. 
 If instead $e=1$ the above bound gives  $\dim Y(\gM,\gN) \le [K:\Qp]
 + 1$. Suppose for the sake of
 contradiction that equality holds. This is only possible if $\Hzero'
 \cong \Gm^{f+1}$, $\Hzero' \subset \Hzero \times \mathbf{G}_{m,\xi}$,
 and $x_i = [d_i - c_i] > 0$ for all
 $i$. 
 Define $\mu^{(i)}
 \in \Czerofrac$ to be the element such that $\mu_{i} =
 u^{-[d_{i}-c_{i}]}$, and $\mu_j = 0$ for $j \neq i$. Let $\F''/\F$ be
 any finite extension such that $\#\F'' > 3$.  For each nonzero $z \in \F''$
 define $\mu_z = \sum_{j \neq i,i+f} \mu^{(i)}  + z \mu^{(i)} + z^{-1}
\mu^{(i+f)}$, so that  $(\mu_z, 1)$ is an element of  $\Hzero'(\F'')$.
Since  $\Hzero' \subset \Hzero \times \mathbb{G}_{m,\xi}$ and $\Hzero$ is
 linear, the differences between the $\mu_z$ for varying $z$ lie in
 $\Hzero(\F'')$, and (e.g.\ by considering $\mu_1 - \mu_{-1}$ and $\mu_1 -
 \mu_{z}$ for any $z \in \F''$ with $z\neq z^{-1}$) we deduce that each
 $\mu^{(i)}$ lies in $\Hzero$. In particular 
each 
$\mumap(\mu^{(i)})$ lies in $\Cone$.

If $(i-1,i)$ were not a transition then (since $e=1$) we would have
either $r_i =0 $ or $s_i = 0$. The former would contradict
$\mumap(\mu^{(i)}) \in \Cone$ (since the $i$th component of
$\mumap(\mu^{(i)})$ would be $u^{-[d_i-c_i]}$, of negative degree),
and similarly the latter would contradict $\mumap(\mu^{(i-1)}) \in
\Cone$. Thus $(i-1,i)$ is a transition for all $i$. In fact the same
observations show more precisely that $r_i \ge  x_i = [d_i-c_i]$ and $s_i
\ge p x_{i-1} = p [d_{i-1}-c_{i-1}]$.  Summing these inequalities and subtracting
$e'$ we obtain $0 \ge p [d_{i-1}-c_{i-1}] - [c_i-d_i]$, and comparing
with  \eqref{eq:gammastar} 
shows that we must also have $\gamma_i^*=0$ for
all $i$. Since $e=1$ and $(i-1,i)$ is a transition for all $i$ the refined shape of the pair $(\gM,\gN)$ is
automatically maximal;\ but then we are in the exceptional case of
Proposition~\ref{prop:ker-ext-maximal}, 
which (recalling the proof of that Proposition) implies that $T(\gM) \cong T(\gN)$. 
This is the desired contradiction.
\end{proof}

\subsection{Irreducible components}\label{subsec: irred components}
We can now use our results on families of extensions of characters to
classify the irreducible components of the stacks~$\cC^{\tau,\BT,1}$
and~$\cZ^{\tau,1}$. In Section~\ref{sec: picture} we will combine
these results with results coming from Taylor--Wiles patching (in
particular the results of~\cite{geekisin,emertongeerefinedBM}, which
we combine in Appendix~\ref{sec:appendix on geom BM}) to describe the
closed points of each irreducible component of~$\cZ^{\tau,1}$ in terms
of the weight part of Serre's conjecture.
\begin{cor}
  \label{cor: the C(J) are the components}Each irreducible component
  of~$\cC^{\tau,\BT,1}$ is of the form~$\overline{\cC}(J)$ for some~$J$;
  conversely, each~$\overline{\cC}(J)$ is an irreducible component of~$\cC^{\tau,\BT,1}$. 
\end{cor}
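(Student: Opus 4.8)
The plan is to combine the dimension calculations of Theorem~\ref{thm: dimension of refined shapes} and Theorem~\ref{thm: identifying the vertical components} with the equidimensionality result of Proposition~\ref{prop: C tau is equidimensional of the expected dimension} and the bound on the irreducible locus from Theorem~\ref{thm: irreducible Kisin modules can be ignored}. First I would recall that $\cC^{\tau,\BT,1}$ is equidimensional of dimension $[K:\Qp]$ (Proposition~\ref{prop: C tau is equidimensional of the expected dimension}), so every irreducible component has dimension exactly $[K:\Qp]$, and to show a closed irreducible substack of dimension $[K:\Qp]$ is a component it suffices (by \cite[\href{https://stacks.math.columbia.edu/tag/0DS2}{Tag 0DS2}]{stacks-project}) to know it is not properly contained in the reduced locus. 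Since each $\overline{\cC}(J)$ is reduced, irreducible, and closed by construction, and has dimension $[K:\Qp]$ when $(J,r)$ is the maximal refined shape by Theorem~\ref{thm: dimension of refined shapes}, the converse direction — that each $\overline{\cC}(J)$ is an irreducible component — follows immediately from equidimensionality.

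For the forward direction, I would stratify the finite type points of $\cC^{\tau,\BT,1}$. By Lemma~\ref{lem: closed points of irred} every finite type point corresponds either to an irreducible Breuil--Kisin module or to a reducible one. The irreducible ones all lie in the closed substack $\cC_{\irred}$, which has dimension strictly less than $[K:\Qp]$ by Theorem~\ref{thm: irreducible Kisin modules can be ignored}, hence $\cC_{\irred}$ is contained in no irreducible component (each component has dimension $[K:\Qp]$) — more precisely, $\cC_{\irred}$ cannot be dense in any component, so the generic point of any component corresponds to a reducible Breuil--Kisin module. A reducible Breuil--Kisin module $\gP$ of type $\tau$ sits in a short exact sequence $0 \to \gN \to \gP \to \gM \to 0$ with $\gM,\gN$ rank one; after passing to a finite extension of $\F$ we may apply Lemma~\ref{lem:rank one Kisin modules with descent data} to write $\gM,\gN$ in standard form, and the type-$\tau$ condition forces $(\gM,\gN)$ to have some refined shape $(J,r)$. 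Thus every reducible finite type point of $\cC^{\tau,\BT,1}$ lies in $|\cC(J,r)^\tau|$ for some refined shape $(J,r)$, whence in $\overline{\cC}(J,r)$; and by Theorem~\ref{thm: dimension of refined shapes} the non-maximal $\overline{\cC}(J,r)$ have dimension strictly less than $[K:\Qp]$, so they too are contained in no component. Therefore every component has a dense set of finite type points lying in $\bigcup_J \overline{\cC}(J)$ (maximal shapes), and since each component is irreducible and closed of dimension $[K:\Qp]$, it must coincide with one of the $\overline{\cC}(J)$.

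The main obstacle is the careful handling of the stratification argument: one must argue that since $\cC^{\tau,\BT,1}$ is of finite type over $\F$ (equivalently, Jacobson with finitely many irreducible components), and since $\cC_{\irred}$ together with the finitely many non-maximal $\overline{\cC}(J,r)$ all have dimension $< [K:\Qp]$, their union cannot contain a dense subset of any $[K:\Qp]$-dimensional component; hence the generic point of every component lies in some maximal $\overline{\cC}(J)$. Once this is granted, the irreducibility and the correct dimension of $\overline{\cC}(J)$ (from Theorem~\ref{thm: dimension of refined shapes}) force the containment to be an equality. I would also need to note that there are only finitely many shapes $J$ and finitely many refined shapes (as observed in \S\ref{subsubsec:explicitly refined}), so all the unions involved are finite, which is what makes the dimension-counting argument valid.
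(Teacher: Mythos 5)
Your proof is correct and follows essentially the same approach as the paper: both rely on the equidimensionality of $\cC^{\tau,\BT,1}$ from Proposition~\ref{prop: C tau is equidimensional of the expected dimension}, the dimension count for $\overline{\cC}(J,r)$ from Theorem~\ref{thm: dimension of refined shapes}, and the bound on the irreducible locus from Theorem~\ref{thm: irreducible Kisin modules can be ignored}, assembling the low-dimensional loci into a closed substack $\cC_{\mathrm{small}}$ whose complement is covered by the maximal $\overline{\cC}(J)$. You make the stratification of finite type points (via Lemma~\ref{lem: closed points of irred} and Lemma~\ref{lem:rank one Kisin modules with descent data}) slightly more explicit than the paper does, but the argument is the same; the one stray reference to Theorem~\ref{thm: identifying the vertical components} in your opening sentence concerns $\overline{\cZ}(J)$ rather than $\overline{\cC}(J)$ and plays no role in the rest of the argument.
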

\begin{rem}
  \label{rem: haven't yet proved the C(J) are distinct}Note that at
  this point we have not established that different sets~$J$ give
  distinct irreducible components~$\overline{\cC}(J)$; we will prove this in Section~\ref{subsec: map to
  Dieudonne stack} below by a consideration of Dieudonn\'e
modules. 
\end{rem}
\begin{proof}[Proof of Corollary~{\ref{cor: the C(J) are the
    components}}]By~Proposition~\ref{prop: C tau is
    equidimensional of the expected dimension}, $\cC^{\tau,\BT,1}$ is
  equidimensional of dimension~$[K:\Qp]$. By construction, the~$\overline{\cC}(J)$ are irreducible
  substacks of~$\cC^{\tau,\BT,1}$, and by Theorem~\ref{thm:
    dimension of refined shapes} they also have dimension~$[K:\Qp]$, so  they are in fact
  irreducible components by~\cite[\href{https://stacks.math.columbia.edu/tag/0DS2}{Tag 0DS2}]{stacks-project}. 
  
  By Theorem~\ref{thm: irreducible Kisin
    modules can be ignored} and Theorem~\ref{thm: dimension of refined
    shapes}, we see that there is a closed substack
  $\cC_{\mathrm{small}}$ of~$\cC^{\tau,\BT,1}$ of dimension strictly
  less than~$[K:\Qp]$, with the property that every finite type point
  of~$\cC^{\tau,\BT,1}$ is a point of at least one of the~$\overline{\cC}(J)$
  or of~$\cC_{\mathrm{small}}$ (or both). (Indeed, we can
  take~$\cC_{\mathrm{small}}$ to be the union of the
  stack~$\cC_{\mathrm{irred}}$ of Theorem~\ref{thm: irreducible Kisin
    modules can be ignored} and the stacks~$\overline{\cC}(J,r)$ for
  non-maximal shapes~$(J,r)$.) 
  Since  $\cC^{\tau,\BT,1}$ is
  equidimensional of dimension~$[K:\Qp]$, it follows 
  that the~$\overline{\cC}(J)$ exhaust
  the irreducible components of~$\cC^{\tau,\BT,1}$, as required. 
\end{proof}

We now deduce a classification of the
irreducible components of $\cZ^{\tau,1}$; Theorem~\ref{thm:stack
  version of geometric Breuil--Mezard} below is a considerable refinement of
this, giving a precise description of the finite type points of the
irreducible components in terms of the weight part of Serre's conjecture.
  \begin{cor}
    \label{cor: components of Z are exactly the Z(J)}The irreducible
    components of~$\cZ^{\tau,1}$ are precisely the~$\overline{\cZ}(J)$
    for~$J\in\cP_\tau$, and if $J\ne J'$ then~$\overline{\cZ}(J)\ne\overline{\cZ}(J')$.
  \end{cor}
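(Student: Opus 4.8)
The plan is to deduce Corollary~\ref{cor: components of Z are exactly the Z(J)} from the corresponding statement for $\cC^{\tau,\BT,1}$ (Corollary~\ref{cor: the C(J) are the components}) by pushing forward along the scheme-theoretically dominant proper morphism $\cC^{\tau,\BT,1}\to\cZ^{\tau,1}$, together with the dimension and distinctness results already established. First I would recall that, by Theorem~\ref{thm: existence of picture with descent data and its basic properties}(2), the morphism $\pi\colon\cC^{\tau,\BT,1}\to\cZ^{\tau,1}$ is proper, representable by algebraic spaces, and scheme-theoretically dominant; in particular it is surjective on underlying topological spaces. Since by Corollary~\ref{cor: the C(J) are the components} the $\overline{\cC}(J)$ (as $J$ ranges over all shapes) are exactly the irreducible components of $\cC^{\tau,\BT,1}$, their images cover $|\cZ^{\tau,1}|$, and each $|\pi(\overline{\cC}(J))|$ is irreducible with closure the irreducible closed substack $\overline{\cZ}(J)$ (this is the definition of $\overline{\cZ}(J)$ as the scheme-theoretic image of $\overline{\cC}(J)$, cf.\ the definition preceding Theorem~\ref{thm: dimension of refined shapes}). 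Hence every irreducible component of $\cZ^{\tau,1}$ is contained in some $\overline{\cZ}(J)$, and conversely each $\overline{\cZ}(J)$ is contained in an irreducible component.

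Next I would invoke the dimension count. By Proposition~\ref{prop: dimensions of the Z stacks}, $\cZ^{\tau,1}$ is equidimensional of dimension $[K:\Qp]$. By Theorem~\ref{thm: identifying the vertical components}, $\dim\overline{\cZ}(J)\le[K:\Qp]$ with equality precisely when $J\in\cP_\tau$. Therefore, for $J\notin\cP_\tau$, $\overline{\cZ}(J)$ has dimension strictly less than $[K:\Qp]$ and so cannot be an irreducible component (it is properly contained in one, by equidimensionality, using \cite[\href{https://stacks.math.columbia.edu/tag/0DS2}{Tag 0DS2}]{stacks-project}); and indeed such $\overline{\cZ}(J)$ is contained in the union of the $\overline{\cZ}(J')$ with $J'\in\cP_\tau$ (since those exhaust the components). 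For $J\in\cP_\tau$, Theorem~\ref{thm: identifying the vertical components} already states that $\overline{\cZ}(J)$ is an irreducible component of $\cZ^{\tau,1}$. Combining, the set of irreducible components of $\cZ^{\tau,1}$ is exactly $\{\overline{\cZ}(J): J\in\cP_\tau\}$.

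Finally, for the distinctness claim: if $J\ne J'$ are both in $\cP_\tau$, then Theorem~\ref{thm: unique serre weight} asserts precisely that the $\overline{\cZ}(J)$ for $J\in\cP_\tau$ are pairwise distinct closed substacks of $\cZ^{\tau,1}$ (this rests on Proposition~\ref{prop:char-calculation}, which shows that $J\mapsto T(\gN(J))$ is injective on $\cP_\tau$, and on the density statement giving finite type points of $\overline{\cZ}(J)$ whose associated Galois representations distinguish $J$). This gives $\overline{\cZ}(J)\ne\overline{\cZ}(J')$, completing the proof.

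The main obstacle is, I expect, purely bookkeeping rather than conceptual: one must be careful that the images $|\pi(\overline{\cC}(J))|$ genuinely cover $|\cZ^{\tau,1}|$ and that ``irreducible component contained in $\overline{\cZ}(J)$'' combined with ``$\overline{\cZ}(J)$ contained in an irreducible component'' forces equality when $J\in\cP_\tau$ — this is where equidimensionality and \cite[\href{https://stacks.math.columbia.edu/tag/0DS2}{Tag 0DS2}]{stacks-project} are used, exactly as in the proof of Corollary~\ref{cor: the C(J) are the components}. All the genuinely substantial inputs (the dimension bounds of Theorem~\ref{thm: identifying the vertical components}, the distinctness of Theorem~\ref{thm: unique serre weight}, and the classification of components of $\cC^{\tau,\BT,1}$) are already in hand, so no new hard estimate is needed here.
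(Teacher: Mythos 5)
Your proof is correct and follows essentially the same route as the paper's own: it combines Theorem~\ref{thm: identifying the vertical components} (dimension bounds and the fact that $\overline{\cZ}(J)$ is a component exactly when $J\in\cP_\tau$), Theorem~\ref{thm: unique serre weight} (pairwise distinctness), Corollary~\ref{cor: the C(J) are the components} (classification of components of $\cC^{\tau,\BT,1}$), and scheme-theoretic dominance of $\cC^{\tau,\BT,1}\to\cZ^{\tau,1}$ together with equidimensionality from Proposition~\ref{prop: dimensions of the Z stacks}. The only difference is that your write-up spells out the covering/containment argument slightly more verbosely than the paper does, but the logical content is identical.
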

  \begin{proof}
    By Theorem~\ref{thm: identifying the vertical components}, if
    $J\in\cP_\tau$ then~$\overline{\cZ}(J)$ is an irreducible component of
    ~$\cZ^{\tau,1}$. 
    Furthermore, these~$\overline{\cZ}(J)$ are pairwise
    distinct by Theorem~\ref{thm: unique serre weight}.

    Since the morphism
    $\cC^{\tau,\BT,1}\to\cZ^{\tau,1}$ is scheme-theoretically
    dominant, it follows from Corollary~\ref{cor: the C(J) are the
      components} that each irreducible component of $\cZ^{\tau,1}$
    is dominated by some~$\overline{\cC}(J)$.  Applying Theorem~\ref{thm:
      identifying the vertical components} again, we see that if
    $J\notin\cP_\tau$ then~$\overline{\cC}(J)$ does not dominate an irreducible
    component, as required.
  \end{proof}

\subsection{Dieudonn\'e modules and the morphism to the gauge stack}\label{subsec: map to
  Dieudonne stack} 
We now study the images of the irreducible components  $\overline{\cC}(J)$
in the gauge stack $\cG_\eta$; 
this amounts to computing
the Dieudonn\'e modules and Galois
representations associated to the extensions of Breuil--Kisin modules that we
considered in Section~\ref{sec: extensions of rank one Kisin
  modules}. 
Suppose throughout this subsection that $\tau$ is a non-scalar type,
and that $(J,r)$ is a maximal refined shape. 
Recall that in the cuspidal case this entails that $i \in J$ if and
only if $i + f \not\in J$.

\begin{lemma}
	\label{lem:Dieudonne modules}
Let $\gP \in \Ext^1_{\K{\F}}(\gM,\gN)$ be an extension of type $\tau$
and refined shape $(J,r)$. Then for $i \in \Z/f'\Z$ we have $F=0$ on $D(\gP)_{\eta,i-1}$ if $i\in
J$, while $V=0$ on $D(\gP)_{\eta,i}$ if
$i\notin J$.
\end{lemma}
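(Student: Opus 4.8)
The plan is to compute the Dieudonné module $D(\gP) = \gP/u\gP$ directly from the explicit formulas for $\gP = \gP(h)$ recorded in Remark~\ref{prop: extensions of rank one Kisin modules}, using the recipe of Definition~\ref{def: Dieudonne module formulas}. Recall that $F$ on $D(\gP)_i$ is induced by $\Phi_{\gP,i}$ modulo $u$, and $V$ is obtained by factoring the multiplication-by-$E(u)$ map on $\gP_i$ as $\mathfrak V \circ \varphi$ and reducing $c_0^{-1}\mathfrak V$ modulo $u$, where $E(0) = c_0 p$. So the entire computation is a matter of tracking the $u$-adic valuations of the structure constants and the descent characters.

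First I would set up notation: write $m = \sum m_i$, $n = \sum n_i$ for the standard generators, so that on the $\eta$-isotypic piece $D(\gP)_{\eta,i}$ is spanned (after reducing mod $u$) by exactly one of $\bar m_i$ or $\bar n_i$, depending on whether $i \in J$ (where $c_i = k_i$, i.e. the $m$-line carries $\eta$) or $i \notin J$ (where $d_i = k_i$, i.e. the $n$-line carries $\eta$); here I use that $\gM = \gM(J,r)$ has $c$ determined by $c_i = k_i \iff i \in J$ and $\gN = \gN(J,r)$ has the complementary $d$. Then I would compute $F$ on $D(\gP)_{\eta,i-1} \to D(\gP)_{\eta,i}$ case by case according to whether $i-1$ and $i$ lie in $J$. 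When $i \in J$ but $i-1 \notin J$ (a transition into $J$), the relevant map on $\eta$-parts goes from the $\bar n_{i-1}$-line to the $\bar m_i$-line, and by the formula $\Phi_{\gP,i}(1\otimes n_{i-1}) = b_i u^{s_i} n_i$ together with $\Phi_{\gP,i}(1\otimes m_{i-1}) = a_i u^{r_i} m_i + h_i n_i$, the only contribution to the $m_i$-component comes from... nothing, since $n_{i-1}$ maps into the $n_i$-line; so $F$ is computed from the off-diagonal term $h_i$, and one checks $h_i$ is divisible by $u$ (using $\deg(h_i) < r_i$ and the congruence conditions from the maximal shape, or more simply that the $\eta$-part of the source is the $n$-line). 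When $i-1, i \in J$, the map on $\eta$-parts is $\bar m_{i-1} \mapsto a_i u^{r_i} \bar m_i$ with $r_i = e'(p^{f'}-1)/(p^{f'}-1) \cdot y_i = (p^{f'}-1)y_i$ — wait, more precisely for a maximal shape $r_i = (p^{f'}-1)e > 0$ when $(i-1,i)$ is not a transition, so $u^{r_i} \equiv 0 \bmod u$, giving $F = 0$. So in all cases with $i \in J$, $F = 0$ on $D(\gP)_{\eta,i-1}$. The $V = 0$ statement for $i \notin J$ is entirely parallel, using that $V$ is built from $E(u)/\varphi(\cdot)$ and that on the relevant line $E(u)^1$ times the generator, divided by the $\varphi$-image, again lands in $u \gP$ — here one uses the determinant/height-one condition $r_i + s_i = e'$ so that $E(u) = $ unit $\cdot u^{e'}$ (mod higher terms, since $E(u) \equiv c_0 p$ and $p \equiv c_0^{-1}E(u) \bmod u$, and $E(u)^h$ divides $u^{e(a+h-1)}$... ) — the cleanest route is: on $\gP_i$ the cokernel of $\Phi$ is killed by $E(u)$, factor $E(u)\cdot(\text{gen}) = \mathfrak V(\varphi(\text{gen}))$, and read off the $u$-valuation of $\mathfrak V$ on each line as $e' - r_i$ or $e' - s_i$, which is positive exactly when $s_i > 0$ resp. $r_i > 0$, matching the complementary condition.

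The main obstacle I expect is bookkeeping the transitions correctly — keeping straight which isotypic line ($m$ or $n$) carries $\eta$ on each side of a transition, and verifying that at a transition the relevant structure constant (either $h_i$ for $F$, or the corresponding entry of $\mathfrak V$ for $V$) is genuinely divisible by $u$ rather than a unit. For the $h_i$ case this needs the observation that at a transition into $J$ the nonzero terms of $h_i$ have degree $\equiv 0 \bmod (p^{f'}-1)$ and strictly less than $r_i = (p^{f'}-1)y_i - [c_i-d_i]$; since the smallest such nonnegative degree is $0$... so actually $h_i$ need \emph{not} be divisible by $u$ — but then the $F$-map in question is \emph{not} a map between the $\eta$-parts at all, because at a transition into $J$ we have $i-1 \notin J$ so $D(\gP)_{\eta,i-1}$ is the $\bar n_{i-1}$-line, and $\Phi_{\gP,i}(1\otimes n_{i-1}) = b_i u^{s_i}n_i$ lands purely in the $n_i$-line with $s_i > 0$ (as $s_i = (p^{f'}-1)(e+1-y_i) - [d_i-c_i] \geq (p^{f'}-1) - [d_i-c_i] > 0$ since $[d_i - c_i] < p^{f'-1}$), hence $F = 0$. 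So in fact the off-diagonal term $h_i$ never enters the $\eta$-isotypic $F$ or $V$, and the argument reduces to checking positivity of the relevant $r_i$ or $s_i$ exponent on the correct isotypic line in each of the four transition/non-transition cases. I would organize the proof as a short lemma verifying, for each $i$ and each of the two cases ($i \in J$ for $F$, $i \notin J$ for $V$), that the single relevant diagonal structure constant has positive $u$-valuation, and conclude.
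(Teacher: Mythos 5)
Your final organisation is the same as the paper's: reduce mod $u$ using the explicit formulas of Remark~\ref{prop: extensions of rank one Kisin modules}, identify the $\eta$-line at each index as the $\bar m_i$-line (when $i\in J$) or the $\bar n_i$-line (when $i\notin J$), compute $F$ from $\varphi$ and $V$ from $\mathfrak V$ (whose matrix you can write down explicitly using $r_i+s_i=e'$), and then do a four-way case analysis on whether $(i-1,i)$ is a transition.  So the approach is essentially identical.

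That said, the claim that ``the off-diagonal term $h_i$ never enters the $\eta$-isotypic $F$ or $V$'' is not quite right, and the resulting slogan ``just check positivity of the relevant diagonal exponent'' is a little too quick in one of the two $F$-cases you actually need.  When $i-1\in J$ and $i\in J$ (no transition), the generator of $D(\gP)_{\eta,i-1}$ is $\bar m_{i-1}$ and
$F(\bar m_{i-1}) = \overline{a_i u^{r_i}m_i + h_i n_i}$.  Here the $h_i n_i$ term is present, and the conclusion $F=0$ needs one of two justifications: either observe that $F$ commutes with the $I(K'/K)$-action and $\bar n_i$ lies in the $\eta'$-line, so this component must vanish for equivariance reasons (which you tacitly invoke when you project ``to $\eta$-parts''); or, as the paper does, note directly that because $(i-1,i)$ is not a transition the nonzero terms of $h_i$ have degree $\equiv r_i + c_i - d_i \pmod{p^{f'}-1}$, and $c_i\neq d_i$ since $\tau$ is non-scalar, so $h_i$ is divisible by $u$.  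The paper's version is preferable here since the same computation is also needed in the other two cases (to identify $F$ and $V$ with $h_i^0$ on the components \emph{not} covered by the present lemma, which is what Proposition~\ref{prop:Dieudonne divisors} uses). There is also a small typo: you need $[d_i-c_i] < p^{f'}-1$ (which is automatic for a least nonnegative residue with $c_i \ne d_i$), not $< p^{f'-1}$, to conclude $s_i>0$; the conclusion is unaffected.
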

\begin{proof}
Recall that $D(\gP) = \gP/u\gP$. Let $w_i$ be the image of $m_i$ in
$D(\gP)$ if $i \in J$, and let $w_i$ be the image of $n_i$ in $D(\gP)$
if $i \not\in J$.  It follows easily from the
definitions that $D(\gP)_{\eta,i}$ is generated over~$\F$ by $w_i$.

Recall that the actions of $F,V$ on $D(\gP)$ are as specified in
Definition~\ref{def: Dieudonne module formulas}. In particular $F$ is
induced by $\varphi$, while $V$ is $\czero^{-1} \mathfrak{V}$ mod $u$ where $\mathfrak{V}$ is the
unique map on $\gP$ satisfying $\mathfrak{V} \circ \varphi =
E(u)$, and $\czero = E(0)$.  For the Breuil--Kisin module $\gP$, we have
\[\varphi(n_{i-1}) = b_i u^{s_i} n_i,\qquad \varphi(m_{i-1}) = a_i u^{r_i}
m_i + h_i n_i,\] and so
 one checks (using that $E(u) = u^{e'}$ in $\F$)
that 
$$\mathfrak{V}(m_i) = a_i^{-1} u^{s_i} m_{i-1} - a_{i}^{-1} b_i^{-1}
h_i n_{i-1} , \qquad \mathfrak{V}(n_i) = b_i^{-1} u^{r_i} n_{i-1}.$$

From Definition~\ref{df:extensions of shape $J$} and the
discussion immediately following it, we recall that if $(i-1,i)$ is not a transition
then $r_i = e'$,
$s_i=0$, and $h_i$ is divisible by $u$ (the latter because nonzero
terms of $h_i$ have degrees congruent to $r_i+c_i-d_i
\pmod{p^{f'}-1}$, and $c_i \not\equiv d_i$ since $\tau$ is non-scalar).
On the other hand if $(i-1,i)$ is a transition, then $r_i , s_i >0$,
and nonzero terms of $h_i$ have degrees divisible by
$p^{f'}-1$; in that case we write $h_i^0$ for the constant coefficient
of $h_i$, and we remark that $h_i^0$ does not vanish identically on $\Ext^1_{\K{\F}}(\gM,\gN)$.

Suppose, for instance, that $i-1 \in J$ and $i \in J$. Then
$w_{i-1}$ and $w_i$ are the images in $D(\gP)$ of $m_{i-1}$ and
$m_{i}$.  From the above formulas we see that $u^{r_i} = u^{e'}$ and
$h_i$ are both divisible by $u$, while on the other hand $u^{s_i} = 1$. We
deduce that $F(w_{i-1}) = 0$ and $V(w_i) = \czero^{-1} a_i^{-1}
w_{i-1}$.  Computing along similar lines,  it is easy to check the following four
cases.

\begin{enumerate}
\item $i-1\in J,i\in J$. Then  $F(w_{i-1}) = 0$ and $V(w_i) = \czero^{-1} a_i^{-1}
w_{i-1}$.

\item $i-1\notin J,i\notin J$. Then $F(w_{i-1})=b_{i}w_{i}$, $V(w_{i})=0$.
\item\label{item: interesting case} $i-1\in J$, $i\notin J$. Then $F(w_{i-1})=h_{i}^0w_{i}$, $V(w_{i})=0$.
\item $i-1\notin J$, $i\in J$. Then $F(w_{i-1})=0$,
  $V(w_{i})=-\czero^{-1} a_{i}^{-1}b_{i}^{-1}h_{i}^0w_{i-1}$.
\end{enumerate}
In particular, if $i\in J$ then $F(w_i)=0$, while if
$i\notin J$ then $V(w_{i+1})=0$. 
\end{proof}

Since $\cC^{\tau,\BT}$ is flat over $\cO$ by Corollary~\ref{cor: Kisin
  moduli
  consequences of local models}, 
it follows from Lemma~\ref{lem: maps to gauge stack as Cartier
    divisors} that the natural morphism $\cC^{\tau,\BT} \to \cG_{\eta}$ 
is determined by an $f$-tuple of effective Cartier divisors $\{\cD_j\}_{0 \le j < f}$
lying in the special fibre $\cC^{\tau,\BT,1}$. 
Concretely, 
$\cD_j$ is the zero locus of~ $X_j$, which is the zero locus
of~$F:D_{\eta,j}\to D_{\eta,j+1}$. 
The zero locus of $Y_j$ (which is the zero locus of~$V:D_{\eta,j+1}\to
D_{\eta,j}$) is another
Cartier divisor $\cD_j'$. 
Since $\cC^{\tau,\BT,1}$ is reduced,
we conclude that each of $\cD_j$ and $\cD_j'$ is simply a union of irreducible components
of  $\cC^{\tau,\BT,1}$, each component appearing precisely once in
precisely one of either $\cD_j$ or $\cD_j'$.

\begin{prop}
\label{prop:Dieudonne divisors}
$\cD_j$ is equal to the union of the irreducible components~$\overline{\cC}(J)$ of
$\cC^{\tau,\BT,1}$ for those $J$ that contain
$j+1$. 
\end{prop}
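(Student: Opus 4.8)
The strategy is to identify the Cartier divisor $\cD_j$ (the vanishing locus of $F : D_{\eta,j} \to D_{\eta,j+1}$, equivalently of $X_j$) on the nose by comparing it, component by component, against the explicit computation of $F$ on Dieudonn\'e modules carried out in Lemma~\ref{lem:Dieudonne modules}. As observed just before the statement, $\cC^{\tau,\BT,1}$ is reduced (Corollary~\ref{cor: Kisin moduli consequences of local models}), so $\cD_j$, being an effective Cartier divisor in this reduced stack, is a union of irreducible components, each occurring with multiplicity one, and moreover each component of $\cC^{\tau,\BT,1}$ occurs in exactly one of $\cD_j$ or $\cD_j'$. By Corollary~\ref{cor: the C(J) are the components} the irreducible components are exactly the $\overline{\cC}(J)$, so it suffices to determine, for each shape $J$, whether $\overline{\cC}(J) \subseteq \cD_j$ or $\overline{\cC}(J) \subseteq \cD_j'$.

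First I would recall that $\overline{\cC}(J)$ is defined as a scheme-theoretic image of the family of extensions $\gEdist$ parametrised by $\Spec B^{\dist}$, built from $\gM(J) = \gM(J,r)$ and $\gN(J) = \gN(J,r)$ for the maximal refined shape $(J,r)$; by the density statements of Remark~\ref{rem:Zariski density} and Lemma~\ref{lem: scheme theoretic images coincide}, it is enough to check the containment on a dense set of finite type points, i.e.\ on the extensions $\gP \in \Ext^1_{\K{\F}}(\gM(J),\gN(J))$ of type $\tau$ and refined shape $(J,r)$ (over finite extensions $\F'/\F$). For such a $\gP$, Lemma~\ref{lem:Dieudonne modules} tells us that $F = 0$ on $D(\gP)_{\eta,i-1}$ whenever $i \in J$; taking $i = j+1$, this says precisely that if $j + 1 \in J$ then $F : D(\gP)_{\eta,j} \to D(\gP)_{\eta,j+1}$ vanishes, i.e.\ $\gP$ lies in the zero locus of $X_j$, hence $\overline{\cC}(J) \subseteq \cD_j$. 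This establishes the inclusion of the union of the $\overline{\cC}(J)$ with $j+1 \in J$ into $\cD_j$.

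For the reverse inclusion I would argue that no component $\overline{\cC}(J)$ with $j+1 \notin J$ is contained in $\cD_j$; combined with the fact that $\cD_j$ is a union of components this forces $\cD_j$ to be exactly the union over $J \ni j+1$. Here I use the ``Then $F(w_{i-1}) = h_i^0 w_i$'' case (item~(3), with $i = j+1$, so $j \in J$ and $j+1 \notin J$) and the analogous ``$F(w_{i-1}) = b_i w_i$'' case (item~(2), with $j \notin J$ and $j + 1 \notin J$) from the proof of Lemma~\ref{lem:Dieudonne modules}. In the second case $b_i$ is a unit (indeed $b_i = 1$ in the normalisation used, or at worst lies in $\F^\times$ after an unramified twist over $\Spec A^{\dist}$), so $F$ is an isomorphism on $D(\gP)_{\eta,j}$ for \emph{every} extension $\gP$ of shape $(J,r)$, and $\overline{\cC}(J) \cap \cD_j = \varnothing$. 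In the first case $F(w_j) = h_{j+1}^0 w_{j+1}$, and the key point — already noted in the proof of Lemma~\ref{lem:Dieudonne modules} — is that the constant term $h_{j+1}^0$ does not vanish identically on $\Ext^1_{\K{\F}}(\gM(J),\gN(J))$; hence $F \neq 0$ at a dense set of finite type points of $\overline{\cC}(J)$, so $\overline{\cC}(J) \not\subseteq \cD_j$.

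The main obstacle is making the ``does not vanish identically'' observation rigorous at the level of the stack $\overline{\cC}(J)$ rather than just the affine space $\Ext^1_{\K{\F}}(\gM(J),\gN(J))$: one must check that the locus $\{h_{j+1}^0 \neq 0\}$ is genuinely dense in $\Spec B^{\dist}$ (equivalently, that $h_{j+1}^0$ is a nonzero function on the irreducible scheme $\Spec B^{\dist}$), and that density is preserved under passage to the scheme-theoretic image $\overline{\cC}(J)$ — this is exactly where the irreducibility of $\Spec A^{\dist}$ and $\Spec B^{\dist}$ (Remark~\ref{rem:Zariski density}) and the fact that a scheme-theoretic image of an irreducible stack is irreducible get used. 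Given the explicit parametrisation of $\Ext^1$ by the polynomials $(h_i)$ with $\deg h_i < r_i$ (Remark~\ref{rem:representatives-for-ext}) and the maximality of $(J,r)$ (which guarantees $r_{j+1} > 0$ at a transition, so that a constant term is allowed and generically nonzero), this is a short check. Everything else is a bookkeeping translation between the indexing conventions of Lemma~\ref{lem:Dieudonne modules} (where $i \in J$ controls vanishing of $F$ on $D_{\eta,i-1}$) and the statement of the proposition (where $\cD_j$ is the zero locus of $F$ on $D_{\eta,j} \to D_{\eta,j+1}$), i.e.\ the substitution $i = j+1$.
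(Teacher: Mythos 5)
Your argument is correct and follows the same route as the paper's proof: both reduce to determining the support of $\cD_j$ via Lemma~\ref{lem:Dieudonne modules}, both rely on the observation (made just before the proposition) that $\cD_j$ and $\cD_j'$ are each reduced unions of irreducible components since $\cC^{\tau,\BT,1}$ is reduced, and both show $X_j\not\equiv 0$ on $\overline{\cC}(J)$ for $j+1\notin J$ by noting that in the transition case $X_j$ is computed by $h_{j+1}^0$, which does not vanish identically on the irreducible parameter space $\Spec B^{\dist}$. The paper's proof is just a more compressed version of the same bookkeeping, so your concern at the end about promoting the non-vanishing from $\Ext^1$ to $\overline{\cC}(J)$ is indeed the only point needing care, and is handled exactly by the density and irreducibility observations you cite.
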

\begin{proof}
Lemma~\ref{lem:Dieudonne modules} shows
that if $j+1\in J$, then $X_j=0$, while
if $j+1\notin J$, then $Y_j=0$. In the latter case, by an inspection
of case~\eqref{item: interesting case} of the proof of Lemma~\ref{lem:Dieudonne modules}, we have
$X_j=0$ if and only if  $j\in J$ 
and
$h_{j+1}^0=0$. Since~$h_{j+1}^0$ does not vanish identically on an
irreducible component, we see that the irreducible components on which $X_j$
vanishes identically are precisely those for which $j+1\in J$, as
claimed. 
\end{proof}

\begin{thm}
  \label{thm: components of C}The algebraic stack~$\cC^{\tau,\BT,1}$
  has precisely $2^f$ irreducible components, namely the irreducible substacks~$\overline{\cC}(J)$. 
\end{thm}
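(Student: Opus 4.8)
The plan is to combine Corollary~\ref{cor: the C(J) are the components}, which already establishes that the irreducible components of $\cC^{\tau,\BT,1}$ are exactly the substacks $\overline{\cC}(J)$ as $J$ ranges over the shapes, with Proposition~\ref{prop:Dieudonne divisors}, which distinguishes them. So the only thing left to prove is that the assignment $J \mapsto \overline{\cC}(J)$ is injective, equivalently that $\overline{\cC}(J) = \overline{\cC}(J')$ forces $J = J'$ (and, separately, that there really are $2^f$ shapes in play --- in the principal series case every subset of $\Z/f'\Z = \Z/f\Z$ occurs, while in the cuspidal case a shape is determined by its intersection with any $f$ consecutive residues, again giving $2^f$ possibilities).

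First I would recall the setup of Subsection~\ref{subsec: map to Dieudonne stack}: by Corollary~\ref{cor: Kisin moduli consequences of local models} the stack $\cC^{\tau,\BT}$ is flat over $\cO$, so by Lemma~\ref{lem: maps to gauge stack as Cartier divisors} the morphism $\cC^{\tau,\BT}\to\cG_\eta$ corresponds to an $f$-tuple of effective Cartier divisors $\cD_0,\dots,\cD_{f-1}$ in the special fibre $\cC^{\tau,\BT,1}$. Since $\cC^{\tau,\BT,1}$ is reduced, each $\cD_j$ is a union of irreducible components, each occurring with multiplicity one. Proposition~\ref{prop:Dieudonne divisors} identifies $\cD_j$ as the union of those $\overline{\cC}(J)$ with $j+1\in J$. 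The key point is then purely formal: if $\overline{\cC}(J)=\overline{\cC}(J')$ as substacks, then for every $j$ we have $j+1\in J \iff \overline{\cC}(J)\subseteq \cD_j \iff \overline{\cC}(J')\subseteq \cD_j \iff j+1\in J'$, using that each $\overline{\cC}(J)$ is an irreducible component and hence is contained in $\cD_j$ precisely when it is one of the components making up $\cD_j$. Letting $j$ run over $\Z/f\Z$ shows $J\cap(\Z/f\Z+1) = J'\cap(\Z/f\Z+1)$; in the principal series case $f'=f$ so this says $J=J'$ directly, while in the cuspidal case $f'=2f$ but, as recorded in Definition~\ref{df:extensions of shape $J$}, a shape is determined by its intersection with any $f$ consecutive integers modulo $f'$, so again $J=J'$.

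Having shown the $\overline{\cC}(J)$ are pairwise distinct, I would conclude: by Corollary~\ref{cor: the C(J) are the components} every irreducible component of $\cC^{\tau,\BT,1}$ is some $\overline{\cC}(J)$ and conversely every $\overline{\cC}(J)$ is an irreducible component, and we have just seen distinct $J$ give distinct components; counting the shapes (accounting for scalar versus non-scalar $\tau$ --- in the scalar case the only shape is $\varnothing$, but then $\cC^{\tau,\BT,1}$ is irreducible, consistent with $2^f$ only if $f$ is interpreted appropriately; more precisely one should note that the theorem as stated concerns the generic situation and the scalar case is handled separately via $\cM_{\loc}$ being irreducible) gives exactly $2^f$ of them. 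The main (and essentially only) obstacle is making sure the distinctness argument is airtight in the cuspidal case --- one must be careful that Proposition~\ref{prop:Dieudonne divisors} is only asserting membership of $j+1$ for $j$ in a set of representatives $0,\dots,f-1$, and invoke the periodicity/symmetry property $i\in J \iff i+f\notin J$ of cuspidal shapes to recover all of $J$ from this partial information. Everything else is bookkeeping already done in the preceding results.

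\begin{proof}
By Corollary~\ref{cor: the C(J) are the components}, the irreducible components of $\cC^{\tau,\BT,1}$ are precisely the substacks $\overline{\cC}(J)$, as $J$ ranges over the shapes for $\tau$; it remains to see that distinct shapes $J\ne J'$ give distinct components, and to count the shapes. For the first point, assume $\tau$ is non-scalar (the scalar case being covered by the irreducibility of $\cM_{\loc}^{\tau,\BT}$, hence of $\cC^{\tau,\BT,1}$, via Theorem~\ref{thm: map from Kisin to general local model is formally smooth} and Lemma~\ref{lem: local model scalar type is equivalent to what you expect}). As recalled above, $\cC^{\tau,\BT,1}$ is reduced, and the morphism $\cC^{\tau,\BT}\to\cG_\eta$ is given by effective Cartier divisors $\cD_0,\dots,\cD_{f-1}$, each a multiplicity-one union of irreducible components. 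Proposition~\ref{prop:Dieudonne divisors} shows that $\overline{\cC}(J)$ is one of the components comprising $\cD_j$ if and only if $j+1\in J$. Hence if $\overline{\cC}(J)=\overline{\cC}(J')$ then, for every $0\le j<f$, $j+1\in J \iff \overline{\cC}(J)\subseteq\cD_j \iff \overline{\cC}(J')\subseteq\cD_j \iff j+1\in J'$. Thus $J$ and $J'$ agree on the residues $1,\dots,f$. In the principal series case $f'=f$, so $J=J'$. In the cuspidal case $f'=2f$, but by Definition~\ref{df:extensions of shape $J$} a cuspidal shape is determined by its intersection with any $f$ consecutive residues modulo $f'$ (using $i\in J\iff i+f\notin J$), so again $J=J'$. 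Finally, the number of shapes for $\tau$ is $2^f$: in the principal series case these are all subsets of $\Z/f\Z$, and in the cuspidal case a shape is freely determined by its intersection with $\{1,\dots,f\}$. Therefore $\cC^{\tau,\BT,1}$ has exactly $2^f$ irreducible components, namely the $\overline{\cC}(J)$.
\end{proof}
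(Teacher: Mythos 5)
Your proof is correct and follows essentially the same route as the paper: the paper's own proof simply reads ``By Corollary~\ref{cor: the C(J) are the components}, we need only show that if $J\ne J'$ then $\overline{\cC}(J)\ne\overline{\cC}(J')$; but this is immediate from Proposition~\ref{prop:Dieudonne divisors}.'' You have unpacked the ``immediate'' by spelling out that $\overline{\cC}(J)\subseteq\cD_j$ iff $j+1\in J$, and carefully handled the cuspidal case via the periodicity $i\in J\iff i+f\notin J$, which is exactly where the argument requires a moment's thought and which the paper leaves implicit. The only superfluous element is your parenthetical about the scalar case: Subsection~\ref{subsec: map to Dieudonne stack} opens with the standing hypothesis that $\tau$ is non-scalar, so Theorem~\ref{thm: components of C} is already stated under that assumption and there is nothing to handle separately.
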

\begin{proof}
By Corollary~\ref{cor: the C(J) are the components}, we need only show
that if~$J\ne J'$   then $\overline{\cC}(J)\ne\cC(J')$; but this is immediate
from Proposition~\ref{prop:Dieudonne divisors}.
\end{proof}

\section{Moduli stacks of Galois representations and the geometric
Breuil--M\'ezard conjecture}\label{sec: picture} 
We now make a more detailed study of the stacks~$\cZ^{\dd}$
and~$\cZ^{\tau}$. In particular, we 
prove a ``geometric Breuil--M\'ezard'' result, showing in particular that
the finite type points of each irreducible
components 
are precisely described by the weight part of Serre's conjecture. We
also prove a new result on the structure of potentially Barsotti--Tate
deformation rings, Proposition~\ref{prop: generically reduced special fibre deformation
    ring}, showing that their special fibres are generically reduced.

\subsection{Generic reducedness of \texorpdfstring{$\Spec R^{\tau,\BT}_{\rbar}/\varpi$}{a
    deformation ring}}
\label{subsec: generically
  reduced}
We return to the setting of Subsection~\ref{subsec:Galois
  deformation rings}:\ that is, we fix a finite type point $\Spec\F'\to\cZ^{\tau,a}$, where $\F'/\F$
is a finite extension, and let $\rbar:G_K\to\GL_2(\F')$ be the
 corresponding Galois representation.
It follows from Corollary~\ref{cor: R tau BT is a versal
  ring to Z-hat} that $\Spec R^{\tau,a}$ is a closed subscheme of
$\Spec R^{\tau,\BT}_{\rbar}/\varpi^a$, but we have no reason to
believe that equality holds.
It follows from Lemma~\ref{lem: C 1 and Z 1 are the underlying reduced substacks},
together with Lemma~\ref{lem: generic reducedness passes to completions} below, that
$\Spec R^{\tau,1}$ is the underlying reduced subscheme
of~$\Spec R^{\tau,\BT}_{\rbar}/\varpi$, so that equality holds in the case $a = 1$
if and only
if~$\Spec R^{\tau,\BT}_{\rbar}/\varpi$ is reduced. Again, we have no
reason to believe that this holds in general, but the main result of
this section is
  Proposition~\ref{prop: generically reduced special fibre deformation ring} below,
showing that $\Spec R^{\tau,\BT}_{\rbar}/\varpi$  is generically
reduced. We will use this in the proof of our geometric
Breuil--M\'ezard result below. (Recall that a scheme is
generically reduced if it contains an open reduced subscheme whose
underlying topological space is dense.  In the case of a Noetherian
affine scheme $\Spec A$, this is equivalent to requiring
that the localisation of $A$ at each of its minimal
primes is reduced.)

\begin{prop}
  \label{prop: generically reduced special fibre deformation ring}
For any tame type~$\tau$, the scheme $\Spec R^{\tau,\BT}_{\rbar}/\varpi$ is generically reduced, with
  underlying reduced subscheme $\Spec R^{\tau,1}$. 
\end{prop}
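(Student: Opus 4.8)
The plan is to reduce generic reducedness of $\Spec R^{\tau,\BT}_{\rbar}/\varpi$ to a statement about the special fibre of the algebraic stack $\cC^{\tau,\BT,1}$, which we know to be reduced by Corollary~\ref{cor: Kisin moduli consequences of local models}(2). The key geometric input is the projective, scheme-theoretically dominant morphism $X_{\rbar}\to\Spec R^{\tau,\BT}_{\rbar}$ of Proposition~\ref{prop: projective morphism as in Kisin}, whose $\mathfrak m$-adic completion is identified with $\widehat X_{\rbar}=\cC^{\tau,\BT}\times_{\cR^{\dd}}\Spf R_{\rbar|_{G_{K_\infty}}}$, together with the flatness and normality of $X_{\rbar}$ from Lemma~\ref{lem:X is normal and flat} and the versality of $\Spf(\varprojlim R^{\tau,a})\to\cZ^{\tau}$.

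First I would record the general commutative-algebra lemma alluded to in the statement (``Lemma~\ref{lem: generic reducedness passes to completions}''): if $A$ is a Noetherian local ring, flat over $\cO$, such that $A/\varpi$ is generically reduced, and if $\widehat A$ denotes a completion, then $\widehat A/\varpi$ is generically reduced, and conversely; this follows from faithful flatness of completion together with the fact that $(A/\varpi)$ is generically reduced iff it satisfies Serre's condition $(R_0)$, a condition that ascends and descends along faithfully flat maps with reduced (here, regular — the fibres of completion) fibres, via \cite[\href{https://stacks.math.columbia.edu/tag/033G}{Tag 033G}]{stacks-project} and \cite[\href{https://stacks.math.columbia.edu/tag/0C23}{Tag 0C23}]{stacks-project} (excellence of $R^{\tau,\BT}_{\rbar}$). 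Because $R^{\tau,\BT}_{\rbar}$ is $\cO$-flat and excellent, it therefore suffices to prove that its $\varpi$-adic completion along the maximal ideal has generically reduced special fibre — equivalently, by Proposition~\ref{prop: projective morphism as in Kisin} (the morphism $X_{\rbar}\to\Spec R^{\tau,\BT}_{\rbar}$ is projective, $\cO$-flat, and an isomorphism after inverting $\varpi$, hence scheme-theoretically dominant on special fibres onto a dense open), it suffices to show that the special fibre $X_{\rbar,1}:=X_{\rbar}\times_{\cO}\F$ is generically reduced.

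Next I would transfer this to $\widehat X_{\rbar}$. Since $X_{\rbar}$ is Noetherian and $X_{\rbar,1}$ is its special fibre, generic reducedness of $X_{\rbar,1}$ can be checked at the (finitely many) generic points of its irreducible components; as each such component of the projective $\Spec R^{\tau,\BT}_{\rbar}$-scheme $X_{\rbar}$ has a closed point lying over the closed point of $\Spec R^{\tau,\BT}_{\rbar}$, it is enough — arguing exactly as in Lemma~\ref{lem:GAGA facts}, via faithful flatness of the $\mathfrak m$-adic completion of an affine open of $X_{\rbar}$ over its localisations at maximal ideals, and the fact that $(R_0)$ descends along such completions — to check generic reducedness of the special fibre of $\widehat X_{\rbar}$. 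Now $\widehat X_{\rbar}\cong\cC^{\tau,\BT}\times_{\cZ^{\tau}}\Spf(\varprojlim R^{\tau,a})$ (as established in the proof of Lemma~\ref{lem:X is normal and flat}), and the morphism $\Spf(\varprojlim R^{\tau,a})\to\cZ^{\tau}$ is flat (the flatness claim proved in that same proof). Hence $\widehat X_{\rbar}\to\cC^{\tau,\BT}$ is flat, and its special fibre is flat over $\cC^{\tau,\BT,1}$, which is reduced by Corollary~\ref{cor: Kisin moduli consequences of local models}(2). A flat morphism (locally of finite type between Noetherian objects) with reduced target and reduced — indeed regular, being a localisation of a power series ring by the smoothness argument in the proof of Lemma~\ref{lem:X is normal and flat} — fibres over the generic points has reduced source in a neighbourhood of the fibre over any generic point; this is the ascent of $(R_0)+(S_1)$, so $\widehat X_{\rbar,1}$ is generically reduced. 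Chasing the reductions backwards yields generic reducedness of $\Spec R^{\tau,\BT}_{\rbar}/\varpi$.

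Finally, the identification of the underlying reduced subscheme with $\Spec R^{\tau,1}$: by Corollary~\ref{cor: R tau BT is a versal ring to Z-hat} we have $R^{\tau,1}=R^{\tau,\BT}_{\rbar}\otimes_{\cZ^{\tau}}\cZ^{\tau,1}$ up to the thickening $\cZ^{\tau,1}\hookrightarrow\cZ^{\tau}\times_{\Spf\cO}\F$, and $\cZ^{\tau,1}$ is the underlying reduced substack of $\cZ^{\tau,a}$ by Lemma~\ref{lem: C 1 and Z 1 are the underlying reduced substacks}; pulling this back through the versal morphism and using Lemma~\ref{lem: generic reducedness passes to completions} (applied now to the reduced quotient, which is unaffected by completion along the maximal ideal because everything in sight is excellent) gives $\Spec R^{\tau,1}=(\Spec R^{\tau,\BT}_{\rbar}/\varpi)_{\red}$. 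The main obstacle I anticipate is the bookkeeping in the two GAGA-style descent steps — verifying carefully that generic reducedness (the property $(R_0)$, or equivalently reducedness of the localisations at minimal primes of the special fibre) genuinely ascends and descends along the completion and flat-base-change morphisms involved, given that these are not finite-type maps; this is where the excellence of $R^{\tau,\BT}_{\rbar}$ and the explicit regularity of the fibres (rather than mere reducedness) must be used, precisely as in Lemma~\ref{lem:GAGA facts}.
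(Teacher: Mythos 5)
Your first reduction step contains the essential gap, and the remainder of the argument inherits it. You claim that because $X_{\rbar}\to\Spec R^{\tau,\BT}_{\rbar}$ is projective, $\cO$-flat, and an isomorphism after inverting $\varpi$, the induced morphism on special fibres is ``scheme-theoretically dominant onto a dense open,'' and hence that it suffices to prove $X_{\rbar,1}$ is generically reduced. Neither half of this is justified. The morphism $X_{\rbar,1}\to\Spec R^{\tau,\BT}_{\rbar}/\varpi$ is the base change of a non-flat projective morphism along the non-flat map $\cO\to\F$, and scheme-theoretic dominance is not preserved under such base change. In fact the scheme-theoretic image of $X_{\rbar,1}$ is, by construction, $\Spec R^{\tau,1}$ (this is essentially how $R^{\tau,a}$ and its limit are defined), and the assertion that $\Spec R^{\tau,1}\hookrightarrow\Spec R^{\tau,\BT}_{\rbar}/\varpi$ is an isomorphism over a dense open --- together with reducedness there --- \emph{is} the proposition. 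Moreover, even granting scheme-theoretic dominance of the special-fibre map, generic reducedness does not descend along a proper morphism unless one knows the morphism is finite (or better, an isomorphism) over the generic points of the target; your proposal supplies no control on the fibre dimensions of $X_{\rbar,1}\to\Spec R^{\tau,\BT}_{\rbar}/\varpi$ over those generic points, and in principle positive-dimensional exceptional loci over generic points of the special fibre are exactly what one must rule out.

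What is missing is the substantive geometric input that the paper establishes in Proposition~\ref{prop: existence of dense open substack of R such that C is a mono}, which in turn rests on Proposition~\ref{prop:C to Z mono}, Corollary~\ref{cor:ker-ext-maximal-nonzero}, and the component analysis of Section~\ref{sec: extensions of rank one Kisin modules}: there is a dense open substack $\cU\subseteq\cZ^{\tau}$ over which $\cC^{\tau,\BT}\to\cZ^{\tau}$ is a monomorphism (hence, by properness, an isomorphism), so that by Proposition~\ref{prop:opens} the thickening $\cZ^{\tau,a}\hookrightarrow\cZ^{\tau}\times_{\cO}\cO/\varpi^a$ is an isomorphism over $\cU$, for every $a$. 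This is what makes the Kisin resolution generically an isomorphism on special fibres; it is not a formal consequence of $X_{\rbar}\to\Spec R^{\tau,\BT}_{\rbar}$ being projective and birational. Once Proposition~\ref{prop: existence of dense open substack of R such that C is a mono} is in hand, the paper's proof is actually shorter than your outline: it avoids $X_{\rbar}$ entirely, taking a smooth presentation $V\to\cZ^{\tau}_{/\F}$ with $\widehat{\cO}_{V,v}\cong R^{\tau,\BT}_{\rbar}/\varpi$ (using \cite[\href{https://stacks.math.columbia.edu/tag/0DR0}{Tag 0DR0}]{stacks-project}), deducing $V$ is generically reduced from Lemma~\ref{lem: flat pullbacks are reduced scheme theoretically dense}, and concluding with Lemma~\ref{lem: generic reducedness passes to completions}. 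Your bookkeeping for the GAGA and excellence steps (completion preserving and reflecting generic reducedness) is sound and close to what appears in Lemmas~\ref{lem:GAGA facts} and~\ref{lem: generic reducedness passes to completions}; the final paragraph identifying the reduced subscheme with $\Spec R^{\tau,1}$ is also fine and is exactly the remark preceding the proposition in the paper. But without the dense-open monomorphism input, the central transfer step does not go through.
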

 We will deduce
 Proposition~\ref{prop: generically reduced special fibre deformation ring}
 from the following global statement.

  \begin{prop}
    \label{prop: existence of dense open substack of R such that C is a mono}
Let~$\tau$ be a tame type. There is a dense open substack $\cU$ of $\cZ^{\tau}$
      such that $\cU_{/\F}$ is reduced.
  \end{prop}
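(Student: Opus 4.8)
The plan is to exhibit, on each irreducible component $\overline{\cZ}(J)$ of $\cZ^{\tau,1}$ (for $J \in \cP_\tau$, using Corollary~\ref{cor: components of Z are exactly the Z(J)}), a dense open substack over which the structure is that of the generic point of a family of extensions, and to check reducedness there directly. Recall from the proof of Theorem~\ref{thm: unique serre weight} that there is a dense open substack of $\overline{\cZ}(J)$ whose finite type points correspond to reducible Galois representations admitting a unique Breuil--Kisin model of type $\tau$, of maximal refined shape $(J,r)$, and by Proposition~\ref{prop:C to Z mono} there is a dense open substack $\cU_J \subseteq \overline{\cC}(J)$ on which the morphism $\overline{\cC}(J) \to \overline{\cZ}(J)$ restricts to an open immersion. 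First I would intersect these two open loci and further shrink, using Theorem~\ref{thm: dimension of refined shapes} and Theorem~\ref{thm: irreducible Kisin modules can be ignored}, to a dense open $\cU_J \subseteq \overline{\cC}(J)$ which is disjoint from all $\overline{\cC}(J',r')$ for $(J',r') \neq (J,r)$ and from $\cC_{\mathrm{irred}}$; its image in $\cZ^{\tau,1}$ is then a dense open substack $\cV_J$ of $\overline{\cZ}(J)$, disjoint from the other components, and $\cU_J \to \cV_J$ is an open immersion.

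The key point is then that $\cU_J$ — being an open substack of $\overline{\cC}(J) = \overline{\cC}(\gM(J),\gN(J))$, which is by construction the scheme-theoretic image of $\Spec B^{\dist}$, and indeed (in the strict or generic situation, using Proposition~\ref{prop: construction of family monomorphing to C and R}) generically $[X/\Gm\times_\F\Gm]$ — is generically reduced: $\Spec B^{\dist}$ is reduced by definition (Remark~\ref{rem: C(M,N) is reduced}), it is moreover a vector bundle over the reduced irreducible scheme $\Spec A^{\dist}$, hence a smooth scheme over $\F$ on a dense open locus, and the quotient by $\Gm\times_\F\Gm$ preserves this. So on a dense open substack, $\cU_J$ is even smooth over $\F$. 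Since $\cU_J \cong \cV_J$ as stacks, $\cV_J$ contains a dense open reduced (indeed smooth) substack. Taking $\cU := \bigcup_{J\in\cP_\tau} \cV_J'$, where $\cV_J'$ is this smooth dense open in each component and we have arranged the $\cV_J'$ to be pairwise disjoint (possible since distinct components meet in proper closed substacks), we obtain a dense open substack of $\cZ^{\tau,1} = \cZ^\tau_{/\F}$; but we want $\cU$ open in $\cZ^\tau$ with $\cU_{/\F}$ reduced, so I would instead take $\cU$ to be the open substack of $\cZ^\tau$ whose underlying reduced substack is this dense open in $\cZ^{\tau,1}$ and check that $\cU_{/\F}$ (which is a closed substack of $\cU$ by Lemma~\ref{lem: underlying reduced substack mod p}) coincides with its reduced substack, i.e.\ is reduced — this follows because $\cU_{/\F}$ is covered by the smooth-over-$\F$ loci just constructed, and the formation of $\cZ^{\tau,1}$ as the underlying reduced substack of $\cZ^{\tau,a}$ together with Lemma~\ref{lem: C 1 and Z 1 are the underlying reduced substacks} identifies $\cU_{/\F}$ with $\cU \cap \cZ^{\tau,1}$ on this locus.

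The main obstacle is the bookkeeping around the distinction between $\cZ^{\tau,1}$ and $\cZ^\tau_{/\F}$: a priori these differ (Remark after Proposition~\ref{prop: Z is a p-adic formal stack}), $\cZ^{\tau,1} \hookrightarrow \cZ^\tau \times_{\cO}\F$ being only a thickening, so one must argue that on the dense open locus under consideration the thickening is trivial. The cleanest route is to observe that the morphism $\cC^{\tau,\BT}\to\cZ^\tau$ is scheme-theoretically dominant and, restricting over the dense open $\cU_J$ where $\cU_J \hookrightarrow \cZ^\tau$ is an open immersion and $\cU_J$ is flat over $\cO$ (being smooth over an open locus of $\cC^{\tau,\BT}$, which is $\cO$-flat by Corollary~\ref{cor: Kisin moduli consequences of local models}), one gets that the corresponding open of $\cZ^\tau$ is $\cO$-flat with reduced special fibre; so on that open, $\cZ^\tau \times_\cO \F$ is reduced and hence agrees with $\cZ^{\tau,1}$. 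Assembling these opens and taking their union over $J \in \cP_\tau$ gives the dense open $\cU \subseteq \cZ^\tau$ with $\cU_{/\F}$ reduced, completing the proof.
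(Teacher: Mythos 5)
Your construction of the dense open locus at the mod-$\varpi$ level is essentially the same as the paper's: pick out, on each component $\overline{\cZ}(J)$ with $J \in \cP_\tau$, a dense open which is disjoint from the other components and other refined shapes, and over which $\cC^{\tau,\BT,1}\to\cZ^{\tau,1}$ is a monomorphism (via Proposition~\ref{prop:C to Z mono}). The smoothness argument for $[X/\Gm\times_\F\Gm]$ is unnecessary, though: the paper never needs generic smoothness, since $\cZ^{\tau,1}$ is globally reduced by Lemma~\ref{lem: C 1 and Z 1 are the underlying reduced substacks}, so any open substack of it is automatically reduced. The only real content is the thickening issue you correctly flag.

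The final paragraph contains the genuine gap. You assert that ``one gets that the corresponding open of $\cZ^\tau$ is $\cO$-flat'' because $\cU_J$ maps into $\cZ^\tau$ by an open immersion and $\cC^{\tau,\BT}$ is $\cO$-flat. But you do not yet know that the morphism from the formal open substack of $\cC^{\tau,\BT}$ into $\cZ^\tau$ is an open immersion; you only know the mod-$\varpi$ statement that $\cC^{\tau,\BT,1}\times_{\cZ^{\tau,1}}\cU^1\to\cU^1$ is a monomorphism. Deducing from this that the closed immersion $\cU^a = \cU\times_{\cZ^\tau}\cZ^{\tau,a}\hookrightarrow\cU\times_\cO\cO/\varpi^a$ is an isomorphism for every $a$ --- i.e.\ that the formation of the scheme-theoretic image $\cZ^{\tau,a}$ is insensitive to $a$ over $\cU$ --- is precisely the content of Proposition~\ref{prop:opens}, which is what the paper invokes. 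That proposition is not a formality: it uses properness of $\cC^{\tau,\BT,a}\to\cZ^{\tau,a}$ together with flat scheme-theoretic dominance to upgrade the level-one monomorphism to an isomorphism at every finite level, via the observation that a scheme-theoretically dominant proper monomorphism is an isomorphism and that monomorphy propagates along finite-order thickenings. Your flatness assertion presupposes this conclusion rather than establishing it; to close the argument you must either invoke Proposition~\ref{prop:opens} or reprove its content.
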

  \begin{proof}
	  The proposition will follow from an application of
	  Proposition~\ref{prop:opens},
	  and the key to this application will be to find a
	  candidate open substack $\cU^1$ of $\cZ^{\tau,1}$,
which we will do using our study of the irreducible components
of $\cC^{\tau,\BT,1}$ and $\cZ^{\tau,1}$.  

  Recall that, for each $J \in \cP_{\tau}$,
 we let $\overline{\cZ}(J)$ denote the scheme-theoretic image of
 $\overline{\cC}(J)$ under the proper morphism
 $\cC^{\tau,\BT,1} \to \cZ^{\tau,1}$. 
 Each $\overline{\cZ}(J)$ 
 is a closed substack of $\cZ^{\tau,1}$, and so,
  if we let $\cV(J)$ be the complement in $\cZ^{\tau,1}$ of the
  union of the $\overline{\cZ}(J')$ for all $J'\ne J$, $J'\in\cP_\tau$, then $\cV(J)$ is
  a dense open open substack of~$\cZ^{\tau,1}$, by Corollary~\ref{cor:
  components of Z are exactly the Z(J)}.
The preimage $\cW(J)$ of $\cV(J)$ in $\cC^{\tau,\BT,1}$ is therefore a
dense open
substack of $\overline{\cC}(J)$. 
Possibly shrinking $\cW(J)$ further, we may
  suppose by Proposition~\ref{prop:C to Z mono}
that the morphism $\cW(J)\to
\cZ^{\tau,1}$ is a monomorphism. 

The complement $|\overline{\cC}(J)|\setminus |\cW(J)|$ is a closed subset
of $|\overline{\cC}(J)|$, and thus of
$|\cC^{\tau,\BT,1}|$, and its image under the proper morphism
$\cC^{\tau,\BT,1}\to \cZ^{\tau,1}$ is a closed
subset of $|\cZ^{\tau,\BT,1}|$, which is (e.g.\ for dimension reasons) a proper
closed subset of~$|\overline{\cZ}(J)|$; so if we let
$\cU(J)$ be the complement in $\cV(J)$ of this image, then $\cU(J)$ is
open and dense in~$\overline{\cZ}(J)$, 
and the morphism $\cC^{\tau,\BT,1}\times_{\cZ^{\tau,1}}
\cU(J)\to \cU(J)$ is a monomorphism. Set $\cU^1=\cup_J \cU(J)$.
Since the $\cU(J)$ are pairwise disjoint
by construction, $\cC^{\tau,\BT,1}\times_{\cZ^{\tau,1}}
\cU^1\to \cU^1$ is again a monomorphism. By construction (taking into account
Corollary~\ref{cor: components of Z are exactly the Z(J)}), $\cU^1$ is dense in $\cZ^{\tau,1}$.

Now let $\cU$ denote the open substack of $\cZ^{\tau}$ corresponding
to $\cU^1$.  Since $|\cZ^{\tau}| = |\cZ^{\tau,1}|$,
we see that $\cU$ is dense in $\cZ^\tau$. We have seen in the previous
paragraph that the statement of Proposition~\ref{prop:opens}~(5) holds
(taking~$a=1$, $\cX=\cC^{\tau,\BT}$, and~$\cY=\cZ$); so
Proposition~\ref{prop:opens} implies that, for each $a \geq 1$,
the closed immersion
      $\cU\times_{\cZ^{\tau}}\cZ^{\tau,a} \hookrightarrow \cU\times_{\cO} \cO/\varpi^a$
      is an isomorphism.

      In particular,
      since the closed immersion $\cU^1 = \cU\times_{\cZ^{\tau}} \cZ^{\tau,1}
      \to \cU_{/\F}$ is an isomorphism, 
      we may regard $\cU_{/\F}$ as an open substack
      of $\cZ^{\tau,1}$.  Since $\cZ^{\tau,1}$ is
      reduced,
      by Lemma~\ref{lem: C 1 and Z 1 are the underlying reduced substacks},
    so is its open substack $\cU_{/\F}$.
    This completes the proof of the proposition.
  \end{proof}

\begin{cor}
    \label{cor: existence of dense open substack of Z with unique
      Kisin module}
Let~$\tau$ be a tame type. There is a dense open substack $\cU$ of
$\cZ^{\tau}$ such that we have an isomorphism
	$\cC^{\tau,\BT} \times_{\cZ^{\tau}} \cU \iso \cU,$
	as well as isomorphisms
	$$\cU\times_{\cZ^{\tau}} \cC^{\tau,\BT, a} \iso 
	\cU\times_{\cZ^{\tau}}\cZ^{\tau,a}
	\iso \cU\times_{\cO} \cO/\varpi^a,$$
	for each $a \geq 1$.
  \end{cor}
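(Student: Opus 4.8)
The plan is to derive this corollary directly from Proposition~\ref{prop: existence of dense open substack of R such that C is a mono} together with Proposition~\ref{prop:opens}, whose hypotheses were verified in the course of proving that proposition. First I would take $\cU$ to be the dense open substack of $\cZ^{\tau}$ produced in Proposition~\ref{prop: existence of dense open substack of R such that C is a mono}; recall that its construction starts from a dense open substack $\cU^1 = \cup_J \cU(J)$ of $\cZ^{\tau,1}$ over which the morphism $\cC^{\tau,\BT,1}\times_{\cZ^{\tau,1}}\cU^1 \to \cU^1$ is a monomorphism, and $\cU$ is the open substack of $\cZ^{\tau}$ corresponding to $\cU^1$. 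The proof of that proposition already records that Proposition~\ref{prop:opens} applies, with $a=1$, $\cX = \cC^{\tau,\BT}$ and $\cY = \cZ$, and hence gives the isomorphisms $\cU\times_{\cZ^{\tau}}\cZ^{\tau,a} \iso \cU\times_{\cO}\cO/\varpi^a$ for every $a\geq 1$. This yields the second of the two displayed isomorphisms in the corollary.

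Next I would address the remaining isomorphisms, namely $\cC^{\tau,\BT}\times_{\cZ^{\tau}}\cU \iso \cU$ and $\cU\times_{\cZ^{\tau}}\cC^{\tau,\BT,a} \iso \cU\times_{\cZ^{\tau}}\cZ^{\tau,a}$. For the latter, I would observe that by the first isomorphism (once established) we have $\cC^{\tau,\BT}\times_{\cZ^{\tau}}\cU\iso \cU$, and base-changing this isomorphism over $\cU$ along $\cZ^{\tau,a}\to\cZ^{\tau}$ gives $\cC^{\tau,\BT,a}\times_{\cZ^{\tau,a}}(\cU\times_{\cZ^{\tau}}\cZ^{\tau,a}) \iso \cU\times_{\cZ^{\tau}}\cZ^{\tau,a}$; since $\cC^{\tau,\BT,a} = \cC^{\tau,\BT}\times_{\cZ^{\tau}}\cZ^{\tau,a}$ (as $\cC^{\tau,\BT,a}$ and $\cZ^{\tau,a}$ are each obtained from $\cC^{\tau,\BT}$ and $\cZ^{\tau}$ by pull-back over $\cO/\varpi^a$, and the morphism $\cC^{\tau,\BT}\to\cZ^{\tau}$ lies over $\Spf\cO$), this rewrites as the claimed isomorphism $\cU\times_{\cZ^{\tau}}\cC^{\tau,\BT,a}\iso\cU\times_{\cZ^{\tau}}\cZ^{\tau,a}$. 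So everything reduces to the first isomorphism $\cC^{\tau,\BT}\times_{\cZ^{\tau}}\cU\iso\cU$.

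For that first isomorphism, the input is that $\cC^{\tau,\BT}\times_{\cZ^{\tau}}\cU \to \cU$ is a proper morphism (being a base-change of the proper morphism $\cC^{\tau,\BT}\to\cZ^{\tau}$ of Theorem~\ref{thm: existence of picture with descent data and its basic properties}, in its $\varpi$-adic formal incarnation) which, after base-change to the special fibre, becomes the monomorphism $\cC^{\tau,\BT,1}\times_{\cZ^{\tau,1}}\cU^1\to\cU^1$ built into the construction of $\cU$. A proper monomorphism is a closed immersion, so $\cC^{\tau,\BT,1}\times_{\cZ^{\tau,1}}\cU^1\to\cU^1$ is a closed immersion, and it is moreover scheme-theoretically dominant (being a base-change of the scheme-theoretically dominant morphism $\cC^{\tau,\BT,1}\to\cZ^{\tau,1}$ along the flat morphism $\cU^1\to\cZ^{\tau,1}$), hence an isomorphism on the special fibre. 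I would then invoke the general principle, exactly as used in Proposition~\ref{prop:opens} (or deduce it again by a formal functions / Nakayama argument on the level of the versal rings $\widehat{\cO}_{X_{\rbar},\tx}$ of Proposition~\ref{prop: C tau is equidimensional of the expected dimension} and $R^{\tau,\BT}_{\rbar}$ of Corollary~\ref{cor: R tau BT is a versal ring to Z-hat}), that a proper representable morphism of $\varpi$-adic formal algebraic stacks which is an isomorphism on special fibres and whose target is reduced where we have arranged $\cU$ to be so, is itself an isomorphism over $\cU$; here the key point is that $\cU_{/\F}\iso\cU^1$ is reduced, so there is no room for nilpotent thickening.

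The main obstacle is this last step: making precise that the monomorphism property on the special fibre propagates to an isomorphism over the whole $\varpi$-adic formal base. The cleanest route is to reuse Proposition~\ref{prop:opens} itself, whose conclusion (2) or (3) already packages exactly the statement that, over $\cU$, the morphism from $\cX = \cC^{\tau,\BT}$ becomes the relevant isomorphism; one only needs to check that the hypotheses of Proposition~\ref{prop:opens} — in particular the monomorphism condition (5) at level $a=1$, and the reducedness of $\cU_{/\F}$ — hold, both of which were established in the proof of Proposition~\ref{prop: existence of dense open substack of R such that C is a mono}. Thus in practice the corollary is essentially a repackaging of the conclusions already obtained there, and the proof I would write is short: restate the choice of $\cU$, cite Proposition~\ref{prop:opens} for the isomorphisms involving $\cZ^{\tau,a}$ and $\cO/\varpi^a$, note that $\cC^{\tau,\BT}\times_{\cZ^{\tau}}\cU\to\cU$ is proper and a monomorphism on the reduced special fibre $\cU^1$ hence a closed immersion which is scheme-theoretically dominant hence an isomorphism, and finally bootstrap to the isomorphisms at each finite level $a$ by base-change as above.
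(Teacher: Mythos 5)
Your proposal is correct and takes the same essential route as the paper: fix the dense open $\cU$ from Proposition~\ref{prop: existence of dense open substack of R such that C is a mono}, observe that the hypotheses of Proposition~\ref{prop:opens} were verified there (in particular, condition~(5) at level $a=1$), and read off the conclusions. However, you have done more work than necessary, and this obscures how short the argument actually is. Proposition~\ref{prop:opens} already delivers \emph{all three} isomorphisms in the corollary directly: the equivalence of~(5) with~(2) gives $\cC^{\tau,\BT}\times_{\cZ^{\tau}}\cU\iso\cU$; the equivalence of~(5) with~(4) gives $\cX^a\times_{\cY^a}\cU^a\iso\cU^a$, which, unwinding the notation $\cX^a = \cC^{\tau,\BT,a}$, $\cY^a = \cZ^{\tau,a}$, $\cU^a = \cU\times_{\cZ^{\tau}}\cZ^{\tau,a}$ and using that $\cX^a\times_{\cY^a}\cU^a = \cU\times_{\cZ^{\tau}}\cC^{\tau,\BT,a}$ (via Lemma~\ref{lem:cartesian diagrams}), is exactly the middle isomorphism; and the ``furthermore'' clause gives $\cU^a\iso\cU\times_{\cO}\cO/\varpi^a$. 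So your third paragraph, in which you re-derive $\cC^{\tau,\BT}\times_{\cZ^\tau}\cU\iso\cU$ by hand via properness, monomorphism implies closed immersion, scheme-theoretic dominance, and a formal-functions argument, is redundant --- you are essentially re-proving part of Proposition~\ref{prop:opens}. One further small inaccuracy: when you justify $\cC^{\tau,\BT,a} = \cC^{\tau,\BT}\times_{\cZ^\tau}\cZ^{\tau,a}$ by saying that both $\cC^{\tau,\BT,a}$ and $\cZ^{\tau,a}$ are obtained ``by pull-back over $\cO/\varpi^a$,'' this is wrong for $\cZ^{\tau,a}$; as the paper explicitly cautions, $\cZ^{\tau,a}\hookrightarrow\cZ^\tau\times_\cO\cO/\varpi^a$ need not be an isomorphism. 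The identity you want is instead the content of Lemma~\ref{lem:cartesian diagrams}, which holds because $\cC^{\tau,\BT,a}\to\cZ^\tau\times_\cO\cO/\varpi^a$ factors through its scheme-theoretic image $\cZ^{\tau,a}$, a monomorphism.
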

  \begin{proof}This follows from
    Proposition~\ref{prop: existence of dense open substack of R such
      that C is a mono} and Proposition~\ref{prop:opens}.
  \end{proof}

\begin{remark}
	\label{rem:U^a = U mod pi^a} 
More colloquially, Corollary~\ref{cor: existence of dense open substack of Z with unique
      Kisin module} shows that for each tame type~$\tau$, there is an
    open dense substack~$\cU$ of~$\cZ^\tau$ consisting of Galois
    representations which have a unique Breuil--Kisin
    model of type~$\tau$. 
\end{remark}

  \begin{lemma}
    \label{lem: flat pullbacks are reduced scheme theoretically
      dense} 
      If $\cU$ is an open substack of $\cZ^{\tau}$ satisfying the conditions
      of Proposition~{\em \ref{prop: existence of dense open substack of R such that C is a mono}},
      and if $T \to \cZ^{\tau}_{/\F}$ is a smooth morphism
      whose source is a scheme, 
      then $T\times_{\cZ^{\tau}_{/\F}} \cU_{/\F}$ is reduced,
      and is a dense open subscheme of $T$.
  \end{lemma}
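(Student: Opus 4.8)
The plan is to reduce the statement about the scheme $T$ to the corresponding statement about the open substack $\cU$ of $\cZ^\tau$, using that the properties in question (reducedness, density of an open subscheme) can be checked smooth-locally. First I would observe that $\cU_{/\F}$ is reduced: by construction in Proposition~\ref{prop: existence of dense open substack of R such that C is a mono}, the closed immersion $\cU^1 = \cU\times_{\cZ^\tau}\cZ^{\tau,1}\to\cU_{/\F}$ is an isomorphism, and $\cU^1$ is an open substack of the reduced stack $\cZ^{\tau,1}$ (reduced by Lemma~\ref{lem: C 1 and Z 1 are the underlying reduced substacks}), hence $\cU_{/\F}$ is reduced. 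Then, since $T\to\cZ^\tau_{/\F}$ is smooth and $\cU_{/\F}$ is an open substack of $\cZ^\tau_{/\F}$, the fibre product $T\times_{\cZ^\tau_{/\F}}\cU_{/\F}$ is an open subscheme of $T$, and it is reduced because reducedness is preserved by smooth (indeed flat, locally of finite type) base change from a reduced base --- the morphism $T\times_{\cZ^\tau_{/\F}}\cU_{/\F}\to\cU_{/\F}$ is smooth, so we may cite \cite[\href{http://stacks.math.columbia.edu/tag/04YH}{Tag 04YH}]{stacks-project}, as was already used in the proof of Theorem~\ref{thm: local consequences of local models}.

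Next I would prove density. Since $\cU$ is dense in $\cZ^\tau$ (as established in the proof of Proposition~\ref{prop: existence of dense open substack of R such that C is a mono}, using that $|\cZ^\tau|=|\cZ^{\tau,1}|$ and $\cU^1$ is dense in $\cZ^{\tau,1}$ by Corollary~\ref{cor: components of Z are exactly the Z(J)}), the open substack $\cU_{/\F}$ is dense in $\cZ^\tau_{/\F}$; equivalently its complement is a proper closed substack, hence nowhere dense. Density of an open subscheme is equivalent to the complement being nowhere dense, and this property is preserved under pullback along a smooth (in particular, open and surjective onto its image, and flat) morphism. Concretely, the complement of $T\times_{\cZ^\tau_{/\F}}\cU_{/\F}$ in $T$ is the preimage of the complement of $\cU_{/\F}$ in $\cZ^\tau_{/\F}$ under the smooth morphism $T\to\cZ^\tau_{/\F}$; since this morphism is flat and locally of finite presentation it is open, so the preimage of a dense open is dense open, and therefore the preimage of the (closed, nowhere dense) complement is closed and nowhere dense in $T$. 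Hence $T\times_{\cZ^\tau_{/\F}}\cU_{/\F}$ is dense open in $T$.

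I expect the argument to be essentially formal, with the only subtlety being the bookkeeping of base-changing all of $\cZ^\tau$, $\cU$, $\cC^{\tau,\BT}$ to the special fibre over $\F$ and keeping track of which objects are stacks versus schemes. The main (mild) obstacle is simply to make precise the claim that ``density of an open substack is a smooth-local property'': this follows because a smooth morphism is flat and locally of finite presentation, hence open, and an open continuous map pulls back dense subsets to dense subsets and their complements (nowhere dense closed sets) to nowhere dense closed sets. No genuinely new input beyond the already-cited stacks-project tags and the results of Proposition~\ref{prop: existence of dense open substack of R such that C is a mono} and Lemma~\ref{lem: C 1 and Z 1 are the underlying reduced substacks} is needed.
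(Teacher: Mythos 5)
Your proof is correct, and the reducedness part runs essentially along the same lines as the paper's (smooth over the reduced stack $\cU_{/\F}$ implies reduced). Where you genuinely diverge is the density argument. The paper first observes that the open immersion $\cU_{/\F}\hookrightarrow\cZ^\tau_{/\F}$ is quasi-compact (because $\cZ^{\tau}_{/\F}$ is Noetherian), then invokes the preservation of scheme-theoretic dominance under flat base-change for quasi-compact morphisms, combined with the relation between dense open immersions and scheme-theoretic dominance after passing to reduced substacks. You instead use the purely topological fact that a smooth (hence flat and locally of finite presentation, hence open) morphism of algebraic stacks pulls back a dense open subset to a dense open subset. Your route is cleaner: it sidesteps quasi-compactness and scheme-theoretic dominance entirely, at the cost of needing to note that for an open immersion the topological space of the fibre product really is the preimage (which you do, at least implicitly, and which is fine since open immersions are monomorphisms). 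Both proofs are valid; yours is arguably more elementary.
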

  \begin{proof}
Since $\cZ^{\tau,1}_{/\F}$ is a 
    Noetherian algebraic stack (being of finite presentation over $\Spec \F$),
    the open
    immersion \[\cU_{/\F}\to \cZ^{\tau}_{/\F}\]
    is quasi-compact (\cite[\href{https://stacks.math.columbia.edu/tag/0CPM}{Tag 0CPM}]{stacks-project}). 
    Since $T\to \cZ^{\tau}_{/\F}$ is flat (being smooth, by assumption), the
    pullback $T\times_{\cZ^{\tau}_{/\F}} \cU_{/F} \to T$ is an open immersion
    with dense image;\ here we use the fact that for a quasi-compact
	    morphism, the property of being scheme-theoretically dominant
	    is preserved by flat base-change, together with the fact that
	    an open immersion with dense image induces a scheme-theoretically
	    dominant morphism after passing to underlying reduced substacks.
    Since the source of this morphism is smooth over the reduced algebraic
    stack $\cU_{/\F}$, it is itself reduced.
  \end{proof}

The following result is standard, but we recall the proof for the sake of completeness.

  \begin{lem}
    \label{lem: generic reducedness passes to completions}
Let $T$ be a Noetherian scheme, all of whose local rings at
    finite type points are $G$-rings. If $T$ is reduced {\em (}resp.\ generically reduced{\em)},
    then so are all of its complete local rings at finite type points.
  \end{lem}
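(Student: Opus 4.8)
The plan is to reduce the statement to a purely local, commutative-algebra assertion about the completion homomorphism and then invoke standard descent properties of flat maps. Fix a finite type point $x$ of $T$, set $A := \mathcal{O}_{T,x}$ with maximal ideal $\mathfrak{m}$, and let $\widehat{A}$ be its $\mathfrak{m}$-adic completion. Since localizing a reduced scheme gives a reduced ring, and since the localization of $A$ at a minimal prime $\mathfrak p$ is $\mathcal O_{T,\xi}$ for the corresponding generic point $\xi$ of $T$ specializing to $x$, we see that $A$ is reduced (resp.\ satisfies Serre's condition $(R_0)$, i.e.\ is generically reduced) whenever $T$ is. It therefore suffices to show: if $A$ is a Noetherian local $G$-ring which is reduced (resp.\ generically reduced), then $\widehat{A}$ is reduced (resp.\ generically reduced). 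The key input is that, because $A$ is a $G$-ring, the formal fibres of $A$ at $\mathfrak m$ are geometrically regular; as $A$ is local this says precisely that $A \to \widehat{A}$ is a \emph{regular} ring homomorphism, i.e.\ faithfully flat with geometrically regular fibres. In particular those fibres are geometrically reduced and Cohen--Macaulay.

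For the generically reduced case I would argue directly via $(R_0)$. Let $\mathfrak{q}$ be a minimal prime of $\widehat{A}$ and put $\mathfrak{p} := \mathfrak{q}\cap A$. Flatness of $A\to\widehat{A}$ gives going-down, from which $\mathfrak{p}$ is a minimal prime of $A$ (a strictly smaller prime of $A$ would lift to a strictly smaller prime of $\widehat{A}$ contained in $\mathfrak{q}$). Since $A$ is generically reduced, $A_{\mathfrak{p}}$ is an Artinian local reduced ring, hence a field, so $A_{\mathfrak p}=\kappa(\mathfrak p)$ and $\widehat{A}\otimes_A A_{\mathfrak{p}} = \widehat{A}\otimes_A\kappa(\mathfrak{p})$ is exactly the scheme-theoretic fibre of $\Spec\widehat{A}\to\Spec A$ over $\mathfrak p$. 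As $A\to\widehat{A}$ is regular this fibre ring is (geometrically) regular, in particular reduced, and $\widehat{A}_{\mathfrak{q}}$ is a localization of it, hence reduced. Since $\mathfrak q$ was arbitrary, $\widehat{A}$ satisfies $(R_0)$, i.e.\ is generically reduced.

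For the reduced case one uses the full Serre criterion: $\widehat{A}$ is reduced if and only if it satisfies $(R_0)$ and $(S_1)$. Condition $(R_0)$ is handled exactly as above (a reduced ring is in particular generically reduced). For $(S_1)$: $A$ reduced implies $A$ satisfies $(S_1)$, the fibres of $A\to\widehat{A}$ are Cohen--Macaulay and hence satisfy $(S_1)$, and $(S_k)$ ascends along flat local homomorphisms with $(S_k)$ fibres when the base satisfies $(S_k)$ (a standard fact about flat ring maps, \cite{stacks-project}); hence $\widehat{A}$ satisfies $(S_1)$. Equivalently, one may simply quote that reducedness ascends along regular ring homomorphisms. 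I do not expect a genuine obstacle here: the only real content is the observation that the $G$-ring hypothesis makes $A\to\widehat{A}$ a regular morphism, after which everything is a routine application of flat descent/ascent of ring-theoretic properties.
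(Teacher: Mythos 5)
Your proof is correct, and it takes a genuinely different route from the paper's. The paper handles the reduced and generically reduced cases uniformly: for a (minimal) prime $\widehat{\p}$ of $\widehat{A}$ lying over $\p$, it shows that the induced map $A_\p \to \widehat{A}_{\widehat{\p}}$ is regular and then deduces reducedness of $\widehat{A}_{\widehat{\p}}$ from that of $A_\p$. But proving $A_\p \to \widehat{A}_{\widehat{\p}}$ is regular is not immediate from $A\to\widehat{A}$ being regular; the paper has to invoke the additional fact that the complete ring $\widehat{A}$ is itself a $G$-ring (Stacks, Tag 07PS), pass to the completion of $\widehat{A}_{\widehat{\p}}$, and then descend regularity along the faithfully flat completion map (Tag 07NT). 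Your argument sidesteps that entire machine. In the generically reduced case you exploit the fact that $A_\p$ is an Artinian reduced local ring, hence a field, so $\widehat{A}\otimes_A A_\p$ is literally the fibre ring of $A\to\widehat{A}$ over $\p$; geometric regularity of the fibres (the $G$-ring hypothesis applied directly) then gives reducedness, and $\widehat{A}_\q$ is a localization of this fibre. In the reduced case you either quote that reducedness ascends along a regular homomorphism, or verify $(R_0)$ and $(S_1)$ directly via flat ascent with regular (hence CM) fibres. Both routes are standard; yours avoids the detour through the completion of $\widehat{A}_{\widehat{\p}}$ and the auxiliary $G$-ring input, at the cost of splitting the argument into two cases and, for the reduced case, invoking Serre's criterion (or the ascent of reducedness along regular maps) rather than reducing everything to regularity of a single local homomorphism.
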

  \begin{proof}
    Let $t$ be a finite type point of~$T$, and write
    $A:=\cO_{T,t}$. Then $A$ is a (generically) reduced local $G$-ring,
    and we need to show that its completion $\widehat{A}$ is also
    (generically) reduced. Let $\widehat{\p}$ be a (minimal) prime of
    $\widehat{A}$; since $A\to\widehat{A}$ is (faithfully) flat,
    $\widehat{\p}$ lies over a (minimal) prime $\p$ of $A$ by the
    going-down theorem.

Then $A_\p$ is reduced by assumption, and we need to show that
$\widehat{A}_{\widehat{\p}}$ is
reduced. By~\cite[\href{http://stacks.math.columbia.edu/tag/07QK}{Tag
  07QK}]{stacks-project}, it is enough to show that the morphism $A\to
\widehat{A}_{\widehat{\p}}$ is regular. Both $A$ and $\widehat{A}$ are
$G$-rings (the latter
by~\cite[\href{http://stacks.math.columbia.edu/tag/07PS}{Tag
  07PS}]{stacks-project}), so the composite \[A\to \widehat{A}\to
(\widehat{A}_{\widehat{\p}})^{\widehat{}}\] is a composite of regular
morphisms, and is thus a regular morphism
by~\cite[\href{http://stacks.math.columbia.edu/tag/07QI}{Tag
  07QI}]{stacks-project}. 

This composite factors through the natural morphism $A_{\p}\to
(\widehat{A}_{\widehat{\p}})^{\widehat{}}$, so this morphism is also
regular. Factoring it as the composite \[A_{\p}\to \widehat{A}_{\widehat{\p}}\to
(\widehat{A}_{\widehat{\p}})^{\widehat{}},\]it follows
from~\cite[\href{http://stacks.math.columbia.edu/tag/07NT}{Tag
  07NT}]{stacks-project} that $A_{\p}\to \widehat{A}_{\widehat{\p}}$ is
regular, as required.
  \end{proof}
  \begin{proof}[Proof of Proposition~{\ref{prop: generically reduced special fibre deformation
    ring}}]By Corollary~\ref{cor: R tau BT is a versal ring to Z-hat},
  we have a versal morphism \[\Spf
  R_{\rbar}^{\tau,\BT}/\varpi\to \cZ^\tau_{/\F}.\] 
Since $\cZ^{\tau}_{/\F}$ is an algebraic stack of finite presentation
over $\F$ (as $\cZ^{\tau}$ is a
$\varpi$-adic formal algebraic stack of finite presentation over $\Spf \cO$), 
we may apply~\cite[\href{https://stacks.math.columbia.edu/tag/0DR0}{Tag 0DR0}]{stacks-project} to
this morphism so as to find a
smooth morphism $V\to \cZ^{\tau}_{/\F}$ with source a finite
type $\cO/\varpi$-scheme, and a point $v\in V$ with residue
field~$\F'$, such that there is an isomorphism
$\widehat{\cO}_{V,v}\cong R^{\tau,\BT}_{\rbar}/\varpi$,
compatible with the given morphism to~$\cZ^{\tau}_{/\F}$.
Proposition~\ref{prop: existence of dense open substack of R such that C is a mono}
and Lemma~\ref{lem: flat pullbacks are reduced scheme theoretically
      dense} taken together show that $V$ is generically reduced, and so the result follows from Lemma~\ref{lem: generic reducedness passes to completions}.
\end{proof}

\subsection{The geometric Breuil--M\'ezard conjecture}\label{subsec:
  geometric BM} We now study the irreducible components of
$\cZ^{\dd,1}$. We do this by a slightly indirect method, defining
certain formal sums of these irreducible components which we then
compute via the geometric Breuil--M\'ezard conjecture, and in
particular the results of Appendix~\ref{sec:appendix on geom
  BM}. 

By Lemma~\ref{lem: C 1 and Z 1 are the underlying reduced substacks} and Proposition~\ref{prop: dimensions of the Z stacks}, $\cZ^{\dd,1}$
is reduced and equidimensional, and each $\cZ^{\tau,1}$ is a union of some of
its irreducible components. Let $K(\cZ^{\dd,1})$ be the free abelian group
generated by the irreducible components of $\cZ^{\dd,1}$.   We say that an element of $K(\cZ^{\dd,1})$ is
\emph{effective} if the multiplicity of each irreducible
component is nonnegative.  We say that an element of $K(\cZ^{\dd,1})$ is
\emph{reduced and effective} if the multiplicity of each irreducible
component is $0$ or~$1$. 

Let $x$ be a finite type point of $\cZ^{\dd,1}$, corresponding to a
representation $\rbar:G_K\to\GL_2(\F')$. As in Section~\ref{subsec:Galois
  deformation rings}, there is a quotient~$R_x$ of the framed deformation
ring $R_{\rbar}^{[0,1]}/\varpi$ which is a versal ring to
$\cZ^{\dd,1}$ at~$x$; each  $R^{\tau,1}$ is a quotient
of~$R_x$. Indeed, since $\cZ^{\tau,1}$ is a union of irreducible
components of  $\cZ^{\dd,1}$, $\Spec R^{\tau,1}$ is a union of
irreducible components of~$\Spec R_x$. 

Let $K(R_x)$ be the free abelian group generated by the irreducible
components of $\Spec R_x$. By~\cite[\href{https://stacks.math.columbia.edu/tag/0DRB}{Tag 0DRB},\href{https://stacks.math.columbia.edu/tag/0DRD}{Tag 0DRD}]{stacks-project}, there is a
natural multiplicity-preserving surjection from the set of irreducible
components of $\Spec R_x$ to the set of irreducible components
of~$\cZ^{\dd,1}$ which contain~$x$. Using this surjection, we can
define a group homomorphism \[K(\cZ^{\dd,1})\to K(R_x)\]
in the following way: we send any irreducible component $\cZ$ of
$\cZ^{\dd,1}$ which contains~$x$ to the formal sum of the
irreducible components of~$\Spec R_x$ in the preimage of~$\cZ$ under
this surjection, and we send every other irreducible component to~$0$.

\begin{lem}
  \label{lem: can read off reduced and effective from local rings}An
  element~$\cTbar$ of~$K(\cZ^{\dd,1})$ is effective if
  and only if for every finite type point $x$ of~$\cZ^{\dd,1}$, the
  image of $\cTbar$ in $K(R_x)$ is effective. We have $\cTbar=0$ if
  and only if its image is $0$ in every $K(R_x)$.
\end{lem}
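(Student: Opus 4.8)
\textbf{Proof proposal for Lemma~\ref{lem: can read off reduced and effective from local rings}.}
The plan is to reduce the global assertion to a purely local statement about the behaviour of irreducible components under passing to a versal ring, using the multiplicity-preserving surjection from irreducible components of $\Spec R_x$ to irreducible components of $\cZ^{\dd,1}$ through $x$ described in the paragraph preceding the lemma. The key point is that an effective class in $K(\cZ^{\dd,1})$ is detected component-by-component, and each irreducible component of $\cZ^{\dd,1}$ contains some finite type point lying on no other component; at such a point the local picture ``sees'' that component and no others.

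First I would fix notation: write $\cTbar = \sum_i n_i \cZ_i$, the sum over the irreducible components $\cZ_i$ of $\cZ^{\dd,1}$. The forward implication (effective $\Rightarrow$ locally effective) is immediate: the homomorphism $K(\cZ^{\dd,1}) \to K(R_x)$ sends each $\cZ_i$ either to $0$ or to a nonnegative sum of irreducible components of $\Spec R_x$ (a formal sum of the components of $\Spec R_x$ in the preimage, each with coefficient $1$), so it carries effective classes to effective classes, and similarly the zero class to the zero class. The content is in the converse. Suppose the image of $\cTbar$ in $K(R_x)$ is effective for every finite type point $x$. Fix an index $i_0$; I would choose a finite type point $x \in |\cZ_{i_0}|$ which does not lie on any other component $\cZ_i$, $i \neq i_0$ — such a point exists because $\cZ^{\dd,1}$ is a Noetherian algebraic stack, so $|\cZ_{i_0}|$ is not contained in the union of the finitely many other $|\cZ_i|$, and the finite type points are dense in the locally closed complement. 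At such an $x$, the homomorphism $K(\cZ^{\dd,1}) \to K(R_x)$ kills every $\cZ_i$ with $i \neq i_0$ (since $x \notin |\cZ_i|$), and sends $\cZ_{i_0}$ to a \emph{nonzero} effective class $\sum_{j} C_j$ (the components of $\Spec R_x$ surjecting onto $\cZ_{i_0}$; this set is nonempty precisely because $x \in |\cZ_{i_0}|$, by \cite[\href{https://stacks.math.columbia.edu/tag/0DRB}{Tag 0DRB}, \href{https://stacks.math.columbia.edu/tag/0DRD}{Tag 0DRD}]{stacks-project}). Hence the image of $\cTbar$ in $K(R_x)$ is $n_{i_0} \sum_j C_j$, and effectivity of this class in the free abelian group $K(R_x)$ on distinct basis elements $C_j$ forces $n_{i_0} \geq 0$. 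Since $i_0$ was arbitrary, $\cTbar$ is effective. The same argument with ``effective'' replaced by ``equal to zero'' gives: if the image of $\cTbar$ vanishes in every $K(R_x)$, then applying it at the point $x$ chosen for $i_0$ gives $n_{i_0} \sum_j C_j = 0$ in $K(R_x)$, whence $n_{i_0} = 0$; so $\cTbar = 0$. The reverse direction of the zero statement is trivial as noted.

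The main obstacle — really the only non-formal input — is verifying that for a component $\cZ_{i_0}$ of $\cZ^{\dd,1}$ one can find a finite type point $x$ of $\cZ_{i_0}$ lying on no other component \emph{and} such that the local-to-global map behaves as claimed there, i.e.\ that the preimage of $\cZ_{i_0}$ among the components of $\Spec R_x$ is nonempty while components surjecting to other $\cZ_i$ do not occur. The first part is a standard fact about finitely many closed subsets of a Noetherian space; the second is exactly the statement that the surjection from components of $\Spec R_x$ is onto the set of components of $\cZ^{\dd,1}$ \emph{through $x$}, which is \cite[\href{https://stacks.math.columbia.edu/tag/0DRD}{Tag 0DRD}]{stacks-project} applied to the versal (in particular smooth on an atlas, flat) morphism $\Spf R_x \to \cZ^{\dd,1}$ at $x$. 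Once these are in hand the argument is the elementary manipulation in a free abelian group sketched above.
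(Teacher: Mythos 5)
Your proof is correct and follows essentially the same route as the paper: reduce to the converse, fix a component, pick a finite type point lying on that component and no other, and read off the sign of the coefficient from effectivity of the image in $K(R_x)$. The only difference is that you spell out the existence of such a point and the non-vanishing of the fibre in slightly more detail, which the paper leaves implicit.
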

\begin{proof}
  The ``only if'' direction is trivial, so we need only consider the
  ``if'' implication. Write $\cTbar=\sum_{\cZbar}a_{\cZbar}\cZbar$,
  where the sum runs over the irreducible components $\cZbar$ of
  $\cZ^{\dd,1}$, and the $a_{\cZbar}$ are integers.

  Suppose first
  that the image of $\cTbar$ in $K(R_x)$ is effective; we then have to
  show that each $a_{\cZbar}$ is nonnegative.
  To see this, fix an irreducible component~$\cZbar$, and choose~$x$
  to be a finite type point of $\cZ^{\dd,1}$ which is contained
  in~$\cZbar$ and in no other irreducible component
  of~$\cZ^{\dd,1}$. Then the image of~$\cTbar$ in $K(R_x)$ is equal to
  $a_{\cZbar}$ times the sum of the irreducible components
  of~$\Spec R_x$. By hypothesis, this must be effective,
  which implies that $a_{\cZbar}$ is nonnegative, as required.

Finally, if the image of $\cTbar$ in $K(R_x)$ is $0$, then
$a_{\cZbar}=0$; so if this holds for all~$x$, then~$\cTbar=0$.
\end{proof}

For each tame type $\tau$, we let $\cZ(\tau)$ denote the formal sum of
the irreducible components of $\cZ^{\tau,1}$, considered as an
element of $K(\cZ^{\dd,1})$. By Lemma~\ref{lem:write Serre weight as
  linear combination of types}, for each non-Steinberg Serre weight
$\sigmabar$ of $\GL_2(k)$, there are integers $n_\tau(\sigmabar)$ such
that $\sigmabar=\sum_\tau n_\tau(\sigmabar)\sigmabar(\tau)$ in the
Grothendieck group of mod $p$ representations of $\GL_2(k)$, where the
$\tau$ run over the tame types. We set
\[\cZ(\sigmabar):=\sum_\tau n_\tau(\sigmabar)\cZ(\tau)\in K(\cZ^{\dd,1}).\]
The integers $n_\tau(\sigmabar)$ are not necessarily unique, but it
follows from the following result that $\cZ(\sigmabar)$  is independent
of the choice of $n_\tau(\sigmabar)$, and is reduced and effective.

\begin{thm}
  \label{thm:stack version of geometric Breuil--Mezard}
  \begin{enumerate}
  \item   Each $\cZ(\sigmabar)$  is an
    irreducible component of $\cZ^{\dd,1}$.
  \item The finite type points of $\cZ(\sigmabar)$ are precisely the
    representations $\rbar:G_K\to\GL_2(\F')$ having $\sigmabar$ as a
    Serre weight.
 \item For each tame type $\tau$, we have
   $\cZ(\tau)=\sum_{\sigmabar\in\JH(\sigmabar(\tau))}\cZ(\sigmabar)$. 
  \item Every irreducible component of $\cZ^{\dd,1}$ is of the form
    $\cZ(\sigmabar)$ for some unique Serre weight~$\sigmabar$. 
    \item For each tame type $\tau$, and each $J\in \cP_\tau$,
	    we have $\cZ(\sigmabar_J)=\overline{\cZ}(J)$.
  \end{enumerate}

\end{thm}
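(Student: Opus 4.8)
### Proof proposal for Theorem~\ref{thm:stack version of geometric Breuil--Mezard}

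The plan is to establish the five statements in the order $(5) \Rightarrow (1) \Rightarrow (3) \Rightarrow (4) \Rightarrow (2)$, extracting as much as possible from the already-established local structure theory (in particular Corollary~\ref{cor: components of Z are exactly the Z(J)}, Theorem~\ref{thm: unique serre weight}, and Theorem~\ref{thm: components of C}) together with the Breuil--M\'ezard input recalled in Appendix~\ref{sec:appendix on geom BM}. Throughout I would pass freely between an effective cycle $\cTbar \in K(\cZ^{\dd,1})$ and its images in the groups $K(R_x)$ attached to versal rings, using Lemma~\ref{lem: can read off reduced and effective from local rings} to check effectivity and reducedness pointwise.

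First I would prove~(5). By definition $\overline{\cZ}(J)$ is the scheme-theoretic image of the irreducible component $\overline{\cC}(J)$ of $\cC^{\tau,\BT,1}$ (Corollary~\ref{cor: the C(J) are the components}, Theorem~\ref{thm: components of C}), and for $J \in \cP_\tau$ it is an irreducible component of $\cZ^{\tau,1}$ (Theorem~\ref{thm: identifying the vertical components}); moreover $\cZ(\tau)$, being the reduced sum of the irreducible components of $\cZ^{\tau,1}$, is exactly $\sum_{J \in \cP_\tau} \overline{\cZ}(J)$ by Corollary~\ref{cor: components of Z are exactly the Z(J)}. The content of~(5) is then to match the labelling $J \mapsto \overline{\cZ}(J)$ with the labelling $\sigmabar \mapsto \cZ(\sigmabar)$ coming from the decomposition $\sigmabar(\tau) = \sum_{\sigmabar \in \JH(\sigmabar(\tau))} [\sigmabar]$; concretely, I would use Theorem~\ref{thm: unique serre weight}, which says that a dense set of finite type points of $\overline{\cZ}(J)$ have $\sigmabar_J$ as a Serre weight and a \emph{unique} Breuil--Kisin model of type $\tau$, combined with the identification (recalled in Appendix~\ref{sec: appendix on tame types}) of $\cP_\tau$ with $\JH(\sigmabar(\tau))$ via $J \leftrightarrow \sigmabar_J$. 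This reduces~(5) to the numerical identity, in $K(\cZ^{\dd,1})$, expressing $\sum_{J\in\cP_\tau}\overline{\cZ}(J)$ as $\sum_{\sigmabar \in \JH(\sigmabar(\tau))} \cZ(\sigmabar)$, which is precisely~(3).

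The main work is therefore in~(3), and this is where the Breuil--M\'ezard machinery enters. I would argue locally at each finite type point $x$ of $\cZ^{\dd,1}$, corresponding to some $\rbar$. The versal ring $R_x$ is a quotient of $R_{\rbar}^{[0,1]}/\varpi$, and each $R^{\tau,1}$ is a quotient of $R_x$; by Corollary~\ref{cor: R tau BT is a versal ring to Z-hat} the Noetherian ring $R^{\tau,\BT}_{\rbar}$ is a versal ring to $\cZ^\tau$ at $x$, and by Proposition~\ref{prop: generically reduced special fibre deformation ring} the special fibre $\Spec R^{\tau,\BT}_{\rbar}/\varpi$ is generically reduced with underlying reduced subscheme $\Spec R^{\tau,1}$. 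Hence the image of $\cZ(\tau)$ in $K(R_x)$ is the cycle of the special fibre of the potentially Barsotti--Tate deformation ring $R^{\tau,\BT}_{\rbar}$ — that is, $Z(R^{\tau,\BT}_{\rbar}/\varpi)$ in the notation of the Breuil--M\'ezard formalism. The geometric Breuil--M\'ezard result of Appendix~\ref{sec:appendix on geom BM} (combining \cite{geekisin} and \cite{emertongeerefinedBM}) provides, for each non-Steinberg Serre weight $\sigmabar$, a reduced effective cycle $\cZ_{\sigmabar}(\rbar)$ on $\Spec R_{\rbar}^{[0,1]}/\varpi$ such that $Z(R^{\tau,\BT}_{\rbar}/\varpi) = \sum_{\sigmabar \in \JH(\sigmabar(\tau))} \cZ_\sigmabar(\rbar)$, and such that $\cZ_\sigmabar(\rbar) \neq 0$ iff $\sigmabar \in W(\rbar)$. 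Unwinding the definition of $\cZ(\sigmabar) = \sum_\tau n_\tau(\sigmabar)\cZ(\tau)$ and using the inversion of the system $[\sigmabar(\tau)] = \sum_{\sigmabar}[\sigmabar]$ (Lemma~\ref{lem:write Serre weight as linear combination of types}), the image of $\cZ(\sigmabar)$ in $K(R_x)$ is exactly $\cZ_\sigmabar(\rbar)$; since this holds at every $x$, Lemma~\ref{lem: can read off reduced and effective from local rings} shows $\cZ(\sigmabar)$ is well-defined (independent of the $n_\tau(\sigmabar)$), reduced and effective, giving~(3) and the well-definedness asserted before the theorem.

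With~(3) in hand, statements~(1),~(2),~(4) follow formally. For~(1): $\cZ(\sigmabar)$ is reduced and effective by the above; it is a single irreducible component because, choosing $\rbar$ with $W(\rbar) = \{\sigmabar\}$ (such $\rbar$ exist — one can pick a suitable point on $\overline{\cZ}(J)$ with $J\leftrightarrow\sigmabar$, using Theorem~\ref{thm: unique serre weight} and genericity), the local image $\cZ_\sigmabar(\rbar)$ is nonzero while $\cZ_{\sigmabar'}(\rbar) = 0$ for $\sigmabar' \neq \sigmabar$, forcing $\cZ(\sigmabar)$ to be nonzero and, comparing with~(5) and Corollary~\ref{cor: components of Z are exactly the Z(J)}, to be exactly one $\overline{\cZ}(J)$. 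For~(2): a finite type point $\rbar$ lies on $\cZ(\sigmabar)$ iff the image of $\cZ(\sigmabar)$ in $K(R_x)$ is nonzero, iff $\cZ_\sigmabar(\rbar) \neq 0$, iff $\sigmabar \in W(\rbar)$, by the Breuil--M\'ezard statement. For~(4): every irreducible component of $\cZ^{\dd,1}$ lies in some $\cZ^{\tau,1}$ (this is Theorem~\ref{thm: existence of picture with descent data and its basic properties}, since $\cZ^{\dd,1}$ is the union of the $\cZ^{\tau,1}$), hence by~(3) and Corollary~\ref{cor: components of Z are exactly the Z(J)} is one of the $\cZ(\sigmabar)$; uniqueness of $\sigmabar$ follows from~(2), since distinct Serre weights are distinguished by the sets of $\rbar$ admitting them (e.g. via Theorem~\ref{thm: unique serre weight} one produces $\rbar$ with $W(\rbar)$ a singleton). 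The anticipated main obstacle is purely bookkeeping: correctly matching the normalisations — the half-sum-of-roots shift in the dictionary between highest weights and inertial characters, and the various dualities flagged in the ``remark on normalisations'' — so that the labelling $J \leftrightarrow \sigmabar_J$ used in Theorem~\ref{thm: unique serre weight} agrees on the nose with the $\JH(\sigmabar(\tau))$ labelling used in the Breuil--M\'ezard input of Appendix~\ref{sec:appendix on geom BM}. Once that compatibility is pinned down, the argument is a formal comparison of cycles at versal rings.
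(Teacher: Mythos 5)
Your overall approach matches the paper's: pass to the versal rings $K(R_x)$, use the generic reducedness result (Proposition~\ref{prop: generically reduced special fibre deformation ring}) to identify $\cZ(\tau)_x$ with the cycle $Z(R^{\tau,\BT}_{\rbar}/\varpi)$, apply the local geometric Breuil--M\'ezard theorem from Appendix~\ref{sec:appendix on geom BM}, and propagate back via the comparison lemma~\ref{lem: can read off reduced and effective from local rings}. Parts~(2) and~(3), and the effectivity and well-definedness of the $\cZ(\sigmabar)$, are handled essentially as in the paper.

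The gap is in~(5). You assert that once Theorem~\ref{thm: unique serre weight} and the dictionary $J \leftrightarrow \sigmabar_J$ are in place, (5) ``reduces to'' the cycle identity~(3). This is not a valid reduction: the identity $\sum_{J\in\cP_\tau}\overline{\cZ}(J)=\sum_{\sigmabar\in\JH(\sigmabar(\tau))}\cZ(\sigmabar)$ of reduced effective cycles only says the two multisets of components agree as sets; it does \emph{not} identify which $\overline{\cZ}(J')$ equals which $\cZ(\sigmabar_J)$. A priori the bijection could be a nontrivial permutation of $\cP_\tau$. The paper first deduces from~(3), (2), and a counting argument (no appeal to~(5)) that each $\cZ(\sigmabar_J)$ equals a single $\overline{\cZ}(J')$, establishing~(1) and~(4); and only then proves $J'=J$ by a separate argument, intersecting the dense open of $\cZ(\sigmabar_J)=\overline{\cZ}(J')$ on which $\sigmabar_J$ is the \emph{unique} non-Steinberg Serre weight (from~(2)) with the dense open from Theorem~\ref{thm: unique serre weight} on which $\sigmabar_{J'}$ is a Serre weight, to force $\sigmabar_J=\sigmabar_{J'}$. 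This final step is entirely absent from your proposal. Relatedly, your argument for~(1) cites~(5), which by that point you have not established, so the logical structure is circular as written; you should derive~(1) and~(4) from~(2), (3), and the $\#\cP_\tau$-component counting instead, and then supply the missing dense-open intersection step to close~(5).
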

\begin{proof} Let~$x$ be a finite type point of $\cZ^{\dd,1}$
  corresponding to $\rbar:G_K\to\GL_2(\F')$, and
  write $\cZ(\sigmabar)_x$, $\cZ(\tau)_x$ for the images in $K(R_x)$ of
  $\cZ(\sigmabar)$ and $\cZ(\tau)$ respectively. Each $\Spec R^{\tau,1}$
  is a closed subscheme of $\Spec R^{\square}$, the universal framed deformation
  $\cO_{E'}$-algebra for~$\rbar$, so we may regard the $\cZ(\tau)_x$
  as formal sums (with multiplicities) of irreducible subschemes
  of~$\Spec R^{\square}/\pi$. 

  By definition, $\cZ(\tau)_x$ is just the underlying cycle of
  $\Spec R^{\tau,1}$. By Proposition~\ref{prop: generically reduced
    special fibre deformation ring}, this is equal to the underlying
  cycle of $\Spec R_{\rbar}^{\tau,\BT}/\varpi$. Consequently,
  $\cZ(\sigmabar)_x$ is the cycle denoted by~$\cC_{\sigmabar}$ in
  Appendix~\ref{sec:appendix on geom BM}. It follows from
  Theorem~\ref{thm: geometric BM} that:
  \begin{itemize}
  \item $\cZ(\sigmabar)_x$ is effective, and is nonzero precisely when
    $\sigmabar$ is a Serre weight for~$\rbar$.
  \item For each tame type $\tau$, we have
   $\cZ(\tau)_x=\sum_{\sigmabar\in\JH(\sigmabar(\tau))}\cZ(\sigmabar)_x$.
  \end{itemize}
Applying Lemma~\ref{lem: can read off reduced and effective from local
  rings}, we see that each $\cZ(\sigmabar)$ is effective, and that~(3)
holds. Since $\cZ^{\tau,1}$ is reduced, $\cZ(\tau)$ is reduced and
effective, so it follows from~(3) that each~$\cZ(\sigmabar)$ is
reduced and effective. Since $x$ is a finite type point of
$\cZ(\sigmabar)$ if and only if $\cZ(\sigmabar)_x\ne 0$, we have also proved~(2).

Since every irreducible component of~$\cZ^{\dd,1}$ is an irreducible
component of some~$\cZ^{\tau,1}$, in order to prove~(1) and~(4) it
suffices to show that for each~$\tau$, every irreducible component
of~$\cZ^{\tau,1}$ is of the form~$\cZ(\sigmabar_J)$ for some~$J$, and
that each~$\cZ(\sigmabar_J)$ is irreducible. Now, by
Corollary~\ref{cor: components of Z are exactly the Z(J)}, we know
that~$\cZ^{\tau,1}$ has exactly~$\#\cP_\tau$ irreducible components,
namely the~$\cZbar(J')$ for~$J'\in\cP_\tau$. On the other hand,
the~$\cZ(\sigmabar_J)$ are reduced and effective, and since there
certainly exist representations admitting~$\sigmabar_J$ as their
unique Serre weight, it follows from~(2) that for each~$J$, there must
be a~$J'\in\cP_\tau$ such that~$\cZbar(J')$ contributes
to~$\cZ(\sigmabar_J)$, but not to any~$\cZ(\sigmabar_{J''})$
for~$J''\ne J$. 

Since~$\cZ(\tau)$ is reduced and effective, and the sum in~(3) is
over~$\#\cP_\tau$ weights~$\sigmabar$, it follows that we in fact have
$\cZ(\sigmabar_J)=\cZbar(J')$. This proves~(1) and~(4), and to
prove~(5), it only remains to show that~$J'=J$. To see this, note that
by~(2), $\cZ(\sigmabar_J)=\cZbar(J')$ has a dense open substack whose
finite type points have~$\sigmabar_J$ as their unique non-Steinberg Serre weight
(namely the complement of the union of the~$\cZ(\sigmabar')$ for
all~$\sigmabar'\ne\sigmabar_J$). By Theorem~\ref{thm: unique serre
  weight}, it also has a dense open substack whose finite type points
have~$\sigmabar_{J'}$ as a Serre weight. Considering any finite type
point in the intersection of these dense open substacks, we see
that~$\sigmabar_J=\sigmabar_{J'}$, so that~$J=J'$, as required.
\end{proof}

\renewcommand{\theequation}{\Alph{section}.\arabic{subsection}} 

\appendix

\section{Formal algebraic stacks}
\label{sec:formal stacks}
In this appendix we briefly recall some basic definitions and 
facts concerning formal algebraic 
stacks; our primary reference is \cite{Emertonformalstacks}.  We then
develop some simple geometric lemmas which will be applied in the
main body of the paper.

We first recall the definition of a formal algebraic stack
\cite[Def.\ 5.3]{Emertonformalstacks}.

\begin{adf}
	\label{df:formal stack}
An {\em fppf} stack in groupoids $\cX$ over a scheme $S$ is called
a {\em formal algebraic stack} if
there is a morphism $U \to \cX$,
	whose domain $U$ is a formal algebraic space over $S$
	(in the sense 
of~\cite[\href{http://stacks.math.columbia.edu/tag/0AIL}{Tag 0AIL}]{stacks-project}),
and which is representable by algebraic spaces,
smooth, and surjective.
\end{adf}

We will be primarily interested in the case when $S = \Spec \cO$, where,
as in the main body of the paper, $\cO$ is the ring of integers
in a finite extension $E$ of~$\Q_p$.   We let $\varpi$ denote
a uniformiser of $\cO$, and let $\Spf \cO$ denote the affine formal
scheme (or affine formal algebraic space, in the terminology
of~\cite{stacks-project}) obtained by $\varpi$-adically completing
$\Spec \cO$. 

Among all the formal algebraic stacks over $\Spec \cO$,
we single out the $\varpi$-adic formal algebraic stacks as being
of particular interest. The following definition is a particular case of~\cite[Def.~7.6]{Emertonformalstacks}.

\begin{adf}\label{defn: pi adic formal alg stack}
	A formal algebraic stack $\cX$ over $\Spec \cO$
	is called $\varpi$-adic if the canonical map
	$\cX \to \Spec \cO$ factors 
	through $\Spf \cO$,
	and if the induced map $\cX \to \Spf \cO$
	is algebraic, i.e.\ representable by algebraic
	stacks (in the sense 
of~\cite[\href{http://stacks.math.columbia.edu/tag/06CF}{Tag 06CF}]{stacks-project} and \cite[Def.~3.1]{Emertonformalstacks}).
\end{adf}

We refer to \cite{Emertonformalstacks} for the various other notions
related to formal algebraic stacks that we employ.
We also recall the following key lemma, which allows us
to recognise certain Ind-algebraic stacks as being formal algebraic 
stacks \cite[Lem.~6.3]{Emertonformalstacks}.

\begin{alemma}
	\label{lem:ind to formal}
	If $\cX_1 \hookrightarrow \cX_2 \hookrightarrow \cdots \hookrightarrow
	\cX_n \hookrightarrow \cdots$ is a sequence of finite order
	thickenings of
	algebraic stacks,
then $\varinjlim_n \cX_n$ is a formal algebraic stack.
\end{alemma}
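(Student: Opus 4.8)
The plan is to produce, directly from Definition~\ref{df:formal stack}, a smooth surjective chart $U \to \cX := \varinjlim_n \cX_n$ whose source is a formal algebraic space, by constructing compatible smooth charts of the individual $\cX_n$ and passing to the colimit. Since each $\cX_n$ is an algebraic stack, I would first choose, using algebraicity of $\cX_1$, a smooth surjective morphism $U_1 \to \cX_1$, representable by algebraic spaces, whose source is a disjoint union of affine schemes. The inductive step is then: given a smooth surjective morphism $U_n \to \cX_n$, representable by algebraic spaces, with $U_n$ a disjoint union of affine schemes, and given the finite-order thickening $\cX_n \hookrightarrow \cX_{n+1}$, construct a smooth morphism $U_{n+1} \to \cX_{n+1}$, representable by algebraic spaces, together with a $2$-cartesian square realising $U_n \cong U_{n+1}\times_{\cX_{n+1}} \cX_n$, in which $U_n \hookrightarrow U_{n+1}$ is again a finite-order thickening. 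Factoring the thickening $\cX_n \hookrightarrow \cX_{n+1}$ into finitely many square-zero thickenings reduces this to the square-zero case, where it is the (unobstructed) deformation theory of smooth morphisms; affineness of $U_n$ (over the base in the relevant local sense) guarantees both the existence of the lift and that $U_{n+1}$ is again a disjoint union of affine schemes. This inductive lifting is the main obstacle: one must locate the correct form of ``a smooth representable-by-algebraic-spaces morphism to an algebraic stack deforms along a square-zero thickening of the target'' in the stacks-project (and check the standard compatibility that the lift restricts back to $U_n$), and keep track of affineness throughout.

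Granting the inductive construction, I would set $U := \varinjlim_n U_n$, the colimit being taken as {\em fppf} sheaves. Since each $U_n$ is a disjoint union of affine schemes, say $U_n = \coprod_i \Spec A_{n,i}$ compatibly in $n$, and the transition maps are finite-order thickenings, $U$ is the disjoint union of the affine formal schemes $\Spf(\varprojlim_n A_{n,i})$, and is therefore a formal algebraic space over $S$. It then remains to verify that the natural morphism $U \to \cX$ is representable by algebraic spaces, smooth, and surjective. For the first two properties I would proceed as follows: from the construction, colimits commute with the relevant fibre products, so that $U\times_{\cX}\cX_m \cong U_m$ for every $m$ (using that $U_n\times_{\cX_n}\cX_m \cong U_m$ for $n\ge m$, because the transition maps form a system of thickenings); then, given a morphism $T \to \cX$ with $T$ an affine scheme, I would invoke the Ind-structure of $\cX = \varinjlim \cX_n$ to see that it factors through some $\cX_m$ (this is where I would cite the pertinent fact from \cite{Emertonformalstacks} on mapping quasi-compact objects into such colimits), whence $T\times_{\cX} U \cong T\times_{\cX_m} U_m$ is an algebraic space, and is smooth over $T$ because $U_m \to \cX_m$ is. Surjectivity is immediate: the thickening $\cX_1 \hookrightarrow \cX$ is a universal homeomorphism, so $|\cX| = |\cX_1|$, and $U_1 \to \cX_1$ is already surjective.

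Thus $U \to \cX$ is a smooth surjective morphism, representable by algebraic spaces, from a formal algebraic space, and Definition~\ref{df:formal stack} shows that $\cX = \varinjlim_n \cX_n$ is a formal algebraic stack, completing the proof. The only genuinely substantive point is the inductive lifting of the smooth chart along a square-zero thickening; everything else is formal bookkeeping with colimits of thickenings, modulo locating the right citations.
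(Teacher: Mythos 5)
The paper does not give its own proof of this statement, instead citing~[Lem.~6.3]{Emertonformalstacks}; your argument is, as far as I can determine, correct, and it is the natural proof one would expect to find in that reference. The key points you identify are exactly the ingredients needed: the unobstructed deformation of a smooth morphism with (disjoint-union-of-)affine source across a square-zero thickening of an algebraic stack (using that for a representable smooth morphism the cotangent complex is $\Omega_{U_n/\cX_n}$ in degree zero, so the obstruction lives in a degree-two cohomology group on an affine and hence vanishes), the fact that a thickening of an affine scheme is again affine (so the chart stays a disjoint union of affines, and the colimit is a disjoint union of affine formal algebraic spaces), the automatic representability by algebraic spaces of any morphism from an algebraic space to an algebraic stack, and the factoring of quasi-compact test objects through the Ind-system (valid because the transition maps $\cX_m \hookrightarrow \cX_n$ are monomorphisms) to check $U\times_{\cX}\cX_m \cong U_m$ and to analyse $T\times_{\cX} U$.
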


    \subsection{Open and closed substacks}
    Suppose first that~$\cX$ is an algebraic stack over
    some scheme $S$.   An open substack of
    $\cX$ is then, by definition
    (\cite[\href{http://stacks.math.columbia.edu/tag/04YM}{Tag
      04YM}]{stacks-project}), a strictly full substack~$\cX'$ such
    that the natural morphism $\cX'\to\cX$ is an open immersion; that
    is, it is representable by algebraic spaces, and an open immersion
    on all pullbacks to algebraic spaces. A closed substack is defined
    in the analogous way. The stack $\cX$ has an underlying
    topological space $|\cX|$, and the open subsets of $|\cX|$ are in
    natural bijection with the open substacks of~$\cX$
    by~\cite[\href{http://stacks.math.columbia.edu/tag/06FJ}{Tag
      06FJ}]{stacks-project}. 

We now note that
the preceding definitions of open and closed substacks in fact apply perfectly
well to a stack over $S$ which is not assumed to be
algebraic (see e.g.\ ~\cite[Def.~3.26]{Emertonformalstacks}). 
In particular, we can apply it in the case when $\cX$ is a formal algebraic
stack.

We make
an important
observation: if $\cX \hookrightarrow \cX'$ is a
morphism of stacks over $S$ which is representable by algebraic spaces
and is a thickening (in the sense of~\cite[\href{http://stacks.math.columbia.edu/tag/0BPN}{Tag 0BPN}]{stacks-project}), 
then pull-back under this morphism induces a bijection between open
substacks of $\cX'$ and open substacks of $\cX$.  (If $\cU \to \cX$
is an open immersion, and $T$ is an $S$-scheme,
we define \[\cU'(T):= \{ T \to \cX' \, | \, \text{ the base-changed morphism }
\cX \times_{\cX'} T \to \cX \text{ factors through } \cU\}.\] We leave
it to the reader to check that $\cU'$ is an open substack of $\cX'$,
and that $\cU \mapsto \cU'$ and $\cU' \mapsto \cX\times_{\cX'} \cU'$ are
mutually inverse; see also \cite[Lem.~3.41]{Emertonformalstacks},
where this result is established in the context of a more general
statement about the topological invariance of the \'etale site.)

Finally,
we say that an open substack $\cU$ of a formal algebraic stack $\cX$ is
\emph{dense} if its underlying topological space $|\cU|$ is dense in
$|\cX|$. Note that this need not imply that it is scheme-theoretically
dense (as is already the case if $\cX$ is a non-reduced scheme).

\subsection{A geometric situation}
\label{subsec:situation}
	 We suppose given a commutative diagram
	 of morphisms of formal algebraic stacks
	 $$\xymatrix{\cX \ar[r] \ar[dr] & \cY \ar[d] \\ & \Spf \cO}$$
         We suppose that each of $\cX$ and $\cY$ is quasi-compact
	 and quasi-separated,
	 and that the horizontal arrow is scheme-theoretically
	 dominant, in the sense of \cite[Def.~6.13]{Emertonformalstacks}.
	 We furthermore suppose that the morphism $\cX\to\Spf\cO$ realises
	 $\cX$ as a finite type $\varpi$-adic formal algebraic stack.

	 Concretely, 
	 if we write $\cX^a := \cX\times_{\cO} \cO/\varpi^a,$
	 then each $\cX^a$ is an algebraic stack,
	 locally of finite type over $\Spec \cO/\varpi^a$, 
	 and there is an isomorphism $\varinjlim_a \cX^a \iso \cX.$
	 Furthermore, the assumption that the horizontal arrow
	 is scheme-theoretically dominant means that we may
	 find an isomorphism $\cY \cong \varinjlim_a \cY^a$,
	 with each $\cY^a$ being a quasi-compact and quasi-separated
	 algebraic stack, and with the transition morphisms being
	 thickenings, such that the morphism $\cX \to \cY$
	 is induced by a compatible family of morphisms
	 $\cX^a \to \cY^a$, each of which is scheme-theoretically
	 dominant. (The~$\cY^a$ are uniquely determined by the
         requirement that for all~$b\ge a$ large enough so that the
         morphism $\cX^a\to\cY$ factors
         through~$\cY\otimes_\cO\cO/\varpi^b$, $\cY^a$ is the
         scheme-theoretic image of the morphism
         $\cX^a\to\cY\otimes_\cO\cO/\varpi^b$. In particular, $\cY^a$
         is a closed substack of $\cY\times_\cO \cO/\varpi^a$.) 

	 It is often the case, in the preceding situation,
	 that $\cY$ is also a $\varpi$-adic formal algebraic stack.
	 For example, we have the following result.
	 (Note that the usual graph argument
	 shows that the morphism $\cX\to \cY$ is necessarily
		 algebraic, i.e.\ representable by algebraic 
	stacks, in the sense 
of~\cite[\href{http://stacks.math.columbia.edu/tag/06CF}{Tag 06CF}]{stacks-project} and \cite[Def.~3.1]{Emertonformalstacks}. Thus it makes sense to 
speak of it being proper, following \cite[Def.~3.11]{Emertonformalstacks}.)

	 \begin{aprop}
		 \label{prop:Y is p-adic formal}
		 Suppose that the morphism $\cX \to \cY$ is proper,
and that $\cY$ is locally Ind-finite type over $\Spec \cO$
{\em (}in the sense of {\em \cite[Rem.~8.30]{Emertonformalstacks})}.
Then $\cY$ is a $\varpi$-adic formal algebraic stack.
	 \end{aprop}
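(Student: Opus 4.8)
I want to show that under the hypotheses of Proposition~\ref{prop:Y is p-adic formal}, the formal algebraic stack $\cY$ is in fact $\varpi$-adic, i.e.\ the structure morphism $\cY \to \Spec\cO$ factors through $\Spf\cO$, and the induced morphism $\cY \to \Spf\cO$ is algebraic. The key idea is to exploit the presentation $\cY \cong \varinjlim_a \cY^a$ furnished by the setup of Subsection~\ref{subsec:situation}, where each $\cY^a$ is a closed substack of $\cY \times_\cO \cO/\varpi^a$, each transition map $\cY^a \hookrightarrow \cY^{a+1}$ is a thickening, and the morphism $\cX \to \cY$ is induced by scheme-theoretically dominant morphisms $\cX^a \to \cY^a$. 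Since $\cX$ is $\varpi$-adic, $\varpi^a$ acts as zero on $\cX^a$; I want to transport this to $\cY^a$ using properness and scheme-theoretic dominance.

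First I would verify that the morphism $\cY \to \Spec\cO$ factors through $\Spf\cO$. Equivalently, I must check that $\varpi$ is locally topologically nilpotent on $\cY$ — concretely, that for any affine scheme $T$ with a smooth morphism to $\cY$ (factoring through some $\cY^a$), the element $\varpi$ is nilpotent on $T$; or better, that each $\cY^a$ is actually a $\cZ/\varpi^a\cZ$-stack for possibly enlarged but bounded exponent. The mechanism: the morphism $\cX^a \to \cY^a$ is scheme-theoretically dominant and proper, hence (by the graph argument) representable by algebraic stacks and quasi-compact; scheme-theoretic dominance of a quasi-compact morphism means the scheme-theoretic image is all of $\cY^a$. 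Now $\varpi^a$ annihilates $\cX^a$, so the ideal sheaf cut out by $\varpi^a$ on $\cY^a$ pulls back to zero on $\cX^a$; since $\cX^a \to \cY^a$ is scheme-theoretically dominant, this forces $\varpi^a$ to annihilate $\cY^a$ as well. (One checks this smooth-locally: pull back to a smooth cover $V^a \to \cY^a$ by a scheme, pull the proper scheme-theoretically dominant morphism back over $V^a$, and reduce to the statement that if $B \hookrightarrow A$ is an injection of rings with $\varpi^a A = 0$ then $\varpi^a B = 0$ — but in fact one needs $\cX^a \to \cY^a$ scheme-theoretically dominant, so $\cO_{\cY^a} \to (\text{pushforward of }\cO_{\cX^a})$ is injective, and $\varpi^a$ dies in the target hence in the source.) Thus each $\cY^a$ is an algebraic stack over $\cO/\varpi^a$, so $\cY = \varinjlim_a \cY^a$ has structure morphism factoring through $\Spf\cO$.

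Next I would show the morphism $\cY \to \Spf\cO$ is algebraic, i.e.\ representable by algebraic stacks. By \cite[Def.~7.6]{Emertonformalstacks} and the discussion there, since we now know $\cY \to \Spec\cO$ factors through $\Spf\cO$, and $\cY \cong \varinjlim_a \cY^a$ with the transition morphisms thickenings and each $\cY^a$ an algebraic stack over $\cO/\varpi^a$, this is essentially the assertion that $\cY$ is what is called a $\varpi$-adic formal algebraic stack — one checks that $\cY \times_{\Spf\cO} \Spec\cO/\varpi^a$ is an algebraic stack. But this fibre product is a closed substack of $\cY$ (as $\Spec\cO/\varpi^a \hookrightarrow \Spf\cO$ is a closed immersion), containing the thickening $\cY^a$; since $\varpi^a$ annihilates it, and it is covered by the thickening-invariant topological space $|\cY^a| = |\cY|$, it coincides with $\cY^a$ (a closed substack with the same underlying reduced substack as $\cY^a$ and on which $\varpi^a$ vanishes must equal $\cY^a$, by the uniqueness clause in the construction of the $\cY^a$ as scheme-theoretic images). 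Hence $\cY \times_{\Spf\cO}\Spec\cO/\varpi^a \cong \cY^a$ is algebraic, so $\cY \to \Spf\cO$ is algebraic, and $\cY$ is a $\varpi$-adic formal algebraic stack.

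**The main obstacle** I anticipate is the bookkeeping in the first step: precisely extracting "$\varpi^a$ annihilates $\cY^a$" from scheme-theoretic dominance of $\cX^a \to \cY^a$. Scheme-theoretic dominance is a statement about $\cO_{\cY^a}$ injecting into a pushforward, which is well-behaved smooth-locally and under the quasi-compactness we have, but one must be careful that it is preserved on passing to the smooth cover (this uses the flat base-change compatibility of scheme-theoretic image for quasi-compact morphisms, already invoked e.g.\ in the proof of Lemma~\ref{lem:Z thickenings}), and that properness of $\cX^a \to \cY^a$ gives the requisite quasi-compactness. Once that is in hand, everything else is formal manipulation with the presentations and the topological invariance of open/closed substacks under thickenings recalled in this appendix.
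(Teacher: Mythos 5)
The key step in your second paragraph contains a genuine error. You claim that $\cY \times_{\cO} \cO/\varpi^a$ coincides with $\cY^a$, arguing that a closed substack of $\cY$ on which $\varpi^a$ vanishes and whose underlying reduced substack agrees with that of $\cY^a$ must equal $\cY^a$, ``by the uniqueness clause in the construction of the $\cY^a$ as scheme-theoretic images.'' But that uniqueness clause only characterises $\cY^a$ as the \emph{smallest} such closed substack (being the scheme-theoretic image of $\cX^a$); it does not forbid strictly larger ones, and indeed the closed substack $\cY \times_{\cO} \cO/\varpi^a$ generally properly contains $\cY^a$. The paper flags this explicitly in the sentence immediately after the statement of the proposition: ``because the formation of scheme-theoretic images is not generally compatible with non-flat base-change, the closed immersion~\eqref{eqn:closed immersion} is typically not an isomorphism, even if $\cY$ is a $\varpi$-adic formal algebraic stack.'' So the equality you assert is not merely unjustified --- it is false even after the proposition has been proved, and the subsequent Proposition~\ref{prop:opens} is devoted precisely to giving a criterion under which it \emph{does} hold, and then only over an open substack.

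Note also that your first paragraph re-derives things already given as part of the setup of Subsection~\ref{subsec:situation}: the diagram there already has $\cY$ mapping to $\Spf \cO$, and the $\cY^a$ are asserted there to be closed substacks of $\cY \times_{\cO} \cO/\varpi^a$, so $\varpi^a$-annihilation of $\cY^a$ is built in. Meanwhile the two hypotheses of the proposition --- properness of $\cX\to\cY$ and local Ind-finite-type of $\cY$ --- appear in your argument only incidentally, whereas they should be doing the real work. The paper does not attempt a proof from scratch; it cites \cite[Prop.~10.5]{Emertonformalstacks} directly. What needs to be established is a \emph{stabilisation} statement: for each $a$, the increasing chain of algebraic stacks $\cY^b \times_{\cO} \cO/\varpi^a$ (over $b\geq a$, with thickening transition morphisms) is eventually constant, so that the inductive limit $\cY\times_{\cO}\cO/\varpi^a$ is again algebraic. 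This is where properness of $\cX\to\cY$ (feeding into a formal-functions / Noetherian-descent type argument, using that $\cX$ itself is already $\varpi$-adic so that $\cX\times_\cO\cO/\varpi^a$ is a fixed algebraic stack) and the Noetherianity furnished by the locally Ind-finite type hypothesis are essential. Your argument instead asserts that the chain already stabilises at $b=a$, which it need not.
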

	 \begin{proof}
		 This is an application of \cite[Prop.~10.5]{Emertonformalstacks}.
	 \end{proof}
	
A key point is that, because the formation of scheme-theoretic images
is not generally compatible with non-flat base-change,
the closed immersion
\anumequation
\label{eqn:closed immersion}
\cY^a \hookrightarrow \cY \times_{\cO} \cO/\varpi^a
\end{equation}
is typically {\em not} an isomorphism, even if $\cY$ is a $\varpi$-adic
formal algebraic stack.
Our goal in the remainder of this discussion is to give a criterion
(involving the morphism $\cX\to\cY$)
on an open substack $\cU \hookrightarrow \cY$ which guarantees
that the closed immersion
$\cU\times_{\cY}\cY^a \hookrightarrow \cU\times_{\cO}\cO/\varpi^a$
induced by~(\ref{eqn:closed immersion})
{\em is} an isomorphism.

We begin by establishing a simple lemma.
         For any $a \geq 1$,
	 we have the $2$-commutative diagram
	 \anumequation
	 \label{eqn:base-change}
	 \xymatrix{
	 \cX^a \ar[r] \ar[d] & \cY^a \ar[d] \\
	 \cX \ar[r] & \cY 
 }
         \end{equation}
	 Similarly, if $b \geq a \geq 1$,
	 then we have the $2$-commutative diagram
	 \anumequation
	 \label{eqn:base-change bis}
	 \xymatrix{
	 \cX^a \ar[r] \ar[d] & \cY^a \ar[d] \\
	 \cX^b \ar[r] & \cY^b 
 }
         \end{equation}

	 \begin{alemma}
		 \label{lem:cartesian diagrams}
		 Each of the diagrams~{\em (\ref{eqn:base-change})}
		 and~{\em (\ref{eqn:base-change bis})} 
		 is $2$-Cartesian.
	 \end{alemma}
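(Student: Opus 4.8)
The plan is to derive both $2$-Cartesian squares purely formally from the defining property of $\cY^a$ as a scheme-theoretic image, together with the base-change identity $\cX^a = \cX\times_\cY\bigl(\cY\otimes_\cO\cO/\varpi^a\bigr)$, which holds because $\cX\to\cY$ lies over $\Spf\cO$. Write $\cY^a{}' := \cY\otimes_\cO\cO/\varpi^a$, a closed substack of $\cY$ (the one cut out by $\varpi^a$). Since $\cX^a$ is an $\cO/\varpi^a$-stack, the structure morphism $\cX^a\to\cY$ factors through $\cY^a{}'$, and by construction $\cY^a$ is the scheme-theoretic image of the resulting morphism $\cX^a\to\cY^a{}'$; in particular this morphism factors through the closed immersion $\cY^a\hookrightarrow\cY^a{}'$. (Here one invokes the quasi-compactness of $\cX^a\to\cY^a{}'$ and the quasi-separatedness of $\cY^a{}'$, both in force under the assumptions of Subsection~\ref{subsec:situation}, to know that this scheme-theoretic image behaves well.)

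First I would treat~(\ref{eqn:base-change}). The base-change identity gives $\cX^a \iso \cX\times_\cY\cY^a{}'$, and under this identification the projection to $\cY^a{}'$ is the structure morphism, which we have just seen factors through $\cY^a$. Thus the identity morphism of $\cX^a$, regarded via the universal property of $\cX\times_\cY\cY^a{}'$ as the pair $\bigl(\cX^a\to\cX,\ \cX^a\to\cY^a{}'\bigr)$, lifts canonically to a morphism $\cX^a\to\cX\times_\cY\cY^a$, and this lift is a section of the closed immersion $\cX\times_\cY\cY^a\hookrightarrow\cX\times_\cY\cY^a{}' = \cX^a$. A monomorphism possessing a section is an isomorphism, so $\cX^a\iso\cX\times_\cY\cY^a$; unwinding the construction shows that this isomorphism is compatible with all four arrows of~(\ref{eqn:base-change}), so that square is $2$-Cartesian. (Every morphism appearing is a closed immersion or a base change of $\cX\to\cY$, so there is no $2$-categorical subtlety and strict fibre products compute the $2$-fibre products.)

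For~(\ref{eqn:base-change bis}) I would bootstrap from the previous case. Recall from the setup of Subsection~\ref{subsec:situation} that the isomorphism $\cY\iso\varinjlim_a\cY^a$ presents the $\cY^a$ as an inductive system of closed substacks of $\cY$, with transition morphisms $\cY^a\to\cY^b$ ($b\ge a$) over $\cY$, and similarly $\cX\iso\varinjlim_a\cX^a$. Applying the case already proved for both $a$ and $b$ gives $\cX^a\iso\cX\times_\cY\cY^a$ and $\cX^b\iso\cX\times_\cY\cY^b$, whence, by transitivity of fibre products and the fact that $\cY^a\to\cY^b$ is over $\cY$,
\[
\cX^b\times_{\cY^b}\cY^a \;\iso\; (\cX\times_\cY\cY^b)\times_{\cY^b}\cY^a \;\iso\; \cX\times_\cY\cY^a \;\iso\; \cX^a .
\]
Again one checks that these isomorphisms respect the arrows of~(\ref{eqn:base-change bis}), so it too is $2$-Cartesian.

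The arguments are essentially formal, and I do not anticipate a genuine obstacle. The only points needing care are the verification that $\cX^a\to\cY^a{}'$ genuinely factors through the scheme-theoretic image $\cY^a$ (which rests on the good behaviour of scheme-theoretic images of quasi-compact morphisms with locally Noetherian, quasi-separated target, precisely the situation of Subsection~\ref{subsec:situation}), and the routine bookkeeping confirming that the isomorphisms produced are compatible with the structure morphisms of the two squares, so that one obtains honest $2$-Cartesian diagrams rather than merely an abstract isomorphism of the relevant objects.
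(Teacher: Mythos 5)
Your proof is correct and takes essentially the same route as the paper's: both identify $\cX^a$ with $\cX\times_\cY(\cY\otimes_\cO\cO/\varpi^a)$ (using that $\cX\to\cY$ lies over $\Spf\cO$) and then exploit that $\cY^a\hookrightarrow\cY\otimes_\cO\cO/\varpi^a$ is a monomorphism to cut down the fibre product. The only cosmetic differences are that the paper phrases the monomorphism step via the pasting lemma for a two-square rectangle (outer rectangle Cartesian, right horizontal a mono, hence left square Cartesian) rather than your ``section of a monomorphism is an isomorphism'' argument, and handles~(\ref{eqn:base-change bis}) by repeating the same argument rather than bootstrapping from~(\ref{eqn:base-change}).
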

	 \begin{proof}
		We may embed the diagram~(\ref{eqn:base-change})
	       in the larger $2$-commutative diagram
       $$\xymatrix{\cX^a \ar[r] \ar[d] & \cY^a \ar^-{\text{(\ref{eqn:closed
				       immersion})}}[r] \ar[d] &\cY\otimes_{\cO}
	       \cO/\varpi^a \ar[d] \\ \cX \ar[r] &\cY \ar@{=}[r]& \cY}$$
       Since the outer rectangle is manifestly $2$-Cartesian,
       and since~(\ref{eqn:closed immersion}) is a closed immersion (and thus
       a monomorphism), we conclude that~(\ref{eqn:base-change}) is indeed
       $2$-Cartesian.  

       A similar argument shows that~(\ref{eqn:base-change bis}) 
       is $2$-Cartesian.
	 \end{proof}

         We next note that,
	 since each of the closed immersions $\cY^a \hookrightarrow \cY$
	 is a thickening, giving an open substack $\cU \hookrightarrow \cY$
	 is equivalent to giving an open substack $\cU^a \hookrightarrow \cY^a$
	 for some, or equivalently, every, choice of $ a\geq 1$;
	 the two pieces of data are related by the formulas
	 $\cU^a := \cU\times_{\cY} \cY^a$
	 and $\varinjlim_a \cU^a \iso \cU.$

	 \begin{aprop}
		 \label{prop:opens}
		 Suppose that $\cX \to \cY$ is proper.
		 If $\cU$ is an open substack of~$\cY$,
		 then the following conditions are equivalent:
		 \begin{enumerate}
			 \item
				 The morphism $\cX\times_{\cY} \cU \to \cU$
				 is a monomorphism.
			 \item
				 The morphism $\cX\times_{\cY} \cU \to \cU$
				 is an isomorphism.
			 \item   For every $a \geq 1$,
				 the morphism $\cX^a\times_{\cY^a} \cU^a \to
				 \cU^a$
				 is a monomorphism.
			 \item   For every $a \geq 1$,
				 the morphism $\cX^a\times_{\cY^a} \cU^a \to
				 \cU^a$
				 is an isomorphism.
			 \item   For some $a \geq 1$,
				 the morphism $\cX^a\times_{\cY^a} \cU^a \to
				 \cU^a$
				 is a monomorphism.
			 \item   For some $a \geq 1$,
				 the morphism $\cX^a\times_{\cY^a} \cU^a \to
				 \cU^a$
				 is an isomorphism.
			 \end{enumerate}
			 Furthermore, if these equivalent conditions hold,
			 then the closed immersion  $\cU^a \hookrightarrow
			 \cU\times_{\cO} \cO/\varpi^a$ is an isomorphism,
			 for each $a \geq 1$.
	 \end{aprop}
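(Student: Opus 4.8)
\textbf{Proof strategy for Proposition~\ref{prop:opens}.}
The plan is first to reduce all of the equivalences to statements about the algebraic stacks at finite level, and then to exploit the key fact that the transition maps $\cX^a \hookrightarrow \cX^b$ and $\cY^a \hookrightarrow \cY^b$ (and hence $\cX \hookrightarrow \cX^b$, etc.) are thickenings, so that properties invariant under thickening can be checked at any single level. Concretely, I would proceed as follows. First I would observe that by Lemma~\ref{lem:cartesian diagrams} the squares \eqref{eqn:base-change} and \eqref{eqn:base-change bis} are $2$-Cartesian; intersecting with the open substacks $\cU, \cU^a$ (which correspond to one another under pullback, as noted in the text just before the statement), we get $2$-Cartesian squares relating $\cX\times_\cY\cU \to \cU$ to each $\cX^a\times_{\cY^a}\cU^a \to \cU^a$, and relating the various finite-level morphisms to one another. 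Since being a monomorphism (resp.\ an isomorphism) is stable under base change, this immediately gives the implications $(1)\Rightarrow(3)$, $(2)\Rightarrow(4)$, $(3)\Rightarrow(5)$, $(4)\Rightarrow(6)$, and the trivial $(2)\Rightarrow(1)$, $(4)\Rightarrow(3)$, $(6)\Rightarrow(5)$.

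Next I would supply the reverse implications. For $(1)\Rightarrow(2)$: the morphism $\cX\times_\cY\cU \to \cU$ is proper (being a base change of the proper morphism $\cX\to\cY$) and scheme-theoretically dominant (base change of $\cX\to\cY$ along the flat, indeed open, morphism $\cU\hookrightarrow\cY$ preserves scheme-theoretic dominance of quasi-compact morphisms). A proper monomorphism is a closed immersion; a closed immersion which is scheme-theoretically dominant is an isomorphism. The same argument at finite level gives $(3)\Rightarrow(4)$ and $(5)\Rightarrow(6)$, using that $\cX^a\to\cY^a$ is scheme-theoretically dominant by the setup of Subsection~\ref{subsec:situation}, so $\cX^a\times_{\cY^a}\cU^a\to\cU^a$ is too. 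It remains to close the cycle by showing, say, $(5)\Rightarrow(1)$ or $(6)\Rightarrow(2)$, i.e.\ that the condition at one level $a$ propagates to all levels and to the limit. Here is where the thickening hypothesis is essential: given $(6)$ at level $a$, I would argue that for any $b\ge a$ the square \eqref{eqn:base-change bis} intersected with the opens expresses $\cX^a\times_{\cY^a}\cU^a$ as the base change of $\cX^b\times_{\cY^b}\cU^b \to \cU^b$ along the thickening $\cU^a\hookrightarrow\cU^b$; since the latter is a thickening and the former morphism is proper (hence in particular of finite type and, crucially, representable by algebraic spaces, so one can check it is an isomorphism after the thickening base change by the topological invariance of the \'etale/closed-immersion structure — more precisely, a morphism representable by algebraic spaces whose base change along a thickening is an isomorphism is itself an isomorphism, since being a closed immersion and being surjective are both insensitive to thickenings, and a surjective closed immersion that becomes an isomorphism mod a nilpotent ideal is an isomorphism), we deduce $(6)$ at level $b$, hence $(4)$, and then passing to the colimit over $b$ gives $(2)$.

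Finally, for the concluding assertion, assume the equivalent conditions hold. By definition $\cY^a$ is the scheme-theoretic image of $\cX^a$ in $\cY\otimes_\cO\cO/\varpi^a$, so $\cU^a = \cU\times_\cY\cY^a$ is the scheme-theoretic image of $\cX^a\times_{\cY^a}\cU^a$ in $\cU\times_\cO\cO/\varpi^a$ (scheme-theoretic image commutes with flat, in particular open, base change on the target, for quasi-compact morphisms). By condition $(4)$, $\cX^a\times_{\cY^a}\cU^a\iso\cU^a$, and the composite $\cU^a\iso\cX^a\times_{\cY^a}\cU^a\to\cU\times_\cO\cO/\varpi^a$ is the canonical closed immersion \eqref{eqn:closed immersion} restricted to $\cU$; a morphism that is simultaneously a closed immersion and scheme-theoretically dominant onto its target is an isomorphism, so $\cU^a\hookrightarrow\cU\times_\cO\cO/\varpi^a$ is an isomorphism. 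I expect the main obstacle to be the careful handling of the propagation step $(6)\Rightarrow(2)$ — making precise why an isomorphism at one finite level, combined with the thickening structure and representability/properness, forces the isomorphism at all levels and in the limit — rather than any of the base-change bookkeeping, which is routine given Lemma~\ref{lem:cartesian diagrams}.
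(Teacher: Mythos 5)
The overall shape of your argument matches the paper's: you invoke Lemma~\ref{lem:cartesian diagrams} to relate the finite levels to each other and to the limit, and you combine the fact that a proper, scheme-theoretically dominant monomorphism is an isomorphism with a thickening-invariance statement to close the cycle. However, your propagation step $(6)\Rightarrow(4)$ rests on a false claim, and this is a genuine gap.

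You assert that ``a morphism representable by algebraic spaces whose base change along a thickening is an isomorphism is itself an isomorphism,'' justified via ``a surjective closed immersion that becomes an isomorphism mod a nilpotent ideal is an isomorphism.'' Both statements are false. Take $W = \Spec k[\epsilon]/(\epsilon^2)$, $W_0 = \Spec k$ (a thickening), and $g\colon Z = \Spec k \hookrightarrow W$ the closed point. Then $g$ is a surjective closed immersion, $g\times_W W_0$ is the identity of $\Spec k$, hence an isomorphism, yet $g$ is not an isomorphism. Being an isomorphism is \emph{not} invariant under thickenings. The correct invariance, and the one the paper uses, is that being a \emph{monomorphism} is invariant under thickenings (the paper cites the stack analogue of Tag~09ZZ in the Stacks Project for this). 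So the paper proves $(3)\Leftrightarrow(5)$ by monomorphism-invariance, and then applies $(5)\Rightarrow(6)$ --- which you have correctly, and which crucially uses the scheme-theoretic dominance of $\cX^b\to\cY^b$ --- afresh at each level $b$, in order to upgrade a monomorphism to an isomorphism. Your argument attempts to dispense with the scheme-theoretic dominance at levels $b>a$ by propagating ``isomorphism'' directly across thickenings; this cannot work, since without the dominance input $\cX^b\times_{\cY^b}\cU^b \to \cU^b$ really could be a non-isomorphic surjective closed immersion at level $b$, exactly as in the $k[\epsilon]/(\epsilon^2)$ example.

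Your argument for the concluding assertion is also problematic. You correctly observe that the scheme-theoretic image of $\cX^a\times_{\cY^a}\cU^a \to \cU\times_\cO\cO/\varpi^a$ is $\cU^a$. But after identifying $\cX^a\times_{\cY^a}\cU^a$ with $\cU^a$ via $(4)$, this is just the tautological fact that the scheme-theoretic image of the closed immersion $\cU^a\hookrightarrow\cU\times_\cO\cO/\varpi^a$ is $\cU^a$; it does not make that closed immersion scheme-theoretically dominant \emph{onto} $\cU\times_\cO\cO/\varpi^a$, which is exactly what would need to be proven (and is circularly equivalent to the conclusion). The paper instead base-changes the isomorphism of $(2)$ over $\cO/\varpi^a$ to obtain an isomorphism $\cX^a\times_{\cY^a}\cU^a\iso\cU\times_\cO\cO/\varpi^a$ (note: the target here is $\cU\times_\cO\cO/\varpi^a$, not $\cU^a$ as in $(4)$); since this isomorphism factors through the closed immersion $\cU^a\hookrightarrow\cU\times_\cO\cO/\varpi^a$, that closed immersion acquires a section, hence is an isomorphism.
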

	 \begin{proof}
		 The key point is that 
		 Lemma~\ref{lem:cartesian diagrams} implies that the diagram
		 $$\xymatrix{\cX^a\times_{\cY^a}\cU^a \ar[r] \ar[d] & \cU^a \ar[d] \\
			 \cX\times_{\cY} \cU \ar[r] & \cU }
		 $$
		 is $2$-Cartesian, for any $a \geq 1$,
		 and similarly, that if $b \geq a \geq 1,$ then the diagram
		 $$\xymatrix{\cX^a \times_{\cY^a}\cU^a\ar[r] \ar[d] & \cU^a \ar[d] \\
			 \cX^b\times_{\cY^b} \cU^b \ar[r] & \cU^b }
		 $$
		 is $2$-Cartesian.
		 Since the vertical arrows of this latter diagram
		 are finite order thickenings,
		 we find (by applying the analogue
		 of~\cite[\href{http://stacks.math.columbia.edu/tag/09ZZ}{Tag 09ZZ}]{stacks-project}
		 for algebraic stacks, whose straightforward deduction
		 from that result we leave to the reader)
		 that the top horizontal arrow
		 is a monomorphism
		 if and only if the bottom horizontal arrow is.
		 This shows the equivalence of~(3) and~(5).
		 Since the morphism $\cX\times_{\cY} \cU \to \cU$
		 is obtained as the inductive limit
		 of the various morphisms $\cX^a\times_{\cY^a} \cU^a
		 \to \cU^a,$
		 we find that~(3) implies~(1) (by applying e.g.\
		 \cite[Lem.~4.11~(1)]{Emertonformalstacks}, which
		 shows that the inductive limit of monomorphisms
		 is a monomorphism),
		 and also that~(4) implies~(2) (the inductive
		 limit of isomorphisms being again an isomorphism).

		 Conversely, if~(1) holds, then the base-changed
                 morphism 
		 $$\cX\times_{\cY} (\cU\times_{\cO} \cO/\varpi^a) \to \cU
		 \times_{\cO} \cO/\varpi^a$$
		 is a monomorphism. 
		 The source of this morphism admits an alternative description
		 as $\cX^a \times_{\cY} \cU,$ which the
		 $2$-Cartesian diagram
		 at the beginning of the proof allows us to identify
		 with 
		 $\cX^a\times_{\cY^a} \cU^a$. 
		 Thus we obtain a monomorphism
		 $$\cX^a\times_{\cY^a} \cU^a \hookrightarrow \cU\times_{\cO}
		 \cO_/\varpi^a.$$

		 Since this monomorphism factors through
		 the closed immersion $\cU^a \hookrightarrow \cU
		 \times_{\cO} \cO/\varpi^a,$
		 we find that each of the morphisms of~(3) is a monomorphism;
		 thus~(1) implies~(3).  Similarly, (2)~implies~(4),
		 and also implies 
		 that the closed immersion
		 $\cU^a \hookrightarrow \cU\times_{\cO} \cO/\varpi^a$
		 is an isomorphism, for each $a \geq 1$.

		 Since clearly (4) implies~(6),
		 while (6) implies~(5), to complete the proof
		 of the proposition, it suffices to show that
		 (5) implies~(6).
		 Suppose then that
		 $\cX^a\times_{\cY^a} \cU^a \to \cU^a$
		 is a monomorphism.  Since $\cU^a \hookrightarrow \cY^a$
		 is an open immersion, it is in particular flat.  Since
		 $\cX^a \to \cY^a$ is scheme-theoretically dominant and
		 quasi-compact (being proper), any flat base-change of this
		 morphism is again scheme-theoretically dominant,
		 as well as being proper.
		 Thus we see that $\cX^a\times_{\cY^a} \cU^a \to \cU^a$ is a
		 scheme-theoretically dominant proper monomorphism,
		 i.e.\ a scheme-theoretically dominant closed immersion,
		 i.e.\ an isomorphism, as required.
	 \end{proof}

\section{Serre weights and tame types}\label{sec: appendix on tame
  types} 
We begin by recalling some results from~\cite{MR2392355} on the Jordan--H\"older factors
of the reductions modulo $p$ of lattices in principal series
and cuspidal representations of $\GL_2(k)$,
following~\cite[\S3]{emertongeesavitt} (but with slightly different
normalisations than those of \emph{loc.\ cit.}).

Let $\tau$ be a tame inertial type. Recall from Section~\ref{subsec:
  notation} that we associate a representation~$\sigma(\tau)$ of
$\GL_2(\cO_K)$ to~$\tau$ as follows: if $\tau
\simeq \eta \oplus \eta'$ is a tame principal series type, then we set
$\sigma(\tau) := \Ind_I^{\GL_2(\cO_K)} \eta'\otimes \eta$, while 
if $\tau=\eta\oplus\eta^q$ is a tame cuspidal type, then $\sigma(\tau)$ is the
inflation to $\GL_2(\cO_K)$ of the cuspidal representation of $\GL_2(k)$
denoted by~$\Theta(\eta)$ in~\cite{MR2392355}. (Here we have
identified~$\eta,\eta'$ with their composites with~$\Art_K$.)

 Write $\sigmabar(\tau)$ for the
semisimplification of the reduction modulo~$p$ of (a
$\GL_2(\cO_K)$-stable $\cO$-lattice in) $\sigma(\tau)$. The action
of~$\GL_2(\cO_K)$ on~$\sigmabar(\tau)$ factors through~$\GL_2(k)$, so
the Jordan--H\"older factors~$\JH(\sigmabar(\tau))$ of~$\sigmabar(\tau)$ are Serre weights.
By the results of~\cite{MR2392355}, these Jordan--H\"older factors of
$\sigmabar(\tau)$ are pairwise non-isomorphic, and are parametrised
by the set $\cP_\tau$ (defined in Section~\ref{sec:sets-cp_tau})
in a fashion that we now recall.

Suppose first that $\tau=\eta\oplus\eta'$ is a tame principal series 
type. Set $f'=f$ in this case.
We define $0 \le \gamma_i \le p-1$ (for $i \in \Z/f\Z$) 
to be the unique integers not all equal to $p-1$ such that 
$\eta (\eta')^{-1} = \prod_{i=0}^{f-1}
\omega_{\sigma_i}^{\gamma_i}$. If instead $\tau = \eta \oplus \eta'$
is a cuspidal type, set $f'=2f$. We define $0 \le \gamma_i \le p-1$ (for $i \in \Z/f'\Z$) 
to be the unique integers such that 
$\eta (\eta')^{-1} = \prod_{i=0}^{f'-1}
\omega_{\sigma'_i}^{\gamma_i}$. Here the $\sigma'_i$ are the embeddings
$l \to \F$, where $l$ is the quadratic extension of $k$, 
$\sigma'_0$ is a fixed choice of embedding extending $\sigma_0$, and
$(\sigma'_{i+1})^p = \sigma'_i$ for all $i$.

If~$\tau$ is scalar then
we set $\cP_\tau=\{\varnothing\}$. 
Otherwise we have $\eta\ne\eta'$, and as in
Section~\ref{sec:sets-cp_tau} we let
$\cP_{\tau}$ be the collection of subsets $J \subset \Z/f'\Z$ 
satisfying the conditions:
\begin{itemize}
\item if $i-1\in J$ and $i\notin J$ then $\gamma_{i}\ne p-1$, and
\item if $i-1\notin J$ and $i\in J$ then $\gamma_{i}\ne 0$
\end{itemize}
and, in the cuspidal case, satisfying the further condition that $i
\in J$ if and only if $i+f \not\in J$.

The Jordan--H\"older factors of $\sigmabar(\tau)$ are by definition
Serre weights, and are
parametrised by $\cP_{\tau}$ as follows (see~\cite[\S3.2, 3.3]{emertongeesavitt}). For any $J\subseteq \Z/f'\Z$, we let $\delta_J$ denote
the characteristic function of $J$, and if $J \in \cP_{\tau}$
we define $s_{J,i}$ by
\[s_{J,i}=\begin{cases} p-1-\gamma_{i}-\delta_{J^c}(i)&\text{if }i-1 \in J \\
  \gamma_{i}-\delta_J(i)&\text{if }i-1\notin J, \end{cases}\]
and we set $t_{J,i}=\gamma_{i}+\delta_{J^c}(i)$ if $i-1\in J$ and $0$
otherwise. 

In the principal series case we let
$\sigmabar(\tau)_J
:=\sigmabar_{\vec{t},\vec{s}}\otimes\eta'\circ\det$;
the $\sigmabar(\tau)_J$ are precisely the Jordan--H\"older factors of
$\sigmabar(\tau)$. 

In the cuspidal case, one checks that $s_{J,i} = s_{J,i+f}$ for all
$i$, and also that  the character $\eta' \cdot
\prod_{i=0}^{f'-1} (\sigma'_i)^{t_{J,i}} : l^{\times} \to
\F^{\times}$ factors as $\theta \circ N_{l/k}$ where $N_{l/k}$
is the norm map. We let $\sigmabar(\tau)_J
:=\sigmabar_{0,\vec{s}}\otimes\theta \circ\det$;
the $\sigmabar(\tau)_J$ are precisely the Jordan--H\"older factors of
$\sigmabar(\tau)$. 

\begin{aremark}\label{arem: wtf were we thinking in EGS}
  The parameterisations above are easily deduced from those given in
  \cite[\S3.2, 3.3]{emertongeesavitt} for the Jordan--H\"older factors
  of the representations $\Ind_I^{\GL_2(\cO_K)} \eta'\otimes \eta$
  and~$\Theta(\eta)$. (Note that there is a minor mistake
  in~\cite[\S3.1]{emertongeesavitt}: since the conventions
  of~\cite{emertongeesavitt} regarding the inertial Langlands
  correspondence agree with those of~\cite{geekisin}, the explicit
  identification of~$\sigma(\tau)$ with a principal series or cuspidal
  type in ~\cite[\S3.1]{emertongeesavitt} is missing a dual. The
  explicit parameterisation we are using here is of course independent
  of this issue.

  This mistake has the unfortunate effect that various
  explicit formulae in~\cite[\S7]{emertongeesavitt} need to be
  modified in a more or less obvious fashion; note that since
  ~$\sigma(\tau)$ is self dual up to twist, all formulae can be fixed
  by making twists and/or exchanging~$\eta$ and~$\eta'$. In
  particular, the definition of the strongly divisible module
  before~\cite[Rem.\ 7.3.2]{emertongeesavitt} is incorrect as written,
  and can be fixed by either reversing the roles of~$\eta,\eta'$ or
  changing the definition of the quantity~$c^{(j)}$ defined there.)
\end{aremark}

\begin{aremark}
In the cuspidal case, write $\eta$ in the form 
$(\sigma'_0)^{(q+1)b+1+c}$ where $0\le b\le q-2$, $0\le c\le 
q-1$. 
Set $t'_{J,i} = t_{J,i+f}$ for integers $1\le i \le f$. Then one can check
that $\sigmabar(\tau)_J = \sigmabar_{\vec{t}',\vec{s}} \otimes
(\sigma_0^{(q+1)b + \delta_J(0)} \circ \det).$
\end{aremark}

We now recall some facts about the set of Serre weights~$W(\rbar)$
associated to a representation $\rbar:G_K\to\GL_2(\Fpbar)$. 

\begin{adefn}\label{def:de rham of type}
 We say that a crystalline representation $r : G_K \to
  \GL_2(\Qpbar)$ has \emph{type $\sigmabar_{\vec{t},\vec{s}}$}
  provided that for each embedding $\sigma_j : k \into \F$ there is an
  embedding $\widetilde{\sigma}_j : K \into \Qpbar$ lifting $\sigma_j$
  such that the $\widetilde{\sigma}_j$-labeled Hodge--Tate weights
  of~$r$ are $\{-s_j-t_j,1-t_j\}$, and the remaining $(e-1)f$ pairs of  Hodge--Tate weights
  of~$r$ are all $\{0,1\}$. \emph{(}In particular the representations of
   type $\sigmabar_{\vec{0},\vec{0}}$ \emph{(}the trivial weight\emph{)} are the same as those of Hodge type $0$.\emph{)}
\end{adefn}

\begin{adefn}\label{def:serre weights}
Given a representation $\rbar:G_K\to\GL_2(\Fpbar)$ we define $W(\rbar)$
to be the set of Serre weights $\sigmabar$ such that $\rbar$ has a
crystalline lift of type $\sigmabar$.
\end{adefn}

It follows easily from the formula
$\sigmabar_{\vec{t},\vec{s}}^\vee=\sigmabar_{-\vec{s}-\vec{t},\vec{s}}$ 
that $\sigmabar \in W(\rbar)$ if and only if $\sigmabar^\vee$ is in
the set of Serre weights associated to $\rbar^\vee$  
in~\cite[Defn.\ 4.1.3]{gls13}.  
  
There are several definitions of the set $W(\rbar)$ in the literature,
which by the papers~\cite{blggu2,geekisin,gls13} are known to be
equivalent (up to normalisation).  While the preceding definition 
is perhaps the most compact, it is the description of $W(\rbar)$
via the Breuil--M\'ezard conjecture that appears to be the most
amenable to generalisation;\ see Theorem~\ref{thm: geometric BM} for
an instance of this description, and \cite{MR3871496} for much more discussion.

 Recall that~$\rbar$ is~\emph{tr\`es ramifi\'ee} if it is a
    twist of an extension of the trivial character by the mod~$p$
    cyclotomic character, and if furthermore the splitting field of its projective
    image is \emph{not} of the form
    $K(\alpha_1^{1/p},\dots,\alpha_s^{1/p})$ for some
    $\alpha_1,\dots,\alpha_s\in\cO_K^\times$. 
\begin{alemma}\label{lem: list of things we need to know about Serre weights}
  \begin{enumerate}
  \item If ~$\tau$ is a tame type, then $\rbar$ has a potentially
    Barsotti--Tate lift of type~$\tau$ if and only
    if $W(\rbar)\cap\JH(\sigmabar(\tau))\ne 0$. 
  \item The following conditions are equivalent:
    \begin{enumerate}
    \item $\rbar$ admits a potentially Barsotti--Tate lift of some tame type.
    \item $W(\rbar)$ contains a non-Steinberg Serre weight.
    \item $\rbar$ is not tr\`es    ramifi\'ee.
    \end{enumerate}
  \end{enumerate}
\end{alemma}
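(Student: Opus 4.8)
The plan is to deduce the lemma from the established literature on the weight part of Serre's conjecture, the only real work being to reconcile the several competing normalisations.

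Part~(1) is, up to normalisation, a consequence of the main results of \cite{geekisin} (together with \cite{gls13}, which is what allows one to pass between the various definitions of the weight set): \emph{loc.\ cit.}\ shows that $\rbar$ admits a potentially Barsotti--Tate lift of a given tame type if and only if the Gee--Kisin weight set of $\rbar$ meets the set of Jordan--H\"older factors of the reduction of the corresponding inertial type representation. To obtain the statement as phrased here I would set up the dictionary already begun in the introduction's remark on normalisations: (i) our $\sigma(\tau)$ is the representation $\sigma(\tau^\vee)$ of \cite{geekisin}; (ii) our notion of ``potentially Barsotti--Tate of type $\tau$'' is the dual, under $\rho\leftrightarrow\rho^\vee$, of the one used in \cite{geekisin}; and (iii) by \cite[Defn.\ 4.1.3]{gls13} and the introduction, our $W(\rbar)$ of Definition~\ref{def:serre weights} consists of those $\sigmabar$ for which $\sigmabar^\vee$ lies in the weight set of $\rbar^\vee$ in the sense of \cite{gls13}, which by the main theorem of \cite{gls13} is, after the appropriate dual and twist, the Gee--Kisin set. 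Chaining these identifications --- and using $\sigmabar_{\vec t,\vec s}^\vee=\sigmabar_{-\vec s-\vec t,\vec s}$ together with the compatibility of Jordan--H\"older factors with duality --- converts the assertion of \cite{geekisin} into part~(1). Keeping all of these duals and twists consistent is where the care is needed.

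For part~(2) I would prove the three implications $(a)\Rightarrow\neg(c)$, $(b)\Leftrightarrow(c)$ and $(b)\Rightarrow(a)$, which together yield all the equivalences. For $(a)\Rightarrow\neg(c)$ I would argue directly: if $\rbar$ were tr\`es ramifi\'ee and had a potentially Barsotti--Tate lift of type $\tau$, then $\rbar|_{G_{K'}}$ would arise from a finite flat group scheme over $\cO_{K'}$, where $K'/K$ is the tame extension trivialising $\tau$; but restriction to a tamely (hence, in our setting, prime-to-$p$) ramified extension preserves the property of being tr\`es ramifi\'ee --- the defining condition is that a Kummer-theoretic extension class fails to be peu ramifi\'ee, and the image of a uniformiser of $K$ remains, up to a unit, a non-$p$-th power in $K'$ --- whereas a tr\`es ramifi\'ee extension of the trivial character by $\varepsilonbar$ is not finite flat. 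The equivalence $(b)\Leftrightarrow(c)$ is classical and can be read off any of the explicit recipes for $W(\rbar)$ (e.g.\ \cite{bdj}, or obtained from \cite{gls13} via the dictionary above): the tr\`es ramifi\'ee representations are precisely those whose set of Serre weights is a single twist of the Steinberg weight. Finally, for $(b)\Rightarrow(a)$: given a non-Steinberg $\sigmabar=\sigmabar_{\vec t,\vec s}\in W(\rbar)$, since the $s_j$ are not all $p-1$ I would choose a tame principal series type $\tau=\eta\oplus\eta'$, with $\eta(\eta')^{-1}=\prod_j\omega_{\sigma_j}^{s_j}$ and $\eta'$ corresponding to $\prod_j\sigma_j^{t_j}$, for which the explicit description of $\JH(\sigmabar(\tau))$ recalled above gives $\sigmabar=\sigmabar(\tau)_\varnothing$ (note $\varnothing\in\cP_\tau$ for principal series types); part~(1) then supplies the desired potentially Barsotti--Tate lift.

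The expected main obstacle is not a deep mathematical point --- all the substantive input lies in \cite{geekisin} and \cite{gls13} --- but rather the consistent bookkeeping of duals, twists, and the two incompatible conventions for ``potentially Barsotti--Tate'' and for $W(\rbar)$, since an error there would invalidate part~(1) and hence the parts of~(2) that depend on it.
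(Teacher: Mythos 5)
Your overall plan is sound, and part~(1) follows essentially the same chain of citations as the paper's proof: the main theorem of~\cite{gls13} converts $W(\rbar)$ into the Gee--Kisin set $\WBT(\rbar)$ (after dualising), and then \cite[Cor.\ 3.5.6]{geekisin} equates the nonemptiness of $\WBT(\rbar)\cap\JH(\sigmabar(\tau))$ with the nonvanishing of the Hilbert--Samuel multiplicity $e(R^\square_{\rbar,0,\tau}/\varpi)$. The one citation you elide is the step from ``$R^\square_{\rbar,0,\tau}\ne 0$'' (the literal meaning of having a potentially Barsotti--Tate lift of type $\tau$) to ``$e(R^\square_{\rbar,0,\tau}/\varpi)\ne 0$'': this is not automatic, and the paper invokes~\cite[Prop.\ 4.1.2]{emertongeerefinedBM}, which shows that when nonzero the special fibre is equidimensional of the expected dimension, so its multiplicity is positive. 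Your outline should make this step explicit rather than attributing the whole equivalence to ``the main results of~\cite{geekisin}.''

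For part~(2), first a typographical slip: you write the target implication as $(a)\Rightarrow\neg(c)$, but the argument you sketch (assume $\rbar$ tr\`es ramifi\'ee \emph{and} with a potentially Barsotti--Tate lift, derive a contradiction) proves $(a)\Rightarrow(c)$, which is what you need. With that correction your implication structure $(a)\Rightarrow(c)\Leftrightarrow(b)\Rightarrow(a)$ does close the cycle, and your $(b)\Rightarrow(a)$ argument --- constructing a principal series type $\tau=\eta\oplus\eta'$ with $\gamma_i=s_i$ and $\eta'=\prod_j\sigma_j^{t_j}$ so that $\sigmabar=\sigmabar(\tau)_\varnothing$ --- is a correct and slightly more explicit form of the paper's observation that every non-Steinberg weight occurs as a Jordan--H\"older factor of some $\sigmabar(\tau)$. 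The genuine divergence from the paper is in $(b)\Leftrightarrow(c)$, and specifically the direction $(c)\Rightarrow(b)$. You outsource this to the explicit Buzzard--Diamond--Jarvis recipe (extended to ramified~$K$ by~\cite{MR2430440,gee061}), together with \cite{gls13} to identify the BDJ set with the $W(\rbar)$ of Definition~\ref{def:serre weights}. The paper instead proves $(c)\Rightarrow(b)$ from scratch: assuming $W(\rbar)$ is a single Steinberg weight, it globalises $\rbar$ via~\cite[Cor.\ A.5]{geekisin}, deduces the existence of a semistable-but-not-crystalline lift of Hodge type~$0$, analyses the weakly admissible module to show that $\rbar$ must be an unramified twist of an extension of~$\varepsilonbar^{-1}$ by the trivial character, and then uses part~(1) once more to derive a contradiction with $(c)$. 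Your route is shorter and entirely legitimate given the state of the literature, but it leans on a careful translation of normalisations between BDJ's recipe and the conventions here; the paper's self-contained argument avoids having to certify that translation.
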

\begin{proof}
(1)   By the main result of~\cite{gls13}, and bearing in mind the
differences between our conventions and those of~\cite{geekisin} as
recalled in Section~\ref{subsec: notation}, we have
$\sigmabar\in W(\rbar)$ if and only if $\sigmabar^\vee\in \WBT(\rbar)$,
where $\WBT(\rbar)$ is the set of weights defined
  in~\cite[\S3]{geekisin}. 
  By~\cite[Cor.\ 3.5.6]{geekisin} (bearing in mind once again the
  differences between our conventions and those of~\cite{geekisin}),
  it follows that we have $W(\rbar)\cap\JH(\sigmabar(\tau))\ne 0$ if
  and only if $e(R^{\square,0,\tau}/\pi)\ne 0$ in the notation of
  \emph{loc.\ cit.},
  and 
 by definition $\rbar$ has a potentially Barsotti--Tate lift of
  type~$\tau$ if and only if $R^{\square,0,\tau}\ne 0$. It follows
  from~\cite[Prop.\ 4.1.2]{emertongeerefinedBM} 
that
  $R^{\square,0,\tau}\ne 0$ if and only if
  $e(R^{\square,0,\tau}/\pi)\ne 0$, as required.

  (2) By part~(1), condition~(a) is equivalent to $W(\rbar)$
  containing a Serre weight occurring as a Jordan--H\"older factor of
  ~$\sigmabar(\tau)$ for some tame type~$\tau$. It is easily seen
  (either by inspection, or by Lemma~\ref{lem:write Serre weight as
    linear combination of types} below) that the Serre weights
  occurring as Jordan--H\"older factors of the
  ~$\sigmabar(\tau)$ are precisely the non-Steinberg Serre weights,
  so~(a) and~(b) are equivalent.

  Suppose that~(a) holds; then~$\rbar$ becomes finite flat over a tame
  extension. However the restriction to a tame extension of a tr\`es
  ramifi\'ee representation is still tr\`es
  ramifi\'ee, and therefore not finite flat, so~(c) also
  holds. Conversely, suppose for the sake of contradiction that~(c)
  holds, but that~(b) does not hold, i.e.\ that $W(\rbar)$ consists of
  a single Steinberg weight.

Twisting, we
can without loss of generality assume that~$W(\rbar)=\{\sigmabar_{\vec{0},\vec{p-1}}\}$. By~\cite[Cor.\ A.5]{geekisin} we
can globalise~$\rbar$, and then the hypothesis that~$W(\rbar)$
contains $\sigmabar_{\vec{0},\vec{p-1}}$ implies that it has a semistable lift of
Hodge type~$0$. If this lift were in fact crystalline, then~$W(\rbar)$
would also contain the weight $\sigmabar_{\vec{0},\vec{0}}$ by~(1). So
this lift is not crystalline, and in particular the monodromy
operator~$N$ on the corresponding weakly admissible module is
nonzero. But then $\ker(N)$ is a free filtered submodule of rank $1$,
and since the lift has Hodge type~$0$, $\ker(N)$ is in fact a weakly
admissible submodule. It follows that the lift is an unramified twist of an
extension of $\varepsilon^{-1}$ by the  trivial
character, so that~$\rbar$ is an unramified twist of an extension of
$\varepsilonbar^{-1}$ by the trivial character. But we are assuming
that~(c) holds, so~$\rbar$ is finite flat, so that by~(1), $W(\rbar)$
contains the weight $\sigmabar_{\vec{0},\vec{0}}$, a contradiction.
\end{proof}

\begin{alem}
  \label{lem: explicit Serre weights with our normalisations} 
  Suppose that~$\sigmabar_{\vec{t},\vec{s}}$ is a non-Steinberg
  Serre weight. Suppose that~$\rbar:G_K\to\GL_2(\Fpbar)$ is a reducible
  representation satisfying \[\rbar|_{I_K}\cong
    \begin{pmatrix}
      \prod_{i=0}^{f-1}\omega_{\sigma_i}^{s_i+t_i} &*\\ 0 & \varepsilonbar^{-1}\prod_{i=0}^{f-1}\omega_{\sigma_i}^{t_i}
    \end{pmatrix}
    ,\] and that $\rbar$ is not tr\`es ramifi\'ee.
Then $\sigmabar_{\vec{t},\vec{s}}\in W(\rbar)$.
  \end{alem}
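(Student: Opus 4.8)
The statement to prove is Lemma~\ref{lem: explicit Serre weights with our normalisations}: that if $\rbar$ is a reducible non-tr\`es-ramifi\'ee representation whose restriction to inertia has the displayed shape, then $\sigmabar_{\vec{t},\vec{s}}\in W(\rbar)$. By Definition~\ref{def:serre weights}, this means exhibiting a crystalline lift of $\rbar$ of type $\sigmabar_{\vec{t},\vec{s}}$, i.e.\ a crystalline $r:G_K\to\GL_2(\Qpbar)$ with the prescribed Hodge--Tate weights reducing to $\rbar$. The plan is to construct such a lift directly as an extension of crystalline characters, treating separately the \emph{peu ramifi\'ee} / non-split case and the genuinely split case.

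First I would reduce to understanding crystalline lifts of the two diagonal characters. The diagonal entries of $\rbar$ are $\chi_1 := \prod_i\omega_{\sigma_i}^{s_i+t_i}$ (times an unramified character, after extending $\F$) and $\chi_2 := \varepsilonbar^{-1}\prod_i\omega_{\sigma_i}^{t_i}$ (times an unramified character). Using the standard theory of crystalline characters of $G_K$ (as recalled e.g.\ in the references for Lemma~\ref{lem:cft} and in the discussion of fundamental characters in Section~\ref{subsec: notation}), one lifts $\chi_1$ to a crystalline character $\tilde\chi_1$ with $\widetilde\sigma_j$-labelled Hodge--Tate weight $-s_j-t_j$ and all other labelled weights $0$, and $\chi_2$ to a crystalline character $\tilde\chi_2$ with $\widetilde\sigma_j$-labelled weight $1-t_j$ and all other labelled weights $1$ — possible since $0\le s_j,t_j\le p-1$ and the weight is non-Steinberg, so these weight patterns are in the admissible range; here one also adjusts the unramified parts of $\tilde\chi_1,\tilde\chi_2$ so that they reduce exactly to the diagonal characters of $\rbar$. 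Then $r$ will be built as an extension $0\to\tilde\chi_1\to r\to\tilde\chi_2\to 0$; any such extension is automatically crystalline of type $\sigmabar_{\vec t,\vec s}$ because both subquotients are crystalline with the correct Hodge--Tate weights (using that the category of crystalline representations is closed under extensions when the Hodge--Tate weights are appropriately ordered — concretely, $\tilde\chi_2\otimes\tilde\chi_1^{-1}$ has all labelled Hodge--Tate weights $\ge 1> 0$, so $H^1_f = H^1$ in the relevant range and any lift of the extension class of $\rbar$ works). The main case is then a lifting-the-extension-class argument: the mod $p$ class defining $\rbar$ lies in $H^1(G_K,\bar\chi_1\bar\chi_2^{-1})$ and I need it to lift to $H^1_{f}$ of the characteristic-zero twist; since $H^1_f$ surjects onto this $H^1$ under the hypotheses (again because the Hodge--Tate weights of $\tilde\chi_2\tilde\chi_1^{-1}$ are all positive), this is where the non-tr\`es-ramifi\'ee hypothesis is used only in the borderline case $\chi_1\chi_2^{-1}=\varepsilonbar$, i.e.\ when $s_j=p-1$ for all $j$ — but that is the Steinberg weight, excluded by hypothesis, so in fact the only delicate point is when $\chi_1\chi_2^{-1}=\varepsilonbar^{\pm1}$-like degenerate situations occur, which I would handle by the same Kummer-theory analysis that underlies the definition of tr\`es ramifi\'ee.

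Concretely, the cleanest route is probably to cite the existing literature rather than redo the extension-class computation: the set $W(\rbar)$ as defined here agrees (by \cite{gls13}, as recalled in the proof of Lemma~\ref{lem: list of things we need to know about Serre weights}) with $\WBT$-type sets, and for reducible $\rbar$ the explicit recipe for $W(\rbar)$ in terms of the restriction to inertia is exactly the classical Buzzard--Diamond--Jarvis / Gee recipe (\cite{bdj}, \cite{gee061}, \cite{blggu2}). So the real content of this lemma is the bookkeeping translating the displayed inertial shape into the combinatorial data $(\vec t,\vec s)$ and checking it matches the recipe; I would do this by comparing with the parametrisation in Appendix~\ref{sec: appendix on tame types} (the characters $\sigmabar_{\vec t,\vec s}$ there) and with \cite[Thm.~3.17 or equivalent]{bdj}. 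The non-tr\`es-ramifi\'ee hypothesis is precisely the condition that rules out the one case where the naive inertial shape would predict a Steinberg weight that is not actually attained, matching the equivalence (b)$\iff$(c) of Lemma~\ref{lem: list of things we need to know about Serre weights}(2).

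\textbf{Main obstacle.} The genuinely substantive point is the borderline case where $\tilde\chi_2\tilde\chi_1^{-1}$ fails to have \emph{strictly} positive Hodge--Tate weights in every embedding — equivalently where $s_j=0$ for some $j$ and the corresponding twist has a labelled weight $0$. There the map $H^1_f\to H^1$ (mod $p$) need not be surjective, and one must argue that the specific extension class cutting out $\rbar$ nonetheless lifts; this is where one genuinely needs that $\rbar$ is reducible of the given form and not tr\`es ramifi\'ee, and it is handled exactly as in the reducible-case analysis of \cite{bdj}/\cite{gls13} via an explicit Kummer-theoretic description of $H^1(G_K,\varepsilonbar)$ and the observation that the non-tr\`es-ramifi\'ee condition guarantees the class lies in the image of $H^1_f$. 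I would invoke those results rather than reprove them. Everything else — constructing the character lifts, checking the extension is crystalline of the right type — is routine $p$-adic Hodge theory that I would state without detailed computation.
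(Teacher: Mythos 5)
Your proposal follows the same two-step strategy as the paper's proof: lift the two inertial characters on the diagonal of~$\rbar|_{I_K}$ to crystalline characters $\chi,\chi'$ of $G_K$ with labelled Hodge--Tate weights $1-t_j$ and $-s_j-t_j$ at one embedding above each~$\sigma_j$ and $1$, $0$ at the rest, and then lift the extension class defining $\rbar$ into the Bloch--Kato subgroup $H^1_f$, using a dimension count in the generic case and the non-tr\`es-ramifi\'ee hypothesis in the degenerate one. That is exactly the skeleton of the paper's argument. However, two points in your plan are off.

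First, your delimitation of the exceptional case is wrong. The extension class lives in $H^1\bigl(G_K,\chi'\chi^{-1}\bigr)$, and $\chi'\chi^{-1}$ has labelled Hodge--Tate weights $-1-s_j \le -1$ at the distinguished embeddings and $-1$ elsewhere; so \emph{every} labelled weight is strictly negative, irrespective of the $s_j$. (Your version, with $\tilde\chi_2\tilde\chi_1^{-1}$ having some non-positive weight, literally never happens: those weights are $1+s_j\ge 1$ and $1$.) The one obstruction to $H^1_f=H^1$ coming from \cite[Prop.~1.24]{nekovar} is the nonvanishing of $H^0\bigl(G_K,(\chi'\chi^{-1})^\vee(1)\bigr)$, i.e.\ $\chi'\chi^{-1}=\varepsilon$ on all of $G_K$, which already at the level of Hodge--Tate weights forces $s_j=0$ for \emph{all} $j$. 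So the borderline case is exactly when $\rbar$ is an unramified twist of an extension of $\varepsilonbar^{-1}$ by the trivial character — not ``$s_j=0$ for some $j$.''

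Second, in that exceptional case the paper does not retreat to the Kummer-theoretic analysis of~\cite{bdj,gls13}; there is a short self-contained argument. The non-Steinberg hypothesis rules out $s_j=p-1$ for all $j$, so one has $s_j=0$ for all $j$. Twisting, $\rbar\otimes\prod_i\omega_{\sigma_i}^{-t_i}$ is (up to unramified twist) an extension of $\varepsilonbar^{-1}$ by $1$, and the non-tr\`es-ramifi\'ee hypothesis says exactly that this representation is finite flat, hence admits a Barsotti--Tate lift. Twisting back by a crystalline lift of $\prod_i\omega_{\sigma_i}^{t_i}$ of the appropriate Hodge type gives a crystalline lift of $\rbar$ of type $\sigmabar_{\vec t,\vec 0}$, so $\sigmabar_{\vec t,\vec 0}\in W(\rbar)$. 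One further small caution: the parenthetical claim that ``the category of crystalline representations is closed under extensions when the Hodge--Tate weights are appropriately ordered'' is false as stated (an extension of crystalline representations need only be de Rham); what makes your extension crystalline is precisely the $H^1_f$ condition you invoke immediately after, which the parenthetical would render redundant if it were true.
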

  \begin{proof} 
 Write $\rbar$ as an extension of characters $\overline{\chi}$ by $\overline{\chi}'$.
 It is straightforward from the classification of crystalline
 characters as in \cite[Lem.\ 5.1.6]{MR3871496} that there exist 
 crystalline lifts $\chi,\chi'$ of
 $\overline{\chi},\overline{\chi}'$ so that $\chi,\chi'$ have Hodge--Tate
 weights $1-t_j$ and $-s_j-t_j$ respectively at one embedding lifting each $\sigma_j$ and
 Hodge--Tate weights $1$ and $0$ respectively at the others. In the
 case that $\rbar$ is not the twist of an extension of
 $\varepsilonbar^{-1}$ by $1$ the result follows because the corresponding
    $H^1_f(G_K,\chi'\otimes\chi^{-1})$ agrees with the
    full~$H^1(G_K,\chi'\otimes\chi^{-1})$ (as a consequence of the
    usual dimension formulas for~$H^1_f$, \cite[Prop.\
    1.24]{nekovar}).  

If $\rbar$ is twist
of an extension of $\varepsilonbar^{-1}$ by $1$, the assumption that 
 $\sigmabar_{\vec{t},\vec{s}}$ is non-Steinberg implies $s_j = 0$
    for all $j$. The hypothesis that
$\rbar$ is not tr\`es ramifi\'ee guarantees that 
    $\rbar\otimes\prod_{i=0}^{f-1}\omega_{\sigma_i}^{-t_i} $ is finite
    flat, so has a Barsotti--Tate lift, and we deduce  that
    $\sigmabar_{\vec{t},\vec{0}} \in W(\rbar)$. 
\end{proof}

\section{The geometric Breuil--M\'ezard Conjecture for potentially
  Barsotti--Tate representations}\label{sec:appendix on geom BM} In this
appendix, by combining the methods of~\cite{emertongeerefinedBM}
and~\cite{geekisin} we prove a special case of the geometric Breuil--M\'ezard
conjecture~\cite[Conj.\ 4.2.1]{emertongeerefinedBM}. This result is
``globalised'' in Section~\ref{sec: picture}.

Let $K/\Qp$ be a finite extension, and let $E/\Qp$ be another finite extension,
with ring of integers~$\cO$, uniformiser~$\varpi$, 
and residue field~$\F$. We assume that $E$ is
sufficiently large, and in particular that~$E$ contains~$K$. Let
$\rbar:G_K\to\GL_2(\F)$ be a continuous representation, and let
$R^\square_{\rbar}$ be the universal framed deformation $\cO$-algebra for~$\rbar$. For each tame
type~$\tau$, let $R^\square_{\rbar,0,\tau}$ be the reduced and $p$-torsion
free 
quotient of $R^\square_{\rbar}$ whose $\Qpbar$-points correspond
to the potentially Barsotti--Tate lifts of~$\rbar$ of
type~$\tau$. In Section~\ref{sec: picture} we denote this ring by
$R^{\tau,\BT}_{\rbar}$, but we use the more cumbersome
notation~$R^\square_{\rbar,0,\tau}$ here to make it easier for the
reader to refer to~\cite{emertongeerefinedBM} and~\cite{geekisin}. 

By \cite[Prop.\ 4.1.2]{emertongeerefinedBM}, $R^\square_{\rbar,0,\tau}/\varpi$ is zero
if $\rbar$ has no potentially Barsotti--Tate lifts of type~$\tau$, and otherwise
it is equidimensional of dimension $4+[K:\Qp]$. Each $\Spec
R^\square_{\rbar,0,\tau}/\varpi$ is a closed subscheme of $\Spec
R^\square_{\rbar}/\varpi$, and we write $Z(R^\square_{\rbar,0,\tau}/\varpi)$ for the
corresponding cycle, as in~\cite[Defn.\ 2.2.5]{emertongeerefinedBM}. (This
is a formal sum of the irreducible components of $\Spec
R^\square_{\rbar,0,\tau}/\varpi$, weighted by the multiplicities with which they occur.)

\begin{alem}
  \label{lem:write Serre weight as linear combination of types}If $\sigmabar$ is a
non-Steinberg Serre weight of $\GL_2(k)$, then there are
integers $n_\tau(\sigmabar)$ such that
$\sigmabar=\sum_\tau n_\tau(\sigmabar)\sigmabar(\tau)$ in the Grothendieck group
of mod $p$ representations of $\GL_2(k)$, where the $\tau$ run over the tame types.
\end{alem}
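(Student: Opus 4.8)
The plan is to work in the Grothendieck group $R$ of finitely generated $\F[\GL_2(k)]$-modules, and to show that the subgroup $L\subseteq R$ generated by the classes $[\sigmabar(\tau)]$, as $\tau$ ranges over all tame types, contains every non-Steinberg Serre weight. First I would reduce to the case $\vec t=\vec 0$. The scalar types $\tau=\eta\oplus\eta$ have $\sigmabar(\tau)=\bar\eta\circ\det$, and as $\bar\eta$ ranges over the characters of $k^\times$ these exhaust the characters of $\GL_2(k)$ (recall $p$ is odd, so $k\neq\F_2$ and $\GL_2(k)^{\mathrm{ab}}\cong k^\times$). Since the character group has prime-to-$p$ order, every character of $\GL_2(k)$ is the reduction of a character $\chi\circ\det$ in characteristic zero, and the compatibility of inertial local Langlands with twisting gives $\sigma(\tau)\otimes(\chi\circ\det)\cong\sigma(\tau\otimes\chi)$; hence $L$ is stable under tensoring by characters. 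As $\sigmabar_{\vec t,\vec s}=\sigmabar_{\vec 0,\vec s}\otimes\bigl(\prod_j\omega_{\sigma_j}^{t_j}\circ\det\bigr)$, it therefore suffices to prove $\sigmabar_{\vec 0,\vec s}\in L$ for every $\vec s$ with not all $s_j$ equal to $p-1$.

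The basic mechanism is the identity $[\sigmabar(\tau)]=\sum_{J\in\cP_\tau}[\sigmabar(\tau)_J]$ in $R$, valid for every non-scalar tame type (the $\sigmabar(\tau)_J$ being the pairwise distinct Jordan--Hölder factors, as recalled in Appendix~\ref{sec: appendix on tame types}), together with the observation — which I would check directly from the formulas for $s_{J,i}$ there — that for non-scalar $\tau$ every $\sigmabar(\tau)_J$ is non-Steinberg: for all $s_{J,i}$ to equal $p-1$ would force $J$ to have no transition, i.e.\ $J=\varnothing$ or $J=\Z/f'\Z$, and then respectively all $\gamma_i=p-1$ or all $\gamma_i=0$, both of which are excluded. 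Given a non-Steinberg $\sigmabar_{\vec 0,\vec s}$ with $\vec s\neq 0$, one realises it (up to a character twist) as $\sigmabar(\tau)_\varnothing$ for the principal series type with $\gamma_i=s_i$; the relation above then writes $[\sigmabar_{\vec 0,\vec s}]$ as a $\Z$-combination of $[\sigmabar(\tau)]\in L$ and of the $[\sigmabar(\tau)_J]$ for $\varnothing\neq J\in\cP_\tau$, each of which, after undoing a character twist (using stability of $L$), is again a class $[\sigmabar_{\vec 0,\vec s'}]$ with $\vec s'$ non-Steinberg.

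The heart of the matter, and the main obstacle, is to turn this into a genuine induction. A naive induction on $\sum_j s_j$, or on $\sum_j s_jp^j$, fails: the factor $\sigmabar(\tau)_{\Z/f'\Z}$ of a principal series $\sigmabar(\tau)$ with parameters $\vec\gamma$ is, up to twist, $\sigmabar_{\vec 0,\vec s'}$ with $s'_j=p-1-\gamma_j$, so the relations pair $\vec s$ with its ``complement'' and exhibit $2$-cycles — already for $f=1$, $\sigmabar_{0,1}$ and $\sigmabar_{0,p-2}$ occur as Jordan--Hölder constituents only of principal series types in which the other also occurs, so principal series and scalar types alone cannot separate them. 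I would break these cycles using cuspidal types: since a cuspidal type has $\gamma_{i+f}=p-1-\gamma_i$, a suitable choice of parameters makes $\#\cP_\tau=1$, so that $\sigmabar(\tau)$ is \emph{irreducible} and equal to a prescribed non-Steinberg weight, or makes it of length two with its second constituent among weights already treated. Taking as base cases the $1$-dimensional weights (from scalar types) and the weights with irreducible cuspidal reduction, one then propagates: $\sigmabar_{\vec 0,\vec s}\in L$ as soon as all of the finitely many weights linked to it by a principal series or cuspidal relation (other than itself) lie in $L$. The remaining work is a connectivity/case analysis, on the shapes $J$ and on the configurations of the $\gamma_i\in\{0,\dots,p-1\}$, showing that every non-Steinberg weight is reached from the base cases in finitely many such steps; making this bookkeeping uniform in $f$ and in the degenerate values of the $\gamma_i$ is where essentially all of the effort lies, the rest being formal.
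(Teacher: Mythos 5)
Your proposal does not constitute a complete proof: the final paragraph acknowledges that ``making this bookkeeping uniform in $f$ and in the degenerate values of the $\gamma_i$ is where essentially all of the effort lies'', and then leaves that work undone. What you have is a plausible framework --- reduction to $\vec{t}=\vec{0}$ by twist-stability, the identities $[\sigmabar(\tau)]=\sum_{J\in\cP_\tau}[\sigmabar(\tau)_J]$ for non-scalar tame $\tau$, the (correct) observation that every $\sigmabar(\tau)_J$ is non-Steinberg, and an idea for breaking the complementation $2$-cycles using cuspidal types with $\#\cP_\tau$ small --- but the step that would actually finish the lemma, namely a combinatorial verification that every non-Steinberg weight is reachable from your base cases through these relations, is not carried out. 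As written this is an argument sketch, not a proof.

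The paper's argument sidesteps the combinatorics entirely. The decomposition map of Brauer theory
\[
d\colon R_{\Qpbar}\bigl(\GL_2(k)\bigr)\longrightarrow R_{\Fpbar}\bigl(\GL_2(k)\bigr)
\]
between Grothendieck groups of finite-dimensional representations is surjective (Serre, \emph{Linear Representations of Finite Groups}, \S\,III, Thm.~33). The $\sigma(\tau)$, as $\tau$ ranges over tame types, account for every irreducible $\Qpbar$-representation of $\GL_2(k)$ other than the twists $\St\otimes(\chi\circ\det)$ of the Steinberg representation; and each such twist reduces irreducibly to the corresponding Steinberg Serre weight $\sigmabar_{\vec{t},\vec{p-1}}$. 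Since the Steinberg weights form part of the basis of $R_{\Fpbar}(\GL_2(k))$, and (as you observe from the formulas for $s_{J,i}$) no $\sigmabar(\tau)$ has a Steinberg constituent, the surjectivity of $d$ forces the $\Z$-span of the $\sigmabar(\tau)$ to equal the $\Z$-span of all non-Steinberg weights: given a non-Steinberg $\sigmabar$, lift it under $d$, split the lift into its Steinberg-twist part and its $\sigma(\tau)$ part, and observe that the former must reduce to zero. No induction, no case analysis on $\cP_\tau$, and no need to manufacture cuspidal types with $\#\cP_\tau=1$. You should internalise this surjectivity theorem; it is the standard tool for results of this shape, and recognising its applicability here would have dissolved the difficulty you ran into.
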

\begin{proof}This is an immediate consequence of the surjectivity of the natural
  map from the Grothendieck group of $\Qpbar$-representations of $\GL_2(k)$ to
  the Grothendieck group of $\Fpbar$-representations of $\GL_2(k)$ \cite[\S III,
  Thm.\ 33]{MR0450380}, together with the observation that the reduction of the
  Steinberg representation of $\GL_2(k)$ is precisely $\sigmabar_{\vec{0},\vec{p-1}}$.
  \end{proof}
Let $\sigmabar$ be a non-Steinberg Serre weight of $\GL_2(k)$, so that by
Lemma~\ref{lem:write Serre weight as linear combination of types} we can
write \anumequation\label{eqn: defn of n_tau(sigmabar)}\sigmabar=\sum_\tau n_\tau(\sigmabar)\sigmabar(\tau)\end{equation}  in the Grothendieck group
of mod $p$ representations of $\GL_2(k)$. Note that the integers $n_\tau(\sigmabar)$ are not uniquely determined; however,
all our constructions elsewhere in this paper will be (non-obviously!) independent of the choice of the $n_\tau(\sigmabar)$. We also
write \[\sigmabar(\tau)=\sum_{\sigmabar}m_{\sigmabar}(\tau)\sigmabar;\] since
$\sigmabar(\tau)$ is multiplicity-free, each $m_{\sigmabar}(\tau)$ is equal to
$0$ or $1$. Then\[\sigmabar=\sum_{\sigmabar'}\left(\sum_\tau
  n_\tau(\sigmabar)m_{\sigmabar'}(\tau)\right)\sigmabar',\]and
therefore\anumequation\label{eqn: orthogonality of sigmabar tau}\sum_\tau
  n_\tau(\sigmabar)m_{\sigmabar'}(\tau)=\delta_{\sigmabar,\sigmabar'}.\end{equation}

For each non-Steinberg Serre weight $\sigmabar$, we
set \[\cC_{\sigmabar}:=\sum_\tau
n_\tau(\sigmabar)Z(R^\square_{\rbar,0,\tau}/\varpi),\]where the sum ranges over the
tame types $\tau$, and the integers $n_\tau(\sigmabar)$ are as in~(\ref{eqn:
  defn of n_tau(sigmabar)}). By definition this is a formal sum with (possibly
negative) multiplicities of irreducible
subschemes of~$\Spec
R^\square_{\rbar}/\varpi$; recall that we say that it is \emph{effective} if all of
the multiplicities are non-negative.
\begin{athm}
  \label{thm: geometric BM}Let $\sigmabar$ be a non-Steinberg Serre weight. Then
  the cycle $\cC_{\sigmabar}$ is effective, 
and is nonzero
  precisely when $\sigmabar\in W(\rbar)$. 
  It is
  independent of the choice of integers $n_\tau(\sigmabar)$ satisfying~{\em
    (\ref{eqn: defn of n_tau(sigmabar)})}. For
  each tame type $\tau$, we
  have \[Z(R^\square_{\rbar,0,\tau}/\varpi)=\sum_{\sigmabar\in\JH(\sigmabar(\tau))}\cC_{\sigmabar}.\] 
\end{athm}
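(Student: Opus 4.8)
The plan is to deduce all four assertions from the Taylor--Wiles patched module underlying the proof of the numerical Breuil--M\'ezard conjecture in \cite{geekisin}, together with the good ring-theoretic properties of the potentially Barsotti--Tate deformation rings coming from local models. After choosing a suitable globalisation of $\rbar$ (with a unique place above $p$, and formally smooth deformation conditions at the auxiliary places), the patching argument of \cite{geekisin} produces a power series ring $R_\infty = R^\square_\rbar\llbracket x_1,\dots,x_j\rrbracket$, an exact covariant functor $\sigma^\circ \mapsto M_\infty(\sigma^\circ)$ from $\GL_2(\cO_K)$-stable $\cO$-lattices to finitely generated $R_\infty$-modules, and, for each tame type $\tau$, writing $R_\infty(0,\tau) := R_\infty \widehat{\otimes}_{R^\square_\rbar} R^\square_{\rbar,0,\tau} = R^\square_{\rbar,0,\tau}\llbracket x_i\rrbracket$, the facts that $M_\infty(\sigma(\tau)^\circ)$ is a maximal Cohen--Macaulay $R_\infty(0,\tau)$-module and that $M_\infty(\sigma(\tau)^\circ)[1/p]$ is locally free of rank one over $R_\infty(0,\tau)[1/p]$. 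By \cite[Prop.\ 4.1.2]{emertongeerefinedBM} the ring $R^\square_{\rbar,0,\tau}$ is either zero or equidimensional of dimension $4+[K:\Qp]$; and since (by Corollary~\ref{cor: R tau BT is a versal ring to Z-hat}) it is a versal ring for $\cZ^\tau$, Corollary~\ref{cor: Kisin moduli consequences of local models} shows it is moreover $\cO$-flat, Cohen--Macaulay and normal. Hence $R_\infty(0,\tau)$ has the same properties.

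First I would establish, in $\Spec R_\infty/\varpi$, the identity $Z(R_\infty(0,\tau)/\varpi) = \sum_{\sigmabar} m_\sigmabar(\tau)\,Z(M_\infty(\sigmabar))$. For the left-hand side one checks $Z(M_\infty(\sigma(\tau)^\circ)/\varpi) = Z(R_\infty(0,\tau)/\varpi)$: since $R_\infty(0,\tau)$ is $\cO$-flat, Cohen--Macaulay and normal, $\varpi$ is a nonzerodivisor, $R_\infty(0,\tau)/\varpi$ has no embedded primes, and its minimal primes are height-one primes of $R_\infty(0,\tau)$ at which the latter is a discrete valuation ring; as $M_\infty(\sigma(\tau)^\circ)$ is maximal Cohen--Macaulay of generic rank one on each component, it is free of rank one at each such prime, so the two cycles agree. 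For the right-hand side, reducing $\sigma(\tau)^\circ$ modulo $\varpi$ and choosing a Jordan--H\"older filtration, the exactness of $M_\infty(-)$ gives a filtration of $M_\infty(\sigma(\tau)^\circ)/\varpi$ with graded pieces $M_\infty(\sigmabar)$, each occurring $m_\sigmabar(\tau) = [\sigmabar(\tau):\sigmabar]$ times.

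Next I would descend along the formally smooth morphism $\Spec R_\infty/\varpi = \Spec (R^\square_\rbar/\varpi)\llbracket x_i\rrbracket \to \Spec R^\square_\rbar/\varpi$, which induces a multiplicity-preserving bijection of cycles (cf.\ \cite[\href{https://stacks.math.columbia.edu/tag/0DRB}{Tag 0DRB}]{stacks-project}) carrying $Z(R_\infty(0,\tau)/\varpi)$ to $Z(R^\square_{\rbar,0,\tau}/\varpi)$. Each $Z(M_\infty(\sigmabar))$ is effective and, by the identity above, bounded above by $Z(R_\infty(0,\tau)/\varpi)$ whenever $\sigmabar\in\JH(\sigmabar(\tau))$, so it is pulled back from a well-defined effective cycle $\cC'_\sigmabar$ on $\Spec R^\square_\rbar/\varpi$, and the identity descends to $Z(R^\square_{\rbar,0,\tau}/\varpi) = \sum_\sigmabar m_\sigmabar(\tau)\,\cC'_\sigmabar$. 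Feeding this into the definition of $\cC_\sigmabar$ and using \eqref{eqn: orthogonality of sigmabar tau}, for any $n_\tau(\sigmabar)$ satisfying \eqref{eqn: defn of n_tau(sigmabar)} we get
\[ \cC_\sigmabar = \sum_\tau n_\tau(\sigmabar)\,Z(R^\square_{\rbar,0,\tau}/\varpi) = \sum_{\sigmabar'}\Bigl(\sum_\tau n_\tau(\sigmabar)\,m_{\sigmabar'}(\tau)\Bigr)\cC'_{\sigmabar'} = \cC'_\sigmabar, \]
which is effective and independent of the choices; since $\sigmabar(\tau)$ is multiplicity-free, $Z(R^\square_{\rbar,0,\tau}/\varpi)=\sum_\sigmabar m_\sigmabar(\tau)\cC'_\sigmabar$ is exactly the asserted identity $Z(R^\square_{\rbar,0,\tau}/\varpi)=\sum_{\sigmabar\in\JH(\sigmabar(\tau))}\cC_\sigmabar$. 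Finally, passing to Hilbert--Samuel multiplicities (using associativity of multiplicities, so that $\deg$ is additive on cycles and $\deg Z(R^\square_{\rbar,0,\tau}/\varpi)=e(R^\square_{\rbar,0,\tau}/\varpi)$) and comparing with the numerical Breuil--M\'ezard formula $e(R^\square_{\rbar,0,\tau}/\varpi) = \sum_\sigmabar m_\sigmabar(\tau)\,\mu_\sigmabar(\rbar)$ of \cite{geekisin} (the intrinsic multiplicities $\mu_\sigmabar(\rbar)$ being the characteristic function of $W(\rbar)$ by \cite{gls13}), and again inverting $(m_\sigmabar(\tau))$ via $(n_\tau(\sigmabar))$, gives $\deg\cC_\sigmabar = \mu_\sigmabar(\rbar)$; in particular $\cC_\sigmabar\neq 0$ precisely when $\sigmabar\in W(\rbar)$.

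The main obstacle is the second paragraph, specifically the equality $Z(M_\infty(\sigma(\tau)^\circ)/\varpi) = Z(R_\infty(0,\tau)/\varpi)$: equidimensionality of $R^\square_{\rbar,0,\tau}$ on its own does not suffice, and it is the Cohen--Macaulayness, $\cO$-flatness and normality furnished by the local model computations of Section~\ref{sec:moduli stacks} (through Corollary~\ref{cor: Kisin moduli consequences of local models}) that make the ``patched module is generically of rank one'' argument go through cleanly. A secondary point requiring care is bookkeeping: one must confirm that the version of the numerical Breuil--M\'ezard conjecture available in \cite{geekisin} is the precise multiplicity formula with $\mu_\sigmabar(\rbar)$ identified with the characteristic function of $W(\rbar)$, and reconcile the normalisation conventions for $\sigma(\tau)$, $W(\rbar)$ and Hodge--Tate weights between \cite{geekisin}, \cite{gls13} and the present paper, as recorded in Section~\ref{subsec: notation}.
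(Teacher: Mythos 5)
Your proposal goes through a genuinely different route than the paper --- you unwind the patching argument from scratch rather than citing \cite[Lem.~5.5.1(5)]{emertongeerefinedBM}, and you deduce the effectivity directly from a cycle-theoretic analysis of the patched module, whereas the paper works via the global product identity over all places above $p$ and a sign argument --- but there is a genuine gap in the crucial second paragraph.

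You write that $R^\square_{\rbar,0,\tau}$ is ``$\cO$-flat, Cohen--Macaulay and normal'' by combining Corollary~\ref{cor: R tau BT is a versal ring to Z-hat} with Corollary~\ref{cor: Kisin moduli consequences of local models}. But Corollary~\ref{cor: Kisin moduli consequences of local models} concerns the stack $\cC^{\tau,\BT}$ of Breuil--Kisin modules, \emph{not} the stack $\cZ^{\tau}$ of \'etale $\varphi$-modules, and Corollary~\ref{cor: R tau BT is a versal ring to Z-hat} identifies $R^\square_{\rbar,0,\tau}$ as a versal ring to $\cZ^\tau$, not to $\cC^{\tau,\BT}$. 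The morphism $\cC^{\tau,\BT}\to\cZ^{\tau}$ is a proper resolution; the good properties of the source (normal, Cohen--Macaulay, flat, with reduced special fibre) are precisely what the local model arguments of Section~\ref{sec:moduli stacks} establish, and they are emphatically \emph{not} inherited by the scheme-theoretic image. The paper never asserts that $R^\square_{\rbar,0,\tau}$ is Cohen--Macaulay or normal, and if it were Cohen--Macaulay one would conclude (from Cohen--Macaulayness plus Proposition~\ref{prop: generically reduced special fibre deformation ring}) that $R^\square_{\rbar,0,\tau}/\varpi$ has no embedded primes and hence is reduced --- whereas the paper is at pains to prove only \emph{generic} reducedness, strongly suggesting the authors do not know (or do not believe) the stronger statement.

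This is not a cosmetic issue, because it is exactly the Cohen--Macaulay and regular-in-codimension-one hypotheses that make your cycle identity $Z(M_\infty(\sigma(\tau)^\circ)/\varpi)=Z(R_\infty(0,\tau)/\varpi)$ go through. At a minimal prime $\mathfrak q$ of $R_\infty(0,\tau)/\varpi$ (a height-one prime of $R_\infty(0,\tau)$), if $(R_\infty(0,\tau))_{\mathfrak q}$ is not a DVR then ``maximal Cohen--Macaulay of generic rank one'' does not imply that $M_\infty(\sigma(\tau)^\circ)_{\mathfrak q}$ is free of rank one; the associativity formula then gives only $Z(M_\infty(\sigma(\tau)^\circ)/\varpi)\le Z(R_\infty(0,\tau)/\varpi)$, with a potential strict inequality at non-Cohen--Macaulay points. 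So as written the key equality in your second paragraph is unsupported. The paper avoids this by invoking the cycle identity proved in~\cite{emertongeerefinedBM} (which is established there by a different argument, relying on $\cO$-flatness of the deformation ring and the structure of the patched module, and not requiring normality or Cohen--Macaulayness of $R^\square_{\rbar,0,\tau}$), and then recovers effectivity by the product-over-places sign argument. If you wish to keep your more direct route, you would need to replace the appeal to Corollary~\ref{cor: Kisin moduli consequences of local models} with the actual cycle-comparison lemma from~\cite{emertongeerefinedBM}, or else carry out the patching against the resolution $X_{\rbar}$ (which \emph{is} normal, Cohen--Macaulay and $\cO$-flat by Lemma~\ref{lem:X is normal and flat}) rather than against the deformation ring --- but this is a substantially harder undertaking and is not what the cited references do.
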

\begin{proof}
  We will argue exactly as in the proof of~\cite[Thm.\
  5.5.2]{emertongeerefinedBM} (taking $n=2$), and we freely use the
  notation and definitions of~\cite{emertongeerefinedBM}. Since
  $p>2$, 
  we have $p\nmid n$ and thus a suitable globalisation $\rhobar$ exists
  provided that~\cite[Conj.\ A.3]{emertongeerefinedBM} holds for
  $\rbar$. Exactly as in the proof of~\cite[Thm.\ 5.5.4]{emertongeerefinedBM}, this follows from the proof of
  Theorem A.1.2 of~\cite{geekisin} (which shows that $\rbar$ has a
  potentially Barsotti--Tate lift) and Lemma 4.4.1 of \emph{op.cit}.\ 
  (which shows that any potentially Barsotti--Tate representation is
  potentially diagonalizable). These same results also show that the equivalent
  conditions of~\cite[Lem.\ 5.5.1]{emertongeerefinedBM} hold in the case that
  $\lambda_v=0$ for all $v$, and in particular in the case that $\lambda_v=0$
  and $\tau_v$ is tame for all $v$, which is all that we will require.

By~\cite[Lem.\ 5.5.1(5)]{emertongeerefinedBM}, we see that for each choice of
tame types $\tau_v$, we have 
\anumequation\label{eqn: first BM eqn}
Z(\overline{R}_\infty/\varpi)=\sum_{\otimes_{v|p}\sigmabar_v}\prod_{v|p}m_{\sigmabar_v}(\tau_v)Z'_{\otimes_{v|p}\sigmabar_v}(\rhobar).\end{equation}
Now, by definition we
have  \anumequation\label{eqn: second BM eqn}Z(\Rbarinfty/\varpi)=\prod_{v|p} Z(R^{\square}_{\rbar,0,\tau_v}/\varpi)\times Z
    (\F[[[x_1,\dots,x_{q-[F^+:\Q]n(n-1)/2},t_1,\dots,t_{n^2} ]]).\end{equation} Fix a
    non-Steinberg Serre weight $\sigmabar=\otimes_v\sigmabar_v$, and sum over
    all choices of types $\tau_v$, weighted by
    $\prod_{v|p}n_{\tau_v}(\sigmabar_v)$. We obtain \[\sum_\tau\prod_{v|p}n_{\tau_v}(\sigmabar_v)\prod_{v|p} Z(R^{\square}_{\rbar,0,\tau_v}/\varpi)\times Z
    (\F[[[x_1,\dots,x_{q-[F^+:\Q]n(n-1)/2},t_1,\dots,t_{n^2}
    ]])\]\[=\sum_\tau\prod_{v|p}n_{\tau_v}(\sigmabar_v)\sum_{\otimes_{v|p}\sigmabar'_v}\prod_{v|p}m_{\sigmabar'_v}(\tau_v)Z'_{\otimes_{v|p}\sigmabar'_v}(\rhobar) \]which
    by~(\ref{eqn: orthogonality of sigmabar tau}) simplifies to \anumequation\label{eqn: third BM eqn}\prod_{v|p}\cC_{\sigmabar_v}\times Z
    (\F[[[x_1,\dots,x_{q-[F^+:\Q]n(n-1)/2},t_1,\dots,t_{n^2}
    ]])=Z'_{\otimes_{v|p}\sigmabar_v}(\rhobar).\end{equation} Since
    $Z'_{\otimes_{v|p}\sigmabar_v}(\rhobar)$ is effective by definition (as it
    is defined as a positive multiple of the support cycle of a patched module),
    this shows that every $\prod_{v|p}\cC_{\sigmabar_v}$ is effective. We conclude
    that either every $\cC_{\sigmabar}$ is effective, or that every
    $-\cC_{\sigmabar}$ is effective.

Substituting~(\ref{eqn: third BM eqn}) and~(\ref{eqn: second BM eqn}) into~(\ref{eqn:
  first BM eqn}), we see that \anumequation\label{eqn: 4th BM eqn}\prod_{v|p} Z(R^{\square}_{\rbar,0,\tau_v}/\varpi)\times Z
    (\F[[[x_1,\dots,x_{q-[F^+:\Q]n(n-1)/2},t_1,\dots,t_{n^2} ]])\end{equation} \[=\prod_{v|p}\left(\sum_{\sigmabar\in\JH(\sigmabar(\tau))}\cC_{\sigmabar_v}\right)\times Z
    (\F[[[x_1,\dots,x_{q-[F^+:\Q]n(n-1)/2},t_1,\dots,t_{n^2}
    ]],\]and we deduce that either
    $Z(R^\square_{\rbar,0,\tau}/\varpi)=\sum_{\sigmabar}m_{\sigmabar}(\tau)\cC_{\sigmabar}$
    for all $\tau$, or
    $Z(R^\square_{\rbar,0,\tau}/\varpi)=-\sum_{\sigmabar}m_{\sigmabar}(\tau)\cC_{\sigmabar}$
    for all $\tau$.

Since each $Z(R^\square_{\rbar,0,\tau}/\varpi)$ is effective, the second
possibility holds if and only if every $-\cC_{\sigmabar}$ is effective (since
either all the $-\cC_{\sigmabar}$ are effective, or all the $\cC_{\sigmabar}$
are effective). It remains to show that this possibility leads to a
contradiction. Now, if
$Z(R^\square_{\rbar,0,\tau}/\varpi)=-\sum_{\sigmabar}m_{\sigmabar}(\tau)\cC_{\sigmabar}$
for all $\tau$, then substituting into the definition $\cC_{\sigmabar}=\sum_\tau
n_\tau(\sigmabar)Z(R^\square_{\rbar,0,\tau}/\varpi)$, we
obtain \[\cC_{\sigmabar}=\sum_{\sigmabar'}\left(\sum_{\tau}n_\tau(\sigmabar)m_{\sigmabar'}(\tau)\right)\left(-\cC_{\sigmabar'}\right),\]and
applying~(\ref{eqn: orthogonality of sigmabar tau}), we obtain
$\cC_{\sigmabar}=-\cC_{\sigmabar}$, so that $\cC_{\sigmabar}=0$ for all
$\sigmabar$.  Thus all the $\cC_{\sigmabar}$ are effective, as claimed.

Since $Z'_{\otimes_{v|p}\sigmabar_v}(\rhobar)$ by definition
depends only on (the global choices in the Taylor--Wiles method, and)
$\otimes_{v|p}\sigmabar_v$, and \emph{not} on the particular choice of the
$n_\tau(\sigmabar)$, it follows from~(\ref{eqn: third BM eqn})
that~$\cC_{\sigmabar}$ is also independent of this choice.

Finally, note that by definition
  $Z'_{\otimes_{v|p}\sigmabar_v}(\rhobar)$  is nonzero precisely when
  $\sigmabar_v$ is in the set $\WBT(\rbar)$ defined in~\cite[\S3]{geekisin}; but
  by the main result of~\cite{gls13}, this is precisely the set~$W(\rbar)$.
\end{proof}

\begin{rem}
  \label{rem: could have done BM for wildly ramified
    types}As we do not use wildly ramified types elsewhere in the
  paper, we have restricted the statement of Theorem~\ref{thm:
    geometric BM} to the case of tame types; but the statement
admits a natural extension
to the case
  of wildly ramified inertial types (with some components now occurring with multiplicity greater than
  one), and the proof goes through unchanged in this more general setting.
\end{rem}

\bibliographystyle{amsalpha}
\bibliography{dieudonnelattices}
\end{document}